\newcommand\myshade{85}
\definecolor{mycitecolor}{rgb}{0.94, 0.8, 0.0}
\definecolor{mylinkcolor}{rgb}{0.133, 0.0, 0.8}
\definecolor{myurlcolor}{rgb}{0.5, 1.0, 0.83}
\newcommand{\N}{\mathbb{N}}
\newcommand{\Np}{{\mathbb{N}^+}}
\newcommand{\Z}{\mathbb{Z}}
\newcommand{\Q}{\mathbb{Q}}
\newcommand{\R}{\mathbb{R}}
\newcommand{\Dyad}{\mathrm{\mathbb{D}}}
\newcommand{\Set}{\cat{Set}} %category of sets and functions between them
\newcommand{\Preord}{\cat{Preo}} %category of preordered sets and order-preserving functions between them
\newcommand{\Ord}{\cat{Ord}} % category of partially ordered sets and order-preserving functions between them
\newcommand{\Top}{\cat{Top}} %category of topological spaces and continuous functions between them
\newcommand{\CompHaus}{\cat{CH}} %category of compact Hausdorff spaces and continuous functions between them
\newcommand{\Pries}{\mathsf{Pries}} %category of Priestley spaces and order-preserving continuous functions between them
\newcommand{\Stone}{\cat{Stone}} % category of Stone spaces and continuous functions between them
\newcommand{\CompOrd}{\cat{CompOrd}} % category of compact ordered spaces and order-preserving continuous functions between them
\newcommand{\TopXPreord}{\Top\times_{\Set}\Preord} %category of topological spaces equipped with a preorder and order-preserving continuous functions between them
\newcommand{\CompHausXPreord}{\CompHaus\times_\Set\Preord} %category of compact Hausdorff spaces equipped with a preorder and order-preserving continuous functions between them
\newcommand{\CompHausPreord}{\cat{CHPreo}} % category of compact Hausdorff spaces equipped with a closed preorder and order-preserving continuous functions between them
\newcommand{\TopPreord}{\cat{TopPreo}} %category of topological spaces equipped with a closed preorder and order-preserving continuous functions between them
\newcommand{\CompHausXOrd}{\cat{\CompHaus \times_\Set \Ord}} %category of compact Hausdorff spaces equipped with a partial order and order-preserving continuous functions between them
\newcommand{\TopOrd}{\cat{TopOrd}} %category of topological spaces equipped with a closed partial order and order-preserving continuous functions between them
\newcommand{\TopXOrd}{\cat{\Top \times_\Set \Ord}} %category of topological spaces equipped with a partial order and order-preserving continuous functions between them
\newcommand{\MV}{{\mathsf{MV}}} %category of MV-algebras and homomorphisms between them
\DeclareMathOperator{\CMiso}{\mathsf{OC}} %category of SigmaCM-algebras in ISP([0,1]) and homomorphisms between them
\newcommand{\MVM}{\mathsf{MVM}} %category of MV-monoidal algebras and homomorphisms between them
\newcommand{\DMVM}{\mathsf{MVM}_{\rm{dyad}}} %category of MV-monoidal algebras and homomorphisms between them
\newcommand{\DMVMinfty}{\mathsf{MVM}_{\mathsf{dyad}}^{\mathsf{lim}}} %category of limit MV-monoidal algebras and homomorphisms between them
\newcommand{\TMVM}{\mathsf{MVM}_{\frac{1}{2}}} %category of MV-monoidal algebras with division by 2 and homomorphisms between them
\newcommand{\TMVMinfty}{\mathsf{MVM}_{\frac{1}{2}}^{\mathsf{lim}}} %category of limit MV-monoidal algebras with division by 2 and homomorphisms between them
\newcommand{\ULG}{{\mathsf{u} \ell \mathsf{G}}} %category of unital Abelian lattice-ordered groups and homomorphisms between them
\newcommand{\ULM}{{\mathsf{u} \ell \mathsf{M}}} %category of unital commutative distributive lattice-ordered monoids and homomorphisms between them
\newcommand{\DLM}{{\ell \mathsf{M}_{\mathsf{dyad}}}} %%category of commutative distributive lattice-ordered monoids with dyadic constants and homomorphisms between them
\newcommand{\Cleq}{\mathrm{C}_{\leq}}
\newcommand{\Cont}{\mathrm{C}}
\newcommand{\Gam}{\Gamma}
\newcommand{\X}{\Xi}
\newcommand{\CoAlg}{\mathsf{CoAlg}}
\newcommand{\Quot}{\mathbf{Q}} % Quotient objects on X
\newcommand{\QuotEquiv}{\tilde{\mathbf{Q}}}
\newcommand{\Preo}{\mathbf{P}} % Closed pre-orders extending the give partial order on X
\DeclareMathOperator{\ev}{ev} % Evaluation homomorphism
\newcommand{\upset}{\ensuremath{\mathord{\uparrow}\mkern1mu}} % up-set
\newcommand{\downset}{\ensuremath{\mathord{\downarrow}\mkern1mu}} % down-set
\newcommand{\opcat}{\mathsf{op}} % opposite category
\newcommand{\opalg}{\mathrm{op}} % opposite algebra
\newcommand{\oprel}{\mathrm{op}} % opposite relation
\DeclareMathOperator{\ALG}{\cat{Alg}} % \Alg \Sigma = (quasi-)category of $\Sigma$-algebras and $\Sigma$-homomorphisms between them
\newcommand{\disthom}[1][]{\operatorname{d}_{\mathrm{hom}}^{#1}}
\newcommand{\distint}[1][]{\operatorname{d}_{\mathrm{int}}^{#1}}
\newcommand{\dist}[1][]{\operatorname{d}^{#1}}
\DeclareMathOperator{\essinf}{essinf}
\DeclareMathOperator{\esssup}{esssup}
\DeclareMathOperator{\ess}{ess}
\newcommand{\0}{\mathbf{0}}
\newcommand{\1}{\mathbf{1}}
\DeclareMathOperator{\opH}{H}
\DeclareMathOperator{\opI}{I}
\DeclareMathOperator{\opS}{S}
\DeclareMathOperator{\opP}{P}
\newcommand{\CompOrdop}{\CompOrd^\opcat} %The opposite of compact ordered spaces
\newcommand{\eps}{\varepsilon}
\newcommand{\seq}{\subseteq}
\newcommand{\df}{\coloneqq}
\renewcommand{\leq}{\leqslant}
\renewcommand{\geq}{\geqslant}
\newcommand{\les}{\preccurlyeq}
\newcommand{\epi}{\twoheadrightarrow}
\newcommand{\mono}{\rightarrowtail}
\newcommand{\rmono}{\hookrightarrow}
\newcommand{\half}[1]{\frac{#1}{2}}
\DeclareMathOperator{\ar}{ar}
\newcommand{\Dyadu}{\Dyad\cap [0,1]}
\newcommand{\alge}[1]{\mathbf{#1}}
\newcommand{\trunc}{\tau}
\newcommand{\SignCM}{\Sigma^\mathrm{OC}}
\newcommand{\SignCMomega}{\SignCM_{\leq \omega}}
\newcommand{\SignDMVM}{\Sigma_{\mathrm{dy}}}
\newcommand{\SignDMVMinfty}{\Sigma_{\mathrm{dy}}^{\mathrm{lim}}}
\DeclareMathOperator{\hh}{h}
\DeclareMathOperator{\jj}{j}
\DeclareMathOperator{\dd}{d}
\DeclareMathOperator{\uu}{u}
\newcommand{\limop}{\lambda}
\newcommand{\aprj}{\mu}
\newcommand{\cat}[1]{\mathsf{#1}}
\newcommand{\clalg}[1]{\mathsf{#1}}
\newcommand{\gs}[1]{\mathbf{#1}}
\newcommand{\by}[1]{\text{(#1)}}
\newcommand{\mv}{MV-algebra}
\newcommand{\mvs}{MV-algebras}
\newcommand{\amv}{an MV-algebra}
\newcommand{\mvm}{MV-monoidal algebra}
\newcommand{\mvms}{MV-monoidal algebras}
\newcommand{\amvm}{an MV-monoidal algebra}
\newcommand{\dmvm}{dyadic MV-monoidal algebra}
\newcommand{\dmvms}{dyadic MV-monoidal algebras}
\newcommand{\admvm}{a dyadic MV-monoidal algebra}
\newcommand{\dmvminfty}{limit dyadic MV-monoidal algebra}
\newcommand{\dmvminftys}{limit dyadic MV-monoidal algebras}
\newcommand{\admvminfty}{a limit dyadic MV-monoidal algebra}
\newcommand{\tmvm}{$2$-divisible MV-monoidal algebra}
\newcommand{\tmvms}{$2$-divisible MV-monoidal algebras}
\newcommand{\atmvm}{a $2$-divisible MV-monoidal algebra}
\newcommand{\tmvminfty}{limit $2$-divisible MV-monoidal algebra}
\newcommand{\tmvminftys}{limit $2$-divisible MV-monoidal algebras}
\newcommand{\atmvminfty}{a limit $2$-divisible MV-monoidal algebra}
\renewcommand{\lg}{Abelian $\ell$-group}
\newcommand{\alg}{an Abelian $\ell$-group}
\newcommand{\extlg}{Abelian lattice-ordered group}
\newcommand{\ulg}{unital Abelian $\ell$-group}
\newcommand{\ulgs}{unital Abelian $\ell$-groups}
\newcommand{\extulg}{unital Abelian lattice-ordered group}
\newcommand{\extulgs}{unital Abelian lattice-ordered groups}
\newcommand{\lm}{commutative distributive $\ell$-monoid}
\newcommand{\lms}{commutative distributive $\ell$-monoids}
\newcommand{\alm}{a commutative distributive $\ell$-monoid}
\newcommand{\ulm}{unital commutative distributive $\ell$-monoid}
\newcommand{\ulms}{unital commutative distributive $\ell$-monoids}
\newcommand{\aulm}{a unital commutative distributive $\ell$-monoid}
\newcommand{\extulms}{unital commutative distributive lattice-ordered monoids}
\newcommand{\dlm}{dyadic commutative distributive $\ell$-monoid}
\newcommand{\dlms}{dyadic commutative distributive $\ell$-monoids}
\newcommand{\adlm}{a dyadic commutative distributive $\ell$-monoid}
\newcommand{\extdlms}{dyadic commutative distributive lattice-ordered monoids}
\newcommand{\tulm}{unital $2$-divisible commutative distributive $\ell$-monoid}
\newcommand{\tulms}{unital $2$-divisible commutative distributive $\ell$-monoids}
\newcommand{\atulm}{a unital $2$-divisible commutative distributive $\ell$-monoid}
\def\DD{\ensuremath{\mathcal D}}
\def\FF{\ensuremath{\mathcal F}}
\def\GG{\ensuremath{\mathcal G}}
\declaretheorem[name=Theorem, refname={Theorem,Theorems}, Refname={Theorem,Theorems}, numberwithin=chapter,]{theorem}
\declaretheorem[name=Theorem, numbered = no]{theorem*}
\declaretheorem[name=Proposition, refname={Proposition,Propositions}, Refname={Proposition,Propositions}, sibling=theorem,]{proposition}
\declaretheorem[name=Proposition, numbered = no]{proposition*}
\declaretheorem[name=Lemma, refname={Lemma,Lemmas}, Refname={Lemma,Lemmas}, sibling=theorem,]{lemma}
\declaretheorem[name=Lemma, numbered = no]{lemma*}
\declaretheorem[name=Corollary, refname={Corollary,Corollaries}, Refname={Corollary,Corollaries}, sibling=theorem,]{corollary}
\declaretheorem[name=Corollary, numbered = no]{corollary*}
\declaretheorem[name=Definition, refname={Definition,Definitions}, Refname={Definition,Definitions}, sibling=theorem, style=definition,]{definition}
\declaretheorem[name=Notation, refname={Notation,Notations}, Refname={Notation,Notations}, sibling=theorem, style=definition,]{notation}
\declaretheorem[name=Example, refname={Example,Examples}, Refname={Example,Examples}, sibling=theorem, style=definition,]{example}
\declaretheorem[name=Examples, refname={Examples,Examples}, Refname={Examples,Examples}, sibling=theorem, style=definition,]{examples}
\declaretheorem[name=Remark, refname={Remark,Remarks}, Refname={Remark,Remarks}, sibling=theorem, style=remark,]{remark}
\declaretheorem[name=Remark, style=remark, numbered=no,]{remark*}
\declaretheorem[name=Claim, refname={Claim,Claims}, Refname={Claim,Claims}, style=remark, sibling=theorem]{claim}
\declaretheorem[name=Claim, numbered=no, style=remark]{claim*}
\newenvironment{claimproof}[1][Proof of Claim]{\begin{proof}[#1]}{\end{proof}}
\crefname{chapter}{Chapter}{Chapters}
\Crefname{chapter}{Chapter}{Chapters}
\crefname{section}{Section}{Sections}
\Crefname{section}{Section}{Sections}
\crefname{axiom}{Axiom}{Axioms}
\Crefname{axiom}{Axiom}{Axioms}
\crefname{item}{Item}{Items}
\Crefname{item}{Item}{Items}
\newglossaryentry{ALG}{
	type = {category},
	name = {\ensuremath{\ALG\Sigma}},
	description = {category of $\Sigma$-algebras and $\Sigma$-homomorphisms between them},
}
\newglossaryentry{CoAlg}{
	type = {category},
	name = {\ensuremath{\CoAlg(V)}},
	description = {category of coalgebras for the endofunctor $V$},
}
\newglossaryentry{CompHaus}{
	type = {category},
	name = {\ensuremath{\CompHaus}},
	description = {category of compact Hausdorff spaces and continuous functions between them},
}
\newglossaryentry{CompHausXOrd}{
	type = {category},
	name = {\ensuremath{\CompHausXOrd}},
	description = {category of compact Hausdorff spaces equipped with a partial order and order-preserving continuous functions between them},
}
\newglossaryentry{CompHausXPreord}{
	type = {category},
	name = {\ensuremath{\CompHausXPreord}},
	description = {category of compact Hausdorff spaces equipped with a preorder and order-preserving continuous functions between them},
}
\newglossaryentry{CompHausPreord}{
	type = {category},
	name = {\ensuremath{\CompHausPreord}},
	description = {category of compact Hausdorff spaces equipped with a closed preorder and order-preserving continuous functions between them},
}
\newglossaryentry{CompOrd}{
	type = {category},
	name = {\ensuremath{\CompOrd}},
	description = {category of compact ordered spaces and order-preserving continuous functions between them},
}
\newglossaryentry{CMiso}{
	type = {category},
	name = {\ensuremath{\CMiso}},
	description = {category of $\SignCM$-algebras in $\opS\opP([0,1])$ and homomorphisms between them},
}
\newglossaryentry{DLM}{
	type = {category},
	name = {\ensuremath{\DLM}},
	description = {category of {\dlms} and homomorphisms between them},
}
\newglossaryentry{MV}{
	type = {category},
	name = {\ensuremath{\MV}},
	description = {category of {\mvs} and homomorphisms between them},
}
\newglossaryentry{MVM}{
	type = {category},
	name = {\ensuremath{\MVM}},
	description = {category of {\mvms} and homomorphisms between them},
}
\newglossaryentry{TMVM}{
	type = {category},
	name = {\ensuremath{\TMVM}},
	description = {category of {\tmvms} and homomorphisms between them},
}	
\newglossaryentry{TMVMinfty}{
	type = {category},
	name = {\ensuremath{\TMVMinfty}},
	description = {category of {\tmvminftys} and homomorphisms between them},
}
\newglossaryentry{DMVM}{
	type = {category},
	name = {\ensuremath{\DMVM}},
	description = {category of {\dmvms} and homomorphisms between them},
}
\newglossaryentry{DMVMinfty}{
	type = {category},
	name = {\ensuremath{\DMVMinfty}},
	description = {category of {\dmvminftys} and homomorphisms between them},
}
\newglossaryentry{Ord}{
	type = {category},
	name = {\ensuremath{\Ord}},
	description = {category of partially ordered sets and order-preserving functions between them},
}
\newglossaryentry{Preord}{
	type = {category},
	name = {\ensuremath{\Preord}},
	description = {category of preordered sets and order-preserving functions between them},
}
\newglossaryentry{Pries}{
	type = {category},
	name = {\ensuremath{\Pries}},
	description = {category of Priestley spaces and order-preserving continuous functions between them},
}
\newglossaryentry{Set}{
	type = {category},
	name = {\ensuremath{\Set}},
	description = {category of sets and functions between them},
}
\newglossaryentry{Stone}{
	type = {category},
	name = {\ensuremath{\Stone}},
	description = {category of Stone spaces and continuous functions between them},
}
\newglossaryentry{Top}{
	type = {category},
	name = {\ensuremath{\Top}},
	description = {category of topological spaces and continuous functions between them},
}
\newglossaryentry{TopXOrd}{
	type = {category},
	name = {\ensuremath{\TopXOrd}},
	description = {category of topological spaces equipped with a partial order and order-preserving continuous functions between them},
}
\newglossaryentry{TopOrd}{
	type = {category},
	name = {\ensuremath{\TopOrd}},
	description = {category of topological spaces equipped with a closed partial order and order-preserving continuous functions between them}
}
\newglossaryentry{TopPreord}{
	type = {category},
	name = {\ensuremath{\TopPreord}},
	description = {category of topological spaces equipped with a closed preorder and order-preserving continuous functions between them},
}
\newglossaryentry{TopXPreord}{
	type = {category},
	name = {\ensuremath{\TopXPreord}},
	description = {category of topological spaces equipped with a preorder and order-preserving continuous functions between them},
}
\newglossaryentry{ULG}{
	type = {category},
	name = {\ensuremath{\ULG}},
	description = {category of {\ulgs} and homomorphisms between them},
}
\newglossaryentry{ULM}{
	type = {category},
	name = {\ensuremath{\ULM}},
	description = {category of {\ulms} and homomorphisms between them},
}
\newglossaryentry{N}{
	type = {symbol},
	name = {\ensuremath{\N}},
	description = {set of nonnegative integers},
}
\newglossaryentry{Np}{
	type = {symbol},
	name = {\ensuremath{\Np}},
	description = {set of positive integers},
}
\newglossaryentry{Z}{
	type = {symbol},
	name = {\ensuremath{\Z}},
	description = {set of integers},
}
\newglossaryentry{Q}{
	type = {symbol},
	name = {\ensuremath{\Q}},
	description = {set of rational numbers},
}
\newglossaryentry{R}{
	type = {symbol},
	name = {\ensuremath{\R}},
	description = {set of real numbers},
}
\newglossaryentry{Dyad}{
	type = {symbol},
	name = {\ensuremath{\Dyad}},
	description = {set of dyadic rationals},
}
\newglossaryentry{empty0}{
	type = {symbol},
	name = {\ },
	description = {\ },
}
\newglossaryentry{upset-subset}{
	type = {symbol},
	name = {\ensuremath{\upset A}},
	description = {up-closure of the subset $A$},
}
\newglossaryentry{downset-subset}{
	type = {symbol},
	name = {\ensuremath{\downset A}},
	description = {down-closure of the subset $A$},
}
\newglossaryentry{upset}{
	type = {symbol},
	name = {\ensuremath{\upset x}},
	description = {up-closure of $\{x\}$},
}
\newglossaryentry{downset}{
	type = {symbol},
	name = {\ensuremath{\downset x}},
	description = {down-closure of $\{x\}$},
}
\newglossaryentry{Cont}{
	type = {symbol},
	name = {\ensuremath{\Cont(X,Y)}},
	description = {set of continuous functions from $X$ to $Y$},
}
\newglossaryentry{Cleq}{
	type = {symbol},
	name = {\ensuremath{\Cleq(X,Y)}},
	description = {set of order-preserving continuous functions from $X$ to $Y$},
}
\newglossaryentry{dist}{
	type = {symbol},
	name = {\ensuremath{\dist(x,y)}},
	description={distance between $x$ and $y$},
}
\newglossaryentry{ev}{
	type = {symbol},
	name = {\ensuremath{\ev}},
	description = {evaluation homomorphism},
}
\newglossaryentry{opcat}{
	type = {symbol},
	name = {\ensuremath{\cat{C}^\opcat}},
	description = {dual category of $\cat{C}$},
}
\newglossaryentry{opalg}{
	type = {symbol},
	name = {\ensuremath{\alge{A}^\opalg}},
	description = {dual algebra of $\alge{A}$},
}
\newglossaryentry{oprel}{
	type = {symbol},
	name = {\ensuremath{R^\oprel}},
	description={opposite relation of the binary relation $R$},
}
\newglossaryentry{Quot}{
	type = {symbol},
	name = {\ensuremath{\Quot(X)}},
	description={class of epimorphisms with domain $X$},
}
\newglossaryentry{QuotEquiv}{
	type = {symbol},
	name = {\ensuremath{\QuotEquiv(X)}},
	description = {set of equivalence classes of epimorphisms with domain $X$},
}
\newglossaryentry{Preo}{
	type = {symbol},
	name = {\ensuremath{\Preo(X)}},
	description = {set of closed pre-orders extending the given partial order on $X$},
}
\newglossaryentry{empty1}{
	type = {symbol},
	name = {\ },
	description = {\ },
}
\newglossaryentry{opH}{
	type = {symbol},
	name = {\ensuremath{\opH(\clalg{A})}},
	description = {closure of $\clalg{A}$ under homomorphic images},
}
\newglossaryentry{opI}{
	type = {symbol},
	name = {\ensuremath{\opI(\clalg{A})}},
	description = {closure of $\clalg{A}$ under isomorphisms},
}
\newglossaryentry{opP}{
	type = {symbol},
	name = {\ensuremath{\opP(\clalg{A})}},
	description = {closure of $\clalg{A}$ under products},
}
\newglossaryentry{opS}{
	type = {symbol},
	name = {\ensuremath{\opS(\clalg{A})}},
	description = {closure of $\clalg{A}$ under subalgebras},
}
\newglossaryentry{empty2}{
	type = {symbol},
	name = {\ },
	description = {\ },
}
\newglossaryentry{Gam}{
	type = {symbol},
	name = {\ensuremath{\Gam(M)}},
	description = {unit interval of $M$},
}
\newglossaryentry{X}{
	type = {symbol},
	name = {\ensuremath{\X(A)}},
	description = {set of good $\Z$-sequences in $A$},
}
\newglossaryentry{empty3}{
	type = {symbol},
	name = {\ },
	description = {\ },
}
\newglossaryentry{SignCM}{
	type = {symbol},
	name = {\ensuremath{\SignCM}},
	description = {signature of all order-preserving continuous functions from powers of $[0,1]$ to $[0,1]$},
}
\newglossaryentry{SignCMomega}{
	type = {symbol},
	name = {\ensuremath{\SignCMomega}},
	description = {signature of all order-preserving continuous functions from at most countable powers of $[0,1]$ to $[0,1]$},
}
\newglossaryentry{SignDMVM}{
	type = {symbol},
	name = {\ensuremath{\SignDMVM}},
	description = {signature $\{\oplus, \odot, \lor, \land\} \cup \Dyad$},
}
\newglossaryentry{SignDMVMinfty}{
	type = {symbol},
	name = {\ensuremath{\SignDMVMinfty}},
	description = {signature $\{\oplus, \odot, \lor, \land\} \cup \Dyad \cup \{\limop\}$},
}
\newglossaryentry{empty4}{
	type = {symbol},
	name = {\ },
	description = {\ },
}
\newglossaryentry{oplus}{
	type = {symbol},
	name = {\ensuremath{x \oplus y}},
	description = {$\min\{x + y, 1\}$},
}
\newglossaryentry{odot}{
	type = {symbol},
	name = {\ensuremath{x \odot y}},
	description = {$\max\{x + y - 1, 0\}$},
}
\newglossaryentry{lor}{
	type = {symbol},
	name = {\ensuremath{x \lor y}},
	description = {$\sup\{x,y\}$},
}
\newglossaryentry{land}{
	type = {symbol},
	name = {\ensuremath{x \land y}},
	description = {$\inf\{x,y\}$},
}
\newglossaryentry{zero}{
	type = {symbol},
	name = {\ensuremath{0}},
	description = {the element $0$},
}
\newglossaryentry{one}{
	type = {symbol},
	name = {\ensuremath{1}},
	description = {the element $1$},
}
\newglossaryentry{dyad}{
	type = {symbol},
	name = {\ensuremath{t \in \Dyadu}},
	description = {the element $t$},
}
\newglossaryentry{trunc}{
	type = {symbol},
	name = {\ensuremath{\trunc_n(x,y)}},
	description = {$\max\left\{\min\left\{x, y + \frac{1}{2^n}\right\}, y - \frac{1}{2^n}\right\}$},
}
\newglossaryentry{aprj-one}{
	type = {symbol},
	name = {\ensuremath{\aprj_1(x_1)}},
	description = {$x_1$},
}
\newglossaryentry{aprj-n}{
	type = {symbol},
	name = {\ensuremath{\aprj_n(x_1, \dots, x_n)}},
	description = {$\trunc_{n-1}(x_n,\aprj_{n-1}(x_1, \dots,x_{n-1}))$},
}
\newglossaryentry{limop}{
	type = {symbol},
	name = {\ensuremath{\limop(x_1, x_2, x_3, \dots)}},
	description = {$\lim_{n \to \infty} \aprj(x_1, \dots, x_n)$},
}
\newglossaryentry{dd-n}{
	type = {symbol},
	name = {\ensuremath{\dd_n}},
	description = {the element $\frac{1}{2^n}$},
}
\newglossaryentry{uu-n}{
	type = {symbol},
	name = {\ensuremath{\uu_n}},
	description = {the element $1-\frac{1}{2^n}$},
}
\newglossaryentry{hh}{
	type = {symbol},
	name = {\ensuremath{\hh(x)}},
	description = {$\frac{x}{2}$},
}
\newglossaryentry{jj}{
	type = {symbol},
	name = {\ensuremath{\jj(x)}},
	description = {$\frac{1}{2} + \frac{x}{2}$},
}
\begin{document}

\frontmatter

\thispagestyle{empty}

\centerline {\includegraphics[width=10cm]{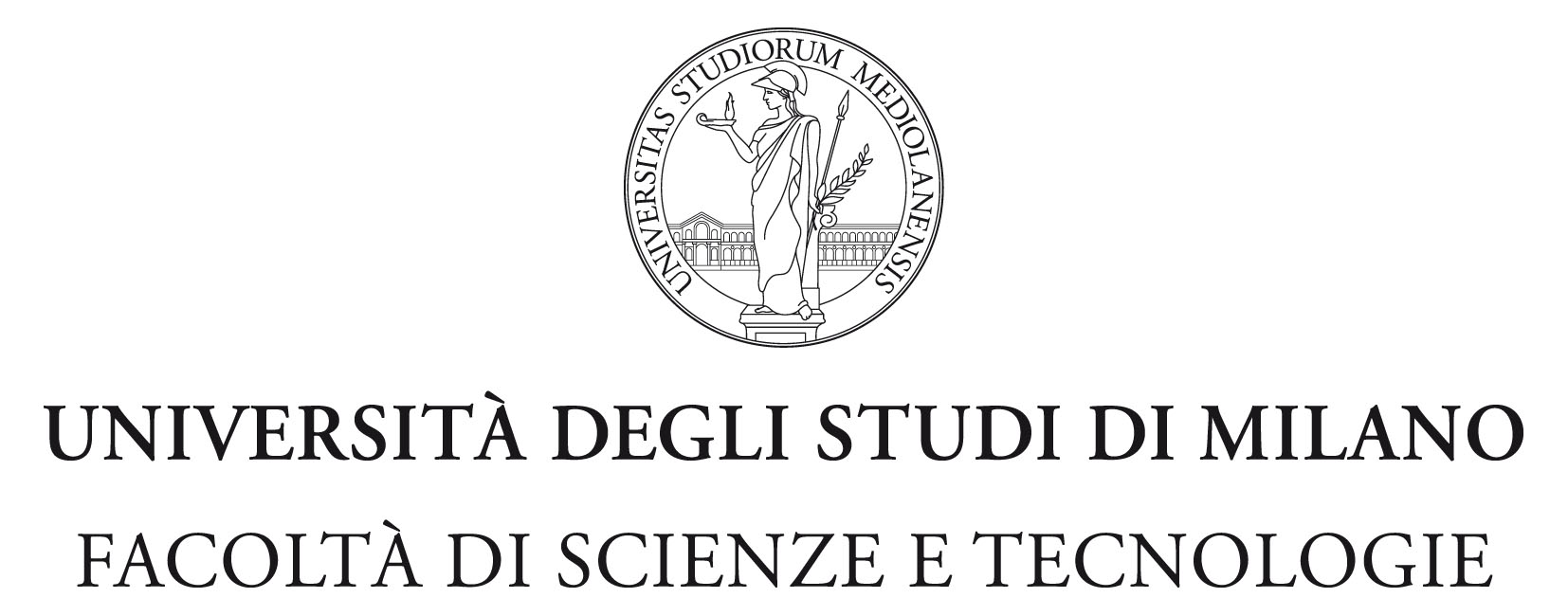}}

%\centerline {\Large{\textsc{ UNIVERSIT\`A DEGLI STUDI DI MILANO}}}
%\vskip 20 pt
%\centerline {\Large{\textsc FACOLT\`A DI SCIENZE E TECNOLOGIE}}

\vskip 20 pt
\centerline {\Large{\textsc {}Dipartimento di Matematica Federigo Enriques}}

\vskip 20 pt

\centerline {\Large{\textsc {}Corso di Dottorato di Ricerca in Matematica}}
\vskip 20 pt
\centerline {\Large{\textsc {}XXXIII ciclo}}

\vskip 1.2cm

\centerline {\normalsize {Tesi di Dottorato di Ricerca}} 

\vskip 0.7cm

\begin{center}
\Large {\bf ON THE AXIOMATISABILITY OF\\
THE DUAL OF COMPACT ORDERED SPACES}
\end{center}

\vskip 20 pt
\centerline{MAT01}

\vskip 2cm

\begin{flushleft}
	\noindent\quad\quad\quad Relatore:
	
	\noindent\,\hspace*{1cm} Prof.\ Vincenzo Marra
	
	\vskip 20 pt
	
	\noindent\quad\quad\quad Coordinatore del dottorato:
	
	\noindent\quad\quad\quad Prof.\ Vieri Mastropietro
\end{flushleft}

	\vskip 20 pt

\begin{flushright}
Candidato:\hspace*{1cm}\,

Marco Abbadini\hspace*{1cm}\,
\end{flushright}

\vskip 3.3cm

\centerline{A.\ A.\ 2019-2020}

\cleardoublepage
\phantomsection
%%%%%%%%%%%%%%%%%%%%%%%%%%%%%%%%%%%   SECTION   %%%%%%%%%%%%%%%%%%%%%%%%%%%%%%%%%%%%

\chapter*{Abstract}
We prove that the category of Nachbin's compact ordered spaces and order-preserving continuous maps between them is dually equivalent to a variety of algebras, with operations of at most countable arity.
Furthermore, we show that the countable bound on the arity is the best possible: the category of compact ordered spaces is not dually equivalent to any variety of finitary algebras.
Indeed, the following stronger results hold: the category of compact ordered spaces is not dually equivalent to (i) any finitely accessible category, (ii) any first-order definable class of structures, (iii) any class of finitary algebras closed under products and subalgebras.
An explicit equational axiomatisation of the dual of the category of compact ordered spaces is obtained; in fact, we provide a finite one, meaning that our description uses only finitely many function symbols and finitely many equational axioms.
In preparation for the latter result, we establish a generalisation of a celebrated theorem by D.\ Mundici:
our result asserts that the category of unital commutative distributive lattice-ordered monoids is equivalent to the category of what we call MV-monoidal algebras.
Our proof is independent of Mundici's theorem.

%######################################################.  CHAPTER.  ######################################################

\chapter*{Acknowledgements}
I wish to thank Vincenzo Marra for being my advisor: I cannot overestimate the value his teaching and mentoring have had during my Ph.D.\ studies.
In fact, this work would hardly have been possible without his effort to convey me his organised and deep vision of mathematics, and I am grateful for his many suggestions and useful advice.

I would like to express my gratitude to Luca Reggio for having taken the time to improve a version of a paper of mine, which was the beginning of the whole story contained in this thesis.
Also, Luca explained me an important bit of categorical algebra, which opened me up to a new understanding of the subject, and I enjoyed one brilliant idea of his which led to a nice proof of the main result of this manuscript.

Finally, I would like to thank the two anonymous referees who read this thesis.
I am grateful to the first referee for his or her interesting ideas and encouragement for further research.
I am grateful to the second referee for several comments and suggestions that helped to improve the quality of this manuscript.

\tableofcontents

\setcounter{chapter}{-1}

%######################################################.  CHAPTER.  ######################################################

\chapter*{Introduction}
% CHAPTER INTRODUCTION

\addcontentsline{toc}{chapter}{Introduction}
\markboth{Introduction}{Introduction}

In 1936, in his landmark paper \cite{Stone1936}, M.\ H.\ Stone described what is nowadays known as Stone duality for Boolean algebras.
In modern terms, the result states that the category of Boolean algebras and homomorphisms is dually equivalent to the category of totally disconnected compact Hausdorff spaces and continuous maps, now known as Stone or Boolean spaces.
If we drop the assumption of total disconnectedness, we are left with the category $\CompHaus$ of compact Hausdorff spaces and continuous maps.
J.\ Duskin observed in 1969 that the opposite category $\CompHaus^\opcat$ is monadic over the category $\Set$ of sets and functions \cite[{}5.15.3]{Duskin1969}.
In fact, $\CompHaus^{\opcat}$ is equivalent to a variety of algebras with primitive operations of at most countable arity: a finite generating set of operations was exhibited in~\cite{Isbell1982}, while a finite equational axiomatisation was provided in~\cite{MarraReggio2017}.
Therefore, if we allow for infinitary operations, Stone duality can be lifted to compact Hausdorff spaces, retaining the algebraic nature of the category involved.

Shortly after his paper on the duality for Boolean algebras, Stone published a generalisation of this theory to bounded distributive lattices~\cite{Stone1938}.
In his formulation, the dual category consists of what are nowadays called spectral spaces and perfect maps.
In 1970, H.\ A.\ Priestley showed that spectral spaces can be equivalently described as what are now known as Priestley spaces, i.e.\ compact spaces equipped with a partial order satisfying a condition called total order-disconnectedness~\cite{Priestley1970}.
More precisely, Priestley duality states that the category of bounded distributive lattices is dually equivalent to the category of Priestley spaces and order-preserving continuous maps.
The latter category is a full subcategory of the category $\CompOrd$ of compact ordered spaces and order-preserving continuous maps.
Compact ordered spaces, introduced by L.\ Nachbin \cite{Nachbin1948,Nachbin} before Priestley's result on bounded distributive lattices, are compact spaces equipped with a partial order which is closed in the product topology.
Similarly to the case of Boolean algebras, one may ask if Priestley duality can be lifted to the category $\CompOrd$ of compact ordered spaces, retaining the algebraic nature of the opposite category.
In \cite{HNN2018}, D.\ Hofmann, R.\ Neves and P.\ Nora showed that $\CompOrdop$ is equivalent to an $\aleph_1$-ary quasivariety\index{quasivariety of algebras!$\aleph_1$-ary}\index{$\aleph_1$-ary|see{quasivariety of algebras, $\aleph_1$-ary}}, i.e.\ a quasivariety of algebras with operations of at most countable arity, axiomatised by implications with at most countably many premises.
In the same paper, the authors left as open the question whether $\CompOrdop$ is equivalent to a variety of (possibly infinitary) algebras.

The main aims of this manuscript are (i) to show that the dual of the category of compact ordered spaces is in fact equivalent to a variety of (infinitary) algebras---thus providing a positive answer to the open question in \cite{HNN2018}---and (ii) to obtain a finite equational axiomatisation of $\CompOrdop$.

The main results of this thesis are presented in \cref{chap:direct-proof,chap:negative-results,chap:equiv,chap:yosida-like,chap:axiomatisation,chap:finite-axiomatisation}.

In \cref{chap:direct-proof}, we prove that $\CompOrdop$ is equivalent to a variety, with primitive operations of at most countable arity (\cref{t:MAIN});
the proof rests upon a well-known categorical characterisation of varieties.
In \cref{chap:negative-results}, we show that the countable bound on the arity of the primitive operations is the best possible: $\CompOrd$ is not dually equivalent to any variety of finitary algebras (\cref{t:finitely accessible}).
Indeed, the following stronger results hold: $\CompOrd$ is not dually equivalent to (i) any finitely accessible category, (ii) any first-order definable class of structures, (iii) any class of finitary algebras closed under products and subalgebras.

The main goal of the subsequent chapters is to provide explicit equational axiomatisations of $\CompOrdop$.
This is first achieved in \cref{chap:axiomatisation}, where we prove that the category of compact ordered spaces is dually equivalent to the variety consisting of what we call \emph{\dmvminftys} (\cref{t:axiomatisation}).
The axiomatisation builds on what we call \emph{\mvms}.
These generalise {\mvs}, originally introduced by \cite{Chang1958} to serve as algebraic semantics for {\L}ukasiewicz many-valued propositional logic.
In \cref{chap:finite-axiomatisation}, we take a further step by providing a \emph{finite} equational axiomatisation of $\CompOrdop$, meaning that we use only finitely many function symbols and finitely many equational axioms to present the variety: the dual of $\CompOrd$ is there presented as the variety of \emph{\tmvminftys} (\cref{t:axiomatisation-t}).
This finite axiomatisation is a bit more complex than the infinite one in \cref{chap:axiomatisation}.

The intermediate \cref{chap:equiv,chap:yosida-like} are of independent interest.
Even if they are not really necessary to obtain the results reported above, they serve to provide a better intuition on the algebras of the equational axiomatisations of \cref{chap:axiomatisation,chap:finite-axiomatisation}, which otherwise may seem a bit obscure.

In particular, in \cref{chap:equiv}, we show that {\mvms} are exactly the unit intervals of the more intuitive structures that we call \emph{\extulms}.
In fact, we prove that the categories of {\extulms} and {\mvms} are equivalent (\cref{t:G is equiv}).
This equivalence generalises a well-known result by \cite[Theorem~3.9]{Mundici} asserting that the category of Abelian lattice-ordered groups with strong order unit is equivalent to the category of MV-algebras.

Starting from 1940, several descriptions of the dual of the category of compact Hausdorff spaces were obtained: we here cite the works of \cite{KreinKrein1940,Gelfand1941,Kakutani1941,Stone1941,Yosida1941}.
The latter used lattice-ordered vector spaces with strong order unit, and in \cref{chap:yosida-like} we obtain an ordered analogue of this result, that we call \emph{ordered Yosida duality}.
In our formulation, compact Hausdorff spaces are replaced by compact ordered spaces, and lattice-ordered vector spaces with strong order unit are replaced by what we call \emph{\extdlms}.
Even if these structures fail to form a variety, we find them interesting because their axiomatisation is simpler than the equational ones of the following chapters.
This ordered version of Yosida duality fits in the general structure of the manuscript by providing a more accessible intuition to the ideas behind the dualities in \cref{chap:axiomatisation,chap:finite-axiomatisation}.
Also, to prove some of the results in these last two chapters, we will rely on analogous ones obtained in \cref{chap:yosida-like}, for which an easier-to-follow proof will have already been carried out in details.

We conclude this introduction by commenting on the novelty of the results presented here.
It is well-known that the category of compact ordered spaces and order-preserving continuous maps is isomorphic to the category of so-called stably compact spaces and perfect maps \cite[Proposition~VI.6.23]{GierzHofmannKeimelEtAl2003}.
Dualities for stably compact spaces are already known.
For example, the category of stably compact spaces is dually equivalent to the category of stably locally compact frames \cite[Theorem~VI-7.4]{GierzHofmannKeimelEtAl2003}, as well as to the category of strong proximity lattices \cite{JungSuenderhauf1996}.
However, neither of these two categories is (in its usual presentation) a variety of algebras.

To the best of our knowledge, the fact that the category of compact ordered spaces is dually equivalent to a variety of algebras was first proved in \cite{Abbadini2019}, where an explicit (infinite) equational axiomatisation was also presented.
Then, in \cite{AbbadiniReggio2020}, a nicer and shorter proof was obtained, which rested upon a well-known categorical characterisation of varieties.
Additionally, in the latter paper, the aforementioned negative axiomatisability results about $\CompOrdop$ were observed.
The result that $\CompOrd$ is not dually equivalent to any first-order definable class of structures was suggested to us by S.\ Vasey (private communication) as an application of a result of M.\ Lieberman, J.\ Rosick\'y and S.\ Vasey \cite{LieRosVas}, replacing a previous weaker statement.
\Cref{chap:direct-proof,chap:negative-results} follow very closely the lines of \cite{AbbadiniReggio2020}.

We believe that the equational axiomatisation of $\CompOrdop$ in \cref{chap:axiomatisation} is nicer than the one available in \cite{Abbadini2019}, one of the reasons being the self-duality of the primitive operation of countably infinite arity.
The result in \cref{chap:finite-axiomatisation} that also a finite equational axiomatisation exists is new.

\mainmatter

%######################################################.  CHAPTER.  ######################################################

\chapter{Background}\label{chap:background}
% CHAPTER 0

%%%%%%%%%%%%%%%%%%%%%%%%%%%%%%%%%%%   SECTION   %%%%%%%%%%%%%%%%%%%%%%%%%%%%%%%%%%%%

We collect here some basic notions and some preliminary results about sets, preordered sets, topological spaces, categories and algebras.
The reader may wish to move to \cref{chap:CompOrd}, skipping the present chapter and referring to it if the need arises.

\section{Foundations}

For the concepts of set\index{set} and class\index{class} we refer to \cite[Chapter~2]{AdamekHerrlichStrecker2006}.

\section{Sets}

We denote with $\N$ the set of natural numbers $\{0, 1, 2, \dots\}$, and with $\Np$ the set $\N \setminus \{0\} = \{1, 2, 3, \dots\}$.

We assume the axiom of choice.

Given a function $f \colon X \to Y$, we let $f[A]$ denote the image of a subset $A \seq X$ under $f$ and $f^{-1}[B]$ the preimage of a subset $B \seq Y$ under $f$.

%-------------------------------   SUBSUBSECTION   --------------------------------%

\subsection{Binary relations} \label{s:binary-relations}

A \emph{binary relation}\index{relation!on a set!binary} on a set $X$ is a subset $R \seq X \times X$.
We write $x \mathrel{R} y$ as an alternative to $(x, y) \in R$.
Given a binary relation $R$, we let $R^\oprel$ denote the relation defined by 
\[
	x \mathrel{R^\oprel} y \ \text{ if, and only if, } \ y \mathrel{R} x.
\]

A binary relation $R$ on a set $X$ is called
\begin{description}
	\item[\emph{reflexive}] \index{relation!on a set!reflexive} provided that, for all $x \in X$, we have $x \mathrel{R} x$;
	\item[\emph{transitive}] \index{relation!on a set!transitive} provided that, for all $x,y,z \in X$, if $x \mathrel{R} y$ and $y \mathrel{R} z$, then $x \mathrel{R} z$;
	\item[\emph{anti-symmetric}] \index{relation!on a set!anti-symmetric} provided that, for all $x,y \in X$, if $x \mathrel{R} y$ and $y \mathrel{R} x$, then $x = y$;
	\item[\emph{symmetric}]\index{relation!on a set!symmetric} provided that, for all $x,y \in X$, if $x \mathrel{R} y$, then $y \mathrel{R} x$.
\end{description}
A \emph{preorder}\index{preorder} on a set $X$ is a reflexive transitive binary relation on $X$.
A \emph{partial order}\index{partial order} on $X$ is an anti-symmetric preorder on $X$.
An \emph{equivalence relation}\index{relation!on a set!equivalence} on $X$ is a reflexive transitive symmetric binary relation on $X$.

Usually, we use the symbol $\les$ for preorders, and the symbol $\leq$ for partial orders.

A \emph{preordered set}\index{preordered set} is a pair $(X, \les)$ where $X$ is a set and $\les$ is a preorder on $X$.
A \emph{partially ordered set}\index{partially ordered set} is a pair $(X, \leq)$ where $X$ is a set and $\leq$ is a preorder on $X$.
To keep notation simple, we shall often write simply $X$ instead of $(X, \les)$ or $(X, \leq)$.

An \emph{up-set}\index{up-set} in a preordered set $(X, {\les})$ is a subset $U$ of $X$ such that, for every $x \in U$ and $y \in X$, if $x \les y$, then $y \in U$.
A \emph{down-set}\index{down-set} is a subset $U$ of $X$ such that, for every $x \in U$ and $y \in X$, if $y \les x$, then $y \in U$.

Analogous concepts are defined for the case of classes instead of sets.

%%%%%%%%%%%%%%%%%%%%%%%%%%%%%%%%%%%   SECTION   %%%%%%%%%%%%%%%%%%%%%%%%%%%%%%%%%%%%

\section{Preordered sets}

Let $(X, \les_X)$ and $(Y, \les_Y)$ be preordered sets.
An \emph{order-preserving}\index{function!order-preserving}\index{order-preserving|see{function, order-preserving}} function from $(X, \les_X)$ to $(Y, \les_Y)$ is a function $f\colon X \to Y$ such that, for all $x, y \in X$, if $x \les_X y$, then $f(x) \les_Y f(y)$.

%=================================   SUBSECTION   =================================%

\subsection{Initial and final preorder} \label{s:init-preorder}

In this subsection we define the notions of initial, final, product, coproduct, discrete, indiscrete, induced and quotient (pre)order.
The motivation for most of this terminology will become clear after the discussion on topological functors below (\cref{s:topological-functors}).

Given a set $X$ and a class-indexed\footnote{We use a class as index to adhere to the definition of topological functors (\cref{d:topological-functor} below).} family $(f_i \colon X \to A_i)_{i \in I}$ of functions from $X$ to the underlying set of a preordered set $(A_i, \les_i)$, we define the \emph{initial preorder}\index{initial!preorder}\index{preorder!initial}\index{order!initial} on $X$ as the greatest preorder $\les$ on $X$ that makes each $f_i$ order-preserving.
It is given by
\[
	x \les y \ \Longleftrightarrow \ \forall i \in I\ f_i(x) \les_i f_i(y).
\]

Given a set $X$ and a class-indexed family $(f_i \colon A_i \to X)_{i \in I}$ of functions from $X$ to the underlying set of a preordered set $(A_i, \les_i)$, we define the \emph{final preorder}\index{final!preorder}\index{preorder!final}\index{order!final} on $X$ as the smallest preorder $\les$ on $X$ that makes each $f_i$ order-preserving.
It is given by the transitive closure of the reflexive closure of the relation
\begin{equation} \label{i:final-preo}
	x \les' y \ \Longleftrightarrow \ \exists i \in I\ \exists x',y' \in A_i:\ x' \les_i y',\ f_i(x')= x,\ f_i(y') = y.
\end{equation}
As one of the anonymous referees pointed out, correcting an error in a preliminary version of this thesis, the reflexive closure of the relation $\les'$ described in \cref{i:final-preo} might fail to be transitive.
For example, let $\{a,b,c,d\}$ be a preordered set on four elements with $a \leq b$ and $c \leq d$, and no other pairs of distinct elements in relation, and consider the map $f \colon \{a,b,c,d\} \to \{x, y, z\}$ that maps $a$ to $x$, $b$ and $c$ to $y$ and $d$ to $z$.

Given a family $(X_i, \les_i)_{i \in I}$ (with $I$ a set) of preordered sets, the \emph{product preorder}\index{product!preorder}\index{preorder!product}\index{order!product} on the set-theoretic product $X \df \prod_{i \in I} X_i$ is the initial preorder $\les$ on $X$ with respect to the family of projections $(\pi_i \colon X \to X_i)_{i \in I}$, i.e.\
\[
	(x_i)_{i \in I} \les (y_i)_{i \in I} \ \Longleftrightarrow \ \forall i \in I\ x_i \les_i y_i.
\]
Unless otherwise specified, it is understood that the set $\prod_{i \in I} X_i$, when regarded as a preordered set, is equipped with the product preorder.

The \emph{coproduct preorder}\index{coproduct!preorder}\index{preorder!coproduct}\index{order!coproduct} on the set-theoretic coproduct $X \df \sum_{i \in I} X_i$ is the final preorder on $X$ with respect to the family of coproduct injections $(\iota_i \colon X_i \hookrightarrow X)_{i \in I}$, i.e.\
\[
	x \les y \ \Longleftrightarrow \ \exists i \in I\ \exists x', y' \in X_i :\ x' \les_i y',\ \iota_i(x') = x,\ \iota_i(y') = y.
\]

Every set $X$ carries two canonical preorders: the \emph{discrete preorder}\index{discrete!preorder}\index{preorder!discrete}\index{order!discrete}, i.e.\ $\{(x,x) \in X \times X \mid x \in X\}$, and the \emph{indiscrete preorder}\index{indiscrete!preorder}\index{preorder!indiscrete}\index{order!indiscrete}, i.e.\ $X \times X$.

Let $(X, \les_X)$ be a preordered set, and let $\iota \colon Y \hookrightarrow X$ be an injective function.
The \emph{induced preorder}\index{induced!preorder}\index{preorder!induced}\index{order!induced} on $Y$ is the initial preorder $\les_Y$ on $Y$ with respect to $\iota$, i.e.\ 
\[
	x \les_Y y \ \Longleftrightarrow \ \iota(x) \les_X \iota(y).
\]

Let $(X, \les_X)$ be a preordered set, and let $q \colon X \twoheadrightarrow Y$ be a surjective function.
The \emph{quotient preorder}\index{quotient!order} on $Y$ is the final preorder $\les_Y$ on $Y$ with respect to $q$, i.e.\ the transitive closure of the relation
\begin{equation} \label{e:quotient-preorder}
	x \les' y \ \Longleftrightarrow \ \exists x', y' \in X :\ x' \les_X y',\ q(x') = x,\ q(y') = y.
\end{equation}
If, for all $x, y \in X$ with $q(x) = q(y)$, we have $x \les_X y$, then the relation on $Y$ defined by \cref{e:quotient-preorder} is transitive, so there is no need to take the transitive closure.

Unless otherwise specified, it is understood that subsets and quotient sets of a preordered sets, when regarded as a preordered set, are equipped with the induced and quotient preorder, respectively.

Initial (resp.\ final, product, coproduct, discrete, indiscrete, induced, quotient) preorder will also be called \emph{initial} (resp.\ \emph{final}, \emph{product}, \emph{coproduct}, \emph{discrete}, \emph{indiscrete}, \emph{induced}, \emph{quotient}) \emph{order}.

%%%%%%%%%%%%%%%%%%%%%%%%%%%%%%%%%%%   SECTION   %%%%%%%%%%%%%%%%%%%%%%%%%%%%%%%%%%%%

\section{Topological spaces}

We assume that the reader has basic knowledge of topology, for which we refer to \cite{Willard1970}.

A \emph{topology}\index{topology} on a set $X$ is a set of subset of $X$ which is closed under arbitrary unions and finite intersections (where by $\emptyset$-indexed union we mean the empty set, and by $\emptyset$-indexed intersection we mean the set $X$).
A \emph{topological space}\index{topological space} is a pair $(X, \tau)$ where $X$ is a set and $\tau$ is a topology on $X$.
To keep notation simple, we shall often write simply $X$ instead of $(X, \tau)$.

Let $(X, \tau_X)$ and $(Y, \tau_Y)$ be topological spaces.
A \emph{continuous function}\index{function!continuous}\index{continuous|see{function, continuous}} from $(X, \tau_X)$ to $(Y, \tau_Y)$ is a function $f\colon X \to Y$ such that, for all $O \in \tau_Y$, we have $f^{-1}[O] \in \tau_X$.

%=================================   SUBSECTION   =================================%

\subsection{Initial and final topology} \label{s:init-topology}

In this subsection we define the notions of initial, final, product, coproduct, discrete, indiscrete, induced and quotient topology.
As for preordered sets, the motivation for most of this terminology will become clear after the discussion on topological functors below (\cref{s:topological-functors}).

Given a set $X$ and a class-indexed family $(f_i \colon X \to A_i)_{i \in I}$ of functions from $X$ to the underlying set of a topological space $(A_i, \tau_i)$, we define the \emph{initial topology}\index{initial!topology}\index{topology!initial} on $X$ as the smallest topology $\tau$ on $X$ that makes each $f_i$ continuous, i.e.\ the topology generated by $\{f_i^{-1}[O] \mid i \in I, O \in \tau_i \}$.

Given a set $X$ and a class-indexed family $(f_i \colon A_i \to X)_{i \in I}$ of functions from $X$ to the underlying set of a topological space $(A_i, \les_i)$, we define the \emph{final topology}\index{final!topology}\index{topology!final} on $X$ as the greatest topology $\tau$ on $X$ that makes each $f_i$ continuous, i.e.\
\[
	S \in \tau \ \Longleftrightarrow \  \forall i \in I \ f_i^{-1}[S] \in \tau_i.
\]

Given a family $(X_i, \tau_i)_{i \in I}$ (with $I$ a set) of topological spaces, the \emph{product topology}\index{product!topology}\index{topology!product} on the set-theoretic product $X \df \prod_{i \in I} X_i$ is the initial topology on $X$ with respect to the family of projections $(\pi_i \colon X \to X_i)_{i \in I}$, i.e.\ the topology generated by $\{\pi_i^{-1}[O] \mid i \in I, O \in \tau_i \}$.
Unless otherwise specified, it is understood that the set $\prod_{i \in I} X_i$, when regarded as a topological space, is equipped with the product topology.

The \emph{coproduct topology}\index{coproduct!topology}\index{topology!coproduct} on the set-theoretic coproduct $X \df \sum_{i \in I} X_i$ is the final topology on $X$ with respect to the family of coproduct injections $(\iota_i \colon X_i \hookrightarrow X)_{i \in I}$, i.e.\
\[
	S \in \tau \ \Longleftrightarrow \ \forall i \in I \ \iota_i^{-1}[S] \in \tau_i.
\]

Every set $X$ carries two canonical topology: the \emph{discrete topology}\index{discrete!topology}\index{topology!discrete}, consisting of all subsets of $X$, and the \emph{indiscrete topology}\index{indiscrete!topology}\index{topology!indiscrete}, consisting of $\emptyset$ and $X$.

Let $(X, \tau_X)$ be a topological space, and let $\iota \colon Y \hookrightarrow X$ be an injective function.
The \emph{induced topology}\index{induced!topology}\index{topology!induced} on $Y$ is the initial topology $\tau_Y$ on $Y$ with respect to $\iota$, i.e.\ 
\[
	S \in \tau_Y \ \Longleftrightarrow \ \exists O \in \tau_X :\ \iota^{-1}[O] = S.
\]

Let $(X, \tau_X)$ be a topological space, and let $q \colon X \twoheadrightarrow Y$ be a surjective function.
The \emph{quotient topology}\index{quotient!topology}\index{topology!quotient} on $Y$ is the final topology $\tau_Y$ on $Y$ with respect to $q$, i.e.\ 
\[
	S \in \tau_Y \ \Longleftrightarrow \ q^{-1}[S] \in \tau_X.
\]

Unless otherwise specified, it is understood that subsets and quotient sets of a topological space, when regarded as a topological space, are equipped with the induced and quotient topology, respectively.

%=================================   SUBSECTION   =================================%

\subsection{Compact Hausdorff spaces}

We recall that a topological space $X$ is said to be \emph{compact}\index{topological space!compact} if each open cover of $X$ has a finite subcover, and \emph{Hausdorff}\index{topological space!Hausdorff} if, for all distinct $x, y \in X$, there exist disjoint open sets $U$ and $V$ in $X$ with $x \in U$ and $y \in V$.
We recall some basic facts about compact Hausdorff spaces.

\begin{proposition} [{\cite[Theorem~13.7]{Willard1970}}] \label{p:diag-closed}
	 A topological space $X$ is Hausdorff if, and only if, its diagonal $\{(x,x) \mid x \in X\}$ is a closed subspace of $X \times X$.
\end{proposition}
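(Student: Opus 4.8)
The plan is to prove the two implications separately, in both cases exploiting the fact that rectangles $U \times V$ with $U$ and $V$ open in $X$ form a basis for the product topology on $X \times X$. Throughout, write $\Delta \df \{(x,x) \mid x \in X\}$ for the diagonal, so that the claim is that $X$ is Hausdorff if, and only if, $\Delta$ is closed in $X \times X$.

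For the forward implication, I would assume $X$ is Hausdorff and show that the complement $(X \times X) \setminus \Delta$ is open by producing, around each of its points, an open neighbourhood contained in it. Given $(x,y) \notin \Delta$ we have $x \neq y$, so by the Hausdorff property there are disjoint open sets $U \ni x$ and $V \ni y$. Then $U \times V$ is an open neighbourhood of $(x,y)$, and it misses $\Delta$: any element $(z,z) \in U \times V$ would force $z \in U \cap V = \emptyset$, which is impossible. Hence every point of $(X \times X) \setminus \Delta$ has an open neighbourhood avoiding $\Delta$, so the complement of $\Delta$ is open and $\Delta$ is closed.

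For the converse, I would assume $\Delta$ is closed and take distinct $x, y \in X$. Then $(x,y)$ lies in the open set $(X \times X) \setminus \Delta$, so by the definition of the product topology there is a basic open rectangle with $(x,y) \in U \times V \subseteq (X \times X) \setminus \Delta$. The sets $U \ni x$ and $V \ni y$ are open and disjoint: if some $z$ belonged to $U \cap V$, then $(z,z)$ would lie simultaneously in $U \times V$ and in $\Delta$, contradicting $U \times V \subseteq (X \times X) \setminus \Delta$. This yields exactly the separating pair of open sets required for $X$ to be Hausdorff.

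There is no genuine obstacle here: the argument is a direct unwinding of the definitions, and the only external ingredient is the description of a basis for the product topology, which is standard. The two directions are essentially mirror images of each other, each converting the pointwise separation of $x$ from $y$ into the separation of the point $(x,y)$ from $\Delta$ by an open rectangle, and back again.
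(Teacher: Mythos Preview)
Your argument is correct and is exactly the standard proof of this fact. The paper does not supply its own proof here but simply cites \cite[Theorem~13.7]{Willard1970}, and what you have written is essentially the argument found there.
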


\begin{proposition} [{\cite[Theorem~17.7]{Willard1970}}] \label{p:image-of-compact}
	The image of a compact space under a continuous map is compact.
\end{proposition}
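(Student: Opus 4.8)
The plan is to use the standard ``pull back the cover, extract a finite subcover, push it forward'' argument, phrased in terms of open sets of the codomain rather than the subspace topology on the image. Recall the equivalent formulation of compactness for a subspace: a subset $A \seq Y$ is compact (in the induced topology) precisely when every family of open subsets of $Y$ whose union contains $A$ admits a finite subfamily whose union still contains $A$. Reducing to this formulation at the outset avoids carrying the induced topology through the argument and is the only point that deserves a moment's care.

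So let $f \colon X \to Y$ be continuous with $X$ compact, and let $(O_i)_{i \in I}$ be a family of open subsets of $Y$ with $f[X] \seq \bigcup_{i \in I} O_i$. First I would observe that $(f^{-1}[O_i])_{i \in I}$ is an open cover of $X$: each $f^{-1}[O_i]$ is open by continuity of $f$, and every $x \in X$ satisfies $f(x) \in f[X] \seq \bigcup_{i \in I} O_i$, so that $x \in f^{-1}[O_i]$ for some $i \in I$.

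Next, I would invoke the compactness of $X$ to extract a finite subset $J \seq I$ with $X = \bigcup_{i \in J} f^{-1}[O_i]$. Finally I would verify that $(O_i)_{i \in J}$ already covers $f[X]$: any $y \in f[X]$ has the form $y = f(x)$ for some $x \in X$, and since $x \in f^{-1}[O_i]$ for some $i \in J$, we obtain $y = f(x) \in O_i$. Hence $f[X] \seq \bigcup_{i \in J} O_i$ with $J$ finite, which is exactly what the compactness of $f[X]$ requires.

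I do not expect any genuine obstacle here: the statement is an immediate consequence of the definitions of continuity (preimages of opens are open) and compactness (covers admit finite subcovers). The only mildly delicate step is the translation between covers of $f[X]$ by opens of the ambient space $Y$ and covers by opens of the subspace $f[X]$ itself, and this is routine.
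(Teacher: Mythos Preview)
Your argument is correct and is exactly the standard proof. The paper itself does not supply a proof of this proposition: it simply records the statement with a citation to \cite[Theorem~17.7]{Willard1970}, so there is nothing to compare beyond noting that your write-up is the textbook argument that reference would provide.
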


\begin{proposition}[{\cite[Theorem~17.5.b]{Willard1970}}] \label{p:compact-in-Haus}
	A compact subset of a Hausdorff space is closed.
\end{proposition}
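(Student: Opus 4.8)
The plan is to show that the complement of the compact subset is open by producing, around each point outside it, an open neighbourhood that avoids the subset entirely. So let $X$ be a Hausdorff space and let $K \seq X$ be compact; the goal is to prove that $X \setminus K$ is open. First I would fix an arbitrary point $x \in X \setminus K$. For each $y \in K$ we have $x \neq y$, so the Hausdorff property (exactly as recalled before \cref{p:diag-closed}) supplies disjoint open sets $U_y \ni x$ and $V_y \ni y$. The family $\{V_y \mid y \in K\}$ is then an open cover of $K$.

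Next I would invoke compactness of $K$ to extract a finite subcover $V_{y_1}, \dots, V_{y_n}$, and set $U \df U_{y_1} \cap \dots \cap U_{y_n}$. Being a \emph{finite} intersection of open sets, $U$ is open, and it contains $x$. Since each $U_{y_i}$ is disjoint from the corresponding $V_{y_i}$, the set $U$ is disjoint from $V_{y_1} \cup \dots \cup V_{y_n}$, which contains $K$; hence $U \seq X \setminus K$. This exhibits $x$ as an interior point of $X \setminus K$. As $x \in X \setminus K$ was arbitrary, every point of $X \setminus K$ is interior, so $X \setminus K$ is open and $K$ is therefore closed.

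The one point requiring care is the finiteness of the subcover: an arbitrary intersection of the $U_y$ over all $y \in K$ need not be open, so it is precisely the compactness of $K$ that lets us replace that intersection by a finite one and thereby keep $U$ open. Beyond this, the argument is purely a matter of bookkeeping and presents no real obstacle. As an alternative route, one could instead deduce the claim from \cref{p:diag-closed} via a product-space argument, but the direct neighbourhood construction above is both shorter and more self-contained.
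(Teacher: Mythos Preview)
Your argument is correct and is exactly the standard proof of this fact (and indeed the one given in Willard's Theorem~17.5). The paper itself does not supply a proof at all: the proposition is stated with a bare citation to \cite[Theorem~17.5.b]{Willard1970}, so there is nothing to compare your approach against beyond noting that you have filled in what the paper left to the reference.
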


\begin{proposition}[{\cite[Theorem~17.5.a]{Willard1970}}] \label{p:closed-in-compact}
	Every closed subspace of a compact space is compact.
\end{proposition}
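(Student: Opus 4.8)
The plan is to reduce compactness of the closed subspace to compactness of the ambient space, via the standard passage between open covers of the subspace and open covers of $X$. So let $X$ be compact and let $C \seq X$ be closed; the crucial consequence of closedness that I will exploit is that $X \setminus C$ is open in $X$.

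First I would take an arbitrary cover of $C$ by sets open in the subspace topology, say $C = \bigcup_{i \in I} U_i$ with each $U_i$ open in $C$. By the definition of the induced topology (\cref{s:init-topology}), each such $U_i$ is of the form $U_i = V_i \cap C$ for some $V_i$ open in $X$. I would then enlarge this to a cover of the whole space: the family $\{V_i\}_{i \in I} \cup \{X \setminus C\}$ consists of open subsets of $X$, and it covers $X$, since the $V_i$ already cover $C$ while $X \setminus C$ accounts for every point outside $C$. Applying the defining finite-subcover property of the compact space $X$, I would extract a finite subcover, indexed by some finite $F \seq I$ and possibly including the extra set $X \setminus C$.

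Finally I would discard $X \setminus C$ from this finite subcover: as it contains no point of $C$, the finite family $\{V_i\}_{i \in F}$ still covers $C$, and hence $\{U_i\}_{i \in F} = \{V_i \cap C\}_{i \in F}$ is a finite subcover of the original cover of $C$. This exhibits $C$ as compact, completing the argument. I do not anticipate any genuine obstacle: the only two points meriting care are the correct use of the induced topology to realise each subspace-open $U_i$ as a restriction $V_i \cap C$ of an $X$-open set, and the observation that closedness of $C$ is precisely what makes $X \setminus C$ available as the additional open set needed to complete the cover of $X$.
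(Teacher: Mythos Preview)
Your proof is correct and is the standard argument. The paper does not supply its own proof of this proposition: it simply cites \cite[Theorem~17.5.a]{Willard1970}, so there is nothing further to compare.
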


\begin{proposition}\label{p:comp-Haus-closed}
	Every continuous map between compact Hausdorff spaces is closed.
\end{proposition}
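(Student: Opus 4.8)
The plan is to show that $f[C]$ is closed in $Y$ for every closed subset $C$ of $X$, which is exactly what it means for the continuous map $f \colon X \to Y$ to be closed. The strategy is a direct three-step chaining of the compactness results just established, exploiting compactness of the domain to control arbitrary closed sets and the Hausdorff property of the codomain to convert compactness back into closedness.

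First I would fix a continuous map $f \colon X \to Y$ between compact Hausdorff spaces and an arbitrary closed subset $C \seq X$. Since $X$ is compact and $C$ is a closed subspace of it, \cref{p:closed-in-compact} yields that $C$ is compact. Next, as $f$ is continuous, \cref{p:image-of-compact} gives that the image $f[C]$ is a compact subspace of $Y$. Finally, since $Y$ is Hausdorff, \cref{p:compact-in-Haus} ensures that the compact subset $f[C]$ is closed in $Y$. As $C$ was an arbitrary closed subset of $X$, this shows that $f$ maps closed sets to closed sets, i.e.\ $f$ is closed.

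There is essentially no obstacle here: the statement is a formal consequence of the three preceding propositions, and the only care needed is to invoke them in the correct order---using compactness of the \emph{domain} to pass from "closed" to "compact", and the Hausdorff property of the \emph{codomain} to pass back from "compact" to "closed". I would not expect any subtlety beyond correctly matching each hypothesis (domain compact, map continuous, codomain Hausdorff) to the proposition that consumes it.
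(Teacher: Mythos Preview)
Your proof is correct and follows exactly the same approach as the paper: chain \cref{p:closed-in-compact}, \cref{p:image-of-compact}, and \cref{p:compact-in-Haus} in that order to pass from closed to compact in the domain, push forward along $f$, and return from compact to closed in the Hausdorff codomain.
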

\begin{proof}
	Let $f \colon X \to Y$ be a continuous map between compact Hausdorff spaces, and let $K$ be a closed subspace of $X$.
	Then, by \cref{p:closed-in-compact}, $K$ is compact.
	Then, by \cref{p:image-of-compact}, $f[K]$ is compact.
	Then, by \cref{p:compact-in-Haus}, $f[K]$ is closed.
\end{proof}

\begin{proposition} [{\cite[Theorem~17.14]{Willard1970}}]\label{p:comp-Haus-homeo}
	Every continuous bijection between compact Hausdorff spaces is a homeomorphism.
\end{proposition}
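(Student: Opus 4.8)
The plan is to reduce the statement to the closedness property just established in \cref{p:comp-Haus-closed}. Let $f \colon X \to Y$ be a continuous bijection between compact Hausdorff spaces, and write $g \df f^{-1} \colon Y \to X$ for its set-theoretic inverse. Since $f$ is already assumed to be continuous and bijective, to conclude that $f$ is a homeomorphism it suffices to show that $g$ is continuous.

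To prove continuity of $g$, I would argue via closed sets, using that a function is continuous if and only if the preimage of every closed set is closed. So fix a closed subset $C \seq X$; the goal is to show that $g^{-1}[C]$ is closed in $Y$. The key observation is that, because $g = f^{-1}$ is the inverse bijection of $f$, one has the identity $g^{-1}[C] = f[C]$: for $y \in Y$ we have $y \in g^{-1}[C]$ iff $g(y) \in C$ iff $f^{-1}(y) \in C$ iff $y \in f[C]$.

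It then remains to see that $f[C]$ is closed in $Y$, and this is exactly where I would invoke \cref{p:comp-Haus-closed}: every continuous map between compact Hausdorff spaces is closed, so $f$ carries the closed set $C$ to the closed set $f[C]$. Hence $g^{-1}[C] = f[C]$ is closed in $Y$, the map $g$ is continuous, and therefore $f$ is a homeomorphism.

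There is essentially no obstacle once \cref{p:comp-Haus-closed} is in hand; the only point requiring care is the set-theoretic identity $g^{-1}[C] = f[C]$, which relies on $f$ being a bijection, so that the preimage under the inverse coincides with the direct image under $f$. Without bijectivity this identity can fail, which is precisely why the hypothesis that $f$ is a bijection—rather than merely a continuous closed surjection—is what upgrades the conclusion from \emph{closed map} to \emph{homeomorphism}.
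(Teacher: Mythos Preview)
Your proof is correct. The paper does not give its own proof of this proposition; it simply cites \cite[Theorem~17.14]{Willard1970} and moves on. Your argument is the standard one and, fittingly, uses \cref{p:comp-Haus-closed}, which the paper proves immediately before this statement.
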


\begin{theorem} [{Thychonoff's theorem, \cite[Theorem~17.8]{Willard1970}}]\index{Thyconoff's theorem} \label{t:Thych}
	A product of compact spaces is compact.
\end{theorem}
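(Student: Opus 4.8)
The plan is to prove Tychonoff's theorem via the characterisation of compactness in terms of ultrafilter convergence, which sits comfortably with the standing assumption of the axiom of choice. First I would establish (or recall) the lemma that a topological space $X$ is compact if, and only if, every ultrafilter on $X$ converges to some point of $X$. The forward direction uses that in a compact space every filter has a cluster point, together with the observation that an ultrafilter clusters at a point precisely when it converges there; the converse uses the finite-intersection-property formulation of compactness, extending a family with the FIP to an ultrafilter (by the ultrafilter lemma) which must then converge to any would-be cluster point.

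Granting this, let $X \df \prod_{i \in I} X_i$ with each $X_i$ compact, and let $\UU$ be an ultrafilter on $X$. For each $i \in I$, the image $\pi_{i*}\UU \df \{A \seq X_i \mid \pi_i^{-1}[A] \in \UU\}$ is an ultrafilter on $X_i$, since the direct image of an ultrafilter along any map is again an ultrafilter (for every $A \seq X_i$, exactly one of $\pi_i^{-1}[A]$ and its complement lies in $\UU$). By compactness of $X_i$, this ultrafilter converges to some point $x_i \in X_i$; choosing one such $x_i$ for every $i$ (here the axiom of choice enters a second time) produces a candidate limit $x \df (x_i)_{i \in I} \in X$.

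It then remains to verify that $\UU$ converges to $x$ in the product topology. A basic open neighbourhood of $x$ has the form $\bigcap_{i \in F} \pi_i^{-1}[O_i]$ with $F \seq I$ finite and each $O_i$ an open neighbourhood of $x_i$. Since $\pi_{i*}\UU$ converges to $x_i$, each $O_i$ belongs to $\pi_{i*}\UU$, that is, $\pi_i^{-1}[O_i] \in \UU$; as an ultrafilter is closed under finite intersections, the whole basic neighbourhood lies in $\UU$. Hence every neighbourhood of $x$ belongs to $\UU$, so $\UU$ converges to $x$, and by the characterisation $X$ is compact.

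The only genuinely delicate point is the ultrafilter characterisation of compactness and its reliance on the ultrafilter lemma; everything after that is routine bookkeeping in the product topology. An alternative route avoiding ultrafilters is Alexander's subbase lemma applied to the subbasis $\{\pi_i^{-1}[O] \mid i \in I,\ O \text{ open in } X_i\}$: if such a subbasic cover had no finite subcover, then compactness of each factor forces a point $x_i \in X_i$ missed by the $i$-th slice of the cover, and $(x_i)_{i \in I}$ is then covered by no member of the family---a contradiction. This merely shifts the set-theoretic burden onto Alexander's lemma (itself a Zorn's-lemma argument) rather than the ultrafilter lemma, but is otherwise entirely parallel.
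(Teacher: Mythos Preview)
Your argument is correct: the ultrafilter characterisation of compactness together with the stability of ultrafilters under pushforward gives Tychonoff's theorem cleanly, and your alternative via Alexander's subbase lemma is equally standard. The paper, however, does not prove this statement at all---it merely records it with a citation to \cite[Theorem~17.8]{Willard1970}, as it belongs to the background material in \cref{chap:background}. So there is nothing to compare on the paper's side; your proposal simply supplies what the paper deliberately omits.
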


%%%%%%%%%%%%%%%%%%%%%%%%%%%%%%%%%%%   SECTION   %%%%%%%%%%%%%%%%%%%%%%%%%%%%%%%%%%%%

\section{Categories\index{category}}

We assume that the reader has basic knowledge of categories, functors, and adjunctions, for which we refer to \cite{MacLane1998}.

Throughout this manuscript all categories are assumed to be \emph{locally small}\index{category!locally small}\index{locally small|see{category, locally small}}, i.e.\ given two objects $X$ and $Y$, the morphisms from $X$ to $Y$ form a set.

For infinite products, we use the notation $\prod_{i\in I}{X_i}$, or $X_1 \times \dots \times X_n$ in case of a finite index set.
For infinite coproducts, we use the notation $\sum_{i\in I}{X_i}$, or $X_1 + \dots + X_n$ in case of a finite index set.

\begin{proposition}[{\cite[Theorem~1]{MacLane1998}}] \label{p:adjoint-preservation}
	Left adjoints preserve colimits and right adjoints preserve limits.
\end{proposition}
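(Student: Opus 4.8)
The plan is to fix an adjunction $F \dashv G$ with $F \colon \CC \to \DD$ the left adjoint and $G \colon \DD \to \CC$ the right adjoint, and to prove the two halves of the statement separately, deducing the second from the first by duality. Concretely, I would first prove that $G$ preserves limits; then, observing that $F \dashv G$ induces an adjunction $G^\opcat \dashv F^\opcat$ between the opposite categories $\DD^\opcat$ and $\CC^\opcat$, and that limits in $\CC^\opcat$ (resp.\ $\DD^\opcat$) are exactly colimits in $\CC$ (resp.\ $\DD$), the fact that the right adjoint $F^\opcat$ preserves limits translates directly into the fact that $F$ preserves colimits. This way only one of the two dual assertions requires a genuine argument.

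For the limit-preservation half, let $D \colon \JJ \to \DD$ be a diagram that admits a limit $L$ in $\DD$, with limiting cone $(\pi_j \colon L \to D_j)_{j \in \JJ}$. Applying $G$ produces a cone $(G\pi_j \colon GL \to GD_j)_{j}$ over $GD$ in $\CC$; the task is to prove that this cone is limiting. The main tool is the defining natural bijection $\DD(Fc, d) \cong \CC(c, Gd)$ of the adjunction, together with the fact that a covariant hom-functor preserves all limits in its argument. For every object $c$ of $\CC$ I would chain natural isomorphisms
\[
\CC(c, GL) \cong \DD(Fc, L) \cong \lim_{j} \DD(Fc, D_j) \cong \lim_{j} \CC(c, GD_j) \cong \CC\!\left(c, \lim_j GD_j\right),
\]
where the first and third steps use the adjunction, the second uses that $\DD(Fc, -)$ preserves the limit $L$, and the last uses that $\CC(c,-)$ sends the limit $\lim_j GD_j$ (computed in $\Set$) to a limit of hom-sets. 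All these bijections are natural in $c$, so by the Yoneda lemma they are induced by a unique isomorphism $GL \cong \lim_j GD_j$ in $\CC$.

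The step that requires the most care is checking that this Yoneda-induced isomorphism is precisely the canonical comparison morphism determined by the cone $(G\pi_j)_j$, rather than some unrelated isomorphism of objects; only then may one conclude that $(GL, (G\pi_j)_j)$ is a limit cone, and not merely that $GL$ is abstractly isomorphic to $\lim_j GD_j$. I would verify this by tracking the component at the identity $\mathrm{id}_{GL}$ through the chain of bijections and confirming that each projection implicit on the right-hand side corresponds to post-composition with $G\pi_j$; this in turn reduces to the naturality of the unit and counit of the adjunction and to the triangle identities. Granting this compatibility, the comparison morphism is invertible, so $G$ preserves the chosen limit; since $D$ was arbitrary, $G$ preserves all limits, and the dual argument of the first paragraph completes the proof for $F$.
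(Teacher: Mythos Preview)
Your argument is correct and is the standard proof via the adjunction hom-set bijection and Yoneda, including the care you take to identify the Yoneda-induced isomorphism with the canonical comparison map. The paper, however, does not give its own proof of this proposition: it simply records the result with a citation to Mac~Lane, so there is no in-text argument to compare against. Your write-up is thus more detailed than what the paper provides, but it matches the proof one finds in the cited reference.
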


\begin{definition} \label{d:reflective}
	A \emph{reflective full subcategory}\index{subcategory!full!reflective}\index{reflective|see{subcategory, full, reflective}} of a category $\cat{C}$ is a full subcategory $\cat{D}$ whose inclusion functor $\cat{D} \hookrightarrow \cat{C}$ admits a left adjoint, called the \emph{reflector}\index{reflector}\index{functor!reflector|see{reflector}}.
\end{definition}

\begin{proposition}[{\cite[Propositions~3.5.3 and~3.5.4]{Borceux1994-vol1}}] \label{p:complete-reflective}
	Let $\cat{D}$ be a reflective full subcategory of a category $\cat{C}$.
	If $\cat{C}$ is complete (resp.\ cocomplete), then $\cat{D}$ is complete (resp.\ cocomplete).
\end{proposition}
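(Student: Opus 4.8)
The plan is to exhibit limits and colimits in $\cat{D}$ by computing them in $\cat{C}$ and then comparing along the reflection, using the two halves of \cref{p:adjoint-preservation}. Write $i\colon \cat{D} \hookrightarrow \cat{C}$ for the (fully faithful) inclusion and let $r \dashv i$ be the reflection (\cref{d:reflective}), with unit $\eta\colon \mathrm{id}_{\cat{C}} \Rightarrow i r$.

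For the cocomplete case---the quicker half---suppose $\cat{C}$ is cocomplete and let $F\colon \mathcal{J} \to \cat{D}$ be a diagram. I would form $C \df \operatorname{colim} iF$ in $\cat{C}$ and claim that $r(C)$ is a colimit of $F$ in $\cat{D}$. This follows by Yoneda from the chain of natural isomorphisms
\[
	\cat{D}(r(C), D) \cong \cat{C}(C, iD) \cong \lim_{j} \cat{C}(iF_j, iD) \cong \lim_{j} \cat{D}(F_j, D), \qquad D \in \cat{D},
\]
which uses in turn the adjunction $r \dashv i$, the universal property of $C$, and the fullness of $\cat{D}$.

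For the complete case the real content is to show that $\cat{D}$ is closed under the limits that $\cat{C}$ already supplies. Assume $\cat{C}$ complete and let $(L, (\pi_j)_j)$ be a limit of $iF$ in $\cat{C}$. I would prove that the unit $\eta_L\colon L \to i r(L)$ is an isomorphism; then $L$ lies in the essential image of $i$, and since a fully faithful functor reflects limits, $r(L)$ is a limit of $F$ in $\cat{D}$. To invert $\eta_L$: each $\pi_j$ factors as $\pi_j = i(g_j) \circ \eta_L$ for a unique $g_j\colon r(L) \to F_j$ by the universal property of the unit; the cone $(i(g_j))_j$ over $iF$ then induces a unique $u\colon i r(L) \to L$ with $\pi_j \circ u = i(g_j)$, and comparing with the limiting cone gives $u \circ \eta_L = \mathrm{id}_L$. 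Finally, both $\eta_L \circ u$ and $\mathrm{id}_{i r(L)}$ are endomorphisms of $i r(L) \in \cat{D}$, hence of the form $i(-)$ by fullness, and both restrict to $\eta_L$ after precomposition with $\eta_L$; the uniqueness clause in the universal property of $\eta_L$ then forces $\eta_L \circ u = \mathrm{id}_{i r(L)}$.

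I expect the last step---upgrading the split monomorphism $\eta_L$ to a genuine isomorphism---to be the main obstacle, as it is the only point where both the universal property of the reflection and the fullness of $\cat{D}$ must be exploited together; the cocomplete half, by contrast, reduces to a routine hom-set computation.
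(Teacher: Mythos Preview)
Your argument is correct. Note, however, that the paper does not supply its own proof of this proposition: it is stated as a background fact with a direct citation to Borceux, and no proof appears in the text. Your proof is the standard one (and indeed is the argument given in the cited reference): colimits in $\cat{D}$ are obtained by reflecting colimits in $\cat{C}$, while limits are created by the inclusion, the key point being that the unit at a limit of a $\cat{D}$-valued diagram is invertible.
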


%=================================   SUBSECTION   =================================%

\subsection{Topological functors} \label{s:topological-functors}

We recall basic notions and results concerning topological functors.
For more details, we refer to \cite[Chapter~21]{AdamekHerrlichStrecker2006}.

\begin{definition}
	A \emph{source}\index{source} is a pair $(A,(f_i)_{i\in I})$ consisting of an object $A$ and a family of arrows $f_i\colon A\to A_i$ with domain $A$, indexed by some class $I$.
	The object $A$ is called the \emph{domain of the source}\index{domain of a source}\index{source!domain of|see{domain of a source}} and the family $(A_i)_{i\in I}$ is called the \emph{codomain of the source}\index{codomain of a source}. Whenever convenient we use the notation $(f_i\colon A\to A_i)_{i\in I}$ instead of $(A,(f_i)_{i\in I})$.
\end{definition}

Given a faithful functor $U \colon \cat{A}\to \cat{X}$, given two objects $A$ and $B$ of $\cat{A}$ and an $\cat{X}$-morphism $f\colon U(A)\to U(B)$, we say that \emph{$f$ is an $\cat{A}$-morphism from $A$ to $B$} if there exists a (necessarily unique) $\cat{A}$-morphism $\overline{f}\colon A \to B$ such that $U(\overline{f})=f$.
When $A$ and $B$ are understood, we simply say that \emph{$f$ is an $\cat{A}$-morphism}.

\begin{definition} \label{d:topological-functor}
	Let $U \colon \cat{A} \to \cat{X}$ be a faitfhul functor.
	\begin{enumerate}[wide]
		
		\item A \emph{$U$-structured source}\index{source!structured}\index{structured!source|see{source, structured}} is a pair $(X,(f_i,A_i)_{i\in I})$ consisting of an object $X$ of $\cat{X}$ and a family of pairs $(f_i,A_i)$, indexed by some class $I$, consisting of an object $A_i$ of $\cat{A}$ and an $\cat{X}$-morphism $f_i\colon X\to U(A_i)$.
		Whenever convenient, we use the notation $(f_i\colon X\to U(A_i))_{i\in I}$ instead of $(X,(f_i,A_i)_{i\in I})$.
		
		\item A source $(f_i\colon A\to A_i)_{i\in I}$ in $\cat{A}$ is said to be \emph{$U$-initial}\index{initial!source} provided that, for each object $C$ of $\cat{A}$, an $\cat{X}$-morphism $h\colon U(C)\to U(A)$ is an $\cat{A}$-morphism if, and only if, for all $i\in I$, the composite $U(C) \xrightarrow{h} U(A) \xrightarrow{G(f_i)} U(A_i)$ is an $\cat{A}$-morphism.
		As a particular case, an $\cat{A}$-morphism $f\colon A\to B$ is \emph{$U$-initial}\index{initial!morphism}\index{morphism!initial} provided that, for each object $C$ of $\cat{A}$, an $\cat{X}$-morphism $h\colon U(C)\to U(A)$ is an $\cat{A}$-morphism if, and only if, the composite $U(C) \xrightarrow{h} U(A) \xrightarrow{G(f)} U(B)$ is an $\cat{A}$-morphism.
		
		\item A \emph{lift}\index{lift of a structured source} of a $U$-structured source $(f_i\colon X\to U(A_i))_{i\in I}$ is a source $(\bar{f}_i\colon A\to A_i)_{i\in I}$ such that $U(A)=X$ and $U(\bar{f}_i)=f_i$.
		
		\item We say that $U$ is \emph{topological}\index{functor!topological} if every $U$-structured source has a unique $U$-initial lift.
	\end{enumerate}
\end{definition}

Topological functors are sometimes introduced with a definition that does not assume faithfulness \cite[Definition~21.1]{AdamekHerrlichStrecker2006}.
However, faithfulness is a consequence \cite[Theorem~21.3]{AdamekHerrlichStrecker2006}.

We let $\Set$ denote the category of sets and functions.

\begin{examples} \label{exs:topological-functors!}
	\begin{description}[wide]
		\item[Topological spaces.] Let $\Top$ denote the category of topological spaces and continuous functions.
		The forgetful functor $\Top \to \Set$ is topological: the unique initial lift of a family of functions $(f_i \colon X \to A_i)$ is obtained by providing $X$ with the initial topology (see \cref{s:init-topology}).
		
		\item[Preordered sets.] Let $\Preord$ denote the category of preordered sets and order-preserving functions.
		The forgetful functor $\Preord \to \Set$ is topological: the unique initial lift of a family of functions $(f_i \colon X \to A_i)$ is obtained by providing $X$ with the initial preorder  (see \cref{s:init-preorder}).
	\end{description}
\end{examples}

\begin{definition}
	Let $U \colon \cat{A} \to \cat{X}$ be a faithful functor.
	Given an object $X$ of $\cat{X}$, we call \emph{fibre of $X$}\index{fibre} the preordered class consisting of all objects $A$ of $\cat{A}$ with $U(A) = X$ ordered by:
	\[
		A \les B \text{ if, and only if, }1_X \colon U(A) \to U(B) \text{ is an $\cat{A}$-morphism.}
	\]
\end{definition}

\begin{examples}
	\begin{description}[wide]
		\item[Topological spaces.]
		The fibre of a set $X$ with respect to the forgetful functor $\Top \to \Set$ is the set of topologies on $X$, ordered by reverse inclusion.
		
		\item[Preordered sets.]
		The fibre of a set $X$ with respect to the forgetful functor $\Preord \to \Set$ is the set of preorders on $X$, ordered by inclusion.
	\end{description}
\end{examples}

\begin{proposition}
	Let $U \colon \cat{A} \to \cat{X}$ be a topological functor.
	Then, the fibre of every object of $\cat{X}$ is a partially ordered class in which every subclass has a join and a meet.
\end{proposition}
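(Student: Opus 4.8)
The plan is to verify three things in turn: that the fibre of $X$ is genuinely anti-symmetric (hence a partially ordered, not merely preordered, class), that every subclass admits a meet, and then that joins follow formally. Throughout I write the order on the fibre as $\les$, so that $A \les B$ means precisely that $1_X \colon U(A) \to U(B)$ is an $\cat{A}$-morphism.

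For anti-symmetry, suppose $A \les B$ and $B \les A$. Then $1_X \colon U(A) \to U(B)$ lifts to some $f \colon A \to B$ and $1_X \colon U(B) \to U(A)$ lifts to some $g \colon B \to A$. I would consider the $U$-structured source consisting of the single morphism $1_X \colon X \to U(A)$. The identity $1_A \colon A \to A$ is a $U$-initial lift of it, since the initiality condition reduces to the tautology that $h$ is an $\cat{A}$-morphism if, and only if, $U(1_A) \circ h = h$ is an $\cat{A}$-morphism. The key claim is that $g \colon B \to A$ is \emph{also} $U$-initial: for any $h \colon U(C) \to U(B)$, pre-composing a lift of $h$ with $f$ or with $g$ shows that $h$ is an $\cat{A}$-morphism into $B$ exactly when it is one into $A$, the two directions of this equivalence using the two inequalities $A \les B$ and $B \les A$. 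Thus $1_A$ and $g$ are both $U$-initial lifts of the same structured source, and uniqueness of initial lifts forces their domains to coincide, i.e.\ $A = B$. This is the step where uniqueness (not mere existence) is indispensable, and I expect it to be the only genuinely delicate point.

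For meets, let $(A_i)_{i \in I}$ be any subclass of the fibre, indexed by a class as the definition of topological functor permits. I would take the unique $U$-initial lift $(\bar f_i \colon A \to A_i)_{i \in I}$ of the $U$-structured source $(1_X \colon X \to U(A_i))_{i \in I}$, so that $U(A) = X$ and each $\bar f_i$ lifts $1_X$; hence $A \les A_i$ for every $i$, and $A$ is a lower bound. If $B$ is another lower bound, so $B \les A_i$ for all $i$, then $U$-initiality of the source applied to the $\cat{X}$-morphism $1_X \colon U(B) \to U(A)$ — whose composite with each $\bar f_i$ is again $1_X \colon U(B) \to U(A_i)$, an $\cat{A}$-morphism by hypothesis — yields that $1_X \colon U(B) \to U(A)$ is an $\cat{A}$-morphism, i.e.\ $B \les A$. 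Therefore $A$ is the meet of $(A_i)_{i \in I}$. Taking $I = \emptyset$ gives a top element $\top$, the lift of the empty structured source.

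Finally, joins come for free by the standard order-theoretic device: for a subclass $S$, the collection of its upper bounds is nonempty (it contains $\top$), and I would set $\bigvee S$ to be the meet of all upper bounds of $S$, which exists by the construction just given. A one-line check confirms that this element is an upper bound of $S$ and lies below every upper bound, hence is the least upper bound. The meet construction is a direct unwinding of $U$-initiality, and this passage from meets to joins is purely formal, so the whole difficulty is concentrated in the anti-symmetry argument above.
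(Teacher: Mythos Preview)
Your proof is correct. The paper's own proof simply cites two results from Ad\'{a}mek--Herrlich--Strecker (Propositions~21.5 and~21.11), so you have effectively unpacked those citations into a self-contained argument.

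One minor difference worth noting: for joins, you derive them from meets via the order-theoretic formula $\bigvee S = \bigwedge\{\text{upper bounds of }S\}$, which is perfectly valid since you have meets of arbitrary subclasses. The paper, via the cited reference and the remark immediately following the proposition, instead appeals to the topological duality theorem (that $U^\opcat$ is also topological), obtaining joins directly as domains of $U$-final lifts of structured sinks $(1_X \colon U(A_i) \to X)_{i \in I}$. Your route avoids invoking that duality and keeps everything on the initial-lift side; the paper's route is symmetric and perhaps more conceptual. Both buy the same conclusion with comparable effort.
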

\begin{proof}
	Each fibre is a partially ordered class by the implication (1) $\Rightarrow$ (2) in \cite[Proposition~21.5]{AdamekHerrlichStrecker2006}, and every subclass has a join and a meet by \cite[Proposition~21.11]{AdamekHerrlichStrecker2006}.
\end{proof}

The dual notion of source is \emph{sink}\index{sink}\index{sink!structured}\index{structured!sink|see{sink, structured}}, and the dual notion of initial is \emph{final}\index{final!sink}\index{final!morphism}\index{morphism!final}.

\begin{remark}
	Let $U\colon \cat{A} \to \cat{X}$ be a topological functor.
	Then, the unique initial lift of a $U$-structured source $(f_i\colon X\to U(A_i))_{i\in I}$ is the smallest element $A$ in the fibre of $X$ such that, for every $i \in I$, $f_i$ is an $\cat{A}$-morphism from $A$ to $A_i$.
	Similarly, every $U$-structured sink $(f_i\colon U(A_i) \to X)_{i\in I}$ admits a unique $U$-final lift, i.e.\ the greatest element $A$ in the fibre of $X$ such that, for every $i \in I$, $f_i$ is an $\cat{A}$-morphism from $A_i$ to $A$.
	In fact, for any topological functor $U\colon \cat{A} \to \cat{X}$, also the functor $U^\opcat \colon \cat{A}^\opcat \to \cat{X}^\opcat$ is topological {\cite[Topological duality theorem~21.9]{AdamekHerrlichStrecker2006}}.
\end{remark}

Let $U \colon \cat{A} \to \cat{X}$ be a faithful functor, and let $A$ be an object of $\cat{A}$.
The object $A$ is called \emph{discrete}\index{discrete!object}\index{object!discrete} whenever, for each object $B$, every $\cat{X}$-morphism $U(A) \to U(B)$ is an $\cat{A}$-morphism.
The object $A$ is called \emph{indiscrete}\index{indiscrete!object}\index{object!indiscrete} whenever, for each object $B$, every $\cat{X}$-morphism $U(B) \to U(A)$ is an $\cat{A}$-morphism.

\begin{proposition}[{\cite[Proposition~21.11]{AdamekHerrlichStrecker2006}}]
	The smallest (resp.\ largest) element of each fibre of a topological functor is discrete (resp.\ indiscrete).
\end{proposition}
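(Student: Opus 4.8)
The plan is to produce the two extremal objects of a fibre by hand, as lifts of the \emph{empty} structured sink and source, and to read off discreteness and indiscreteness directly from the defining properties of a topological functor. Since the previous proposition tells us that each fibre is a partially ordered class, a smallest and a largest element are unique once they exist; so it suffices to exhibit a discrete object that sits below everything in the fibre, and an indiscrete one that sits above everything.

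First I would handle the smallest element. Fix an $\cat{X}$-object $X$. Because $U^\opcat$ is again topological (the topological duality theorem), every $U$-structured sink with codomain $X$ has a unique $U$-final lift; apply this to the empty sink to obtain an $\cat{A}$-object $D$ with $U(D) = X$. Unwinding the definition of $U$-final for an empty index class: for every $\cat{A}$-object $C$, a $\cat{X}$-morphism $g \colon U(D) \to U(C)$ is an $\cat{A}$-morphism if, and only if, the vacuously-quantified family of composites consists of $\cat{A}$-morphisms, which is automatic. Hence every $\cat{X}$-morphism out of $U(D)$ is an $\cat{A}$-morphism, which is precisely the definition of $D$ being discrete.

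Next I would verify that this $D$ is the smallest element of the fibre. For any $\cat{A}$-object $B$ with $U(B) = X$, the identity $1_X \colon U(D) \to U(B)$ is a $\cat{X}$-morphism, so discreteness of $D$ upgrades it to an $\cat{A}$-morphism; by the definition of the fibre order this says exactly $D \les B$. Thus $D$ lies below every object of the fibre, and by uniqueness of a smallest element it \emph{is} the smallest element, which is therefore discrete. The largest case is formally dual: replacing the empty sink by the empty source and finality by the defining initiality property of $U$, one gets the $U$-initial lift $I$ of the empty source on $X$, whose initiality (again read off for an empty index class) forces every $\cat{X}$-morphism \emph{into} $U(I)$ to be an $\cat{A}$-morphism, i.e.\ $I$ is indiscrete; and $1_X \colon U(B) \to U(I)$ then witnesses $B \les I$ for all $B$ in the fibre, so $I$ is the largest element.

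I expect the only genuine obstacle to be keeping the dualities and the direction of $\les$ aligned: discreteness is a statement about morphisms \emph{out} of the object and must be matched with \emph{finality} of the empty sink and with the \emph{smallest} fibre element, whereas indiscreteness concerns morphisms \emph{in} and pairs with \emph{initiality} of the empty source and the \emph{largest} element. Getting this correspondence backwards is the easy trap; once it is fixed, and once one invokes the topological duality theorem to guarantee that the empty sink really does have a final lift, every remaining step is immediate from the definitions.
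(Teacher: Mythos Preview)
Your argument is correct and carefully handles the only real pitfall, namely matching discreteness with the final lift of the empty sink (and the smallest fibre element) and indiscreteness with the initial lift of the empty source (and the largest fibre element). The paper itself does not prove this proposition; it merely cites \cite[Proposition~21.11]{AdamekHerrlichStrecker2006}. Your proof is in fact the standard one found there: construct the extremal fibre elements as lifts of the empty source and sink, read off (in)discreteness from the vacuous (co)initiality condition, and use the identity on $X$ to witness the order comparison with every other object in the fibre.
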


\begin{examples}
	\begin{description}[wide]
		\item[Topological spaces.]
		A topological space $(X, \tau)$ is (in)discrete with respect to the forgetful functor $\Top \to \Set$ if, and only if, the topology $\tau$ is (in)discrete (see \cref{s:init-topology}).
		
		\item[Preordered sets.]
		A preordered set $(X, \les)$ is (in)discrete with respect to the forgetful functor $\Preord \to \Set$ if, and only if, the preorder $\les$ is (in)discrete (see \cref{s:init-preorder}).
	\end{description}
\end{examples}

\begin{proposition}[{\cite[Proposition~21.12]{AdamekHerrlichStrecker2006}}] \label{p:adjoints-topological}
	Let $U\colon \cat{A} \to \cat{X}$ be a topological functor.
	\begin{enumerate}[wide]
		\item The functor $U$ has a left adjoint $F \colon \cat{X} \to \cat{A}$ that maps
		\begin{enumerate}
			\item an object $X$ of $\cat{X}$ to the smallest element in its fibre (the discrete object), and 
			\item an $\cat{X}$-morphism $f \colon X \to Y$ to the unique $\cat{A}$-morphism $\bar{f} \colon F(X) \to F(Y)$ such that $U(\bar{f}) = f$.
		\end{enumerate}

		\item The functor $U$ has a right adjoint $G \colon \cat{X} \to \cat{A}$ that maps
		\begin{enumerate}
			\item an object $X$ of $\cat{X}$ to the greatest element in its fibre (the indiscrete object), and 
			\item an $\cat{X}$-morphism $f \colon X \to Y$ to the unique $\cat{A}$-morphism $\bar{f} \colon G(X) \to G(Y)$ such that $U(\bar{f}) = f$.
		\end{enumerate}
	\end{enumerate}
	The functors $F$ and $G$ are full, faithful and injective on objects.
	Moreover, $U \circ F$ and $U \circ G$ are the identity functor on $\cat{X}$.
\end{proposition}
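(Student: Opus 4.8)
The plan is to construct the left adjoint $F$ explicitly from the topological structure of $U$, verify the adjunction through the universal property of discrete objects, and then obtain the right adjoint $G$ by dualising. First I would build $F$ on objects. For an object $X$ of $\cat{X}$, consider the \emph{empty} $U$-structured source on $X$ (indexed by the empty class). Since $U$ is topological, this source has a unique $U$-initial lift, which I take to be $F(X)$; by construction $U(F(X)) = X$. The point is that, for an empty index class, the condition of $U$-initiality reads: for each object $C$, every $\cat{X}$-morphism $h \colon U(C) \to U(F(X))$ is an $\cat{A}$-morphism. This is precisely the discreteness condition, so $F(X)$ is the discrete object, equivalently the smallest element of the fibre of $X$, matching the cited Proposition 21.11.

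Next I would establish the universal property, taking $1_X \colon X \to U(F(X)) = X$ as the unit. Given any object $A$ of $\cat{A}$ and any $\cat{X}$-morphism $f \colon X \to U(A)$, the discreteness of $F(X)$ says exactly that $f$, viewed as a morphism $U(F(X)) \to U(A)$, is an $\cat{A}$-morphism; hence there is a morphism $\bar{f} \colon F(X) \to A$ with $U(\bar{f}) = f$, and this $\bar{f}$ is unique because $U$ is faithful. Since $U(\bar{f}) \circ 1_X = f$, the pair $(F(X), 1_X)$ is a universal arrow from $X$ to $U$, which is exactly the universal mapping property witnessing $F(X)$ as a free object. By the standard characterisation of adjoints, this choice of universal arrows assembles into a functor $F$ left adjoint to $U$: on a morphism $f \colon X \to Y$ one defines $F(f)$ to be the unique lift of $f \colon X \to Y = U(F(Y))$, and functoriality follows from faithfulness together with the identity $U(\bar{g} \circ \bar{f}) = g \circ f = U(\overline{g \circ f})$.

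The remaining formal properties are then immediate consequences of $U \circ F = \mathrm{Id}_{\cat{X}}$, which holds by construction on both objects and morphisms. Injectivity on objects and faithfulness follow at once: if $F(X) = F(X')$ then $X = U(F(X)) = U(F(X')) = X'$, and the same argument on morphisms gives faithfulness. Fullness follows because any $\cat{A}$-morphism $h \colon F(X) \to F(Y)$ satisfies $F(U(h)) = h$, since both are lifts of $U(h)$ and $U$ is faithful; thus $h$ lies in the image of $F$. For the right adjoint $G$ I would invoke the topological duality theorem recalled in the remark above: $U^\opcat \colon \cat{A}^\opcat \to \cat{X}^\opcat$ is again topological, so every $U$-structured sink has a unique $U$-final lift. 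Applying the construction of part (1) to $U^\opcat$ produces a left adjoint to $U^\opcat$, which is precisely a right adjoint $G$ to $U$; dually, $G(X)$ is the $U$-final lift of the empty sink, i.e.\ the indiscrete object (the greatest element of the fibre), with $U \circ G = \mathrm{Id}_{\cat{X}}$ and $G$ full, faithful and injective on objects.

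The main obstacle is conceptual rather than computational: one must recognise that the $U$-initiality of the empty-source lift coincides with the discreteness condition, and that discreteness is in turn exactly the universal mapping property of a free object over $U$. Once this identification of quantifiers is made, everything else is a routine verification resting on the faithfulness of $U$, and the right adjoint requires no new work beyond the duality principle. A secondary point to handle carefully is the existence of the discrete and indiscrete objects, but this is guaranteed because the fibres of a topological functor are partially ordered classes in which every subclass admits a join and a meet, so each fibre has a least and a greatest element.
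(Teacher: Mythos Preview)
Your construction has the roles of discrete and indiscrete reversed. You correctly compute that $U$-initiality of the empty source on $X$ unwinds to: for every object $C$, every $\cat{X}$-morphism $h \colon U(C) \to U(A)$ is an $\cat{A}$-morphism. But compare this with the definitions in the paper: this is precisely the condition for $A$ to be \emph{indiscrete} (every morphism \emph{into} $A$ lifts), whereas discreteness means every morphism $U(A) \to U(B)$ \emph{out of} $A$ lifts. So the $U$-initial lift of the empty source is the indiscrete object, the \emph{largest} element of the fibre; concretely, in $\Top$ the initial topology for an empty family of maps is the coarsest one, namely the indiscrete topology. What you have actually constructed is the right adjoint $G$, not $F$.

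The universal-property step then fails as written: you invoke ``the discreteness of $F(X)$ says exactly that $f$, viewed as a morphism $U(F(X)) \to U(A)$, is an $\cat{A}$-morphism'', but the property your object enjoys is indiscreteness, which lifts maps in the wrong direction for a left adjoint. The fix is simply to swap source for sink and initial for final: define $F(X)$ as the $U$-\emph{final} lift of the empty \emph{sink} on $X$ (available by the topological duality theorem recalled in the Remark). Finality of the empty sink says every $h \colon U(A) \to U(C)$ lifts, which is exactly discreteness, and your verification of the universal property, of $U \circ F = 1_{\cat{X}}$, and of full-faithfulness then goes through unchanged. Dually, $G(X)$ is the initial lift of the empty source. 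The paper does not prove this proposition itself but cites \cite[Proposition~21.12]{AdamekHerrlichStrecker2006}, so there is no in-paper argument to compare against; your corrected approach is the standard one.
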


\begin{proposition}[{\cite[Proposition~21.15]{AdamekHerrlichStrecker2006}}]
	A topological functor uniquely lifts both limits (via initiality) and colimits (via finality), and it preserves both limits and colimits.
\end{proposition}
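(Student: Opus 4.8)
The plan is to split the statement into two essentially independent assertions — preservation of (co)limits and unique lifting via (in)itiality — and then to obtain the colimit half from the limit half by duality.

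For preservation nothing new is needed. By \cref{p:adjoints-topological}, a topological functor $U \colon \cat{A} \to \cat{X}$ has both a left adjoint $F$ and a right adjoint $G$. Hence $U$ is simultaneously a left adjoint and a right adjoint, and \cref{p:adjoint-preservation} immediately yields that $U$ preserves colimits (being a left adjoint) and limits (being a right adjoint).

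For the unique lifting of limits, let $D \colon \cat{J} \to \cat{A}$ be a diagram and suppose $U \circ D$ has a limit cone $(L, (p_j \colon L \to U(D_j))_{j \in \cat{J}})$ in $\cat{X}$. I would regard $(p_j \colon L \to U(D_j))_{j \in \cat{J}}$ as a $U$-structured source and take its unique $U$-initial lift $(\bar{p}_j \colon A \to D_j)_{j \in \cat{J}}$, which exists since $U$ is topological. First I would check that this is a cone over $D$: for each $\alpha \colon j \to j'$ in $\cat{J}$, the composites $D(\alpha) \circ \bar{p}_j$ and $\bar{p}_{j'}$ have the same image under $U$ (because $(p_j)$ is a cone over $U \circ D$), hence coincide by faithfulness. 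To see that it is a limit cone, take any cone $(q_j \colon C \to D_j)_{j}$ over $D$; applying $U$ and invoking the universal property of $L$ yields a unique $\cat{X}$-morphism $g \colon U(C) \to L = U(A)$ with $p_j \circ g = U(q_j)$ for all $j$. The crucial point is that each $p_j \circ g = U(q_j)$ is an $\cat{A}$-morphism, so by $U$-initiality of the source $(\bar{p}_j)$ the morphism $g$ is itself an $\cat{A}$-morphism $\bar{g} \colon C \to A$; faithfulness then gives $\bar{p}_j \circ \bar{g} = q_j$ together with the uniqueness of $\bar{g}$.

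Finally, for the uniqueness of the lift and for the colimit case. The lift just produced maps onto the given limit cone, so $U$ carries it back to $(L,(p_j))$, consistently with preservation. For uniqueness, any two limit cones over $D$ with the same $U$-image $(L,(p_j))$ are joined by a unique isomorphism whose $U$-image is forced to equal $1_L$; thus $1_L$ lifts to an $\cat{A}$-isomorphism between their domains, and since the fibres of a topological functor are partially ordered (as shown earlier in this section), antisymmetry forces the two domains, and hence the two cones, to coincide. The colimit statement then follows by duality: the earlier remark that $U^{\opcat} \colon \cat{A}^{\opcat} \to \cat{X}^{\opcat}$ is again topological turns the argument above into the assertion that $U$ uniquely lifts colimits via finality. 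I expect the main obstacle to be the middle step — verifying that the $U$-initial lift is genuinely a limit cone — which hinges precisely on the defining property of $U$-initiality being strong enough to lift the mediating morphism $g$ from $\cat{X}$ back to $\cat{A}$.
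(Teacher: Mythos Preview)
Your argument is correct and follows the standard route to this result. Note, however, that the paper does not actually supply a proof of this proposition: it is stated with a citation to \cite[Proposition~21.15]{AdamekHerrlichStrecker2006} and left unproved, as background material. So there is nothing in the paper to compare your proof against; you have simply filled in what the paper chose to import from the reference.
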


\begin{theorem}[{\cite[Theorem~21.16(1)]{AdamekHerrlichStrecker2006}}] \label{t:complete-topological}
	Given a topological functor $U \colon \cat{A} \to \cat{X}$, the category $\cat{A}$ is complete (resp.\ cocomplete) if, and only if, the category $\cat{X}$ is complete (resp.\ cocomplete).
\end{theorem}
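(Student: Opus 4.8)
The plan is to reduce everything to the lifting/preservation proposition stated immediately above (\cite[Proposition~21.15]{AdamekHerrlichStrecker2006}) together with the adjunctions of \cref{p:adjoints-topological}, and to handle the cocomplete case by duality. Concretely, the remark preceding this theorem records that $U^\opcat \colon \cat{A}^\opcat \to \cat{X}^\opcat$ is again topological, so the cocompleteness equivalence for $U$ is literally the completeness equivalence for $U^\opcat$. I would therefore prove only the completeness statement and invoke this duality for the remaining half.

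First I would treat the implication that completeness of $\cat{X}$ forces completeness of $\cat{A}$. Given a small diagram $D \colon \cat{I} \to \cat{A}$, I would pass to $U \circ D$ in $\cat{X}$, which has a limit by hypothesis, and then lift its limiting source. The cited proposition asserts that $U$ lifts limits uniquely via initiality, so the unique $U$-initial lift $(\bar p_i \colon A \to D_i)_i$ of a limiting source $(p_i \colon L \to U(D_i))_i$ is a limit of $D$. If I were to unfold this, the universal property would come down to: for a competing cone $(q_i \colon C \to D_i)_i$ in $\cat{A}$, factor the $U(q_i)$ uniquely through $L$ by some $h \colon U(C) \to L$, and use $U$-initiality of the lift to promote $h$ to an $\cat{A}$-morphism $C \to A$, which is the required factorisation.

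For the converse, I would exploit that $U$ is a right adjoint. By \cref{p:adjoints-topological} it has a left adjoint $F$ with $U \circ F = \mathrm{id}_{\cat{X}}$, and by \cref{p:adjoint-preservation} it preserves limits. Given a small diagram $D \colon \cat{I} \to \cat{X}$, I would form $F \circ D$ in $\cat{A}$, take its limit $L$ (available since $\cat{A}$ is complete), and apply $U$; because $U$ preserves limits and $U \circ F \circ D = D$, the image of $L$ is a limit of $D$ in $\cat{X}$. Hence $\cat{X}$ is complete.

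The hard part is conceptually the first implication, where one must check that an initial lift of a limiting source really is a limit: this is precisely the point at which the definition of $U$-initiality does its work, transporting the comparison morphism from $\cat{X}$ up to $\cat{A}$. In practice, though, this is already subsumed by the cited proposition, so the argument is short once that result is in hand. The second implication is essentially formal, relying only on the adjunction identities and preservation of limits by a right adjoint, and the cocomplete half needs no separate argument thanks to the topological duality for $U^\opcat$.
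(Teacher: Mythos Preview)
The paper does not supply its own proof of this theorem: it is simply quoted from \cite[Theorem~21.16(1)]{AdamekHerrlichStrecker2006}, so there is no in-paper argument to compare against. Your sketch is the standard textbook argument and is correct: the forward direction (completeness of $\cat{X}$ implies completeness of $\cat{A}$) via lifting of limits through $U$-initial lifts is exactly what the cited Proposition~21.15 encapsulates, the converse via $U \circ F = \mathrm{id}_{\cat{X}}$ and preservation of limits by $U$ is clean, and handling the cocomplete case by passing to $U^\opcat$ (which the paper also records as topological) is the right move.
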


These last two results provide a description of limits and colimits in $\Top$ and $\Preord$, as explained in the following.

\begin{examples}
	\begin{description}
		\item[Topological spaces.]
		The category $\Top$ is complete and cocomplete.
		The forgetful functor $\Top \to \Set$ uniquely lifts both limits (via initiality) and colimits (via finality), and it preserves both limits and colimits.
		
		\item[Preordered sets.]
		The category $\Preord$ is complete and cocomplete.
		The forgetful functor $\Top \to \Set$ uniquely lifts both limits (via initiality) and colimits (via finality), and it preserves both limits and colimits.
	\end{description}
\end{examples}

%%%%%%%%%%%%%%%%%%%%%%%%%%%%%%%%%%%   SECTION   %%%%%%%%%%%%%%%%%%%%%%%%%%%%%%%%%%%%

\section{Algebras\index{algebra}}

We assume that the reader has basic knowledge of abstract algebras.
We warn the reader that we admit \emph{infinitary} algebras, i.e.\ algebras with operations of infinite arity, and we allow \emph{large} signatures (i.e.\ a class, rather than a set).

In particular, we work with a large signature\index{signature} $\Sigma$ which is the union of the classes of $\kappa$-ary operations, for $\kappa$ cardinal.
We introduce $\Sigma$-algebras and homomorphisms as usual, and we let $\ALG\Sigma$ denote the category of $\Sigma$-algebras and homomorphisms\footnote{Technically speaking, if $\Sigma$ is large, $\Sigma$-algebras and homomorphisms do not form a legitimate category because there is more than a proper class of algebras on a two-elements set.
However, we will always end up restricting to a class contained in $\ALG \Sigma$.}.

With a common abuse of notation, we will often make no notational distinction between an algebraic structure and its underlying set.
When we want to stress the difference between the algebraic structure and its underlying set, we use a letter in bold font for the structure ($\alge{A}$, $\alge{B}$, $\dots$) and the same letter in plain font for the underlying set ($A$, $B$, \ldots).

The interpretation\index{interpretation of a function symbol} of a function symbol $f$ on an algebra $\alge{A}$ is denoted by $f^\alge{A}$, or simply by $f$ when $\alge{A}$ is understood.

When a class of algebras with a common signature is considered as a category, it is understood that the morphisms are the homomorphisms.

By \emph{finitary algebra}\index{algebra!finitary} we mean an algebra with a signature consisting of operations of finite arity.

By \emph{trivial algebra}\index{algebra!trivial} we mean an algebra whose underlying set is a singleton.

%=================================   SUBSECTION   =================================%

\subsection{Products, subalgebras, homomorphic images}

\begin{notation}
Let $\Sigma$ be a signature.
\begin{enumerate}[wide]
	\item An \emph{isomorphic copy}\index{isomorphic copy} of a $\Sigma$-algebra $\alge{A}$ is an algebra which is isomorphic to $\alge{A}$.
	Given a class $\clalg{A}$ of $\Sigma$-algebras, we let $\opI(\clalg{A})$ denote the class of isomorphic copies of algebras in $\clalg{A}$.
	Note that the class $\clalg{A}$ is contained in $\opI(\clalg{A})$.
	
	\item A \emph{(direct) product}\index{product!of algebras} of a family $(\alge{A}_i)_{i \in I}$ of $\Sigma$-algebras is a $\Sigma$-algebra which is isomorphic to the set-theoretic direct product of $(A_i)_{i \in I}$, in which the operation symbols are defined coordinatewise.
	Given a class of $\Sigma$-algebras $\clalg{A}$, we let $\opP(\clalg{A})$ denote the class of direct products of algebras in $\clalg{A}$.
	Note that $\opP(\clalg{A})$ is closed under isomorphisms, the class $\clalg{A}$ is contained in $\opP(\clalg{A})$, and any trivial algebra belongs to $\opP(\clalg{A})$.
	
	\item A \emph{subalgebra}\index{subalgebra} of a $\Sigma$-algebra $\alge{A}$ is an algebra whose underlying set is a subset of $A$ and on which the interpretation of each operation symbol is the restriction of its interpretation on $\alge{A}$.
	Given a class $\clalg{A}$ of $\Sigma$-algebras, we let $\opS(\clalg{A})$ denote the class of subalgebras of algebras in $\clalg{A}$.
	The class $\clalg{A}$ is contained in $\opS(\clalg{A})$.
	The class $\opS(\clalg{A})$ is not guaranteed to be closed under isomorphism.
	However, if $\clalg{A}$ is closed under isomorphic copies, then the class $\opS(\clalg{A})$ is closed under isomorphic copies.
	Thus, for example, $\opS \opP(\clalg{A}) = \opI \opS \opP(\clalg{A})$.

	\item A \emph{homomorphic image}\index{homomorphic image} of a $\Sigma$-algebra $\alge{A}$ is a $\Sigma$-algebra $\alge{B}$ such that there exists a surjective $\Sigma$-homomorphism from $\alge{A}$ to $\alge{B}$.
	Given a class of $\Sigma$-algebras $\clalg{A}$, we let $\opH(\clalg{A})$ denote the homomorphic images of algebras in $\clalg{A}$.
	Note that $\opH(\clalg{A})$ is closed under isomorphisms, and the class $\clalg{A}$ is contained in $\opH(\clalg{A})$.
\end{enumerate}
\end{notation}

\begin{remark} \label{r:IPSH}
	Let $\clalg{A}$ be a class of algebras.
	\begin{enumerate}
		\item The classes $\opI(\clalg{A})$, $\opP(\clalg{A})$, $\opH(\clalg{A})$ are closed under isomorphic images.
		\item The class $\opP(\clalg{A})$ is closed under products.
		\item The class $\opS(\clalg{A})$ is closed under subalgebras.
		\item The class $\opH(\clalg{A})$ is closed under homomorphic images.
		\item If $\clalg{A}$ is closed under isomorphic images, then $\opI(\clalg{A})$, $\opP(\clalg{A})$, $\opS(\clalg{A})$ and $\opH(\clalg{A})$ are closed under isomorphic images.
		\item If $\clalg{A}$ is closed under products, then $\opI(\clalg{A})$, $\opP(\clalg{A})$, $\opS(\clalg{A})$ and $\opH(\clalg{A})$ are closed under products.
		\item If $\clalg{A}$ is closed under subalgebras, then $\opI(\clalg{A})$, $\opS(\clalg{A})$ and $\opH(\clalg{A})$ are closed under subalgebras.
		\item If $\clalg{A}$ is closed under homomorphic images, then $\opI(\clalg{A})$ and $\opH(\clalg{A})$ are closed under homomorphic images.
	\end{enumerate}
\end{remark}

For the sake of clarity, we will use expressions such as `the algebra $A$ is \emph{isomorphic to} a subalgebra of a product of $B$', even if, with our isomorphism-invariant convention about products, the expression `isomorphic to' is redundant. 

%=================================   SUBSECTION   =================================%

\subsection{Birkhoff's subdirect representation theorem}

An algebra $A$ is a \emph{subdirect product}\index{subdirect!product} of an indexed family $(A_i)_{i \in I}$ of algebras if $A$ is a subalgebra of $\prod_{i \in I} A_i$ and, for every $i \in I$, the projection of $A$ on the $i$-th coordinate is $A_i$.
An injective homomorphism $\alpha \colon A \hookrightarrow \prod_{i \in I} A_i$ is \emph{subdirect}\index{subdirect!homomorphism} if the image of $\alpha$ is a subdirect product of $(A_i)_{i \in I}$.
An algebra $A$ is \emph{subdirectly irreducible}\index{subdirectly irreducible} if, for every subdirect injective homomorphism  $\alpha \colon A \hookrightarrow \prod_{i \in I} A_i$, there exists $j \in I$ such that $\pi_j \alpha \colon A \to A_j$ is an isomorphism, where $\pi_j \colon \prod_{i \in I} A_i \to A_j$ is the $j$-th projection.
By taking $I= \emptyset$, we observe that every subdirectly irreducible algebra is non-trivial.

\begin{theorem}[{Birkhoff's subdirect representation theorem \cite[Theorem~2]{Birkhoff1944}}] \label{t:SubRepThe} \index{subdirect!representation theorem}
	Every finitary algebra is isomorphic to a subdirect product of subdirectly irreducible algebras.
\end{theorem}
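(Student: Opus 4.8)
The plan is to translate everything into the language of congruences and then apply Zorn's lemma. Recall that a subdirect embedding $\alpha\colon A \mono \prod_{i \in I} A_i$ is the same datum as a family of congruences $(\theta_i)_{i \in I}$ on $A$ with $\bigcap_{i\in I}\theta_i = \Delta_A$, where $\Delta_A$ denotes the diagonal congruence: the dictionary sends the representation to $\theta_i \df \ker(\pi_i\alpha)$, with $A_i \cong A/\theta_i$, surjectivity of each $\pi_i\alpha$ making the representation subdirect and injectivity of $\alpha$ being exactly the condition $\bigcap_i \theta_i = \Delta_A$. Under this correspondence, $A$ is subdirectly irreducible precisely when $A$ is non-trivial and $\Delta_A$ is \emph{completely meet-irreducible} in the lattice $\mathrm{Con}(A)$ of congruences, i.e.\ whenever $\Delta_A = \bigcap_{i \in I}\theta_i$ one already has $\Delta_A = \theta_j$ for some $j$; equivalently, the congruences strictly above $\Delta_A$ possess a least element. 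I would begin by recording this reformulation, which rests only on the correspondence theorem relating congruences of $A/\theta$ with congruences of $A$ containing $\theta$.

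For the main construction, fix a finitary algebra $A$. If $A$ is trivial it is the empty subdirect product and there is nothing to prove, so assume $A$ is non-trivial and fix a pair $a \neq b$ in $A$. Consider the set of congruences $\theta$ on $A$ with $(a,b)\notin\theta$. It is non-empty, as it contains $\Delta_A$, and I claim it is closed under unions of chains: the union of a chain of such congruences is again a congruence and still omits the pair $(a,b)$. By Zorn's lemma there is then a congruence $\theta_{a,b}$ that is maximal with respect to not identifying $a$ and $b$.

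Next I would verify that $A/\theta_{a,b}$ is subdirectly irreducible. By maximality, every congruence strictly containing $\theta_{a,b}$ contains the pair $(a,b)$; hence if $\theta_{a,b} = \bigcap_k \psi_k$ with each $\psi_k \supsetneq \theta_{a,b}$, then $(a,b)$ lies in every $\psi_k$ and therefore in their intersection $\theta_{a,b}$, a contradiction. Thus $\theta_{a,b}$ is completely meet-irreducible, and through the congruence correspondence the diagonal of $A/\theta_{a,b}$ is completely meet-irreducible; since $a$ and $b$ remain distinct modulo $\theta_{a,b}$, the quotient is non-trivial, and so it is subdirectly irreducible. Finally, the family $(\theta_{a,b})_{a\neq b}$ satisfies $\bigcap_{a \neq b}\theta_{a,b} = \Delta_A$, because any pair $(x,y)$ with $x \neq y$ fails to lie in $\theta_{x,y}$. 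Consequently the induced map $A \mono \prod_{a \neq b} A/\theta_{a,b}$ is a subdirect embedding into a product of subdirectly irreducible algebras, which is what the statement asserts.

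The single place where the hypothesis of finitarity is indispensable—and the step I expect to be the crux—is the closure-under-chains claim inside the Zorn argument. To check that the union $\bigcup_\lambda \theta_\lambda$ of a chain of congruences is compatible with a basic operation $f$ of arity $n$, one needs all $n$ argument-pairs to lie in a single member $\theta_\lambda$ of the chain, and directedness supplies such a common member only when $n$ is finite. For genuinely infinitary operations the union of a chain of congruences need not be a congruence, so this argument breaks down; accordingly the conclusion is restricted to finitary algebras.
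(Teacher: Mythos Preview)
Your argument is correct and is the standard textbook proof (essentially Birkhoff's original one): translate subdirect representations into families of congruences with trivial intersection, use Zorn's lemma to produce, for each pair $a\neq b$, a congruence $\theta_{a,b}$ maximal with respect to separating $a$ from $b$, observe that such a congruence is completely meet-irreducible so that the quotient is subdirectly irreducible, and note that finitarity is exactly what guarantees closure of congruences under unions of chains. The paper itself does not supply a proof of this statement; it is recorded as background and attributed to \cite[Theorem~2]{Birkhoff1944}, so there is no in-paper argument to compare against.
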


%=================================   SUBSECTION   =================================%

\subsection{Definition of varieties and quasivarieties}

An \emph{implication}\index{implication} is a (universally quantified) formula
\[ 
\bigwedge_{i\in I}(u_i=v_i)\Longrightarrow (u_0=v_0)
\]
where $I$ is a (possibly infinite) set, and $u_i$, $v_i$ are, for $i\in I\cup \{0\}$, terms over a given set of variables.

\begin{definition}
	We say that a class of algebras $\clalg{A}$ \emph{has free algebras}\index{algebra!free} if, for each cardinal $\kappa$, there exists an algebra $A$ in $\clalg{A}$ and a function $\iota$ from $\kappa$ to (the underlying set of) $A$ such that, for every algebra $B$ in $\clalg{A}$ and every function $f$ from $\kappa$ to (the underlying set of) $B$, there exists a unique homomorphism $\overline{f} \colon A \to B$ such that $f = \overline{f} \iota$.
\end{definition}
\begin{definition}\label{d:quasivariety}
	A class $\clalg{A}$ of $\Sigma$-algebras is called a \emph{quasivariety of $\Sigma$-algebras}\index{quasivariety of algebras} if
	\begin{enumerate}
		\item \label{i:presented-by} the class $\clalg{A}$ can be presented by a class of implications, and
		\item \label{i:free-algs} the class $\clalg{A}$ has free algebras.
	\end{enumerate}
\end{definition}
As shown in \cite[Subsection~3.1]{Adamek2004}, given \cref{i:presented-by}, we have that \cref{i:free-algs} holds if, and only if, for each cardinal $\kappa$, the class $\clalg{A}$ has only a set of isomorphism classes of algebras on $\kappa$ generators.
\begin{definition} \label{d:variety}
	A class $\clalg{A}$ of $\Sigma$-algebras is called a \emph{variety of $\Sigma$-algebras}\index{variety of algebras} if
	\begin{enumerate}
		\item \label{i:presented-by-var} the class $\clalg{A}$ can be presented by a class of equations, and
		\item \label{i:free-algs-var} the class $\clalg{A}$ has free algebras.
	\end{enumerate}
\end{definition}
As for quasivarieties, given \cref{i:presented-by-var}, we have that \cref{i:free-algs-var} holds if, and only if, for each cardinal $\kappa$, the class $\clalg{A}$ has only a set of isomorphism classes of algebras on $\kappa$ generators.

Varieties of algebras in this sense coincide, up to equivalence, with monadic (also known as tripleable\index{triple}\index{functor!tripleable}) categories\index{monad}\index{functor!monadic} over $\Set$; see \cite[Section~4.1]{Borceux1994-vol2} for the related definitions.
Moreover, varieties of algebras coincide, up to equivalence, with varietal categories\index{category!varietal}\index{varietal|see{category, varietal}} in the sense of \cite[Section~1]{Linton1966}: the equivalence between Linton's varietal categories and triplable categories over $\Set$ is asserted at the end of Section~6 in \cite{Linton1966}.

\begin{proposition}[{\cite[Proposition~3.5]{Adamek2004}}] \label{p:char-quasivar-ISP}
	A class of $\Sigma$-algebras having free algebras is a quasivariety if, and only if, it is closed in $\ALG \Sigma$ under products and subalgebras.
\end{proposition}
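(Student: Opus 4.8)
The plan is to prove the two implications separately, the forward one being routine and the converse carrying all the weight. For the forward direction, suppose $\clalg{A}$ is a quasivariety, so by \cref{d:quasivariety} it is the class of models of some class of implications. I would check directly that a single implication $\bigwedge_{i \in I}(u_i = v_i) \Rightarrow (u_0 = v_0)$ is inherited by products and subalgebras. For a product this is immediate because operations, hence term evaluations, are computed coordinatewise, so any assignment verifying the premises verifies them in each coordinate, where the conclusion then holds; for a subalgebra it is equally immediate, since term evaluation in the subalgebra agrees with that in the ambient algebra. Thus $\clalg{A} = \opS(\clalg{A}) = \opP(\clalg{A})$, i.e.\ $\clalg{A}$ is closed under products and subalgebras.

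For the converse, assume $\clalg{A}$ has free algebras and is closed under products and subalgebras; I must present $\clalg{A}$ by implications. Let $\clalg{A}'$ be the class of all $\Sigma$-algebras satisfying every implication valid in $\clalg{A}$. Then $\clalg{A} \seq \clalg{A}'$ trivially, $\clalg{A}'$ is presented by implications, and together with the hypothesis that $\clalg{A}$ has free algebras it suffices to prove the reverse inclusion $\clalg{A}' \seq \clalg{A}$. So fix $B \in \clalg{A}'$; the goal is to exhibit $B$ as (isomorphic to) a subalgebra of a product of members of $\clalg{A}$, whence $B \in \opS\opP(\clalg{A}) = \clalg{A}$.

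The first step is to build a canonical surjection $q \colon \Phi \epi B$, where $\Phi$ is a free algebra in $\clalg{A}$ on the underlying set of $B$, with generators $(x_c)_{c \in B}$. Since $B$ is not yet known to lie in $\clalg{A}$, the universal property cannot be applied to it directly; instead I would use that equations are precisely the implications with empty premise, so $B \in \clalg{A}'$ satisfies every equation valid in $\clalg{A}$. Comparing the term-algebra congruence defining $\Phi$ with the kernel of the evaluation map $x_c \mapsto c$ into $B$, this containment yields the desired $q$ with $q(x_c) = c$; set $\theta \df \ker q$, so $B \cong \Phi/\theta$. The second step is to separate points: I claim that for all distinct $b, b' \in B$ there exist $A \in \clalg{A}$ and a homomorphism $f \colon B \to A$ with $f(b) \neq f(b')$. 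Suppose not, for a fixed pair $b_0 \neq b_0'$, and consider the implication
\[
	\Psi \colon \quad \Bigl(\textstyle\bigwedge_{(a,a') \in \theta}\, a = a'\Bigr) \;\Longrightarrow\; x_{b_0} = x_{b_0'},
\]
whose premise is indexed by the \emph{set} $\theta \seq \Phi \times \Phi$ and whose atoms are terms in the variables $(x_c)_{c \in B}$. The key point is that $\Psi$ is valid in $\clalg{A}$: for any $A \in \clalg{A}$ and assignment $x_c \mapsto h(c)$ extending to $\hat h \colon \Phi \to A$, the premises force $\theta \seq \ker \hat h$, so $\hat h$ factors as $\bar h \circ q$ through $B$, and the assumed non-separability of $b_0, b_0'$ gives $\hat h(x_{b_0}) = \bar h(b_0) = \bar h(b_0') = \hat h(x_{b_0'})$. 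But $B \in \clalg{A}'$ must satisfy $\Psi$, and evaluating at the identity assignment $x_c \mapsto c$, which extends to $q$, makes every premise hold, forcing $b_0 = q(x_{b_0}) = q(x_{b_0'}) = b_0'$, a contradiction.

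With separation established, indexing over the set of pairs of distinct elements of $B$ and bundling the separating homomorphisms gives an injective homomorphism $B \mono \prod_{(b,b')} A_{b,b'}$; replacing each $A_{b,b'}$ by the image of $B$ keeps it inside $\clalg{A}$ (by closure under $\opS$) and bounds its size, so the product is a legitimate set-indexed product. Closure under $\opP$ and then $\opS$ yields $B \in \clalg{A}$, completing the reverse inclusion and hence the proof. The main obstacle is the converse direction, and within it the two delicate points are obtaining $q$ before knowing $B \in \clalg{A}$ (handled by treating equations as premise-free implications) and verifying that $\Psi$ is valid in $\clalg{A}$ (handled by the homomorphism factorisation theorem together with the contradiction hypothesis); the remaining bookkeeping, including the set-theoretic size of $\theta$ and of the indexing family, is routine.
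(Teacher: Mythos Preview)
Your proof is correct. The paper itself does not prove this proposition; it is quoted with a bare citation to \cite[Proposition~3.5]{Adamek2004}, so there is nothing to compare against beyond that reference, and your argument is the standard one found there: close $\clalg{A}$ under all implications it satisfies, and for any $B$ in this implicational hull separate points by homomorphisms into members of $\clalg{A}$, encoding a hypothetical failure of separation as a single implication whose premises enumerate $\ker q$.

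One minor point worth tightening: the premises of $\Psi$ must literally be equations between \emph{terms} in the variables $(x_c)_{c\in B}$, whereas $\theta \subseteq \Phi \times \Phi$ is a set of pairs of \emph{elements} of the free algebra. You flag this (``whose atoms are terms'') but do not spell out the lift. It works because $\Phi$ is generated by the image of the $x_c$ (this uses closure of $\clalg{A}$ under subalgebras), so every element of $\Phi$ is the value of some term in those variables; choosing such representatives lets you write $\Psi$ honestly, and the rest of the verification goes through verbatim since two terms with the same value in $\Phi$ give an equation valid throughout $\clalg{A}$, hence also in $B$. With that understood the argument is complete, and the size remark in the final paragraph is harmless but superfluous: the product is already indexed by the set of pairs of distinct elements of $B$.
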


\begin{lemma} \label{l:ISP}
	Given a $\Sigma$-algebra $A$, the class $\opS\opP(A)$ is a quasivariety.
\end{lemma}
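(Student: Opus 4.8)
The plan is to invoke the characterisation of quasivarieties proved just above, namely \cref{p:char-quasivar-ISP}, which says that a class having free algebras is a quasivariety precisely when it is closed under products and subalgebras in $\ALG\Sigma$. So to show that $\opS\opP(A)$ is a quasivariety, I would verify these two closure conditions together with the existence of free algebras.

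First I would check closure under products and subalgebras. For products: by \cref{r:IPSH}\,(6), since $\opP(A)$ is closed under products (being of the form $\opP(-)$, cf.\ \cref{r:IPSH}\,(2)), applying $\opS$ preserves this, so $\opS\opP(A)$ is closed under products. More directly, a product of subalgebras of products of $A$ is again (isomorphic to) a subalgebra of a product of $A$, since an arbitrary product of products is a product. For subalgebras: a subalgebra of a subalgebra of a product of $A$ is a subalgebra of that product, so $\opS\opP(A)$ is closed under subalgebras; this is \cref{r:IPSH}\,(7) applied to the class $\opP(A)$, which is closed under subalgebras in the relevant sense once one observes $\opS\opS = \opS$.

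The main obstacle — and the only genuinely non-formal point — is establishing that $\opS\opP(A)$ \emph{has free algebras}, i.e.\ condition \ref{i:free-algs} of \cref{d:quasivariety}. By the remark following \cref{d:quasivariety}, once a class is presented by implications (which closure under $\opS$ and $\opP$ will eventually guarantee via \cref{p:char-quasivar-ISP} itself, so I must be careful not to argue circularly), it suffices to show that for each cardinal $\kappa$ there is only a set of isomorphism classes of algebras in $\opS\opP(A)$ on $\kappa$ generators. This is a cardinality bound: any algebra in $\opS\opP(A)$ generated by $\kappa$ elements embeds into a power $A^{J}$, and the subalgebra generated by $\kappa$ elements is determined by the images of those generators, so its cardinality is bounded by the cardinality of the set of $\Sigma$-terms in $\kappa$ variables evaluated in $A^{J}$. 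Since each such generated subalgebra has cardinality at most $|A|^{\lvert T_\kappa\rvert}$ for a fixed bound depending only on $\kappa$ and the signature, up to isomorphism there is only a set of them.

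Concretely, I would argue that a $\kappa$-generated member of $\opS\opP(A)$ is a homomorphic-image-free construction: it is the subalgebra of some power $A^{J}$ generated by $\kappa$ chosen elements $(a_i)_{i<\kappa}$, and such a subalgebra is the image of the term algebra on $\kappa$ generators, hence has underlying set of size at most the number of $\Sigma$-terms on $\kappa$ variables. Bounding this cardinal uniformly in $J$ shows the isomorphism classes form a set, giving free algebras. Combining this with the closure properties and applying \cref{p:char-quasivar-ISP} concludes that $\opS\opP(A)$ is a quasivariety.
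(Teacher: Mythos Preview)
Your overall strategy matches the paper's: verify closure under products and subalgebras, establish the existence of free algebras, and then invoke \cref{p:char-quasivar-ISP}. The closure part is fine and the paper appeals to \cref{r:IPSH} in the same way.

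Where the paper differs is in the treatment of free algebras, and here your argument has two genuine weak points. First, you invoke the remark after \cref{d:quasivariety} (that ``set of isomorphism classes of $\kappa$-generated algebras'' suffices), but that remark is stated \emph{assuming} the class is already presented by implications---you flag the circularity but do not resolve it. Second, your cardinality bound via ``the number of $\Sigma$-terms on $\kappa$ variables'' fails when $\Sigma$ is large, which this paper explicitly allows: there may be a proper class of $\Sigma$-terms, so $\lvert T_\kappa\rvert$ need not be a cardinal at all.

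The paper sidesteps both issues by giving a direct construction: the free algebra on a set $X$ is the subalgebra of $A^{A^X}$ generated by the projections $\{\pi_x \colon A^X \to A \mid x \in X\}$. This is manifestly an element of $\opS\opP(A)$, it is a set regardless of the size of $\Sigma$, and the universal property is immediate (any map $X \to B$ into some $B \subseteq A^J$ determines a map $J \to A^X$, hence a homomorphism $A^{A^X} \to A^J$ sending $\pi_x$ to the image of $x$). Your cardinality argument, carried out carefully, would essentially rediscover this object---but as written it does not stand on its own.
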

\begin{proof}
	The class $\opS\opP(A)$ has free algebras.
	Indeed, following a standard construction, a free algebra over a set $X$ is given by the subalgebra of $A^{A^X}$ generated by the set of projections $\{ \pi_x \colon A^X \to A \mid x \in X\}$.
	Moreover, by \cref{r:IPSH}, $\opS\opP(A)$ is closed under products and subalgebras.
	By \cref{p:char-quasivar-ISP}, $\opS\opP(A)$ is a quasivariety.
\end{proof}

As observed in \cite[Section~4.1]{AdamekHerrlichStrecker2006}, we have an analogous version of \cref{p:char-quasivar-ISP} for varieties, which generalises a celebrated theorem by \cite{Birkhoff1935} for varieties in a small signature.

\begin{proposition}\label{p:char-var-HSP}
	A class of $\Sigma$-algebras having free algebras is a variety if, and only if, it is closed in $\ALG \Sigma$ under products, subalgebras and homomorphic images.
\end{proposition}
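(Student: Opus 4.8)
The plan is to prove both implications, the forward one being routine and the converse being a Birkhoff-style $\opH\opS\opP$ argument that parallels the quasivariety case already settled in \cref{p:char-quasivar-ISP}. (Indeed, closure under products and subalgebras already makes $\clalg{A}$ a quasivariety by \cref{p:char-quasivar-ISP}, so the extra hypothesis is closure under homomorphic images, which should be exactly what upgrades an implicational presentation to an equational one; rather than route through implications, however, I would argue equationally and directly.)

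For the forward direction, suppose $\clalg{A}$ is a variety, so by \cref{d:variety} it is the class of all $\Sigma$-algebras satisfying some class $E$ of equations. I would verify directly, working inside $\ALG\Sigma$, that satisfaction of an equation $s = t$ is inherited under the three operations: under products because the operations are computed coordinatewise, so $s$ and $t$ are evaluated coordinatewise; under subalgebras because the evaluation of a term in a subalgebra is the restriction of its evaluation in the ambient algebra; and under homomorphic images because a surjection $h$ commutes with term evaluation, $s^{\alge{B}}(h\bar{a}) = h(s^{\alge{A}}(\bar a))$, while every tuple in the image is of the form $h\bar{a}$. Hence $\clalg{A}$ is closed under products, subalgebras and homomorphic images.

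For the converse, assume $\clalg{A}$ is closed under products, subalgebras, homomorphic images and has free algebras. Let $E$ be the class of all equations $s = t$ valid throughout $\clalg{A}$, and let $\clalg{V}$ be the class of all $\Sigma$-algebras satisfying $E$. Then $\clalg{A} \subseteq \clalg{V}$, and $\clalg{V}$ is presented by equations; since $\clalg{A}$ has free algebras by hypothesis, once I show $\clalg{A} = \clalg{V}$ the statement follows from \cref{d:variety}. The heart of the matter is the inclusion $\clalg{V} \subseteq \clalg{A}$. To establish it, take $\alge{B} \in \clalg{V}$, let $X$ be its underlying set, and compare two quotients of the absolutely free $\Sigma$-algebra (term algebra) $\alge{T}(X)$ on $X$: the surjection $p \colon \alge{T}(X) \epi \alge{B}$ induced by the identity on $X$, and the surjection $q \colon \alge{T}(X) \epi \alge{F}$ onto the free algebra $\alge{F}$ of $\clalg{A}$ on $X$, which lies in $\clalg{A}$ and is generated by the image of its unit $\iota \colon X \to \alge{F}$. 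Using the universal property of $\alge{F}$ I would identify $\ker q$ with exactly the pairs $(s,t)$ for which $s = t$ holds throughout $\clalg{A}$, i.e.\ with the members of $E$: if $\clalg{A} \models s = t$ then in particular $q(s) = q(t)$; conversely, if $q(s) = q(t)$, then any assignment $X \to \alge{A}$ into an $\alge{A} \in \clalg{A}$ factors through $\iota$ by a homomorphism $\alge{F} \to \alge{A}$, and applying it to $q(s) = q(t)$ shows $s$ and $t$ agree under that assignment. Since $\alge{B} \in \clalg{V}$ satisfies every equation of $E$, this gives $\ker q \subseteq \ker p$, so $p$ factors through $q$ as a surjection $\alge{F} \epi \alge{B}$. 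As $\alge{F} \in \clalg{A}$ and $\clalg{A}$ is closed under homomorphic images, $\alge{B} \in \clalg{A}$, as required.

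The main obstacle I expect is the converse, and within it the careful handling of free algebras in this infinitary, large-signature setting: the existence of $\alge{F}$ on the (possibly large) generating set $X$ is precisely what the ``has free algebras'' hypothesis supplies, and it is genuinely needed here, since—unlike in the classical finitary Birkhoff theorem—free algebras cannot be taken for granted. I would also take care that $E$ is permitted to contain equations over a variable set as large as $X$, and that the kernel comparison is insensitive to whether the unit $\iota$ is injective.
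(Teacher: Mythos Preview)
The paper does not supply its own proof of this proposition; the statement is preceded only by a sentence pointing to the literature and no proof environment follows. Your proposal fills in the classical Birkhoff argument, and the overall strategy is correct.

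One technical point deserves attention in this setting. You invoke the absolutely free $\Sigma$-algebra $\alge{T}(X)$, but the paper explicitly allows $\Sigma$ to be a proper class, and in that generality the collection of $\Sigma$-terms over a non-empty set $X$ need not be a set, so $\alge{T}(X)$ is not a $\Sigma$-algebra in the paper's sense and the surjections $p,q$ out of it are not literally homomorphisms of $\Sigma$-algebras. The fix is minor and preserves your idea: bypass $\alge{T}(X)$ and argue directly that the assignment $t^{\alge{F}}(\iota(x_\alpha)) \mapsto t^{\alge{B}}(x_\alpha)$ is a well-defined surjective homomorphism $\alge{F} \epi \alge{B}$. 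Well-definedness is precisely your kernel comparison, rephrased without materialising $\alge{T}(X)$: if $t^{\alge{F}}(\iota(x_\alpha)) = s^{\alge{F}}(\iota(y_\beta))$, then applying the unique extension $\alge{F} \to \alge{A}$ of an arbitrary assignment $X \to \alge{A}$ (for $\alge{A} \in \clalg{A}$) shows the equation $t = s$ holds throughout $\clalg{A}$, hence lies in $E$, hence holds in $\alge{B}$. Everything else goes through as you wrote it.
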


%-------------------------------   SUBSUBSECTION   --------------------------------%

\subsubsection{Comparison with other definitions}

We warn the reader that the term `variety of algebras' (and `quasivariety of algebras') admits different non-equivalent definitions in the literature, depending on the desired level of generality.
Our convention (see \cref{d:variety}) is listed here as \cref{i:variety}.
\begin{enumerate}[wide]

	\item Classically (but not in this manuscript), one considers a class of algebras in a small signature (i.e., a set), with operations of finite arity, defined by a set of equations.
	Boolean algebras and distributive lattices are an example.
	Any such class has free algebras by a theorem of \cite{Birkhoff1935} (see also \cite[Chapter~4, Section~25, Corollary~2]{Graetzer2008}).
	In our convention, these classes are\footnote{Indeed, the smallness of the signature and of the class of equations is not relevant (in terms of algebraic theories).} the varieties of algebras in a signature whose operations have finite arity.
	
	\item \label{i:Slo} \cite{Slominski1959} considers a class of algebras in a small signature (with operations of possibly infinite arity), defined by a set of equations.
	Boolean $\sigma$-algebras \cite[Chapter~29]{GivantHalmos2009} are an example.
	Any such class has free algebras \cite[{}8.3]{Slominski1959}.
	In our convention, these classes are\footnote{Indeed, the smallness of the class of equations is not relevant (in terms of algebraic theories).} the varieties of algebras in a small signature, also called varieties with rank\index{variety of algebras!with rank}.

	\item \label{i:variety} In this manuscript, the term `variety of algebras' (\cref{d:variety}) denotes a class of algebras (in a possibly large signature), defined by a class of equations, with free algebras. 
	Complete join-semilattices (cf.\ \cite[Examples~3.2]{Adamek2004}) and frames (cf.\ \cite[Theorem II.1.2]{Johnstone1986}) are examples.
	Even if this definition is the one we use for the term `variety of algebras', it should be noted that the varieties of algebras that we deal with in this manuscript are also varieties of algebras in the more restrictive sense of \cite{Slominski1959}, described in \cref{i:Slo} above.

	\item One may consider a class of algebras (in a possibly large signature) that can be presented by a class of equations (but possibly lacking some free algebras).
	Complete Boolean algebras are an example;
	it was proved independently by \cite{Gaifman1961,Hales1962} that complete Boolean algebras lack free algebras over a countably infinite set (see \cite{Solovay1966} for a shorter proof).
	
\end{enumerate}
%

%######################################################.  CHAPTER.  ######################################################

\chapter{Compact ordered spaces}\label{chap:CompOrd}
% CHAPTER 1

%%%%%%%%%%%%%%%%%%%%%%%%%%%%%%%%%%%   SECTION   %%%%%%%%%%%%%%%%%%%%%%%%%%%%%%%%%%%%

\section{Introduction}

What is the correct partially-ordered version of compact Hausdorff spaces?
More to the point, what is the missing piece in the equation
\begin{equation} \label{e:ordered-of-ch}
	\text{Stone spaces are to Priestley spaces as compact Hausdorff spaces are to \ldots,}
\end{equation}
or, equivalently, in the equation
\begin{equation} \label{e:t2-of-p}
	\text{Stone spaces are to compact Hausdorff spaces as Priestley spaces are to \ldots?}
\end{equation}
Our view (which is not new) is that the answer is given by those structures that L.\ Nachbin introduced under the name of \emph{compact ordered spaces}\footnote{These structure appear in the literature also under the name of `compact pospaces', `compact partially ordered spaces', `partially ordered compact spaces', `separated ordered compact Hausdorff spaces', or `Nachbin spaces'.} (\cite{Nachbin1948}, \cite[Section~3]{Nachbin}).
A \emph{compact ordered space} is a compact space $X$ equipped with a partial order $\leq$ that is a closed subspace of $X \times X$ with respect to the product topology.
In this chapter we collect some known background results on compact ordered spaces to motivate this view.
In doing so, we discuss limits and colimits of compact ordered spaces and an ordered version of Urysohn's lemma, which will come handy in the following chapters.

No result in this chapter is new.

The reader who has familiarity with compact ordered spaces may move to \cref{chap:direct-proof}.

%%%%%%%%%%%%%%%%%%%%%%%%%%%%%%%%%%%   SECTION   %%%%%%%%%%%%%%%%%%%%%%%%%%%%%%%%%%%%

\section{Stone is to Priestley as compact Hausdorff is to compact ordered}
\label{s:characterisations}

To motivate the fact that compact ordered spaces are the missing piece in \cref{e:ordered-of-ch}, we compare some characterisations of Stone, Priestley, compact Hausdorff and compact ordered spaces which best show the analogies between them.
In the following sections, we will recall the classical definition of compact ordered spaces (\cref{d:compact-ordered-space}) and we will make sure that the characterisations of compact ordered spaces stated below are correct.
\begin{enumerate}[wide]
	\item A \emph{Stone space}\index{Stone space} (also known as \emph{Boolean space}\index{Boolean space}) is a compact topological space $X$ such that, for all $x, y \in X$ such that $x \neq y$, there exist an open set $U$ containing $x$ and an open set $V$ containing $y$ such that $U \cap V = \emptyset$ and $U \cup V = X$.
	
	\item A \emph{Priestley space}\index{Priestley space} is a compact topological space $X$ equipped with a partial order $\leq$ such that, for all $x, y \in X$ such that $x \not \leq y$, there exist an open up-set $U$ containing $x$ and an open down-set $V$ containing $y$ such that $U \cap V = \emptyset$ and $U \cup V = X$.
	
	\item A \emph{compact Hausdorff space} is a compact topological space $X$ such that, for all $x, y \in X$ such that $x \neq y$, there exist an open set $U$ containing $x$ and an open set $V$ containing $y$ such that $U \cap V = \emptyset$.
	
	\item A \emph{compact ordered space} is a compact topological space $X$ equipped with a partial order $\leq$ such that, for all $x, y \in X$ such that $x \not \leq y$, there exist an open up-set $U$ containing $x$ and an open down-set $V$ containing $y$ such that $U \cap V = \emptyset$ (\cref{l:char-closed} below).
\end{enumerate}
Note that the characterisation of compact Hausdorff spaces is obtained from the one of Stone spaces simply by dropping the requirement that the separating sets $U$ and $V$ cover the whole space.
The characterisation of compact ordered spaces is obtained in an analogous way from the one of Priestley spaces.
Thus, `compact ordered spaces' is a good fit in \cref{e:t2-of-p}.
Furthermore, Stone spaces are precisely the Priestley spaces whose partial order is equality, and
compact Hausdorff spaces are precisely the compact ordered spaces whose partial order is equality.
Thus, `compact ordered spaces' is a good fit in \cref{e:ordered-of-ch}.

Other closely related characterisations, which again show the analogies, are as follows.
\begin{enumerate}[wide]
	\item A \emph{Stone space} is a compact topological space $X$ such that, for all $(x, y) \in (X \times X) \setminus \{(s,t) \in X \times X \mid s = t\}$, there exist an open set $U$ containing $x$ and an open set $V$ containing $y$ such that $U \cup V = X$ and such that $U \times V$ and $\{(s,t) \in X \times X \mid s = t\}$ are disjoint.
	
	\item A \emph{Priestley space} is a compact topological space $X$ such that, for all $(x, y) \in (X \times X) \setminus \{(s,t) \in X \times X \mid s \leq t\}$, there exist an open set $U$ containing $x$ and an open set $V$ containing $y$ such that $U \cup V = X$ and such that $U \times V$ and $\{(s,t) \in X \times X \mid s \leq t\}$ are disjoint.
	
	\item A \emph{compact Hausdorff space} is a compact topological space $X$ such that, for all $(x, y) \in (X \times X) \setminus \{(s,t) \in X \times X \mid s = t\}$, there exist an open set $U$ containing $x$ and an open set $V$ containing $y$ such that $U \times V$ and $\{(s,t) \in X \times X \mid s = t\}$ are disjoint.
	(In other words, a compact Hausdorff space is a compact space with a closed diagonal, see \cref{p:diag-closed}.)
	
	\item A \emph{compact ordered space} is a compact topological space $X$ such that, for all $(x, y) \in (X \times X) \setminus \{(s,t) \in X \times X \mid s \leq t\}$, there exist an open set $U$ containing $x$ and an open set $V$ containing $y$ such that $U \times V$ and $\{(s,t) \in X \times X \mid s \leq t\}$ are disjoint.
	(In other words, a compact ordered space is a compact space equipped with a closed partial order. In fact, this is Nachbin's original definition, to which we will conform.)
\end{enumerate}

To present some additional analogies, let us fix some conventions.
On the set $\{0,1\}$ we consider the discrete topology and the canonical total order ($0 \leq 1$); on the unit interval $[0,1]$ we consider the Euclidean topology and the `less or equal' total order $\leq$.
Powers are set-theoretic powers equipped with the product topology and product order, and subsets are equipped with the induced topology and the induced order (see \cref{s:init-topology,s:init-preorder}).
Isomorphisms of topological structures are simply homeomorphisms, and isomorphisms of ordered-topological structures are homeomorphisms which preserve and reflect the partial order.
We can now state our desired analogies.
\begin{enumerate}[wide]
	\item A \emph{Stone space} is a topological space which is isomorphic to a closed subspace of a power of the topological space $\{0,1\}$.
	\item A \emph{Priestley space} is a topological space equipped with a partial order which is isomorphic to a closed subspace of a power of the ordered-topological space $\{0,1\}$.
	\item A \emph{compact Hausdorff space} is a topological space which is isomorphic to a closed subspace of a power of the topological space $[0,1]$.
	\item A \emph{compact ordered space} is a topological space equipped with a partial order which is isomorphic to a closed subspace of a power of the ordered-topological space $[0,1]$ (\cref{l:closed-sub-power} below).
\end{enumerate}
In the following sections, we make sure that the characterisations of compact ordered spaces stated above are correct.

%%%%%%%%%%%%%%%%%%%%%%%%%%%%%%%%%%%   SECTION   %%%%%%%%%%%%%%%%%%%%%%%%%%%%%%%%%%%%

\section{Compact ordered spaces}

The following concept is an ordered analogue of the Hausdorff property.

\begin{definition} [{See \cite[Chapter~I, Section~1, p.\ 25]{Nachbin}}] \label{d:closed}
	We say that a preorder on a topological space $X$ is \emph{closed}\index{preorder!closed} if it is a closed subset of the topological space $X \times X$.
\end{definition}

The reader who is acquainted with nets\index{net} (\cite[Definition~11.2]{Willard1970}) will notice that a preorder $\les$ on a topological space $X$ is closed if, and only if\footnote{This equivalence holds because (i) a set is closed if, and only if, together with any net it contains all its limits \cite[Theorem~11.7]{Willard1970}, and (ii) the product topology is the topology of pointwise convergence \cite[Theorem~11.9]{Willard1970}.}, for any two converging nets $x_i \to x$ and $y_i \to y$, the property `$x_i \les y_i$ for all $i$' implies $x \les y$.
Note that, replacing $\les$ with $=$ in this condition, we obtain the Hausdorff property \cite[Theorem~13.7]{Willard1970}.

\begin{example}
	By \cref{p:diag-closed}, the discrete order $\{(x, x) \in X \times X \mid x \in X\}$ on a topological space $X$ is closed if, and only if, $X$ is Hausdorff.
\end{example}

The following shows that the first characterisation of compact ordered spaces in \cref{s:characterisations} is correct.

\begin{lemma}[{\cite[Proposition~1, p.\ 26]{Nachbin}}] \label{l:char-closed}
	A preorder on a topological space $X$ is closed if, and only if, for all $x, y \in X$ such that $x \not \leq y$, there exist an open up-set $U$ containing $x$ and an open down-set $V$ containing $y$ such that $U \cap V = \emptyset$.
\end{lemma}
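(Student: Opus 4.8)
The plan is to prove the two implications separately, the direction ``separation implies closed'' being the short one. For that direction I would argue directly that the complement of the preorder is open in $X \times X$. Given a pair $(p,q)$ with $p \not\leq q$, the hypothesis yields an open up-set $U \ni p$ and an open down-set $V \ni q$ with $U \cap V = \emptyset$. Then $U \times V$ is an open neighbourhood of $(p,q)$ that misses $\leq$: if $(s,t) \in U \times V$ satisfied $s \leq t$, then $s \in U$ together with $U$ being an up-set would force $t \in U$, contradicting $t \in V$ and $U \cap V = \emptyset$. Hence every point of $(X \times X) \setminus {\leq}$ is interior to it, so $\leq$ is closed. This half uses nothing beyond the definitions.

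For the forward direction (closed implies separation) I would first record that the principal up-set $\upset x \df \{z \in X \mid x \leq z\}$ and the principal down-set $\downset y \df \{z \in X \mid z \leq y\}$ are closed, being the preimages of the closed set $\leq$ under the continuous maps $z \mapsto (x,z)$ and $z \mapsto (z,y)$. Moreover $x \not\leq y$ together with transitivity makes them disjoint, since a common element $z$ would give $x \leq z \leq y$ and hence $x \leq y$. The key observation is that the separation we seek is exactly a separation of these two closed sets: any open up-set containing $x$ automatically contains $\upset x$, and any open down-set containing $y$ automatically contains $\downset y$. So producing disjoint $U \ni x$ and $V \ni y$ of the required kind is the same as producing an open up-set $U \supseteq \upset x$ and an open down-set $V \supseteq \downset y$ with $U \cap V = \emptyset$.

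The main obstacle is therefore to separate the disjoint closed sets $\upset x$ and $\downset y$ by an open up-set and an open down-set. The naive attempt $U \df X \setminus \downset y$ and $V \df X \setminus \upset x$ does give an open up-set and an open down-set containing $x$ and $y$, but these generally overlap, so genuine interpolation is needed: one must thicken $\upset x$ to an open up-set $U$ admitting a closed up-set $C$ with $\upset x \seq U \seq C$ and $C \cap \downset y = \emptyset$, whereupon $V \df X \setminus C$ is the desired open down-set. Carrying out this thickening is the heart of the matter, and I would handle it by an ordered normality / ordered Urysohn argument applied to the closed up-set $\upset x$ and the closed down-set $\downset y$ (the ordered version of Urysohn's lemma being exactly the tool developed in this chapter). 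This ordered separation step is where the ambient separation properties of the space enter, and I expect it to be the only nontrivial part of the argument.
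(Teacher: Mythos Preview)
Your reverse implication is correct and is exactly the standard argument.

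For the forward implication, however, your plan does not work. The lemma is stated for an \emph{arbitrary} topological space with a closed preorder: no compactness, no normality, no ordered normality is assumed. The ordered Urysohn lemma you propose to invoke (the paper's \cref{t:Urysohn}, which is Nachbin's Theorem~1) is proved only for compact ordered spaces (more generally, for normally ordered spaces), and in Nachbin's book it appears well \emph{after} Proposition~1. So appealing to it here is both out of scope (the hypothesis is unavailable) and circular in the logical development. Your remark that ``this is where the ambient separation properties of the space enter'' is the tell: there are no ambient separation properties to use.

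Nachbin's actual argument is completely elementary and avoids $\upset x$ and $\downset y$ altogether. Since $\leq$ is closed and $(x,y)\notin{\leq}$, there are open sets $A\ni x$ and $B\ni y$ with $(A\times B)\cap{\leq}=\emptyset$, i.e.\ no element of $A$ is below any element of $B$. Then take $U\df\upset A$ and $V\df\downset B$. These are an up-set and a down-set, $A\subseteq U$ and $B\subseteq V$ (so $U$ is a neighbourhood of $x$ and $V$ a neighbourhood of $y$), and they are disjoint: if $z\in U\cap V$ then $a\leq z\leq b$ for some $a\in A$, $b\in B$, whence $a\leq b$, a contradiction. No interpolation or normality is needed.

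One small caveat: Nachbin works with (not necessarily open) neighbourhoods in the Bourbaki sense, and the sets $\upset A$, $\downset B$ need not be open in general, so what his Proposition~1 literally yields is the neighbourhood version of the separation condition. The paper's phrasing with ``open up-set'' and ``open down-set'' is a mild paraphrase; for the uses made of the lemma in the thesis the neighbourhood version suffices (and is what the citation actually provides).
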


\begin{lemma}[{\cite[Proposition~2, p.\ 27]{Nachbin}}]
	Every topological space $X$ equipped with a closed partial order is a Hausdorff space.
\end{lemma}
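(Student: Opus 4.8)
The plan is to reduce the Hausdorff property directly to the separation statement furnished by \cref{l:char-closed}, using anti-symmetry of the partial order to reach incomparability from mere distinctness. Since we are already given that $\leq$ is closed, the forward implication of \cref{l:char-closed} is available: whenever $x \not\leq y$, there exist an open up-set $U$ containing $x$ and an open down-set $V$ containing $y$ with $U \cap V = \emptyset$.

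First I would fix two distinct points $x, y \in X$ and aim to produce disjoint open neighbourhoods of them. Because $\leq$ is a partial order, it is anti-symmetric, so $x \leq y$ and $y \leq x$ cannot both hold; together with $x \neq y$ this forces at least one of $x \not\leq y$ or $y \not\leq x$. This is the only place where anti-symmetry (as opposed to mere reflexivity and transitivity) is used, and it is exactly what lets distinctness substitute for incomparability.

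Next I would split into the two symmetric cases. If $x \not\leq y$, applying \cref{l:char-closed} gives an open up-set $U \ni x$ and an open down-set $V \ni y$ with $U \cap V = \emptyset$; discarding the up-/down-set structure, these are simply disjoint open sets separating $x$ and $y$. If instead $y \not\leq x$, I would apply \cref{l:char-closed} to the pair $(y, x)$, obtaining an open up-set containing $y$ and a disjoint open down-set containing $x$, again separating the two points.

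Since in either case $x$ and $y$ have disjoint open neighbourhoods, $X$ is Hausdorff. I do not expect any genuine obstacle: all the topological work is already packaged in \cref{l:char-closed}, and the proof is essentially a one-line case split whose only subtlety is remembering to invoke anti-symmetry before calling on the separation lemma.
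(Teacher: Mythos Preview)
The paper does not supply its own proof of this lemma; it simply cites \cite[Proposition~2, p.\ 27]{Nachbin}. Your argument via \cref{l:char-closed} and anti-symmetry is correct and is the standard route.
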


\begin{definition} [{See \cite[Chapter I, Section 3, p.\ 44]{Nachbin}}]\label{d:compact-ordered-space}
	A \emph{compact ordered space}\index{compact ordered space} $(X, \tau, \leq)$ consists of a set $X$ with a compact topology $\tau$ and a closed partial order $\leq$.
\end{definition}

(We have already observed that the closure of the order has a natural characterisation in terms of convergence of nets.
The same happens for the compactness and the Hausdorff properties \cite[Theorems~13.7 and 17.4]{Willard1970}.)

To keep the notation simple, we will often write $X$ or $(X, \leq)$ instead of $(X, \tau, \leq)$.

We denote with $\CompOrd$ the category of compact ordered spaces and order-preserving continuous maps.

\begin{examples} \label{ex:comp-ord-spaces}
	\begin{enumerate}[wide]
		\item \label{i:interval}A basic example of compact ordered space is any compact interval $[a,b]\subseteq \R$---let alone the unit interval $[0,1]$---equipped with the Euclidean topology and the usual total order.
		\item Every compact Hausdorff space equipped with the discrete order is a compact ordered space.
		\item Every finite partially ordered set equipped with the discrete topology is a compact ordered space.
		\item Every Priestley space is a compact ordered space.
	\end{enumerate}
\end{examples}

%%%%%%%%%%%%%%%%%%%%%%%%%%%%%%%%%%%   SECTION   %%%%%%%%%%%%%%%%%%%%%%%%%%%%%%%%%%%%

\section{Limits and colimits}

In this section, we show that the category $\CompOrd$ of compact ordered spaces is complete and cocomplete (see also \cite[Corollary 2, p.\ 2153]{Tholen2009}).
Moreover, we characterise limits and finite coproducts: the product of a family of compact ordered spaces consists of the set-theoretic product equipped with the product topology and product order, and the coproduct of a finite family of compact ordered spaces consists of their disjoint union equipped with the coproduct topology and coproduct order.

To prove these facts, we play with reflections and topological functors.
Let us first define some categories.
\begin{center}
	\begin{tabular}{l|ll}
		\hline
		$\Set$					& Objects: 		& Sets.\\
									& Morphisms: 	& Functions.\\
		\hline
		$\Preord$ 				& Objects: 		& Preordered sets.\\
									& Morphisms:	& Order-preserving functions.\\
		\hline
		$\Top$ 					& Objects: 		& Topological spaces.\\
									& Morphisms: 	& Continuous functions.\\
		\hline
		$\CompHaus$ 			& Objects:  	& Compact Hausdorff spaces.\\
									& Morphisms: 	& Continuous functions.\\
		\hline
		$\TopXPreord$ 			& Objects:  	& Topological spaces with a preorder.\\
									& Morphisms:	& Order-preserving continuous functions.\\
		\hline
		$\CompHausXPreord$	& Objects:  	& Compact Hausdorff spaces with a preorder.\\
									& Morphisms: 	& Order-preserving continuous functions.\\
		\hline
		$\CompHausPreord$ 	& Objects: 		& Compact Hausdorff spaces with a closed preorder.\\
									& Morphisms:	& Order-preserving continuous functions.\\
		\hline
		$\CompOrd$				& Objects: 		& Compact ordered spaces.\\
									& Morphisms: 	& Order-preserving continuous functions.\\
		\hline
	\end{tabular}
\end{center}

The following diagram illustrates some of the inclusion and forgetful functors between these categories.
We will observe that the functors labelled `refl.' are inclusions of reflective full subcategories, and the functors labelled `topol.' are topological functors.
\begin{equation} \label[figure]{e:functors}
	\begin{tikzcd}
		\CompOrd	\arrow[hook]{r}{\text{refl.}} & \CompHausPreord	\arrow[hook]{r}{\text{refl.}} \arrow[swap]{rd}{\text{topol.}} & \CompHausXPreord	\arrow{d}{\text{topol.}} \arrow[hook]{r}{\text{refl.}} & \TopXPreord \arrow[swap]{d}{\text{topol.}} \arrow{dr}{\text{topol.}} &\\
		& & \CompHaus \arrow[hook, swap]{r}{\text{refl.}} & \Top	\arrow[swap]{dr}{\text{topol.}} & \Preord \arrow{d}{\text{topol.}}\\
		& & & & \Set
	\end{tikzcd}
\end{equation}

To this end, we recall from \cref{d:reflective} that a \emph{reflective full subcategory} of a category $\cat{C}$ is a full subcategory $\cat{D}$ whose inclusion functor $\cat{D} \hookrightarrow \cat{C}$ admits a left adjoint, called the \emph{reflector}.

Furthermore, we recall from \cref{d:topological-functor} that a faithful functor $G\colon \cat{A}\to \cat{X}$ is called \emph{topological} provided that every family of morphisms $(f_i\colon X\to G(A_i))_{i\in I}$ in $\cat{X}$ has a unique $G$-initial lift.
Spelling out the details, the existence of a unique $G$-initial lift amounts to say that there exists a unique family of morphisms $(\overline{f}_i\colon A\to A_i)_{i\in I}$ in $\cat{A}$ that is 
\begin{enumerate}
	\item 
		a \emph{lift} of $(f_i\colon X\to G(A_i))_{i\in I}$, i.e.\ such that $G(A)=X$ and, for every $i \in I$, $G(\overline{f}_i)=f_i$, and 
	\item
		\emph{$G$-initial}, i.e., for each object $C$ of $\cat{A}$, an $\cat{X}$-morphism $h\colon U(C)\to X$ is (the image of) an $\cat{A}$-morphism from $C$ to $A$ if, and only if, for every $i \in I$, the composite $U(C) \xrightarrow{h} X \xrightarrow{f_i} G(A_i)$ is (the image of) an $\cat{A}$-morphism from $C$ to $A_i$.
\end{enumerate}

\begin{remark} \label{r:fundamental-pleasant}
	We observe the following pleasant facts.
	\begin{enumerate}[wide]
	
		\item \label{i:top-top-topolo}The forgetful functor $\Top \to \Set$ is topological (see \cref{exs:topological-functors!}).
		
		\item \label{i:pre-pre-preordi} The forgetful functor $\Preord \to \Set$ is topological (see \cref{exs:topological-functors!}).
		
		\item\label{i:Stone-Cech} The inclusion functor $\CompHaus \hookrightarrow \Top$ is reflective.
		The reflector is the Stone-\v{C}ech compactification\index{Stone-\v{C}ech compactification}\index{compactification!Stone-\v{C}ech} functor $\beta \colon \Top \to \CompHaus$ \cite[3.3.9.d]{Borceux1994-vol1}.
		
		\item \label{i:symmetrise}The inclusion functor $\Ord \hookrightarrow \Preord$ is reflective.
		The reflector maps a preordered set $(X, \les)$ to the partially ordered set $(X/{\sim}, \leq)$, where $\sim$ is the equivalence relation defined by
		\[
			x \sim y \Longleftrightarrow x \les y \text{ and } y \les x,
		\]
		and $\leq$ is the quotient order with respect to the map $X \epi X / {\sim}$, which, in this case, is defined by
		\[
			[x]_{\sim} \leq [y]_{\sim} \Longleftrightarrow x \les y.
		\]
		
		\item \label{i:chiudi!} The inclusion functor $\TopPreord \hookrightarrow \TopXPreord$ is reflective.
		The reflector maps a topological space $X$ equipped with a preorder $\les$ to the topological space $X$ itself, equipped with the smallest closed preorder on $X$ which extends $\les$.
		Note that the reflector $\TopXPreord \to \TopPreord$ commutes with the forgeftul functors $\TopPreord \to \Top$ and $\TopXPreord \to \Top$.
		
	\end{enumerate}
\end{remark}

The facts above are the ingredients to obtain the following.

\begin{remark} \label{r:figure-is-right}
	In \cref{e:functors}, the functors labelled `refl.' are inclusions of reflective full subcategories, and the functors labelled `topol.' are topological functors.	
	In the following, we carry out some details.
	\begin{description}[wide]
	
		\item[Reflector of $\CompHaus \hookrightarrow \Top$.]
		See \cref{i:Stone-Cech} in \cref{r:fundamental-pleasant}.
		
		\item[Reflector of $\CompHausXPreord \hookrightarrow \TopXPreord$.]
		The reflector maps an object $(X, \les)$ to $(\beta X, \les')$, where $\les'$ is the final preorder with respect to the map $X \to \beta X$, i.e. the smallest preorder that makes this function order-preserving.
		
		\item[Reflector of $\CompHausPreord \hookrightarrow \CompHausXPreord$.]
		The reflector is the restriction of the reflector of the inclusion $\TopPreord \hookrightarrow \TopXPreord$ (see \cref{i:chiudi!} in \cref{r:fundamental-pleasant}).
		
		\item[Reflector of $\CompOrd \hookrightarrow \CompHausPreord$.]
		The reflector is the restriction of the reflector of the inclusion functor $\TopXOrd \hookrightarrow \TopXPreord$.
		The reflector 
		\[
			\TopXPreord \longrightarrow \TopXOrd
		\]
		maps a topological space with a preorder $(X, \les)$ to the set $X/({\les}\cap {\les^\oprel})$ with the quotient order (which is a partial order) as described by the reflector of $\Ord \hookrightarrow \Preord$, equipped with the quotient topology.
		This reflector restricts to a functor $\CompHausPreord \to \CompOrd$.
		Indeed, if $(X, \les)$ is compact, then also its continuous image $X/({\les}\cap {\les^\oprel})$ is compact, by \cref{p:image-of-compact}.
		Since any continuous map between compact Hausdorff spaces is closed (\cref{p:comp-Haus-closed}), and the continuous map 
		\[
			X \times X \longrightarrow X/({\les}\cap {\les^\oprel}) \times X/({\les}\cap {\les^\oprel})
		\]
		maps the subset $\les$ to $\leq$ and the subset ${\les}\cap {\les^\oprel}$ to the diagonal of $X/({\les}\cap {\les^\oprel})$, we have that $X/({\les}\cap {\les^\oprel})$ has a closed partial order and a Hausdorff topology.
		
		Let us now take a look at the topological functors.
		\item[The topological functor $\Top \to \Set$.]
		See \cref{i:top-top-topolo} in \cref{r:fundamental-pleasant}.
		\item[The topological functor $\Preord \to \Set$.]
		See \cref{i:pre-pre-preordi} in \cref{r:fundamental-pleasant}.
		\item[The topological functor $\TopXPreord \to \Preord$.]
		The unique initial lift of a family of order-preserving functions $(f_i \colon X \to A_i)$ is obtained by providing $X$ with the initial topology.
		\item[The topological functor $\TopXPreord \to \Top$.]
		The unique initial lift of a family of continuous functions $(f_i \colon X \to A_i)$ is obtained by providing $X$ with the initial preorder. 
		\item[The topological functor $\CompHausXPreord \to \CompHaus$.]
		The unique initial lift of a family of continuous functions $(f_i \colon X \to A_i)$ is obtained by providing $X$ with the initial preorder.
		\item[The topological functor $\CompHausPreord \to \CompHaus$.]
		Recall from the discussion above that the inclusion functor $\CompHausPreord \hookrightarrow \CompHausXPreord$ has a reflector that commutes with the forgetful functors to $\CompHaus$.
		Therefore, by \cite[Proposition~21.31]{AdamekHerrlichStrecker2006}, denoting with $U$ the forgetful functor $\CompHausXPreord \to \CompHaus$, every $U$-initial source whose codomain is a family of objects in $\CompHausPreord$ has its domain in $\CompHausPreord$.
		Hence, by \cite[Proposition~21.30]{AdamekHerrlichStrecker2006}, the forgetful functor $\CompHausPreord \to \CompHaus$ is topological.
		
	\end{description}
\end{remark}

\begin{proposition}\label{p:PC-complete-cocomplete}
	The category $\CompOrd$ is complete and cocomplete.
\end{proposition}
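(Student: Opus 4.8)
The plan is to transport completeness and cocompleteness along the functors displayed in \cref{e:functors}, using the two transfer principles already recorded in the excerpt: a category admitting a topological functor to $\cat{X}$ is complete (resp.\ cocomplete) exactly when $\cat{X}$ is (\cref{t:complete-topological}), and a reflective full subcategory of a complete (resp.\ cocomplete) category is again complete (resp.\ cocomplete) (\cref{p:complete-reflective}). The facts that the relevant inclusions are reflective and that the relevant forgetful functors are topological have all been verified in \cref{r:figure-is-right}, so the argument amounts to chaining these results in the correct order.

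First I would start from the base of the diagram: $\Set$ is complete and cocomplete. Since the forgetful functor $\Preord \to \Set$ is topological, \cref{t:complete-topological} yields that $\Preord$ is complete and cocomplete. Next, the forgetful functor $\TopXPreord \to \Preord$ is topological, so a second application of \cref{t:complete-topological} shows that $\TopXPreord$ is complete and cocomplete. (One could equally route this through $\Top$, which is complete and cocomplete by the topological functor $\Top \to \Set$, and then use the topological functor $\TopXPreord \to \Top$.)

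It then remains to climb the three reflective inclusions
\[
	\CompOrd \rmono \CompHausPreord \rmono \CompHausXPreord \rmono \TopXPreord.
\]
Applying \cref{p:complete-reflective} to $\CompHausXPreord \hookrightarrow \TopXPreord$ gives that $\CompHausXPreord$ is complete and cocomplete; a second application to $\CompHausPreord \hookrightarrow \CompHausXPreord$ transfers these properties to $\CompHausPreord$; and a final application to $\CompOrd \hookrightarrow \CompHausPreord$ yields the claim.

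There is no genuine obstacle here: all the conceptual work---identifying the reflectors (in particular the Stone--\v{C}ech-type reflection and the order-quotient that antisymmetrises a preorder while keeping the space compact Hausdorff) and checking that the forgetful functors lift initial sources---has already been carried out in \cref{r:figure-is-right}. The only point requiring a little care is to invoke the two transfer principles in the right direction: completeness passes \emph{along} topological functors in both directions but only \emph{up} reflective inclusions, so one must first reach $\TopXPreord$ from $\Set$ through topological functors before climbing the reflective tower down to $\CompOrd$.
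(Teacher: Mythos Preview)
Your proof is correct and uses exactly the same strategy as the paper: transport completeness and cocompleteness from $\Set$ via the transfer principles of \cref{t:complete-topological} and \cref{p:complete-reflective}, relying on \cref{r:figure-is-right} for the needed structural facts. The only difference is the route through the diagram: the paper takes the shorter chain $\Set \to \Top \to \CompHaus \to \CompHausPreord \to \CompOrd$ (alternating topological and reflective steps), whereas you go through $\Preord$ and $\TopXPreord$ and then climb three reflective inclusions; both paths are valid and you even flag the alternative yourself.
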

\begin{proof}
	By \cref{r:figure-is-right}, $\CompOrd$ is a full reflective subcategory of $\CompHausPreord$, which is topological over $\CompHaus$, which is a full reflective subcategory of $\Top$, which is topological over $\Set$.
	The category $\Set$ is complete and cocomplete.
	By \cref{p:complete-reflective,t:complete-topological}, $\CompOrd$ is complete and cocomplete.
\end{proof}
\begin{proposition} \label{p:pres-limits}
	Every functor in \cref{e:functors} preserves limits.
	In particular, the forgetful functors $\CompOrd \to \Set$, $\CompOrd \to \Preord$ and $\CompOrd \to \Top$ preserve limits.
\end{proposition}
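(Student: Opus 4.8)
The plan is to use \cref{r:figure-is-right}, which identifies every arrow in \cref{e:functors} as either the inclusion of a reflective full subcategory (those labelled `refl.') or a topological functor (those labelled `topol.'). It then suffices to show that each of these two kinds of functor preserves limits, and to recognise the three named forgetful functors as composites of arrows from the diagram.

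First I would handle the reflective inclusions. By \cref{d:reflective}, the inclusion of a reflective full subcategory admits a left adjoint (the reflector); hence such an inclusion is a right adjoint, and so preserves limits by \cref{p:adjoint-preservation}. For the topological functors, I would invoke \cite[Proposition~21.15]{AdamekHerrlichStrecker2006} (recalled in the excerpt just before \cref{t:complete-topological}), according to which every topological functor preserves limits. Together these two observations show that every functor in \cref{e:functors} preserves limits.

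For the `in particular' clause, I would use that a composite of limit-preserving functors preserves limits, together with the fact that each named forgetful functor is such a composite: the functor $\CompOrd \to \Set$ factors as
\[
	\CompOrd \hookrightarrow \CompHausPreord \to \CompHaus \hookrightarrow \Top \to \Set,
\]
the functor $\CompOrd \to \Top$ as the truncation of this chain ending at $\Top$, and the functor $\CompOrd \to \Preord$ as
\[
	\CompOrd \hookrightarrow \CompHausPreord \hookrightarrow \CompHausXPreord \hookrightarrow \TopXPreord \to \Preord.
\]
There is no genuine difficulty here: the whole argument is an assembly of two standard facts. The only point that deserves a moment's care is verifying that each forgetful functor really does coincide with the displayed composite of diagram arrows, which is immediate once one observes that the relevant reflectors and initial lifts act as the identity on underlying data, so that the composites forget exactly the intended structure.
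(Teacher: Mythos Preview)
Your proof is correct and follows essentially the same approach as the paper: invoke \cref{r:figure-is-right} to classify each arrow as reflective or topological, then argue that each type preserves limits. The only cosmetic difference is that the paper handles both cases uniformly by observing that topological functors are also right adjoints (via \cref{p:adjoints-topological}) and then applying \cref{p:adjoint-preservation} once, whereas you cite \cite[Proposition~21.15]{AdamekHerrlichStrecker2006} directly for the topological case; your explicit verification of the composites for the `in particular' clause is a welcome addition that the paper leaves implicit.
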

\begin{proof}
	By \cref{r:figure-is-right}, each functor in \cref{e:functors} is either an inclusion of a full reflective subcategory or a topological functor.
	In either cases, it is a right adjoint: in the first case by definition of full reflective subcategory, in the second case by \cref{p:adjoints-topological}.
	By \cref{p:adjoint-preservation}, right adjoint preserve limits.
\end{proof}

\begin{lemma}\label{l:limits}
	The product in $\CompOrd$ of a family of compact ordered spaces $(X_i)_{i \in I}$ consists of the set-theoretic product $\prod_{i \in I} X_i$ equipped with the product topology and product order.
\end{lemma}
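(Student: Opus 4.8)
The plan is to identify the product $P$ in $\CompOrd$ componentwise---its underlying set, its topology, and its order---by exploiting the fact (\cref{p:pres-limits}) that the three forgetful functors $\CompOrd \to \Set$, $\CompOrd \to \Top$ and $\CompOrd \to \Preord$ all preserve limits, and in particular products. Since $\CompOrd$ is complete by \cref{p:PC-complete-cocomplete}, the product $P$ of the family $(X_i)_{i \in I}$, together with its projections $(\pi_i)_{i \in I}$, exists in $\CompOrd$; it then suffices to compute what each forgetful functor does to it.

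First I would apply the forgetful functor $\CompOrd \to \Set$. As it preserves products, the underlying set of $P$ is the product in $\Set$ of the underlying sets of the $X_i$, namely the set-theoretic product $\prod_{i \in I} X_i$, and the underlying functions of the $\pi_i$ are the set-theoretic projections. This already pins down the underlying set and the projections, so that all that remains is to determine the topology and the order carried by $\prod_{i \in I} X_i$.

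Next I would apply the forgetful functor $\CompOrd \to \Top$. Again this preserves products, so the underlying topological space of $P$, with the projections just identified, is the product in $\Top$ of the spaces $X_i$. Since $\Top \to \Set$ is topological and lifts limits via initiality, this $\Top$-product is $\prod_{i \in I} X_i$ equipped with the initial topology with respect to the set-theoretic projections, which is by definition the product topology (see \cref{s:init-topology}). In exactly the same way, applying $\CompOrd \to \Preord$ and using that $\Preord \to \Set$ is topological, the underlying preordered set of $P$ is $\prod_{i \in I} X_i$ equipped with the initial preorder with respect to the projections, i.e.\ the product order (see \cref{s:init-preorder}).

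Putting these three computations together determines $P$ completely: its underlying set is $\prod_{i \in I} X_i$, its topology is the product topology, and its order is the product order. Consistency is automatic, since the product order of a family of partial orders is again a partial order, matching the requirement that $P$ be a compact ordered space. I do not expect a genuine obstacle here: the only point requiring care is that preservation of limits by the three forgetful functors gives product \emph{cones} in the target categories, and one must observe that, because the underlying-set functors agree on $P$ and on the expected structure, the comparison isomorphisms are the identity on $\prod_{i \in I} X_i$; the topology and order are then forced to be the initial (hence product) ones with respect to the projections.
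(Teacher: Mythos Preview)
Your proposal is correct and follows exactly the approach of the paper, which simply writes ``By \cref{p:pres-limits}'' as the entire proof. You have unpacked this one-line invocation into the three applications of the forgetful functors to $\Set$, $\Top$ and $\Preord$, which is precisely the intended reading; your care about the comparison isomorphisms being identities is a reasonable extra remark but not something the paper spells out.
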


\begin{proof}
	By \cref{p:pres-limits}.
\end{proof}

We conclude this section with a note on finite coproducts.

\begin{lemma} \label{l:preserves-finite-coproducts}
	Every functor in \cref{e:functors} preserves finite coproducts.
	In particular, a coproduct in $\CompOrd$ of two compact ordered spaces $X$ and $Y$ is given by the disjoint union of $X$ and $Y$ equipped with the coproduct topology and coproduct order.
\end{lemma}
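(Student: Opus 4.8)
The plan is to treat the two kinds of functors in \cref{e:functors} separately, and then to read off the concrete description from the forgetful functors into $\Top$ and $\Preord$. First, the functors labelled `topol.'\ preserve \emph{all} colimits, as recalled above for topological functors; in particular they preserve finite coproducts. The real work concerns the reflective inclusions, which are right adjoints and hence \emph{a priori} need not preserve colimits at all. Here I would invoke the general principle that colimits in a reflective full subcategory $\cat{D} \hookrightarrow \cat{C}$ are computed by applying the reflector to the corresponding colimit in $\cat{C}$; consequently the inclusion preserves a given colimit exactly when the colimit formed in $\cat{C}$ out of objects of $\cat{D}$ already lies in $\cat{D}$ (equivalently, the unit of the reflection is an isomorphism there). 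Thus for each reflective inclusion it suffices to check that a finite coproduct of objects of the subcategory, computed one level up, stays in the subcategory.

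These checks are the heart of the matter, and each reduces to a closure property of finite disjoint unions: for $\CompHaus \hookrightarrow \Top$ and $\CompHausXPreord \hookrightarrow \TopXPreord$, a finite disjoint union of compact Hausdorff spaces is again compact Hausdorff; for $\CompOrd \hookrightarrow \CompHausPreord$, the coproduct preorder on a finite disjoint union of \emph{partial} orders is again antisymmetric, since in the disjoint union distinct components are order-incomparable, so $x \les y$ and $y \les x$ force $x$ and $y$ into a common component and hence $x = y$; and for $\CompHausPreord \hookrightarrow \CompHausXPreord$, the coproduct preorder on a finite disjoint union of closed preorders is again closed. This last point is where I expect the only genuine subtlety: writing $X \df A + B$, the product decomposes as $(A \times A) + (A \times B) + (B \times A) + (B \times B)$ with each summand clopen, so a subset of $X \times X$ is closed if, and only if, its trace on each summand is; the coproduct preorder meets only $A \times A$ and $B \times B$, where it coincides with the given closed preorders, and is therefore closed. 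It is precisely the clopenness of the cross-terms $A \times B$ and $B \times A$ --- valid for \emph{finite} but not arbitrary coproducts --- that confines the statement to finite coproducts.

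Combining the two cases, every functor in \cref{e:functors} preserves finite coproducts, and hence so do all composites, including the forgetful functors $\CompOrd \to \Top$ and $\CompOrd \to \Preord$. To obtain the concrete description, I would then note that the coproduct of $X$ and $Y$ in $\Top$ is their disjoint union equipped with the coproduct topology, and in $\Preord$ their disjoint union equipped with the coproduct order. Since both forgetful functors preserve finite coproducts and a compact ordered space is determined by its topology and its order on a common underlying set (which is itself the coproduct $X + Y$ in $\Set$), the underlying space and preorder of the coproduct $X + Y$ in $\CompOrd$ must be exactly the disjoint union carrying the coproduct topology and the coproduct order. Finally, the closure checks of the previous paragraph certify that this disjoint union is indeed a compact ordered space, completing the identification.
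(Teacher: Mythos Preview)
Your proposal is correct and follows essentially the same approach as the paper: topological functors preserve all colimits, while for each reflective inclusion one checks that a finite disjoint union of objects of the subcategory, formed in the ambient category, already satisfies the defining conditions of the subcategory (compact Hausdorff, antisymmetric, closed preorder). You give more detail than the paper does---in particular the clopen decomposition of $(A+B)\times(A+B)$ to verify closedness, and the explicit derivation of the concrete description via the forgetful functors---and you omit the separate remark about the nullary coproduct (the empty set), but this is covered by your argument and is in any case trivial.
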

\begin{proof}
	This is clearly true for the topological functors.
	Let us settle the statement for the inclusion functors.
	The case of binary coproduct follows from the following observations.
	\begin{enumerate}[wide]
		\item If $X$ and $Y$ are Hausdorff (resp. compact) spaces, then the coproduct topology on their disjoint union is Hausdorff (resp. compact).
			
		\item If $X$ and $Y$ are partially ordered sets, then the coproduct preorder on their disjoint union is a partial order.
		
		\item If $X$ and $Y$ are topological spaces equipped with a closed preorder, then the coproduct preorder on their disjoint union is closed with respect to the coproduct topology.
	\end{enumerate}
	The nullary coproduct turns out to be the emptyset (with the only possible topology and preorder) in each category under consideration.
\end{proof}

%%%%%%%%%%%%%%%%%%%%%%%%%%%%%%%%%%%   SECTION   %%%%%%%%%%%%%%%%%%%%%%%%%%%%%%%%%%%%

\section{Ordered Urysohn's lemma}

The following important result, due to L.\ Nachbin, is an ordered version of a celebrated result of P.\ Urysohn \cite[Urysohn's lemma 15.6]{Willard1970}\index{Urysohn's lemma}.

\begin{theorem} [Ordered version of Urysohn's lemma]\label{t:Urysohn}\index{Urysohn's lemma!ordered}
	For any two	disjoint closed subsets $F_0$, $F_1$ of a compact ordered space $X$ where $F_0$ is a down-set and $F_1$ is an up-set, there exists a continuous order-preserving function $f \colon X \to [0,1]$ such that $f(x) = 0$ for $x \in F_0$ and $f(x) = 1$ for $x \in F_1$.
\end{theorem}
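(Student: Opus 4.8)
The plan is to transport the classical proof of Urysohn's lemma into the lattice of \emph{open down-sets} of $X$, so that the separating function—defined as an infimum of a suitable dyadically indexed family of open down-sets—comes out order-preserving automatically. Two ingredients special to the ordered setting make this work: an \emph{ordered normality} property, separating a closed down-set from a disjoint closed up-set by disjoint open down- and up-sets, and the fact that the down-closure of a closed set is again closed. Throughout I will use freely that finite unions and finite intersections of open down-sets are again open down-sets, and that complementation interchanges open down-sets with closed up-sets.

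I would first establish the \textbf{ordered normality lemma}: if $A$ is a closed down-set and $B$ a closed up-set with $A \cap B = \emptyset$, then there are disjoint open sets $U \supseteq A$ and $V \supseteq B$ with $U$ a down-set and $V$ an up-set. Fix $a \in A$; for each $b \in B$ one has $b \not\leq a$ (else $b \in \downset a \seq A$, contradicting disjointness), so \cref{l:char-closed} yields a disjoint open up-set around $b$ and open down-set around $a$. Since $B$ is closed in a compact space it is compact (\cref{p:closed-in-compact}), so finitely many of the up-sets cover $B$; their union is an open up-set $V_a \supseteq B$ and the corresponding finite intersection is an open down-set $W_a \ni a$ disjoint from $V_a$. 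Now $A$ is likewise compact, so finitely many $W_a$ cover it; their union $U$ is an open down-set $\supseteq A$ and the intersection $V$ of the matching $V_a$ is an open up-set $\supseteq B$, and $U \cap V = \emptyset$ by construction. Taking complements gives the \textbf{interpolation step}: whenever a closed down-set $A$ sits inside an open down-set $W$, there is an open down-set $U$ with $A \seq U \seq \overline{U} \seq W$ (put $B = X \setminus W$, obtain $U,V$ as above, and note $\overline U \seq X \setminus V \seq W$). I also need that the down-closure $\downset F$ of a closed set $F$ is closed: indeed $\downset F$ is the image under the first projection of $({\leq}) \cap (X \times F)$, a closed hence compact subset of $X \times X$ (\cref{p:closed-in-compact}), and the projection is continuous, so $\downset F$ is compact (\cref{p:image-of-compact}) and therefore closed in the Hausdorff space $X$ (\cref{p:compact-in-Haus}).

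With these tools I run the \textbf{dyadic construction}. Setting $W = X \setminus F_1$, an open down-set containing $F_0$, I recursively build open down-sets $U_t$ indexed by the dyadics $t \in \Dyadu \cap (0,1)$ such that $F_0 \seq U_t$, $\overline{U_t} \seq X \setminus F_1$, and $\overline{U_s} \seq U_t$ whenever $s < t$. The base step interpolates between the closed down-set $F_0$ and $W$. To insert $U_r$ for $r = (s+t)/2$ between consecutive values (treating the endpoints as $F_0$ and $W$), I interpolate between the closed down-set $\downset \overline{U_s}$—closed by the previous paragraph and contained in $U_t$ since $\overline{U_s} \seq U_t$ and $U_t$ is a down-set—and the open down-set $U_t$, which yields $\overline{U_s} \seq U_r$ and $\overline{U_r} \seq U_t$. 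Finally I define
\[
	f(x) = \inf\{\, t \in \Dyadu \cap (0,1) : x \in U_t \,\},
\]
with $\inf \emptyset = 1$. Then $f = 0$ on $F_0$ (which lies in every $U_t$) and $f = 1$ on $F_1$ (which meets no $U_t$). Order-preservation is immediate: if $x \leq y$ then, as each $U_t$ is a down-set, $y \in U_t$ forces $x \in U_t$, so $f(x) \leq f(y)$. Continuity follows from the standard argument, using $\{f < a\} = \bigcup_{t < a} U_t$ and $\{f > a\} = \bigcup_{t > a}\bigl(X \setminus \overline{U_t}\bigr)$, both open by the nesting $\overline{U_s} \seq U_t$.

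The main obstacle is the ordered normality lemma: it is the one place where the order and the topology genuinely interact, and it is exactly what allows every stage of the interpolation to stay inside the lattice of open down-sets, which in turn is what makes the resulting function order-preserving. Once that lemma and the closedness of down-closures are in hand, the remainder is the classical Urysohn recursion carried out verbatim.
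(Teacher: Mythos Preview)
Your proof is correct. The paper does not actually prove this theorem: it simply invokes \cite[Theorem~1, p.~30]{Nachbin}, noting that compact ordered spaces are normally ordered by \cite[Corollary of Theorem~4, p.~48]{Nachbin}. You have instead supplied a self-contained argument, essentially reconstructing Nachbin's proof: first establishing ordered normality directly from the closedness of the order via \cref{l:char-closed} and compactness (this is exactly the content of Nachbin's Theorem~4 in the compact case), and then running the dyadic Urysohn construction inside the lattice of open down-sets, using the closedness of $\downset\overline{U_s}$ to keep each interpolation step well-typed. The payoff of your route is that it is independent of the external reference and makes transparent where the order interacts with the topology; the paper's route is of course shorter but defers the substance elsewhere. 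Your handling of the one genuinely delicate point---that $\overline{U_s}$ need not be a down-set, remedied by passing to $\downset\overline{U_s}$ and checking it is closed via the projection argument---is clean and correct.
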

\begin{proof}
	The assertion holds by \cite[Theorem~1, p.\ 30]{Nachbin}, which applies to compact ordered spaces in light of \cite[Corollary of Theorem~4, p.\ 48]{Nachbin}.
\end{proof}

For $X$ a preordered set and $x \in X$, we set $\downset x \df \{z \in X \mid z \les x\}$ and $\upset x \df \{z \in X \mid x \les z\}$.

\begin{lemma}[{\cite[Proposition~1, p.\ 26]{Nachbin}}] \label{l:up-set-closed}
	Given a topological space $X$ equipped with a closed preorder, for every $x \in X$ the sets $\downset x$ and $\upset x$ are closed.
\end{lemma}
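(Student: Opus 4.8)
The plan is to realise each of the sets $\upset x$ and $\downset x$ as the preimage of the closed subset ${\les} \seq X \times X$ under a suitable continuous map, so that closedness is immediate from continuity together with the hypothesis that the preorder is closed.

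For a fixed $x \in X$, I would consider the two maps $X \to X \times X$ given by $z \mapsto (x, z)$ and $z \mapsto (z, x)$. Both are continuous, since in each case one coordinate is the identity map and the other is constant, and the product topology makes a map into $X \times X$ continuous exactly when its two components are. The first map sends $z$ into ${\les}$ precisely when $x \les z$, so the preimage of ${\les}$ under it equals $\upset x$; the second sends $z$ into ${\les}$ precisely when $z \les x$, so the preimage of ${\les}$ under it equals $\downset x$. Since ${\les}$ is a closed subset of $X \times X$ by hypothesis, both preimages are closed, which is the assertion.

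There is essentially no obstacle here beyond spotting the right continuous insertions; all the content lies in the hypothesis that $\les$ is a closed subset of the product, which converts the membership condition defining $\upset x$ and $\downset x$ into a topological closedness condition via continuity of these coordinate maps. Notably, the argument uses neither compactness, nor the Hausdorff property, nor anti-symmetry of the order, so it yields the lemma for an arbitrary topological space equipped with a closed preorder, exactly as stated.
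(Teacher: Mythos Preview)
Your argument is correct: realising $\upset x$ and $\downset x$ as preimages of the closed relation ${\les}$ under the continuous insertions $z \mapsto (x,z)$ and $z \mapsto (z,x)$ is exactly the standard proof. The paper itself does not supply a proof for this lemma; it simply cites Nachbin's book, so there is nothing further to compare.
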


To illustrate \cref{l:up-set-closed} with an example, note that, applying \cref{l:up-set-closed} to a compact Hausdorff space $X$ equipped with the discrete order, we obtain that the points of $X$ are closed.

\begin{lemma}\label{l:cor of urysohn}
	Let $X$ be a compact ordered space, and let $x, y \in X$ be such that $x \not\geq y$.
	Then there exists a continuous order-preserving function $\psi \colon X \rightarrow [0,1]$ such that $\psi(x) =0$ and $\psi(y) = 1$.
\end{lemma}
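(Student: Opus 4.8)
The plan is to produce the desired $\psi$ as a direct application of the ordered version of Urysohn's lemma (\cref{t:Urysohn}), after packaging the hypothesis $x \not\geq y$ into two disjoint closed sets of the required type. The natural candidates are the down-set $F_0 \df \downset x$ and the up-set $F_1 \df \upset y$. By the very definition of $\downset{x}$ and $\upset{y}$ these are, respectively, a down-set and an up-set, and they are both closed by \cref{l:up-set-closed}, which applies since the partial order of a compact ordered space is closed. Moreover $x \in F_0$ and $y \in F_1$ by reflexivity of the order.

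The one thing to verify is that $F_0$ and $F_1$ are disjoint. Suppose for contradiction that some $z$ lies in $F_0 \cap F_1$; then $z \leq x$ and $y \leq z$, so by transitivity $y \leq x$, that is $x \geq y$, contradicting the hypothesis $x \not\geq y$. Hence $F_0 \cap F_1 = \emptyset$.

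With these facts in hand, \cref{t:Urysohn} applied to the disjoint closed sets $F_0$ (a down-set) and $F_1$ (an up-set) yields a continuous order-preserving function $\psi \colon X \to [0,1]$ with $\psi = 0$ on $F_0$ and $\psi = 1$ on $F_1$. Evaluating at the points $x \in F_0$ and $y \in F_1$ gives $\psi(x) = 0$ and $\psi(y) = 1$, as required.

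I expect no real obstacle here: the statement is essentially a pointwise corollary of \cref{t:Urysohn}, and the only genuine content is choosing $F_0 = \downset x$, $F_1 = \upset y$ and reading off disjointness from transitivity together with the hypothesis. The roles of $x$ and $y$ (and the direction of the inequality $x \not\geq y$ versus $x \not\leq y$) must be tracked carefully so that $x$ lands in the down-set sent to $0$ and $y$ in the up-set sent to $1$, but this is a bookkeeping matter rather than a conceptual difficulty.
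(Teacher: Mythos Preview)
Your proof is correct and follows exactly the same approach as the paper: apply \cref{t:Urysohn} with $F_0 = \downset x$ and $F_1 = \upset y$, using \cref{l:up-set-closed} for closedness. You simply spell out the disjointness argument and the memberships $x \in F_0$, $y \in F_1$ that the paper leaves implicit.
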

\begin{proof} 
	Apply \cref{t:Urysohn} with $F_0 = \downset x$ and $F_1 = \upset y$, both of which are closed by \cref{l:up-set-closed}.
\end{proof} 

\begin{lemma}\label{l:embedding}
	For every compact ordered space $X$, the function
	\begin{align*}
		\ev \colon 	X	& \longrightarrow	\prod_{\hom_{\CompOrd}(X, [0,1])} [0,1]\\
						x	& \longmapsto		\ev_x \colon f \mapsto f(x)
	\end{align*}
	is continuous, order-preserving, injective and order-reflective (with respect to the product order and product topology).
\end{lemma}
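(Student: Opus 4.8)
The plan is to treat the four properties separately, obtaining continuity and order-preservation essentially for free from the initial-structure description of the product, establishing order-reflection via the ordered Urysohn machinery, and then deducing injectivity formally from order-reflection.

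First I would record the key identity that, for each coordinate $f \in \hom_{\CompOrd}(X,[0,1])$, the composite of $\ev$ with the $f$-th projection $\pi_f$ satisfies $\pi_f(\ev_x) = \ev_x(f) = f(x)$, so that $\pi_f \circ \ev = f$. By \cref{l:limits}, the codomain carries the product topology and product order, i.e.\ the initial topology and initial order with respect to the projections $(\pi_f)_f$. Since each $f$ is continuous, $\ev$ is continuous; and since each $f$ is order-preserving, $\ev$ is order-preserving, because $x \leq y$ gives $f(x) \leq f(y)$ for every $f$, which is exactly $\ev(x) \leq \ev(y)$ in the product order.

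Next I would establish order-reflection, which is the only step where the specific theory of compact ordered spaces enters, and is therefore the main (though mild) obstacle. I argue by contraposition: suppose $x \not\leq y$, equivalently $y \not\geq x$. Applying \cref{l:cor of urysohn} with the pair $(y,x)$ in place of its $(x,y)$ yields an order-preserving continuous $\psi \colon X \to [0,1]$ with $\psi(y) = 0$ and $\psi(x) = 1$. Inspecting the $\psi$-coordinate, $\psi(x) = 1 \not\leq 0 = \psi(y)$, so $\ev(x) \not\leq \ev(y)$ in the product order. This shows that $\ev(x) \leq \ev(y)$ implies $x \leq y$, i.e.\ $\ev$ reflects the order.

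Finally, injectivity follows formally: if $\ev(x) = \ev(y)$, then both $\ev(x) \leq \ev(y)$ and $\ev(y) \leq \ev(x)$ hold, so order-reflection gives $x \leq y$ and $y \leq x$, whence $x = y$ by antisymmetry of the partial order on the compact ordered space $X$. The only genuinely delicate point in the whole argument is fixing the orientation of \cref{l:cor of urysohn} correctly, which is bookkeeping rather than a real difficulty.
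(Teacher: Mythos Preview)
Your proof is correct and follows the same approach as the paper, which simply cites \cref{l:limits} for continuity and order-preservation and \cref{l:cor of urysohn} for injectivity and order-reflection. You have spelled out explicitly the details that the paper leaves to the reader, including the formal derivation of injectivity from order-reflection via antisymmetry.
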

\begin{proof}
	It is continuous and order-preserving by the construction of products (see \cref{l:limits}).
	It is injective and order-reflective by \cref{l:cor of urysohn}.
\end{proof}

\begin{lemma} \label{l:subset}
	Let $X$ be a topological space equipped with a closed preorder, and let $Y$ be a subset of $X$.
	Then, the induced preorder on $Y$ is closed with respect to the induced topology on $Y$.
\end{lemma}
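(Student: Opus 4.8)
The plan is to realise the induced preorder on $Y$ as the set-theoretic intersection of the closed preorder on $X$ with $Y \times Y$, and then to exploit the standard fact that the product topology on $Y \times Y$ agrees with the subspace topology that $Y \times Y$ inherits from $X \times X$. Once these two observations are in place, the closedness of $\les_Y$ is immediate from the closedness of $\les_X$.

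First I would unwind the definitions. Write $\les_X$ for the closed preorder on $X$ and $\iota \colon Y \hookrightarrow X$ for the inclusion, equipping $Y$ with the induced topology and induced preorder (see \cref{s:init-topology,s:init-preorder}). By definition of the induced preorder, $x \les_Y y$ holds if, and only if, $\iota(x) \les_X \iota(y)$; hence, viewing $\les_X$ as a subset of $X \times X$, we obtain the equality of subsets of $Y \times Y$
\[
	{\les_Y} = {\les_X} \cap (Y \times Y).
\]
The task thus reduces to showing that this intersection is a closed subset of $Y \times Y$, where the latter carries the product of the induced topologies on the two factors.

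Next I would record the key topological fact: if $Y$ carries the subspace topology, then the product topology on $Y \times Y$ coincides with the subspace topology that $Y \times Y$ inherits from $X \times X$. Both topologies arise as initial topologies with respect to the two projections followed by $\iota$, so this follows from the general theory of initial topologies (or can be verified directly on a basis of open rectangles). Granting this identification, the proof concludes quickly: since $\les_X$ is closed in $X \times X$ by hypothesis, its intersection $\les_X \cap (Y \times Y)$ is closed in $Y \times Y$ for the subspace topology, and hence, by the identification just made, closed in the product topology on $Y \times Y$. This is precisely the assertion that $\les_Y$ is a closed preorder. The only step demanding any care is the identification of the two topologies on $Y \times Y$; the remainder is a direct intersection argument.
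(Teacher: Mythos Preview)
Your argument is correct. You work directly from the definition of a closed preorder as a closed subset of the square: you identify $\les_Y$ with $\les_X \cap (Y \times Y)$ and use the fact that the product of the subspace topologies on $Y \times Y$ coincides with the subspace topology inherited from $X \times X$, so closedness of $\les_X$ passes to the intersection. The paper instead invokes \cref{l:char-closed}, the characterisation of closed preorders via separating open up-sets and down-sets: given $x, y \in Y$ with $x \not\les_Y y$, one pulls back the separating open up-set and down-set from $X$ to $Y$ by intersection, and applies the characterisation again in $Y$. Your route is more self-contained, needing only elementary facts about subspace and product topologies and nothing specific to preorders, whereas the paper's one-line citation is terser but presupposes the equivalence in \cref{l:char-closed}. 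Both are entirely standard.
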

\begin{proof}
	By \cref{l:char-closed}.
\end{proof}

We obtain that compact ordered spaces are precisely, up to isomorphisms, the closed subspaces of some power of $[0,1]$ with the induced order, as shown in the following.

\begin{lemma}\label{l:closed-sub-power}
	A topological space $X$ equipped with a preorder is a compact ordered space if and only if there exists an isomorphism (in $\TopXPreord$, i.e.\ an order-preserving order-reflecting homeomorphism) between $X$ and a closed subspace of a power of $[0,1]$.
\end{lemma}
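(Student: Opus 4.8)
The plan is to prove the two implications separately, using the evaluation map of \cref{l:embedding} for the ``only if'' direction and the description of products and closed subspaces for the ``if'' direction.

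For the forward implication, suppose $X$ is a compact ordered space. I would take the map
\[
	\ev \colon X \longrightarrow \prod_{\hom_{\CompOrd}(X, [0,1])} [0,1]
\]
of \cref{l:embedding}, which is continuous, order-preserving, injective and order-reflecting. Its codomain is a power of $[0,1]$, hence a compact Hausdorff space by Tychonoff's theorem (\cref{t:Thych}) together with the fact that a product of Hausdorff spaces is Hausdorff. The first step is to show that $\ev[X]$ is a \emph{closed} subspace: by \cref{p:image-of-compact} the image $\ev[X]$ is compact, and a compact subset of a Hausdorff space is closed by \cref{p:compact-in-Haus}. The second step is to upgrade $\ev$ to an isomorphism in $\TopXPreord$ onto $\ev[X]$. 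Since $\ev$ is a continuous bijection onto the compact Hausdorff space $\ev[X]$, it is a homeomorphism by \cref{p:comp-Haus-homeo}; and since it is both order-preserving and order-reflecting, the order induced on $\ev[X]$ corresponds exactly to the order on $X$, so $\ev$ is simultaneously an order-isomorphism. Hence $\ev$ is the desired isomorphism onto a closed subspace of a power of $[0,1]$.

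For the converse, suppose $X$ is isomorphic in $\TopXPreord$ to a closed subspace $Y$ of a power $[0,1]^I$. I would first observe that being a compact ordered space is invariant under isomorphism in $\TopXPreord$: if $\phi$ is an order-preserving order-reflecting homeomorphism, then $\phi$ transports compactness, the map $\phi \times \phi$ carries the order of one space to the order of the other (so antisymmetry, hence the partial-order property, transfers), and since $\phi \times \phi$ is itself a homeomorphism it carries a closed order to a closed order. It therefore suffices to check that $Y$ is a compact ordered space. By \cref{l:limits}, the power $[0,1]^I$, equipped with the product topology and product order, is the product of copies of $[0,1]$ in $\CompOrd$ (recall from \cref{ex:comp-ord-spaces} that $[0,1]$ is a compact ordered space), hence itself a compact ordered space. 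Then $Y$ is compact by \cref{p:closed-in-compact}, its induced order is a partial order (being the restriction of the product partial order), and this induced order is closed in the induced topology by \cref{l:subset}. Thus $Y$, and therefore $X$, is a compact ordered space.

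Putting routine bookkeeping aside, the point needing the most care is the forward direction's claim that $\ev$ is an \emph{isomorphism} and not merely a continuous order-embedding: it is precisely the compactness of $X$, via \cref{p:comp-Haus-homeo}, that forces the continuous injection to be a homeomorphism onto its image, after which order-reflection supplies the order part at no extra cost. The complementary delicate point is that the image is closed, which again rests on compactness through the combination of Tychonoff's theorem and ``compact-in-Hausdorff-is-closed''. Everything else is an application of the already-established structural results on limits, closed subspaces, and closed preorders.
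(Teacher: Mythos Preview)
Your proof is correct and follows essentially the same route as the paper: the forward direction uses \cref{l:embedding} together with \cref{p:image-of-compact} and \cref{p:compact-in-Haus} to get a closed image, and the converse combines \cref{ex:comp-ord-spaces}, \cref{l:limits}, \cref{l:subset}, and \cref{p:closed-in-compact}. You are slightly more explicit than the paper in invoking \cref{p:comp-Haus-homeo} to upgrade $\ev$ to a homeomorphism onto its image, which the paper leaves implicit.
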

\begin{proof}
	By \cref{l:embedding}, for every compact ordered space $X$, the function 
	\[
		\ev \colon X \to \prod_{\hom_{\CompOrd}(X, [0,1])} [0,1]
	\]
	is continuous, order-preserving, injective and order-reflective.
	Since the image of a compact space under a continuous map is compact (\cref{p:image-of-compact}), the image of $\ev$ is compact.
	Since a compact subset of a Hausdorff space is closed (\cref{p:compact-in-Haus}), the image of $\ev_X$ is closed.
	Therefore, every compact ordered space is isomorphic to a closed subspace of a power of $[0,1]$.
	This settles one direction.
	
	As observed in \cref{i:interval} in \cref{ex:comp-ord-spaces}, the unit interval $[0,1]$ is a compact ordered space.
	By \cref{l:limits}, any power of a compact ordered space is a compact ordered space.
	By \cref{l:subset}, for every topological space $X$ equipped with a closed preorder and every subset $Y$ of $X$, the induced preorder on $Y$ is closed with respect to the induced topology on $Y$.
	By \cref{p:closed-in-compact}, every closed subspace of a compact space is compact.
	Therefore, every closed subspace of a compact ordered space is a compact ordered space.
	In conclusion, every closed subspace of a power of $[0,1]$ (and of its isomorphic copies) is a compact ordered space.
\end{proof}

%%%%%%%%%%%%%%%%%%%%%%%%%%%%%%%%%%%   SECTION   %%%%%%%%%%%%%%%%%%%%%%%%%%%%%%%%%%%%

\section{Conclusions}

We completed our work of showing that the various characterisations of compact ordered spaces in \cref{s:characterisations} are correct.
With these facts at hand, we believe we can finally conclude: Stone spaces are to Priestley spaces as compact Hausdorff spaces are to \emph{compact ordered spaces}.

%######################################################.  CHAPTER.  ######################################################

\chapter{The dual of compact ordered spaces is a variety}\label{chap:direct-proof}
% CHAPTER 2

%%%%%%%%%%%%%%%%%%%%%%%%%%%%%%%%%%%   SECTION   %%%%%%%%%%%%%%%%%%%%%%%%%%%%%%%%%%%%

\section{Introduction}

In the previous chapter we presented compact ordered spaces as the correct solution for $X$ in the equation
\begin{equation*}
	\text{Stone spaces are to Priestley spaces as compact Hausdorff spaces are to $X$.}
\end{equation*}

It has been very well known for at least half a century that Stone spaces, Priestley spaces and compact Hausdorff spaces all have an equationally axiomatisable dual.
\begin{enumerate}
	\item \label{i:stone} The category $\Stone$ of Stone spaces and continuous maps is dually equivalent to a variety of finitary algebras---namely, the variety of Boolean algebras \cite{Stone1936}.
	\item \label{i:priestley} The category $\Pries$ of Priestley spaces and order-preserving continuous maps is dually equivalent to a variety of finitary algebras---namely, the variety of bounded distributive lattices \cite{Priestley1970}.
	\item \label{i:comphaus} The category $\CompHaus$ of compact Hausdorff spaces and continuous maps is dually equivalent to a variety of algebras (as observed in \cite[5.15.3]{Duskin1969}; for a proof see \cite[Chapter 9, Theorem 1.11]{BarrWells2005}), with primitive operations of at most countable arity\footnote{Given the fact that the functor $\hom_{\CompHaus}({-},[0,1])\colon \CompHaus^\opcat \to \Set$ is monadic \cite{Duskin1969}, the bound on the arity follows from the fact that every morphism from a power of $[0,1]$ to $[0,1]$ factors through a countable sub-power \cite[Theorem 1]{Mib44}, or, alternatively, from the fact that $[0,1]$ is $\aleph_1$-copresentable \cite[6.5(a)]{GabUlm}. See also \cite{Isbell1982,MarraReggio2017}.}.
\end{enumerate}

The question now arises: 
\begin{quote}
	Is the category of compact ordered spaces dually equivalent to a variety of (possibly infinitary) algebras?
\end{quote}

In fact, this question appears as an open problem in \cite{HNN2018}, and it will be the driving force of this manuscript.
In this chapter, we provide a clear-cut answer: The category of compact ordered spaces is dually equivalent to a variety, with primitive operations of at most countable arity.

Whether a better bound on the arity can be achieved is a question that we will address in the next chapter.

The structure of our proof is the following.
In \cref{s:varieties-as-categories}, we recall a well-known result in category theory, which characterises those categories which are equivalent to some variety of possibly infinitary algebras.
A key property, which characterises varieties among quasivarieties, is the effectiveness of (internal) equivalence relations.
In \cref{s:compact-pospaces} we prove that $\CompOrdop$ is equivalent to a quasivariety.
Then, in \cref{s:equivalence-corelations}, we characterise equivalence relations on a compact ordered space $X$, seen as an object of $\CompOrdop$, as certain preorders on the order-topological coproduct $X+X$.
Then, we rephrase effectiveness into an order-theoretic condition, and show, in \cref{s:proof-of-main-res}, that this condition is satisfied by every preorder arising from an equivalence relation.
This proves the important result stated in \cref{t:effective}: equivalence relations in $\CompOrdop$ are effective.
Finally, we show that this implies that $\CompOrdop$ is equivalent to a variety.

Let us note that the proof of effectiveness of equivalence relations in $\CompOrdop$ is far more involved than the proof of the corresponding fact for $\CompHaus^\opcat$.
To our understanding, this is due to the fact that---as it emerges in the proof of \cite[Chapter 9, Theorem 1.11]{BarrWells2005}---every reflexive relation in $\CompHaus^\opcat$ is an equivalence relation (i.e., $\CompHaus^\opcat$ is a Mal'cev category\index{category!Mal'cev}\index{Mal'cev|see{category, Mal'cev}}), whereas the same does not hold for $\CompOrdop$: the study of symmetry and transitivity seems necessary in our case.

This chapter is based on a joint work with L.\ Reggio \cite{AbbadiniReggio2020}, whose novel results can be found in \cref{s:quotient-objects,s:equivalence-corelations,s:proof-of-main-res} below.
\Cref{s:varieties-as-categories,s:compact-pospaces}, instead, collect some useful results from the literature.

%%%%%%%%%%%%%%%%%%%%%%%%%%%%%%%%%%%   SECTION   %%%%%%%%%%%%%%%%%%%%%%%%%%%%%%%%%%%%

\section{Varieties and quasivarieties as categories}\label{s:varieties-as-categories}

In this section we provide the background needed to state a well-known characterisation of those categories which are equivalent to some (quasi)variety of algebras (\cref{t:char-quasivarieties} below).

Recall, from \cref{d:quasivariety,d:variety}, that by \emph{variety of algebras} (resp.\ \emph{quasivariety of algebras}) we mean a class of algebras in a (possibly large) signature that can be presented by a class of equations (resp.\ implications) and that has free algebras.

The abstract characterisation of varieties and quasivarieties (also called primitive and quasiprimitive classes) has a long history in category theory, starting in the `60s: in particular, we mention \cite{Lawvere1963-book,Isbell1964,Linton1966,Felscher1968,Duskin1969,Vitale1994,Adamek2004}\footnote{To add some detail, \cite{Lawvere1963-book} studies varieties of finitary algebras, \cite{Isbell1964} quasivarieties of finitary algebras, \cite{Linton1966} varieties of possibly infinitary algebras, \cite{Felscher1968} varieties and quasivarieties of possibly infinitary algebras, \cite{Duskin1969} varieties of possibly infinitary algebras, \cite{Vitale1994} varieties of possibly infinitary algebras, and \cite{Adamek2004} varieties and quasivarieties of finitary algebras and varieties and quasivarieties of possibly infinitary algebras.}.

%=================================   SUBSECTION   =================================%

\subsection{Characterisation of quasivarieties}

Each quasivariety of algebras has an object of special interest: the free algebra over one element.
This object possesses certain categorical properties which are used in the characterisation of quasivarieties: it is both a regular generator and a regular projective object.

Recall that an object $G$ of a category $\cat{C}$ with coproducts is a \emph{regular generator}\index{object!regular generator}\index{regular generator|see{object, regular generator}} if, for every object $A$ of $\cat{C}$, the canonical morphism
\[
	\sum_{\hom_{\cat{C}}(G,A)}{G}\to A
\]
is a regular epimorphism.
In a quasivariety, the free algebra over one element is a regular generator\footnote{In fact, the statement is true for any free algebra over a non-empty set.}:
this fact corresponds to the fact that every algebra is the quotient of some free algebra.

Further, an object $G$ of a category $\cat{C}$ is \emph{regular projective}\index{object!regular projective}\index{regular projective|see{object, regular projective}} if, for every morphism $f\colon G\to A$ and every regular epimorphism $g\colon B\twoheadrightarrow A$, there exists a morphism $h \colon G \to B$ such that the following diagram commutes.
\[
	\begin{tikzcd}
		G \arrow[dashed]{r}{h} \arrow[swap]{rd}{f}	& B \arrow[two heads]{d}{g}\\
			& A
	\end{tikzcd}
\]
In a quasivariety, the free object over one element is regular projective\footnote{In fact, the statement is true for any free algebra.}.

\begin{theorem}[{Characterisation of quasivarieties}]\label{t:char-quasivarieties}
	A category is equivalent to a quasivariety if, and only if, it is cocomplete and it has a regular projective regular generator object.
\end{theorem}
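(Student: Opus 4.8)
The plan is to prove the two implications separately, noting that the ``only if'' direction is essentially already recorded in the discussion preceding the statement. Suppose first that $\cat{C}$ is (equivalent to) a quasivariety of $\Sigma$-algebras. Cocompleteness follows from the fact that a quasivariety is a reflective subcategory of $\ALG\Sigma$: being presented by implications, it is closed under products and subalgebras by \cref{p:char-quasivar-ISP}, and such a subcategory is reflective, so it inherits cocompleteness from $\ALG\Sigma$. The free algebra $G \df F(1)$ on one generator is a regular generator and a regular projective object, exactly as recalled in the paragraphs above: regular generation is the assertion that every algebra is a regular quotient of a free one, and regular projectivity is immediate from the universal property of $F(1)$ together with the surjectivity of regular epimorphisms of algebras.

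For the ``if'' direction, assume $\cat{C}$ is cocomplete with a regular projective regular generator $G$. The first step is to manufacture a (possibly large) algebraic theory out of $G$: for each cardinal $\kappa$, declare the $\kappa$-ary operation symbols to be the morphisms $G \to \sum_\kappa G$ of $\cat{C}$ (these coproducts exist by cocompleteness), and single out the objects $\sum_\kappa G$ as the intended free algebras. This produces a signature $\Sigma$ and a comparison functor $E \df \hom_{\cat{C}}(G,-) \colon \cat{C} \to \ALG\Sigma$, where an operation is interpreted on $\hom_{\cat{C}}(G,A)$ by cotupling a $\kappa$-indexed family of elements into a morphism $\sum_\kappa G \to A$ and composing with the corresponding operation symbol $G \to \sum_\kappa G$. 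Faithfulness of $E$ is precisely the statement that $G$ is a generator.

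The heart of the argument, and the step I expect to be the main obstacle, is to show that $E$ is full and essentially surjective onto its image, i.e.\ that $E$ is an equivalence. The key device is that, since $G$ is a \emph{regular} generator, every object $A$ admits a regular quotient $\sum_{\hom_{\cat{C}}(G,A)} G \epi A$; writing this regular epimorphism as the coequaliser of its kernel pair and covering the kernel pair in turn by a free object, one presents each $A$ as a coequaliser of a parallel pair of free objects $\sum_I G \rightrightarrows \sum_J G \to A$. Because $G$ is regular projective, $\hom_{\cat{C}}(G,-)$ preserves regular epimorphisms, so $E$ carries the chosen presentation of $A$ to the canonical free presentation of $E(A)$ in the image. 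Comparing the two presentations yields both fullness (a homomorphism $E(A) \to E(B)$ lifts along the free presentations to a morphism $A \to B$) and essential surjectivity (every algebra of the image is realised as the coequaliser computing some $E(A)$). The genuinely delicate points are the legitimacy of the large signature, where one must ensure only a set of isomorphism classes of $\kappa$-generated algebras so that free algebras exist in the sense of \cref{d:quasivariety}, and the verification that $E$ preserves the reflexive coequalisers presenting objects, which rests squarely on regular projectivity of $G$.

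Finally, to close the loop one identifies the image $\clalg{A} \df E[\cat{C}]$ as a quasivariety. It contains the free algebras $E(\sum_\kappa G)$, and the conditions cutting it out of $\ALG\Sigma$ are the implications encoding composition in the theory attached to $G$, so $\clalg{A}$ is closed under products and subalgebras and has free algebras; thus \cref{p:char-quasivar-ISP} applies and $\clalg{A}$ is a quasivariety to which $\cat{C}$ is equivalent via $E$.
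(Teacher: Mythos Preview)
The paper's own proof is a bare citation to \cite[Theorem~3.6]{Adamek2004}, so there is no in-paper argument to compare against; your sketch is essentially an outline of what that reference contains, and in particular your construction of the signature from morphisms $G \to \sum_\kappa G$ and the comparison functor $E = \hom_{\cat{C}}(G,-)$ matches exactly what the paper later spells out in the proof of \cref{l:Ada-natural-general-with-cogen-dual}.

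One point to tighten: your argument for cocompleteness of a quasivariety (``closed under products and subalgebras, hence reflective in $\ALG\Sigma$, hence cocomplete'') is not quite right as stated. Closure under products and subalgebras does not by itself yield reflectivity, and for a large signature the paper explicitly warns (in the footnote attached to the definition of $\ALG\Sigma$) that $\ALG\Sigma$ is not a legitimate category, so you cannot simply inherit cocompleteness from it. The correct route uses the defining assumption that the quasivariety has free algebras: this gives coproducts as free algebras on disjoint unions of generators, and coequalisers are obtained by quotienting by the smallest relative congruence, so cocompleteness follows directly without appeal to an ambient $\ALG\Sigma$.
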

\begin{proof}
	By \cite[Theorem~3.6]{Adamek2004}.
\end{proof}

%=================================   SUBSECTION   =================================%

\subsection{Characterisation of varieties}

To obtain a categorical characterisation of varieties, one addresses the question: when is a quasivariety a variety?
By \cref{p:char-quasivar-ISP,p:char-var-HSP}, a quasivariety $\clalg{A}$ is a variety if, and only if, it is closed under homomorphic images.
This happens if, and only if, for every algebra $A$ in $\clalg{A}$ and every congruence $\sim$ on $A$, the quotient $A/{\sim}$ still belongs to $\clalg{A}$, or, equivalently, $\sim$ is the kernel of some morphism.
In categorical terms, congruences are internal equivalence relations, and kernels are kernel pairs: so, a quasivariety is a variety if, and only if, every internal equivalence relation is a kernel pair.
We now recall the related definitions.

\begin{notation}
	Given morphisms $f_0\colon X \to Y_0$ and $f_1\colon X \to Y_1$, the unique morphism induced by the universal property of the product is denoted by $\langle f_0, f_1\rangle\colon X\to Y_0\times Y_1$.
	Similarly, given morphisms $f_0\colon X_0 \to Y$ and $f_1\colon X_1 \to Y$, the coproduct map is denoted by $\binom{g_0}{g_1}\colon X_0+X_1\to Y$.
\end{notation}

Let $\cat{C}$ be a category with finite limits, and $A$ an object of $\cat{C}$.
An \emph{(internal) binary relation}\index{relation!on an object of a category!binary} on $A$ is a subobject $\langle p_0, p_1 \rangle \colon R \mono A \times A$, (or, equivalently, a pair of jointly monic maps $p_0, p_1 \colon R \rightrightarrows A$).
A binary relation $\langle p_0, p_1 \rangle \colon R \mono A \times A$ on $A$ is called
\begin{description}[wide]
	\item[\emph{reflexive}]\index{relation!on an object of a category!reflexive} provided there exists a morphism $d\colon A\to R$ such that the following diagram commutes;
	\[
	\begin{tikzcd}
		A \arrow[swap,tail]{rd}{\langle 1_A,1_A\rangle} \arrow[dashed]{rr}{d}	& 			& R \arrow[tail]{dl}{\langle p_0,p_1\rangle}\\
																				& A\times A &
	\end{tikzcd}
	\]
	\item[\emph{symmetric}]\index{relation!on an object of a category!symmetric} provided there exists a morphism $s\colon R\to R$ such that the following diagram commutes;
	\[\begin{tikzcd}
		R \arrow[dashed]{rr}{s}\arrow[tail,swap]{dr}{\langle p_1,p_0\rangle}	&			& R\arrow[tail]{dl}{\langle p_0,p_1\rangle}\\
																				& A\times A	&
	\end{tikzcd}\]
	\item[\emph{transitive}]\index{relation!on an object of a category!transitive} provided that, if the left-hand diagram below is a pullback square, then there exists a morphism $t\colon P\to R$ such that the right-hand diagram commutes.
	\[
	\begin{tikzcd}
		P \arrow{r}{\pi_1} \arrow[swap]{d}{\pi_0} \arrow[dr, phantom, "\ulcorner", very near start]	& R \arrow{d}{p_0} \\
		R \arrow[swap]{r}{p_1} 																		& A
	\end{tikzcd}
	\ \ \ \ \ \ 
	\begin{tikzcd}
		P \arrow[swap]{rd}{\langle p_0\circ \pi_0,p_1\circ\pi_1\rangle} \arrow[dashed]{rr}{t}	&			& R \arrow[tail]{dl}{\langle p_0,p_1\rangle}\\
																								& A\times A	&
	\end{tikzcd}
	\]
\end{description}
An \emph{(internal) equivalence relation}\index{relation!on an object of a category!equivalence} on $A$ is a reflexive symmetric transitive binary relation on $A$.

\begin{definition}\label{d:effective-exact}
	An equivalence relation $p_0,p_1\colon R\rightrightarrows A$ is \emph{effective}\index{relation!on an object of a category!effective} if it coincides with the kernel pair of its coequaliser.
\end{definition}
For categories of algebras, the definition of equivalence relation given above coincides with the usual notion of congruence\index{congruence}, while the effective equivalence relations in quasivarieties are the so-called \emph{relative} congruences\index{congruence!relative}, i.e.\ congruences that induce a quotient which still belongs to the quasivariety.

We can now state the following folklore result.

\begin{proposition} \label{p:effectiveness}
	A quasivariety $\cat{C}$ is a variety if, and only if, every equivalence relation in $\cat{C}$ is effective.
\end{proposition}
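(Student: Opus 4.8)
The plan is to reduce the categorical statement to the two algebraic characterizations already recorded. Since $\cat{C}$ is a quasivariety, it has free algebras and (by \cref{p:char-quasivar-ISP}) is closed in $\ALG\Sigma$ under $\opS$ and $\opP$; by \cref{p:char-var-HSP}, $\cat{C}$ is a variety precisely when it is moreover closed under $\opH$. So the entire task becomes showing that closure of $\cat{C}$ under homomorphic images is equivalent to the effectiveness of every internal equivalence relation in $\cat{C}$.

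First I would fix the dictionary between the categorical and the algebraic vocabulary. Because $\cat{C}$ is closed under $\opS$ and $\opP$, finite limits in $\cat{C}$ (binary products, equalizers, and in particular kernel pairs) are computed exactly as in $\ALG\Sigma$, and the monomorphisms of $\cat{C}$ are the injective homomorphisms. Hence an internal binary relation $\langle p_0,p_1\rangle\colon R\mono A\times A$ is, up to isomorphism, a subalgebra $R\seq A\times A$, and internal reflexivity, symmetry and transitivity translate verbatim into $R$ being a reflexive, symmetric and transitive subset of $A\times A$. Thus the internal equivalence relations on $A\in\cat{C}$ are exactly the congruences $\theta$ on $A$; each such $\theta$, being a subalgebra of $A\times A\in\cat{C}$, already lies in $\cat{C}$ by closure under $\opS$.

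The heart of the argument is to prove, for a congruence $\theta$ on $A\in\cat{C}$, that $\theta$ is effective if and only if $A/\theta\in\cat{C}$. For the ``if'' direction, when $A/\theta\in\cat{C}$ the quotient map $q\colon A\epi A/\theta$---which is already the coequalizer of $p_0,p_1\colon\theta\rightrightarrows A$ in $\ALG\Sigma$, since for a congruence the pairs $(p_0,p_1)$ generate exactly $\theta$---remains a coequalizer in the full subcategory $\cat{C}$, and its kernel pair, computed as in $\ALG\Sigma$, is $\ker q=\theta$; so $\theta$ is the kernel pair of its own coequalizer, i.e.\ effective. For the ``only if'' direction, let $q\colon A\to Q$ be the coequalizer of $\theta\rightrightarrows A$ taken in $\cat{C}$, so $Q\in\cat{C}$; effectiveness gives $\theta=\ker q$, whence $A/\theta\cong q[A]$, a subalgebra of $Q\in\cat{C}$, so $A/\theta\in\cat{C}$ by closure under $\opS$. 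Quantifying over all $A$ and $\theta$, this shows that every equivalence relation in $\cat{C}$ is effective if and only if every algebraic quotient of an object of $\cat{C}$ lies in $\cat{C}$, which is precisely closure under $\opH$; combined with the first paragraph, this is the assertion.

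The step I expect to demand the most care is the comparison of colimits. Coequalizers in $\cat{C}$ do \emph{not} in general agree with those in $\ALG\Sigma$: the $\cat{C}$-coequalizer of $\theta\rightrightarrows A$ is the quotient by the smallest congruence $\bar\theta\supseteq\theta$ with $A/\bar\theta\in\cat{C}$ (the relative congruence generated by $\theta$), and $\bar\theta$ can be strictly larger than $\theta$. Effectiveness is exactly the statement $\bar\theta=\theta$, i.e.\ that no enlargement is forced, and this is what couples it to the membership $A/\theta\in\cat{C}$. I would therefore be careful to justify that (i) kernel pairs are genuinely computed as limits in $\ALG\Sigma$, and (ii) when $A/\theta\in\cat{C}$ the $\ALG\Sigma$-coequalizer is already the $\cat{C}$-coequalizer by fullness of the inclusion; the remaining bookkeeping is supplied directly by \cref{p:char-quasivar-ISP} and \cref{p:char-var-HSP}.
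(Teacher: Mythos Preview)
Your argument is correct. The paper does not actually prove this proposition: it is introduced as a ``folklore result'' and left without proof. However, the paragraph immediately preceding the proposition sketches exactly the reduction you carry out, namely: by \cref{p:char-quasivar-ISP,p:char-var-HSP} a quasivariety is a variety iff it is closed under homomorphic images, which in turn holds iff every congruence on an object of $\cat{C}$ has its quotient in $\cat{C}$, i.e.\ is the kernel of some morphism in $\cat{C}$; and in categorical terms congruences are internal equivalence relations and kernels are kernel pairs. Your proposal is a faithful and carefully fleshed-out version of that sketch, with the additional (and appropriate) attention to the fact that coequalizers in $\cat{C}$ need not coincide with those in $\ALG\Sigma$, which is precisely the content of the equivalence $\bar\theta=\theta\Leftrightarrow A/\theta\in\cat{C}$.
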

\begin{theorem}[Characterisation of varieties]\label{t:char-varieties}
	A category $\cat{C}$ is equivalent to a variety if, and only if, $\cat{C}$ is cocomplete, $\cat{C}$ has a regular projective regular generator object, and every equivalence relation in $\cat{C}$ is effective.
\end{theorem}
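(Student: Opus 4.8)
The plan is to derive the statement by bolting together the two preceding results: the characterisation of quasivarieties (\cref{t:char-quasivarieties}) handles cocompleteness and the existence of a regular projective regular generator, while \cref{p:effectiveness} supplies exactly the additional ingredient—effectiveness of equivalence relations—that separates varieties from quasivarieties. So I would not reprove anything from scratch; instead I would organise the argument as a two-step reduction, first to ``equivalent to a quasivariety'' and then, via effectiveness, to ``equivalent to a variety''.

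For the forward implication, suppose $\cat{C}$ is equivalent to a variety $\cat{V}$. Since every class of equations is in particular a class of implications, $\cat{V}$ is also a quasivariety, so \cref{t:char-quasivarieties} gives that $\cat{C}$ is cocomplete and has a regular projective regular generator object. For the remaining condition I would apply the forward implication of \cref{p:effectiveness} to $\cat{V}$ (a quasivariety that is in fact a variety) to conclude that every equivalence relation in $\cat{V}$ is effective, and then transport this along the equivalence $\cat{C}\simeq\cat{V}$.

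For the converse, assume $\cat{C}$ is cocomplete, has a regular projective regular generator object, and has all its equivalence relations effective. The first two hypotheses let me invoke \cref{t:char-quasivarieties} to fix an equivalence $\cat{C}\simeq\cat{Q}$ for some quasivariety $\cat{Q}$. Carrying effectiveness across this equivalence shows that every equivalence relation in $\cat{Q}$ is effective, so the backward implication of \cref{p:effectiveness} makes $\cat{Q}$ a variety; composing equivalences, $\cat{C}$ is equivalent to a variety.

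The one point deserving genuine care—and the step I would write out rather than wave at—is the invariance of the property \emph{``every internal equivalence relation is effective''} under equivalence of categories. Concretely, for an equivalence $F\colon\cat{C}\to\cat{D}$ one checks that $F$ sends an internal equivalence relation $\langle p_0,p_1\rangle\colon R\mono A\times A$ to an internal equivalence relation on $FA$: this uses that $F$ preserves binary products, jointly monic pairs, and the witnessing morphisms for reflexivity, symmetry and transitivity, together with the pullback square defining transitivity. One then uses that an equivalence preserves (and reflects) all limits and colimits, so in particular coequalisers and kernel pairs, whence it preserves and reflects effectiveness. Everything else in the proof is routine bookkeeping, so I expect no serious obstacle beyond this verification.
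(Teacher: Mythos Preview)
Your argument is correct and follows exactly the same route as the paper, which simply states that the result follows immediately from \cref{t:char-quasivarieties} and \cref{p:effectiveness} together with the fact that varieties are quasivarieties. Your version just spells out the two directions and the (routine) invariance of effectiveness under equivalence more explicitly than the paper does.
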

\begin{proof}
	This follows immediately from \cref{t:char-quasivarieties,p:effectiveness} (using the fact that varieties are quasivarieties).
\end{proof}

%=================================   SUBSECTION   =================================%

\subsection{Quasivarieties with a cogenerator}

In \cref{t:char-quasivarieties}, we have seen that a category is equivalent to a quasivariety if, and only if, it is cocomplete and it has a regular projective regular generator object.
Dualising these notions, by \emph{regular injective}\index{object!regular injective}\index{regular injective|see{object, regular injective}} we mean the dual notion of regular projective, and by \emph{regular cogenerator}\index{object!regular cogenerator}\index{regular cogenerator|see{object, regular cogenerator}} we mean the dual notion of regular generator.
It follows that a category $\cat{C}$ is dually equivalent to a quasivariety if, and only if, it is complete and it has a regular injective regular cogenerator object.
In this case, the description of a quasivariety dual to $\cat{C}$ can be obtained by inspection of the proof of \cite[Theorem~3.6]{Adamek2004}.
However, in the case $\cat{C}$ admits a faitfhul representable functor $U \colon \cat{C} \to \Set$, an easier description can be given.
This is explained in \cref{p:Ada-natural} below, for which we need a couple of lemmas.
These results combine the categorical characterisation of quasivarieties and the theory of natural dualities, for an overview of which we refer to \cite{PorstTholen1991}.

In analogy with the definition of regular generator, we recall that an object $G$ of a category $\cat{C}$ with coproducts is a \emph{generator}\index{object!generator}\index{generator|see{object, generator}} if, for every object $A$ of $\cat{C}$, the canonical morphism
\[
	\sum_{\hom_{\cat{C}}(G,A)}{G}\to A
\]
is an epimorphism.
The dual notion is \emph{cogenerator}\index{object!cogenerator}\index{cogenerator|see{object, cogenerator}}.

\begin{lemma} \label{l:cogen-qv}
	An algebra $A$ of a quasivariety $\cat{D}$ is a cogenerator if, and only if, 
	\[
		\cat{D} = \opS\opP(A).
	\]
\end{lemma}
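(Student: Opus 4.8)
The plan is to unwind the definition of cogenerator as the exact dual of the notion of generator recalled just above, and then read off both implications directly from the description of $\opS\opP(A)$ as subalgebras of powers of $A$. Dualising the defining condition for a generator, an object $A$ of $\cat{D}$ is a cogenerator precisely when, for every algebra $B$ in $\cat{D}$, the canonical morphism
\[
	B \longrightarrow \prod_{\hom_{\cat{D}}(B, A)} A
\]
is a monomorphism. Before exploiting this, I would record the standard fact that in a quasivariety the monomorphisms are exactly the injective homomorphisms: since $\cat{D}$ has free algebras, the forgetful functor $U \colon \cat{D} \to \Set$ is a right adjoint, hence preserves monomorphisms, so monos are injective; conversely $U$ is faithful, hence reflects monos, so injective homomorphisms are monos. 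Thus the cogenerator condition becomes the concrete statement that, for every $B$, the family $\hom_{\cat{D}}(B, A)$ is jointly injective.

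For the implication $\cat{D} = \opS\opP(A) \Rightarrow A$ is a cogenerator, I would take an arbitrary $B \in \cat{D} = \opS\opP(A)$ and fix an injective homomorphism $B \rmono \prod_{i \in I} A$ witnessing membership in $\opS\opP(A)$. Each projection, composed with this embedding, yields a homomorphism $B \to A$, and these coordinate maps are jointly injective because the embedding is. Since they form a subfamily of $\hom_{\cat{D}}(B, A)$, the full canonical map $B \to \prod_{\hom_{\cat{D}}(B, A)} A$ is a fortiori injective, hence a monomorphism; so $A$ is a cogenerator.

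For the converse I would first note that $\opS\opP(A) \subseteq \cat{D}$: by \cref{p:char-quasivar-ISP} the quasivariety $\cat{D}$ is closed under products and subalgebras, and it contains $A$. For the reverse inclusion, take any $B \in \cat{D}$; since $A$ is a cogenerator, the canonical map $B \to \prod_{\hom_{\cat{D}}(B, A)} A$ is a monomorphism, hence injective, exhibiting $B$ as (isomorphic to) a subalgebra of a power of $A$, that is, $B \in \opS\opP(A)$. Combining the two inclusions gives $\cat{D} = \opS\opP(A)$.

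The only genuinely delicate point is the equivalence between the categorical cogenerator condition and joint injectivity of the hom-sets, which rests on the identification of monomorphisms with injective homomorphisms in a quasivariety; once that is in place, both directions are immediate from the $\opS\opP$ description. The hard part is therefore conceptual rather than computational: correctly dualising the generator condition and justifying that mono coincides with injective, after which no further work is required.
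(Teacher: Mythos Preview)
Your proof is correct and follows essentially the same approach as the paper's: unwind the cogenerator condition to the canonical map $B \to \prod_{\hom(B,A)} A$ being a monomorphism, identify monomorphisms with injective homomorphisms in a quasivariety, and then read off both inclusions from closure of $\cat{D}$ under $\opS$ and $\opP$. The paper is terser (it phrases the cogenerator condition equivalently as the existence of \emph{some} monomorphism from $B$ into a power of $A$, which collapses both directions into one line), while you spell out the mono $=$ injective identification and the two implications separately; but the underlying argument is the same.
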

\begin{proof}
	By definition, an object $A$ of $\cat{D}$ is a cogenerator if, and only if, for every object $B$ of $\cat{D}$, the canonical map
	\[
		B \to \prod_{\hom(B, A)} A
	\] 
	is a monomorphism, or, equivalently, there exists a monomorphism from $B$ to some power of $A$.
	Since $\cat{D}$ is a quasivariety, categorical products are classical direct products of algebras and monomorphisms are injective functions.
	Therefore, $A$ is a cogenerator if, and only, $\cat{D} \seq \opS\opP(A)$.
	The inclusion $\cat{D} \supseteq \opS\opP(A)$ holds for every object $A$ because quasivarieties are closed under products and subalgebras.
\end{proof}

We remark that there are examples of varieties which do not have a cogenerator, such as the category of semigroups, the category of groups and the category of rings \cite[Examples~7.18(8)]{AdamekHerrlichStrecker2006}.

\begin{lemma}\label{l:Ada-natural-general-with-cogen-dual}
	Let $\cat{C}$ be a cocomplete category, let $X$ be a regular projective regular generator of $\cat{C}$, and let $X_0$ be a cogenerator of $\cat{C}$.
	Let $\Sigma$ be the signature whose elements of arity $\kappa$ (for each cardinal $\kappa$) are the morphisms from $X$ to $\sum_{i \in \kappa} X$, and let $\bar{X}$ be the $\Sigma$-algebra whose underlying set is $\hom(X, X_0)$ and on which the interpretation of any operation symbol $s$ of arity $\kappa$ maps $(f_i)_{i \in \kappa}$ to the composite $X \xrightarrow{s} \sum_{i \in \kappa} X \xrightarrow{(f_i)_{i \in \kappa}} X_0$.
	Then, $\cat{C}$ is equivalent to $\opS\opP(\bar{X})$.
\end{lemma}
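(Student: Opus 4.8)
The plan is to realise the claimed equivalence through the concrete comparison functor $E \colon \cat{C} \to \ALG\Sigma$ that sends an object $A$ to the $\Sigma$-algebra with underlying set $\hom(X,A)$ and operations exactly as in the statement, and a morphism $u \colon A \to B$ to postcomposition $u \circ ({-}) \colon \hom(X,A) \to \hom(X,B)$. By construction $E(X_0) = \bar{X}$. First I would record why $\Sigma$ is the natural signature here: for every cardinal $\kappa$ the universal property of the coproduct gives a bijection $\hom(X,A)^\kappa \cong \hom(\sum_{i\in\kappa} X, A)$, natural in $A$, so by the Yoneda lemma the natural $\kappa$-ary operations on the functor $\hom(X,{-})$ correspond precisely to the morphisms $X \to \sum_{i\in\kappa} X$. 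This is exactly the arity-$\kappa$ part of $\Sigma$, and it makes transparent that $E$ really does take values in $\Sigma$-algebras and respects the operation symbols.

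Next I would invoke \cref{t:char-quasivarieties}: since $\cat{C}$ is cocomplete and $X$ is a regular projective regular generator, $\cat{C}$ is equivalent to a quasivariety, and the equivalence may be taken to be $E$ itself, since this is how the quasivariety is produced in the proof of \cite[Theorem~3.6]{Adamek2004} with the signature $\Sigma$ just computed. Concretely, faithfulness of $E$ is immediate from $X$ being a generator: if $u,v \colon A \to B$ agree after precomposition with every $f \colon X \to A$, then they agree on the components of the canonical regular epimorphism $\sum_{\hom(X,A)} X \epi A$, hence $u \circ e = v \circ e$ and so $u = v$. Fullness, together with the fact that the essential image $\cat{D} \df \opI(E[\cat{C}]) \seq \ALG\Sigma$ is a quasivariety, is the substantive content delivered by that theorem. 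Thus $E$ corestricts to an equivalence $\cat{C} \simeq \cat{D}$.

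It then remains to identify $\cat{D}$ with $\opS\opP(\bar{X})$. Since $X_0$ is a cogenerator of $\cat{C}$ and an equivalence of categories preserves products and monomorphisms and is bijective on hom-sets, it preserves cogenerators; hence $\bar{X} = E(X_0)$ is a cogenerator of $\cat{D}$. Applying \cref{l:cogen-qv} to the quasivariety $\cat{D}$ and its cogenerator $\bar{X}$ yields $\cat{D} = \opS\opP(\bar{X})$. Combining this with $\cat{C} \simeq \cat{D}$ gives $\cat{C} \simeq \opS\opP(\bar{X})$, as desired.

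I expect the main obstacle to be the middle step: pinning down that the abstract equivalence furnished by \cref{t:char-quasivarieties} can be taken to be exactly the explicit functor $E$ for the explicit signature $\Sigma$, rather than merely \emph{some} equivalence onto \emph{some} quasivariety. The Yoneda/coproduct computation is what makes this identification clean, reducing the issue to the fullness of $E$, which is the part that genuinely uses $X$ being regular projective in addition to being a regular generator.
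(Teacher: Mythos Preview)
Your proposal is correct and follows essentially the same approach as the paper: define the comparison functor $E$ with $E(X_0)=\bar{X}$, invoke \cite[Theorem~3.6]{Adamek2004} to conclude that $E$ is full and faithful with essential image a quasivariety $\cat{D}$, and then apply \cref{l:cogen-qv} to the cogenerator $\bar{X}$ of $\cat{D}$ to obtain $\cat{D}=\opS\opP(\bar{X})$. Your Yoneda/coproduct discussion and the explicit faithfulness argument are additional clarifications not spelled out in the paper, but the structure of the argument is the same.
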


\begin{proof} 
	Following \cite[Theorem~3.6]{Adamek2004}, we have a functor $E \colon \cat{C} \to \ALG \Sigma$,	defined as follows: given an object $Y$, the underlying set of $E(Y)$ is $\hom(X, Y)$, and the interpretation of an operation symbol $s$ of arity $\kappa$ maps $(f_i)_{i \in \kappa}$ to the composite $X \xrightarrow{s} \sum_{i \in \kappa} X \xrightarrow{(f_i)_{i \in \kappa}} Y$; given a morphism $f\colon Y \to Z$, the map $E(f)$ maps $g$ to $f \circ g$.
	Let $\cat{D}$ be the closure of the image of $E$ under isomorphisms.
	By \cite[Theorem~3.6]{Adamek2004}, we have the following: since $X$ is a generator, the functor $E$ is faithful, and since $X$ is a regular projective regular generator, the functor $E$ is full, and $\cat{D}$ is a quasivariety.
	In particular, it follows that $\cat{C}$ is equivalent to $\cat{D}$.
	Since $X_0$ is a cogenerator of $\cat{C}$, the object $E(X_0)$ is a cogenerator in $\cat{D}$.
	Therefore, by \cref{l:cogen-qv}, we have $\cat{D} = \opS\opP(E(X_0))$.
	Note that the object $E(X_0)$ coincides with the object $\bar{X}$ of the statement.
\end{proof}

We remark that the fact that the class $\opS\opP(\bar{X})$ in the statement of \cref{l:Ada-natural-general-with-cogen-dual} is a quasivariety (as attested by the proof) should be self-evident because of \cref{l:ISP}.

We recall that a functor $U \colon \cat{C} \to \Set$ is said to be \emph{representable}\index{functor!representable}\index{representable|see{functor, representable}} if there exists an object $C$ in $\cat{C}$ such that $U$ is naturally isomorphic to $\hom(C, -)$.
For the following result, we recall that representable functors preserve all limits---let alone powers.
We anticipate the fact that we will use the following result with $\cat{C} = \CompOrd$, $X = [0,1]$, and $U \colon \cat{C} \to \Set$ the obvious forgetful functor.

\begin{proposition} \label{p:Ada-natural}
	\
	Let $\cat{C}$ be a complete category, let $X$ be a regular injective regular cogenerator of $\cat{C}$, and let $U \colon \cat{C} \to \Set$ be a faitfhul representable functor.
	Let $\Sigma$ be the signature whose elements of arity $\kappa$ (for each cardinal $\kappa$) are the morphisms from $X^\kappa$ to $X$, and let $\bar{X}$ be the $\Sigma$-algebra whose underlying set is $U(X)$ and on which the interpretation of any operation symbol $f$ is $U(f)$.
	Then, $\cat{C}$ is dually equivalent to $\opS\opP(\bar{X})$.
\end{proposition}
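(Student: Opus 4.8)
The plan is to deduce this statement from its covariant counterpart, \cref{l:Ada-natural-general-with-cogen-dual}, by passing to the opposite category and transporting all the data along the duality $(-)^\opcat$. Set $\cat{C}' \df \cat{C}^\opcat$. Since $\cat{C}$ is complete, $\cat{C}'$ is cocomplete. By definition the notions of regular injective and regular cogenerator are dual to those of regular projective and regular generator, so the hypothesis that $X$ is a regular injective regular cogenerator of $\cat{C}$ says precisely that $X$ is a regular projective regular generator of $\cat{C}'$. It remains to exhibit a cogenerator of $\cat{C}'$: fix an object $C$ of $\cat{C}$ with $U \cong \hom_{\cat{C}}(C,-)$, which exists because $U$ is representable. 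Then $\hom_{\cat{C}'}(-, C)$, viewed as a functor $\cat{C} \to \Set$, is $A \mapsto \hom_{\cat{C}}(C, A) = U(A)$, i.e.\ it is $U$ itself; hence the assumed faithfulness of $U$ is exactly the assertion that $C$ is a cogenerator of $\cat{C}'$. Put $X_0 \df C$.

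Next I would apply \cref{l:Ada-natural-general-with-cogen-dual} in $\cat{C}'$ with this $X$ and $X_0$, and check that its output matches the statement. First, the signatures coincide: an arity-$\kappa$ operation symbol in the lemma is a $\cat{C}'$-morphism $X \to \sum_{i \in \kappa} X$, and under the duality the coproduct $\sum_{i\in\kappa} X$ of $\cat{C}'$ is the product $X^\kappa$ of $\cat{C}$, so such a morphism is exactly a $\cat{C}$-morphism $X^\kappa \to X$. This is precisely the signature $\Sigma$ of the statement.

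Then I would identify the two algebras. The underlying set of the algebra produced by the lemma is $\hom_{\cat{C}'}(X, X_0) = \hom_{\cat{C}}(C, X) \cong U(X)$, the last isomorphism being the component at $X$ of $U \cong \hom_{\cat{C}}(C,-)$; this agrees with $\bar{X}$. For the operations, let $f \colon X^\kappa \to X$ be an operation symbol and let $(a_i)_{i\in\kappa} \in U(X)^\kappa$, with each $a_i$ corresponding to $g_i \colon C \to X$. The coproduct map $(g_i)_{i\in\kappa}$ of $\cat{C}'$ dualizes to the product map $\langle g_i\rangle_{i\in\kappa} \colon C \to X^\kappa$ of $\cat{C}$, and the composite computed by the lemma, read in $\cat{C}$, is $f \circ \langle g_i\rangle_{i\in\kappa} \colon C \to X$. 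Because $U$ is representable it preserves the power $X^\kappa$, so $U(X^\kappa) \cong U(X)^\kappa$; chasing naturality of $U \cong \hom_{\cat{C}}(C,-)$ shows that $\langle g_i\rangle_{i\in\kappa}$ corresponds to $(a_i)_{i\in\kappa}$ and that $f \circ \langle g_i\rangle_{i\in\kappa}$ corresponds to $U(f)\big((a_i)_{i\in\kappa}\big)$, which is exactly the interpretation of $f$ on $\bar{X}$. Hence the algebra delivered by the lemma is isomorphic to $\bar{X}$.

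Putting this together, \cref{l:Ada-natural-general-with-cogen-dual} gives $\cat{C}' \simeq \opS\opP(\bar{X})$, that is $\cat{C}^\opcat \simeq \opS\opP(\bar{X})$, which is the asserted dual equivalence between $\cat{C}$ and $\opS\opP(\bar{X})$. I expect the only genuinely delicate point to be the naturality bookkeeping of the previous paragraph: one must verify that the representing isomorphism $U \cong \hom_{\cat{C}}(C,-)$ intertwines the product maps $\langle g_i\rangle_{i\in\kappa}$ of $\cat{C}$ with the tuples $(a_i)_{i\in\kappa}$, which rests squarely on $U$ preserving powers. Everything else is a formal transport of the hypotheses and of \cref{l:Ada-natural-general-with-cogen-dual} across the duality.
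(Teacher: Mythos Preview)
Your proposal is correct and follows exactly the paper's approach: pass to $\cat{C}^\opcat$, identify the representing object of $U$ as the needed cogenerator (equivalently, generator of $\cat{C}$) via faithfulness, and invoke \cref{l:Ada-natural-general-with-cogen-dual}. The paper's proof is only three sentences and omits the verification that the signatures and algebras match up across the duality, which you carry out explicitly; your naturality bookkeeping is sound and is precisely what the paper leaves to the reader.
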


\begin{proof}
	Let $X_0$ be an object such that $U \simeq \hom(-, X_0)$.
	Faithfulness of $U$ is equivalent to the fact that $X_0$ is a generator \cite[Corollary~4.5.9]{Borceux1994-vol1}.
	The result then follows from \cref{l:Ada-natural-general-with-cogen-dual}.
\end{proof}

%%%%%%%%%%%%%%%%%%%%%%%%%%%%%%%%%%%   SECTION   %%%%%%%%%%%%%%%%%%%%%%%%%%%%%%%%%%%%

\section{The dual of compact ordered spaces is a quasivariety}\label{s:compact-pospaces}

\begin{proposition}\label{p:properties-of-PC-morphisms}
	The following statements hold.
	\begin{enumerate}
		\item \label{i:mono}
			A morphism in $\CompOrd$ is a monomorphism if, and only if, it is injective.
		\item \label{i:regmono}
			A morphism in $\CompOrd$ is a regular monomorphisms if, and only if, it is injective and order-reflecting.
		\item \label{i:epi}
			A morphism in $\CompOrd$ is an epimorphisms if, and only if, it is surjective.
		\item \label{i:iso}
			A morphism in $\CompOrd$ is an isomorphism if, and only if, it is bijective and order-reflecting.
	\end{enumerate}
\end{proposition}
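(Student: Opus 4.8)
The plan is to prove each biconditional by splitting off its easy half and concentrating all the real work into a single separation statement. The easy halves follow from general category theory together with the compact--Hausdorff facts already at hand. For \cref{i:mono}, injectivity yields a monomorphism because the forgetful functor $\CompOrd \to \Set$ is faithful; conversely a monomorphism is injective because the one-point space $\mathbf{1}$ is terminal, so points of $A$ are exactly morphisms $\mathbf{1} \to A$ and a mono cancels them. For \cref{i:iso}, an isomorphism is bijective and order-reflecting since its inverse is an order-preserving continuous map; conversely a bijective order-reflecting morphism $f$ is a homeomorphism by \cref{p:comp-Haus-homeo}, and order-reflection makes $f^{-1}$ order-preserving, so $f^{-1}$ is a $\CompOrd$-morphism and $f$ is invertible. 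For \cref{i:epi}, surjectivity gives an epimorphism by pointwise cancellation (using faithfulness).

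For \cref{i:regmono} the easy direction is again formal: a regular mono is a mono, hence injective by \cref{i:mono}; and since the forgetful functors to $\Set$, $\Top$ and $\Preord$ preserve limits (\cref{p:pres-limits}), an equalizer is carried by the subspace $\{b : u(b) = v(b)\}$ with the \emph{induced} topology and order, so its inclusion reflects the order. For the converse I would first note that an injective order-reflecting $f \colon A \to B$ is an isomorphism onto a closed order-subspace: $f[A]$ is compact (\cref{p:image-of-compact}), hence closed (\cref{p:compact-in-Haus}), and $f$ corestricts to a continuous order-preserving order-reflecting bijection, which is an iso by \cref{p:comp-Haus-homeo}. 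It therefore suffices to exhibit the inclusion of a closed subspace $K \seq B$ (with induced order) as an equalizer. The device is to realise $K$ as the \emph{joint} equalizer of all pairs $(g,h)$ of $\CompOrd$-morphisms $B \to [0,1]$ that agree on $K$: packaging them into one pair $B \rightrightarrows \prod_{(g,h)} [0,1]$ (a power of $[0,1]$ is a compact ordered space by \cref{l:limits}), the equalizer is $\{x \in B : g(x)=h(x) \text{ for all such } (g,h)\}$, which contains $K$ by construction and equals $K$ as soon as every $b \notin K$ is separated by some such pair.

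All the weight thus falls on the following \textbf{separation lemma}: for $K \seq B$ closed and $b \in B \setminus K$ there are order-preserving continuous $g,h \colon B \to [0,1]$ with $g|_K = h|_K$ and $g(b) \neq h(b)$. The comparable cases I would dispatch by the ordered Urysohn lemma (\cref{t:Urysohn}), using that $\downset K$ and $\upset K$ are closed (as projections of the closed order intersected with a compact slab; cf.\ \cref{l:up-set-closed}): if $b \notin \downset K$, then $\downset K$ and $\upset b$ form a disjoint closed down-set/up-set pair, so \cref{t:Urysohn} gives $\psi$ with $\psi|_K = 0$ and $\psi(b)=1$, and $(g,h)=(\psi,0)$ works; the case $b \notin \upset K$ is symmetric with $(g,h)=(\psi,1)$. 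The main obstacle is the remaining \emph{sandwiched} case, where $k^- \leq b \leq k^+$ for some $k^\pm \in K$: here no function-versus-constant pair can succeed, since monotonicity forces $g(b)$ and $h(b)$ to lie between the coinciding values at $k^-$ and $k^+$. I expect to overcome this with a genuine function-versus-function construction, taking an order-preserving $\phi$ on which $b$ is strictly sandwiched and pinching two copies of it apart at $b$ while keeping them tied on $K$ --- concretely $g = \max\bigl(0,\ \phi - \tfrac12 e\bigr)$ and $h = \min\bigl(1,\ \phi + \tfrac12 e\bigr)$ for a nonnegative continuous $e$ vanishing on $K$ with $e(b)>0$, chosen so that its variation is dominated by the monotonicity of $\phi$ and both $g,h$ stay order-preserving. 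Finally, the hard direction of \cref{i:epi} follows formally from \cref{i:regmono}: a non-surjective $f$ factors as $A \epi f[A] \rmono B$ through the inclusion of its closed image, which is a regular mono by \cref{i:regmono}; were $f$ an epimorphism, this inclusion would be epi, and a regular mono that is epi is an isomorphism, forcing $f[A]=B$.
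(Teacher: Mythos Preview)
Your overall strategy is sound and considerably more self-contained than the paper's, which for \cref{i:regmono} and \cref{i:epi} simply defers to \cite[Theorem~2.6]{HNN2018}; your reductions (closed subspace inclusion $\Rightarrow$ equalizer via a separation lemma, then \cref{i:epi} from \cref{i:regmono} via the regular-mono-epi-is-iso trick) are correct. The genuine gap is the sandwiched case of the separation lemma. For the pinching construction $g=\max(0,\phi-\tfrac12 e)$, $h=\min(1,\phi+\tfrac12 e)$ you need $\phi\pm\tfrac12 e$ to be order-preserving, but $e$ cannot itself be monotone (it vanishes on $k^-,k^+\in K$ yet is positive at the intermediate point $b$), so this demands that the variation of $e$ be dominated by the strict increase of $\phi$ along \emph{every} order-chain through $b$. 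You give no construction of such a pair $(\phi,e)$, and it is not clear how to produce one in general without essentially already having the desired $g,h$ in hand.

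A cleaner route, which handles all cases at once and dispenses with the case split, uses Nachbin's extension theorem (\cref{l:Nac-reg-inj}, equivalently regular injectivity of $[0,1]$). Work inside the compact ordered space $K$: the sets $L=K\cap\downset b$ and $U=K\cap\upset b$ are disjoint (their intersection lies in $\{b\}$, and $b\notin K$), closed, and are respectively a down-set and an up-set of $K$; so ordered Urysohn (\cref{t:Urysohn}) on $K$ yields $\alpha\colon K\to[0,1]$ with $\alpha[L]=\{0\}$ and $\alpha[U]=\{1\}$. Since $\{b\}$ is isolated in the closed set $K\cup\{b\}$, the assignments $g_0|_K=h_0|_K=\alpha$, $g_0(b)=0$, $h_0(b)=1$ define continuous order-preserving maps $K\cup\{b\}\to[0,1]$ (order-preservation holds because any $k\in K$ comparable to $b$ lies in $L$ or $U$). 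Extend $g_0,h_0$ to $B$ by \cref{l:Nac-reg-inj} and clamp into $[0,1]$ via $t\mapsto(t\vee 0)\wedge 1$; the resulting $g,h$ agree on $K$ and satisfy $g(b)=0\neq 1=h(b)$. (Minor aside: in your argument for \cref{i:mono}, it is not terminality of $\mathbf 1$ that matters, but simply that every set map out of a singleton is automatically a $\CompOrd$-morphism.)
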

\begin{proof}
	Recall that faithful functors reflect monomorphisms and right adjoints preserve monomorphisms.
	The forgetful functor $\CompOrd \to \Set$ is faithful and, by \cref{r:figure-is-right}, right adjoint.
	\Cref{i:mono} follows.
	
	For \cref{i:regmono,i:epi} see, e.g., \cite[Theorem~2.6]{HNN2018}.
	
	It is clear that any isomorphism in $\CompOrd$ is bijective and reflects the order.
	Let $f \colon X \to Y$ be a bijective order-reflective morphism of compact ordered spaces.
	Then,  by \cref{p:comp-Haus-homeo}, $f$ is a homeomorphism, and thus it admits a continuous inverse function $g$; the function $g$ is order-preserving because $f$ is order-reflecting.
\end{proof}
Note that, by \cref{i:regmono} in \cref{p:properties-of-PC-morphisms}, the regular monomorphisms are, up to isomorphisms, the closed subets with the induced topology and order.

\begin{proposition}\label{p:int-is-regular-injective}
	The unit interval $[0,1]$ is a regular cogenerator of $\CompOrd$.
\end{proposition}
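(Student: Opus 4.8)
The plan is to unwind the definition of regular cogenerator and recognise the relevant canonical comparison morphism as the evaluation map already analysed in \cref{l:embedding}. By definition, $[0,1]$ is a regular cogenerator of $\CompOrd$ exactly when, for every compact ordered space $X$, the canonical morphism
\[
	X \longrightarrow \prod_{\hom_{\CompOrd}(X, [0,1])} [0,1]
\]
dual to the defining diagram of a regular generator is a regular monomorphism in $\CompOrd$. So the first step is purely bookkeeping: I would note that the component of this canonical morphism at an index $f \in \hom_{\CompOrd}(X, [0,1])$ is $f$ itself, so the morphism sends $x$ to the tuple whose $f$-th coordinate is $f(x)$; this is precisely the map $\ev$ of \cref{l:embedding}, with $\ev_x \colon f \mapsto f(x)$. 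Here the product is taken in $\CompOrd$, which by \cref{l:limits} is the set-theoretic product with the product topology and product order, so it agrees with the ambient codomain used in \cref{l:embedding}.

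Second, I would invoke \cref{l:embedding} directly: it records that $\ev$ is continuous, order-preserving, injective and order-reflective. In particular $\ev$ is a morphism of $\CompOrd$ that is both injective and order-reflecting. Finally, by \cref{i:regmono} in \cref{p:properties-of-PC-morphisms}, a morphism of $\CompOrd$ is a regular monomorphism if, and only if, it is injective and order-reflecting; hence $\ev$ is a regular monomorphism. Since $X$ was an arbitrary compact ordered space, the canonical morphism above is always a regular monomorphism, and therefore $[0,1]$ is a regular cogenerator of $\CompOrd$.

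Since the substantive work has been carried out in the preceding lemmas, I do not expect a genuine obstacle here. The only point requiring care is the identification in the first step: matching the abstract diagram that defines a (regular) cogenerator, namely the dual of the coproduct diagram for a regular generator, with the concrete product $\prod_{\hom_{\CompOrd}(X,[0,1])} [0,1]$ and the evaluation map, and making sure that the product formed in $\CompOrd$ is the one computed in \cref{l:limits} rather than merely a set-theoretic or topological product. Once this identification is in place, the statement reduces to the already-established properties of $\ev$ together with the characterisation of regular monomorphisms, and the proof is immediate.
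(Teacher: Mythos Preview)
Your proposal is correct and follows essentially the same approach as the paper: invoke \cref{l:embedding} to see that the canonical morphism $X \to \prod_{\hom_{\CompOrd}(X,[0,1])}[0,1]$ is injective and order-reflecting, then apply \cref{i:regmono} in \cref{p:properties-of-PC-morphisms} to conclude it is a regular monomorphism. Your write-up is slightly more explicit about identifying the canonical comparison morphism with $\ev$ and about using \cref{l:limits} for the product, but the substance is identical.
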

\begin{proof}
	By \cref{l:embedding}, for every compact ordered space $X$, the canonical morphism
	\[
		X \to \prod_{\hom_{\CompOrd}(X, [0,1])}[0,1]
	\]
	is injective and order-reflective.
	By \cref{i:regmono} in \cref{p:properties-of-PC-morphisms}, the regular monomorphisms are precisely the injective and order-reflecting monomorphisms.
	It follows that $[0,1]$ is a regular cogenerator.
\end{proof}

\begin{lemma} \label{l:Nac-reg-inj}
	Let $X$ be a compact ordered space equipped with a closed partial order and let $F$ be a closed subset of $X$.
	Then, every continuous order-preserving real-valued function on $F$ can be extended to the entire space in such a way as to remain continuous and order-preserving.
\end{lemma}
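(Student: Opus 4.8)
This is the ordered analogue of the Tietze extension theorem, so my plan is to imitate the classical proof, using the ordered Urysohn lemma (\cref{t:Urysohn}) in place of the ordinary one as the separation engine. After an affine rescaling we may assume $f\colon F\to[0,1]$, where $F$ is itself a compact ordered space (its induced order is closed by \cref{l:subset} and it is compact by \cref{p:closed-in-compact}), and $f$ is order-preserving and continuous.

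Two facts make the ordered version tractable. First, for dyadic $a<b$ the sets $\downset\{y\in F: f(y)\le a\}$ and $\upset\{y\in F: f(y)\ge b\}$ are disjoint: a common point $z$ would provide $y_0,y_1\in F$ with $z\le y_0$, $f(y_0)\le a$, and $y_1\le z$, $f(y_1)\ge b$, whence $y_1\le y_0$, and monotonicity of $f$ forces $b\le f(y_1)\le f(y_0)\le a<b$, a contradiction. Second, these down- and up-closures are \emph{closed}: for closed (hence compact) $K\seq X$ one has $\downset K=\pi_1\bigl((X\times K)\cap{\leq}\bigr)$, the continuous image of a compact set, so it is compact and therefore closed in the Hausdorff space $X$ (\cref{p:image-of-compact,p:compact-in-Haus}), and dually for $\upset$. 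Thus \cref{t:Urysohn} applies to such pairs; in particular, a disjoint closed down-set and closed up-set can be separated by an open down-set and an open up-set (ordered normality follows by taking sublevel and superlevel sets of the separating function).

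The essential obstacle — and the reason one cannot merely invoke the classical Tietze theorem — is that the usual proof forms residuals $f-g_1|_F$ and re-applies Urysohn, but a \emph{difference} of order-preserving functions need not be order-preserving. Hence the residual's sublevel and superlevel sets need no longer have disjoint down- and up-closures, and \cref{t:Urysohn} can no longer be applied at the next stage. The way around this is to build the extension using only operations that preserve monotonicity: suprema, infima, and sums of order-preserving functions are order-preserving, whereas differences are not, so the construction must be additive rather than subtractive.

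Concretely, I would run the ordered Urysohn nested-set construction. By recursion over an enumeration of the dyadics $q\in\Dyad\cap(0,1)$, build open down-sets $U_q$ satisfying $\downset\overline{U_q}\seq U_{q'}$ whenever $q<q'$ and $U_q\cap F=\{y\in F: f(y)<q\}$, inserting each new $U_q$ between the closed down-set $\downset\{f\le q^-\}$ and the open down-set $X\setminus\upset\{f\ge q\}$ (the insertion being possible by the ordered normality noted above). Setting $g(x)\df\inf\{q: x\in U_q\}$, the fact that each $U_q$ is a down-set makes $g$ order-preserving, the nesting condition yields continuity by the standard Urysohn argument, and the matching condition $U_q\cap F=\{f<q\}$ forces $g|_F=f$. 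The delicate point to watch is precisely this last condition: one must prevent $\overline{U_q}$ from creeping up onto $F$ above level $q$, which is exactly what the maintained invariant $\overline{U_q}\cap\upset\{f\ge q\}=\emptyset$ secures.
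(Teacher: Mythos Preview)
The paper does not prove this lemma but simply cites Nachbin (\cite[Theorem~6, p.~49]{Nachbin}, applicable to compact ordered spaces by \cite[Corollary of Theorem~4, p.~48]{Nachbin}). Your proposal reconstructs an actual argument, and the strategy is the right one: the preliminary facts (closedness of $\downset K$ and $\upset K$ for closed $K$, disjointness of $\downset\{f\le a\}$ and $\upset\{f\ge b\}$ for $a<b$, ordered normality from ordered Urysohn) are all correct, your diagnosis of why the classical residual-based Tietze iteration fails is exactly the point, and the nested open-down-set construction with $g(x)=\inf\{q:x\in U_q\}$ is essentially Nachbin's method.

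Two refinements you would need when writing this out. First, at the insertion step the closed down-set to engulf is the \emph{union} of the nesting constraint $\downset\overline{U_p}$ (for the current predecessor $p$) and the $F$-constraint $\downset\{y\in F:f(y)\le q^-\}$, and dually the closed up-set to avoid is $(X\setminus U_r)\cup\upset\{y\in F:f(y)\ge q\}$; your sketch lists both target properties but phrases the insertion using only the $F$-part, and it is the disjointness of the \emph{combined} sets that actually exercises the maintained invariants. Second, the exact equality $U_q\cap F=\{f<q\}$ cannot be secured at each finite stage by separating closed sets: engulfing $\downset\{f\le q^-\}$ with $q^-<q$ yields only $\{f\le q^-\}\seq U_q\cap F$, while taking $q^-=q$ makes the two sets to be separated overlap on $\{f=q\}$. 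What one maintains is the sandwich $\{f\le p\}\seq U_q\cap F\seq\{f<q\}$ for the current predecessor $p$, and $g|_F=f$ then follows once the dyadics become dense. Neither point is a genuine gap---both are routine bookkeeping that Nachbin carries out in detail.
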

\begin{proof}
	The statement holds by \cite[Theorem~6, p.\ 49]{Nachbin}, which applies to compact ordered spaces by \cite[Corollary of Theorem~4, p.\ 48]{Nachbin}.
\end{proof}

\begin{proposition}\label{p:int-is-reg-cogen}
	The unit interval $[0,1]$ is a regular injective object of $\CompOrd$.
\end{proposition}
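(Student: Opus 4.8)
The plan is to unwind the definition of regular injective---the notion dual to the regular projectivity recalled just before \cref{t:char-varieties}---and to reduce the statement to an extension problem that is essentially solved by \cref{l:Nac-reg-inj}. Concretely, $[0,1]$ is regular injective exactly when, for every regular monomorphism $m \colon A \mono X$ in $\CompOrd$ and every morphism $f \colon A \to [0,1]$, there exists a morphism $g \colon X \to [0,1]$ with $g \circ m = f$.

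First I would invoke \cref{i:regmono} in \cref{p:properties-of-PC-morphisms} together with the remark following it: the regular monomorphisms of $\CompOrd$ are, up to isomorphism, exactly the inclusions of closed subspaces equipped with the induced topology and induced order. Hence, without loss of generality, I may take $A$ to be a closed subset $F$ of the compact ordered space $X$, with $m$ the inclusion. The lifting problem then becomes an extension problem: given an order-preserving continuous map $f \colon F \to [0,1]$, produce an order-preserving continuous map $X \to [0,1]$ restricting to $f$ on $F$.

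At this point I would apply \cref{l:Nac-reg-inj}, which yields a continuous order-preserving extension $\tilde{g} \colon X \to \R$ of $f$ regarded as a real-valued function. The only gap is that $\tilde{g}$ need not take values in $[0,1]$. To close it, I would postcompose with the truncation map $r \colon \R \to [0,1]$, $r(t) = \max\{\min\{t,1\},0\}$, which is continuous and order-preserving and restricts to the identity on $[0,1]$. Since $f$ already takes values in $[0,1]$, the composite $g \df r \circ \tilde{g}$ still satisfies $g|_F = f$, and it is a $\CompOrd$-morphism $X \to [0,1]$ extending $f$, as required.

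The argument carries essentially no difficulty of its own: all the analytic content is hidden in Nachbin's extension theorem underlying \cref{l:Nac-reg-inj}. The only points demanding a little care are the identification of regular monomorphisms with closed-subspace inclusions---so that the categorical lifting problem is genuinely an extension problem---and the harmless truncation step needed to land in $[0,1]$ rather than in $\R$. I expect no real obstacle beyond correctly matching these two standard facts.
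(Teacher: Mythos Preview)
Your proposal is correct and follows essentially the same approach as the paper, which simply cites \cref{l:Nac-reg-inj}. You have spelled out the details the paper leaves implicit---the identification of regular monomorphisms with closed-subspace inclusions and the truncation step---but the core is the same appeal to Nachbin's extension theorem.
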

\begin{proof}
	By \cref{l:Nac-reg-inj}.
\end{proof}

\begin{lemma} \label{l:abstractly-cocountable}
	Every morphism in $\CompOrd$ from a power of $[0,1]$ to $[0,1]$ factors through a countable sub-power.
	\[
		\begin{tikzcd}
			{[0,1]^{X}} \arrow{r}{f} \arrow[swap, two heads]{d}{} & {[0,1]}\\
			{[0,1]^{Y}} \arrow[dashed, swap]{ru}{} &
		\end{tikzcd}
	\]
\end{lemma}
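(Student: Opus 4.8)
The plan is to split the statement into a purely topological assertion---that a continuous map out of a power of $[0,1]$ depends on only countably many coordinates---and a short order-theoretic verification that the induced factor map is order-preserving. Write $f \colon [0,1]^X \to [0,1]$ for the given morphism and, for a subset $Y \seq X$, let $\pi_Y \colon [0,1]^X \to [0,1]^Y$ denote the projection; by \cref{l:limits} both spaces are compact ordered spaces and $\pi_Y$ is a $\CompOrd$-morphism, so it suffices to produce a countable $Y$ and a $\CompOrd$-morphism $g \colon [0,1]^Y \to [0,1]$ with $g \circ \pi_Y = f$.

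First I would obtain a countable $Y \seq X$ through which $f$ factors set-theoretically. The coordinate projections $\pi_i \colon [0,1]^X \to [0,1]$ (for $i \in X$) separate the points of $[0,1]^X$, which is compact Hausdorff by \cref{t:Thych}, so the unital subalgebra of $\Cont([0,1]^X, \R)$ they generate is dense in the supremum norm by the Stone--Weierstrass theorem. Hence $f$ is a uniform limit of functions $p_n$, each a polynomial in finitely many projections and thus depending only on a finite set $F_n \seq X$. Put $Y \df \bigcup_{n} F_n$, which is countable. If $x, x' \in [0,1]^X$ satisfy $\pi_Y(x) = \pi_Y(x')$, then $p_n(x) = p_n(x')$ for every $n$, whence $f(x) = f(x')$ by uniform convergence; so $f$ depends only on the coordinates in $Y$, and there is a unique function $g \colon [0,1]^Y \to [0,1]$ with $g \circ \pi_Y = f$. (Equivalently, this topological factorization is Mibu's theorem \cite[Theorem~1]{Mib44}.)

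Next I would check that $g$ is a $\CompOrd$-morphism. Since $\pi_Y$ is a continuous surjection between compact Hausdorff spaces, it is closed by \cref{p:comp-Haus-closed}, hence a quotient map; as $g \circ \pi_Y = f$ is continuous, $g$ is continuous. For order-preservation, let $u \leq v$ in $[0,1]^Y$. Choose any $x \in [0,1]^X$ with $\pi_Y(x) = u$ and define $x' \in [0,1]^X$ by $x'_i \df v_i$ for $i \in Y$ and $x'_i \df x_i$ for $i \in X \setminus Y$. Then $\pi_Y(x') = v$ and $x \leq x'$ coordinatewise, so $g(u) = f(x) \leq f(x') = g(v)$ because $f$ is order-preserving. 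Thus $g$ is order-preserving, and $f = g \circ \pi_Y$ is the required factorization through the countable sub-power $[0,1]^Y$.

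The only genuinely substantial step is the countable dependence of $f$; the passage from the set-theoretic factorization to an order-preserving continuous one is routine, since the product order lets me lift any comparison in $[0,1]^Y$ to a comparison in $[0,1]^X$ respected by $\pi_Y$. I therefore expect no real obstacle beyond correctly packaging the Stone--Weierstrass approximation (or, alternatively, citing Mibu's theorem for the unordered factorization).
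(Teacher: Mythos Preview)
Your proof is correct and follows essentially the same approach as the paper, which simply cites \cite[Theorem~1]{Mib44} for the fact that a continuous map from a power of $[0,1]$ to $[0,1]$ depends on only countably many coordinates. Your argument is in fact more complete: the paper leaves the verification that the induced factor $g$ is order-preserving implicit, whereas you spell out the coordinate-lifting argument that makes this transparent.
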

\begin{proof}
	Every continuous map from a power of $[0,1]$ to $[0,1]$ depends on at most countably many coordinates \cite[Theorem 1]{Mib44}.
\end{proof}

We let $\SignCM$ (for `Signature of Order-preserving Continuous function') denote the signature whose operation symbols of arity $\kappa$ are the order-preserving continuous functions from $[0,1]^\kappa$ to $[0,1]$.
Every operation symbol in $\SignCM$ has an obvious interpretation on $[0,1]$.
We let $\SignCM_{\leq \omega}$ denote the sub-signature of $\SignCM$ consisting of the operations symbols in $\SignCM$ of at most countable arity.

\begin{theorem} \label{t:duality-not-explicit}
	The category $\CompOrd$ is dually equivalent to the quasivarieties\footnote{As we will prove, they are actually varieties.}
	\[
		\opS\opP\mathopen{}\left(\left\langle [0,1]; \SignCM \right\rangle\right)\mathclose{},
	\]
	and
	\[
		\opS\opP\mathopen{}\left(\left\langle [0,1]; \SignCM_{\leq \omega} \right\rangle\right)\mathclose{}.
	\]
\end{theorem}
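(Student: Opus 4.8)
The plan is to derive the first dual equivalence as a direct application of the natural-duality machinery packaged in \cref{p:Ada-natural}, and then to pass from the full signature $\SignCM$ to the countable-arity signature $\SignCM_{\leq\omega}$ by a term-equivalence argument powered by the factorisation in \cref{l:abstractly-cocountable}.

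First I would invoke \cref{p:Ada-natural} with $\cat{C}=\CompOrd$, with $X=[0,1]$, and with $U\colon\CompOrd\to\Set$ the forgetful functor. All three hypotheses are on hand: $\CompOrd$ is complete by \cref{p:PC-complete-cocomplete}; the unit interval is a regular cogenerator by \cref{p:int-is-regular-injective} and a regular injective object by \cref{p:int-is-reg-cogen}, hence a regular injective regular cogenerator; and $U$ is faithful and representable, being naturally isomorphic to $\hom_{\CompOrd}(\mathbf{1},-)$ for the one-point compact ordered space $\mathbf{1}$, which is a generator of $\CompOrd$. Being representable, $U$ preserves limits, so $U(X^\kappa)=U(X)^\kappa=[0,1]^\kappa$ and the construction makes sense. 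I would then identify the output: the signature $\Sigma$ produced by \cref{p:Ada-natural} has, for each cardinal $\kappa$, as its arity-$\kappa$ symbols the morphisms $X^\kappa\to X$, that is, the order-preserving continuous functions $[0,1]^\kappa\to[0,1]$—precisely $\SignCM$—while the algebra $\bar{X}$ has underlying set $U([0,1])=[0,1]$ and interprets each symbol $f$ as $U(f)=f$. Thus $\bar{X}=\langle[0,1];\SignCM\rangle$, and \cref{p:Ada-natural} gives that $\CompOrd$ is dually equivalent to $\opS\opP(\langle[0,1];\SignCM\rangle)$, settling the first half.

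For the second half I would show that the reduct to $\SignCM_{\leq\omega}$ leaves the generated quasivariety unchanged up to isomorphism of categories. By \cref{l:abstractly-cocountable}, every $f\in\SignCM$ of arity $\kappa$ factors as $[0,1]^\kappa\xrightarrow{\pi}[0,1]^{Y}\xrightarrow{g}[0,1]$ with $Y$ countable and $g\in\SignCM_{\leq\omega}$; equivalently, $f$ is the term operation obtained by applying $g$ to the subfamily of arguments indexed by $Y$. Hence $\SignCM$ and $\SignCM_{\leq\omega}$ generate the same clone on $[0,1]$, the inclusion $\SignCM_{\leq\omega}\subseteq\SignCM$ giving one containment and the factorisation the other. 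Consequently a subset of a power $[0,1]^I$ is closed under $\SignCM$ if and only if it is closed under $\SignCM_{\leq\omega}$, and a map between such subsets is a $\SignCM$-homomorphism if and only if it is a $\SignCM_{\leq\omega}$-homomorphism; therefore the reduct functor forgetting the uncountable-arity operations restricts to an isomorphism of categories $\opS\opP(\langle[0,1];\SignCM\rangle)\cong\opS\opP(\langle[0,1];\SignCM_{\leq\omega}\rangle)$. Composing with the first dual equivalence yields the second.

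The conceptual weight sits entirely in the invoked results—\cref{p:Ada-natural}, which encapsulates the relevant natural-duality theory, and \cref{l:abstractly-cocountable}, which is the countable-dependence theorem—so the remaining work is assembly. The one step needing genuine (if routine) care is the term-equivalence of the last paragraph: one must verify, for \emph{infinitary} signatures, that closure under the basic operations entails closure under all term operations and that homomorphisms preserve term operations, so that the equality of clones really transfers subalgebras and homomorphisms between the two signatures. This is the main, and modest, obstacle.
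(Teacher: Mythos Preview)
Your proposal is correct and follows essentially the same line as the paper: invoke \cref{p:Ada-natural} with $[0,1]$ as the regular injective regular cogenerator and the forgetful functor as the faithful representable $U$, then use \cref{l:abstractly-cocountable} to pass to the countable-arity signature. Your treatment of the second step is actually more detailed than the paper's, which simply asserts that one ``can restrict to operations of at most countable arity'' and parenthetically mentions the $\aleph_1$-copresentability alternative; your explicit clone/term-equivalence argument is a welcome elaboration. The only minor omission is an explicit citation of \cref{l:ISP} to justify that the $\opS\opP$-classes are quasivarieties, but this is routine.
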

\begin{proof}
	By \cref{p:PC-complete-cocomplete}, $\CompOrd$ is complete.
	The compact ordered space $[0,1]$ is a regular injective regular cogenerator of $\CompOrd$ by \cref{p:int-is-reg-cogen,p:int-is-regular-injective}.
	The forgetful functor from $\CompOrd$ to $\Set$ is faitfhul, and it is represented by the one-element compact ordered space.
	So, \cref{p:Ada-natural} applies and we obtain that $\CompOrd$ is dually equivalent to $\opS\opP\mathopen{}\left(\left\langle [0,1]; \SignCM \right\rangle\right)\mathclose{}$, which,  by \cref{l:ISP}, is a quasivariety.

	By \cref{l:abstractly-cocountable}, we can restrict to operations of at most countable arity.
	(Alternatively, one can use the fact that $[0,1]$ is $\aleph_1$-copresentable in $\CompOrd$ \cite[Proposition~3.7]{HNN2018}; see \cite[Definition~1.16]{AdaRos} for the definition of $\kappa$-presentability for $\kappa$ a regular cardinal.\footnote{The reader might also want to take a look at \cite[Proposition~2.4]{PedicchioVitale2000}, where it is proved that the notions of abstractly finite and finitely generated are equivalent for a regular projective regular generator with copowers.})
\end{proof}

A stronger version of \cref{t:duality-not-explicit} was obtained in~\cite{HNN2018}, where the authors proved that $\CompOrdop$ is an $\aleph_1$-ary quasivariety\index{quasivariety of algebras!$\aleph_1$-ary}, i.e., it can be presented by means of operations of at most countable arity, and of implications with at most countably many premises.
The main contribution of this chapter consists in a proof of the fact that every equivalence relation in $\CompOrdop$ is effective (\cref{t:effective}).
When this result is coupled with the fact that $\CompOrdop$ is equivalent to a quasivariety, we obtain the main result of this chapter: $\CompOrdop$ is equivalent to a variety (\cref{t:MAIN}).

%%%%%%%%%%%%%%%%%%%%%%%%%%%%%%%%%%%   SECTION   %%%%%%%%%%%%%%%%%%%%%%%%%%%%%%%%%%%%

\section{Quotient objects} \label{s:quotient-objects}

We will study equivalence relations in $\CompOrdop$ by looking at their dual in $\CompOrd$.
In order to do so, we need to introduce some notation for the dual concepts.
\begin{notation}\label{n:quotequiv}
	We let $\Quot(X)$ denote the class of epimorphisms of compact ordered spaces with domain $X$.
	We equip $\Quot(X)$ with a relation ${\les}$ as follows: an element $f\colon X \epi Y$ is below an element $g\colon X \epi Z$ if, and only if, there exists a (necessarily unique) morphism $h \colon Y \to Z$ such that the following diagram commutes\footnote{We warn the reader that, in \cite{AbbadiniReggio2020}, the order of $\QuotEquiv$ is the opposite of what we consider here.}.
	\[
		\begin{tikzcd}
			X \arrow[two heads]{r}{f} \arrow[swap, two heads]{rd}{g}	& Y \arrow[dashed]{d}{h} \\
																						& Z
		\end{tikzcd}
	\]
	It is immediate that $\les$ is reflexive and transitive, so that $\Quot(X)$ becomes a preordered class.
	
	There is a standard way in which a partially ordered set is obtained from a preordered set, i.e.\ by identifying elements of an equivalence class (see \cref{i:symmetrise} in \cref{r:fundamental-pleasant}).
	In the same fashion, from $\Quot(X)$, we obtain a partially ordered class (in fact, a set) $\QuotEquiv(X)$.
	Explicitly, $\QuotEquiv(X)$ is the set of equivalence classes of epimorphisms of compact ordered spaces with domain $X$, where two epimorphisms $f\colon X\epi Y$ and $g\colon X\epi Z$ are equivalent if, and only if, there exists an isomorphism $h \colon Y \to Z$ such that $hf = g$; moreover, the equivalence class of $f\colon X \epi Y$ is below the equivalence class of $g\colon X \epi Z$ if, and only if, there exists a morphism $h \colon Y \to Z$ such that $hf = g$.
	Elements of $\QuotEquiv(X)$ are called \emph{quotient objects of $X$}\index{quotient object}\index{object!quotient}.
	We warn the reader that our terminology is non-standard.
	By a quotient object we do not mean a regular epimorphism, but what may be called a \emph{cosubobject}\index{cosubobject}.
	With a little abuse of notation, we take the liberty to refer to an element of $\QuotEquiv(X)$ just with one of its representatives.
\end{notation}
Our next goal is to encode quotient objects on $X$ internally on $X$.
To make a parallelism: by \cite[The Alexandroff Theorem~3.2.11]{Engelking1989}\footnote{The reader is warned that, in \cite{Engelking1989}, by `compact space' is meant what we here call a compact \emph{Hausdorff} space.}, in the category $\CompHaus$ of compact Hausdorff spaces, an epimorphism $f \colon X \epi Y$ (equivalently, a surjective continuous function) is encoded by the equivalence relation ${\sim_f} \df \{(x,y) \in X \times X \mid f(x) = f(y)\}$;
the equivalence relation $\sim_f$ is closed, and, in fact, there is a bijection between equivalence classes of epimorphisms of compact Hausdorff spaces with domain $X$ and closed equivalence relations on $X$.
There is also an analogous version for Stone spaces, namely \emph{Boolean relations}\index{Boolean relation}\footnote{Sometimes called \emph{Boolean equivalences}\index{Boolean equivalence}.} (see \cite[Lemma~1, Chapter 37]{GivantHalmos2009}), and an analogous version for Priestley spaces, introduced under the name of \emph{lattice preorders}\index{lattice preorder}\index{preorder!lattice} in \cite[Definition~2.3]{CignoliLafalcePetrovich1991}\footnote{Lattice preorders are also called \emph{Priestley quasiorders}\index{Priestley quasiorder} (\cite[Definition~3.5]{Schmid2002}), or \emph{compatible quasiorders}\index{compatible quasiorder}\index{quasiorder!compatible}.}.

In the case of compact ordered spaces, we encode a quotient object $f \colon X \epi Y$ via a certain preorder $\les_f$ on $X$, as follows.
\begin{notation}\label{n:les-f}
	Given a morphism $f\colon (X,\leq_X)\to (Y,\leq_Y)$ in $\CompOrd$, we set
	\begin{equation*}
		\les_f=\{(x_1,x_2)\in X\times X\mid f(x_1)\leq_Y f(x_2)\}.
	\end{equation*}
\end{notation}
To illustrate \cref{n:les-f} with an example, consider the following compact ordered space $X =\{a, b ,c\}$ with $a \leq b$.
\[
	\begin{tikzpicture}[node distance= 1.5 cm, auto]
		\node (11) {$\bullet$};
		\node (21) [below of=11]{$\bullet$};
		\node (31) [below of=21]{$\bullet$};
		
		\node [left=0.2 cm] at (11.west) {$c$};
		\node [left=0.2 cm] at (21.west) {$b$};
		\node [left=0.2 cm] at (31.west) {$a$};
		
		\node[rounded corners,draw=black,fit={(11) (21) (31)}] {};

		\draw[->, thick] (31) to node {}  (21);
	
	\end{tikzpicture}
\]
It is understood that the topology is the discrete one (the topology of any finite Stone space is discrete).
Consider a chain $Y$ of two elements $\bot \leq \top$, equipped with the discrete topology, and let $f \colon X \to Y$ be the function that maps $a$ and $b$ to $\bot$ and $c$ to $\top$.
\[
	\begin{tikzpicture}[node distance= 1.5 cm, auto]
		\node (11) {$\bullet$};
		\node (21) [below of=11]{$\bullet$};
		\node (31) [below of=21]{$\bullet$};
		
		\node [above=0.5 cm] at (11.north) {$X$};
				
		\node (12) [right of=11]{};
		\node (22) [right of=21]{};

		\node (13) [right of=12]{};
		\node (23) [right of=22]{};
		
		\node [above=0.1 cm] at (13.north) {$f$};

		\node (14) [right of=13]{};
		\node (24) [right of=23]{};
		
		\node (15) [right of=14]{$\bullet$};
		\node (25) [right of=24]{$\bullet$};
		
		\node [above=0.5 cm] at (15.north) {$Y$};
		
		\node[rounded corners,draw=black,fit={(11) (21) (31)}] {};
		\node[rounded corners,draw=black,fit={(15) (25)}] {};

		\draw[->, thick] (31) to node {}  (21);
		
		\draw[->, thick] (25) to node {}  (15);
		
		\draw[->, thick, dashed] (11) to node {}  (15);
		\draw[->, thick, dashed] (21) to node {}  (25);
		\draw[->, thick, dashed] (31) to node {}  (25);

	\end{tikzpicture}
\]
Then, $\les_f$ looks as the following preorder on $X$.
\[
	\begin{tikzpicture}[node distance= 1.5 cm, auto]
		\node (11) {$\bullet$};
		\node (21) [below of=11]{$\bullet$};
		\node (31) [below of=21]{$\bullet$};
		
		\node[rounded corners,draw=black,fit={(11) (21) (31)}] {};

		\draw[->, thick] (21) to node {}  (11);
		\draw[<->, thick] (31) to node {}  (21);
	
	\end{tikzpicture}
\]
\begin{example}
	If $Y$ is a compact Hausdorff space equipped with the identity partial order, and $f \colon X \to Y$ is a morphism of compact ordered spaces, then ${\les_f} = \{(x, y) \in X \times X \mid f(x) = f(y)\}$.
	So, the specialisation to compact Hausdorff spaces of this approach is precisely the one that we have discussed before \cref{n:les-f}.
\end{example}
\Cref{n:les-f} will be relevant especially for $f$ an epimorphism.
The idea is that, up to an isomorphism, an epimorphism $f$ can be completely recovered from $\les_f$.
In order to establish an inverse for the assignment $f \mapsto {\les_f}$, we shall investigate the properties satisfied by $\les_f$:
these properties are precisely that $\les_f$ is a closed preorder which extends the given partial order, as we now shall see.
\begin{lemma} \label{l:les-f-in-preo}
	If $f\colon (X,\leq_X)\to (Y,\leq_Y)$ is a morphism in $\CompOrd$, then $\les_f$ is a closed preorder on $X$ that extends $\leq_X$.
\end{lemma}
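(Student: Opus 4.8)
The plan is to verify, in turn, the three assertions in the statement: that $\les_f$ is a preorder, that it extends $\leq_X$, and that it is closed. The first two are immediate from the order-theoretic structure of the codomain together with the hypotheses on $f$, so the only substantive point is the closedness, which I would obtain by exhibiting $\les_f$ as the preimage of a closed set under a continuous map.

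First I would check that $\les_f$ is reflexive and transitive. For reflexivity, given $x \in X$ we have $f(x) \leq_Y f(x)$ because $\leq_Y$ is reflexive, whence $x \les_f x$. For transitivity, if $x_1 \les_f x_2$ and $x_2 \les_f x_3$, then $f(x_1) \leq_Y f(x_2)$ and $f(x_2) \leq_Y f(x_3)$, so transitivity of $\leq_Y$ gives $f(x_1) \leq_Y f(x_3)$, that is, $x_1 \les_f x_3$. Thus $\les_f$ is a preorder. Next, to see that $\les_f$ extends $\leq_X$: if $x_1 \leq_X x_2$, then, since $f$ is order-preserving, $f(x_1) \leq_Y f(x_2)$, and hence $x_1 \les_f x_2$.

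The heart of the argument is closedness. I would consider the map $f \times f \colon X \times X \to Y \times Y$ sending $(x_1, x_2)$ to $(f(x_1), f(x_2))$; this is continuous because $f$ is continuous. Directly from \cref{n:les-f} one has
\[
	{\les_f} = (f \times f)^{-1}\!\left[{\leq_Y}\right].
\]
Since $Y$ is a compact ordered space, the partial order $\leq_Y$ is by definition a closed subset of $Y \times Y$. As the preimage of a closed set under a continuous map is closed, it follows that $\les_f$ is a closed subset of $X \times X$, as required.

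I do not anticipate any real obstacle here: the preorder properties and the extension of $\leq_X$ reduce to the reflexivity and transitivity of $\leq_Y$ and to the fact that $f$ is order-preserving, while closedness rests on the single clean observation that $\les_f$ is the continuous preimage of the (closed) graph of $\leq_Y$. The only mild subtlety worth stating explicitly is that continuity of $f \times f$ follows from continuity of $f$ together with the definition of the product topology on $X \times X$ and on $Y \times Y$.
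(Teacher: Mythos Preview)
Your proof is correct and follows essentially the same approach as the paper: preorder properties from reflexivity and transitivity of $\leq_Y$, extension of $\leq_X$ from monotonicity of $f$, and closedness from expressing $\les_f$ as $(f\times f)^{-1}[{\leq_Y}]$. The paper's proof is merely more terse, omitting the verifications you spell out.
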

\begin{proof}
	The fact that $\les_f$ is a preorder follows from the fact that $\leq_Y$ is reflexive and transitive.
	The monotonicity of $f$ entails ${\leq_X}\subseteq{\les_f}$.
	The set $\les_f$ is closed in $X\times X$ because $\les_f$ is the preimage of $\leq_Y$ under the continuous map $f\times f\colon X\times X\to Y\times Y$.
\end{proof}
\begin{remark}\label{r:sym}
	We shall now see how one recovers an epimorphism $f$ from $\les_f$.
	Let $(X, \leq_X)$ be a compact ordered space and let ${\les}$ be a closed preoder on $X$ that extends $\leq_X$.
	We equip $X/({\les}\cap{{\les}^\oprel})$ with the quotient topology and the quotient order, defined by
	\[
		[x]_{{\les}\cap{{\les}^\oprel}}\leq_{X/({\les}\cap{{\les}^\oprel})} [y]_{{\les}\cap{{\les}^\oprel}}\ \Longleftrightarrow \ x\les y.
	\]
	Then, $X/({\les}\cap{{\les}^\oprel})$ is a compact ordered space, as proved in the paragraph `Reflector of $\CompOrd \hookrightarrow \CompHausPreord$' in \cref{r:figure-is-right}.
	Moreover, since ${\leq_X}\subseteq{\les}$, the function $X \epi X/({\les}\cap{{\les}^\oprel})$ is order-preserving.
	In conclusion, the function 
	\[
		X \epi X/({\les}\cap{{\les}^\oprel})
	\]
	is an epimorphism in $\CompOrd$.
\end{remark}

For $X$ a compact ordered space, we let $\Preo(X)$ denote the set of closed preorders on $X$ that extend $\leq_X$.
We equip $\Preo(X)$ with the partial order given by inclusion.

Our goal, met in \cref{t:bijection-Q-P}, is to prove that the assignments\\
\begin{minipage}{0.49\textwidth}
	\begin{align*}
		\QuotEquiv(X)			& \longrightarrow \Preo(X)\\
		\big(f \colon X\epi Y\big)	& \longmapsto 		{\les_f}\\
	\end{align*}
\end{minipage}
\begin{minipage}{0.49\textwidth}
	\begin{align*}
		\Preo(X)	& \longrightarrow	\QuotEquiv(X)\\
		{\les}		& \longmapsto 		\big(X\epi X/{({\les} \cap {\les}^\oprel)}\big)\\
	\end{align*}
\end{minipage}\\
establish an isomorphism between the partially ordered sets $\QuotEquiv(X)$ and $\Preo(X)$.
This will allow us to work with $\Preo(X)$ instead of $\QuotEquiv(X)$.

%-------------------------------   SUBSUBSECTION   --------------------------------%

\subsubsection{The adjunction between the coslice category over \texorpdfstring{$X$}{\unichar{"1D44B}} and \texorpdfstring{$\mathbf{P}(X)$}{\unichar{"1D40F}(\unichar{"1D44B})}}

For the rest of this section, we fix a compact ordered space $X$.
In \cref{t:bijection-Q-P} below, we will prove the correspondence between quotients objects on $X$ and closed preorders on $X$ extending the given partial order.
To do so, we start by establishing, in \cref{l:adjunction} below, an adjunction between the coslice category $X\downarrow \CompOrd$ (whose objects are morphisms with domain $X$) and the partially ordered set $\Preo(X)$, regarded as a category.
From this adjunction, we will obtain an equivalence by restricting to the fixed points, and then an isomorphism via a certain quotient.

We let $X\downarrow \CompOrd$ denote the \emph{coslice category}\index{category!coslice}\index{coslice|see{category, coslice}} of $\CompOrd$ over $X$, i.e., the category whose objects are the morphisms in $\CompOrd$ with domain $X$ and whose morphisms from an object $f\colon X\to Y$ to an object $g\colon X\to Z$ are the morphisms $h\colon Y\to Z$ in $\CompOrd$ such that the following triangle commutes.
\[
	\begin{tikzcd}
		X \arrow{r}{f}\arrow[swap]{rd}{g}	& Y \arrow[dashed]{d}{h} \\
														& Z
	\end{tikzcd}
\]
Whenever convenient, we shall regard $\Preo(X)$ as a category, in the way in which it is usually done for partially ordered sets.
\begin{notation}\label{n:F-G}
We let 
	\begin{equation*}
		\begin{split}
			F \colon 	X \downarrow \CompOrd	& \longrightarrow	\Preo(X)	\\
							\big(f\colon X\to Y\big)			& \longmapsto		{\les_f}
		\end{split}
	\end{equation*}
	denote the assignment described in \cref{n:les-f}.
	Note that $\les_f$ belongs to $\Preo(X)$ by \cref{l:les-f-in-preo}.
	This assignment can be extended on morphisms so that $G$ becomes a functor: given $f\colon X\to Y$ and $g\colon X\to Z$ in $X\downarrow \CompOrd$, and given $h\colon Y\to Z$ such that $g=hf$, we set $G(h)$ as the unique morphism in $\Preo(X)$ from $\les_f$ to ${\les}_g$.
	
	We let
	\begin{equation*}
		\begin{split}
			F\colon	\Preo(X)	& \longrightarrow	X\downarrow \CompOrd	\\
						\les		& \longmapsto		\big(X\epi X/{({\les}\cap {\les}^\oprel)}\big)
		\end{split}
	\end{equation*}
	denote the assignment described in \cref{r:sym}.
	This assignment can be extended on morphisms so that $F$ becomes a functor: given ${{\les}_1},{{\les}_{2}} \in \Preo(X)$ such that ${{\les}_1} \seq {{\les}_2}$, $F$ maps the unique morphism from ${\les}_1$ to ${\les}_2$ to the morphism of compact ordered spaces
	\begin{equation*}
		\begin{split}
			X/({\les}_1\cap{\les}_1^\oprel)	&\longrightarrow	X/({\les}_2\cap{\les}_2^\oprel) \\
			[x]_{{\les}_1\cap{\les}_1^\oprel}	&\longmapsto		[x]_{{\les}_2\cap{\les}_2^\oprel}.
		\end{split}
	\end{equation*}
	It is easily seen that the functor $GF \colon \Preo(X) \to \Preo(X)$ is the identity functor.
	We let $\eta$ denote the identity natural transformation from the identity functor $1_{\Preo}$ on $\Preo$ to itself.
	
	Given the adjunction between $\CompOrd$ and $\CompHausPreord$ described in \cref{r:figure-is-right}, for all compact ordered spaces we have a morphism
	\begin{equation}\label{e:eps-f}
		\begin{split}
			\eps_f \colon	X/{({\les_f}\cap {\les_f^\oprel})}	& \longrightarrow	Y \\
								[x]			& \longmapsto		f(x).
		\end{split}
	\end{equation}
	\begin{claim}
		$\eps$ is a natural transformation.
	\end{claim}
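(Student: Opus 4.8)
The plan is to show that the components $\eps_f$ of \cref{e:eps-f} are genuine morphisms of the coslice category and that they assemble into a natural transformation from $FG$ to the identity functor on $X\downarrow\CompOrd$, where $F$ and $G$ are the functors of \cref{n:F-G}. Writing $q_f \colon X \epi X/({\les_f}\cap{\les_f^\oprel})$ for the quotient map, recall that $FG(f)$ is the coslice object $q_f$ and that $\eps_f$ is to be an arrow from $FG(f)$ to $f$.

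First I would check that each $\eps_f$ is a legitimate arrow of $X\downarrow\CompOrd$. Well-definedness of the underlying function is the only point needing a moment's thought: if $[x]=[x']$ in $X/({\les_f}\cap{\les_f^\oprel})$, then $x\les_f x'$ and $x'\les_f x$, i.e.\ $f(x)\leq_Y f(x')$ and $f(x')\leq_Y f(x)$; since $\leq_Y$ is a genuine partial order, antisymmetry forces $f(x)=f(x')$, so $[x]\mapsto f(x)$ is well defined. Continuity then follows from the universal property of the quotient topology, because $\eps_f\circ q_f = f$ is continuous; order-preservation follows directly from the definition of the quotient order (if $[x]\leq[x']$ then $x\les_f x'$, whence $f(x)\leq_Y f(x')$). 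Finally $\eps_f\circ q_f = f$ by construction, so $\eps_f$ is indeed a morphism from $FG(f)$ to $f$ in $X\downarrow\CompOrd$.

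Second, I would verify naturality. Given an arrow of $X\downarrow\CompOrd$ from $f\colon X\to Y$ to $g\colon X\to Z$, realised by $k\colon Y\to Z$ with $kf=g$, monotonicity of $k$ yields ${\les_f}\seq{\les_g}$, so $G(k)$ is the unique comparison in $\Preo(X)$ and $FG(k)$ is the induced map $[x]_{\les_f}\mapsto[x]_{\les_g}$ described in \cref{n:F-G}. The naturality square lives in $X\downarrow\CompOrd$, so it suffices to check that its two underlying composites in $\CompOrd$ agree on representatives: the path $\eps_f$ then $k$ sends $[x]_{\les_f}\mapsto f(x)\mapsto k(f(x))=g(x)$, while the path $FG(k)$ then $\eps_g$ sends $[x]_{\les_f}\mapsto[x]_{\les_g}\mapsto g(x)$. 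Both equal $g(x)$, so the square commutes. I expect the whole verification to be routine; the only step demanding care is the well-definedness of $\eps_f$, which is precisely where the antisymmetry of the order on the codomain is used—explaining why one quotients by ${\les_f}\cap{\les_f^\oprel}$ rather than by $\les_f$ alone.
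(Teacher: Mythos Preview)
Your proof is correct and, for the naturality check itself, essentially identical to the paper's: both chase a representative $[x]$ around the square and observe that each composite sends it to $g(x)$. The only difference is that you verify directly that each component $\eps_f$ is a well-defined morphism of compact ordered spaces (using antisymmetry of $\leq_Y$ and the universal property of the quotient topology), whereas the paper obtains this from the reflector $\CompHausPreord \to \CompOrd$ of \cref{r:figure-is-right}; this is a minor packaging difference, not a different argument.
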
	
	\begin{claimproof}
		Let $f\colon X\to Y$ and $g\colon X\to Z$ be elements of $X\downarrow \CompOrd$, and let $h \colon Y\to Z$ be such that the $g = hf$.
			We shall prove that the following diagram commutes.
			\[
				\begin{tikzcd}
					{(X\xrightarrow{\pi}X/{\sim_f})} \arrow{r}{\eps_f} \arrow[swap]{d}{FG(h)}	& (X\xrightarrow{f}Y) \arrow{d}{h}\\
					{(X\xrightarrow{\pi}X/{\sim_g})} \arrow{r}{\eps_g}									& (X\xrightarrow{g}Z)
				\end{tikzcd}
			\]
			The commutativity of the diagram above amounts to the commutativity of the following one.
			\[
			\begin{tikzcd}
				X/{\sim_f} \arrow{r}{\eps_f} \arrow[swap]{d}{FG(h)}	& Y \arrow{d}{h}\\
				X/{\sim_g} \arrow{r}{\eps_g}									& Z
			\end{tikzcd}
			\]
			For every $x\in X$ we have 
			\[
				h(\eps_f([x]_{\sim_f}))  =  h(f(x))  =  g(x)  =  \eps_g([x]_{\sim_g})  =  \eps_g(FG(h)(x)).
			\]
			This proves our claim.
	\end{claimproof}
\end{notation}

\begin{lemma}\label{l:adjunction}
	The functor 
	\[
		F\colon \Preo(X)\to X \downarrow \CompOrd
	\]
	is left adjoint to the functor
	\[
		G\colon X\downarrow\CompOrd \to \Preo(X),
	\]
	with unit $\eta$ and counit $\eps$.
\end{lemma}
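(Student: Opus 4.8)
The plan is to verify directly that the given data satisfy the two triangle identities for an adjunction; this suffices because $\eta$ and $\eps$ have already been exhibited as natural transformations ($\eps$ by the Claim just proved, and $\eta$ is an identity, hence trivially natural). Explicitly, I would check
\[
	(G\eps)\circ(\eta G) = 1_G \qquad\text{and}\qquad (\eps F)\circ(F\eta) = 1_F .
\]
The argument becomes short once one exploits the structural fact that $\Preo(X)$ is a \emph{poset}, viewed as a thin category, so that between any two of its objects there is at most one morphism.

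For the first identity, observe that $(G\eps)\circ(\eta G)$ is a natural endomorphism of the functor $G$, which takes values in the poset $\Preo(X)$; since the only endomorphism of an object of a poset is its identity, each component is forced to be an identity and the triangle identity holds automatically. For the second identity, I would use that $\eta$ is the identity natural transformation (this is precisely the already-noted fact $GF = 1_{\Preo(X)}$), so that $F\eta = 1_F$ and the identity reduces to showing that the counit component at $F({\les})$ is the identity, for every ${\les}\in\Preo(X)$. Writing $q\colon X\epi X/({\les}\cap{\les}^\oprel)$ for the object $F({\les})$ and $Q\df X/({\les}\cap{\les}^\oprel)$, the key point is the fixed-point identity $\les_q={\les}$: by the very definition of the quotient order, $q(x_1)\leq_Q q(x_2)$ holds exactly when $x_1\les x_2$. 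Consequently the domain of $\eps_q$ is again $Q$, and $\eps_q$ sends $[x]$ to $q(x)=[x]$; that is, $\eps_q=1_Q$ as a morphism $q\to q$ in $X\downarrow\CompOrd$, which is what was needed.

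Because the two triangle identities come out so cheaply, the genuine content of the statement sits in the definitions feeding into it: that $F$ and $G$ are well-defined functors, and that the counit $\eps$ is well-defined and natural. The one place I would single out as requiring care is encapsulated by the equivalent hom-set formulation $(X\downarrow\CompOrd)(F({\les}),g)\cong\Preo(X)({\les},G(g))$. Since $F({\les})$ is an epimorphism with domain $X$, coslice morphisms out of it are unique, so both sides are subsingletons and the bijection amounts to the assertion that a morphism $F({\les})\to g$ exists if, and only if, ${\les}\seq{\les_g}$. The substantive direction is that ${\les}\seq{\les_g}$ forces the structure map $g\colon X\to Z$ to factor through the quotient $X/({\les}\cap{\les}^\oprel)$: well-definedness of the induced map uses the antisymmetry of $\leq_Z$ (applied to a pair with $x\les y$ and $y\les x$), order-preservation is immediate from ${\les}\seq{\les_g}$, and continuity follows from the universal property of the quotient topology. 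I expect this factorisation --- well-definedness together with continuity --- to be the only delicate point, everything else being formal once the poset structure of $\Preo(X)$ is used.
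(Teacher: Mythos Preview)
Your proof is correct and follows essentially the same approach as the paper: both triangle identities are handled exactly as you describe, with the first dismissed because $\Preo(X)$ is a poset and the second reduced (via $\eta=1$) to checking that $\eps_{F({\les})}$ is the identity, which follows from ${\les_q}={\les}$. Your third paragraph on the hom-set formulation is extra commentary not present in the paper, but it is accurate and does not detract from the argument.
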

\begin{proof}
	It remains to prove that the triangle identities hold.
	One triangle identity is trivial because every diagram commutes in a category arising from a partially ordered set.
	We now set the remaining triangle identity.
	Let ${\les}\in \Preo(X)$.
	We shall prove that the following diagram commutes.
	\begin{equation} \label{e:triangle}
		\begin{tikzcd}
				F(\les) \arrow{r}{F(\eta_{\les})} \arrow[swap]{rd}{1_{F(\les)}}	& FGF(\les) \arrow{d}{\eps_{F(\les)}}\\
																										& F(\les)
		\end{tikzcd}
	\end{equation}
	Since $GF$ is the identity functor, and $\eta$ is the identity natural transformation, the commutativity of \cref{e:triangle} amounts to the fact that $\eps_{F(\les)}$ is the identity on $F(\les)$, which is not hard to see.
\end{proof}
We recall that a morphism in a coslice category $X\downarrow \mathsf{C}$ is an isomorphism in $X\downarrow \mathsf{C}$ if, and only if, it is an isomorphism in $\mathsf{C}$.
Then, we have the following.
\begin{lemma}\label{l:unit-iso}
	Given an object $f\colon X\to Y$ of $X\downarrow\CompOrd$, the component of the counit $\eps$ at $f$ is an isomorphism if, and only if, $f$ is an epimorphism.
\end{lemma}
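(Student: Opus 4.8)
The plan is to show that the counit component $\eps_f$ is \emph{always} injective and order-reflecting, so that surjectivity is the sole property separating a general $\eps_f$ from an isomorphism; I will then identify this surjectivity with that of $f$, i.e.\ with $f$ being an epimorphism. Recall from \cref{e:eps-f} that $\eps_f \colon X/({\les_f} \cap {\les_f^\oprel}) \to Y$ is given by $[x] \mapsto f(x)$, and that by \cref{r:sym} its domain is a compact ordered space and $\eps_f$ is a morphism of $\CompOrd$.

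First I would check injectivity. If $\eps_f([x]) = \eps_f([x'])$, then $f(x) = f(x')$, whence $f(x) \leq_Y f(x')$ and $f(x') \leq_Y f(x)$; by \cref{n:les-f} this says $x \les_f x'$ and $x' \les_f x$, that is $[x] = [x']$. Next I would check order-reflectivity: by construction the quotient order on the domain satisfies
\[
	[x] \leq [x'] \Longleftrightarrow x \les_f x' \Longleftrightarrow f(x) \leq_Y f(x') \Longleftrightarrow \eps_f([x]) \leq_Y \eps_f([x']),
\]
so $\eps_f$ both preserves and reflects the order. Crucially, neither verification uses anything about $f$ beyond its being a morphism.

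With these two facts in hand, the equivalence drops out of \cref{p:properties-of-PC-morphisms}. By \cref{i:iso}, a morphism in $\CompOrd$ is an isomorphism if and only if it is bijective and order-reflecting; since $\eps_f$ is automatically injective and order-reflecting, $\eps_f$ is an isomorphism if and only if it is surjective. Because the image of $\eps_f$ is $\{f(x) \mid x \in X\} = f[X]$, surjectivity of $\eps_f$ is equivalent to surjectivity of $f$, which by \cref{i:epi} is equivalent to $f$ being an epimorphism. Chaining these equivalences proves the lemma.

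The argument presents no genuine difficulty: it is a direct unravelling of \cref{n:les-f} and of the quotient order on $X/({\les_f}\cap{\les_f^\oprel})$. The only point deserving a moment's attention is the recognition that injectivity and order-reflectivity of $\eps_f$ are built into the very definition of the quotient---so that $\eps_f$ is always a regular monomorphism by \cref{i:regmono}---leaving surjectivity as the sole obstruction to its being an isomorphism; once this is seen, the statement reduces entirely to the ``surjective $=$ epimorphism'' characterisation already available from \cref{p:properties-of-PC-morphisms}.
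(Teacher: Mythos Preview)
Your proof is correct and follows essentially the same route as the paper's: show $\eps_f$ is always injective and order-reflecting via the definitions of $\les_f$ and of the quotient order, invoke \cref{i:iso} of \cref{p:properties-of-PC-morphisms}, and conclude that $\eps_f$ is an isomorphism precisely when it is surjective. You are slightly more explicit than the paper in spelling out that the image of $\eps_f$ is $f[X]$, so that surjectivity of $\eps_f$ coincides with surjectivity of $f$ (hence, by \cref{i:epi}, with $f$ being an epimorphism); the paper leaves this last identification implicit.
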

\begin{proof}
	For every $f \colon X \to Y$, the function $\eps_f \colon X/{({\les_f}\cap {\les_f^\oprel})} \to Y$ is injective because, for all $x,y \in X$, we have
	\begin{align*}
		\eps_f([x]) = \eps_f([y])	& \Longleftrightarrow f(x) = f(y)\\
											& \Longleftrightarrow f(x) \leq f(y) \text{ and } f(y) \leq f(x)\\
											& \Longleftrightarrow x \les_f y \text{ and } y \les_f x\\
											& \Longleftrightarrow [x] = [y],
	\end{align*}
	and reflects the order because, for all $x,y \in X$, we have
	\[
		\eps_f(x) \leq \eps_f(y) \ \Longleftrightarrow \ f(x) \leq f(y) \ \Longleftrightarrow \ x \les_f y \ \Longleftrightarrow \ [x] \leq [y].
	\]
	Therefore, by \cref{i:iso} in \cref{p:properties-of-PC-morphisms}, $\eps_f$ is an isomorphism if, and only if, it is surjective, i.e.\ an epimorphism.
\end{proof}
We state the following for future reference.
\begin{lemma}\label{l:correspondence-epi-morphism}
	Let $f \colon X \to Y$ and $g \colon X \to Z$ be morphisms of compact ordered spaces, and suppose $f$ is surjective.
	Then, the condition ${\les_f} \seq {\les_g}$ holds if, and only if, there exists a morphism $h \colon Y \to Z$ of compact ordered spaces such that the following diagram commutes.
	\[
		\begin{tikzcd}
			X \arrow[two heads]{r}{f} \arrow[swap]{rd}{g}	& Y \arrow[dashed]{d}{h} \\
																			& Z 
		\end{tikzcd}
	\]
\end{lemma}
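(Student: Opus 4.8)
The plan is to prove both implications separately. For the \emph{forward} direction (existence of $h$ implies ${\les_f} \seq {\les_g}$), I would argue directly from the definitions: suppose $h \colon Y \to Z$ satisfies $hf = g$, and let $(x_1, x_2) \in {\les_f}$, i.e.\ $f(x_1) \leq_Y f(x_2)$. Since $h$ is order-preserving, $g(x_1) = h(f(x_1)) \leq_Z h(f(x_2)) = g(x_2)$, that is $(x_1, x_2) \in {\les_g}$. This uses nothing beyond the monotonicity of $h$.

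For the \emph{converse}, assuming ${\les_f} \seq {\les_g}$, I would construct $h$ by setting $h(y) \df g(x)$ for any $x \in X$ with $f(x) = y$; such an $x$ exists because $f$ is surjective. The first task is well-definedness: if $f(x_1) = f(x_2)$, then both $(x_1, x_2)$ and $(x_2, x_1)$ lie in ${\les_f}$, hence in ${\les_g}$, so $g(x_1) \leq_Z g(x_2)$ and $g(x_2) \leq_Z g(x_1)$; antisymmetry of the partial order on $Z$ forces $g(x_1) = g(x_2)$. By construction $hf = g$, and $h$ is order-preserving by the same pattern: if $y_1 \leq_Y y_2$, choosing preimages $x_1, x_2$ gives $(x_1, x_2) \in {\les_f} \seq {\les_g}$, whence $h(y_1) \leq_Z h(y_2)$.

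The main obstacle is continuity of $h$, since so far $h$ is defined only set-theoretically. Here I would exploit that $f$ is a closed map: compact ordered spaces are compact Hausdorff, so by \cref{p:comp-Haus-closed} the map $f$ is closed, and being a continuous surjection it is in fact a topological quotient map. Concretely, for an arbitrary closed subset $C \seq Z$ I would compute $h^{-1}[C] = f[f^{-1}[h^{-1}[C]]] = f[g^{-1}[C]]$, using surjectivity of $f$ for the first equality and $hf = g$ for the second. Now $g^{-1}[C]$ is closed in the compact space $X$, hence compact by \cref{p:closed-in-compact}; its continuous image $f[g^{-1}[C]]$ is then compact by \cref{p:image-of-compact}, hence closed in the Hausdorff space $Y$ by \cref{p:compact-in-Haus}. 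Thus $h^{-1}[C]$ is closed, which proves that $h$ is continuous. Combined with order-preservation, this exhibits $h$ as a morphism in $\CompOrd$ with $hf = g$, completing the proof.
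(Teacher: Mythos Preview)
Your proof is correct. Both implications are handled cleanly, and the continuity argument via closedness of $f$ is valid: a continuous surjection between compact Hausdorff spaces is a quotient map, and your computation $h^{-1}[C] = f[g^{-1}[C]]$ together with the chain \cref{p:closed-in-compact}--\cref{p:image-of-compact}--\cref{p:compact-in-Haus} does the job.

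The paper takes a different, more categorical route. In the lemmas immediately preceding this one, it sets up an adjunction $F \dashv G$ between $\Preo(X)$ and the coslice category $X \downarrow \CompOrd$, where $G(f) = {\les_f}$ and $F(\les)$ is the quotient map $X \epi X/({\les}\cap{\les}^\oprel)$. It then shows that the counit $\eps_f \colon FG(f) \to f$ is an isomorphism precisely when $f$ is an epimorphism. With that in place, the present lemma is a one-line corollary: since $f$ is epi, $f \cong F({\les_f})$, and the adjunction gives a bijection between morphisms $f \to g$ in the coslice (i.e.\ factorisations $g = hf$) and morphisms ${\les_f} \to {\les_g}$ in $\Preo(X)$ (i.e.\ inclusions ${\les_f} \seq {\les_g}$). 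Your argument is self-contained and elementary, needing only basic facts about compact Hausdorff spaces; the paper's argument is shorter at the point of use but relies on the prior machinery, which is the conceptual backbone for the rest of the section. In effect, your direct construction of $h$ is unpacking the composite $Y \xrightarrow{\eps_f^{-1}} X/({\les_f}\cap{\les_f}^\oprel) \to X/({\les_g}\cap{\les_g}^\oprel) \xrightarrow{\eps_g} Z$ by hand.
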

\begin{proof}
	By \cref{l:adjunction,l:unit-iso}.
\end{proof}
We now consider the preordered class $\Quot(X)$ of epimorphisms with domain $X$ as a full subcategory of $X\downarrow\CompOrd$.
\begin{lemma}\label{l:equivalence}
	The restrictions of the functors $F$ and $G$ to $\Preo(X)$ and $\Quot(X)$ are quasi-inverses.
\end{lemma}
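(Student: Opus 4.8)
The plan is to deduce the desired equivalence directly from the adjunction $F \dashv G$ of \cref{l:adjunction}, by invoking the standard fact that every adjunction restricts to an adjoint equivalence between the full subcategories on which the unit and the counit are isomorphisms. Here the argument is especially transparent, since one of the two natural transformations involved is already the identity: as recorded in \cref{n:F-G}, the composite $GF$ is literally the identity functor on $\Preo(X)$ and the unit $\eta$ is the identity natural transformation (hence trivially a natural isomorphism). So the only genuine work lies on the $X \downarrow \CompOrd$ side.

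First I would check that the two functors restrict as claimed. The functor $G$ takes values in $\Preo(X)$ by construction, so its restriction to the full subcategory $\Quot(X) \seq X \downarrow \CompOrd$ needs no further justification. For $F$, I need $F({\les})$ to be an epimorphism for every ${\les} \in \Preo(X)$, so that $F$ indeed lands in $\Quot(X)$; this is exactly the content of \cref{r:sym}, where the map $X \epi X/({\les}\cap{\les}^\oprel)$ was shown to be an epimorphism in $\CompOrd$.

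It then remains to produce a natural isomorphism $FG \cong 1_{\Quot(X)}$. For this I would use the counit $\eps$, which is a natural transformation by the Claim established in \cref{n:F-G}. Restricting $\eps$ to $\Quot(X)$, each component $\eps_f$ with $f$ an epimorphism is an isomorphism by \cref{l:unit-iso}; hence $\eps$ restricts to a natural isomorphism $FG \cong 1_{\Quot(X)}$. Together with the identity $GF = 1_{\Preo(X)}$ already in hand, this exhibits the restricted $F$ and $G$ as quasi-inverse equivalences between $\Preo(X)$ and $\Quot(X)$. I do not expect any step to present a real obstacle: the entire substance has already been isolated in \cref{l:adjunction,l:unit-iso,r:sym}, and what remains is only to assemble these facts, noting that restricting an adjunction to the objects where unit and counit are invertible always yields an adjoint equivalence.
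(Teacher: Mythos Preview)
Your proposal is correct and follows essentially the same approach as the paper: both invoke the adjunction from \cref{l:adjunction}, note that the unit is the identity (hence an isomorphism everywhere), and use \cref{l:unit-iso} to see that the counit is an isomorphism precisely on $\Quot(X)$. Your version is slightly more explicit in checking that $F$ lands in $\Quot(X)$ via \cref{r:sym}, which the paper leaves implicit.
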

\begin{proof}
	By \cref{l:adjunction}, the functor $F\colon \Preo(X)\to (X\downarrow \CompOrd)$ is left adjoint to $G\colon (X\downarrow \CompOrd)\to \Preo(X)$.
	For every ${\les} \in \Preo(X)$, the component of the unit $\eta$ at $\les$ is the identity morphism; in particular, it is an isomorphism.
	As observed in \cref{l:unit-iso}, the component of the counit $\eps$ at an element $f\colon X\to Y$ of $X\downarrow \CompOrd$ is an isomorphism if and only if $f\colon X\to Y$ is an epimorphism (\cref{l:unit-iso}).
\end{proof}

We obtain now the main result of this section.
\begin{theorem}\label{t:bijection-Q-P}
	The assignments\\
	\begin{minipage}{0.49\textwidth}
		\begin{align*}
			\QuotEquiv(X)			& \longrightarrow \Preo(X)\\
			\big(f \colon X\epi Y\big)	& \longmapsto 		{\les_f}\\
		\end{align*}
	\end{minipage}
	\begin{minipage}{0.49\textwidth}
		\begin{align*}
			\Preo(X)	& \longrightarrow	\QuotEquiv(X)\\
			{\les}		& \longmapsto 		\big(X\epi X/{({\les} \cap {\les}^\oprel)}\big)\\
		\end{align*}
	\end{minipage}\\
	establish an isomorphism between the partially ordered sets $\Preo(X)$ and $\QuotEquiv(X)$.
\end{theorem}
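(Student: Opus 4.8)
The plan is to show that the assignment $\Phi\colon \QuotEquiv(X)\to\Preo(X)$, $[f]\mapsto{\les_f}$, is a well-defined order-embedding that is moreover surjective, and that the second assignment $\Psi\colon{\les}\mapsto[X\epi X/({\les}\cap{\les}^\oprel)]$ is its two-sided inverse. That $\Phi$ takes values in $\Preo(X)$ is \cref{l:les-f-in-preo}, and that $\Psi$ takes values in $\QuotEquiv(X)$ is \cref{r:sym}, so both are defined at the level of representatives; what remains is well-definedness on classes, order-reflection, and that the two composites are identities. Since a bijective order-preserving map whose inverse is also order-preserving is automatically an isomorphism of partially ordered sets, it will suffice to exhibit $\Phi$ and $\Psi$ as mutually inverse order-preserving, order-reflecting maps.

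First I would dispose of every property of $\Phi$ at once by invoking \cref{l:correspondence-epi-morphism}. Any representative $f\colon X\epi Y$ of an element of $\QuotEquiv(X)$ is an epimorphism, hence surjective by \cref{i:epi} in \cref{p:properties-of-PC-morphisms}, so the lemma applies: for epimorphisms $f\colon X\epi Y$ and $g\colon X\epi Z$,
\[
 {\les_f}\seq{\les_g}\ \Longleftrightarrow\ \exists\,h\colon Y\to Z\text{ with }hf=g\ \Longleftrightarrow\ [f]\leq[g].
\]
Specialising to $[f]=[g]$ yields ${\les_f}={\les_g}$, so $\Phi$ is well-defined on equivalence classes; read in full generality, the chain says $[f]\leq[g]$ if and only if $\Phi([f])\seq\Phi([g])$, so that $\Phi$ is simultaneously order-preserving and order-reflecting, and injective by antisymmetry of $\seq$.

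The decisive step, which upgrades the embedding to an isomorphism, will be surjectivity of $\Phi$. Given ${\les}\in\Preo(X)$, let $q\colon X\epi X/({\les}\cap{\les}^\oprel)$ be the quotient epimorphism of \cref{r:sym}; by the very definition of the quotient order, $q(x_1)\leq q(x_2)$ if and only if $x_1\les x_2$, whence ${\les_q}={\les}$ and $\Phi([q])={\les}$. This simultaneously proves $\Phi(\Psi({\les}))={\les}$. For the reverse composite, $\Psi(\Phi([f]))=[q]$ with $q\colon X\epi X/({\les_f}\cap{\les_f^\oprel})$; by \cref{l:unit-iso} the counit component $\eps_f$ is an isomorphism precisely because $f$ is an epimorphism, and $\eps_f\circ q=f$ by \cref{e:eps-f}, so $q$ and $f$ represent the same quotient object and $\Psi(\Phi([f]))=[f]$. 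I expect the only genuine subtlety to be the identity ${\les_q}={\les}$; everything else is bookkeeping, the real substance having already been invested in the adjunction of \cref{l:adjunction} and in \cref{l:correspondence-epi-morphism}. Indeed, the entire statement can alternatively be read off from \cref{l:equivalence} by passing from the preordered class $\Quot(X)$ to its poset reflection $\QuotEquiv(X)$.
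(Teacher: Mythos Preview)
Your proposal is correct and amounts to an unpacked version of the paper's one-line proof, which simply invokes \cref{l:equivalence} and passes to the poset reflection of $\Quot(X)$. You even note this explicitly in your final sentence; the hands-on verification via \cref{l:correspondence-epi-morphism}, the identity ${\les_q}={\les}$, and \cref{l:unit-iso} is just the adjunction argument of \cref{l:adjunction}--\cref{l:equivalence} spelled out at the level of $\QuotEquiv(X)$ rather than $\Quot(X)$.
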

\begin{proof}
	By \cref{l:equivalence}.
\end{proof}
%

%%%%%%%%%%%%%%%%%%%%%%%%%%%%%%%%%%%   SECTION   %%%%%%%%%%%%%%%%%%%%%%%%%%%%%%%%%%%%

\section{Equivalence corelations}\label{s:equivalence-corelations}

In this section we provide a description of equivalence relations in the category $\CompOrdop$, which will then be exploited in the next section to prove that equivalence relations in $\CompOrdop$ are effective.

Recall that a binary relation on an object $A$ of a category $\cat{C}$ is a subobject of $A \times A$.
Dualising this definition, given a compact ordered space $X$, we call a \emph{binary corelation}\index{corelation!binary} on $X$ a quotient object $\binom{q_0}{q_1}\colon X+ X \epi S$ of the compact ordered space $X+X$.
We recall from \cref{l:preserves-finite-coproducts} that $X + X$ is the disjoint union of two copies of $X$ equipped with the coproduct topology and coproduct order.
A binary corelation on $X$ is called respectively \emph{reflexive}\index{corelation!reflexive}, \emph{symmetric}\index{corelation!symmetric}, \emph{transitive}\index{corelation!transitive} provided that it satisfies the properties:\\
\begin{minipage}[t]{0.49\textwidth}
	\begin{figure}[H]
		\centering
		\begin{tikzcd}
			{} 							& X+X \arrow[two heads,swap]{dl}{\binom{q_0}{q_1}} \arrow[two heads]{dr}{\binom{1_X}{1_X}}	& \\
			S\arrow[dashed]{rr}{d} 	& 																															& X
		\end{tikzcd}
		{\vspace{-5pt}\caption*{reflexivity}}
	\end{figure}
\end{minipage}
\begin{minipage}[t]{0.49\textwidth}
	\begin{figure}[H]
		\centering
		\begin{tikzcd}
											& X+X \arrow[swap, two heads]{dl}{\binom{q_0}{q_1}} \arrow[two heads]{dr}{\binom{q_1}{q_0}}	& \\
			S \arrow[dashed]{rr}{s}	& 																															& S
		\end{tikzcd}
		{\vspace{-5pt}\caption*{symmetry}}
	\end{figure}
\end{minipage}
\begin{figure}[H]
	\centering
	\begin{tikzcd}
		X \arrow{r}{q_0} \arrow[swap]{d}{q_1}	& S \arrow{d}{\lambda_1} \\
		S \arrow[swap]{r}{\lambda_0}			 	& P \arrow[ul, phantom, "\lrcorner", very near start]
	\end{tikzcd}
	\ \ \ \ $\Longrightarrow$ \ \ \ \
	\begin{tikzcd}
										& X+X \arrow[swap, two heads]{dl}{\binom{q_0}{q_1}} \arrow{dr}{\binom{\lambda_0\circ q_0}{\lambda_1\circ q_1}}	& \\
		S\arrow[dashed]{rr}{t}	& 																																					& P
	\end{tikzcd}
	{\vspace{-5pt}\caption*{transitivity}}
\end{figure}
An \emph{equivalence corelation}\index{corelation!equivalence} on $X$ is a reflexive symmetric transitive binary corelation on $X$.
The key observation is that, since quotient objects on $X + X$ are in bijection with certain preorders on $X + X$, equivalence corelations are more manageable than their duals.
\begin{definition}
	We call \emph{binary corelational structure}\index{corelational structure!binary} on a compact ordered space $X$ an element of $\Preo(X + X)$, i.e.\ a closed preorder on $X+X$ which extends the coproduct order $\leq_{X+X}$ on $X+X$.
\end{definition}
As an example consider a chain $X$ of two elements with discrete topology.
\[ 
	\begin{tikzpicture}[node distance=1.5 cm, auto]
		\node (11) {$\bullet$};
		\node (21) [below of=11]{$\bullet$};
		
		\node[rounded corners,draw=black,fit={(11) (21)}] {};
		
		\draw[->, thick] (21) to node {}  (11);
	\end{tikzpicture}
\]
Then $X+ X$ looks as follows.
\[ 
	\begin{tikzpicture}[node distance= 1.5 cm, auto]
		\node (11) {$\bullet$};
		\node (12) [right of=11]{$\bullet$};
		\node (21) [below of=11]{$\bullet$};
		\node (22) [right of=21]{$\bullet$};
		
		\node[rounded corners,draw=black,fit={(11) (21)}] {};
		\node[rounded corners,draw=black,fit={(12) (22)}] {};
		
		\draw[->, thick] (21) to node {}  (11);
		\draw[->, thick] (22) to node {}  (12);

	\end{tikzpicture}
\]
Here are some examples of binary corelational structures on $X$; the one on the left is the smallest one, the one on the right is the greatest one.

\medskip
\noindent
\begin{minipage}{0.325\textwidth}
	\begin{center}
		\begin{tikzpicture}[node distance= 1.5 cm, auto]
			\node (11) {$\bullet$};
			\node (12) [right of=11]{$\bullet$};
			\node (21) [below of=11]{$\bullet$};
			\node (22) [right of=21]{$\bullet$};
			
			\node[rounded corners,draw=black,fit={(11) (21)}] {};
			\node[rounded corners,draw=black,fit={(12) (22)}] {};
			
			\draw[->, thick] (21) to node {}  (11);
			\draw[->, thick] (22) to node {}  (12);
		\end{tikzpicture}
	\end{center}
\end{minipage}
\begin{minipage}{0.325\textwidth}
	\begin{center}
		\begin{tikzpicture}[node distance= 1.5 cm, auto]
			\node (11) {$\bullet$};
			\node (12) [right of=11]{$\bullet$};
			\node (21) [below of=11]{$\bullet$};
			\node (22) [right of=21]{$\bullet$};
			
			\node[rounded corners,draw=black,fit={(11) (21)}] {};
			\node[rounded corners,draw=black,fit={(12) (22)}] {};
			
			\draw[->, thick] (21) to node {}  (11);
			\draw[->, thick] (22) to node {}  (12);
			\draw[<->, thick] (21) to node {}  (22);
			\draw[->, thick] (21) to node {}  (12);
		\end{tikzpicture}
	\end{center}

\end{minipage}
\begin{minipage}{0.325\textwidth}
	\begin{center}
		\begin{tikzpicture}[node distance= 1.5 cm, auto]
			\node (11) {$\bullet$};
			\node (12) [right of=11]{$\bullet$};
			\node (21) [below of=11]{$\bullet$};
			\node (22) [right of=21]{$\bullet$};
			
			\node[rounded corners,draw=black,fit={(11) (21)}] {};
			\node[rounded corners,draw=black,fit={(12) (22)}] {};
			
			\draw[<->, thick] (21) to node {}  (11);
			\draw[<->, thick] (22) to node {}  (12);
			\draw[<->, thick] (21) to node {}  (22);
			\draw[<->, thick] (21) to node {}  (12);
			\draw[<->, thick] (11) to node {}  (12);
			\draw[<->, thick] (11) to node {}  (22);
		\end{tikzpicture}
	\end{center}
\end{minipage}
\medskip

\Cref{t:bijection-Q-P} establishes a bijective correspondence between binary corelational structures on $X$ (i.e., elements of $\Preo(X+X)$) and binary corelations on $X$ (i.e., elements of $\Quot(X+X)$).
\begin{definition}
	A binary corelational structure on a compact ordered space $X$ is called \emph{reflexive}\index{corelational structure!reflexive} (resp.\ \emph{symmetric}\index{corelational structure!symmetric}, \emph{transitive}\index{corelational structure!transitive}, \emph{equivalence}\index{corelational structure!equivalence}) if the corresponding binary corelation on $X$ is reflexive (resp.\ symmetric, transitive, equivalence).
\end{definition}

\begin{notation}
	We denote the elements of $X+X$ by $(x,i)$, where $x$ varies in $X$ and $i$ varies in $\{0,1\}$.
	Further, $i^*$ stands for $1-i$.
	For example, $(x,1^*)=(x,0)$.
\end{notation}

We anticipate the fact that on the chain of two elements $X= \{\bot, \top\}$ (with $\bot \leq \top$) there are exactly four equivalence corelational structures.

\medskip
\noindent
\begin{minipage}{0.245\textwidth}
	\begin{center}
		\begin{tikzpicture}[node distance= 1.5 cm, auto]
			\node (11) {$\bullet$};
			\node (12) [right of=11]{$\bullet$};
			\node (21) [below of=11]{$\bullet$};
			\node (22) [right of=21]{$\bullet$};
			
			\node[rounded corners,draw=black,fit={(11) (21)}] {};
			\node[rounded corners,draw=black,fit={(12) (22)}] {};
			
			\draw[->, thick] (21) to node {}  (11);
			\draw[->, thick] (22) to node {}  (12);
		\end{tikzpicture}
	\end{center}
\end{minipage}
\begin{minipage}{0.245\textwidth}
	\begin{center}
		\begin{tikzpicture}[node distance= 1.5 cm, auto]
			\node (11) {$\bullet$};
			\node (12) [right of=11]{$\bullet$};
			\node (21) [below of=11]{$\bullet$};
			\node (22) [right of=21]{$\bullet$};
			
			\node[rounded corners,draw=black,fit={(11) (21)}] {};
			\node[rounded corners,draw=black,fit={(12) (22)}] {};
			
			\draw[->, thick] (21) to node {}  (11);
			\draw[->, thick] (22) to node {}  (12);
			\draw[<->, thick] (21) to node {}  (22);
			\draw[->, thick] (21) to node {}  (12);
			\draw[->, thick] (22) to node {}  (11);
		\end{tikzpicture}
	\end{center}
\end{minipage}
\begin{minipage}{0.245\textwidth}
	\begin{center}
		\begin{tikzpicture}[node distance= 1.5 cm, auto]
			\node (11) {$\bullet$};
			\node (12) [right of=11]{$\bullet$};
			\node (21) [below of=11]{$\bullet$};
			\node (22) [right of=21]{$\bullet$};
			
			\node[rounded corners,draw=black,fit={(11) (21)}] {};
			\node[rounded corners,draw=black,fit={(12) (22)}] {};
			
			\draw[->, thick] (21) to node {}  (11);
			\draw[->, thick] (22) to node {}  (12);
			\draw[->, thick] (21) to node {}  (12);
			\draw[->, thick] (22) to node {}  (11);
			\draw[<->, thick] (11) to node {}  (12);
		\end{tikzpicture}
	\end{center}
\end{minipage}
\begin{minipage}{0.245\textwidth}
	\begin{center}
		\begin{tikzpicture}[node distance= 1.5 cm, auto]
			\node (11) {$\bullet$};
			\node (12) [right of=11]{$\bullet$};
			\node (21) [below of=11]{$\bullet$};
			\node (22) [right of=21]{$\bullet$};
			
			\node[rounded corners,draw=black,fit={(11) (21)}] {};
			\node[rounded corners,draw=black,fit={(12) (22)}] {};
			
			\draw[<->, thick] (21) to node {}  (11);
			\draw[<->, thick] (22) to node {}  (12);
			\draw[<->, thick] (21) to node {}  (22);
			\draw[<->, thick] (21) to node {}  (12);
			\draw[<->, thick] (11) to node {}  (12);
			\draw[<->, thick] (11) to node {}  (22);
		\end{tikzpicture}
	\end{center}
\end{minipage}
\medskip

\noindent
In general, as we will prove, every equivalence corelational structures $\les$ on a compact ordered space $X$ is obtained as follows: consider a closed subset $Y$ of $X$ and let $\les$ be the smallest preorder on $X + X$ that extends the coproduct order of $X + X$ and that satisfies $(y,0) \les (y,1)$ and $(y,1) \les (y,0)$ for every $y \in Y$.
For example, the binary corelational structures above are obtained by taking, respectively, $Y = \emptyset$, $Y = \{\bot\}$, $Y = \{\top\}$, and $Y = X$.
In fact, as we will see, proving that an equivalence corelational structure is effective boils down to proving that it arises with the construction above.

\begin{lemma}\label{l:refl}
	A binary corelational structure $\les$ on a compact ordered space $X$ is reflexive if, and only if, for all $x, y \in X$ and $i, j \in \{0,1\}$, we have
	\begin{equation*}
		 (x,i) \les (y,j) \ \Longrightarrow \ x \leq y.
	\end{equation*}
\end{lemma}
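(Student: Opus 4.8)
The plan is to transport reflexivity across the bijection of \cref{t:bijection-Q-P}, converting the diagrammatic factorisation into a containment of preorders. Write $q = \binom{q_0}{q_1} \colon X+X \epi (X+X)/({\les}\cap{\les}^\oprel)$ for the binary corelation corresponding to the corelational structure ${\les} \in \Preo(X+X)$; by the construction of the bijection, the structure recovered from $q$ is $\les$ itself, i.e.\ ${\les_q} = {\les}$. By definition, $\les$ is reflexive exactly when $q$ is a reflexive corelation, that is, when the codiagonal $\binom{1_X}{1_X} \colon X+X \to X$ factors through $q$: there is $d \colon (X+X)/({\les}\cap{\les}^\oprel) \to X$ with $d \circ q = \binom{1_X}{1_X}$.

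First I would compute the preorder attached to the codiagonal. Since $\binom{1_X}{1_X}$ maps both $(x,0)$ and $(x,1)$ to $x$, \cref{n:les-f} yields
\[
	{\les_{\binom{1_X}{1_X}}} = \{((x,i),(y,j)) \in (X+X)\times(X+X) \mid x \leq_X y\}.
\]
Both $q$ and $\binom{1_X}{1_X}$ are surjective, hence epimorphisms by \cref{p:properties-of-PC-morphisms}, so \cref{l:correspondence-epi-morphism} applies with $f = q$ and $g = \binom{1_X}{1_X}$: the desired factorisation $d$ exists if, and only if, ${\les_q} \seq {\les_{\binom{1_X}{1_X}}}$. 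Since ${\les_q} = {\les}$, this reads ${\les} \seq {\les_{\binom{1_X}{1_X}}}$, which unwinds precisely to the stated implication $(x,i) \les (y,j) \Rightarrow x \leq y$.

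I expect no genuinely hard step: the work has been front-loaded into \cref{t:bijection-Q-P,l:correspondence-epi-morphism}, so the proof is essentially a dictionary translation. The only delicate point is the bookkeeping of directions---checking that the reflexivity square requires the codiagonal to factor \emph{through} $q$ (and not conversely), and that the bijection is normalised so that ${\les_q} = {\les}$ rather than its opposite, matching the orientation convention fixed in \cref{n:quotequiv}. Once these are confirmed, both implications of the lemma drop out simultaneously from the single equivalence above.
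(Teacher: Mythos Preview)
Your proposal is correct and follows essentially the same route as the paper: translate reflexivity of the corelation into the containment ${\les} \seq {\les_{\binom{1_X}{1_X}}}$ via the order-isomorphism between quotient objects and closed preorders, then compute ${\les_{\binom{1_X}{1_X}}}$ explicitly. The paper phrases the first step using \cref{t:bijection-Q-P} directly (the order-isomorphism $\QuotEquiv(X+X)\cong\Preo(X+X)$) rather than \cref{l:correspondence-epi-morphism}, but these are the same content.
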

\begin{proof}
	Let $\binom{q_0}{q_1}\colon X+X \epi S$ be the binary corelation associated with $\les$.
	By definition of reflexive binary corelational structures, $\les$ is reflexive if, and only if, $\binom{q_0}{q_1}\colon X+X \epi S$ is above $\binom{1_X}{1_X}\colon X+X \epi X$ in the poset $\Quot(X+X)$.
	By \cref{t:bijection-Q-P}, this is equivalent to ${\les} \seq {\les_{\binom{1_X}{1_X}}}$.
	Given $(x,i),(y,j)\in X+X$, we have
	\[
		(x,i)\les_{\binom{1_X}{1_X}}(y,j) \ \Longleftrightarrow \ x\leq y.
	\]
	It follows that the binary corelational structure $\les$ is reflexive if, and only if, $(x,i)\les(y,j)$ entails $x\leq y$.
\end{proof}
For example, the following is a reflexive corelational structure on a two-element chain
\[	
	\begin{tikzpicture}[node distance= 1.5 cm, auto]
		\node (11) {$\bullet$};
		\node (12) [right of=11]{$\bullet$};
		\node (21) [below of=11]{$\bullet$};
		\node (22) [right of=21]{$\bullet$};
		
		\node[rounded corners,draw=black,fit={(11) (21)}] {};
		\node[rounded corners,draw=black,fit={(12) (22)}] {};
		
		\draw[->, thick] (21) to node {}  (11);
		\draw[->, thick] (22) to node {}  (12);
		\draw[->, thick] (21) to node {}  (12);
	\end{tikzpicture}
\]
whereas the following are \emph{not}.

\medskip
\noindent
\begin{minipage}{0.49\textwidth}
	\begin{center}
		\begin{tikzpicture}[node distance= 1.5 cm, auto]
			\node (11) {$\bullet$};
			\node (12) [right of=11]{$\bullet$};
			\node (21) [below of=11]{$\bullet$};
			\node (22) [right of=21]{$\bullet$};
			
			\node[rounded corners,draw=black,fit={(11) (21)}] {};
			\node[rounded corners,draw=black,fit={(12) (22)}] {};
			
			\draw[<->, thick] (21) to node {}  (11);
			\draw[->, thick] (22) to node {}  (12);
		\end{tikzpicture}
	\end{center}
\end{minipage}
\begin{minipage}{0.49\textwidth}
	\begin{center}
		\begin{tikzpicture}[node distance= 1.5 cm, auto]
			\node (11) {$\bullet$};
			\node (12) [right of=11]{$\bullet$};
			\node (21) [below of=11]{$\bullet$};
			\node (22) [right of=21]{$\bullet$};
			
			\node[rounded corners,draw=black,fit={(11) (21)}] {};
			\node[rounded corners,draw=black,fit={(12) (22)}] {};
			
			\draw[->, thick] (21) to node {}  (11);
			\draw[->, thick] (22) to node {}  (12);
			\draw[->, thick] (11) to node {}  (12);
			\draw[->, thick] (11) to node {}  (22);
		\end{tikzpicture}
	\end{center}
\end{minipage}
\medskip

\begin{lemma}\label{l:symm}
	A binary corelational structure $\les$ on a compact ordered space $X$ is symmetric if, and only if, for all $x, y \in X$ and $i, j \in \{0,1\}$, we have
	\begin{equation*}
		(x,i) \les (y,j) \  \Longrightarrow \ (x,i^*) \les (y,j^*).
	\end{equation*}
\end{lemma}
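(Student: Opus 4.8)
The plan is to translate symmetry of the corelation into a single inclusion of preorders, using the correspondence already set up. Write $f \df \binom{q_0}{q_1} \colon X + X \epi S$ for the epimorphism associated with $\les$ under \cref{t:bijection-Q-P}, so that $S = (X+X)/({\les}\cap{\les}^\oprel)$ and $f$ is the quotient map; in particular $f$ is surjective and, since $GF$ is the identity (\cref{n:F-G}, \cref{t:bijection-Q-P}), we have ${\les_f} = {\les}$. By definition, $\les$ is symmetric exactly when the binary corelation $f$ is symmetric, i.e.\ when there exists a morphism $s \colon S \to S$ in $\CompOrd$ with $s \circ f = \binom{q_1}{q_0}$.

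Next I would identify the swapped map. Let $\sigma \colon X+X \to X+X$ be the swap $(x,i)\mapsto (x,i^*)$; this is an order-preserving homeomorphism for the coproduct order, hence an isomorphism in $\CompOrd$, and one checks directly on the two copies that $\binom{q_1}{q_0} = f \circ \sigma$. Setting $g \df f \circ \sigma$, this is a morphism in $\CompOrd$, and $f$ is surjective, so \cref{l:correspondence-epi-morphism} applies to the pair $f, g$: the desired $s$ exists if, and only if, ${\les_f}\seq{\les_g}$.

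It then remains to compute the two preorders. We already have ${\les_f}={\les}$. For $g$, since $g(x,i) = f(\sigma(x,i)) = f(x,i^*)$, I get
\[
	(x,i)\les_g(y,j) \ \Longleftrightarrow\ f(x,i^*)\leq_S f(y,j^*) \ \Longleftrightarrow\ (x,i^*)\les(y,j^*).
\]
Hence the inclusion ${\les_f}\seq{\les_g}$ unwinds precisely to: for all $x,y\in X$ and $i,j\in\{0,1\}$, the implication $(x,i)\les(y,j)\Rightarrow(x,i^*)\les(y,j^*)$ holds. Chaining the three equivalences (symmetry $\iff$ existence of $s$ $\iff$ ${\les_f}\seq{\les_g}$ $\iff$ the stated condition) gives the lemma.

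I do not expect a genuine obstacle here, since symmetry is the self-dual case and $\sigma$ is a bona fide automorphism of $X+X$: the argument is essentially an application of \cref{l:correspondence-epi-morphism} followed by a routine unwinding. The only points needing care are the bookkeeping that $f$ is the quotient map so that ${\les_f}={\les}$, and the verification that $\binom{q_1}{q_0}$ is exactly $f$ precomposed with the swap $\sigma$; one can note in passing that, applying the condition twice, it is in fact equivalent to the symmetric form $(x,i)\les(y,j)\iff(x,i^*)\les(y,j^*)$.
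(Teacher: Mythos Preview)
Your proof is correct and follows essentially the same approach as the paper: both reduce symmetry to the inclusion ${\les}\subseteq{\les_{\binom{q_1}{q_0}}}$ and then compute that $(x,i)\les_{\binom{q_1}{q_0}}(y,j)$ iff $(x,i^*)\les(y,j^*)$. The paper invokes \cref{t:bijection-Q-P} directly to translate the $\Quot$-inequality into the preorder inclusion, whereas you route through \cref{l:correspondence-epi-morphism} and make the swap $\sigma$ explicit, but these are the same argument at a slightly different level of detail.
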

\begin{proof}
	Let $\binom{q_0}{q_1}\colon X+X \epi S$ be the binary corelation associated with $\les$.
	By definition of symmetric corelational structure, $\les$ is symmetric if, and only if, $\binom{q_0}{q_1}\colon X+X \epi S$ is above $\binom{q_1}{q_0}\colon X+X \epi S$ in $\Quot(X+X)$.
	By \cref{t:bijection-Q-P}, this happens exactly when ${\les} \seq {\les_{\binom{q_1}{q_0}}}$.
	For all $(x,i),(y,j)\in X+X$, we have
	\[
		(x,i)\les_{\binom{q_1}{q_0}}(y,j) \ \Longleftrightarrow \ (x,i^*)\les (y,j^*).
	\]
	Therefore, the binary corelational structure $\les$ is symmetric if, and only if, $(x,i)\les(y,j)$ entails $(x,i^*)\les (y,j^*)$.
\end{proof}

For example, the following are symmetric corelational structures on a two-element chain

\medskip
\noindent
\begin{minipage}{0.49\textwidth}
	\begin{center}
		\begin{tikzpicture}[node distance= 1.5 cm, auto]
			\node (11) {$\bullet$};
			\node (12) [right of=11]{$\bullet$};
			\node (21) [below of=11]{$\bullet$};
			\node (22) [right of=21]{$\bullet$};
			
			\node[rounded corners,draw=black,fit={(11) (21)}] {};
			\node[rounded corners,draw=black,fit={(12) (22)}] {};
			
			\draw[->, thick] (21) to node {}  (11);
			\draw[->, thick] (22) to node {}  (12);
			\draw[->, thick] (21) to node {}  (12);
			\draw[->, thick] (22) to node {}  (11);
		\end{tikzpicture}
	\end{center}
\end{minipage}
\begin{minipage}{0.49\textwidth}
	\begin{center}
		\begin{tikzpicture}[node distance= 1.5 cm, auto]
			\node (11) {$\bullet$};
			\node (12) [right of=11]{$\bullet$};
			\node (21) [below of=11]{$\bullet$};
			\node (22) [right of=21]{$\bullet$};
			
			\node[rounded corners,draw=black,fit={(11) (21)}] {};
			\node[rounded corners,draw=black,fit={(12) (22)}] {};
			
			\draw[->, thick] (21) to node {}  (11);
			\draw[->, thick] (22) to node {}  (12);
			\draw[->, thick] (12) to node {}  (21);
			\draw[->, thick] (11) to node {}  (22);
			\draw[<->, thick] (11) to node {}  (12);
		\end{tikzpicture}
	\end{center}
\end{minipage}
\medskip

\noindent
whereas the following is \emph{not}.
\[	
	\begin{tikzpicture}[node distance= 1.5 cm, auto]
		\node (11) {$\bullet$};
		\node (12) [right of=11]{$\bullet$};
		\node (21) [below of=11]{$\bullet$};
		\node (22) [right of=21]{$\bullet$};
		
		\node[rounded corners,draw=black,fit={(11) (21)}] {};
		\node[rounded corners,draw=black,fit={(12) (22)}] {};
		
		\draw[->, thick] (21) to node {}  (11);
		\draw[->, thick] (22) to node {}  (12);
		\draw[->, thick] (21) to node {}  (12);
	\end{tikzpicture}
\]
The last example shows a reflexive corelational structure which is not symmetric: this witnesses the fact that $\CompOrdop$ is not a Mal'cev category\index{category!Mal'cev} (i.e., a finitely complete category where every reflexive relation is an equivalence relation), in contrast to what happens for $\CompHaus^\opcat$.

\begin{lemma}\label{l:pushout-mono}
	Consider regular monomorphisms $f_0\colon X\rmono Y_0$, $f_1\colon X\rmono Y_1$ in $\CompOrd$ and their pushout as displayed below.
	\[
		\begin{tikzcd}
			X \arrow[hookrightarrow]{r}{f_1}											& Y_1	\arrow{d}{\lambda_1} \\
			Y_0 \arrow[hookleftarrow]{u}{f_0} \arrow[swap]{r}{\lambda_0}	& P		\arrow[ul, phantom, "\lrcorner", very near start]
		\end{tikzcd}
	\]
	Then, for every $i\in \{0,1\}$, the following conditions hold.
	\begin{enumerate}
		\item 
			For all $u,v\in Y_i$, the condition $\lambda_{i}(u)\leq \lambda_{i}(v)$ holds if, and only if, $u\leq v$.
		\item 
			For all $u\in Y_i$ and $v\in Y_{i^*}$, the condition $\lambda_{i}(u)\leq\lambda_{i^*}(v)$ holds if, and only if, there exists $x\in X$ such that $u\leq f_{i}(x)$ and $f_{i^*}(x)\leq v$.
	\end{enumerate}
\end{lemma}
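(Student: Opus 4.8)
The plan is to identify the pushout $P$ concretely and then read off its order. Since $\CompOrd$ is cocomplete (\cref{p:PC-complete-cocomplete}) the pushout exists; write $\lambda_0,\lambda_1$ for its structure maps, so that $\lambda_0 f_0 = \lambda_1 f_1$. The two ``if'' directions are immediate, so I would dispatch them first. If $u \leq v$ in $Y_i$ then $\lambda_i(u)\leq\lambda_i(v)$ because $\lambda_i$ is order-preserving, which is one direction of~(1); and if $u\leq f_i(x)$ and $f_{i^*}(x)\leq v$ for some $x\in X$, then $\lambda_i(u)\leq\lambda_i(f_i(x))=\lambda_{i^*}(f_{i^*}(x))\leq\lambda_{i^*}(v)$ by commutativity of the pushout square and order-preservation, which is one direction of~(2). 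The substance of the lemma is the two converse implications.

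To obtain them I would build an explicit model of the pushout. Let $Q$ be the pushout of $f_0,f_1$ computed in $\CompHaus$: concretely, $Q$ is the quotient of $Y_0+Y_1$ by the closed equivalence relation generated by $f_0(x)\sim f_1(x)$ for $x\in X$, which is compact Hausdorff, with structure maps $\mu_0,\mu_1$. Because $f_0,f_1$ are injective (\cref{i:regmono} in \cref{p:properties-of-PC-morphisms}), each $\mu_i$ is injective and $\mu_0[Y_0]\cap\mu_1[Y_1]=\mu_0 f_0[X]$. On the set $Q$ I would then define a relation $R$ by exactly the two clauses in the statement: $\mu_i(u)\mathrel{R}\mu_i(v)$ iff $u\leq_{Y_i}v$, and $\mu_i(u)\mathrel{R}\mu_{i^*}(v)$ iff there is $x\in X$ with $u\leq f_i(x)$ and $f_{i^*}(x)\leq v$. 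A short check, using that $f_i$ is order-reflecting, shows that these clauses agree on the overlap $\mu_0 f_0[X]$, so that $R$ is a well-defined subset of $Q\times Q$.

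Next I would verify that $R$ is a closed partial order. Reflexivity is clear. For transitivity and antisymmetry the key device is order-reflection of $f_0,f_1$: a two-step ``cross--cross'' relation $\mu_i(u)\mathrel{R}\mu_{i^*}(v)\mathrel{R}\mu_i(w)$ unwinds, via witnesses $x,x'$, to $f_{i^*}(x)\leq v\leq f_{i^*}(x')$, whence $x\leq_X x'$ and therefore $u\leq f_i(x)\leq f_i(x')\leq w$, collapsing to a same-side relation; the remaining cases are analogous, and antisymmetry follows because the only way to have both $\mu_i(u)\mathrel{R}\mu_{i^*}(v)$ and $\mu_{i^*}(v)\mathrel{R}\mu_i(u)$ is $u=f_i(x)$ and $v=f_{i^*}(x)$, i.e.\ the two points already coincide in $Q$. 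Closedness is where compactness of $X$ enters: the same-side part of $R$ is the image of the closed order $\leq_{Y_i}$ under the closed map $\mu_i\times\mu_i$ (\cref{p:comp-Haus-closed}), while the cross part is the image under $\mu_i\times\mu_{i^*}$ of the projection to $Y_i\times Y_{i^*}$ of the closed set $\{(u,x,v): u\leq f_i(x),\ f_{i^*}(x)\leq v\}\subseteq Y_i\times X\times Y_{i^*}$; since $X$ is compact this projection is a closed map (again \cref{p:comp-Haus-closed}), so $R$ is closed. Thus $(Q,R)$ is a compact ordered space.

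Finally I would check that $(Q,R)$ together with $\mu_0,\mu_1$ satisfies the universal property of the pushout in $\CompOrd$, whence $(Q,R)\cong P$ over $X$. The maps $\mu_i$ are morphisms and the square commutes by construction. Given a competing cocone $g_0,g_1$ into a compact ordered space $Z$ with $g_0 f_0 = g_1 f_1$, the $\CompHaus$-universal property of $Q$ yields a unique continuous $h\colon Q\to Z$ with $h\mu_i=g_i$; this $h$ is automatically order-preserving, since each generating instance of $R$ maps under $h$ to a relation forced by order-preservation of the $g_i$ and the identity $g_0 f_0 = g_1 f_1$ (the same computation as in the two ``if'' directions). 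Transporting the clauses defining $R$ across the isomorphism $(Q,R)\cong P$ then yields precisely the two converse implications, using injectivity of the $\mu_i$ for~(1). I expect the main obstacle to be the verification that $R$ is closed---the existential over $x\in X$ must be shown to preserve closedness---which is exactly where the compactness of $X$ is essential; the collapse of multi-crossing chains via order-reflection of $f_0,f_1$ is the other point requiring care.
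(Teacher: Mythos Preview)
Your proposal is correct, but it takes a different route from the paper. You build the pushout by hand: take the $\CompHaus$-pushout $Q$ of $f_0,f_1$, define the relation $R$ on $Q$ by the two clauses in the statement, verify directly that $R$ is a closed partial order (using order-reflection of the $f_i$ for transitivity and antisymmetry, and compactness of $X$ for closedness), and then check the universal property of the pushout in $\CompOrd$. The paper instead stays on $Y_0+Y_1$ and exploits the machinery already developed in the section: the canonical map $q\colon Y_0+Y_1\to P$ is the coequalizer of $\iota_0 f_0,\iota_1 f_1$, so by \cref{t:bijection-Q-P} the associated preorder $\les_q$ is the \emph{smallest} closed preorder on $Y_0+Y_1$ extending the coproduct order and making $\iota_0 f_0(x)$ and $\iota_1 f_1(x)$ equivalent; one then checks that the relation $\les_0$ defined by the two clauses coincides with $\les_q$ by a double inclusion. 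Your approach is more self-contained and makes the compactness argument for closedness explicit; the paper's approach is shorter because the universal-property step is absorbed into the already-proved correspondence between quotient objects and closed preorders, so all that remains is a minimality argument. The substantive verifications (closedness of the cross part via projection along the compact $X$, transitivity via order-reflection of the $f_i$) are the same in both, though the paper leaves them to the reader with the phrase ``it is enough to notice that $\les_0$ is a closed preorder.''
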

\begin{proof}
	Let $q\colon Y_0+Y_1\to P$ be the unique morphism such that the following diagram commutes.
	\[
	\begin{tikzcd}
		Y_0 \arrow{r}{\iota_0} \arrow{rd}[swap]{\lambda_0}	& Y_0+Y_1 \arrow[dashed]{d}{q}	& Y_1 \arrow{l}[swap]{\iota_1} \arrow{ld}{\lambda_1}\\
																			& P										&
	\end{tikzcd}
	\]
	The existence and uniqueness of $q$ is given by the universal property of the coproduct.
	A straightforward argument shows that $q$ is the coequalizer of
	\[
		\iota_0\circ f_0, \iota_1 \circ f_1\colon X\rightrightarrows Y_0+Y_1.
	\]
	By the universal property of coequalizers, and by \cref{t:bijection-Q-P}, $\les_q$ is the smallest preorder $\les$ on $Y_0+Y_1$ such that $\les$ is a closed subspace of $(Y_0+Y_1)\times (Y_0+ Y_1)$, $\les$ extends the coproduct order of $Y_0+Y_1$, and, for all $x\in X$, $\iota_0f_0(x)\les \iota_1f_1(x)$ and $\iota_1f_1(x)\les \iota_0f_0(x)$.
	
	Let $\les_0$ be the relation on $Y_0+Y_1$ defined as follows.
	\begin{enumerate}
		\item
			For all $u,v\in Y_i$, $\iota_i(u)\les_0 \iota_i(v)$ if, and only if, $u\leq v$.
		\item
			For all $u\in Y_i$ and $v\in Y_{i^*}$, $\iota_i(u)\les_0\iota_i(v)$ if, and only if, there exists $x\in X$ such that $u\leq f_{i}(x)$ and $f_{i^*}(x)\leq v$.
	\end{enumerate}
	We shall prove ${\les_q} = {\les_0}$.
	Let us prove ${\les_0} \seq {\les_q}$.
	\begin{enumerate}
		\item
			For all $u,v\in Y_i$, if $u\leq v$, then $\iota_i(u)\leq_{Y_0+Y_1} \iota_i(v)$, which implies $\iota_i(u)\les_q \iota_i(v)$.
		\item
			For all $u\in Y_i$ and $v\in Y_{i^*}$, if there exists $x\in X$ such that $u\leq f_{i}(x)$ and $f_{i^*}(x)\leq v$, then $\iota_i(u)\leq \iota_i f_{i}(x)$ and $\iota_{i^*}f_{i^*}(x)\leq\iota_{i^*}(v)$, which implies $\iota_i(u)\les_q \iota_i f_{i}(x)$ and $\iota_{i^*}f_{i^*}(x)\les_q \iota_{i^*}(v)$, which implies $\iota_i(u)\les_q \iota_i f_{i}(x)\les_q\iota_{i^*}f_{i^*}(x)\les_q \iota_{i^*}(v)$, which implies $\iota_i(u)\les_q\iota_{i^*}(v)$.
	\end{enumerate}
	This proves ${\les_0}\seq {\les_q}$.
	To obtain the converse inclusion it is enough to notice that $\les_0$ is a closed preorder that extends the coproduct order of $Y_0+Y_1$, and that, for all $x\in X$, we have $\iota_0 f_0(x) \les_0 \iota_1 f_1(x)$ and $\iota_1 f_1(x)\les_0 \iota_0 f_0(x)$.
\end{proof}
\begin{lemma}\label{l:transitive}
	A reflexive binary corelational structure $\les$ on a compact ordered space $X$ is transitive if, and only if, for all $x,y \in X$ and all $i \in \{0,1\}$, we have
	\[
		(x,i)\les(y,i^*) \ \Longrightarrow \ \exists z\in X \text{ s.t.\ }(x,i)\les (z,i^*) \text{ and } (z,i)\les (y,i^*).
	\]
\end{lemma}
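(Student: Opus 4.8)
The plan is to unwind the categorical definition of transitivity of the corelation associated with $\les$ into the displayed order-theoretic implication, using the explicit description of pushouts supplied by \cref{l:pushout-mono}. Write $S \df (X+X)/({\les}\cap{\les^\oprel})$ for the quotient compact ordered space corresponding to $\les$ under \cref{t:bijection-Q-P}, and let $\binom{q_0}{q_1}\colon X+X\epi S$ be the associated epimorphism, so that $q_i(x)=[(x,i)]$ and $\les_{\binom{q_0}{q_1}}={\les}$.

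First I would record that reflexivity makes the two legs $q_0,q_1\colon X\to S$ regular monomorphisms. Indeed $q_i(x)\leq q_i(y)$ holds in $S$ if and only if $(x,i)\les(y,i)$, which by \cref{l:refl} forces $x\leq y$; hence each $q_i$ is injective and order-reflecting, and so is a regular monomorphism by \cref{i:regmono} in \cref{p:properties-of-PC-morphisms}. This is exactly the hypothesis needed to apply \cref{l:pushout-mono} to the transitivity pushout.

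Next I would identify the pushout $P$ of the span $S\xleftarrow{q_0}X\xrightarrow{q_1}S$ from the transitivity diagram. Matching its cocone maps $\lambda_0,\lambda_1\colon S\to P$ against \cref{l:pushout-mono} yields $f_0=q_1$ and $f_1=q_0$, i.e.\ $f_i=q_{i^*}$; this index swap is the crucial bookkeeping point, and it is what ultimately produces the indices $i^*$ in the conclusion. By definition, transitivity of the corelation is the existence of the comparison map $t\colon S\to P$, and since $\binom{q_0}{q_1}$ is surjective, \cref{l:correspondence-epi-morphism} turns this into the inclusion ${\les}\seq{\les_m}$, where $m\df\binom{\lambda_0\circ q_0}{\lambda_1\circ q_1}\colon X+X\to P$ sends $(x,i)$ to $\lambda_i(q_i(x))$.

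It then remains to compute $\les_m$ through \cref{l:pushout-mono}. On same-index pairs, its first condition gives $(x,i)\les_m(y,i)\Leftrightarrow q_i(x)\leq q_i(y)\Leftrightarrow(x,i)\les(y,i)$, so $\les$ and $\les_m$ agree there and the inclusion is automatic. On cross-index pairs, the second condition applied with $u=q_i(x)$, $v=q_{i^*}(y)$ and with $f_i=q_{i^*}$, $f_{i^*}=q_i$ reads
\[
    (x,i)\les_m(y,i^*) \Longleftrightarrow \exists z\in X:\ q_i(x)\leq q_{i^*}(z)\ \text{and}\ q_i(z)\leq q_{i^*}(y),
\]
that is, $(x,i)\les(z,i^*)$ and $(z,i)\les(y,i^*)$. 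Hence ${\les}\seq{\les_m}$ reduces precisely to the implication in the statement, settling both directions at once. The only genuinely delicate point is the matching of indices in the pushout; everything else is a direct translation through \cref{t:bijection-Q-P}, \cref{l:correspondence-epi-morphism}, and \cref{l:pushout-mono}.
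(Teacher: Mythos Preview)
Your proof is correct and follows essentially the same route as the paper's: both translate transitivity via \cref{l:correspondence-epi-morphism} into the inclusion ${\les}\subseteq{\les_m}$ and then compute $\les_m$ using \cref{l:pushout-mono}, with the index swap $f_i=q_{i^*}$ producing the $i^*$'s in the conclusion. The only cosmetic difference is that the paper obtains the regularity of $q_0,q_1$ by noting they are sections of the reflexivity witness $d\colon S\to X$, whereas you argue directly from \cref{l:refl} that each $q_i$ is order-reflecting and hence a regular monomorphism; both justifications are valid.
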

\begin{proof}
	Let $\binom{q_0}{q_1}\colon X+X \epi S$ be the binary corelation associated with $\les$.
	To improve readability, we write $[x,i]$ instead of $\binom{q_0}{q_1}(x,i)$.
	By definition of transitivity, the binary corelational structure $\les$ is transitive if, and only if, given a pushout square in $\CompOrd$ as in the left-hand diagram below,
	\[\begin{tikzcd}
		X \arrow{r}{q_0} \arrow[swap]{d}{q_1}	& S \arrow{d}{\lambda_1} \\
		S \arrow[swap]{r}{\lambda_0}				& P \arrow[ul, phantom, "\lrcorner", very near start]
	\end{tikzcd}
	\ \ \ \ \ \ \ \ 
	\begin{tikzcd}
		{} 	& X+X \arrow[swap, two heads]{dl}{\binom{q_0}{q_1}} \arrow{dr}{\binom{\lambda_0\circ q_0}{\lambda_1\circ q_1}}	& \\
		S \arrow[dashed]{rr}{t}	& 		& P
	\end{tikzcd}
	\]
	there is a morphism $t\colon S\to P$ such that the right-hand diagram commutes.
	By \cref{l:correspondence-epi-morphism}, such a $t$ exists precisely when, for every $(x,i),(y,j)\in X+X$, the condition $(x,i) \les (y,j)$ implies 
	\[
		\binom{\lambda_0\circ q_0}{\lambda_1\circ q_1}(x,i)\leq \binom{\lambda_0\circ q_0}{\lambda_1\circ q_1}(y,j),
	\]
	i.e., $\lambda_i([x,i])\leq \lambda_j([y,j])$.
	Recall that $\les$ is reflexive provided $q_0$ and $q_1$ are both sections of a morphism $d\colon S\to X$.
	In particular, $q_0$ and $q_1$ are regular monomorphisms in $\CompOrd$.
	Thus, by \cref{l:pushout-mono},  the condition $\lambda_i([x,i])\leq \lambda_j([y,j])$ holds if, and only if,
	\begin{equation}\label{eq:preord-pushout}
		(i = j \text{ and } (x,i) \les (y,j) ) \text{ or } (i \neq j \text{ and } \exists z \in X \text{ s.t.\ } (x,i) \les (z,j) \text{ and } (z,i) \les (y, j)).
	\end{equation}
	We conclude that $\les$ is transitive if, and only if, \cref{eq:preord-pushout} holds whenever $(x,i)\les(y,j)$.
	In turn, this is equivalent to the condition in the statement of the lemma.
\end{proof}
For example, the following are transitive corelational structures on a two-element chain

\medskip
\noindent
\begin{minipage}{0.49\textwidth}
	\begin{center}
		\begin{tikzpicture}[node distance= 1.5 cm, auto]
			\node (11) {$\bullet$};
			\node (12) [right of=11]{$\bullet$};
			\node (21) [below of=11]{$\bullet$};
			\node (22) [right of=21]{$\bullet$};
			
			\node[rounded corners,draw=black,fit={(11) (21)}] {};
			\node[rounded corners,draw=black,fit={(12) (22)}] {};
			
			\draw[->, thick] (21) to node {}  (11);
			\draw[->, thick] (22) to node {}  (12);
			\draw[->, thick] (21) to node {}  (12);
			\draw[->, thick] (11) to node {}  (12);
		\end{tikzpicture}
	\end{center}
\end{minipage}
\begin{minipage}{0.49\textwidth}
	\begin{center}
		\begin{tikzpicture}[node distance= 1.5 cm, auto]
			\node (11) {$\bullet$};
			\node (12) [right of=11]{$\bullet$};
			\node (21) [below of=11]{$\bullet$};
			\node (22) [right of=21]{$\bullet$};
			
			\node[rounded corners,draw=black,fit={(11) (21)}] {};
			\node[rounded corners,draw=black,fit={(12) (22)}] {};
			
			\draw[->, thick] (21) to node {}  (11);
			\draw[->, thick] (22) to node {}  (12);
			\draw[->, thick] (21) to node {}  (12);
			\draw[->, thick] (22) to node {}  (11);
			\draw[<->, thick] (11) to node {}  (12);
		\end{tikzpicture}
	\end{center}
\end{minipage}
\medskip

\noindent
whereas the following is \emph{not}.
\[	
	\begin{tikzpicture}[node distance= 1.5 cm, auto]
		\node (11) {$\bullet$};
		\node (12) [right of=11]{$\bullet$};
		\node (21) [below of=11]{$\bullet$};
		\node (22) [right of=21]{$\bullet$};
		
		\node[rounded corners,draw=black,fit={(11) (21)}] {};
		\node[rounded corners,draw=black,fit={(12) (22)}] {};
		
		\draw[->, thick] (21) to node {}  (11);
		\draw[->, thick] (22) to node {}  (12);
		\draw[->, thick] (21) to node {}  (12);
		\draw[->, thick] (22) to node {}  (11);
	\end{tikzpicture}
\]
This last example shows a reflexive (and symmetric) corelational structure which is not transitive, witnessing again the fact that $\CompOrdop$ is not a Mal'cev category.

Finally, we obtain a characterisation of equivalence corelational structures.
\begin{proposition}\label{p:equivalence}
	A binary corelational structure $\les$ on a compact ordered space $X$ is an equivalence corelational structure if, and only if, for all $x, y \in X$ and all $i,j \in \{0,1\}$ we have
	\[
		(x,i)\les (y,j) \ \Longrightarrow \ x\leq y \text{ and } (x,i^*)\les (y,j^*)
	\]
	and
	\[
		(x,i)\les(y,i^*) \ \Longrightarrow \ \exists z\in X \text{ s.t.\ }(x,i)\les (z,i^*) \text{ and } (z,i)\les (y,i^*).
	\]
\end{proposition}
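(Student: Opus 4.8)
The plan is simply to assemble the three preceding lemmas, since by definition an equivalence corelational structure is one that is simultaneously reflexive, symmetric and transitive. First I would observe that the first displayed implication splits into two independent clauses: the clause $(x,i)\les(y,j) \Rightarrow x\leq y$ is exactly the reflexivity criterion of \cref{l:refl}, while the clause $(x,i)\les(y,j) \Rightarrow (x,i^*)\les(y,j^*)$ is exactly the symmetry criterion of \cref{l:symm}. Hence demanding both clauses for all $x,y,i,j$ is the same as demanding that $\les$ be reflexive \emph{and} symmetric. The second displayed implication is, verbatim, the condition appearing in \cref{l:transitive}.

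For the forward direction, I would suppose $\les$ is an equivalence corelational structure. Then it is in particular reflexive and symmetric, so by \cref{l:refl} and \cref{l:symm} both clauses of the first displayed implication hold; and it is reflexive and transitive, so \cref{l:transitive}---whose standing hypothesis of reflexivity is met---delivers the second displayed implication.

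For the converse, I would suppose both displayed implications hold. The first one, via \cref{l:refl} and \cref{l:symm}, shows that $\les$ is reflexive and symmetric. Now that reflexivity has been secured, the hypothesis of \cref{l:transitive} is satisfied, and the second displayed implication is precisely the condition that lemma equates with transitivity; hence $\les$ is transitive as well. Being reflexive, symmetric and transitive, $\les$ is an equivalence corelational structure.

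The only point requiring attention---and the nearest thing to an obstacle in an otherwise purely logical argument---is the order in which the three conditions are extracted: \cref{l:transitive} characterises transitivity only for structures already known to be reflexive, so reflexivity must be read off from the first implication \emph{before} the transitivity lemma is invoked for the second. Beyond this bookkeeping there are no new computations to carry out.
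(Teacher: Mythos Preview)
Your proposal is correct and matches the paper's own proof, which simply reads ``By \cref{l:refl,l:symm,l:transitive}.'' Your additional observation that reflexivity must be secured from the first implication before invoking \cref{l:transitive} (whose hypothesis requires reflexivity) is a worthwhile clarification that the paper leaves implicit.
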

\begin{proof}
	By \cref{l:refl,l:symm,l:transitive}.
\end{proof}

As an exercise, using \cref{p:equivalence}, the reader may verify that, as anticipated before, the equivalence corelational structures on a two-element chain are precisely the following ones.

\medskip
\noindent
\begin{minipage}{0.245\textwidth}
	\begin{center}
		\begin{tikzpicture}[node distance= 1.5 cm, auto]
			\node (11) {$\bullet$};
			\node (12) [right of=11]{$\bullet$};
			\node (21) [below of=11]{$\bullet$};
			\node (22) [right of=21]{$\bullet$};
			
			\node[rounded corners,draw=black,fit={(11) (21)}] {};
			\node[rounded corners,draw=black,fit={(12) (22)}] {};
			
			\draw[->, thick] (21) to node {}  (11);
			\draw[->, thick] (22) to node {}  (12);
		\end{tikzpicture}
	\end{center}
\end{minipage}
\begin{minipage}{0.245\textwidth}
	\begin{center}
		\begin{tikzpicture}[node distance= 1.5 cm, auto]
			\node (11) {$\bullet$};
			\node (12) [right of=11]{$\bullet$};
			\node (21) [below of=11]{$\bullet$};
			\node (22) [right of=21]{$\bullet$};
			
			\node[rounded corners,draw=black,fit={(11) (21)}] {};
			\node[rounded corners,draw=black,fit={(12) (22)}] {};
			
			\draw[->, thick] (21) to node {}  (11);
			\draw[->, thick] (22) to node {}  (12);
			\draw[<->, thick] (21) to node {}  (22);
			\draw[->, thick] (21) to node {}  (12);
			\draw[->, thick] (22) to node {}  (11);
		\end{tikzpicture}
	\end{center}
\end{minipage}
\begin{minipage}{0.245\textwidth}
	\begin{center}
		\begin{tikzpicture}[node distance= 1.5 cm, auto]
			\node (11) {$\bullet$};
			\node (12) [right of=11]{$\bullet$};
			\node (21) [below of=11]{$\bullet$};
			\node (22) [right of=21]{$\bullet$};
			
			\node[rounded corners,draw=black,fit={(11) (21)}] {};
			\node[rounded corners,draw=black,fit={(12) (22)}] {};
			
			\draw[->, thick] (21) to node {}  (11);
			\draw[->, thick] (22) to node {}  (12);
			\draw[->, thick] (21) to node {}  (12);
			\draw[->, thick] (22) to node {}  (11);
			\draw[<->, thick] (11) to node {}  (12);
		\end{tikzpicture}
	\end{center}
\end{minipage}
\begin{minipage}{0.245\textwidth}
	\begin{center}
		\begin{tikzpicture}[node distance= 1.5 cm, auto]
			\node (11) {$\bullet$};
			\node (12) [right of=11]{$\bullet$};
			\node (21) [below of=11]{$\bullet$};
			\node (22) [right of=21]{$\bullet$};
			
			\node[rounded corners,draw=black,fit={(11) (21)}] {};
			\node[rounded corners,draw=black,fit={(12) (22)}] {};
			
			\draw[<->, thick] (21) to node {}  (11);
			\draw[<->, thick] (22) to node {}  (12);
			\draw[<->, thick] (21) to node {}  (22);
			\draw[<->, thick] (21) to node {}  (12);
			\draw[<->, thick] (11) to node {}  (12);
			\draw[<->, thick] (11) to node {}  (22);
		\end{tikzpicture}
	\end{center}
\end{minipage}
%

%%%%%%%%%%%%%%%%%%%%%%%%%%%%%%%%%%%   SECTION   %%%%%%%%%%%%%%%%%%%%%%%%%%%%%%%%%%%%

\section{Main result: equivalence corelations are effective}\label{s:proof-of-main-res}
Dualising \cref{d:effective-exact}, we say that an equivalence corelation $\binom{q_0}{q_1}\colon X+X \epi S$ on a compact ordered space $X$ (and so the corresponding equivalence corelational structure) is \emph{effective}\index{corelation!effective}\index{corelational structure!effective} provided it coincides with the cokernel pair of its equaliser.
That is, provided the following is a pushout square in $\CompOrd$,
\begin{equation*}
	\begin{tikzcd}
		Y \arrow[hookrightarrow]{r}{k} 							& X \arrow{d}{q_1} \\
		X \arrow[hookleftarrow]{u}{k} \arrow[swap]{r}{q_0}	& S
	\end{tikzcd}
\end{equation*}
where $k \colon Y \to X$ is the equaliser of $q_0,q_1\colon X\rightrightarrows S$ in $\CompOrd$.

\begin{notation}
	Given a compact ordered space $X$ and a closed subspace $Y$ of $X$, we define the relation $\les^{Y}$ on $X+X$ as follows: for all $x, y \in X$ and $i \in \{0,1\}$ we set
	\[
		(x,i) \les^{Y} (y,i) \ \Longleftrightarrow \ x \leq y,
	\]
	and
	\[
		(x,i) \les^{Y} (y,i^*) \ \Longleftrightarrow \ \exists z\in Y \text{ s.t.\ } x \leq z \leq y.
	\]
\end{notation}

\begin{lemma}\label{l:cokernel-inclusion}
	Let $X$ be a compact ordered space, let $Y$ be a closed subspace of $X$, equipped with the induced topology and order.
	The binary corelational structure on $X$ associated with the pushout in $\CompOrd$ of the inclusion $Y \rmono X$ along itself is $\les^Y$.
\end{lemma}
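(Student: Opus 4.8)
The plan is to reduce the statement to a direct application of \cref{l:pushout-mono}. First I would observe that the inclusion $k \colon Y \rmono X$ is a \emph{regular} monomorphism in $\CompOrd$: since $Y$ carries the topology and order induced from $X$, the map $k$ is injective and order-reflecting, so by \cref{i:regmono} in \cref{p:properties-of-PC-morphisms} it is a regular monomorphism. This is exactly the hypothesis needed to invoke \cref{l:pushout-mono}.

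Next I would specialise \cref{l:pushout-mono} to the case where $k$ is pushed out along itself, i.e.\ take the common domain to be $Y$, both codomains $Y_0 = Y_1 = X$, and $f_0 = f_1 = k$. Writing $\lambda_0, \lambda_1 \colon X \to P$ for the two legs of the pushout, the map $\binom{\lambda_0}{\lambda_1} \colon X + X \to P$ is, as in the proof of \cref{l:pushout-mono}, the coequaliser of $\iota_0 k, \iota_1 k \colon Y \rightrightarrows X + X$; in particular it is surjective, hence an epimorphism, so it represents a genuine binary corelation on $X$. By \cref{t:bijection-Q-P}, the associated binary corelational structure is the closed preorder $\les_{\binom{\lambda_0}{\lambda_1}}$ on $X + X$ defined as in \cref{n:les-f}, so it suffices to identify this preorder with $\les^{Y}$.

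For this I would simply read off the two cases of \cref{l:pushout-mono}. For $(x,i),(y,j) \in X + X$ we have $(x,i) \les_{\binom{\lambda_0}{\lambda_1}} (y,j)$ precisely when $\lambda_i(x) \leq \lambda_j(y)$ in $P$. When $i = j$, the first item of \cref{l:pushout-mono} gives $\lambda_i(x) \leq \lambda_i(y) \iff x \leq y$, which is the first clause defining $\les^{Y}$. When $j = i^*$, the second item gives $\lambda_i(x) \leq \lambda_{i^*}(y)$ if and only if there is $z \in Y$ with $x \leq k(z) = z$ and $z = k(z) \leq y$, i.e.\ $\exists z \in Y$ with $x \leq z \leq y$, which is the second clause. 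Hence $\les_{\binom{\lambda_0}{\lambda_1}} = \les^{Y}$, as required.

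Since the whole argument funnels through \cref{l:pushout-mono}, there is no genuine obstacle beyond careful bookkeeping. The two points deserving attention are the verification that $k$ is a regular monomorphism (so that \cref{l:pushout-mono} is applicable) and the correct re-indexing between the two statements: in particular, that the apex of the span in \cref{l:pushout-mono} here plays the role of the closed subspace $Y$, while the two codomains $Y_0 = Y_1$ are both copies of $X$.
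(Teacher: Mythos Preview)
Your proposal is correct and follows exactly the same approach as the paper, which simply states that the result is an immediate consequence of \cref{l:pushout-mono}. You have merely spelled out the details—verifying that the inclusion is a regular monomorphism and unpacking the two cases—that the paper leaves implicit.
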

\begin{proof}
	This is an immediate consequence of \cref{l:pushout-mono}.
\end{proof}
\begin{lemma}\label{l:effective-char}
	An equivalence corelational structure $\les$ on a compact ordered space $X$ is effective if, and only if, for all $x, y \in X$ and $i \in \{0,1\}$, we have
	\[(x,i)\les (y,i^*) \ \Longrightarrow \ \exists z\in X \text{ s.t.\ }x \leq z \leq y, (z,i)\les (z,i^*) \text{ and } (z,i^*)\les (z,i).\]
\end{lemma}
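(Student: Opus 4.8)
The plan is to make the categorical definition of effectiveness concrete by identifying the equaliser of $q_0,q_1$ explicitly and then comparing the given corelational structure $\les$ with the one produced by the cokernel pair of that equaliser. Write $\binom{q_0}{q_1}\colon X+X \epi S$ for the corelation associated with $\les$, so that $S = (X+X)/({\les}\cap{\les^\oprel})$ and $q_i(x) = [x,i]$. The two maps $q_0,q_1\colon X \rightrightarrows S$ agree at $x$ precisely when $(x,0)$ and $(x,1)$ lie in the same $({\les}\cap{\les^\oprel})$-class, i.e.\ when $(x,0)\les(x,1)$ and $(x,1)\les(x,0)$. Since the forgetful functor $\CompOrd \to \Set$ preserves limits (\cref{p:pres-limits}), the equaliser $k\colon Y \rmono X$ is the subspace
\[
	Y = \{z\in X \mid (z,0)\les(z,1) \text{ and } (z,1)\les(z,0)\},
\]
with the induced topology and order; being a regular monomorphism, $k$ is the inclusion of a closed subspace. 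By \cref{l:cokernel-inclusion}, the corelational structure of the pushout of $k$ along itself---that is, of the cokernel pair of $k$---is exactly $\les^Y$. Hence, through the bijection of \cref{t:bijection-Q-P}, effectiveness of $\les$ is equivalent to the single equation ${\les} = {\les^Y}$.

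Next I would show that the inclusion ${\les^Y}\seq{\les}$ holds unconditionally. For same-index generators this is immediate because $\les$ extends the coproduct order. For a cross-index pair $(x,i)\les^Y(y,i^*)$ there is, by definition, some $z\in Y$ with $x\leq z\leq y$; then
\[
	(x,i)\les(z,i)\les(z,i^*)\les(y,i^*),
\]
where the two outer relations come from the coproduct order (same index, and $x\leq z$, $z\leq y$ respectively) and the middle one from $z\in Y$, and transitivity of $\les$ closes the chain. Thus the whole question reduces to deciding when the reverse inclusion ${\les}\seq{\les^Y}$ holds, and this is exactly where the stated condition enters.

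Finally I would split ${\les}\seq{\les^Y}$ into its same-index and cross-index cases. The same-index case, $(x,i)\les(y,i)\Rightarrow x\leq y$, is automatic from reflexivity of $\les$ via \cref{l:refl}, so it imposes no constraint. For the cross-index case, the right-hand side $(x,i)\les^Y(y,i^*)$ unwinds to the existence of $z\in X$ with $x\leq z\leq y$ and $z\in Y$; and membership $z\in Y$ is precisely the conjunction $(z,i)\les(z,i^*)$ and $(z,i^*)\les(z,i)$, since the two orderings of $\{0,1\}$ define the same set. Therefore the cross-index case of ${\les}\seq{\les^Y}$ is literally the implication in the statement. Combining it with the unconditional inclusion of the second paragraph gives ${\les}={\les^Y}$, i.e.\ effectiveness. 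I expect the only genuinely delicate point to be the bookkeeping that translates the pushout/cokernel-pair formulation of effectiveness into the equation ${\les}={\les^Y}$ via \cref{t:bijection-Q-P} and \cref{l:cokernel-inclusion}; once that dictionary is set up, everything else is the routine verification sketched above.
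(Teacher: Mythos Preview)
Your proof is correct and follows essentially the same approach as the paper: identify the equaliser as the closed subspace $Y=\{z\in X\mid (z,0)\les(z,1)\text{ and }(z,1)\les(z,0)\}$, use \cref{l:cokernel-inclusion} and \cref{t:bijection-Q-P} to translate effectiveness into the equation ${\les}={\les^Y}$, and then reduce this equation to the cross-index implication by handling the same-index case via reflexivity (\cref{l:refl}) and the unconditional inclusion ${\les^Y}\seq{\les}$ via the fact that $\les$ is a preorder extending the coproduct order. Your write-up is, if anything, slightly more explicit than the paper's in separating the two inclusions.
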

\begin{proof}
	Set 
	\[
		Y \df \{x\in X \mid (x,i)\les (x,i^*) \text{ and } (x,i^*)\les (x,i)\},
	\]
	and let us endow $Y$ with the induced topology and induced partial order.
	Denoting by $\binom{q_0}{q_1}\colon X+X \epi S$ the binary corelation on $X$ associated with $\les$, we have
	\[
		Y = \{x \in X \mid q_0(x) \leq q_1(x) \text{ and }q_1(x) \leq q_0(x)\} = \{x \in X \mid q_0(x) = q_1(x)\}.
	\]
	By \cref{p:pres-limits}, the inclusion $Y \rmono X$ is the equaliser of $q_0,q_1\colon X\rightrightarrows S$ in $\CompOrd$.
	Therefore, the binary corelational structure $\les$ is effective if and only if the following diagram is a pushout in $\CompOrd$.
	\begin{equation*}
		\begin{tikzcd}
			Y \arrow[hookrightarrow]{r}{k} 							& X \arrow{d}{q_1} \\
			X \arrow[hookleftarrow]{u}{k} \arrow[swap]{r}{q_0}	& S
		\end{tikzcd}
	\end{equation*}
	In turn, by \cref{l:cokernel-inclusion}, this is equivalent to saying that ${\les}={\les^Y}$.
	By definition of $\les^Y$, we have, for all $x, y \in X$ and $i \in \{0,1\}$,
	\[
		(x,i) \les^{Y} (y,i) \	 	\Longleftrightarrow \ x \leq y,
	\]
	and
	\[
		(x,i) \les^{Y} (y,i^*) \ \Longleftrightarrow \ \exists z\in X \text{ s.t.\ }x \leq z \leq y, (z,i)\les (z,i^*) \text{ and } (z,i^*)\les (z,i).
	\]
	Note that any reflexive binary corelational structure $\les'$ on $X$ satisfies, for all $x, y\in X$ and $i \in \{0,1\}$,
	\begin{equation*} \label{e:refl}
		(x, i) \les' (y, i) \Longleftrightarrow x \leq y.
	\end{equation*}
	The left-to-right implication follows from \cref{l:refl}, while the right-to-left implication holds because $\les$ extends the coproduct order of $X+X$.
	
	Moreover, note that every transitive binary corelational structure $\les'$ on $X$ satisfies
	\[
		\big(\exists z\in X \text{ s.t.\ }x \leq z \leq y, (z,i)\les' (z,i^*) \text{ and } (z,i^*) \les' (z,i) \big)  \ \Longrightarrow \ (x,i) \les' (y,i^*),
	\]
	because $\les'$ extends the partial order of $X+X$.
	
	Therefore, since $\les$ is reflexive and transitive, the condition ${\les} = {\les^Y}$ holds if, and only if, for all $x, y \in X$ and $i \in \{0,1\}$, we have
	\[
		(x,i)\les (y,i^*) \ \Longrightarrow \ \big(\exists z\in X \text{ s.t.\ }x \leq z \leq y, (z,i)\les (z,i^*) \text{ and } (z,i^*)\les (z,i) \big). \qedhere
	\]
\end{proof}
\begin{theorem}\label{t:effective}
	Every equivalence relation in $\CompOrdop$ is effective.
\end{theorem}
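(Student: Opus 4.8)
The plan is to reduce, via the duality developed in the previous sections, the effectiveness of every equivalence relation in $\CompOrdop$ to a concrete order-topological statement about equivalence corelational structures, and then to settle that statement by a compactness-plus-Zorn argument. Dualising, an equivalence relation in $\CompOrdop$ is an equivalence corelation on some compact ordered space $X$, which by \cref{t:bijection-Q-P} corresponds to an equivalence corelational structure $\les$ on $X$; its effectiveness is exactly the condition isolated in \cref{l:effective-char}. So it suffices to prove: for every equivalence corelational structure $\les$ on $X$, all $x,y\in X$ and $i\in\{0,1\}$, if $(x,i)\les(y,i^*)$ then there is $z\in X$ with $x\leq z\leq y$, $(z,i)\les(z,i^*)$ and $(z,i^*)\les(z,i)$. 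By the symmetry between the two copies I may assume $i=0$, and I abbreviate the cross-relation by writing $x\mathrel{R}y$ for $(x,0)\les(y,1)$.

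First I would record the properties of $R$ coming from \cref{p:equivalence}. Reflexivity gives $R\subseteq{\leq}$; in particular $x\mathrel{R}y$ forces $x\leq y$, so the order interval $[x,y]$ is nonempty. Symmetry gives $(x,0)\les(y,1)\iff(x,1)\les(y,0)$, so that a self-related point---one with $z\mathrel{R}z$, i.e.\ $(z,0)\les(z,1)$---automatically also satisfies $(z,1)\les(z,0)$; thus finding $z\in[x,y]$ with $z\mathrel{R}z$ is precisely what is required. Transitivity, in the form of \cref{l:transitive}, yields the interpolation property $x\mathrel{R}y\Rightarrow\exists w\ (x\mathrel{R}w \text{ and } w\mathrel{R}y)$. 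Finally $R$ is a closed subset of $X\times X$, being the preimage of the closed preorder $\les$ on $X+X$ under the continuous map $(x,y)\mapsto((x,0),(y,1))$.

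The core of the proof is then a fixed-point argument. I would consider the poset $\Pi$ of pairs $(a,b)\in X\times X$ with $a\leq b$ and $a\mathrel{R}b$, ordered by interval refinement: $(a,b)\preceq(a',b')$ iff $a\leq a'$ and $b'\leq b$. It is nonempty, as $(x,y)\in\Pi$. Given a chain in $\Pi$, the associated net of pairs admits a convergent subnet in the compact space $X\times X$, say with limit $(a^*,b^*)$; closedness of $R$ yields $a^*\mathrel{R}b^*$, while closedness of the order together with \cref{l:up-set-closed} (the sets $\upset a_\beta$ and $\downset b_\beta$ are closed) yields $a_\beta\leq a^*\leq b^*\leq b_\beta$ for every $\beta$ in the chain, so $(a^*,b^*)\in\Pi$ is an upper bound. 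By Zorn's lemma $\Pi$ has a maximal element $(a,b)$, which refines $(x,y)$ and hence lies in $[x,y]$. Applying interpolation to $a\mathrel{R}b$ produces $w$ with $a\leq w\leq b$ and $a\mathrel{R}w\mathrel{R}b$; if $a<w<b$ then $(a,w)$ (or $(w,b)$) is a proper refinement, contradicting maximality, so $w\in\{a,b\}$, which forces $a\mathrel{R}a$ or $b\mathrel{R}b$. Either way I obtain the desired self-related point in $[x,y]$, completing the verification of \cref{l:effective-char}.

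I expect the main obstacle to be the upper-bound step for chains: $X$ need not be metrizable, so I must argue with nets and cluster points in a compact Hausdorff space rather than with sequences, and carefully combine cofinality of the subnet with closedness of both $R$ and $\leq$ to land back inside $\Pi$. The remainder is bookkeeping with the reflexive, symmetric, and transitive translations already supplied by \cref{p:equivalence,l:transitive,l:effective-char}.
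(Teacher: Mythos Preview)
Your argument is correct and rests on the same scaffolding as the paper's proof (the reduction via \cref{l:effective-char}, closedness of the cross-relation, transitivity as interpolation, and a Zorn step), but the Zorn step is organised differently. The paper applies Zorn to the set $\Omega=\{u\in X\mid (x,i)\les(u,i^*)\text{ and }(u,i)\les(y,i^*)\}$ of single interpolants, ordered by the native order of $X$; upper bounds for chains come from the fact that a directed subset of a compact ordered space has a supremum equal to its limit as a net, together with closedness of $\Omega$, and the maximal interpolant $z$ is then shown to be self-related by one further appeal to transitivity (plus the fact that $\les$ extends $\leq_{X+X}$ to get the new interpolant back into $\Omega$). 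Your interval-refinement poset of pairs is a pleasant variant: the maximal pair is immediately squeezed to a self-related endpoint by a single interpolation, at the price of a slightly heavier upper-bound argument (cluster points of nets in $X\times X$ rather than directed suprema in $X$). One small imprecision: Zorn applied to $\Pi$ as stated yields a maximal element but not one that refines $(x,y)$; you should either run Zorn on the cone $\{p\in\Pi:(x,y)\preceq p\}$ (your chain argument works verbatim there) or invoke the form of Zorn asserting that every element lies below a maximal one.
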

\begin{proof}
	Let $\les$ be an equivalence corelational structure on a compact ordered space $X$.
	In view of \cref{l:effective-char}, it is enough to show that, whenever $(x,i)\les (y,i^*)$, there is $z\in X$ such that 
	\[
	x\leq z\leq y,\ (z,i) \les (z,i^*) \text{ and } (z,i^*) \les (z,i).
	\]
	Fix arbitrary $x,y\in X$ and $i\in \{0,1\}$ satisfying $(x,i)\les (y,i^*)$, and set
	\[
	\Omega=\{u\in X\mid (x,i)\les (u,i^*) \text{ and } (u,i)\les (y,i^*) \}.
	\]
	The idea is to apply Zorn's Lemma to show that $\Omega$ has a maximal element $z$ satisfying the desired properties.
	
	Since $(x,i) \les (y, i^*)$ and $\les$ is transitive, by \cref{l:transitive} $\Omega$ is non-empty.
	\begin{claim}
		Every non-empty chain contained in $\Omega$ admits a supremum in $X$ and this element belongs to $\Omega$.
	\end{claim}
	\begin{claimproof}
		First, we show that $\Omega$ is a closed subset of $X$.
		The set $\Omega$ can be written as the intersection of the sets
		\[
			\Omega_1= \{u\in X\mid (x,i)\les (u,i^*) \} \text{ and } \Omega_2=\{u\in X\mid (u,i)\les (y,i^*) \}.
		\]
		The set $\Omega_1$ is the preimage, under the coproduct injection $\iota_{i^*}\colon X \hookrightarrow X+X$, of 
		\[
			\upset(x,i)=\{(w,j)\in X+X\mid (x,i)\les (w,j) \}.
		\]
		Since $\les$ is a closed preorder on $X+X$, the set $\upset(x,i)$ is closed in $X+X$ by \cref{l:up-set-closed}.
		Therefore, its preimage $\Omega_1$ is closed in $X$.
		Analogously, $\Omega_2$ is closed.
		Since $\Omega$ is the union of the closed subsets $\Omega_1$ and $\Omega_2$ of $X$, we conclude that $\Omega$ is a closed subset of $X$.
		
		Let $C$ be a chain contained in $\Omega$.
		By \cite[Proposition~VI.1.3]{ContLattDom}, every directed set in a compact ordered space has a supremum, which coincides with the topological limit of the set regarded as a net.
		Thus, $C$ has a supremum $s$ in $X$, which belongs to the topological closure of $C$ in $X$.
		Since $\Omega$ is a closed subset of $X$, the element $s$ belongs to $\Omega$.
	\end{claimproof}

	Having established that $\Omega$ is non-empty and that every non-empty chain in $\Omega$ admits an upper bound in $\Omega$, we can apply Zorn's Lemma, and obtain that $\Omega$ has a maximal element $z$.
	By \cref{l:refl}, since $\les$ is reflexive, from $(x,i)\les (z,i^*)$ and $(z,i)\les (y,i^*)$ we deduce $x \leq z \leq y$.
	\begin{claim}
		We have $(z,i) \les (z,i^*)$ and $(z, i^*) \les (z,i)$.
	\end{claim}
	\begin{claimproof}
	By \cref{l:transitive}, since $\les$ is transitive, from $(z,i)\les (y,i^*)$ it follows that there is $u \in X$ such that $(z,i)\les (u,i^*)$ and $(u,i)\les (y,i^*)$.
		Also, $(x,i)\les(z,i)$ because $\les$ extends the partial order of $X$.
		Thus, $(x,i)\les (z,i)\les (u,i^*)$, which implies $u\in \Omega$.
		By reflexivity, $(z,i)\les (u,i^*)$ entails $z\leq u$.
		Since $z$ is maximal, we have $z=u$.
		Therefore, $(z,i)\les (z,i^*)$.
		By \cref{l:symm}, since $\les$ is symmetric, from $(z,i)\les (z,i^*)$ we deduce $(z, i^*) \les (z,i)$.
	\end{claimproof}
	We have shown that, if $(x,i) \les (y,i^*)$, then there is $z \in X$ such that $x \leq z \leq y$, $(z,i) \les (z,i^*)$ and $(z, i^*) \les (z,i)$.
	As already pointed out at the beginning of the proof, by \cref{l:effective-char}, this implies that $\les$ is effective.
\end{proof}
\begin{remark}
	In the proof above, the topology plays a relevant role. 
	In fact, the dual of the category $\Ord$ of partially ordered sets does not have effective equivalence relations \cite[Remark~4.18]{HofmannNora2020}.
	To see the difference between $\Ord$ and $\CompOrd$, consider the partially ordered set $[0,1]$, with its canonical total order.
	Consider the relation $\les$ on $[0,1] + [0,1]$ defined as follows: for $i \in \{0,1\}$ and $x,y \in [0,1]$, set
	\[
		(x,i) \les (y,i)\ \Longleftrightarrow \ x \leq y,
	\]
	and
	\[
		(x,i) \les (y, i^*) \ \Longleftrightarrow\ x < y.
	\]
	The relation $\les$ satisfies the condition in \cref{p:equivalence} that characterises equivalence relations, whereas it does not satisfy the condition in \cref{l:effective-char} that characterises effective equivalence relations.
	In this case, the reason why this happens is because the relation $\les$ is not closed: indeed, the sequence $x_n \df \left(1- \frac{1}{n}, 0\right)$ converges to $(1,0)$, the constant sequence $y_n \df (1,1)$ converges to $(1,1)$, for all $n \in \N$ we have $x_n \les y_n$, but $(1,0) \not\les (1,1)$.
\end{remark}

We recall that $\SignCM$ is the signature whose operation symbols of arity $\kappa$ are the order-preserving continuous functions from $[0,1]^\kappa$ to $[0,1]$, and $\SignCM_{\leq \omega}$ is the sub-signature of $\SignCM$ consisting of the operations symbols of at most countable arity.

\begin{corollary} \label{c:equiv-to-varieties}
	The category $\CompOrd$ is dually equivalent to
	\[
		\opS\opP\mathopen{}\left(\left\langle [0,1]; \SignCM \right\rangle\right)\mathclose{}
	\]
	and
	\[
		\opS\opP\mathopen{}\left(\left\langle [0,1]; \SignCM_{\leq \omega} \right\rangle\right)\mathclose{},
	\]
	both of which are varieties.
\end{corollary}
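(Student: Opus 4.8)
The plan is to combine the dual equivalence already established in \cref{t:duality-not-explicit} with the effectiveness result of \cref{t:effective}; the only genuinely new content of this corollary over \cref{t:duality-not-explicit} is the promotion of the word ``quasivariety'' to ``variety.'' By \cref{t:duality-not-explicit}, the category $\CompOrd$ is dually equivalent to each of the two classes $\opS\opP(\langle [0,1]; \SignCM \rangle)$ and $\opS\opP(\langle [0,1]; \SignCM_{\leq \omega} \rangle)$, and by \cref{l:ISP} each of these is a quasivariety. So it suffices to show that each is in fact a variety.

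For this I would invoke \cref{p:effectiveness}, which characterises those quasivarieties that are varieties as precisely the ones in which every equivalence relation is effective. Writing $\cat{D}$ for either of the two quasivarieties, the dual equivalence of \cref{t:duality-not-explicit} gives an equivalence of categories $\CompOrdop \simeq \cat{D}$. Now \cref{t:effective} asserts that every equivalence relation in $\CompOrdop$ is effective, and effectiveness is a purely categorical property: it is formulated entirely in terms of finite limits, coequalisers, and kernel pairs, all of which are preserved and reflected by an equivalence of categories. Hence the property transports across the equivalence, so that every equivalence relation in $\cat{D}$ is effective as well. Applying \cref{p:effectiveness} yields that $\cat{D}$ is a variety, and since the argument is uniform in the choice of $\cat{D}$, both classes are varieties.

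I do not expect any real obstacle here, since all the substantive work has been carried out upstream (most notably the effectiveness theorem \cref{t:effective}). The one point deserving a moment's care is the stability of ``every equivalence relation is effective'' under equivalence of categories, but this is immediate once one recalls that an equivalence carries an equivalence relation together with its coequaliser and the kernel pair of that coequaliser faithfully to the other category. As an alternative route that avoids passing explicitly through the quasivariety presentation, I could apply \cref{t:char-varieties} directly to $\CompOrdop$: it is cocomplete (dually to \cref{p:PC-complete-cocomplete}), it has a regular projective regular generator (dually to the regular injective regular cogenerator $[0,1]$ furnished by \cref{p:int-is-reg-cogen,p:int-is-regular-injective}), and its equivalence relations are effective by \cref{t:effective}; this would deliver the ``variety'' conclusion in one stroke.
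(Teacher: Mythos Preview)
Your proof is correct and follows essentially the same route as the paper's: invoke \cref{t:duality-not-explicit} and \cref{l:ISP} for the dual equivalence with a quasivariety, transport effectiveness of equivalence relations from $\CompOrdop$ via \cref{t:effective}, and conclude by \cref{p:effectiveness}. The paper's proof is merely terser, leaving implicit the (obvious) stability of effectiveness under equivalence; your alternative route via \cref{t:char-varieties} is also valid but not the one the paper chooses.
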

\begin{proof}
	The two classes are quasivarieties by \cref{l:ISP}, and they are equivalent to $\CompOrdop$ by \cref{t:duality-not-explicit}.
	By \cref{t:effective}, every equivalence relation in $\CompOrdop$ is effective.
	Since a quasivariety is a variety if and only if equivalence relations are effective (\cref{p:effectiveness}), the result follows.
\end{proof}

We finally summarise our results.

\begin{theorem}\label{t:MAIN}
	The category $\CompOrd$ of compact ordered spaces is dually equivalent to a variety of algebras, with primitive operations of at most countable arity.
\end{theorem}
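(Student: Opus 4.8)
The plan is to read off this statement from the categorical characterisation of varieties (\cref{t:char-varieties}) by assembling the three properties it requires, all of which have been secured by the time we reach this point. I would work with the dual presentation in $\CompOrd$ and first invoke \cref{t:duality-not-explicit}, which exhibits $\CompOrd$ as dually equivalent to the quasivariety $\opS\opP\mathopen{}\left(\left\langle [0,1]; \SignCM_{\leq \omega} \right\rangle\right)\mathclose{}$. This single fact already delivers two of the three ingredients: $\CompOrdop$ is cocomplete and has a regular projective regular generator (since it is equivalent to a quasivariety). Moreover, the signature $\SignCM_{\leq \omega}$ consists by construction only of operations of at most countable arity, so the desired bound on the arity is built in via \cref{l:abstractly-cocountable}; there is nothing further to arrange on that front.

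The only remaining ingredient is effectiveness of equivalence relations. By \cref{p:effectiveness}, a quasivariety is a variety precisely when every internal equivalence relation in it is effective, i.e.\ coincides with the kernel pair of its coequaliser. Dualising, I need every equivalence relation in $\CompOrdop$ to be effective, and this is exactly the content of \cref{t:effective}. I would therefore simply apply \cref{t:effective} to upgrade the quasivariety $\opS\opP\mathopen{}\left(\left\langle [0,1]; \SignCM_{\leq \omega} \right\rangle\right)\mathclose{}$ to a genuine variety. This upgrade is precisely what \cref{c:equiv-to-varieties} records, so the present theorem is the clean summary of that corollary.

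Putting the pieces together, the dual equivalence furnished by \cref{t:duality-not-explicit}, combined with \cref{t:effective} and \cref{p:effectiveness}, exhibits $\CompOrdop$ as a variety of algebras whose primitive operations have at most countable arity, as claimed. I do not expect any genuine obstacle at this final stage: the entire difficulty has been absorbed into the proof of \cref{t:effective}, whose delicate step was the combination of a Zorn's-Lemma argument with the closedness of the preorder on $X + X$ to produce the witness $z$ satisfying $x \leq z \leq y$ together with $(z,i) \les (z,i^*)$ and $(z,i^*) \les (z,i)$. The present statement is obtained by quoting \cref{t:duality-not-explicit}, \cref{p:effectiveness} and \cref{t:effective} in turn, with the arity bound inherited directly from the construction of $\SignCM_{\leq \omega}$.
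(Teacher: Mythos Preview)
Your proposal is correct and follows exactly the paper's route: the paper's proof simply cites \cref{c:equiv-to-varieties}, and your argument reconstructs that corollary by combining \cref{t:duality-not-explicit}, \cref{t:effective}, and \cref{p:effectiveness}, with the arity bound coming from $\SignCM_{\leq \omega}$. You even note this yourself, so there is nothing to add.
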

\begin{proof}
	By \cref{c:equiv-to-varieties}, the category $\CompOrd$ is dually equivalent to the variety $\opS\opP\mathopen{}\left(\left\langle [0,1]; \SignCM_{\leq \omega} \right\rangle\right)\mathclose{}$, whose primitive operations are of at most countable arity.
\end{proof}
%

%%%%%%%%%%%%%%%%%%%%%%%%%%%%%%%%%%%   SECTION   %%%%%%%%%%%%%%%%%%%%%%%%%%%%%%%%%%%%

\section{Conclusions}

In \cref{chap:CompOrd} we motivated our view that, as Priestley spaces are the partially-ordered generalisation of Stone spaces, the structures introduced by L.\ Nachbin under the name of \emph{compact ordered spaces} are the correct partially-ordered generalisation of compact Hausdorff spaces.

In the present chapter, starting from the observation that the categories of Stone spaces, Priestley spaces and compact Hausdorff spaces all have an equationally definable dual, we investigated whether the same happens for compact ordered spaces.
In fact, this is the case: The category of compact ordered spaces is dually equivalent to a variety of algebras, with primitive operations of at most countable arity.

Exploiting the insights from decades of invastigation of natural dualities and categorical characterisations of (quasi)varieties, we provided a varietal description of the dual of the category of compact ordered spaces: using Linton's language, this is the category of models of the varietal theory of order-preserving continuous functions between powers of the unit interval $[0,1]$, which happens to consists of all the subalgebras of powers of $[0,1]$.

However, some questions still remain unaddressed: Is it necessary to resort to infinitary operations? Does there exist a manageable set of primitive operations and axioms for the dual of the category of compact ordered spaces?
These questions we address in the following chapters.

%######################################################.  CHAPTER.  ######################################################

\chapter{Negative axiomatisability results}\label{chap:negative-results}
% CHAPTER 3

%%%%%%%%%%%%%%%%%%%%%%%%%%%%%%%%%%%   SECTION   %%%%%%%%%%%%%%%%%%%%%%%%%%%%%%%%%%%%

\section{Introduction}

In \cref{chap:direct-proof} we proved that the category $\CompOrd$ of compact ordered spaces is dually equivalent to a variety of algebras with operations of at most countable arity.
One may wonder whether it is necessary to resort to infinitary operations.
In this short chapter we show that this is indeed the case: $\CompOrdop$ is not equivalent to any variety of finitary algebras.
In fact, we show the following stronger results.
\begin{enumerate}
	\item The category $\CompOrd$ is not dually equivalent to any finitely accessible category (\cref{t:finitely accessible}).
	\item The category $\CompOrd$ is not dually equivalent to any first-order definable class of structures (\cref{t:not-elementary}).
	\item The category $\CompOrd$ is not dually equivalent to any class of finitary algebras closed under products and subalgebras (\cref{t:SP}).
\end{enumerate}
The second result was suggested by S.\ Vasey (private communication) as an application of a result of M.\ Lieberman, J.\ Rosick\'y and S.\ Vasey \cite{LieRosVas}, replacing a previous weaker statement.

This chapter is based on a joint work with L.\ Reggio \cite{AbbadiniReggio2020}.

%%%%%%%%%%%%%%%%%%%%%%%%%%%%%%%%%%%   SECTION   %%%%%%%%%%%%%%%%%%%%%%%%%%%%%%%%%%%%

\section{Negative results}

%=================================   SUBSECTION   =================================%

\subsection{The dual of \texorpdfstring{$\CompOrd$}{CompOrd} is not a finitely accessible category} \label{subs:finitely-acc}

The first negative result makes use of the concept of finitely accessible category.

Classically, a finitary algebra is called \emph{finitely presentable} if it can be presented by finitely many generators and finitely many equations.
In any variety of finitary algebras, each algebra is the colimit of a directed system of finitely presentable algebras.
We will recall the classical categorical abstraction of finitely presentable algebras and then show that not every object of $\CompOrdop$ is a directed colimit of the objects of this kind, thus proving $\CompOrdop$ not to be equivalent to a variety of finitary algebras.
A categorical abstraction of the notion of finitely presentable algebra has been introduced independently by \cite[Definition~6.1]{GabUlm} and \cite[Expose I, Definition 9.3, p.\ 140]{ArtinGrothendieckVerdier1972}.
To this end, first recall that a partially ordered set is called \emph{directed}\index{partially ordered set!directed}\index{directed!partially ordered set|see{partially ordered set, directed}} provided that every finite subset has an upper bound.
\emph{Directed colimits}\index{colimit!directed}\index{directed!colimit|see{colimit, directed}} (also known as direct limits\index{direct limit} in universal algebra) are colimits of directed systems.
\begin{definition}[{See \cite[Definition~6.1]{GabUlm}, or \cite[Definition~1.1]{AdaRos}}]
	An object $A$ of a category $\cat{C}$ is said to be \emph{finitely presentable}\index{object!finitely presentable} if the covariant hom-functor $\hom_{\cat{C}}(A,-)\colon\cat{C}\to\Set$ preserves directed colimits.
	Explicitly, this means that if $D\colon \cat{I}\to \cat{C}$ is a functor with $\cat{I}$ a directed partially ordered set and $(c_i \colon D(i) \to C)_{i \in \cat{I}}$ is a colimit cocone for $D$, then, for every morphism $f \colon A \to C$ in $\cat{C}$, the following two conditions are satisfied \cite[Proposition~5.1.3]{Borceux1994-vol2}.
	\begin{enumerate}
		\item The morphism $f$ factors through some $c_i$, i.e., there exists $i \in \cat{I}$ and $g \colon A \to D(i)$ such that $f = c_{i} \circ g$.
		\[
			\begin{tikzpicture}
				\matrix(m)[matrix of math nodes, row sep=4em, column sep=5em, text height=1.5ex, text depth=0.25ex]{
				A & C\\ & D(i)\\ 
				};
				\path[->] (m-1-1) edge node[above] {$f$} (m-1-2);
				\path[->, dashed] (m-1-1) edge node[below] {$g$}  (m-2-2);
				\path[->] (m-2-2) edge [bend right=30] node[right] {$c_i$} (m-1-2);
			\end{tikzpicture}
		\]
		\item The factorisation is essentially unique, in the sense that, for all $j,k \in \cat{I}$, for all $g'\colon A \to D(j)$ and $g'' \colon A \to D(k)$ such that $f = c_{j} \circ g' = c_{k} \circ g''$, there exists a common upper bound $l$ of $j$ and $k$ such that $D(j \to l) \circ g' = D(k \to l)\circ g''$.
		\[
			\begin{tikzpicture}
				\matrix(m)[matrix of math nodes, row sep=4em, column sep=1.2em, text height=1.5ex, text depth=0.25ex]{
				A & & C &\\ & & D(l) & \\ &D(j) & &D(k) \\
				};
				\path[->] (m-1-1) edge node[above] {$f$} (m-1-3);
				\path[->] (m-1-1) edge  [bend right=50] node[left] {$g'$} (m-3-2);
				\path[->] (m-1-1) edge  [bend right=30] node[left] {$g''$} (m-3-4);
				\path[->] (m-3-2) edge [bend right=90] node[right] {$c_j$} (m-1-3);
				\path[->] (m-3-4) edge [bend right=60] node[right] {$c_k$} (m-1-3);
				\path[->] (m-2-3) edge [bend right=30] node[right] {$c_l$} (m-1-3);
				\path[->, dashed] (m-3-2) edge node[right] {} (m-2-3);
				\path[->, dashed] (m-3-4) edge node[right] {} (m-2-3);
				\path[->, dashed] (m-1-1) edge node[right] {} (m-2-3);
			\end{tikzpicture}
		\]
	\end{enumerate}
\end{definition}
In every variety of finitary algebras, the finitely presentable objects are precisely the algebras which are finitely presentable in the classical sense \cite[Proposition~3.8.14]{Borceux1994-vol2}.
\begin{definition}[{See \cite[Definition~2.1]{AdaRos}}]
	A category $\cat{C}$ is said to be \emph{finitely accessible}\index{category!finitely accessible}\index{finitely accessible|see{category, finitely accessible}} provided it has directed colimits, and there exists a set $S$ of finitely presentable objects of $\cat{C}$ such that each object of $\cat{C}$ is a directed colimit of objects in $S$.
\end{definition}
For example, varieties and quasivarieties of finitary algebras (with homomorphisms) are finitely accessible categories (cf.\ \cite[Corollary~3.7 and Theorem~3.24]{AdaRos}).

We recall that a \emph{Priestley space} is a compact topological space $X$ equipped with a partial order such that, for all $x,y$ with $x \not\leq y$, there exists a clopen up-set $C$ of $X$ such that $x \in C$ and $y \notin C$.
Priestley spaces were introduced by \cite{Priestley1970} to obtain a duality for bounded distributive lattices.
It is easily seen that the partial order of a Priestley space is closed; hence, every Priestley space is a compact ordered space.
The full subcategory of $\CompOrd$ defined by all Priestley spaces is denoted by $\Pries$.
\begin{lemma}\label{l:pro-completion-posfin}
	A compact ordered space is a Priestley space if, and only if, it is the codirected limit in $\CompOrd$ of finite partially ordered sets equipped with the discrete topologies.
\end{lemma}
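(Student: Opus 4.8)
The plan is to prove the two implications separately, treating the substantial direction via Priestley duality.

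For the easy direction, suppose $X$ is a codirected limit in $\CompOrd$ of finite partially ordered sets equipped with the discrete topology. Each such finite poset is a Priestley space: it is compact and discrete, and given $x \not\leq y$ the clopen up-set $\upset x$ contains $x$ but not $y$. Hence it suffices to show that the full subcategory $\Pries$ is closed under limits in $\CompOrd$. Since limits in $\CompOrd$ are computed as closed subspaces of products (\cref{l:limits} together with \cref{p:pres-limits}), this reduces to two observations: (i) a product $\prod_i X_i$ of Priestley spaces is Priestley, because it is a compact ordered space by \cref{l:limits} and, whenever $(x_i) \not\leq (y_i)$, choosing $i$ with $x_i \not\leq_i y_i$ and a separating clopen up-set $C_i$ in $X_i$ yields the separating clopen up-set $\pi_i^{-1}[C_i]$; and (ii) a closed subspace $Y$ of a Priestley space $X$ is Priestley, since a clopen up-set of $X$ separating two points of $Y$ restricts to one in $Y$. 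Therefore the codirected limit $X$ is a Priestley space.

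For the converse, let $X$ be a Priestley space. Priestley duality identifies $\Pries$ with the opposite of the category $\cat{DLat}$ of bounded distributive lattices \cite{Priestley1970}, and it restricts to a duality between finite posets and finite distributive lattices (Birkhoff's representation theorem). Every bounded distributive lattice $L$ is the directed union of its finitely generated bounded sublattices, each of which is finite because distributive lattices are locally finite; this exhibits $L$ as a directed colimit of finite distributive lattices in $\cat{DLat}$. Applying the duality, the Priestley space dual to $L$ is the codirected limit in $\Pries$ of the corresponding finite posets. By the first paragraph $\Pries$ is closed under limits in $\CompOrd$, so this codirected limit is equally computed in $\CompOrd$. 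Taking $L$ to be the lattice of clopen up-sets of $X$, we conclude that $X$ is a codirected limit in $\CompOrd$ of finite posets with the discrete topology.

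The main obstacle is bookkeeping rather than conceptual: one must check that the codirected limit produced by Priestley duality genuinely coincides with the limit taken in $\CompOrd$, which is precisely the closure of $\Pries$ under limits established above, and that the bonding morphisms of the inverse system are the expected quotient maps of finite posets. Alternatively, the converse can be proved directly by fixing, for each finite set $\FF$ of clopen up-sets of $X$, the closed preorder $x \preceq_{\FF} y$ iff every member of $\FF$ containing $x$ also contains $y$, forming the finite quotient poset $X/({\preceq_{\FF}} \cap {\preceq_{\FF}^{\oprel}})$, and showing via total order-disconnectedness together with a finite-intersection-property argument that the canonical map from $X$ into the inverse limit of these quotients is a continuous order-reflecting bijection, hence an isomorphism by \cref{i:iso} in \cref{p:properties-of-PC-morphisms}.
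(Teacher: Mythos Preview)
Your proof is correct and follows essentially the same route as the paper: both hinge on the fact that the inclusion $\Pries \hookrightarrow \CompOrd$ preserves (equivalently, is closed under) limits, together with the fact that every Priestley space is a codirected limit in $\Pries$ of finite posets. The paper dispatches the latter by citing Johnstone's result that $\Pries$ is the pro-completion of $\Ord_{\cat{fin}}$, whereas you unpack exactly that result via Priestley duality and local finiteness of bounded distributive lattices.
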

\begin{proof}
	Let us denote with $\Ord_{\cat{fin}}$ the category of finite partially ordered sets and order-preserving maps.
	Recall from \cite[Corollary~VI.3.3(ii)]{Johnstone1986}) that the functor $\Ord_{\cat{fin}} \hookrightarrow \Pries$ which equips a finite partially ordered set with the discrete topology provides the pro-completion of $\Ord_{\cat{fin}}$.
	Moreover, it is not difficult to see that the inclusion functor $\Pries \hookrightarrow \CompOrd$ preserves limits.
	The desired result then follows.
\end{proof}
We say that an object in a category $\cat{C}$ is \emph{finitely copresentable}\index{object!finitely copresentable}\index{finitely copresentable|see{object, finitely copresentable}} if it is finitely presentable when regarded as an object of $\cat{C}^\opcat$.
The finitely copresentable objects in $\CompOrd$ are precisely the finite ones \cite[Remark~4.41]{HofmannNora2020}.
For our purposes, we need only one direction, and we here provide a self-contained proof of a slightly more general version of it.

\begin{lemma} \label{l:fin-copres}
	Let $\cat{F}$ be a full subcategory of $\CompOrd$ containing all Priestley spaces.
	Every finitely copresentable object in $\cat{F}$ is finite.
\end{lemma}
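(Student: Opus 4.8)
The plan is to realise every compact ordered space as a surjective order-preserving continuous image of a Priestley space, and then to use \cref{l:pro-completion-posfin} to factor such a surjection through a finite poset, forcing finiteness. Throughout, $A$ denotes the given finitely copresentable object of $\cat{F}$.

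First I would build a Priestley space $P$ together with an epimorphism $q \colon P \epi A$ in $\CompOrd$. By \cref{l:closed-sub-power}, $A$ is isomorphic to a closed subspace of some power $[0,1]^\kappa$. The map $\phi \colon \{0,1\}^{\N} \to [0,1]$ sending $(a_n)_n \mapsto \sum_n a_n 2^{-n-1}$ is continuous, surjective, and order-preserving for the product order, hence so is its power $\phi^\kappa \colon \{0,1\}^{\N \times \kappa} \to [0,1]^\kappa$. Setting $P \df (\phi^\kappa)^{-1}[A]$, the space $P$ is a closed subspace of a power of the two-element ordered space $\{0,1\}$, hence a Priestley space by the characterisation recalled in \cref{s:characterisations}; and the restriction $q \df \phi^\kappa|_P \colon P \to A$ is continuous, order-preserving and surjective, so it is an epimorphism by \cref{i:epi} in \cref{p:properties-of-PC-morphisms}. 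Since $P$ is a Priestley space and $A \in \cat{F}$, both lie in $\cat{F}$, so $q$ is a morphism of $\cat{F}$.

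Next I would present $P$ as a codirected limit of finite posets. By \cref{l:pro-completion-posfin}, $P = \lim_i P_i$ in $\CompOrd$, where each $P_i$ is a finite partially ordered set with the discrete topology and $\pi_i \colon P \to P_i$ are the projections. Each $P_i$ is a Priestley space, hence an object of $\cat{F}$; as $P \in \cat{F}$ and $\cat{F}$ is a full subcategory, the cone $(\pi_i)_i$ is also a limit cone in $\cat{F}$. I would then apply finite copresentability of $A$ to $q$: since $P = \lim_i P_i$ is a codirected limit in $\cat{F}$, the canonical surjection $\operatorname{colim}_i \hom_{\cat{F}}(P_i, A) \to \hom_{\cat{F}}(P, A)$ yields an index $i$ and a morphism $h \colon P_i \to A$ with $q = h \circ \pi_i$. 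As $q$ is surjective, so is $h$, and since $P_i$ is finite, $A = h[P_i]$ is finite, as desired.

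The main obstacle is precisely the first step: because $A$ need not be totally order-disconnected, it cannot be written directly as a limit of finite posets, so one cannot simply factor the identity of $A$. The device is to pull back the order-preserving surjection $\{0,1\}^{\N} \epi [0,1]$ along the closed embedding of $A$ into a power of $[0,1]$, producing a Priestley space that covers $A$. Once this covering is available, the remaining factorisation argument via \cref{l:pro-completion-posfin} is routine, and only the existence part (surjectivity) of the finite-copresentability condition is needed, not essential uniqueness.
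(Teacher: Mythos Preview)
Your proof is correct and follows essentially the same strategy as the paper: cover $A$ by a Priestley space, write that space as a codirected limit of finite posets in $\cat{F}$, and factor the covering map through one of them using finite copresentability. The only difference is the choice of Priestley cover---the paper uses the Stone--\v{C}ech compactification $\beta\lvert A\rvert$ equipped with the discrete order, whereas you pull back along the binary-expansion surjection $\{0,1\}^{\N}\epi[0,1]$---but both constructions serve the same purpose and the remainder of the argument is identical.
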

\begin{proof}
	Let $(X,\leq)$ be a finitely copresentable object in $\cat{F}$.
	Consider a surjective morphism $\gamma\colon Y\epi X$ in $\CompOrd$ with $Y$ a Priestley space; for example, let $Y=\beta|X|$ be the Stone-\v{C}ech compactification of the underlying set of $X$ equipped with the discrete topology, and let $\gamma\colon (\beta|X|,=)\to (X,\leq)$ be the unique continuous extension of the identity function $|X|\to |X|$.
	By \cref{l:pro-completion-posfin}, $Y$ is the codirected limit in $\CompOrd$ of finite posets $\{Y_i\}_{i\in I}$ with the discrete topologies.
	Denote by $\alpha_i\colon Y \to Y_i$ the $i$-th limit arrow.
	Since $Y$ lies in $\cat{F}$, and the inclusion functor $\cat{F} \hookrightarrow \CompOrd$ reflects limits, $Y$ is in fact the codirected limit of $\{Y_i\}_{i\in I}$ in $\cat{F}$.
	Since the object $X$ is finitely copresentable in $\cat{F}$, there exist $j \in I$ and a morphism $\phi \colon Y_j \to X$ such that $\gamma = \phi \circ \alpha_j$.
	\[
		\begin{tikzcd}
			Y \arrow[twoheadrightarrow]{r}{\gamma} \arrow{d}[swap]{\alpha_j}	& X \\
			Y_j \arrow{ur}[swap]{\phi}														&
		\end{tikzcd}
	\]
	The map $\gamma$ is surjective, hence so is $\phi$: this shows that $X$ is finite.
\end{proof}

The category $\Pries^\opcat$ is equivalent to the category of bounded distributive lattices with homomorphisms \cite{Priestley1970}.
In particular, $\Pries^\opcat$ is a finitely accessible category.
The following result is an adaptation of \cite[Proposition~1.2]{MarraReggio2017} to the ordered case.

\begin{theorem}\label{t:finitely accessible}
	Let $\cat{F}$ be a full subcategory of $\CompOrd$ extending $\Pries$.
	If $\cat{F}^\opcat$ is a finitely accessible category---let alone a variety or quasivariety of finitary algebras---then $\cat{F} = \Pries$.
\end{theorem}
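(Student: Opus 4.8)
The plan is to establish the inclusion $\cat{F} \seq \Pries$; since $\Pries \seq \cat{F}$ holds by hypothesis, this gives $\cat{F} = \Pries$. Note first that it suffices to treat the stated case in which $\cat{F}^\opcat$ is finitely accessible: as recalled after the definition of finitely accessible category, every variety or quasivariety of finitary algebras is finitely accessible, so these are special cases of the hypothesis.

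First I would unwind finite accessibility of $\cat{F}^\opcat$. By definition there is a \emph{set} $S$ of finitely presentable objects of $\cat{F}^\opcat$ such that every object of $\cat{F}^\opcat$ is a directed colimit of objects from $S$. Dualising, a finitely presentable object of $\cat{F}^\opcat$ is precisely a finitely copresentable object of $\cat{F}$, and a directed colimit in $\cat{F}^\opcat$ is a codirected limit in $\cat{F}$. Hence every object $X$ of $\cat{F}$ is a codirected limit, taken \emph{in} $\cat{F}$, of objects from $S$. By \cref{l:fin-copres}, each object of $S$ is finite; and a finite compact ordered space is just a finite partially ordered set equipped with the discrete topology. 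So every object of $\cat{F}$ is a codirected limit in $\cat{F}$ of finite posets with discrete topology.

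Next I would transfer this limit from $\cat{F}$ to $\CompOrd$. Fix $X \in \cat{F}$ together with a codirected diagram $(X_i)_{i}$ of finite posets in $\cat{F}$ having $X$ as its limit in $\cat{F}$. Since $\CompOrd$ is complete (\cref{p:PC-complete-cocomplete}), this diagram also has a limit $L$ in $\CompOrd$, and by \cref{l:pro-completion-posfin} the space $L$ is a Priestley space. Because $\Pries \seq \cat{F}$, we have $L \in \cat{F}$; and since the inclusion $\cat{F} \hookrightarrow \CompOrd$ is full and faithful, it reflects limits, so the cone exhibiting $L$ as the $\CompOrd$-limit also exhibits $L$ as the limit in $\cat{F}$. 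By uniqueness of limits, $X \cong L$ in $\cat{F}$, whence $X$ is (isomorphic to) a Priestley space. This proves $\cat{F} \seq \Pries$ and completes the argument.

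The crux of the proof is the last step, where two ambient categories must be reconciled: finite accessibility only supplies a codirected limit computed \emph{inside} $\cat{F}$, whereas the characterisation of Priestley spaces in \cref{l:pro-completion-posfin} is phrased in terms of codirected limits in $\CompOrd$. The bridge is to compute the limit in the larger, complete category $\CompOrd$, observe via \cref{l:pro-completion-posfin} that it already lies in $\Pries \seq \cat{F}$, and then invoke full faithfulness to identify it with the $\cat{F}$-limit $X$. The remaining ingredients---the identification of finitely presentable objects of $\cat{F}^\opcat$ with finite objects of $\cat{F}$, and the dualisation of colimits to limits---are routine once \cref{l:fin-copres,l:pro-completion-posfin} are in hand.
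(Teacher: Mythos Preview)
Your proof is correct and follows essentially the same approach as the paper: both use \cref{l:fin-copres} to identify finitely copresentable objects of $\cat{F}$ as finite posets, invoke reflection of limits along the full inclusion $\cat{F} \hookrightarrow \CompOrd$, and apply \cref{l:pro-completion-posfin} to conclude. Your write-up is more explicit about the bridging step between limits in $\cat{F}$ and in $\CompOrd$ (computing $L$ in $\CompOrd$, observing $L \in \Pries \seq \cat{F}$, then reflecting), but the logical content is identical to the paper's terser argument.
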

\begin{proof}
	It suffices to show that every object in $\cat{F}$ is a Priestley space.
	Since $\cat{F}^\opcat$ is finitely accessible, every object of $\cat{F}$ is the codirected limit of finitely copresentable objects.
	Using the fact that the inclusion functor $\cat{F} \hookrightarrow \CompOrd$ reflects limits and that finitely copresentable objects in $\cat{F}$ are finite by \cref{l:fin-copres}, we deduce by \cref{l:pro-completion-posfin} that every object of $\cat{F}$ is a Priestley space, as was to be shown.
	
	Finally, we have already observed that finitary varieties and finitary quasivarieties are finitely accessible categories.
\end{proof}
%

%=================================   SUBSECTION   =================================%

\subsection{The dual of \texorpdfstring{$\CompOrd$}{CompOrd} is not a first-order definable class}

A \emph{first-order definable class of structures}\index{first-order definable class} is the class of models of a first-order theory, for which the reader is referred to \cite{ChangKeisler1990}.
When one such class is referred to as a category, it is understood that the morphisms are the homomorphism, i.e., a function that preserves all function symbols and all relation symbols.

\begin{lemma}[\cite{Richter1971}] \label{l:dir-concr}
	The forgetful functor from a first-order definable class of structures to $\Set$ preserves directed colimits.
\end{lemma}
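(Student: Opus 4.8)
The plan is to reduce the statement to a concrete computation of directed colimits at the level of underlying sets, and then to check that this set-level colimit is again a model. Write $\tau$ for the signature and $T$ for the first-order theory, so that the class in question is (isomorphic to) the full subcategory $\mathsf{Mod}(T)$ of the category $\mathsf{Str}(\tau)$ of all $\tau$-structures and homomorphisms. First I would recall the standard fact that the forgetful functor $\mathsf{Str}(\tau) \to \Set$ preserves, indeed creates, directed colimits: given a directed diagram $(A_i, f_{ij})$, the colimit is carried by the set-theoretic directed colimit $A \df \varinjlim_i A_i$; a function symbol is interpreted on equivalence classes by choosing representatives from a common stage, which is well defined precisely because the index poset is directed; and an $n$-ary relation symbol $R$ holds of a tuple of classes if and only if it holds, at some stage, of a tuple of representatives. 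It then suffices to show that $\mathsf{Mod}(T)$ is closed under these colimits inside $\mathsf{Str}(\tau)$: once the set-level colimit is seen to be a model, it is automatically the colimit in the full subcategory $\mathsf{Mod}(T)$, and $U$ factors as $\mathsf{Mod}(T) \hookrightarrow \mathsf{Str}(\tau) \to \Set$ through two functors that both preserve it.

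The engine of the closure argument is a \emph{reconciliation-at-a-common-stage} principle, which exploits that a first-order formula mentions only finitely many symbols and variables. Let $c_i \colon A_i \to A$ be the colimit cocone. The key observation is that along each $c_i$ the truth value of an atomic formula can only pass from false to true -- a relation may acquire new tuples and two elements may become identified, but no atomic fact is ever destroyed -- and, conversely, any atomic fact holding in $A$ is, after pushing forward far enough, already witnessed at a single stage. Concretely, for a tuple $\bar a$ of $A$ represented at some $A_i$ and for the finitely many atomic subformulas occurring in a given axiom, directedness lets me pass to a stage $A_j$ at which the truth values of all of these atomics, evaluated on representatives of $\bar a$, coincide with their truth values in $A$. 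For a universal axiom $\forall \bar x\,\psi(\bar x)$ with $\psi$ quantifier-free this yields preservation at once: a putative failure $A \models \lnot\psi(\bar a)$ transports back to $A_j \models \lnot\psi(\bar a_j)$, contradicting $A_j \models T$.

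The hard part will be the control of satisfaction along the colimit maps, because these are homomorphisms rather than embeddings: they need not be injective, and the interpretations of the relation symbols genuinely grow. Consequently one cannot simply transfer the truth of an arbitrary formula between a stage and the colimit, and a naive induction on formula complexity breaks exactly at negations and at existential quantifiers lying under universal ones. The delicate point, and the real content of the lemma, is to make the stabilization lemma uniform over the finitely many atomic subformulas attached to a single axiom instance, so that all their truth values are decided simultaneously at one stage; handling the existential witnesses requires choosing them at the stage where they arise and absorbing the finitely many choices by directedness. This is where the finiteness of first-order formulas and the directedness of the index are used essentially, and it is precisely what forces the set-level directed colimit of models to be a model; once it is in place, preservation of directed colimits by $U$ follows formally.
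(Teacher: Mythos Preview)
The paper does not prove this lemma; it merely cites \cite{Richter1971}.

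Your proposal reduces the lemma to the claim that $\mathsf{Mod}(T)$ is closed under directed colimits in $\mathsf{Str}(\tau)$, but this is false for arbitrary first-order $T$. Take $T$ to be the single sentence $\exists x\,\exists y\,(x \neq y)$ in the empty signature, so that $\mathsf{Mod}(T)$ is the category of sets with at least two elements, with all functions as morphisms. The $\omega$-chain of copies of $\{0,1\}$ with every transition map constant at $0$ has as its $\mathsf{Str}(\tau)$-colimit a singleton, which is not a model. Your stabilization argument breaks exactly here: at each stage there are witnesses $a \neq b$, but the negated atomic $a \neq b$ is \emph{not} preserved forward along homomorphisms, and in this diagram the witnesses are identified at the very next stage. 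The finitary bookkeeping you describe handles universal sentences correctly and does push existential witnesses forward, but it cannot guarantee that an inequation---or more generally any negated atomic---witnessed at a stage survives to the colimit.

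What the lemma actually asserts is weaker: $U$ preserves those directed colimits that \emph{exist} in $\mathsf{Mod}(T)$. In the counterexample above the diagram has no colimit in $\mathsf{Mod}(T)$ at all (a cocone to $M$ amounts to a choice of a single point of $M$, and no object with at least two elements represents the forgetful functor), so there is nothing for $U$ to preserve. Richter's argument must therefore show that whenever a directed colimit does exist in $\mathsf{Mod}(T)$ it necessarily agrees with the $\mathsf{Str}(\tau)$-colimit, without first establishing that the latter is a model. That is a genuinely different statement from closure, and your plan does not address it.
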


\Cref{l:dir-concr} was used by M.\ Lieberman, J.\ Rosick\'y and S.\ Vasey to prove that the category of compact Hausdorff spaces is not dually equivalent to a first-order definable class of structures \cite[Corollary~12]{LieRosVas}.
In fact, they showed the following fact.
\begin{lemma} \label{l:LieRosVas}
	No faithful functor from $\CompHaus^\opcat$ to $\Set$ preserves directed colimits.
\end{lemma}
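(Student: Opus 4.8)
The plan is to argue by contradiction: assume $F \colon \CompHaus^\opcat \to \Set$ is faithful and preserves directed colimits, and extract a numerical incompatibility. Faithfulness gives the first handle: for all compact Hausdorff spaces $X, Y$ the assignment $f \mapsto F(f)$ embeds $\hom_{\CompHaus^\opcat}(X,Y) = \hom_{\CompHaus}(Y,X)$ into $\hom_{\Set}(FX, FY)$, whence $|\hom_{\CompHaus}(Y,X)| \le |FY|^{|FX|}$. Preservation of directed colimits gives the second handle: if $X$ is exhibited as a directed colimit in $\CompHaus^\opcat$ --- equivalently, as a codirected (cofiltered) limit $X = \lim_i X_i$ in $\CompHaus$ --- then $FX = \colim_i FX_i$ in $\Set$, so $|FX| \le \sum_i |FX_i|$.

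First I would set up the presentations. Every compact Hausdorff space is a codirected limit of metrizable compact spaces: by Urysohn's lemma the maps into $[0,1]$ separate points, each factors through a metrizable quotient, and these quotients form a cofiltered diagram with limit $X$; since metrizable compacta form an essentially small family of bounded cardinality, $F$ is, up to the colimit formula, controlled by its restriction to a small subcategory. For explicit estimates I would use the cubes $[0,1]^\kappa = \lim_{A \in [\kappa]^{<\omega}} [0,1]^A$, a codirected limit of finite-dimensional cubes: with $\mu \df \sup_{n<\omega} |F([0,1]^n)|$ one gets $|F([0,1]^\kappa)| \le \kappa \cdot \mu$, hence $|F([0,1]^\kappa)| \le \kappa$ for $\kappa \ge \mu$. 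Pairing this with the faithfulness bound --- counting, say, the $2^\kappa$ maps $[0,1] \to [0,1]^\kappa$, or the continuous self-maps and coordinate symmetries of the cube --- produces inequalities relating $|F([0,1]^\kappa)|$ to the relevant hom-sets.

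The crux, and the step I expect to be genuinely hard, is turning these inequalities into a contradiction. Naive cardinal arithmetic does not suffice: for the Cantor cubes $2^\kappa$ every relevant quantity scales with $\kappa$ (the clopen algebra has size $\kappa$, the symmetry group has size $2^\kappa = \kappa^\kappa$, and $|F(2^\kappa)| \le \kappa$), so the bounds come out as equalities; moreover $2^\kappa$ is a Stone space, and $\Stone^\opcat$ genuinely \emph{is} finitely accessible and first-order, so no obstruction can live there. The obstruction must come from the connected part, that is from $[0,1]$ and the cubes $[0,1]^\kappa$ --- yet even there the exponent $|FX|$ washes the estimates out (with $\mu_1 \df |F[0,1]|$ infinite one has $\mu_1^{\kappa} = 2^\kappa$ for all large $\kappa$). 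The direct universal-property attack fails for the dual reason: the colimit coprojections $FX_i \to FX$ are jointly epic (equivalently, the limit projections $X \to X_i$ are jointly monic), which is exactly the configuration compatible with $F$ being faithful, so one cannot fabricate two distinct morphisms that $F$ identifies from the presentation alone.

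I would therefore route the hard step through the theory of accessible categories rather than through bare cardinality. The expectation is that a faithful directed-colimit-preserving functor forces a \emph{boundedness} property on $\CompHaus^\opcat$: a regular cardinal $\lambda$ and an essentially small family generating every object under $\lambda$-directed colimits, so that the copresentability ranks occurring in $\CompHaus$ are bounded by $\lambda$. The contradiction then comes from the fact that $\CompHaus$ has objects of arbitrarily large weight whose copresentability rank is unbounded (following the Gabriel--Ulmer analysis, and the copresentability computations for $[0,1]$ and its powers), so no single $\lambda$ can work. Isolating and proving this boundedness-versus-unboundedness dichotomy is the combinatorial heart of the statement, and is precisely the content I would import from \cite{LieRosVas}; everything else reduces to the bookkeeping of the two handles above.
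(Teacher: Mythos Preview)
The paper's proof is a bare citation: ``See the final section of \cite{LieRosVas}.'' Your proposal, after a useful exploratory discussion of why naive cardinality bounds fail to close, lands in the same place---you explicitly import the crucial step from \cite{LieRosVas}. In substance the two agree.

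One caution about your last paragraph: the mechanism you sketch there does not work as stated. You propose that a faithful directed-colimit-preserving functor forces ``a regular cardinal $\lambda$ and an essentially small family generating every object under $\lambda$-directed colimits,'' and then appeal to unbounded copresentability ranks in $\CompHaus$ for the contradiction. But $\CompHaus^\opcat$ already \emph{is} locally $\aleph_1$-presentable (being equivalent to an $\aleph_1$-ary variety), so it possesses exactly that property with $\lambda=\aleph_1$; and unbounded presentability ranks are a feature of every non-trivial locally presentable category, so they cannot by themselves supply the obstruction. Whatever \cite{LieRosVas} actually does, it is not this. Your earlier observations---that the obstruction cannot live inside $\Stone$ since $\Stone^\opcat$ genuinely is finitely accessible, and that the exponent $\lvert FX\rvert$ in the faithfulness bound absorbs the colimit estimate---are correct and well-aimed; it is only the proposed contradiction in the final paragraph that misfires.
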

\begin{proof}
	See the final section of \cite{LieRosVas}.
\end{proof}
We use this fact in the proof of the following result.

\begin{theorem}\label{t:not-elementary}
	The category $\CompOrd$ is not dually equivalent to any first-order definable class of structures.
\end{theorem}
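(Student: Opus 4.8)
The plan is to argue by contradiction, reducing the statement to the corresponding negative result for compact Hausdorff spaces via the full embedding $\CompHaus \hookrightarrow \CompOrd$. Suppose $\CompOrdop$ were equivalent to a first-order definable class $\cat{K}$ of structures. The forgetful functor $\cat{K} \to \Set$ is faithful and, by \cref{l:dir-concr}, preserves directed colimits; composing it with the assumed equivalence would yield a faithful functor $U \colon \CompOrdop \to \Set$ preserving directed colimits. The goal is then to extract from $U$ a faithful functor $\CompHaus^\opcat \to \Set$ preserving directed colimits, which is impossible by \cref{l:LieRosVas}.

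First I would recall that $\CompHaus$ sits inside $\CompOrd$ as a full subcategory, via the functor that equips each compact Hausdorff space with the equality order, which is closed by \cref{p:diag-closed}. Since order-preservation is vacuous for the equality order, the $\CompOrd$-morphisms between equality-ordered spaces are exactly the continuous maps (cf.\ \cref{ex:comp-ord-spaces}), so the embedding is full and faithful. Dually, $\CompHaus^\opcat$ is a full subcategory of $\CompOrdop$; write $J$ for the inclusion functor.

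The crux is to show that $J$ preserves directed colimits, i.e.\ that the inclusion $\CompHaus \hookrightarrow \CompOrd$ preserves codirected limits. Here I would use that the forgetful functor $\CompOrd \to \Set$ preserves limits (\cref{p:pres-limits}) together with the explicit computation of limits in $\CompOrd$ in terms of the product topology and product order (\cref{l:limits}): the limit in $\CompOrd$ of a codirected diagram of equality-ordered spaces is a closed subspace of a product of such spaces, and the product of equality orders is again the equality order, so the limit carries the equality order and is therefore a compact Hausdorff space. As $\CompHaus$ is full in $\CompOrd$ and the $\CompOrd$-limit already lies in $\CompHaus$, it is also the limit in $\CompHaus$; hence the inclusion $\CompHaus \hookrightarrow \CompOrd$ preserves codirected limits, and $J$ preserves directed colimits.

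Finally, $U \circ J \colon \CompHaus^\opcat \to \Set$ is faithful, being a composite of faithful functors, and preserves directed colimits, being a composite of directed-colimit-preserving functors, contradicting \cref{l:LieRosVas}. I expect the main obstacle to be precisely the crux step: verifying that the codirected limit in $\CompOrd$ of equality-ordered spaces remains equality-ordered, so that directed colimits in $\CompHaus^\opcat$ and in $\CompOrdop$ genuinely agree and the restriction of $U$ really does preserve them.
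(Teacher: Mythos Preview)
Your proposal is correct and follows essentially the same approach as the paper: both embed $\CompHaus$ into $\CompOrd$ via the discrete/equality order, verify that this embedding preserves (codirected) limits, and then compose the dual inclusion with the hypothetical faithful directed-colimit-preserving functor to contradict \cref{l:LieRosVas}. The only cosmetic difference is that the paper checks limit preservation by observing that the functor $\CompHaus \to \CompHausXPreord$ preserves products and equalisers and then restricts its codomain, whereas you compute the $\CompOrd$-limit directly and observe it carries the equality order; both arguments are straightforward and amount to the same thing.
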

\begin{proof}
	Let $\Delta' \colon \CompHaus \to \CompHausXPreord$ be the functor that maps a compact Hausdorff space $X$ to the space $X$ itself with the discrete order (this is the left adjoint of the topological forgetful functor $\CompHausXPreord \to \CompHaus$).
	It is not difficult to see that this functor preserves products and equalisers: thus, it preserves limits.
	Moreover, note that the objects in the image of $\Delta$ are compact ordered spaces, so we can restrict $\Delta'$ to a functor $\Delta \colon \CompHaus \to \CompOrd$ that preserves limits.
	Then, the functor $\Delta^\opcat\colon \CompHaus^\opcat\to\CompOrdop$ preserves directed colimits.
	Hence, if there were a faithful functor $F$ from $\CompOrdop$ to $\Set$ preserving directed colimits, then the composition $F\circ \Delta\colon \CompHaus^\opcat\to \Set$ would also be a faithful functor preserving directed colimits, contradicting \cref{l:LieRosVas}.
	Thus, no faithful functor from $\CompOrdop$ to $\Set$ preserves directed colimits.
	By \cref{l:dir-concr}, this shows that $\CompOrd$ cannot be dually equivalent to a first-order definable class of structures.
\end{proof}

%=================================   SUBSECTION   =================================%

\subsection{The dual of \texorpdfstring{$\CompOrd$}{CompOrd} is not an \texorpdfstring{$\opS\opP$}{SP}-class of finitary algebras}

The fact that $\CompOrd$ is not dually equivalent to a variety of finitary algebras, together with the fact that equivalence corelations are effective, allows us to obtain another negative result.

\begin{theorem}\label{t:SP}
	The category $\CompOrd$ is not dually equivalent to any class of finitary algebras closed under products and subalgebras.
\end{theorem}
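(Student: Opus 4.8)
The plan is to argue by contradiction. Suppose $\CompOrd$ is dually equivalent to a class $\clalg{A}$ of finitary $\Sigma$-algebras closed under products and subalgebras, and fix an equivalence $E \colon \CompOrdop \to \clalg{A}$. The goal is to force $\clalg{A}$ to be a \emph{variety} (indeed already a \emph{quasivariety}) of finitary algebras, which is impossible: since $\CompOrd \neq \Pries$, \cref{t:finitely accessible} tells us that $\CompOrdop$ is not even a finitely accessible category, let alone a finitary quasivariety. The two levers are the effectiveness of equivalence relations in $\CompOrdop$ (\cref{t:effective}) and the good categorical behaviour of $\CompOrd$ inherited from \cref{p:PC-complete-cocomplete} and from the quotient-object analysis of \cref{n:quotequiv}.

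The first, and in my view the most delicate, step is to show that $\clalg{A}$ has free algebras, so that \cref{p:char-quasivar-ISP} applies and $\clalg{A}$ becomes a finitary quasivariety. I would do this via the Special Adjoint Functor Theorem applied to the forgetful functor $U \colon \clalg{A} \to \Set$. That functor preserves all limits: products are computed on underlying sets, and the equaliser of $f,g \colon A \rightrightarrows B$ is the subalgebra $\{a \mid f(a) = g(a)\}$, both of which stay in $\clalg{A}$ by the assumed closure under products and subalgebras. Moreover $\clalg{A} \simeq \CompOrdop$ is complete (dual to the cocompleteness of $\CompOrd$, \cref{p:PC-complete-cocomplete}) and well-powered, since subobjects in $\CompOrdop$ are quotient objects in $\CompOrd$, and by \cref{n:quotequiv} the class $\QuotEquiv(X)$ is a set; finally $\clalg{A}$ has a cogenerator, namely $E$ applied to the one-point space, because that space is a generator of $\CompOrd$ (the forgetful functor $\CompOrd \to \Set$, which it represents, is faithful). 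Hence $U$ admits a left adjoint, which supplies free algebras, and $\clalg{A}$ is a quasivariety of finitary algebras. The main obstacle is precisely this verification that an a priori mere $\opS\opP$-class is forced to carry free algebras; once the limit-preservation and the well-powered/cogenerator hypotheses are checked, it is automatic.

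It then remains to upgrade ``quasivariety'' to ``variety'' using effectiveness, and to conclude. Since categorical equivalences preserve internal equivalence relations together with their kernel pairs and coequalisers, \cref{t:effective} gives that every equivalence relation in $\clalg{A}$ is effective; that is, for every $A \in \clalg{A}$ and every congruence $\theta$ on $A$, the pair $\theta \rightrightarrows A$ coincides with the kernel pair of its coequaliser, which lives in $\clalg{A}$ because $\clalg{A}$ is cocomplete. As the coequaliser of $\theta$ then has kernel pair exactly $\theta$, its codomain is (isomorphic to) the quotient $A/\theta$, so $A/\theta \in \clalg{A}$; thus $\clalg{A}$ is closed under homomorphic images. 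Concretely, this is \cref{p:effectiveness}: a quasivariety in which all equivalence relations are effective is a variety. Therefore $\clalg{A}$ is a variety of finitary algebras (closed under $\opH$, $\opS$, $\opP$ with free algebras, cf.\ \cref{p:char-var-HSP}), contradicting \cref{t:finitely accessible}. Since the quasivariety conclusion of the previous step already contradicts \cref{t:finitely accessible}, effectiveness here serves to sharpen the statement to a variety; either way no such $\clalg{A}$ can exist, which is the assertion.
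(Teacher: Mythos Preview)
Your proof is correct and follows the same high-level strategy as the paper: assume by contradiction that such an $\opS\opP$-closed class $\clalg{A}$ exists, show it must then be a (quasi)variety of finitary algebras, and contradict \cref{t:finitely accessible}. The difference lies in the middle step. The paper simply invokes a result of Banaschewski (\cite{Ban-var}) stating that any $\opS\opP$-closed class of finitary algebras in which all equivalence relations are effective is already a variety; effectiveness comes from \cref{t:effective}, and one is done. You instead reconstruct this black box from first principles: you use the Special Adjoint Functor Theorem to produce free algebras (exploiting completeness, well-poweredness via $\QuotEquiv(X)$, and the cogenerator coming from the one-point space), then apply \cref{p:char-quasivar-ISP} to obtain a quasivariety, and finally \cref{p:effectiveness} to upgrade to a variety. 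Your route is longer but entirely self-contained within the thesis and has the pleasant side effect you note: already the quasivariety conclusion suffices to contradict \cref{t:finitely accessible}, so effectiveness is not strictly needed for the contradiction, only for the word ``variety''. The paper's route is shorter but relies on an external reference whose content is essentially what you proved.
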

\begin{proof}
	Let us suppose, by way of contradiction, that $\CompOrdop$ is equivalent to a class of finitary algebras closed under products and subalgebras.
	In~\cite{Ban-var}, it is observed that every class of finitary algebras closed under subalgebras and products in which every equivalence relation is effective is a variety of algebras.
	By \cref{t:effective}, every equivalence relation in $\CompOrdop$ is effective.
	Therefore, $\CompOrdop$ is equivalent to a variety of finitary algebras, but this contradicts \cref{t:finitely accessible} (and \cref{t:not-elementary}).
\end{proof}
%

%%%%%%%%%%%%%%%%%%%%%%%%%%%%%%%%%%%   SECTION   %%%%%%%%%%%%%%%%%%%%%%%%%%%%%%%%%%%%

\section{Conclusions}

In \cref{chap:direct-proof} we proved that the category $\CompOrd$ of compact ordered spaces is dually equivalent to a variety, with operations of at most countable arity.
In the present chapter, we showed that it is indeed necessary to resort to infinitary operations, since $\CompOrd$ is \emph{not} dually equivalent to any variety of finitary algebras.

Having established the best possible bound on the arities in an equational axiomatisation of $\CompOrdop$, we are now left with the question: Can we provide a manageable set of primitive operations and axioms for $\CompOrdop$?
Addressing this question will be our main concern in the following chapters.

%######################################################.  CHAPTER.  ######################################################

\chapter{Equivalence \`a la Mundici for unital lattice-ordered monoids}\label{chap:equiv}
% CHAPTER 4

%%%%%%%%%%%%%%%%%%%%%%%%%%%%%%%%%%%   SECTION   %%%%%%%%%%%%%%%%%%%%%%%%%%%%%%%%%%%%

\section{Introduction}

Given a compact ordered space $X$, the set 
\[
	\Cleq(X, [0,1]) \df \{f \colon X \to [0,1] \mid f \text{ is order-preserving and continuous}\}
\]
is closed under pointwise application of each order-preserving continuous function from a power of $[0,1]$ to $[0,1]$.
Thus, recalling that $\SignCM$ denotes the signature whose operation symbols of arity $\kappa$ are the order-preserving continuous functions from $[0,1]^\kappa$ to $[0,1]$, the set $\Cleq(X,[0,1])$ acquires a structure of a $\SignCM$-algebra.

In fact, as described in \cref{chap:direct-proof}, the assignment associating to each compact ordered space $X$ the $\SignCM$-algebra $\Cleq(X,[0,1])$ gives rise to a duality between the category of compact ordered spaces and the variety of algebras
\[
	\opS\opP\mathopen{}\left(\left\langle [0,1]; \SignCM \right\rangle\right)\mathclose{}.
\]

One may wonder whether manageable sets of primitive operations and axioms for this variety exist.
Even if choosing one specific signature seems to us a somewhat arbitrary task, we believe that the choice we will present is natural enough to be worth of consideration.
In particular, as MV-algebras were at the core of the equational axiomatisation of the dual of the category of compact Hausdorff spaces in \cite{MarraReggio2017}, we find it reasonable to base our work on those term-operations of MV-algebras whose interpretation in $[0,1]$ is order-preserving.
By \cite[Section~1]{CabrerJipsenKroupa2019}, such term-operations are generated by $\oplus$, $\odot$, $\lor$, $\land$, $0$ and $1$ (arities $2$, $2$, $2$, $2$, $0$, $0$), with interpretations in $[0,1]$ as follows.
\begin{align*}
	x \oplus y	& = \min\{x + y, 1\};\\
	x \odot y	& = \max\{x + y - 1, 0\};\\
	x \lor y 	& = \max\{x,y\};\\
	x \land y	& = \min\{x,y\};\\
	0				& = \text{the element } 0;\\
	1				& = \text{the element } 1.
\end{align*}

Which reasonable set of equational axioms should we consider for algebras in the signature $\{\oplus, \odot, \lor, \land, 0, 1\}$?
The key insight is that, to gain a better intuition on the subject, we might replace the set $\Cleq(X, [0,1])$ of $[0,1]$-valued order-preserving continuous functions with the set $\Cleq(X,\R)$ of \emph{real-valued} ones.
Then, we should accordingly replace the operations $\oplus$, $\odot$, $\lor$, $\land$, $0$ and $1$ with the operations $+$, $\lor$, $\land$, $0$, $1$ and $-1$.
The set $\Cleq(X,\R)$ is abstracted by what we call {\ulms}, which are algebras in the signature $\{+, \lor, \land, 0, 1, -1\}$ that satisfy certain reasonable axioms.
The set $\Cleq(X,[0,1])$ is abstracted by what we call {\mvms}, which are algebras in the signature $\{\oplus, \odot, \lor, \land, 0, 1\}$ that satisfy the axioms needed for our main result to hold.
Our main success is to have kept these axioms equational and in a finite number: a non-trivial task.
The main result is presented in \cref{t:G is equiv}: we exhibit an equivalence 
\[
	\begin{tikzcd}
		\ULM \arrow[yshift = .45ex]{r}{\Gam}	& \MVM\arrow[yshift = -.45ex]{l}{\X}
	\end{tikzcd}
\]
between the category $\ULM$ of {\ulms} and the category $\MVM$ of {\mvms}.
The functor $\Gam$ maps a {\ulm} $M$ to its unit interval $\Gam(M) \df \{ x \in M \mid 0 \leq x \leq 1\}$ (\cref{s:Gamma}), and the functor $\X$ maps {\amvm} $A$ to the set $\X(A)$ of `good $\Z$-sequences in $A$' (\cref{s:good.def}).

There are both pros and cons in working with {\ulms} or {\mvms}, respectively.
On the one hand, as we mentioned above, it is easier to work with the operations and axioms of {\ulms} rather than those of {\mvms}.
On the other hand, the class of {\mvms} is a variety of finitary algebras, so the tools of universal algebra apply.
The equivalence established in this chapter allows one to transfer the pros of each category to the other one.

As shown in \cref{s:restriction}, our result specialises to (and is inspired by) D.\ Mundici's celebrated result stating that the categories of {\extulgs} and {\mvms} are equivalent \cite[Theorem~3.9]{Mundici} (see also \cite[Section~2]{cdm2000}).
Knowledge about MV-algebras and lattice-ordered groups is assumed but not needed (except for \cref{s:restriction}); the chapter is written so as to maximise the insights for an MV-algebraist, and the reader who does not have such a knowledge might simply disregard the comments about MV-algebras.

We conclude this introduction with a comparison with \cite{AbbadiniEquiv}.
The main result presented here---namely, that the categories of {\ulms} and {\mvms} are equivalent---coincides with the one in \cite{AbbadiniEquiv}.
However, the proofs are different: in the present manuscript, we use Birkhoff's subdirect representation theorem, which simplifies the arguments but relies on the axiom of choice, in contrast with the choice-free proof in \cite{AbbadiniEquiv}.
Moreover, in this document we use $\Z$-indexed sequences instead of $\N$-indexed sequences, which provides some simplifications, and which seems more elegant.

%%%%%%%%%%%%%%%%%%%%%%%%%%%%%%%%%%%   SECTION   %%%%%%%%%%%%%%%%%%%%%%%%%%%%%%%%%%%%

\section{The algebras} \label{s:defn}

%=================================   SUBSECTION   =================================%

\subsection{Unital commutative distributive \texorpdfstring{$\ell$}{\unichar{"02113}}-monoids}

The interplay between lattice and monoid operations is the object of a long-standing interest (see \cite[Chapter~XII]{Fuchs1963}, \cite[Chapter~XIV]{Birkhoff1967}).
The subject emerged with the study of ideals, which has its roots in the work of R.\ Dedekind, and to which W.\ Krull provided some important contributions.
We mention that, in this direction, the introduction of the notion of residuated lattices (a special class of lattice-ordered monoids) by \cite{WardDilworth1939} has opened the way to a research field which is still very active today, also due to its connection to logics; see \cite{GalatosJipsenKowalskiEtAl2007} for a recent account on the subject.

In this subsection, we start by recalling the notions of lattice-ordered semigroup and lattice-ordered monoid; we warn the reader that one may encounter slightly different definitions in the literature, depending on the distributivity laws that the author wants to assume.
However, these notions are only auxiliary: in \cref{d:ulm} we will define the structures we are primarily interested in: {\extulms}.
In our view, these structures are an adequate analogue of Abelian lattice-ordered groups with strong order unit when one wants to replace the group structure with a monoid structure.

We first recall the definition of a lattice.

\begin{definition}
	A \emph{lattice}\index{lattice} is an algebra $\langle A; \lor, \land \rangle$ (arities $2$, $2$, $2$) with the following properties.
	\begin{enumerate}
		\item $x \lor y = y \lor x$.
		\item $x \land y = y \land x$.
		\item $x \lor (y \lor z) = (x \lor y) \lor z$.
		\item $x \land (y \land z) = (x \land y) \lor z$.
		\item $x \lor (x \land y) = x$.
		\item $x \land (x \lor y)=x$.
	\end{enumerate}
	A lattice is \emph{distributive}\index{lattice!distributive} if it satisfies any of the following equivalent conditions.
	\begin{enumerate}
		\item $x \lor (y\land z)=(x\lor y)\land (x\lor z)$.
		\item $x\land (y\lor z)=(x\land y)\lor (x\land z)$.
	\end{enumerate}
\end{definition}

A prototypical example of the algebras in this subsection is $\R$, endowed with the binary operations $+$ (addition), $\lor$ (maximum), $\land$ (minimum), and (for the algebras that require them) the constants $0$, $1$ and $-1$.

\begin{definition} \label{d:l-semigroup}
	A \emph{lattice-ordered semigroup} (\emph{$\ell$-semigroup}\index{lattice-ordered!semigroup}, for short) is an algebra $\langle M; +, \lor, \land\rangle$ (arities $2$, $2$, $2$) with the following properties.
	\begin{enumerate}
		\item  $\langle M; \lor, \land \rangle$ is a lattice.
		\item	$\langle M; +\rangle$ is a semigroup, i.e., $+$ is associative.
		\item \label{i:latt-hom} The operation $+$ distributes over $\lor$ and $\land$ on both sides:
			\begin{enumerate}
				\item $(x \lor y) + z = (x + z) \lor (y + z)$;
				\item $x + (y \lor z) = (x + y) \lor (x + z)$;
				\item $(x \land y) + z = (x + z) \land (y + z)$;
				\item $x + (y \land z) = (x + y) \land (x + z)$.
			\end{enumerate}
	\end{enumerate}
	We say that an $\ell$-semigroup is \emph{commutative}\index{lattice-ordered!semigroup!commutative} if the operation $+$ is commutative, and \emph{distributive}\index{lattice-ordered!semigroup!distributive} if the underlying lattice is distributive.
\end{definition}
Even if we have provided the definition in the general case, we will only be concerned with $\ell$-semigroups that are commutative and distributive.

\begin{remark} \label{r:semigroup-monoidal}
	\Cref{i:latt-hom} in \cref{d:l-semigroup} expresses the fact that $+$ is a lattice homomorphism in both coordinates.
	In fact, as pointed out by one of the referees, an $\ell$-semigroup is an internal semigroup in the monoidal category of lattices and lattice homomorphisms (where the monoidal operation is given by the tensor product).
	For the notion of monoidal category we refer to \cite[Chapter~VII]{MacLane1998}.
\end{remark}

In this manuscript, `$\ell$-'\index{$\ell$-|see{lattice-ordered}} will always be a shorthand for `lattice-ordered '.
\begin{examples}
	\begin{enumerate}[wide]
		\item The algebra $\langle \R; +, \max, \min \rangle$  is an example of a commutative distributive $\ell$-semigroup, as well as any of its subalgebras, such as $\Q$, $\Z$, $\N$, $\Np$, $\N \setminus \{0,1\}$, $\{0,-1, -2, -3, \dots\}$.
		\item Given a distributive lattice $\langle L; \lor, \land \rangle$, the algebras $\langle L; \lor, \lor, \land \rangle$ (i.e.\ $+ \df \lor$) and $\langle L; \land, \lor, \land\rangle$ (i.e.\ $+ \df \land$) are commutative distributive $\ell$-semigroups.
	\end{enumerate}
\end{examples}
\begin{lemma}
	\label{l:sum positive}
	In every $\ell$-semigroup the operation $+$ is order-preserving in both coordinates.
\end{lemma}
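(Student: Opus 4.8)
The plan is to recall that in any lattice the underlying partial order is recoverable from the operations via $x \leq y \iff x \lor y = y$ (equivalently $x \land y = x$), and then to read off order-preservation directly from the two-sided distributivity of $+$ over $\lor$ asserted in \cref{d:l-semigroup}.

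First I would fix $x, y, z \in M$ with $x \leq y$, so that $x \lor y = y$. For order-preservation in the first coordinate, I would apply the law $(x \lor y) + z = (x + z) \lor (y + z)$; substituting $x \lor y = y$ on the left-hand side yields $y + z = (x + z) \lor (y + z)$, which is exactly the inequality $x + z \leq y + z$. For the second coordinate I would instead apply $z + (x \lor y) = (z + x) \lor (z + y)$, obtaining $z + y = (z + x) \lor (z + y)$, i.e.\ $z + x \leq z + y$.

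The only ingredients are the characterisation of the lattice order and the distributivity of $+$ over $\lor$; no use is made of commutativity, associativity, or distributivity of the underlying lattice. Consequently there is no real obstacle here: the statement is essentially a restatement of the fact that $+$ is a lattice homomorphism in each coordinate (cf.\ \cref{r:semigroup-monoidal}). If one prefers, the dual argument using $\land$ and the characterisation $x \leq y \iff x \land y = x$ gives the same conclusion.
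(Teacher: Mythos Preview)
Your proof is correct and is essentially the paper's argument spelled out in detail: the paper simply observes that each translation $x \mapsto x + y$ (and $y \mapsto x + y$) is a lattice homomorphism and hence order-preserving, whereas you unpack the ``hence'' via the characterisation $x \leq y \iff x \lor y = y$ and distributivity over $\lor$.
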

\begin{proof}
	Let $M$ be an $\ell$-semigroup.
	For every $y \in M$, the map
	\begin{align*}
			M & \longrightarrow M\\
			x & \longmapsto x + y
	\end{align*}
	is a lattice homomorphism by definition of $\ell$-semigroup.
	Therefore, it is order-preserving.
	Thus, $+$ is order preserving in the first coordinate.
	Analogously for the second coordinate.
\end{proof}
\begin{lemma}\label{l:land + lor=+}
	For all $x$ and $y$ in a commutative $\ell$-semigroup we have
	\[   (x \land y) + (x \lor y) = x + y.   \]
\end{lemma}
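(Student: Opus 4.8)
The plan is to evaluate the left-hand side $(x \land y) + (x \lor y)$ in two different ways, applying the distributivity laws of \cref{d:l-semigroup} in opposite orders, so as to trap the expression between $x + y$ from below and from above.

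For the upper estimate I would first distribute $+$ over the $\lor$ occurring in the second argument, and then distribute over the $\land$ occurring in the first argument. This yields
\[
	(x \land y) + (x \lor y) = \bigl[(x + x) \land (y + x)\bigr] \lor \bigl[(x + y) \land (y + y)\bigr].
\]
Using commutativity to rewrite $y + x$ as $x + y$, each of the two meets is bounded above by $x + y$, whence the whole join satisfies $(x \land y) + (x \lor y) \leq x + y$.

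For the lower estimate I would instead first distribute $+$ over the $\land$ in the first argument, and then over the $\lor$ in the second argument, obtaining
\[
	(x \land y) + (x \lor y) = \bigl[(x + x) \lor (x + y)\bigr] \land \bigl[(y + x) \lor (y + y)\bigr].
\]
Now each of the two joins is bounded below by $x + y$, so the meet satisfies $(x \land y) + (x \lor y) \geq x + y$. Combining the two inequalities gives the claimed identity.

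I expect no genuine obstacle: the argument is a routine two-sided estimate. The only points demanding care are applying the four identities of \cref{i:latt-hom} in the right order---always resolving the outermost lattice operation first and keeping track of which summand carries the $\lor$ and which the $\land$---and observing that the inequalities invoked, namely $a \land b \leq b$ and $b \leq a \lor b$, are elementary lattice facts. In particular no distributivity of the underlying lattice is ever used, so the identity in fact holds in every commutative $\ell$-semigroup, distributive or not.
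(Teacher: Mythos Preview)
Your proof is correct and follows essentially the same two-sided estimate as the paper. The only difference is cosmetic: after the first distributive step the paper invokes the already-proved monotonicity of $+$ (\cref{l:sum positive}) to conclude, e.g., $(x \land y) + x \leq y + x$ directly, whereas you perform a second distributive expansion and then use the raw lattice inequalities $a \land b \leq b$ and $b \leq a \lor b$; this second expansion is in effect an inline reproof of that monotonicity.
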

\begin{proof}
	We recall the proof, available in \cite[Section~2, p.\ 72]{Choud}, of the two inequalities:
	\begin{gather*}
		(x \land y) + (x \lor y) = ((x \land y) + x) \lor ((x \land y) + y) \leq (y + x) \lor (x + y) = x + y;\\
		(x \land y) + (x \lor y) = (x + (x \lor y)) \land (y + (x \lor y)) \geq (x + y) \land (y + x) = x + y. \qedhere
	\end{gather*}
\end{proof}
\begin{definition} \label{d:lm}
	A \emph{lattice-ordered monoid} (or \emph{$\ell$-monoid}, for short)\index{lattice-ordered!monoid} is an algebra $\langle M; +, \lor, \land, 0\rangle$ (arities $2,2,2,0$) with the following properties.
	\begin{enumerate}[label = N\arabic*.,  ref = N\arabic*, leftmargin=\widthof{LTE4'.} + \labelsep]
		\item \label[axiom]{ax:M1-gen} $\langle M; \lor, \land \rangle$ is a lattice.
		\item	\label[axiom]{ax:M2-gen} $\langle M; +, 0 \rangle$ is a monoid.
		\item \label[axiom]{ax:M3-gen} The operation $+$ distributes over $\lor$ and $\land$ on both sides.
	\end{enumerate}
	We say that an $\ell$-monoid is \emph{commutative}\index{lattice-ordered!monoid!commutative} if the operation $+$ is commutative, and \emph{distributive}\index{lattice-ordered!monoid!distributive} if the underlying lattice is distributive.
\end{definition}

Even if we have provided the definition in the general case, we will only be concerned with $\ell$-monoids that are commutative and distributive.

\begin{remark}
	An $\ell$-monoid is an internal monoid in the monoidal category of lattices and lattice homomorphisms (where the monoidal operation is given by the tensor product, and the unit is given by the one-element lattice).
\end{remark}

\begin{examples}
	\begin{enumerate}[wide]
		\item The set $\R$, with obviously defined operations, is {\alm}, as well as any of its subalgebras, such as $\Q$, $\Z$, $2\Z$, $\N$, $\{0,-1, -2, -3, \dots\}$.
		\item If $\langle L; \lor, \land \rangle$ is a distributive lattice with a bottom element $0$, then the algebra $\langle L; \lor, \lor, \land, 0 \rangle$ (i.e.\ we set $+ \df \lor$) is {\alm}.
		Similarly, if $L$ is a distributive lattice with a top element $0$, then $\langle L; \land, \lor, \land, 0 \rangle$ is {\alm}.
		\item For every topological space $X$ equipped with a preorder, the set of continuous order-preserving functions from $X$ to $\R$ with pointwise defined operations is a {\ulm}.
	\end{enumerate}
\end{examples}

\begin{remark}
	Since $\ell$-monoids are defined by equations, they are closed under products, subalgebras and homomorphic images.
	This allows one to obtain several examples.
\end{remark}

\begin{definition} \label{d:ulm}
	A \emph{unital lattice-ordered monoid} (\emph{unital $\ell$-monoid}, for short)\index{lattice-ordered!monoid!unital} is an algebra $\langle M; +, \lor, \land, 0, 1, -1 \rangle$ (arities $2,2,2,0,0,0$) with the following properties.
	\begin{enumerate}[label = M\arabic*.,  ref = M\arabic*,  start = 0, leftmargin=\widthof{LTE4'.} + \labelsep]
		\item \label[axiom]{ax:U0}
			$\langle M; +, \lor, \land, 0\rangle$ is an $\ell$-monoid.
		\item	\label[axiom]{ax:U1}
			$-1 + 1 = 0$ and $1 + -1 = 0$.
		\item \label[axiom]{ax:U2}
			$-1 \leq 0 \leq 1$.
		\item \label[axiom]{ax:U3}
			For all $x\in M$, there exists $n\in \Np$ such that 
			\[
				\underbrace{-1 + \dots + -1}_{n\ \text{times}}\leq x\leq \underbrace{1 + \dots + 1}_{n\ \text{times}}.
			\]
	\end{enumerate}
	A unital $\ell$-monoid is called \emph{commutative} if the operation $+$ is commutative, and \emph{distributive} if the underlying lattice is distributive\footnote{In \cite{AbbadiniEquiv}, we assumed distributivity of the lattice and called `unital commutative $\ell$-monoids' the algebras that here are referred to as `unital commutative \emph{distributive} $\ell$-monoids'.}.
\end{definition}
We will refer to the element $1$ as the \emph{positive unit}\index{unit!positive}, and to the element $-1$ as the \emph{negative unit}\index{unit!negative}.

In this manuscript we will restrict our attention to those unital $\ell$-monoids which are commutative and distributive.
We denote with $\ULM$ the category of {\ulms} with homomorphisms.

Given $n\in \N$, we write $nx$ for $\underbrace{x + \dots + x}_{n \text{ times}}$, and we write $n$ for $n1$ and $-n$ for $n(-1)$.
Furthermore, we use the shorthand $z - 1$ for $z + (-1)$.

\begin{examples} \label{ex:ulms}
	\begin{enumerate}[wide]
		\item The set $\R$, with obviously defined operations, is {\aulm}, as well as any of its subalgebras, such as $\Q$ and $\Z$.
			An example of a subalgebra of $\R$ which is not a group is, for any irrational element $s$ in $\R$, the algebra $\Z[s] =\{a + bs \mid a \in \Z, b \in \N \}$.
			
		\item For every topological space $X$ equipped with a preorder, the set of bounded continuous order-preserving functions from $X$ to $\R$ with pointwise defined operations is a {\ulm}.
		
		\item \label{i:lexico}For every totally ordered {\ulm} $M$ and every {\lm} $L$, we have {\aulm} $M \stackrel{\to}{\times} L$ defined as follows: the underlying set is $M \times L$, the order is lexicographic, i.e.\ 
			\[
				\big((x,y) \leq (z,w)\big) \Longleftrightarrow \big((x \leq z, x \neq z) \text{ or } (x = z, y \leq w)\big),
			\]
			the operation $+$ is defined componentwise, i.e.\ $(x,y) + (z,w) = (x + z, y + w)$, the positive unit is $(1,0)$ and the negative unit is $(-1,0)$.
	\end{enumerate}
\end{examples}

\begin{remark}
	All the axioms of {\ulms} are equations, except \cref{ax:U3}.
	However, notice that \cref{ax:U3} is preserved by subalgebras, homomorphic images and finite products.
	Thus, {\ulms} are closed under subalgebras, homomorphic images and finite products.
	This is not the case for arbitrary products: for example, $\R^\N$ does not satisfy \cref{ax:U3}.
\end{remark}

\begin{remark} \label{r:te}
	Given {\aulm} $\langle M; +, \lor, \land, 0, 1, -1 \rangle$, one defines the operation
	\[
		x \cdot y \df x - 1 + y.
	\]
	Note that this operation does not coincide on $\R$ with the usual multiplication.
	However, it might deserve to be denoted in this way because the equations $x \cdot 1 = x = 1 \cdot x$ hold\footnote{An additional reason for this notation is the fact that $\oplus$ is to $+$ what $\odot$ is to $\cdot$; this will be clear once the functor $\Gam$ will be defined.}.
	In fact, {\ulms} admit a term-equivalent description in the signature $\{+, \cdot, \lor, \land, 0, 1\}$. (From this signature, the constant $-1$ can be recast as $0 \cdot 0$.)
	In this signature, \cref{ax:U0,ax:U1} are equivalent to:
	\begin{enumerate}[label = S\arabic*.,  ref = S\arabic*,  start = 1, leftmargin=\widthof{LTE4'.} + \labelsep]
		\item \label[axiom]{ax:E1}$\langle M; \lor, \land \rangle$ is a lattice.		
		\item	\label[axiom]{ax:E2}
				$\langle M; +, 0 \rangle$ and $\langle M; \cdot, 1 \rangle$ are monoids.
		\item	\label[axiom]{ax:E3}
				Both the operations $+$ and $\cdot$ distribute over both $\lor$ and $\land$.
		\item \label[axiom]{ax:E4}
				$(x \cdot y) + z = x \cdot (y + z)$.
		\item \label[axiom]{ax:E5}
				$(x + y) \cdot z = x + (y \cdot z)$.
	\end{enumerate}
	(Note that \cref{ax:E5,ax:E4} are equivalent if $+$ and $\cdot$ are commutative.)
	The addition of \cref{ax:U2} equals the addition of the following axiom.
	\begin{enumerate}[label = S\arabic*.,  ref = S\arabic*,  start = 5, leftmargin=\widthof{LTE4'.} + \labelsep]
		\item \label[axiom]{ax:E6}
				$0 \leq 1$.
	\end{enumerate}
	The addition of \cref{ax:U3} equals the addition of the following axiom.
	\begin{enumerate}[label = S\arabic*.,  ref = S\arabic*,  start = 6, leftmargin=\widthof{LTE4'.} + \labelsep]
		\item \label[axiom]{ax:E7}
				For every $x\in M$ there exists $n\in \Np$ such that $\underbrace{0 \cdot \dots \cdot 0}_{n\ \text{times}} \leq x \leq \underbrace{1 + \dots + 1}_{n\ \text{times}}$.
	\end{enumerate}
	Moreover, the distributivity of the lattice in the old signature is equivalent to the distributivity of the lattice in the new signature, and the commutativity of $+$ in the old signature is equivalent to the commutativity of $+$ and $\cdot$ in the new signature.
	The class of algebras satisfying \cref{ax:E1,ax:E2,ax:E3,ax:E4,ax:A5,ax:E6,ax:E7}, distributivity of the underlying lattice, and commutativity of $+$ and $\cdot$, is term-equivalent to the class of {\ulms}.
	In our treatment, however, we shall stick to the signature and axioms of \cref{d:ulm}, and we will use the present remark only to explain the axioms of {\mvms} below.
\end{remark}

%=================================   SUBSECTION   =================================%

\subsection{MV-monoidal algebras} \label{subsec:MV}

The idea that we pursue is that {\aulm} is determined by its unit interval.
For a {\ulm} $M$, we set 
\[
	\Gam(M)\df \{x\in M\mid 0\leq x\leq 1 \}.
\]
On $\Gam(M)$ we define the constants $0$ and $1$ and the binary operations $\lor$ and $\land$ by restriction from $M$, and, for $x, y \in \Gam(M)$, we set 
\[
	x\oplus y\df (x+y)\land 1,
\]
and
\[
	x\odot y\df (x+y-1)\lor 0.
\]
By \cref{l:sum positive}, $\oplus$ and $\odot$ are internal operations on $\Gam(M)$: indeed, we have $x\oplus y\in \Gam(M)$ because $x+y\geq 0+0= 0$, and we have $x\odot y\in \Gam(M)$ because $x+y-1\leq 1+1-1=1$.

The intent behind \cref{d:MVM} below is to provide a finite equational axiomatisation of the algebras which are isomorphic to $\langle \Gam(M); \oplus, \odot, \lor, \land, 0, 1 \rangle$ for some {\ulm} $M$.
We will call the algebras satisfying this axiomatisation \emph{\mvms}.
The main result of this chapter is that the categories of {\ulms} and {\mvms} are equivalent, the equivalence being witnessed by the functor $\Gam$.

On $[0, 1]$, consider the elements $0$ and $1$ and the operations $x \lor y \df \max\{x, y\}$, $x \land y \df \min\{x, y\}$, $x \oplus y \df \min\{x + y, 1\}$, and $x \odot y \df \max\{x + y - 1, 0\}$.
This is a prime example of what we call {\amvm}.
\begin{definition} \label{d:MVM}
	An \emph{\mvm}\index{MV-monoidal algebra} is an algebra $\langle A; \oplus, \odot, \lor, \land, 0, 1 \rangle$ (arities $2$, $2$, $2$, $2$, $0$, $0$) satisfying the following equational axioms. 
	\begin{enumerate}[label = E\arabic*., ref = E\arabic*, leftmargin=\widthof{LTE4'.} + \labelsep]
		\item	\label[axiom]{ax:A1}
				$\langle A; \lor, \land \rangle$ is a distributive lattice.		
		\item	\label[axiom]{ax:A2}
				$\langle A; \oplus, 0 \rangle$ and $\langle A; \odot, 1 \rangle$ are commutative monoids.
		\item	\label[axiom]{ax:A3}
				Both the operations $\oplus$ and $\odot$ distribute over both $\lor$ and $\land$.		
		\item	\label[axiom]{ax:A4}
				$(x \oplus y) \odot ((x \odot y) \oplus z) = (x \odot (y \oplus z)) \oplus (y \odot z)$. 
		\item \label[axiom]{ax:A5}
				$(x \odot y) \oplus ((x \oplus y) \odot z) = (x \oplus (y \odot z)) \odot (y \oplus z)$.
		\item	\label[axiom]{ax:A6}
				$(x \odot y) \oplus z = ((x \oplus y) \odot ((x \odot y) \oplus z)) \lor z$.
		\item	\label[axiom]{ax:A7}
				$(x \oplus y) \odot z = ((x \odot y) \oplus ((x \oplus y) \odot z)) \land z$.
	\end{enumerate}
\end{definition}
\xmakefirstuc{\mvs} are unit intervals of {\ulgs};
the name `{\mvm}' suggests that these algebras play the role of {\mvs} when `group' is replaced by `monoid'.
In fact, as we will prove, {\mvms} are unit intervals of {\ulms}.

We let $\MVM$ denote the category of {\mvms} with homomorphisms.

Notice that \cref{ax:A1,ax:A2,ax:A3} coincide with \cref{ax:E1,ax:E2,ax:E3} in \cref{r:te}, together with the distributivity of the underlying lattice and the commutativity of the monoidal operations.

\Cref{ax:A4} is a sort of associativity, which resembles \cref{ax:E4}, i.e.\ $(x \cdot y) + z = x \cdot (y + z)$.
In particular, one verifies that the interpretation on $[0,1]$ of both the left-hand and right-hand side of \cref{ax:A4} equals
\begin{equation} \label{e:xyz}
	((x + y + z - 1) \lor 0) \land 1.
\end{equation}
Notice that, using the definition of $\cdot$ from \cref{r:te}, the element $x + y + z - 1$ appearing in \eqref{e:xyz} coincides with the interpretation on $\R$ of $(x \cdot y) + z$ and $x \cdot (y + z)$.
In fact, in our view, \cref{ax:A4} is essentially the condition $(x \cdot y) + z = x \cdot (y + z)$ expressed at the unital level, i.e.: 
\begin{equation} \label{e:xyz-01}
	(((x \cdot y) + z) \lor 0) \land 1 = ((x \cdot (y + z)) \lor 0) \land 1.
\end{equation}
In fact, the term $x \cdot y$ in the left-hand side of \eqref{e:xyz-01} corresponds to the terms $x \oplus y$ and $x \odot y$ in the left-hand side of \cref{ax:A4}, and the term $y + z$ in the right-hand side of \eqref{e:xyz-01} corresponds to the terms $y \oplus z$ and $y \odot z$ in the right-hand side of \cref{ax:A4}.

Analogously, \cref{ax:A5} corresponds to \cref{ax:E5}, i.e.\ $(x + y) \cdot z = x + (y \cdot z)$.
Notice that \cref{ax:A4,ax:A5} are equivalent, given the commutativity of $\oplus$ and $\odot$; we have included both so to make it clear that, if $\langle A; \oplus, \odot, \lor, \land, 0, 1 \rangle$ is {\amvm}, then also its order-dual $\langle A; \odot, \oplus, \land, \lor, 1, 0 \rangle$ is {\amvm}.

\Cref{ax:A6} expresses how the term $(x \odot y) \oplus z$ differs from its non-truncated version $(x \cdot y) + z$: essentially, the axiom can be read as 
\[
	(x \odot y) \oplus z = ((x \cdot y) + z) \lor z.
\]
Analogously, \cref{ax:A7} can be read as
\[
	(x \oplus y) \odot z = ((x + y) \cdot z) \land z.
\]

\begin{examples}
	\begin{enumerate}[wide]
		\item The unit interval $[0,1]$ is {\amvm}.
		\item Every bounded distributive lattice $L$ can be made into {\amvm} by setting $x\oplus y\df x\lor y$, and $x\odot y\df x\land y$.
		In fact, the category of bounded distributive lattices is a subvariety of the variety of {\mvms}, obtained by adding the axioms $\forall x, y\ (x \oplus y= x \lor y)$ and $\forall x, y\ (x \odot y= x \land y)$.
		\item For every topological space $X$ equipped with a preorder, the set of continuous order-preserving functions from $X$ to $[0,1]$ with pointwise defined operations is {\amvm}.
	\end{enumerate}
\end{examples}

We remark that $\MVM$ is a variety of algebras whose primitive operations are finitely many and of finite arity, axiomatised by a finite number of equations.

%-------------------------------   SUBSUBSECTION   --------------------------------%

\subsubsection{Basic properties of {\mvms}}

If $\langle A; \oplus, \odot, \lor, \land, 0, 1 \rangle$ is {\amvm}, then also its so-called \emph{dual algebra}\index{dual algebra}\index{algebra!dual|see{dual algebra}}\footnote{Not to be confused with the notion of categorical dual.} $\langle A; \odot, \oplus, \land, \lor, 1, 0 \rangle$---obtained by interchanging the roles of $\odot$ and $\oplus$, the roles of $\land$ and $\lor$ and the roles of $1$ and $0$---is {\amvm}.
We will use this observation to shorten some proofs.

We give a name to the right- and left-hand terms of \cref{ax:A4,ax:A5}; we will then prove that their interpretations in {\amvm} coincide.

\begin{notation}
	We set
	\begin{align*}
		\sigma_1(x,y,z) \df (x \oplus y) \odot ((x \odot y) \oplus z);\\
		\sigma_2(x,y,z) \df (x \odot y) \oplus ((x \oplus y) \odot z);\\
		\sigma_3(x,y,z) \df (x \odot (y \oplus z)) \oplus (y \odot z);\\
		\sigma_4(x,y,z) \df (x \oplus (y \odot z)) \odot (y \oplus z).
	\end{align*}
\end{notation}

In $[0,1]$, the interpretation of any of these terms is
\[
	((x + y + z - 1) \lor 0) \land 1.
\]

Note that \cref{ax:A5,ax:A5,ax:A6,ax:A7} can be written, respectively, as
\begin{align*}
	\sigma_1(x,y,z) 		& = \sigma_3(x,y,z),\\
	\sigma_2(x,y,z) 		& = \sigma_4(x,y,z),\\
	(x \odot y) \oplus z	& = \sigma_1(x, y, z) \lor z,\\
	(x \oplus y) \odot z	& = \sigma_2(x, y, z) \land z.
\end{align*}

\begin{lemma} \label{l:permutations}
	The terms $\sigma_1$, $\sigma_2$, $\sigma_3$, $\sigma_4$ in the theory of {\mvms} are all invariant under any permutation of the variables, and they all coincide.
	In other words, for every {\mvm} $A$, for all $i,j \in \{1,2,3,4\}$, for all permutations $\tau,\rho \colon \{1,2,3\} \to \{1,2,3\}$ and for all $x_1, x_2,x_3 \in A$ we have
	\[
		\sigma_i(x_{\tau(1)},x_{\tau(2)},x_{\tau(3)}) = \sigma_j(x_{\rho(1)},x_{\rho(2)},x_{\rho(3)}).
	\]
\end{lemma}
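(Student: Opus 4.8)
The plan is to split the statement into two parts: first show that the common value of $\sigma_1,\sigma_3$ and the common value of $\sigma_2,\sigma_4$ are each invariant under all permutations of the three variables, and only then show $\sigma_1=\sigma_2$.

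For the symmetry part I would first record the two \emph{manifest} symmetries coming from commutativity of $\oplus$ and $\odot$ (\cref{ax:A2}): since $x\oplus y$ and $x\odot y$ are symmetric in $x,y$, we get $\sigma_1(x,y,z)=\sigma_1(y,x,z)$ and $\sigma_2(x,y,z)=\sigma_2(y,x,z)$, whereas since $y\oplus z$ and $y\odot z$ are symmetric in $y,z$, we get $\sigma_3(x,y,z)=\sigma_3(x,z,y)$ and $\sigma_4(x,y,z)=\sigma_4(x,z,y)$. Now \cref{ax:A4} gives $\sigma_1=\sigma_3$, so this single term is invariant both under the transposition of its first two arguments and under the transposition of its last two arguments; as these two transpositions generate the full symmetric group on three letters, $\sigma_1=\sigma_3$ is invariant under every permutation. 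The identical argument applied to \cref{ax:A5} shows $\sigma_2=\sigma_4$ is fully symmetric. Hence it remains only to prove the single identity $\sigma_1=\sigma_2$.

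To prove $\sigma_1=\sigma_2$ I would exploit two facts. The first is the cancellation property of distributive lattices (available by \cref{ax:A1}): if $a\land c=b\land c$ and $a\lor c=b\lor c$, then $a=b$. The second is the dual-algebra involution noted before the statement, which swaps $\oplus\leftrightarrow\odot$, $\lor\leftrightarrow\land$, $0\leftrightarrow 1$, sends $\sigma_1$ to $\sigma_2$, and sends \cref{ax:A6} to \cref{ax:A7}; consequently, any identity proved for all {\mvms} automatically yields its dual. Taking $c=z$, \cref{ax:A6} already gives $\sigma_1\lor z=(x\odot y)\oplus z$ and \cref{ax:A7} gives $\sigma_2\land z=(x\oplus y)\odot z$. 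Thus it is enough to establish the single identity $\sigma_1\land z=(x\oplus y)\odot z$: dualising it yields $\sigma_2\lor z=(x\odot y)\oplus z$, and then $\sigma_1\lor z=\sigma_2\lor z$ and $\sigma_1\land z=\sigma_2\land z$, so cancellation gives $\sigma_1=\sigma_2$ and therefore all four terms coincide.

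The core identity $\sigma_1\land z=(x\oplus y)\odot z$ is where the real work lies, and I expect it to be the main obstacle. Writing $a\df x\odot y$ and $b\df x\oplus y$, so that $\sigma_1=b\odot(a\oplus z)$, \cref{ax:A6} reads $a\oplus z=\sigma_1\lor z$; substituting this back and distributing $\odot$ over $\lor$ (\cref{ax:A3}) gives the useful relation $\sigma_1=(b\odot\sigma_1)\lor(b\odot z)$. The inequality $(x\oplus y)\odot z\le\sigma_1\land z$ is routine once the basic order facts are in place—namely that $0$ is bottom, $1$ is top, every operation is order-preserving, and $c\odot d\le c\land d\le c\lor d\le c\oplus d$—since then $b\odot z\le b\odot(a\oplus z)=\sigma_1$ and $b\odot z\le z$. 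The reverse inequality $\sigma_1\land z\le(x\oplus y)\odot z$ is the delicate point: meeting the relation $\sigma_1=(b\odot\sigma_1)\lor(b\odot z)$ with $z$ and distributing reduces it to $(b\odot\sigma_1)\land z\le b\odot z$, which must be untangled carefully via \cref{ax:A6,ax:A7} and distributivity rather than by naive iteration. I would therefore isolate the required elementary order properties of {\mvms} as a preliminary step, and only then carry out this final meet computation.
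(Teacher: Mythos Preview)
Your symmetry argument for $\sigma_1=\sigma_3$ and $\sigma_2=\sigma_4$ matches the paper exactly. But for the bridge between the two pairs you have missed a one-line observation that the paper exploits: by commutativity of $\oplus$ and $\odot$ alone,
\[
\sigma_4(z,y,x)=(z\oplus(y\odot x))\odot(y\oplus x)=((x\odot y)\oplus z)\odot(x\oplus y)=\sigma_1(x,y,z).
\]
Since $\sigma_1$ is already known to be fully symmetric, this gives $\sigma_1=\sigma_4$ immediately, and then \cref{ax:A5} gives $\sigma_2=\sigma_4$. No lattice cancellation, no order facts, no duality trick is needed.

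Your proposed route via cancellation is not wrong in spirit, but it is far heavier and, as written, incomplete: you correctly identify $\sigma_1\land z=(x\oplus y)\odot z$ as the crux, reduce it to $(b\odot\sigma_1)\land z\le b\odot z$, and then stop, saying only that it ``must be untangled carefully''. That is precisely the step a reader cannot be expected to fill in, and it is not clear how to do it from \cref{ax:A6,ax:A7} and distributivity without already knowing $\sigma_1=\sigma_2$. Moreover, the ``basic order facts'' you want as preliminaries ($0$ bottom, $1$ top, $c\odot d\le c$, etc.) are proved in the paper \emph{after} this lemma; while they can be established using only \cref{ax:A4} rather than the full statement, you would have to reorganise the development to avoid circularity. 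The simple commutativity identity above dissolves all of this.
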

\begin{proof}
	In the theory of {\mvms}, by commutativity of $\oplus$ and $\odot$, the term $\sigma_1$ is invariant under transposition of the first and the second variable, and the term $\sigma_3$ is invariant under transposition of the second and the third variable.
	Moreover, by \cref{ax:A4}, we have $\sigma_1(x,y,z) = \sigma_3(x,y,z)$.
	Since any two distinct transpositions in the symmetric group on three elements generate the whole symmetric group, it follows that $\sigma_1$ and $\sigma_3$ are invariant under every permutation of the variables.
	By commutativity of $\oplus$ and $\odot$, we have $\sigma_1(x,y,z) = \sigma_4(z, y, x)$.
	Therefore, also $\sigma_4$ is invariant under every permutation, and $\sigma_1(x,y,z) = \sigma_4(x, y, z)$.
	By \cref{ax:A5}, we have $\sigma_2(x,y,z) = \sigma_4(x,y,z)$, and we conclude that $\sigma_1$, $\sigma_2$, $\sigma_3$, $\sigma_4$ are invariant under any permutation of the variables, and coinciding.
\end{proof}

In particular, \cref{l:permutations} guarantees that, for all $x,y,z$ in {\amvm}, we have 
\[
	\sigma_1(x,y,z) = \sigma_2(x,y,z) = \sigma_3(x,y,z) = \sigma_4(x,y,z).
\]

\begin{notation}
	For $x, y, z$ in {\amvm}, we let $\sigma(x,y,z)$ denote the common value of $\sigma_1(x,y,z)$, $\sigma_2(x,y,z)$, $\sigma_3(x,y,z)$ and $\sigma_4(x,y,z)$.
\end{notation}

We recall the interpretation of $\sigma(x,y,z)$ in $[0,1]$:
\[
	\sigma(x,y,z) = ((x + y + z - 1) \lor 0) \land 1.
\]
Loosely speaking, $\sigma(x, y, z)$ is the second layer of the sum of $x$, $y$, and $z$.
In fact, the symbol $\sigma$ should be evocative of the word `\emph{s}um'.

\begin{lemma}\label{l:bounded-lattice}
	For every element $x$ of {\amvm} we have $0 \leq x \leq 1$.
\end{lemma}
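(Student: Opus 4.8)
The plan is to extract both inequalities directly from the ``truncation'' axioms \cref{ax:A6} and \cref{ax:A7}, whose right-hand sides are already a join (respectively a meet) with $z$. Recall that the order on {\amvm} is the underlying lattice order, so that $a \leq b$ means $a \lor b = b$; in particular, any expression of the form $w \lor z$ is automatically $\geq z$, and any expression of the form $w \land z$ is automatically $\leq z$.

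First I would establish the lower bound. The right-hand side of \cref{ax:A6} is $((x \oplus y) \odot ((x \odot y) \oplus z)) \lor z$, which is a join with $z$; hence $(x \odot y) \oplus z \geq z$ for all $x, y, z$. Now specialise to $y = 1$ and $z = 0$. Since $1$ is the identity for $\odot$ and $0$ is the identity for $\oplus$ (\cref{ax:A2}), the left-hand side collapses to $(x \odot 1) \oplus 0 = x \oplus 0 = x$. Thus $x \geq 0$, as desired.

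For the upper bound I would argue symmetrically from \cref{ax:A7}: its right-hand side is a meet with $z$, so $(x \oplus y) \odot z \leq z$ for all $x, y, z$; specialising to $y = 0$ and $z = 1$ and again using the monoid identities gives $(x \oplus 0) \odot 1 = x \odot 1 = x \leq 1$. Alternatively, one can avoid repeating the computation by invoking the dual-algebra symmetry recorded after \cref{d:MVM}: passing to the dual algebra interchanges $\leq$ with $\geq$ (together with $0$ and $1$), so the inequality $x \leq 1$ is precisely the dual statement of $0 \leq x$ and follows at once.

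I do not expect any genuine obstacle here: the argument is a one-line specialisation of each axiom. The only points requiring care are to unwind the monoid identities of \cref{ax:A2} correctly, and to keep in mind that the lattice order is defined through $\lor$ and $\land$, so that ``the right-hand side is a join with $z$'' is literally the assertion ``the left-hand side is $\geq z$''.
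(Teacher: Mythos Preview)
Your argument is correct and uses the same specialisation of \cref{ax:A6} (with $y=1$, $z=0$) as the paper. If anything, your version is slightly more economical: you stop at the observation that the right-hand side of \cref{ax:A6} has the form $W \lor z \geq z$, whereas the paper goes on to identify $W=\sigma_1(x,1,0)$ with $x$ via \cref{ax:A4} in order to obtain the equality $x = x \lor 0$ explicitly; but that extra simplification is not needed for the bare inequality.
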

\begin{proof}
	We have
	\begin{align*}
		x	
			& = (x \odot 1) \oplus 0
			&& \by{\cref{ax:A2}}\\
			& = \sigma_1(x, 1, 0) \lor 0
			&& \by{\cref{ax:A6}}\\
			& = \sigma_3(x, 1, 0) \lor 0
			&& \by{\cref{ax:A4}}\\
			& = ((x \odot (1 \oplus 0)) \oplus (1 \odot 0)) \lor 0
			&& \by{def.\ of $\sigma_3$}\\
			& = ((x \odot 1) \oplus 0) \lor 0
			&& \by{\cref{ax:A2}}\\
			& = x \lor 0.
			&& \by{\cref{ax:A2}}
	\end{align*}
	Thus $0 \leq x$.
	Dually, $x \leq 1$.
\end{proof}

\begin{lemma} \label{l:absorbing}
	For every element $x$ in {\amvm}, we have $x \oplus 1 = 1$ and $x \odot 0 = 0$.
\end{lemma}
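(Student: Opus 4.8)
The plan is to derive $x \oplus 1 = 1$ directly from \cref{ax:A6} and then obtain $x \odot 0 = 0$ by the dual-algebra symmetry. The whole argument rests on the fact, established just above in \cref{l:bounded-lattice}, that $1$ is the greatest and $0$ the least element of the underlying lattice.

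First I would instantiate \cref{ax:A6}, namely $(x \odot y) \oplus z = ((x \oplus y) \odot ((x \odot y) \oplus z)) \lor z$, at $y = 1$ and $z = 1$. Since $\langle A; \odot, 1 \rangle$ is a commutative monoid by \cref{ax:A2}, the left-hand side becomes $(x \odot 1) \oplus 1 = x \oplus 1$. The right-hand side has the shape $a \lor 1$ for some term $a$, and by \cref{l:bounded-lattice} we have $a \leq 1$, so $a \lor 1 = 1$. Combining the two computations gives $x \oplus 1 = 1$.

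For the second identity I would appeal to the dual-algebra observation recorded in the text preceding \cref{l:permutations}: if $\langle A; \oplus, \odot, \lor, \land, 0, 1 \rangle$ is an {\mvm}, then so is its dual $\langle A; \odot, \oplus, \land, \lor, 1, 0 \rangle$. Applying the already established identity $x \oplus 1 = 1$ in the dual algebra---where $\oplus$ is interpreted as $\odot$ and $1$ as $0$---yields exactly $x \odot 0 = 0$ in the original algebra. (Alternatively, one could prove $x \odot 0 = 0$ symmetrically by instantiating \cref{ax:A7} at $y = 0$, $z = 0$ and using that the meet with the least element $0$ collapses to $0$ by \cref{l:bounded-lattice}.)

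I do not expect any genuine obstacle here; the only point requiring care is that the collapse of the $\lor 1$ term (respectively the $\land 0$ term) is legitimate precisely because $1$ is the top and $0$ the bottom of the lattice, which is the content of \cref{l:bounded-lattice} proved immediately beforehand.
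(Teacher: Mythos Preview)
Your argument is correct. Instantiating \cref{ax:A6} at $y = z = 1$ and collapsing the outer $\lor 1$ via \cref{l:bounded-lattice} is a perfectly valid way to obtain $x \oplus 1 = 1$, and the appeal to the dual algebra (or, equivalently, to \cref{ax:A7} with $y = z = 0$) legitimately yields $x \odot 0 = 0$.

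The paper, however, takes a different and slightly more economical route: it proves $x \odot 0 = 0$ first, using only \cref{ax:A2}, \cref{ax:A3}, and \cref{l:bounded-lattice}. Concretely, $0 = 1 \odot 0 = (1 \lor x) \odot 0 = (1 \odot 0) \lor (x \odot 0) = 0 \lor (x \odot 0) = x \odot 0$, and then obtains $x \oplus 1 = 1$ by duality. The difference is in which axiom carries the weight: the paper uses only the distributivity axiom \cref{ax:A3}, whereas you invoke the more specialised \cref{ax:A6}. Your approach is just as short, but the paper's has the minor advantage of showing that the absorption laws follow already from the ``basic'' axioms \cref{ax:A1}--\cref{ax:A3} together with \cref{l:bounded-lattice}, without needing \cref{ax:A6} or \cref{ax:A7}.
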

\begin{proof}
	We have
	\begin{align*}
		0	& = 1 \odot 0								&& \text{(\cref{ax:A2})}\\
			& = (1 \lor x) \odot 0					&& \text{(\cref{l:bounded-lattice})}\\
			& = (1 \odot 0) \lor (x \odot 0)		&& \text{(\cref{ax:A3})}\\
			& = 0 \lor (x \odot 0)					&& \text{(\cref{ax:A2})}\\
			& = x \odot 0.								&& \text{(\cref{l:bounded-lattice})}
	\end{align*}
	Dually, $x \oplus 1 = 1$.
\end{proof}

\begin{lemma} \label{l:order-preserving properties}
	In every {\mvm}, the operations $\oplus$ and $\odot$ are order-preserving in both coordinates.
\end{lemma}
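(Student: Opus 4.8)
The plan is to derive monotonicity directly from the distributivity axiom \cref{ax:A3}, exactly as the order-preservation of $+$ was obtained for $\ell$-semigroups in \cref{l:sum positive}. The guiding principle is that distributing a binary operation over a lattice join is nothing but a reformulation of monotonicity: recall that in any lattice $a \leq b$ holds if, and only if, $a \lor b = b$. So an operation that is a join-homomorphism in a given coordinate is automatically order-preserving in that coordinate.

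First I would fix elements $x, y, z$ in {\amvm} with $x \leq y$, so that $x \lor y = y$. Since $\oplus$ distributes over $\lor$ by \cref{ax:A3}, I compute
\[
	(x \oplus z) \lor (y \oplus z) = (x \lor y) \oplus z = y \oplus z,
\]
which says precisely that $x \oplus z \leq y \oplus z$. Hence $\oplus$ is order-preserving in the first coordinate; by the commutativity of $\oplus$ recorded in \cref{ax:A2}, it is order-preserving in the second coordinate as well. The identical argument handles $\odot$: it too distributes over $\lor$ by \cref{ax:A3}, so from $x \leq y$ one gets $(x \odot z) \lor (y \odot z) = (x \lor y) \odot z = y \odot z$, whence $x \odot z \leq y \odot z$, and commutativity (\cref{ax:A2}) upgrades this to monotonicity in both coordinates.

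There is essentially no obstacle here: the content of the lemma is immediate once one observes that \cref{ax:A3} \emph{is} the monotonicity condition in disguise. I could equally well have used distributivity over $\land$, via the dual characterisation $a \leq b \Longleftrightarrow a \land b = a$, and the choice between the two is a matter of taste. The only point worth stating carefully is the appeal to commutativity to pass from one coordinate to the other, so that a single distributivity computation suffices for each of $\oplus$ and $\odot$.
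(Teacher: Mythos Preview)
Your proof is correct and follows essentially the same idea as the paper. The paper simply packages your computation by observing that $\langle A; \oplus, \lor, \land \rangle$ and $\langle A; \odot, \lor, \land \rangle$ are $\ell$-semigroups and then invoking \cref{l:sum positive}, whereas you unfold that lemma inline; the only cosmetic difference is that the paper handles the second coordinate by the two-sided distributivity in \cref{ax:A3} rather than by commutativity.
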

\begin{proof}
	For every {\mvm} $\langle A; \oplus, \odot, \lor, \land, 0, 1 \rangle$, the algebras $\langle A; \oplus, \lor, \land \rangle$ and $\langle A; \odot, \lor, \land \rangle$ are $\ell$-semigroups.
	Therefore, by \cref{l:sum positive}, the operations $\oplus$ and $\odot$ are order-preserving in both coordinates.
\end{proof}

\begin{lemma} \label{l:increasing}
	For all $x$ and $y$ in {\amvm} we have $x \leq x \oplus y$ and $x \geq x \odot y$.
\end{lemma}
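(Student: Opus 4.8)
The plan is to derive both inequalities directly from monotonicity of the monoid operations together with the bounds $0 \leq y \leq 1$ that were just established in \cref{l:bounded-lattice}. The key observation is that each claimed inequality is obtained by comparing $x \oplus y$ (resp.\ $x \odot y$) against a value where the second argument has been replaced by the relevant identity element, namely $0$ for $\oplus$ and $1$ for $\odot$.

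First I would prove $x \leq x \oplus y$. By \cref{l:bounded-lattice} we have $0 \leq y$. Since $\oplus$ is order-preserving in its second coordinate by \cref{l:order-preserving properties}, it follows that $x \oplus 0 \leq x \oplus y$. As $0$ is the identity for $\oplus$ by \cref{ax:A2}, we have $x \oplus 0 = x$, and hence $x \leq x \oplus y$.

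The inequality $x \geq x \odot y$ is handled dually. By \cref{l:bounded-lattice} we have $y \leq 1$, so monotonicity of $\odot$ in its second coordinate (\cref{l:order-preserving properties}) gives $x \odot y \leq x \odot 1$. Since $1$ is the identity for $\odot$ by \cref{ax:A2}, we have $x \odot 1 = x$, whence $x \odot y \leq x$. Alternatively, one could invoke the dual-algebra symmetry observed after \cref{d:MVM}: the statement $x \geq x \odot y$ in $A$ is precisely the statement $x \leq x \oplus y$ in the dual MV-monoidal algebra, so the second half follows formally from the first.

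I do not expect any genuine obstacle here: the result is an immediate corollary of the monotonicity lemma and the boundedness lemma, both already available, and the proof is a two-line calculation in each direction. The only thing worth stating carefully is which coordinate of $\oplus$ and $\odot$ is being used and that the identity laws come from \cref{ax:A2}.
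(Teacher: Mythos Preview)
Your proposal is correct and follows essentially the same approach as the paper: use \cref{l:bounded-lattice} to get $0 \leq y$, apply monotonicity of $\oplus$ from \cref{l:order-preserving properties}, and use the identity law from \cref{ax:A2}; the second inequality is handled dually. The paper's proof is simply a more terse version of exactly what you wrote.
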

\begin{proof}
	By \cref{l:bounded-lattice}, we have $0\leq y$.
	Thus, by \cref{l:order-preserving properties}, we have $x = x \oplus 0  \leq  x \oplus y$.
	Dually, $x \geq x \odot y$.
\end{proof}
\begin{lemma}
	\label{l:almost associative}
	For all $x,y,z$ in {\amvm} we have
	\[
		x \odot (y \oplus z) \leq (x \odot y) \oplus z.
	\]
\end{lemma}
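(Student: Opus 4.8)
The plan is to sandwich the left-hand side between two expressions built from the auxiliary term $\sigma$, using \cref{ax:A6} on one side and \cref{l:increasing} on the other. The guiding intuition comes from the interpretation in $[0,1]$: there $x \odot (y \oplus z)$ equals $x$ when $y+z \geq 1$ and equals $(x+y+z-1)\lor 0$ otherwise, and in both cases this is bounded above by $\sigma(x,y,z) = ((x+y+z-1)\lor 0)\land 1$, while $\sigma(x,y,z)$ itself never exceeds $(x\odot y)\oplus z$. So the real content is the single inequality $x \odot (y \oplus z) \leq \sigma(x,y,z)$, and the rest is bookkeeping.

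Concretely, first I would rewrite the right-hand side. By \cref{ax:A6} we have $(x \odot y) \oplus z = \sigma_1(x,y,z) \lor z$, and by \cref{l:permutations} the term $\sigma_1(x,y,z)$ coincides with $\sigma(x,y,z)$; hence
\[
	(x \odot y) \oplus z = \sigma(x,y,z) \lor z \geq \sigma(x,y,z).
\]
Thus it suffices to prove $x \odot (y \oplus z) \leq \sigma(x,y,z)$. For this I would invoke the representation of $\sigma$ via $\sigma_3$: again by \cref{l:permutations}, $\sigma(x,y,z) = \sigma_3(x,y,z) = (x \odot (y \oplus z)) \oplus (y \odot z)$. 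Now \cref{l:increasing} gives $w \leq w \oplus v$ for all $w,v$; applying it with $w = x \odot (y \oplus z)$ and $v = y \odot z$ yields $x \odot (y \oplus z) \leq (x \odot (y \oplus z)) \oplus (y \odot z) = \sigma(x,y,z)$. Chaining the two displayed inequalities gives $x \odot (y \oplus z) \leq \sigma(x,y,z) \leq (x \odot y) \oplus z$, which is the assertion.

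I do not expect a genuine obstacle here: once the equalities among the $\sigma_i$ established in \cref{l:permutations} are available, the argument is just two applications of monotonicity-type facts already proved. The only point requiring a little care is choosing the \emph{right} face of $\sigma$ at each step—using $\sigma_1$ (which appears inside \cref{ax:A6}) to handle the right-hand side and $\sigma_3$ (whose leading summand is exactly $x \odot (y \oplus z)$) to handle the left-hand side—so that \cref{l:increasing} applies directly without any further manipulation.
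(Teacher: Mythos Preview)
Your proof is correct and follows essentially the same route as the paper: both arguments sandwich through $\sigma(x,y,z)$, using \cref{ax:A6} to get $\sigma(x,y,z) \leq (x \odot y) \oplus z$. The only minor difference is on the lower bound: the paper invokes \cref{ax:A7} to write $x \odot (y \oplus z) = x \land \sigma(x,y,z) \leq \sigma(x,y,z)$, whereas you use the $\sigma_3$-representation together with \cref{l:increasing}; both are one-line steps and equally clean.
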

\begin{proof}
	Using \cref{ax:A4,ax:A5,ax:A6,ax:A7}, we obtain
	\[
		x \odot (y \oplus z) = x \land \sigma(x, y, z) \leq \sigma(x, y, z) \leq \sigma(x, y, z) \lor z = (x \odot y) \oplus z. \qedhere
	\]
\end{proof}

%%%%%%%%%%%%%%%%%%%%%%%%%%%%%%%%%%%   SECTION   %%%%%%%%%%%%%%%%%%%%%%%%%%%%%%%%%%%%

\section{The unit interval functor \texorpdfstring{$\Gamma$}{\textGamma}}
	\label{s:Gamma}

In this section we define a functor $\Gam\colon\ULM\to\MVM$.
The main goal of the chapter is to show that $\Gam$ is an equivalence.
Recall from the beginning of \cref{subsec:MV} that, for a {\ulm} $M$, we set 
\[
	\Gam(M)\df \{x\in M\mid 0\leq x\leq 1 \},
\]
and we define on $\Gam(M)$ the operations $0$, $1$, $\lor$, $\land$ by restriction, $x \oplus y\df (x+y)\land 1$, and $x\odot y\df (x+y-1)\lor 0$.
Our next goal---met in \cref{t:Gamma is MVM} below---is to show that $\Gam(M)$ is {\amvm}.
We need some lemmas.

\begin{lemma}\label{l:plusdot in mon}
	Let $M$ be a {\ulm}. For all $x,y,z\in \Gam(M)$ we have
	\begin{align*}
		(x \odot y) \oplus z	& = ((x + y + z - 1) \lor z) \land 1,\\
	\intertext{and}
		(x \oplus y) \odot z	& = ((x + y + z - 1) \land z) \lor 0.
	\end{align*}
\end{lemma}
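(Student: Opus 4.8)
The plan is to prove both identities by direct computation inside $M$, unfolding the definitions $x \oplus y = (x+y)\land 1$ and $x \odot y = (x+y-1)\lor 0$ and then repeatedly invoking the distributivity of $+$ over $\lor$ and $\land$, which $M$ enjoys as an $\ell$-monoid (\cref{ax:M3-gen}, available through \cref{ax:U0}). No structural theory is needed; everything reduces to the distributivity laws together with the monoid and unit axioms.

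For the first identity, I would begin from $(x \odot y) \oplus z = ((x \odot y) + z)\land 1$ and substitute $x \odot y = (x+y-1)\lor 0$, obtaining $(((x+y-1)\lor 0) + z)\land 1$. Applying distributivity of $+$ over $\lor$ in the left coordinate rewrites $((x+y-1)\lor 0) + z$ as $(x+y+z-1)\lor(0+z)$, and since $0$ is the monoid unit this is $(x+y+z-1)\lor z$. Hence $(x \odot y)\oplus z = ((x+y+z-1)\lor z)\land 1$, as claimed. Here the only use of the units is the notational shorthand $x+y-1 = x+y+(-1)$.

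For the second identity I would argue order-dually. Starting from $(x \oplus y)\odot z = ((x \oplus y) + z - 1)\lor 0$ and substituting $x \oplus y = (x+y)\land 1$ gives $((((x+y)\land 1) + z) - 1)\lor 0$. Now distributivity of $+$ over $\land$ is applied twice: first to bring in $z$, yielding $(x+y+z)\land(1+z)$, and then to add $-1$, yielding $(x+y+z-1)\land((1+z)-1)$. The one place requiring a little care is the simplification $(1+z)-1 = z$, which follows from commutativity of $+$ together with $1+(-1)=0$ (\cref{ax:U1}) and the unit law; this produces $(x \oplus y)\odot z = ((x+y+z-1)\land z)\lor 0$.

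I do not anticipate any genuine obstacle: both equalities are mechanical consequences of the $\ell$-monoid distributivity and the unit axiom \cref{ax:U1}. The only bookkeeping to watch is that distributivity is applied to the correct coordinate, and that the cancellation $1+(-1)=0$ is invoked in the second computation. Since the two statements are order-dual, once the first is established the second needs only the symmetric argument, so I would either spell both out briefly or remark that the second follows by the dual calculation.
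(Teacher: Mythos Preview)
Your proposal is correct and matches the paper's proof essentially line for line: unfold the definitions of $\oplus$ and $\odot$, apply distributivity of $+$ over $\lor$ (resp.\ $\land$), and simplify the resulting constant term. The paper is slightly terser in the second computation, collapsing your two distributivity steps and the simplification $(1+z)-1=z$ into a single line, but the argument is the same.
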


\begin{proof}
	We have
	\begin{align*}
		(x \odot y) \oplus z	& = ((x \odot y) + z) \land 1
									&& \by{def.\ of $\oplus$}\\
									& = (((x + y - 1) \lor 0) + z) \land 1
									&& \by{def.\ of $\odot$}\\
									& = ((x + y + z - 1) \lor z) \land 1
									&& \by{$+$ distr.\ over $\land$}
	\intertext{and}
		(x \oplus y) \odot z	& = ((x \oplus y) + z - 1) \lor 0
									&& \by{def.\ of $\odot$}\\
									& = (((x + y) \land 1) + z - 1) \lor 0
									&& \by{def.\ of $\oplus$}\\
									& = ((x + y + z - 1) \land z) \lor 0.
									&& \by{$+$ distr.\ over $\land$} \qedhere
	\end{align*}
\end{proof}
The following establishes \cref{ax:A1,ax:A2,ax:A3} for $\Gam(M)$.
\begin{lemma}
	\label{l:A1-A4}
	For any {\ulm} $M$ the following properties hold.
	\begin{enumerate}
		\item	$\langle \Gam(M);\lor,\land \rangle$ is a distributive  lattice.		
		\item $\langle \Gam(M);\oplus,0\rangle$ and $\langle \Gam(M);\odot,1\rangle$ are commutative monoids.
		\item In $\Gam(M)$ the operations $\oplus$ and $\odot$ distribute over $\lor$ and $\land$.	
	\end{enumerate}
\end{lemma}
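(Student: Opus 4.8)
The plan is to verify the three assertions in turn, each reducing to a short computation about how the truncation (the clamping by $0$ and $1$) interacts with the $\ell$-monoid structure of $M$. Throughout, the only facts I would invoke are the distributivity of $+$ over $\lor$ and $\land$ (\cref{ax:M3-gen}), the distributivity of the underlying lattice of $M$, and the bounds $0 \leq x \leq 1$ that hold by definition for every $x \in \Gam(M)$. For item (1), I would first observe that $\Gam(M)$ is closed under $\lor$ and $\land$: if $0 \leq x, y \leq 1$ then $0 \leq x \land y \leq x \lor y \leq 1$, so $\Gam(M)$ is a sublattice of $\langle M; \lor, \land \rangle$. Since $M$ is distributive and a sublattice of a distributive lattice is distributive, $\langle \Gam(M); \lor, \land \rangle$ is a distributive lattice.

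For item (2), closure of $\oplus$ and $\odot$ on $\Gam(M)$ is already recorded (\cref{l:sum positive}), and commutativity is inherited from commutativity of $+$. The unit laws follow at once from the bounds: $x \oplus 0 = (x + 0) \land 1 = x \land 1 = x$ and $x \odot 1 = (x + 1 - 1) \lor 0 = x \lor 0 = x$. The substantive point is associativity. I would expand $(x \oplus y) \oplus z = (((x+y)\land 1) + z) \land 1$, distribute $+$ over $\land$ to reach $((x+y+z) \land (1+z)) \land 1$, and then use $z \geq 0$, whence $1 + z \geq 1$ and $(1+z)\land 1 = 1$, to collapse this to the normal form
\[
	(x \oplus y) \oplus z = (x+y+z)\land 1.
\]
Commutativity and associativity of $+$ yield the same normal form from $x \oplus (y \oplus z)$, so $\oplus$ is associative; the associativity of $\odot$ is the order-dual computation, producing $(x+y+z-2)\lor 0$ via $z \leq 1$, hence $(z-1)\lor 0 = 0$.

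For item (3), each of the four distributivity laws is obtained by pushing the truncation through the lattice operation. For instance $x \oplus (y \lor z) = (x + (y \lor z))\land 1 = ((x+y)\lor(x+z))\land 1 = ((x+y)\land 1)\lor((x+z)\land 1) = (x\oplus y)\lor(x\oplus z)$, first by distributivity of $+$ over $\lor$ and then by distributivity of the lattice of $M$; the law $x \oplus (y\land z) = (x\oplus y)\land (x\oplus z)$ is entirely analogous, and the two $\odot$-laws are the order-dual computations.

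The only genuinely nontrivial step is the associativity in item (2), and the main obstacle, such as it is, lies in correctly applying the truncation-absorption identities $(1+z)\land 1 = 1$ and $(z-1)\lor 0 = 0$, which hold precisely because $z$ ranges over $\Gam(M)$ and would fail on all of $M$. To avoid duplicating the $\odot$-arguments, I would remark that the dual algebra $\langle \Gam(M); \odot, \oplus, \land, \lor, 1, 0\rangle$ is isomorphic, via $x \mapsto x + (-1)$, to $\Gam(M^{\partial})$ for $M^{\partial}$ the order-dual of $M$ (which is again a {\ulm}); hence every $\odot$-statement above follows formally from its $\oplus$-counterpart, though the direct computations are equally short.
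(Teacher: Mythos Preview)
Your proof is correct and follows essentially the same approach as the paper: item (1) by sublattice, item (2) by computing the normal forms $(x+y+z)\land 1$ and $(x+y+z-2)\lor 0$ using the absorption identities $(1+z)\land 1 = 1$ and $(z-1)\lor 0 = 0$, and item (3) by pushing the truncation through using distributivity of $+$ and of the lattice. The paper carries out the $\odot$-computations directly rather than invoking your duality remark, but this is a purely cosmetic difference.
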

\begin{proof}
	 \begin{enumerate}[wide]
	 
	 	\item
	 			This follows from the fact that $\langle M; \lor, \land \rangle$ is a distributive lattice.
	 			
	 	\item
	 			We show that $\langle \Gam(M);\oplus,0\rangle$ is a commutative monoid.
			 	\begin{enumerate}
			 	
			 		\item 
			 				We have
							\begin{equation*}
					 			\begin{split}
					 				(x \oplus y) \oplus z	& = (((x + y) \land 1) + z) \land 1 \\
							 										& = (x + y + z) \land (1 + z) \land 1 \\
							 										& = (x + y + z) \land 1 \\
							 										& = (x + y + z) \land (x + 1) \land 1 \\
							 										& = (x + ((y + z) \land 1)) \land 1 \\
							 										& = x \oplus (y \oplus z).
					 			\end{split}
					 		\end{equation*}

			 		\item 
			 				We have $x \oplus y = (x + y) \land 1 = (y + x) \land 1 = y \oplus x$.
			 		
			 		\item
			 				We have $x \oplus 0 = (x + 0) \land 1 = x \land 1 = x$.
			 	
			 	\end{enumerate}
				We show that $\langle \Gam(M);\odot,1\rangle$ is a commutative monoid.
				\begin{enumerate}
					\item 
			 				We have
							\begin{equation*}
					 			\begin{split}
					 				(x \odot y) \odot z		& = (((x + y - 1) \lor 0) + z - 1) \lor 0 \\
							 										& = (x + y + z - 2) \lor (z - 1) \lor 0 \\
							 										& = (x + y + z - 2) \lor 0  \\
							 										& = (x + y + z - 2) \lor (x - 1) \lor 0 \\
							 										& = (x + ((y + z - 1) \lor 0) - 1) \lor 0 \\
							 										& = x \odot (y \odot z).
					 			\end{split}
					 		\end{equation*}
					\item 
						We have $ x \odot y  =  (x + y - 1) \lor 0  =  (y + x - 1) \lor 0  =  y \odot x $.
						
					\item 
						We have $x \odot 1 = (x + 1 - 1) \lor 0 - 1 = x \lor 0 = x$.
				\end{enumerate}

	 	\item 
	 			We show that $\oplus$ distributes over $\lor$ and $\land$: we have
		 		\begin{align*}
	 				(x \lor y) \oplus z	& = ((x \lor y) + z) \land 1 \\
			 									& = ((x + z) \lor (y + z)) \land 1 \\
			 									& = ((x + z) \land 1) \lor ((y + z) \land 1) \\
			 									& = (x \oplus z) \lor (y \oplus z).
				\intertext{and}
	 				(x \land y) \oplus z	& = ((x \land y) + z) \land 1 \\
		 										& = ((x + z) \land (y + z)) \land 1 \\
		 										& = ((x + z) \land 1) \land ((y + z) \land 1) \\
		 										& = (x \oplus z) \land (y \oplus z).
				\end{align*}
	 			We show that $\odot$ distributes over $\lor$ and $\land$: we have
	 			\begin{align*}
	 				(x \lor y) \odot z	& = ((x \lor y) + z - 1) \lor 0\\
			 									& = ((x + z - 1) \lor (y + z - 1)) \lor 0 \\
			 									& = ((x + z - 1) \lor 0) \lor (((y + z - 1) \lor 0)) \\
			 									& = (x \odot z) \lor (y \odot z).
	 			\intertext{and}
	 				(x \land y) \odot z	& = ((x \land y) + z - 1) \lor 0\\
			 									& = ((x + z - 1) \land (y + z - 1)) \lor 0 \\
			 									& = ((x + z - 1) \lor 0) \land (((y + z - 1) \lor 0)) \\
			 									& = (x \odot z) \land (y \odot z). \qedhere
		 		\end{align*}
	\end{enumerate}
\end{proof}
\begin{lemma}
	\label{l:oplus odot is good}
	Let $M$ be {\aulm}. For all $x, y \in \Gam(M)$ we have
	\[
		(x \oplus y) + (x \odot y) = x + y.
	\]
\end{lemma}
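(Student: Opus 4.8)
The plan is to prove the identity $(x \oplus y) + (x \odot y) = x + y$ for all $x, y \in \Gam(M)$ by unwinding the definitions of $\oplus$ and $\odot$ and then applying the lattice-theoretic identity already available to us. Recall that $x \oplus y = (x+y) \land 1$ and $x \odot y = (x+y-1) \lor 0$. The natural first move is to set $a \df x + y$ and $b \df x + y - 1 = a - 1$, and observe that then $x \oplus y = a \land 1$ and $x \odot y = b \lor 0 = (a-1)\lor 0$. So the claim reduces to showing $(a \land 1) + ((a-1) \lor 0) = a$.

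The key observation is that this is essentially an instance of \cref{l:land + lor=+}, which states that in any commutative $\ell$-semigroup $(u \land v) + (u \lor v) = u + v$. I would apply it with $u \df a \land 1$ and $v$ chosen so that the two summands $a \land 1$ and $(a-1)\lor 0$ become $u \land v$ and $u \lor v$. Concretely, I would first rewrite $(a-1)\lor 0$ by translating: using that $+$ distributes over $\lor$ (an axiom of $\ell$-monoids, \cref{ax:M3-gen}), we have $(a-1)\lor 0 = (a \lor 1) - 1 = (a \lor 1) + (-1)$. Then the left-hand side becomes $(a \land 1) + (a \lor 1) + (-1)$. By \cref{l:land + lor=+} applied to the commutative $\ell$-semigroup underlying $M$, with the two elements $a$ and $1$, we get $(a \land 1) + (a \lor 1) = a + 1$. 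Substituting, the expression equals $a + 1 + (-1) = a + 0 = a$, using \cref{ax:U1} ($1 + {-1} = 0$) and that $0$ is the monoid unit. Since $a = x + y$, this is exactly the desired identity.

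The main obstacle, such as it is, lies in the bookkeeping needed to justify $(a-1)\lor 0 = (a \lor 1) + (-1)$: this is the step where one must invoke the distributivity of $+$ over $\lor$ carefully, writing $(a \lor 1) + (-1) = (a + (-1)) \lor (1 + (-1)) = (a-1) \lor 0$, where the last equality uses \cref{ax:U1}. Everything else is a direct substitution. I expect the full argument to be a short chain of equalities, each labelled by the relevant axiom or by \cref{l:land + lor=+} and \cref{l:sum positive} (the latter only implicitly, to know the operations stay in range, though that is already guaranteed by the surrounding discussion establishing $\oplus$ and $\odot$ are internal to $\Gam(M)$).

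A clean way to present it would be the following computation, with $a = x+y$:
\begin{align*}
	(x \oplus y) + (x \odot y)	& = ((x+y) \land 1) + (((x+y) - 1) \lor 0)	\\
								& = (a \land 1) + ((a \lor 1) + (-1))			\\
								& = ((a \land 1) + (a \lor 1)) + (-1)			\\
								& = (a + 1) + (-1)								\\
								& = a + (1 + (-1))								\\
								& = a + 0										\\
								& = x + y,
\end{align*}
where the second equality uses distributivity of $+$ over $\lor$ together with \cref{ax:U1}, the fourth uses \cref{l:land + lor=+}, and the remaining steps use associativity, \cref{ax:U1}, and the unit law for $0$.
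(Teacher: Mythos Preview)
Your proof is correct and follows essentially the same approach as the paper: unfold the definitions, use distributivity of $+$ over $\lor$ to rewrite $((x+y)-1)\lor 0$ as $((x+y)\lor 1)-1$, apply \cref{l:land + lor=+} with the pair $(x+y,1)$, and cancel $1+(-1)=0$. The paper's version is just a slightly more compressed chain of equalities without the intermediate abbreviation $a$.
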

\begin{proof}
	We have
	\begin{align*}
		(x \oplus y) + (x \odot y)	& = ((x + y) \land 1) + ((x + y - 1) \lor 0)	&& \by{def.\ of $\oplus$ and $\odot$}\\
											& = ((x + y) \land 1) + ((x + y) \lor 1) - 1 && \by{$+$ distr.\ over $\lor$}\\
											& = x + y + 1 - 1										&& \by{\cref{l:land + lor=+}}\\
											& = x + y.												&&
		\qedhere
	\end{align*}
\end{proof}
\begin{lemma} \label{l:sigma in monoid}
	Let $M$ be a {\ulm}.
	For all $x,y,z\in \Gam(M)$, the elements 
	\[  (x \oplus y) \odot ((x \odot y) \oplus z),  \]
	\[  (x \odot y) \oplus ((x \oplus y) \odot z),  \]
	\[  (x \odot (y \oplus z)) \oplus (y \odot z),  \]
	\[  (x \oplus (y \odot z)) \odot (y \oplus z)   \]
	coincide with 
	\[  (x + y + z - 1) \lor 0) \land 1.  \]
\end{lemma}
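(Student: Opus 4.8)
The plan is to compute the four terms directly inside $M$, unfolding the definitions $a \oplus b = (a+b)\land 1$ and $a\odot b = (a+b-1)\lor 0$ and using the distributivity of $+$ over $\lor$ and $\land$ guaranteed by \cref{d:lm}. Note that we cannot shortcut this via \cref{l:permutations}, since that result presupposes the MV-monoidal axioms that we are in the process of establishing for $\Gam(M)$; everything must therefore be carried out by hand in the ambient $\ell$-monoid. Throughout I abbreviate $u \df x + y$ and $s \df x+y+z-1$, and I record the two facts that do all the work: by \cref{l:land + lor=+} applied to $u-1$ and $0$ we have $((u-1)\lor 0) + ((u-1)\land 0) = u-1$, and, since $+$ distributes over the lattice operations, $s \lor z = z + ((u-1)\lor 0)$ and $s\land z = z + ((u-1)\land 0)$.

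First I would treat $\sigma_1 = (x\oplus y)\odot((x\odot y)\oplus z)$. Rewriting the inner term via \cref{l:plusdot in mon} as $(x\odot y)\oplus z = (s\lor z)\land 1$ and applying the formula $(x\oplus y)\odot c = ((x+y+c-1)\land c)\lor 0$ of \cref{l:plusdot in mon} with $c = (s\lor z)\land 1$, distributivity reduces $\sigma_1$ to $\big(A \land u \land (s\lor z)\land 1\big)\lor 0$, where $A \df (2u+z-2)\lor s$. The crucial simplification is that $A = s + ((u-1)\lor 0)$ and $s\lor z = z + ((u-1)\lor 0)$, so that $A\land(s\lor z) = (s\land z) + ((u-1)\lor 0) = z + ((u-1)\land 0) + ((u-1)\lor 0) = z + (u-1) = s$ by the identity above. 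Since $s\le u$ (because $z\le 1$), this collapses $\sigma_1$ to $(s\land 1)\lor 0 = (s\lor 0)\land 1$, which is the claimed value.

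The term $\sigma_2 = (x\odot y)\oplus((x\oplus y)\odot z)$ is handled by the mirror-image computation: one rewrites the inner term as $(x\oplus y)\odot z = (s\land z)\lor 0$ via \cref{l:plusdot in mon}, unfolds the outer $\oplus$ through $(x\odot y)\oplus c' = ((x+y+c'-1)\lor c')\land 1$, and applies the same two identities to reach $(s\lor 0)\land 1$ once more, this time using $s\ge u-1$ (because $0\le z$) to absorb the superfluous joinand. Finally, the remaining two terms require no new computation: by commutativity of $\oplus$ and $\odot$ one has $\sigma_4(x,y,z) = \sigma_1(y,z,x)$ and $\sigma_3(x,y,z) = \sigma_2(y,z,x)$, and since the target value $(s\lor 0)\land 1$ is symmetric in $x,y,z$, the equalities for $\sigma_3$ and $\sigma_4$ follow by relabelling.

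The main obstacle is purely bookkeeping: after fully distributing $+$ over $\lor$ and $\land$, each $\sigma_i$ becomes a lattice polynomial in several affine combinations of $x,y,z$, and one must identify which of these joinands and meetands are absorbed. The device that tames this is to factor out the common summand $((u-1)\lor 0)$ (respectively $((u-1)\land 0)$) and invoke \cref{l:land + lor=+}; together with the two bounds $z\le 1$ and $0\le z$ coming from $x,y,z\in\Gam(M)$, this is precisely what makes the redundant terms disappear. I expect no conceptual difficulty beyond organising these cancellations cleanly.
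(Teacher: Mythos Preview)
Your proof is correct, but it takes a longer route than the paper's. The key difference is in how \cref{l:plusdot in mon} is applied to $\sigma_1$. You view $\sigma_1$ as $(x\oplus y)\odot c$ with $c$ the fully expanded inner term $(s\lor z)\land 1$, which forces you into a nested lattice expression involving $A=(2u+z-2)\lor s$ that you then simplify via distributivity and \cref{l:land + lor=+}. The paper instead commutes to read $\sigma_1$ as $((x\odot y)\oplus z)\odot(x\oplus y)$ and applies \cref{l:plusdot in mon} with the triple $(x\odot y,\,z,\,x\oplus y)$: this produces the additive term $(x\odot y)+z+(x\oplus y)-1$, in which $(x\oplus y)+(x\odot y)$ collapses to $x+y$ immediately by \cref{l:oplus odot is good}, yielding $(s\land(x\oplus y))\lor 0$ in one step. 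After unfolding $x\oplus y=u\land 1$ and using $s\le u$, the result follows with no further lattice manipulation. The paper treats $\sigma_3$ the same way and then derives $\sigma_2,\sigma_4$ by commutativity, just as you derive $\sigma_3,\sigma_4$ from $\sigma_1,\sigma_2$.

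So your argument trades one application of \cref{l:oplus odot is good} for a direct invocation of \cref{l:land + lor=+} plus extra bookkeeping; both are valid, but the paper's choice of which three elements to feed into \cref{l:plusdot in mon} is what makes the computation short.
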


\begin{proof}
	We have
	\begin{align*}
		& (x \oplus y) \odot ((x \odot y) \oplus z)								&& \\
		& = (((x \oplus y) + (x \odot y) + z) \land (x \oplus y)) \lor 0 	&& \by{\cref{l:plusdot in mon}} \\
		& = ((x + y + z - 1) \land (x \oplus y)) \lor 0							&& \by{\cref{l:oplus odot is good}} \\
		& = ((x + y + z - 1) \land (x + y) \land 1) \lor 0						&& \by{def.\ of $\oplus$}\\
		& = ((x + y + z - 1) \land 1) \lor 0										&& \by{$x + y \leq x + y + z - 1$}\\
		& = ((x + y + z - 1) \lor 0) \land 1.										&&				
	\intertext{and}
		& (x \odot (y \oplus z)) \oplus (y \odot z)								&& \\
		& = ((x + (y \oplus z) + (y \odot z)) \lor (y \odot z)) \land 1	&& \by{\cref{l:plusdot in mon}}\\
		& = ((x + y + z - 1) \lor (y \odot z)) \land 1							&& \by{\cref{l:oplus odot is good}}\\
		& = ((x + y + z - 1) \lor (y + z - 1) \lor 0) \land 1					&& \by{def.\ of $\odot$}\\
		& = ((x + y + z - 1) \lor 0) \land 1.										&& \by{$y + z - 1 \leq x + y + z - 1$}
	\end{align*}
	The fact that also $(x \odot y) \oplus ((x \oplus y) \odot z)$ and $(x \oplus (y \odot z)) \odot (y \oplus z)$ coincide with $(x + y + z - 1) \lor 0) \land 1$ follows from the commutativity of $\oplus$ and $\odot$ (which is easily seen to hold) and the commutativity of $+$.
\end{proof}

\begin{theorem}
	\label{t:Gamma is MVM}
	If $M$ is a {\ulm}, then $\Gam(M)$ is {\amvm}.
\end{theorem}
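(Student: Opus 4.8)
The plan is to verify directly that $\langle \Gam(M); \oplus, \odot, \lor, \land, 0, 1 \rangle$ satisfies the seven axioms \cref{ax:A1,ax:A2,ax:A3,ax:A4,ax:A5,ax:A6,ax:A7} of \cref{d:MVM}, handling them in three blocks according to which preparatory lemma does the work. First I would note that the operations are genuinely internal to $\Gam(M)$, which has already been observed before \cref{d:MVM} using \cref{l:sum positive}, so the structure is well defined. The first block, \cref{ax:A1,ax:A2,ax:A3}, is then immediate: these are exactly the three statements established in \cref{l:A1-A4}.

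For the second block, \cref{ax:A4,ax:A5}, I would invoke \cref{l:sigma in monoid}, which computes all four terms
\[
	(x \oplus y) \odot ((x \odot y) \oplus z),\quad (x \odot y) \oplus ((x \oplus y) \odot z),\quad (x \odot (y \oplus z)) \oplus (y \odot z),\quad (x \oplus (y \odot z)) \odot (y \oplus z)
\]
to the single common value $((x + y + z - 1) \lor 0) \land 1$ in $\Gam(M)$. Since the first and third terms agree, \cref{ax:A4} holds; since the second and fourth agree, \cref{ax:A5} holds.

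The third block, \cref{ax:A6,ax:A7}, requires combining \cref{l:plusdot in mon} and \cref{l:sigma in monoid} with a short lattice manipulation. For \cref{ax:A6}, \cref{l:plusdot in mon} gives $(x \odot y) \oplus z = ((x + y + z - 1) \lor z) \land 1$, while \cref{l:sigma in monoid} rewrites the intended right-hand side as $(((x + y + z - 1) \lor 0) \land 1) \lor z$. Writing $s \df x + y + z - 1$, the claim reduces to the identity $(s \lor z) \land 1 = ((s \lor 0) \land 1) \lor z$, which I would prove using distributivity of the lattice together with the bounds $0 \leq z \leq 1$ valid for every $z \in \Gam(M)$:
\[
	((s \lor 0) \land 1) \lor z = ((s \lor 0) \lor z) \land (1 \lor z) = (s \lor z) \land 1,
\]
where the last step uses $0 \lor z = z$ (as $z \geq 0$) and $1 \lor z = 1$ (as $z \leq 1$). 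Axiom \cref{ax:A7} then follows by the order-dual argument, exchanging $\oplus \leftrightarrow \odot$, $\lor \leftrightarrow \land$, and $0 \leftrightarrow 1$.

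I do not expect a genuine obstacle here, since the arithmetic heavy lifting has been front-loaded into \cref{l:plusdot in mon} and \cref{l:sigma in monoid}; the only delicate point is the lattice reduction for \cref{ax:A6,ax:A7}, where one must remember to exploit both distributivity of the underlying lattice of $M$ and the unit-interval bounds $0 \leq z \leq 1$ rather than attempting a brute-force expansion. Assembling the three blocks then yields that $\Gam(M)$ is \amvm.
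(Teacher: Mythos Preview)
Your proof is correct and follows essentially the same three-block structure as the paper's own proof, which simply cites \cref{l:A1-A4} for \cref{ax:A1,ax:A2,ax:A3}, \cref{l:sigma in monoid} for \cref{ax:A4,ax:A5}, and \cref{l:plusdot in mon} together with \cref{l:sigma in monoid} for \cref{ax:A6,ax:A7}. You have additionally spelled out the lattice identity $(s \lor z) \land 1 = ((s \lor 0) \land 1) \lor z$ (and its dual) that bridges the two lemmas for the last block, which the paper leaves implicit; your derivation using distributivity and the bounds $0 \leq z \leq 1$ is accurate.
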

\begin{proof}
	\Cref{ax:A1,ax:A2,ax:A3} in \cref{d:MVM} hold by \cref{l:A1-A4}.
	\Cref{ax:A4,ax:A5} hold by \cref{l:sigma in monoid}.
	\Cref{ax:A6,ax:A7} hold by \cref{l:plusdot in mon} and \cref{l:sigma in monoid}.
\end{proof}
Given a morphism of {\ulms} $f\colon M\to N$, we denote with $\Gam(f)$ its restriction $\Gam(f)\colon \Gam(M)\to \Gam(N)$.
This establishes a functor
\[\Gam\colon \ULM\to \MVM.\]
The main goal of this chapter is to show that $\Gam$ is an equivalence of categories.
%

%%%%%%%%%%%%%%%%%%%%%%%%%%%%%%%%%%%   SECTION   %%%%%%%%%%%%%%%%%%%%%%%%%%%%%%%%%%%%

\section{Good \texorpdfstring{$\mathbb{Z}$}{\unichar{"2124}}-sequences: definition and basic properties} \label{s:good.def}

The idea to establish a quasi-inverse of the functor $\Gam \colon \ULM \to \MVM$ is that every element $x$ of {\aulm} is uniquely determined by the function
\begin{align*}
	\zeta_M(x)	\colon 	\Z	& \longrightarrow 	\Gam(M)\\
								n	& \longmapsto			((x-n) \lor 0) \land 1.
\end{align*}
The intent behind the definition of good $\Z$-sequence (\cref{d:good-Z} below) is to describe the properties of the function $\zeta_M(x)$.
Indeed, in \cref{t:exists unique good sequence}, we will prove that {\aulm} $M$ is in bijection with the set of good $\Z$-sequences in $\Gam(M)$.

\begin{definition}
	Let $A$ be {\amvm}.
	A \emph{good pair}\index{good!pair}\index{pair!good} in {\amvm} $A$ is a pair $(x_0,x_1)$ of elements of $A$ such that $x_0 \oplus x_1 = x_0$ and $x_0 \odot x_1 = x_1$.
\end{definition}
A function from $\Z$ to a set $A$ will be called a \emph{$\Z$-sequence} in A\index{$\mathbb{Z}$-sequence}.
\begin{definition} \label{d:good-Z}
	Let $A$ be {\amvm}.
	A \emph{good $\Z$-sequence}\index{good!$\mathbb{Z}$-sequence}\index{$\mathbb{Z}$-sequence!good} in {\amvm} $A$ is a $\Z$-sequence $\gs{x}$ in $A$ which satisfies the following conditions.
	\begin{enumerate}
		\item The value $\gs{x}(k)$ is eventually $1$ for $k \to - \infty$, i.e., there exists $n \in \Z$ such that, for every $k < n$, we have $\gs{x}(k)=1$.
		\item The value $\gs{x}(k)$ is eventually $0$ for $k \to + \infty$, i.e., there exists $m \in \Z$ such that, for every $k\geq m$, we have $\gs{x}(k)=0$.
		\item For every $k\in \Z$, the pair $(\gs{x}(k),\gs{x}(k+1))$ is good.
	\end{enumerate}
\end{definition}
\begin{notation}
	We will write, more concisely, $(x_0,x_1,x_2,\dots)$ for the $\Z$-sequence
	\begin{align*}
		\Z	& \longrightarrow	A \\
		k	& \longmapsto		{
										\begin{cases}
											1		&	\text{if } k < 0; \\
											x_k	&	\text{if } k \geq 0. \\
										\end{cases}
									}
	\end{align*}
	Moreover, instead of $(x_0,\dots,x_n,0,0,0,\dots)$ we will write, more concisely, $(x_0,\dots,x_n)$. 
	In particular, for each $x\in A$, $(x)$ denotes the good $\Z$-sequence
	\begin{align*}
		\Z	& \longrightarrow	A	\\
		k	& \longmapsto		{
										\begin{cases}
											1	&	\text{if } k < 0;	\\
											x	&	\text{if } k = 0;	\\
											0	&	\text{if } k > 0.	\\
										\end{cases}
									}
	\end{align*}
\end{notation}
The use of sequences indexed by $\Z$ instead of $\N$ is not new: in \cite[Section 2.1]{BalEtal2002} it was used for the equivalence between {\ulgs} and MV-algebras.
\begin{remark}
	In our definition of good pair we included both the conditions $x_0\oplus x_1=x_0$ and $x_0\odot x_1=x_1$ because, in general, they are not equivalent.
	For example, let $A = \Gam(\Z\stackrel{\to}{\times}\{0,1\})$, where $\Z\stackrel{\to}{\times}\{0,1\}$ is the lexicographic product of the two {\lms} $\Z$ (with addition) and $\{0,1\}$ (with $+ \df \lor$), with $(1,0)$ as positive unit and $(-1,0)$ as negative unit (see \cref{i:lexico} in \cref{ex:ulms}).
	We have $(0,1) \oplus (0,1) = (0,1)$ and $(0,1) \odot (0,1) = (0,0) \neq (0,1)$.
\end{remark}
%

%-------------------------------   SUBSUBSECTION   --------------------------------%

\subsubsection{Basic properties of good \texorpdfstring{$\mathbb{Z}$}{\unichar{"2124}}-sequences}

\begin{remark}
	Given two elements $x_0$ and $x_1$ in {\amvm} $\alge{A}$, $(x_0, x_1)$ is a good pair in $\alge{A}$ if, and only if $(x_1, x_0)$ is a good pair in the dual of $\alge{A}$.
\end{remark}

\begin{lemma}\label{l:switch of plus and dot if good}
	Let $A$ be an {\mvm}, let $(x_0,x_1)$ be a good pair in $A$, and let $y\in A$.
	Then
	\[
		x_0 \odot (x_1 \oplus y)  =  x_1 \oplus (x_0 \odot y),
	\]
	and the terms on both sides coincide with $\sigma(x_0,x_1,y)$.
\end{lemma}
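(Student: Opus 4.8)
The plan is to read off both sides directly from the two equivalent defining forms $\sigma_1$ and $\sigma_2$ of the term $\sigma$. Recall that, by \cref{l:permutations}, in every {\mvm} the terms $\sigma_1,\sigma_2,\sigma_3,\sigma_4$ all coincide with $\sigma$; in particular $\sigma(x_0,x_1,y)=\sigma_1(x_0,x_1,y)=\sigma_2(x_0,x_1,y)$. So it suffices to rewrite each of these two forms using the good-pair identities $x_0\oplus x_1=x_0$ and $x_0\odot x_1=x_1$.

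First I would expand $\sigma_1$ at the triple $(x_0,x_1,y)$, namely
\[
	\sigma_1(x_0,x_1,y)=(x_0\oplus x_1)\odot\bigl((x_0\odot x_1)\oplus y\bigr).
\]
Since $(x_0,x_1)$ is a good pair, substituting $x_0\oplus x_1=x_0$ and $x_0\odot x_1=x_1$ collapses the right-hand side to $x_0\odot(x_1\oplus y)$. Hence $x_0\odot(x_1\oplus y)=\sigma(x_0,x_1,y)$. Symmetrically, expanding $\sigma_2$ at $(x_0,x_1,y)$ gives
\[
	\sigma_2(x_0,x_1,y)=(x_0\odot x_1)\oplus\bigl((x_0\oplus x_1)\odot y\bigr),
\]
and the same two good-pair identities turn this into $x_1\oplus(x_0\odot y)$, so that $x_1\oplus(x_0\odot y)=\sigma(x_0,x_1,y)$ as well. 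Comparing the two displays yields
\[
	x_0\odot(x_1\oplus y)=x_1\oplus(x_0\odot y)=\sigma(x_0,x_1,y),
\]
which is precisely the claim, including the assertion that both sides coincide with $\sigma(x_0,x_1,y)$.

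There is no genuine obstacle here once one spots the right substitution: the whole content is that the good-pair equations are exactly what is needed to reduce the two symmetric defining forms $\sigma_1$ and $\sigma_2$ of $\sigma$ to the two expressions in the statement. The only external input is \cref{l:permutations}, which guarantees that $\sigma_1$ and $\sigma_2$ denote the same element $\sigma$; everything else is a one-line rewrite. Thus, rather than verifying any inequality or invoking the interpretation in $[0,1]$, I would keep the argument purely equational and let the good-pair hypotheses do all the work.
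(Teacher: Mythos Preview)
Your proof is correct and essentially identical to the paper's: both expand $\sigma_1(x_0,x_1,y)$ and $\sigma_2(x_0,x_1,y)$, apply the good-pair identities $x_0\oplus x_1=x_0$ and $x_0\odot x_1=x_1$, and invoke \cref{l:permutations} to identify $\sigma_1=\sigma_2=\sigma$. The paper presents the same steps as a single chain of equalities from $x_0\odot(x_1\oplus y)$ to $x_1\oplus(x_0\odot y)$, but the content is the same.
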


\begin{proof}
	We have
	\begin{align*}
		x_0 \odot (x_1 \oplus y)
			& = (x_0 \oplus x_1) \odot ((x_0 \odot x_1) \oplus y)
			&& \by{$(x_0,x_1)$ is good}\\
			& = \sigma_1(x_0, x_1, y)
			&& \by{def.\ of $\sigma_1$}\\
			& = \sigma_2(x_0, x_1, y)
			&& \by{\cref{l:permutations}}\\
			& = (x_0 \odot x_1) \oplus ((x_0 \oplus x_1) \odot y)
			&& \by{def.\ of $\sigma_2$}\\
			& = x_1 \oplus (x_0 \odot y).
			&& \by{$(x_0,x_1)$ is good} \qedhere
	\end{align*}
\end{proof}

\begin{lemma}
	\label{l:oplus odot is a good pair}
	Let $A$ be {\amvm}, and let $x,y\in A$.
	Then $(x\oplus y,x\odot y)$ is a good pair.
\end{lemma}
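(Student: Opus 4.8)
The goal is to show that $(x \oplus y, x \odot y)$ is a good pair, i.e.\ that it satisfies the two defining identities
\[
	(x \oplus y) \oplus (x \odot y) = x \oplus y \qquad \text{and} \qquad (x \oplus y) \odot (x \odot y) = x \odot y.
\]
The plan is to verify each identity directly from the axioms, exploiting the by-now-established basic properties of {\mvms}, especially the symmetry afforded by the dual algebra.

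For the first identity, I would start from the left-hand side $(x \oplus y) \oplus (x \odot y)$ and try to collapse the extra summand $x \odot y$. The natural tool is \cref{l:increasing}, which gives $x \odot y \leq x \leq x \oplus y$ and more generally tells us that $\odot$ shrinks while $\oplus$ grows; combined with \cref{l:order-preserving properties} (monotonicity of $\oplus$), one expects $(x \oplus y) \oplus (x \odot y)$ to be squeezed back down to $x \oplus y$. Concretely, I would use \cref{ax:A6}, which reads $(x \odot y) \oplus z = \sigma(x,y,z) \lor z$, perhaps after rewriting $x \oplus y$ in a form that lets the axiom apply. Alternatively, since $\sigma(x,y,z)$ has the transparent interpretation $((x+y+z-1)\lor 0)\land 1$ on $[0,1]$, I would be guided by that computation: setting $z = x \odot y$ should make $\sigma$ telescope so that the $\lor$ reabsorbs into $x \oplus y$. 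The cleanest route is probably to observe that $x \odot y \leq x \oplus y$ (from \cref{l:increasing} and \cref{l:bounded-lattice}) and that $\oplus$-adding something below a term, when the term is already ``saturated'' in the relevant layer, leaves it unchanged; this is exactly the content one extracts from \cref{ax:A6,ax:A7} and \cref{l:permutations}.

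The second identity $(x \oplus y) \odot (x \odot y) = x \odot y$ is the order-dual of the first: by the duality remark (if $\langle A; \oplus, \odot, \lor, \land, 0, 1\rangle$ is {\amvm} then so is $\langle A; \odot, \oplus, \land, \lor, 1, 0\rangle$), any proof of the first identity transforms verbatim into a proof of the second under the interchange $\oplus \leftrightarrow \odot$, $\lor \leftrightarrow \land$, $0 \leftrightarrow 1$. So I would prove one identity carefully and invoke duality for the other, mirroring the economy used in \cref{l:bounded-lattice,l:absorbing,l:increasing}.

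I do not expect a serious obstacle here: everything reduces to a short equational manipulation using \cref{ax:A2,ax:A3,ax:A6} together with \cref{l:permutations,l:increasing,l:order-preserving properties}. The only mild subtlety is choosing the right instantiation of \cref{ax:A6} (or \cref{ax:A7}) so that the term $\sigma$ appears in a form whose $\lor z$ (resp.\ $\land z$) collapses cleanly; the numerical interpretation $\sigma(x,y,z) = ((x+y+z-1)\lor 0)\land 1$ serves as a reliable guide for which algebraic identity to aim at, and the permutation-invariance of $\sigma$ from \cref{l:permutations} removes any ambiguity about the order of the arguments. Thus the main work is bookkeeping rather than insight, and the duality principle halves even that.
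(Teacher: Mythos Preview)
Your proposal has the right spirit (work with $\sigma$ and invoke duality for the second identity), but the concrete step you propose does not land, and you are missing the key move the paper makes.

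You suggest using \cref{ax:A6}, i.e.\ $(x \odot y) \oplus z = \sigma(x,y,z) \lor z$, ``setting $z = x \odot y$''. That substitution yields $(x \odot y) \oplus (x \odot y) = \sigma(x,y,x\odot y) \lor (x \odot y)$, which is not the expression you need. The correct instantiation (after commuting via \cref{ax:A2}) is $z = x \oplus y$, which gives $(x \oplus y) \oplus (x \odot y) = \sigma(x,y,x\oplus y) \lor (x \oplus y)$; you would then still have to prove $\sigma(x,y,x\oplus y) \leq x \oplus y$ abstractly. This can be done (e.g.\ read $\sigma$ as $\sigma_4$ and use \cref{l:increasing} twice), but it is not ``just bookkeeping'' and you have not indicated how.

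The paper's proof is shorter and uses a different idea: introduce the constant $1$. Since $x \oplus y = 1 \odot (x \oplus y)$ by \cref{ax:A2}, one has
\[
	(x \oplus y) \oplus (x \odot y) = (1 \odot (x \oplus y)) \oplus (x \odot y) = \sigma_3(1,x,y).
\]
By \cref{l:permutations} this equals $\sigma_4(1,x,y) = (1 \oplus (x \odot y)) \odot (x \oplus y)$, and then \cref{l:absorbing} collapses $1 \oplus (x \odot y)$ to $1$, leaving $1 \odot (x \oplus y) = x \oplus y$. The second identity follows by duality, as you correctly anticipated. The insight you are missing is not which axiom to instantiate but \emph{which constant to insert} so that the $\sigma$-equalities do the work for you.
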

\begin{proof}
	We have
	\begin{align*}
		(x \oplus y) \oplus (x \odot y)	& = (1 \odot (x \oplus y)) \oplus (x \odot y)	&& \by{\cref{ax:A2}}\\
													& = \sigma_3(1,x,y)										&& \by{def.\ of $\sigma_3$}\\
													& = \sigma_4(1,x,y)										&& \by{\cref{l:permutations}}\\
													& = (1 \oplus (x \odot y)) \odot (x \oplus y)	&& \by{def.\ of $\sigma_4$}\\
													& = 1 \odot (x \oplus y)								&& \by{\cref{l:absorbing}}\\
													& = x \oplus y.											&& \by{\cref{ax:A2}}
	\end{align*}
	Dually,
	$(x\oplus y)\odot (x\odot y)=x\odot y$.
\end{proof}

\begin{lemma}
	\label{l:bipartite}
	Let $n,m\in \N$, let $x_0,\dots, x_n, y_0, \dots,y_m$ be elements of {\amvm} and suppose that, for every $i\in \{0, \dots,n\}$, and every $j\in \{0, \dots, m\}$, the pair $(x_i,y_j)$ is good.
	Then,
	\[   (x_0\odot \dots \odot x_n, y_0\oplus \dots \oplus y_m)   \]
	is a good pair.
\end{lemma}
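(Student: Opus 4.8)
The plan is to prove the statement by reducing it to two one-variable claims, each proved by induction on the length of the corresponding string, and then combining them. Set $X \df x_0 \odot \dots \odot x_n$ and $Y \df y_0 \oplus \dots \oplus y_m$; the goal is to establish $X \oplus Y = X$ and $X \odot Y = Y$, i.e.\ that $(X, Y)$ is a good pair.

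The key step is a single-step sublemma: \emph{if $(a,c)$ and $(b,c)$ are good pairs, then $(a \odot b, c)$ is a good pair.} For the $\odot$-equation this is routine, since by commutativity and associativity of $\odot$ (\cref{ax:A2}) we get $(a \odot b) \odot c = a \odot (b \odot c) = a \odot c = c$, using $b \odot c = c$ and $a \odot c = c$. The $\oplus$-equation is the only delicate point: applying \cref{l:switch of plus and dot if good} to the good pair $(b,c)$ with third element $a$ yields $b \odot (c \oplus a) = c \oplus (b \odot a)$, and since $c \oplus a = a \oplus c = a$ (as $(a,c)$ is good), the left-hand side collapses to $b \odot a = a \odot b$; commutativity of $\oplus$ then gives $(a \odot b) \oplus c = a \odot b$. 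Iterating this sublemma, a straightforward induction on $n$ proves Claim A: \emph{if $(x_i, c)$ is good for every $i \in \{0, \dots, n\}$, then $(X, c)$ is good}, the base case $n = 0$ being the hypothesis itself.

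For the other coordinate I would pass to the order-dual $\langle A; \odot, \oplus, \land, \lor, 1, 0\rangle$, which is again an MV-monoidal algebra, together with the observation recalled above that $(u,v)$ is good in $A$ if and only if $(v,u)$ is good in the dual. Applying Claim A in the dual algebra and translating back gives Claim B: \emph{if $(c, y_j)$ is good for every $j \in \{0, \dots, m\}$, then $(c, Y)$ is good}. Finally I combine the two: Claim B applied with $c \df x_i$ for each fixed $i$ (using that $(x_i, y_j)$ is good for all $j$) shows that $(x_i, Y)$ is good for every $i$; then Claim A, with $c \df Y$, yields that $(X, Y)$ is good, as required.

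I expect the main obstacle to be precisely the $\oplus$-equation in the single-step sublemma, where the naive associativity manipulation does not close up and one must feed exactly the right good pair into the switch identity \cref{l:switch of plus and dot if good}; once this is in place, both inductions and the duality argument are mechanical.
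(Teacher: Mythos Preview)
Your proof is correct and follows essentially the same approach as the paper: both reduce to the single-step case (your sublemma, the paper's case $(n,m)=(1,0)$) via \cref{l:switch of plus and dot if good}, and then propagate by induction. The paper runs a double induction on $(n,m)$ directly, whereas you split it into two one-parameter inductions (Claim~A, then Claim~B by order-duality) and compose them; this is only a difference in packaging, and your explicit appeal to duality to obtain Claim~B is exactly what the paper leaves as ``analogous''.
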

\begin{proof}
	The statement is trivial for $(n,m)=(0,0)$.
	The statement is true for $(n,m)=(1,0)$ because
	\[
		(x_0\odot x_1) \oplus y_0\stackrel{\text{\cref{l:switch of plus and dot if good}}}{=} (x_0 \oplus y_0) \odot x_1 = x_0 \odot x_1,
	\]
	and
	\[
		(x_0\odot x_1)\odot y_0=x_0\odot (x_1\odot y_0)=x_0\odot y_0=y_0.
	\]
	The case $(n,m)=(0,1)$ is analogous.
	Let $(n,m)\in \N\times \N\setminus \{(0,0), (0,1),(1,0) \}$, and suppose that the statement is true for each $(h,k)\in \N\times \N$ such that $(h,k)\neq (n,m)$, $h\leq n$ and $k\leq m$.
	We prove that the statement holds for $(n,m)$.
	At least one of the two conditions $n\neq 0$ and $m\neq 0$ holds.
	Suppose, for example, $n\neq 0$.
	Then, by inductive hypothesis, the pairs $(x_0\odot \dots \odot x_{n-1}, y_0\oplus \dots \oplus y_m)$ and $(x_{n}, y_0\oplus \dots \oplus y_m)$ are good.
	Now we apply the statement for the case $(1,0)$, and we obtain that $(x_0\odot \dots \odot x_n, y_0\oplus \dots \oplus y_m)$ is a good pair.
	The case $m\neq 0$ is analogous.
\end{proof}
%

%%%%%%%%%%%%%%%%%%%%%%%%%%%%%%%%%%%   SECTION   %%%%%%%%%%%%%%%%%%%%%%%%%%%%%%%%%%%%

\section{Subdirectly irreducible {\mvms}}

In this section we prove that, for every subdirectly irreducible {\mvm} $A$, we have: (i) $A$ is totally ordered and (ii) for every good pair $(x_0, x_1)$ in $A$ we have either $x_0 = 1$ or $x_1 = 0$.
These two conditions are of interest because, as one may prove, for {\amvm} $A$, the conjunction of (i) and (ii) is equivalent to the enveloping {\ulm} of $A$ being totally ordered.

%=================================   SUBSECTION   =================================%

\subsection{Subdirectly irreducible algebras are totally ordered}
	\label{s:sub-irr_tot-ord}

In this subsection we prove that every subdirectly irreducible {\mvm} is totally ordered (\cref{t:sub irr is totally ordered}).
Our source of inspiration is \cite[Section~1]{Repn}, in analogy to which we proceed.

Given {\amvm} $A$ and a lattice congruence $\theta$ on $A$ such that $\lvert A/\theta\rvert=2$, we set
\[   \theta^* \df \{(a,b) \in A \times A \mid \forall x \in A\ (a \oplus x, b \oplus x) \in \theta\text{ and } (a \odot x, b \odot x) \in \theta \},   \]
and we let $\0(\theta)$ and $\1(\theta)$ denote the classes of the lattice congruence $\theta$ corresponding to the smallest and greatest element of the lattice $A/\theta$, respectively.
\begin{lemma}
	\label{l:char-theta*}
	Let $A$ be {\amvm}, and let $\theta$ be a lattice congruence on $A$ such that $\lvert A/\theta\rvert=2$.
	Then, $\theta^*$ is the greatest $\{\oplus, \odot, \lor, \land, 0, 1\}$-congruence contained in $\theta$.
\end{lemma}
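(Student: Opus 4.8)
The plan is to prove the statement by verifying three things: that $\theta^*\subseteq\theta$, that $\theta^*$ is a congruence for the full signature $\{\oplus,\odot,\lor,\land,0,1\}$, and that every such congruence contained in $\theta$ is contained in $\theta^*$; together these say exactly that $\theta^*$ is the greatest congruence below $\theta$. The first point is immediate: taking $x=0$ in the defining condition and using \cref{ax:A2}, from $(a,b)\in\theta^*$ we get $(a\oplus 0,b\oplus 0)=(a,b)\in\theta$. The third point is also short: if $\psi$ is a congruence with $\psi\subseteq\theta$ and $(a,b)\in\psi$, then for every $x$ the pairs $(a\oplus x,b\oplus x)$ and $(a\odot x,b\odot x)$ lie in $\psi$ (as $\psi$ respects $\oplus$ and $\odot$ and $(x,x)\in\psi$), hence in $\theta$, so $(a,b)\in\theta^*$. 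Thus the whole weight of the proof falls on showing that $\theta^*$ is a congruence.

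That $\theta^*$ is an equivalence relation follows at once from $\theta$ being one. Compatibility with $\lor$ and $\land$ is the easy half: if $(a,b)\in\theta^*$, then for each $x$ the distributive laws \cref{ax:A3} give $(a\lor c)\oplus x=(a\oplus x)\lor(c\oplus x)$ and similarly for $\odot$ and for $\land$, and since $\theta$ is a lattice congruence these join/meet with the $c$-term stay $\theta$-related; hence $(a\lor c,b\lor c),(a\land c,b\land c)\in\theta^*$. It remains to prove compatibility with $\oplus$ and $\odot$. Because the defining condition of $\theta^*$ and the axioms are invariant under passing to the order-dual $\langle A;\odot,\oplus,\land,\lor,1,0\rangle$ (again \amvm), it is enough to treat $\oplus$; and by symmetry and changing one argument at a time it suffices to show, for $(a,b)\in\theta^*$ and every $c$, that $(a\oplus c,b\oplus c)\in\theta^*$. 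Unwinding the definition, the ``$\oplus x$'' clause is immediate from associativity, since $(a\oplus c)\oplus x=a\oplus(c\oplus x)$ and $(a,b)\in\theta^*$ applied to the element $c\oplus x$ gives exactly this; so everything reduces to the ``$\odot x$'' clause:
\[
(a,b)\in\theta^*\ \Longrightarrow\ \big((a\oplus c)\odot x,\,(b\oplus c)\odot x\big)\in\theta\quad\text{for all }c,x.
\]

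To attack this I would use the two-element quotient decisively. Write $\0(\theta)$ and $\1(\theta)$ for the two classes; since $A/\theta$ is the two-element chain and $A$ is distributive (\cref{ax:A1}), $\0(\theta)$ is a prime ideal and $\1(\theta)$ a prime filter, and \cref{l:increasing} yields the monotonicity facts that $u\oplus v\in\1(\theta)$ whenever $u$ or $v$ lies in $\1(\theta)$, and dually $u\odot v\in\0(\theta)$ whenever $u$ or $v$ lies in $\0(\theta)$. Using \cref{ax:A7} together with \cref{l:permutations} I would rewrite $(a\oplus c)\odot x$ as $\sigma(a,c,x)\land x$, so that (as $\theta$ is a lattice congruence) the claim reduces to $(\sigma(a,c,x),\sigma(b,c,x))\in\theta$; the symmetric forms of $\sigma$ let me display it as a term in which $a$ occurs exactly once, namely $\sigma(a,c,x)=(a\odot(c\oplus x))\oplus(c\odot x)$, so that $(a,b)\in\theta^*$ controls the $\theta$-class of that single occurrence $a\odot(c\oplus x)$. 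A case analysis on which of the two classes this occurrence and the parameter $c\odot x$ fall into, invoking the switch lemma \cref{l:switch of plus and dot if good} for the good pair $(c\oplus x,c\odot x)$ produced by \cref{l:oplus odot is a good pair}, is then meant to force $\sigma(a,c,x)$ and $\sigma(b,c,x)$ into the same class.

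The main obstacle is precisely this last case analysis. The clauses of $\theta^*$ pin down only the $\theta$-classes of single $\oplus$- and $\odot$-translates of $a$, whereas $\sigma(a,c,x)$ is a two-fold composite of such translations; the two $\theta$-classes do not by themselves determine the class of a sum of two elements of $\0(\theta)$ (nor of a product of two elements of $\1(\theta)$), and the naive rewritings keep bouncing between the $\oplus$- and the $\odot$-compatibility statements. Breaking this interdependence is the crux, and it is exactly here that the hypothesis $\lvert A/\theta\rvert=2$ (equivalently, primeness of $\0(\theta)$) must be used in an essential, non-formal way, in analogy with the treatment in \cite[Section~1]{Repn}; I expect this to be the longest and most delicate part of the argument, the remaining verifications being routine.
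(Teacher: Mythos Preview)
Your overall architecture matches the paper's proof exactly: the containment $\theta^*\subseteq\theta$ via $x=0$, the maximality, the equivalence-relation part, compatibility with $\lor,\land$ via \cref{ax:A3}, the reduction of the $\oplus$-compatibility to the single clause $((a\oplus c)\odot x,(b\oplus c)\odot x)\in\theta$, and the rewriting $(a\oplus c)\odot x=\sigma(a,c,x)\land x$ are all precisely what the paper does. Your diagnosis of where the difficulty lies is also correct.

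The gap is the tool you reach for to close the case analysis. The switch lemma is a red herring here; the paper does not use it. What breaks the circularity you describe is a second pair of identities coming directly from \cref{ax:A6,ax:A7} (with the roles of the variables permuted), namely
\[
a\odot(c\oplus x)=a\land\sigma(a,c,x)\qquad\text{and}\qquad a\oplus(c\odot x)=a\lor\sigma(a,c,x).
\]
These express the very subterms you isolate as \emph{lattice} combinations of $a$ and $\sigma$, and since $\theta$ is a lattice congruence with $\0(\theta)$ a prime ideal and $\1(\theta)$ a prime filter, they let you read off the class of $a$ from the classes of $\sigma$ and of those subterms. The paper runs a contradiction: assume $\sigma(a,c,x)\in\0(\theta)$ and $\sigma(b,c,x)\in\1(\theta)$. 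From the $\sigma_3$-form you already wrote, $a\odot(c\oplus x)\leq\sigma(a,c,x)$ forces $a\odot(c\oplus x)\in\0(\theta)$; by $\theta^*$, also $b\odot(c\oplus x)\in\0(\theta)$; now the first displayed identity for $b$ gives $b\land\sigma(b,c,x)\in\0(\theta)$ with $\sigma(b,c,x)\in\1(\theta)$, whence $b\in\0(\theta)$. Dually, the $\sigma_4$-form gives $b\oplus(c\odot x)\geq\sigma(b,c,x)\in\1(\theta)$; by $\theta^*$, $a\oplus(c\odot x)\in\1(\theta)$; the second displayed identity for $a$ then forces $a\in\1(\theta)$. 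This contradicts $(a,b)\in\theta^*\subseteq\theta$. So the missing idea is to exploit \emph{both} the $\sigma_3$ and $\sigma_4$ presentations in tandem, together with the collapse of $a\odot(c\oplus x)$ and $a\oplus(c\odot x)$ to $a\land\sigma$ and $a\lor\sigma$; once you have that, no further case analysis on $c\odot x$ is needed.
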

\begin{proof}
	We have $\theta^*\seq \theta$ because, for every $(a,b)\in \theta^*$, we have $(a,b)=(a\oplus 0,b\oplus 0)\in \theta$. 
	\begin{claim}
		The relation $\theta^*$ contains every $\{\oplus, \odot, \lor, \land, 0, 1\}$-congruence contained in $\theta$.
	\end{claim}
	\begin{claimproof}
		Let $\rho$ be a $\{\oplus, \odot, \lor, \land, 0, 1\}$-congruence contained in $\theta$.
		Let $(a,b)\in \rho$, and let $x\in A$.
		Since $(x,x)\in \rho$, and $\rho$ is a congruence, we have $(a\oplus x,b\oplus x)\in \rho\seq \theta$, and $(a\odot x,b\odot x)\in \rho\seq \theta$.
		Thus, $(a,b)\in \theta^*$.
	\end{claimproof}
	\begin{claim}
		The relation $\theta^*$ is a $\{\oplus, \odot, \lor, \land, 0, 1\}$-congruence.
	\end{claim}
	\begin{claimproof}
		The relation $\theta^*$ is an equivalence relation because $\theta$ is so.
		In the following, let $a,a',b,b'\in A$, and suppose $(a,a')\in \theta^*$ and $(b,b')\in \theta^*$: for all $x\in A$, we have $(a\oplus x,a'\oplus x)\in \theta$, $(a\odot x,a'\odot x)\in \theta$, $(b\oplus x,b'\oplus x)\in \theta$, and $(b\odot x,b'\odot x)\in \theta$.
			
		Let us prove $(a\lor b, a'\lor b')\in \theta^*$.
		Let $x\in A$.
		Since $(a\oplus x,a'\oplus x)\in \theta$, $(b\oplus x,b'\oplus x)\in \theta$, and $\theta$ is a lattice congruence, we have
		\[
			\big((a\oplus x)\lor (b\oplus x),(a'\oplus x)\lor (b'\oplus x)\big)\in \theta,
		\]
		i.e., 
		\[
			\big((a\lor b)\oplus x, (a'\lor b')\oplus x\big)\in \theta.
		\]
		Analogously, 
		\[
			\big((a\lor b)\odot x, (a'\lor b')\odot x\big)\in \theta.
		\]
		This proves $(a\lor b, a'\lor b')\in \theta^*$.	Analogously, $(a\land b, a'\land b')\in \theta^*$.
		
		Let us prove $(a\oplus b, a'\oplus b')\in \theta^*$.
		Let $x\in A$.
		We shall prove 
		\begin{equation}
		\label{e:oplus oplus}
			(a\oplus b\oplus x,a'\oplus b'\oplus x)\in \theta
		\end{equation}
		and
		\begin{equation}
		\label{e:in theta 0}
			\big((a\oplus b)\odot x,(a'\oplus b')\odot x\big)\in \theta.
		\end{equation}
		Since $(a,a') \in \theta^*$, we have 
		\[
			\big(a \oplus (b \oplus x), a' \oplus (b \oplus x)\big) \in \theta.
		\]
		Since $(b,b') \in \theta^*$, we have
		\[
			\big(b\oplus (a'\oplus x),b'\oplus (a'\oplus x)\big)\in \theta.
		\]
		Hence, by transitivity of $\theta$, we have $(a\oplus b\oplus x,a'\oplus b'\oplus x)\in \theta$, and so \cref{e:oplus oplus} is proved.
		
		Let us prove \cref{e:in theta 0}.
		By transitivity of $\theta$, it is enough to prove 
		\begin{equation}\label{e:in theta 1}
			\big((a \oplus b) \odot x, (a' \oplus b) \odot x\big) \in \theta,
		\end{equation}
		and
		\begin{equation}\label{e:in theta 2}
			\big((a'\oplus b)\odot x,(a'\oplus b')\odot x\big)\in \theta.
		\end{equation}
		Let us prove \cref{e:in theta 1}.
		Suppose, by way of contradiction, $((a\oplus b)\odot x,(a'\oplus b)\odot x)\notin \theta$.
		Then, without loss of generality, we may assume $(a\oplus b)\odot x\in \0(\theta)$ and $(a'\oplus b)\odot x\in \1(\theta)$.	We have	$\1(\theta)\ni(a'\oplus b)\odot x\leq x$; thus $x\in \1(\theta)$.
		We have
		\begin{equation*}
			\underbrace{(a\oplus b)\odot x}_{\in \0(\theta)}=\sigma(a,b,x)\land \underbrace{x}_{\in \1(\theta)},
		\end{equation*}
		and thus $\sigma(a,b,x)\in \0(\theta)$.
		We have
		\begin{equation*}
		\underbrace{(a'\oplus b)\odot x}_{\in \1(\theta)}=\sigma(a',b,x)\land \underbrace{x}_{\in \1(\theta)},
		\end{equation*}
		and thus $\sigma(a',b,x)\in \1(\theta)$.
		We have
		\[
			\0(\theta) \ni \sigma(a,b,x)  =  (a \odot (b \oplus x)) \oplus (b \odot x)  \geq  a \odot (b \oplus x),
		\]
		and thus $a\odot (b\oplus x)\in \0(\theta)$.
		Since $(a,a')\in \theta^*$, it follows that $a'\odot (b\oplus x)\in \0(\theta)$.
		We have
		\[  \underbrace{a'\odot(b\oplus x)}_{\in \0(\theta)}= a' \land \underbrace{\sigma(a',b,x)}_{\in \1(\theta)}.  \]
		Therefore, $a'\in \0(\theta)$.
		We have
		\[
			\1(\theta)\ni\sigma(a',b,x)=(a' \oplus (b\odot x)) \odot(b\oplus x)\leq a' \oplus (b\odot x),
		\]
		and thus $a'\oplus (b\odot x)\in \1(\theta)$.
		Since $(a,a')\in \theta^*$, it follows that $a\oplus (b\odot x)\in \1(\theta)$.
		We have
		\[\underbrace{a\oplus(b\odot x)}_{\in \1(\theta)}= a \lor \underbrace{\sigma(a,b,x)}_{\in \0(\theta)}.\]
		Therefore, $a\in \1(\theta)$.
		Thus, $a\in \1(\theta)$ and $a'\in \0(\theta)$, and this contradicts $(a,a')\in \theta^*\seq \theta$.
		In conclusion, \cref{e:in theta 1} holds, and analogously for \cref{e:in theta 2}.
		By transitivity of $\theta$, \cref{e:in theta 0} holds.
		This proves $(a\oplus b, a'\oplus b')\in \theta^*$.
		Analogously, $(a\odot b, a'\odot b')\in \theta^*$.	
		Therefore, $\theta^*$ is a $\{\oplus, \odot, \lor, \land, 0, 1\}$-congruence.	
	\end{claimproof}
\end{proof}
Given a set $A$, we let $\Delta_A$ (or simply $\Delta$, when $A$ is understood) denote the identity relation $\{(s,s) \mid s \in A\}$ on $A$.
\begin{lemma}
	\label{l:existence of lattice congruence}
	For every subdirectly irreducible {\mvm} $A$ there exists a lattice congruence $\theta$ on $A$ such that $\lvert A/\theta\rvert=2$ and $\theta^*=\Delta$.
\end{lemma}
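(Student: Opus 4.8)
<br>

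The goal is to prove \cref{l:existence of lattice congruence}: for every subdirectly irreducible \mvm{} $A$, there exists a lattice congruence $\theta$ with $\lvert A/\theta\rvert = 2$ and $\theta^* = \Delta$. The plan is to exploit the machinery built in \cref{l:char-theta*} together with the definition of subdirect irreducibility. Recall that \cref{l:char-theta*} tells us that whenever $\theta$ is a lattice congruence with $\lvert A/\theta\rvert = 2$, the relation $\theta^*$ is the \emph{greatest} $\{\oplus,\odot,\lor,\land,0,1\}$-congruence (i.e.\ \mvm{}-congruence) contained in $\theta$. So the condition $\theta^* = \Delta$ says precisely that the only \mvm{}-congruence sitting inside $\theta$ is the trivial one.

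First I would unpack what subdirect irreducibility buys us. A non-trivial algebra $A$ is subdirectly irreducible if and only if the lattice of its congruences has a least non-trivial element, the \emph{monolith} $\mu$; equivalently, the intersection of all non-trivial congruences is itself non-trivial. Since $A$ is non-trivial, it has at least two elements, and using \cref{l:bounded-lattice} (every element lies between $0$ and $1$) together with distributivity of the underlying lattice, I can find a lattice congruence $\theta$ collapsing $A$ onto the two-element chain $\{0,1\}$: concretely, take a prime lattice filter/ideal pair separating two distinct elements, which exists because the underlying lattice is distributive, and let $\theta$ be the associated two-block lattice congruence. This produces a $\theta$ with $\lvert A/\theta\rvert = 2$.

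The key step is then to arrange $\theta^* = \Delta$. By \cref{l:char-theta*}, $\theta^*$ is an \mvm{}-congruence contained in $\theta$. If $\theta^* \neq \Delta$, then $\theta^*$ contains the monolith $\mu$ of $A$. The strategy is to choose the two-element lattice congruence $\theta$ so that it separates a pair of elements related by $\mu$ — that is, pick $a \mathrel{\mu} b$ with $a \neq b$ and a lattice congruence $\theta$ with $\lvert A/\theta\rvert = 2$ such that $(a,b) \notin \theta$. Since $\theta^* \seq \theta$, this would force $(a,b)\notin \theta^*$, so $\mu \not\seq \theta^*$; as $\mu$ is the least non-trivial congruence, this yields $\theta^* = \Delta$. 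The existence of such a separating two-element lattice congruence again rests on distributivity of the lattice $\langle A;\lor,\land\rangle$ (\cref{ax:A1}): in a distributive lattice, any two distinct comparable elements can be separated by a prime filter, and the resulting partition into a prime filter and its complement is a lattice congruence onto the two-element chain.

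The main obstacle I anticipate is verifying that the separating two-block partition is genuinely a \emph{lattice} congruence (not merely a set partition), and handling the case analysis of whether $a$ and $b$ are comparable. For incomparable $a,b$ one separates $a \land b$ from $a$ or $b$ instead, reducing to the comparable case $a\land b < a$; prime-filter separation in the distributive lattice then applies. I would need to check carefully that the monolith, being a full \mvm{}-congruence, restricts to a non-trivial lattice congruence so that some pair in it can be separated — this uses that $\mu$ relates two \emph{distinct} lattice elements. Once that separation is secured, the conclusion $\theta^* = \Delta$ follows formally from the minimality of the monolith and the containment $\theta^* \seq \theta$ established above.
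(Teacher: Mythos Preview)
Your argument is correct and is essentially the same as the paper's, just packaged via the monolith rather than via intersections. The paper takes the full family $\{\theta_i\}$ of two-element lattice quotients coming from Birkhoff's subdirect representation of the distributive lattice reduct (so $\bigcap_i\theta_i=\Delta$), notes that $\bigcap_i\theta_i^*=\Delta$ since each $\theta_i^*\seq\theta_i$, and then uses subdirect irreducibility of $A$ as an \mvm{} to conclude that some $\theta_j^*=\Delta$. Your version---pick a nontrivial pair in the monolith and separate it by a single prime-filter quotient---is the same content phrased elementwise; the case analysis about comparable versus incomparable elements is unnecessary, since the subdirect representation of a distributive lattice already provides a two-element quotient separating any two distinct elements.
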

\begin{proof}
	Since $A$ is distributive as a lattice, it can be decomposed into a subdirect product of two-element lattices.
	Let $\{\theta_i \}_{i\in I}$ be the set of lattice congruences of $A$ corresponding with such a decomposition.
	Then $\bigcap_{i\in I}\theta_i=\Delta$.
	By \cref{l:char-theta*}, each $\theta_i^*$ is a $\{\oplus, \odot, \lor, \land, 0, 1\}$-congruence, and $\Delta\seq\theta^*_i\seq \theta_i$.
	Therefore we have $\bigcap_{i\in I}\theta_i^*=\Delta$, and the fact that $A$ is subdirectly irreducible implies $\theta^*_j=\Delta$ for some $j\in I$.
\end{proof}
\begin{theorem}
	\label{t:sub irr is totally ordered}
	Every subdirectly irreducible {\mvm} is totally ordered.
\end{theorem}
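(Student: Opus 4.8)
The plan is to reduce the global statement about the order to a pointwise dichotomy that can be read off the $\theta^{*}$-machinery. First I would invoke \cref{l:existence of lattice congruence} to fix a lattice congruence $\theta$ on the subdirectly irreducible algebra $A$ with $\lvert A/\theta\rvert = 2$ and $\theta^{*} = \Delta$, and write $v\colon A \to A/\theta$ for the quotient lattice homomorphism onto the two-element chain, so that $\1(\theta) = v^{-1}(1)$ is a prime filter and $\0(\theta) = v^{-1}(0)$ a prime ideal. Given arbitrary $a,b \in A$ I would set $c \df a \land b$ and aim to prove the dichotomy that $(c,a) \in \theta^{*}$ or $(c,b) \in \theta^{*}$; since $\theta^{*} = \Delta$ this forces $c = a$ or $c = b$, i.e.\ $a$ and $b$ are comparable, which is exactly total order.

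The second step is to rewrite membership in $\theta^{*}$ in a transparent form. Using that $\oplus$ and $\odot$ distribute over $\land$ (\cref{ax:A3}) one has $c \oplus z = (a \oplus z) \land (b \oplus z)$ and $c \odot z = (a \odot z) \land (b \odot z)$ for every $z$, and since $v$ is a lattice homomorphism into a chain this gives $v(c \oplus z) = v(a\oplus z) \land v(b \oplus z)$ and likewise for $\odot$. Unfolding the definition of $\theta^{*}$, I would show that $(c,a) \in \theta^{*}$ holds precisely when $v(a \oplus z) \leq v(b \oplus z)$ and $v(a \odot z) \leq v(b \odot z)$ for all $z \in A$, and symmetrically for $(c,b)$. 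Thus the required dichotomy is equivalent to totality of the relation $a \preceq b$, defined to hold when $v(a\oplus z) \le v(b\oplus z)$ and $v(a\odot z)\le v(b\odot z)$ for all $z$. (Monotonicity of $\oplus,\odot$, \cref{l:order-preserving properties}, together with $\theta^{*}=\Delta$, also identifies $\preceq$ with the given order, but only its totality is needed.)

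The heart of the argument, and the step I expect to be the main obstacle, is proving this totality. I would argue by contradiction: if $a \not\preceq b$ and $b \not\preceq a$, there are witnesses $z_{1},z_{2}$ each placing one of the four translates in $\1(\theta)$ while the competing translate lies in $\0(\theta)$. To close the easiest case, where both failures occur through $\oplus$, I would establish the purely equational inequality $(a\oplus z_{1}) \land (b \oplus z_{2}) \leq (b\oplus z_{1}) \lor (a \oplus z_{2})$; then the left-hand side lies in the filter $\1(\theta)$ and the right-hand side in the ideal $\0(\theta)$, forcing an element into $\1(\theta) \cap \0(\theta) = \emptyset$. The real work is the mixed cases, relating an $\oplus$-witness in one direction to an $\odot$-witness in the other; here I would exploit the symmetric sum term $\sigma$ and its identities (\cref{l:permutations,ax:A6,ax:A7}), the estimate $x \odot (y\oplus z) \le (x\odot y)\oplus z$ of \cref{l:almost associative}, and the good-pair exchange law of \cref{l:switch of plus and dot if good}, to produce in each case a single element witnessing the same filter/ideal clash. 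Verifying these term inequalities from the axioms, and organising the four cases so that they all reduce to such a clash, is where the bulk of the calculation lies.
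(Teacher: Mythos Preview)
Your overall strategy is exactly the paper's: invoke \cref{l:existence of lattice congruence} to get a two-class lattice congruence $\theta$ with $\theta^{*}=\Delta$, then show for arbitrary $a,b$ that $(a\land b,a)\in\theta^{*}$ or $(a\land b,b)\in\theta^{*}$ by a four-case contradiction argument. Your reformulation via the preorder $\preceq$ and the quotient map $v$ is just a notational repackaging of the paper's argument, and your pure-$\oplus$ inequality $(a\oplus z_1)\land(b\oplus z_2)\le(b\oplus z_1)\lor(a\oplus z_2)$ is correct and provable; in fact its proof passes through the intermediate term $(a\land b)\oplus(z_1\lor z_2)$, which is precisely how the paper handles this case.

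Where you diverge is in the mixed cases, and there you are overcomplicating matters. You propose to use the $\sigma$-term, its permutation identities, \cref{l:almost associative}, and the good-pair exchange law of \cref{l:switch of plus and dot if good}; but the last of these requires a good pair as input, and none is visibly present. More to the point, none of this machinery is needed. The paper's argument for the mixed case is a one-liner: if $a\oplus z_1\in\1(\theta)$ and $b\oplus z_1\in\0(\theta)$ while $b\odot z_2\in\1(\theta)$, then since $b\le b\oplus z_1$ (\cref{l:increasing}) we get $b\in\0(\theta)$; but then $b\odot z_2\le b$ forces $b\odot z_2\in\0(\theta)$, a contradiction. So the ``real work'' you anticipate in the mixed cases evaporates once you notice that the $\oplus$-witness already pins $b$ itself into $\0(\theta)$.
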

\begin{proof}
	Let $A$ be a subdirectly irreducible {\mvm}.
	By \cref{l:existence of lattice congruence}, there exists a lattice congruence $\theta$ on $A$ such that $\lvert A/\theta\rvert=2$ and such that $\theta^*=\Delta$, i.e., for all distinct $a,b\in A$, there exists $x\in A$ such that $(a\oplus x,b\oplus x)\notin \theta$, or $(a\odot x,b\odot x)\notin \theta$.
	
	Let $a,b\in A$.
	We shall prove that either $a\leq b$ or $b\leq a$ holds.
	Suppose, by way of contradiction, that this is not the case, i.e., $a\land b\neq a$ and $a\land b\neq b$.
	Since $a\land b\neq a$, there exists $x\in A$ such that $((a\land b)\oplus x,a\oplus x)\notin \theta$ or $((a\land b)\odot x,a\odot x)\notin \theta$.
	Since $a\land b\neq b$, there exists $y\in A$ such that $((a\land b)\oplus y,b\oplus y)\notin \theta$ or $((a\land b)\odot y,b\odot y)\notin \theta$.
	We have four cases.
	\begin{enumerate}[wide]
		\item
		\label{i:case oplus oplus} Suppose $((a\land b)\oplus x,a\oplus x)\notin \theta$ and $((a\land b)\oplus y,b\oplus y)\notin \theta$.		
		Then, since $a\land b\leq a$, and $a\land b\leq b$, we have $(a\land b)\oplus x\in \0(\theta)$, $a\oplus x\in \1(\theta)$, $(a\land b)\oplus y\in \0(\theta)$, and $b\oplus y\in \1(\theta)$. Then, we have
		\begin{align*}
			\0(\theta)	& \ni		((a \land b) \oplus x) \lor ((a \land b) \oplus y) \\
							& =		(a \land b) \oplus (x \lor y)								&& \by{$\oplus$ distr.\ over $\lor$}\\
							& =		(a \oplus (x \lor y)) \land (b \oplus(x \lor y))	&& \by{$\oplus$ distr.\ over $\land$}\\
							& \geq	(a \oplus x) \land (b \oplus y)							&& \by{\cref{l:order-preserving properties}}\\
							& \in \1(\theta),
		\end{align*}
		which is a contradiction.
		\item The case $((a \land b) \odot x, a \odot x) \notin \theta$ and $((a \land b) \odot y, b \odot y) \notin \theta$ is analogous to \cref{i:case oplus oplus}.
		
		\item
		\label{i:case oplus odot} Suppose $((a\land b)\oplus x,a\oplus x)\notin \theta$ and $((a\land b)\odot y,b\odot y)\notin \theta$.
		Then, since $a\land b\leq a$, we have $(a\land b)\oplus x\in \0(\theta)$, and $a\oplus x\in \1(\theta)$. Therefore,
		\[   \0(\theta)  \ni  (a \land b) \oplus x  =  \underbrace{(a \oplus x)}_{\in \1(\theta)} \land (b \oplus x).   \]
		Hence, $b \oplus x\in \0(\theta)$, which implies $b \in \0(\theta)$, which implies $(a\land b)\odot y\in \0(\theta)$ and $b\odot y\in \0(\theta)$, which contradicts $((a\land b)\odot y,b\odot y)\notin \theta$.
		
		\item The case $((a\land b)\odot x,a\odot x)\notin \theta$ and $((a\land b)\oplus y,b\oplus y)\notin \theta$ is analogous to \cref{i:case oplus odot}.
	\end{enumerate}
	In each case, we are led to a contradiction.
\end{proof}
%

%=================================   SUBSECTION   =================================%

\subsection{Good pairs in subdirectly irreducible algebras}
	\label{s:good-pairs_in_sub-irr}

The goal of this subsection---met in \cref{l:good-pairs-sub-irr}---is to prove that, for every good pair $(x_0,x_1)$ in a subdirectly irreducible {\mvm} $A$, we have either $x_0 = 1$ or $x_1 = 0$.
This implies that good $\Z$-sequences in  $A$ are of the form
\begin{align*}
	\Z	&	\longrightarrow	A	\\
	k	&	\longmapsto			{
									\begin{cases}
										1	&	\text{if } k < n;	\\
										x	&	\text{if } k = n;	\\
										0	&	\text{if } k > n.	\\
									\end{cases}
									}
\end{align*}
for some $n \in \Z$ and $x \in A$.
\begin{notation} \label{n:sim-bot-top}
	Let $A$ be {\amvm} and let $t,x,y \in A$.
	We write $x \gtrsim_\bot^t y$ if, and only if, there exists $n\in \N$ such that 
	\[
		x \oplus \underbrace{t \oplus \cdots \oplus t}_{n \text{ times}} \geq y.
	\]
	We write $x \sim_\bot^t y$ if, and only if, $x \gtrsim_\bot^t y$ and $y \gtrsim_\bot^t x$.
	
	We write $x \lesssim_t^\top y$ if, and only if, there exists $n \in \N$ such that 
	\[
		x \odot \underbrace{t \odot \cdots \odot t}_{n \text{ times}} \leq y.
	\]
	We write $x \sim_t^\top y$ if, and only if, $x \lesssim_t^\top y$ and $y \lesssim_t^\top x$.
\end{notation}
\begin{lemma}
	\label{l:bot-top-congruences}
	Let $A$ be {\amvm}.
	For every $t\in A$ the following conditions hold.
	\begin{enumerate}
		\item \label{i:t-sim-0} The relation $\sim_\bot ^t$ is the smallest $\{\oplus, \odot, \lor, \land, 0, 1\}$-congruence $\sim$ on $A$ such that $t\sim 0$.
		\item \label{i:t-sim-1} The relation $\sim_t^\top$ is the smallest $\{\oplus, \odot, \lor, \land, 0, 1\}$-congruence $\sim$ on $A$ such that $t\sim 1$.
	\end{enumerate}
\end{lemma}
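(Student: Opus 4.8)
The plan is to prove part (1) directly and then deduce part (2) by duality: passing to the dual algebra $\langle A; \odot, \oplus, \land, \lor, 1, 0\rangle$ turns $\gtrsim_\bot^t$ into $\lesssim_t^\top$, turns the condition $t \sim 0$ into $t \sim 1$, and leaves the notion of $\{\oplus,\odot,\lor,\land,0,1\}$-congruence unchanged, so part (2) is exactly part (1) read in the dual. Throughout I write $s_n$ for the $n$-fold $\oplus$-sum $\underbrace{t \oplus \cdots \oplus t}_{n}$, with the convention $s_0 = 0$, so that $x \gtrsim_\bot^t y$ means $x \oplus s_n \geq y$ for some $n \in \N$.

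First I would check that $\gtrsim_\bot^t$ is a preorder: reflexivity holds by taking $n = 0$, and transitivity follows from $s_n \oplus s_m = s_{n+m}$ together with the monotonicity of $\oplus$ (\cref{l:order-preserving properties}). Hence $\sim_\bot^t$, being the symmetrisation of $\gtrsim_\bot^t$, is an equivalence relation. That $t \sim_\bot^t 0$ is immediate: $t \gtrsim_\bot^t 0$ because $0 \leq t$ (\cref{l:bounded-lattice}) with $n = 0$, and $0 \gtrsim_\bot^t t$ because $0 \oplus s_1 = t \geq t$.

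For the congruence property it suffices, since $\sim_\bot^t$ is symmetric, to show that $\gtrsim_\bot^t$ is \emph{monotone} with respect to each operation, i.e.\ that $a \gtrsim_\bot^t a'$ and $b \gtrsim_\bot^t b'$ imply $a \ast b \gtrsim_\bot^t a' \ast b'$, and then to apply this in both directions. For $\oplus$, from $a \oplus s_n \geq a'$ and $b \oplus s_m \geq b'$ I get $(a \oplus b) \oplus s_{n+m} = (a \oplus s_n) \oplus (b \oplus s_m) \geq a' \oplus b'$ using \cref{ax:A2} and monotonicity. For $\lor$ and $\land$, after enlarging $n$ and $m$ to a common $k$ (legitimate since $s_k \geq s_n$ whenever $k \geq n$, as $t \geq 0$), the distributivity of $\oplus$ over $\lor$ and $\land$ (\cref{ax:A3}) gives $(a \lor b) \oplus s_k = (a \oplus s_k) \lor (b \oplus s_k) \geq a' \lor b'$, and likewise for $\land$. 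The hard part will be the operation $\odot$: here I would invoke the ``almost associativity'' inequality $x \odot (y \oplus z) \leq (x \odot y) \oplus z$ of \cref{l:almost associative} twice to obtain $(a \oplus s_n) \odot (b \oplus s_m) \leq (a \odot b) \oplus s_{n+m}$, and then combine this with $a' \odot b' \leq (a \oplus s_n) \odot (b \oplus s_m)$ (monotonicity of $\odot$) to conclude $a \odot b \gtrsim_\bot^t a' \odot b'$. I expect this $\odot$-step to be the main obstacle, since it is the only place where a genuine identity of the theory, beyond monotonicity and distributivity, is needed.

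Finally, for minimality, let $\sim$ be any $\{\oplus,\odot,\lor,\land,0,1\}$-congruence with $t \sim 0$, and suppose $x \sim_\bot^t y$, say $x \oplus s_n \geq y$ and $y \oplus s_m \geq x$. Since $t \sim 0$ forces $s_n \sim 0$ and hence $x \oplus s_n \sim x$, applying compatibility of $\sim$ with $\land$ to the inequality $y \leq x \oplus s_n$ gives $y = (x \oplus s_n) \land y \sim x \land y$; symmetrically $x \sim x \land y$. Thus $x \sim x \land y \sim y$, so $\sim_\bot^t \subseteq \sim$. Together with the previous steps this shows $\sim_\bot^t$ is the smallest such congruence, completing (1), and (2) follows by the dualisation described above.
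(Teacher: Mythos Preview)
Your proof is correct and follows essentially the same approach as the paper: prove (1) directly and obtain (2) by duality, establish that $\gtrsim_\bot^t$ is a preorder via $s_n \oplus s_m = s_{n+m}$, handle $\oplus$ by associativity, $\lor$/$\land$ by distributivity over a common witness, and $\odot$ by two applications of \cref{l:almost associative}, then argue minimality from $t \sim 0 \Rightarrow s_n \sim 0$. Your packaging via ``$\gtrsim_\bot^t$ is monotone in each operation'' is a slightly cleaner way of organising the congruence verification than the paper's, but the underlying computations are identical.
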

\begin{proof}
	We prove \cref{i:t-sim-0}; \cref{i:t-sim-1} is dual.	It is immediate that $t\sim_\bot^t 0$, and that, if $\sim$ is a $\{\oplus, \odot, \lor, \land, 0, 1\}$-congruence on $A$ such that $t\sim 0$, then ${\sim_\bot^t}\seq {\sim}$.
	To prove that $\sim_\bot^t$ is a congruence, we first prove that $\sim_\bot^t$ is an equivalence relation.
	It is enough to prove that $\gtrsim_\bot^t$ is reflexive and transitive.
	Reflexivity of $\gtrsim_\bot^t$ is trivial.
	To prove transitivity, suppose $x\gtrsim_\bot^t y\gtrsim_\bot^t z$.
	Then there exist $n, n' \in \N$ such that 
	\[
		x \oplus \underbrace{t \oplus \cdots \oplus t}_{n \text{ times}} \geq y
	\]
	and
	\[
		y \oplus \underbrace{t \oplus \cdots \oplus t}_{n' \text{ times}} \geq z.
	\]
	Therefore,
	\[
		x\oplus(\underbrace{t\oplus\cdots\oplus t}_{n\text{ times}})\oplus(\underbrace{t\oplus\cdots\oplus t}_{n'\text{ times}})\geq y\oplus \underbrace{t\oplus\cdots\oplus t}_{n'\text{ times}}\geq z.
	\]
	It follows that $x \gtrsim_\bot^t y$.
	Thus, $\gtrsim_\bot^t$ is transitive.
	This proves that $\sim_\bot^t$ is an equivalence relation.
	
	Let us prove that $\sim_\bot^t$ is a congruence.
	Suppose $x\sim_\bot^t x'$ and $y\sim_\bot^t y'$.
	Then, there exist $n, n', m, m' \in \N$ such that
	\[
		x \oplus \underbrace{t \oplus \cdots \oplus t}_{n \text{ times}} \geq x',
	\]
	\[ 
		x' \oplus \underbrace{t \oplus \cdots \oplus t}_{m \text{ times}} \geq x,
	\]
	\[
		y \oplus \underbrace{t \oplus \cdots \oplus t}_{n' \text{ times}} \geq y',
	\]
	and
	\[
		y' \oplus \underbrace{t \oplus \cdots \oplus t}_{m' \text{ times}} \geq y.
	\]

	We have $x \land y \gtrsim_\bot^t x'\land y'$ because
	\[ 
		(x \land y) \oplus \underbrace{t \oplus \cdots \oplus t}_{\max\{n, n'\} \text{ times}} =
		(x \oplus \underbrace{t \oplus \cdots \oplus t}_{\max\{n, n'\} \text{ times}}) \land (y \oplus \underbrace{t \oplus \cdots \oplus t}_{\max\{n, n'\} \text{ times}}) \geq
		x'\land y'. 
	\]
	Analogously, we have $x' \land y' \gtrsim_\bot^t x \land y$.
	Hence, $x \land y \sim_\bot^t x' \land y'$, and, analogously, $x \lor y\sim_\bot^t x'\lor y'$.
	
	We have $x \oplus y \gtrsim_\bot^t x' \oplus y'$ because
	\[
		x \oplus y \oplus \underbrace{t \oplus \cdots \oplus t}_{(n+n') \text{ times}}=
		(x \oplus \underbrace{t \oplus \cdots \oplus t}_{n\text{ times}}) \oplus (y \oplus \underbrace{t \oplus \cdots \oplus t}_{n'\text{ times}}) \geq
		x' \oplus y'.
	\]
	Analogously, $x' \oplus y' \gtrsim_\bot^t x \oplus y$.
	Hence, $x \oplus y \sim_\bot^t x' \oplus y'$.
	
	We have $x \odot y \gtrsim_\bot^t x'\odot y'$ because
	\begin{align*}
		(x \odot y) \oplus \underbrace{t \oplus \cdots \oplus t}_{(n + n') \text{ times}}	
			& = 		((x \odot y) \oplus \underbrace{t \oplus \cdots \oplus t}_{n \text{ times}})
						\oplus \underbrace{t \oplus \cdots \oplus t}_{n' \text{ times}} 						&& \\
			& \geq	((x \oplus \underbrace{t \oplus \dots \oplus t}_{n \text{ times}}) \odot y)
						\oplus \underbrace{t \oplus \dots \oplus t}_{n' \text{ times}}							&& \text{(\cref{l:almost associative})}\\
			& \geq	(x \oplus \underbrace{t \oplus \cdots \oplus t}_{n \text{ times}}) 
						\odot (y \oplus \underbrace{t \oplus \cdots \oplus t}_{n' \text{ times}})			&& \text{(\cref{l:almost associative})}\\
			& \geq	x'\odot y'.																								&&
	\end{align*}
	Analogously, we have $x' \odot y' \gtrsim_\bot^t x \odot y$.
	Hence, $x \odot y \sim_\bot^t x' \odot y'$, and \cref{i:t-sim-0} is proved.	
\end{proof}
\begin{lemma}
	\label{l:good approximation for leq}
	Let $A$ be {\amvm}, let $(x_0,x_1)$ be a good pair in $A$, and let $a,b\in A$ be such that $a\leq b\oplus x_1$ and $a\odot x_0\leq b$.
	Then $a\leq b$.
\end{lemma}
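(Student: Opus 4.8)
The plan is to reduce the statement to the totally ordered case and then argue by a short comparison. Observe first that the assertion is a \emph{quasi-identity} in the variables $x_0,x_1,a,b$: its premises $x_0\oplus x_1=x_0$, $x_0\odot x_1=x_1$, $a\land(b\oplus x_1)=a$ (encoding $a\le b\oplus x_1$) and $(a\odot x_0)\lor b=b$ (encoding $a\odot x_0\le b$) are equations, and so is the conclusion $a\land b=a$. Conditional equations are preserved by subalgebras and by products, so by \cref{t:SubRepThe} it suffices to prove the lemma when $A$ is subdirectly irreducible. By \cref{t:sub irr is totally ordered}, such an $A$ is totally ordered; thus I may assume $A$ is a chain.

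The key step is to rewrite both hypotheses using the good pair so that they involve a single, order-preserving term. Since $(x_0,x_1)$ is good, we have $x_0=x_0\oplus x_1$ and $x_1=x_0\odot x_1$. Applying \cref{ax:A7} with $(x,y,z)=(x_0,x_1,a)$ and \cref{l:permutations} gives
\[
	a\odot x_0=(x_0\oplus x_1)\odot a=\sigma(x_0,x_1,a)\land a,
\]
while \cref{ax:A6} with $(x,y,z)=(x_0,x_1,b)$ gives
\[
	b\oplus x_1=(x_0\odot x_1)\oplus b=\sigma(x_0,x_1,b)\lor b.
\]
Writing $s\df\sigma(x_0,x_1,a)$ and $t\df\sigma(x_0,x_1,b)$, the two hypotheses become $s\land a\le b$ and $a\le t\lor b$. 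Moreover $z\mapsto\sigma(x_0,x_1,z)$ is order-preserving: indeed $\sigma(x_0,x_1,z)=(x_0\oplus x_1)\odot((x_0\odot x_1)\oplus z)$ is built from $\oplus$ and $\odot$, which are order-preserving by \cref{l:order-preserving properties}.

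Now I run the comparison in the chain $A$. Suppose, towards a contradiction, that $a\not\le b$; since $A$ is totally ordered, $b<a$. From $s\land a\le b<a$ we get $s\land a\neq a$, hence $s<a$ (in a chain $\min\{s,a\}<a$ forces $s<a$). From $a\le t\lor b=\max\{t,b\}$ together with $b<a$ we get $\max\{t,b\}\geq a>b$, so necessarily $t\ge a$ (otherwise $\max\{t,b\}=b<a$). Finally, $b<a$ and monotonicity of $\sigma$ in its last argument yield $t=\sigma(x_0,x_1,b)\le\sigma(x_0,x_1,a)=s$. Chaining these, $a\le t\le s<a$, a contradiction. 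Hence $a\le b$, as required.

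The main obstacle is the middle step: the naive attempt to manipulate $a\le b\oplus x_1$ and $a\odot x_0\le b$ directly stalls, because in a general \mvm{} a good pair need not satisfy the dichotomy ``$x_0=1$ or $x_1=0$'' (that dichotomy, \cref{l:good-pairs-sub-irr}, lies ahead and cannot be invoked here). The decisive observation is therefore that \cref{ax:A6,ax:A7} convert both hypotheses into statements about the \emph{same} monotone term $\sigma(x_0,x_1,\cdot)$, after which total-orderedness (secured by Birkhoff's theorem plus \cref{t:sub irr is totally ordered}) turns the problem into a one-line monotonicity argument; one should take care only that the reduction to subdirectly irreducible algebras is legitimate, i.e.\ that the statement really is Horn.
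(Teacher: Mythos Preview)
Your proof is correct and follows essentially the same approach as the paper's: both reduce to the totally ordered (subdirectly irreducible) case via Birkhoff and \cref{t:sub irr is totally ordered}, rewrite the hypotheses using \cref{ax:A6,ax:A7} so that $a\odot x_0=\sigma(x_0,x_1,a)\land a$ and $b\oplus x_1=\sigma(x_0,x_1,b)\lor b$, and then exploit monotonicity of $\sigma$ in its last argument to chain the inequalities. The only difference is organisational---the paper does explicit case splits (handling separately $a=a\odot x_0$ and, dually, $b=b\oplus x_1$) while you argue by contradiction, but the underlying manipulation is the same.
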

\begin{proof}
	By Birkhoff's subdirect representation theorem, it is enough to prove it for $A$ a subdirectly irreducible algebra.
	By \cref{t:sub irr is totally ordered}, the algebra $A$ is totally ordered.
	Therefore, we have either $a \leq b$ or $b \leq a$.
	In the first case, the desired statement is proved.
	So, let us assume $b \leq a$.
	We have
	\[
		a \land \sigma(a, x_0, x_1) \stackrel{\text{\cref{ax:A7}}}{=} a \odot (x_0 \oplus x_1) = a \odot x_0.
	\]
	Since $A$ is totally ordered, we have either $a = a \odot x_0$ or $\sigma(a, x_0, x_1) = a \odot x_0$.
	In the first case, we have $a = a \odot x_0 \leq b$, so the desired statement holds.
	So, we can assume $\sigma(a, x_0, x_1) = a \odot x_0$.
	Dually, $\sigma(b, x_0, x_1) = b \oplus x_1$.
	Since $b \leq a$, and since every operation of {\mvms} is order-preserving (\cref{l:order-preserving properties}), we have $\sigma(b, x_0, x_1) \leq \sigma(a, x_0, x_1)$.
	Therefore,
	\[
		a \leq b \oplus x_1 = \sigma(b, x_0, x_1) \leq \sigma(a, x_0, x_1) = a \odot x_0 \leq b. \qedhere
	\]
\end{proof}
\begin{lemma}
	\label{l:intersection-of-congruences}
	Let $A$ be {\amvm}.
	For all $x, y \in A$, the intersection of $\sim_\bot^{x\odot y}$ and $\sim_{x\oplus y}^\top$ is the identity relation on $A$.
\end{lemma}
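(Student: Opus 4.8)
The plan is to prove the statement directly, without passing to subdirectly irreducible algebras, by reducing everything to the single key estimate in \cref{l:good approximation for leq}. Unravelling the claim, I must show that for all $a,b \in A$, if $a \sim_\bot^{x\odot y} b$ and $a \sim_{x\oplus y}^\top b$, then $a = b$. Writing $x_0 \df x \oplus y$ and $x_1 \df x \odot y$, I note first that $(x_0,x_1)$ is a good pair by \cref{l:oplus odot is a good pair}. Since the two relations are symmetric, it suffices to prove $a \leq b$; the inequality $b \leq a$ will then follow by the same argument with the roles of $a$ and $b$ interchanged, and antisymmetry of the lattice order gives $a = b$.

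To produce $a \leq b$, I would extract from the hypotheses the two useful inequalities: from $b \gtrsim_\bot^{x_1} a$ (a consequence of $a \sim_\bot^{x_1} b$, per \cref{n:sim-bot-top}) there is $m \in \N$ with $a \leq b \oplus \underbrace{x_1 \oplus \cdots \oplus x_1}_{m}$, and from $a \lesssim_{x_0}^\top b$ there is $p \in \N$ with $a \odot \underbrace{x_0 \odot \cdots \odot x_0}_{p} \leq b$. The hard part is that \cref{l:good approximation for leq} is stated for a \emph{single} good pair, whereas here arbitrarily many copies of $x_0$ and $x_1$ intervene. The resolution is that iterated powers of a good pair are again good: applying \cref{l:bipartite} with all first entries equal to $x_0$ and all second entries equal to $x_1$ shows that for every $N \geq 1$ the pair $\bigl(\underbrace{x_0 \odot \cdots \odot x_0}_{N},\, \underbrace{x_1 \oplus \cdots \oplus x_1}_{N}\bigr)$ is good.

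Taking $N \df \max\{m,p,1\}$, I would then use monotonicity to rewrite both inequalities in terms of this single good pair: since $\oplus$ is increasing and $x_1 \leq x_1 \oplus x_1 \leq \cdots$ by \cref{l:increasing,l:order-preserving properties}, enlarging the number of $\oplus$-copies only increases the right-hand side, giving $a \leq b \oplus (\text{$N$-fold $\oplus$-power of } x_1)$; dually, since $\odot$ is increasing and $x_0 \geq x_0 \odot x_0 \geq \cdots$, enlarging the number of $\odot$-copies only decreases the left-hand side, giving $a \odot (\text{$N$-fold $\odot$-power of } x_0) \leq b$. Now \cref{l:good approximation for leq}, applied to the good pair of $N$-fold powers together with $a$ and $b$, yields $a \leq b$. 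The symmetric argument (using $a \gtrsim_\bot^{x_1} b$ and $b \lesssim_{x_0}^\top a$, and a possibly different exponent $N'$) gives $b \leq a$, whence $a = b$. This shows that the only pair in $\sim_\bot^{x\odot y} \cap \sim_{x\oplus y}^\top$ is the diagonal, i.e.\ the intersection is the identity relation.
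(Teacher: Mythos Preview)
Your proof is correct and follows essentially the same approach as the paper: set $u = x \oplus y$, $v = x \odot y$, extract the two inequalities from the definitions of $\sim_\bot^v$ and $\sim_u^\top$, use \cref{l:oplus odot is a good pair} and \cref{l:bipartite} to produce a good pair of iterated products, and conclude via \cref{l:good approximation for leq}. The only cosmetic difference is that you align the two exponents to a common $N = \max\{m,p,1\}$ via monotonicity, whereas the paper applies \cref{l:bipartite} directly with possibly different numbers of factors on each side (that lemma allows $n \neq m$), so the detour through a common exponent is not needed.
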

\begin{proof}
	Set $u\df x\oplus y$, and $v\df x\odot y$.
	Let us take $a,b\in A$ such that $a\sim_\bot^v b$ and $a\sim_u^\top b$.
	Then, there exists $n,m\in \N$ such that
	\[
		a \leq b \oplus \underbrace{v \oplus \cdots \oplus v}_{m\text{ times}}
	\]
	and
	\[
		a \odot \underbrace{u \odot \cdots \odot u}_{n\text{ times}} \leq b.
	\]
	Since $(u,v)$ is a good pair by \cref{l:oplus odot is a good pair}, also $(u\odot \dots\odot u,v\oplus\cdots\oplus v)$ is so, by \cref{l:bipartite}.
	By \cref{l:good approximation for leq}, $a\leq b$; analogously, $b\leq a$, and therefore $a=b$.
\end{proof}
\begin{theorem}
	\label{t:good pairs in sub irr}
	Let $A$ be a subdirectly irreducible {\mvm}.
	Then, for all $x,y\in A$, we have either $x\oplus y=1$ or $x\odot y=0$.
\end{theorem}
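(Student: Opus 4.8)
The plan is to combine \cref{l:intersection-of-congruences} with the definition of subdirect irreducibility. Fix $x, y \in A$ and set $u \df x \oplus y$ and $v \df x \odot y$. By \cref{l:intersection-of-congruences}, the intersection of the two $\{\oplus, \odot, \lor, \land, 0, 1\}$-congruences $\sim_\bot^{v}$ and $\sim_{u}^{\top}$ is the identity relation $\Delta$ on $A$. The whole proof reduces to turning this equality of congruences into a statement about the elements $u$ and $v$.

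To do so I would translate the situation into a subdirect representation. Let $q_1 \colon A \epi A/{\sim_\bot^v}$ and $q_2 \colon A \epi A/{\sim_u^\top}$ be the canonical quotient homomorphisms, and consider $\langle q_1, q_2 \rangle \colon A \to A/{\sim_\bot^v} \times A/{\sim_u^\top}$. Its kernel is ${\sim_\bot^v} \cap {\sim_u^\top} = \Delta$, so $\langle q_1, q_2 \rangle$ is injective; and since each $q_i$ is surjective, the image projects onto each factor, so $\langle q_1, q_2 \rangle$ is a subdirect injective homomorphism (with two-element index set).

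Because $A$ is subdirectly irreducible, one of the two projections must be an isomorphism; equivalently, one of the congruences $\sim_\bot^v$, $\sim_u^\top$ is already $\Delta$. To finish I would invoke \cref{l:bot-top-congruences}: by \cref{i:t-sim-0} the pair $(v, 0)$ belongs to $\sim_\bot^v$, and by \cref{i:t-sim-1} the pair $(u, 1)$ belongs to $\sim_u^\top$. Hence if $\sim_\bot^v = \Delta$ then $v = 0$, i.e.\ $x \odot y = 0$, whereas if $\sim_u^\top = \Delta$ then $u = 1$, i.e.\ $x \oplus y = 1$. In either case the desired dichotomy holds.

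The argument is short once \cref{l:intersection-of-congruences} is available, and I do not expect a genuine obstacle. The only point that needs to be stated carefully is the passage from ``a binary intersection of two congruences equals $\Delta$'' to ``one of the two congruences equals $\Delta$'', which is precisely what subdirect irreducibility delivers through the subdirect embedding above; the substantive work has already been carried out in proving \cref{l:intersection-of-congruences}, which itself rests on \cref{t:sub irr is totally ordered} and \cref{l:good approximation for leq}.
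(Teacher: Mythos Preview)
Your proof is correct and follows essentially the same approach as the paper's: both invoke \cref{l:intersection-of-congruences} to get $\sim_\bot^v \cap \sim_u^\top = \Delta$, then use subdirect irreducibility to conclude one of the two congruences is $\Delta$, and finish via \cref{l:bot-top-congruences}. The only cosmetic difference is that you spell out the two-factor subdirect embedding explicitly, whereas the paper passes directly from ``intersection of two congruences equals $\Delta$'' to ``one of them equals $\Delta$'' by subdirect irreducibility.
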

\begin{proof}
	By \cref{l:intersection-of-congruences}, the intersection of $\sim_\bot^{x\odot y}$ and $\sim_{x\oplus y}^\top$ is the identity relation $\Delta$ on $A$.
	By \cref{l:bot-top-congruences}, $\sim_\bot^{x\odot y}$ and $\sim_{x\oplus y}^\top$ are $\{\oplus, \odot, \lor, \land, 0, 1\}$-congruences, $x\odot y\sim_\bot^{x\odot y}0$ and $x\oplus y\sim_{x\oplus y}^\top 1$.
	Since $A$ is subdirectly irreducible, either $\sim_\bot^{x\odot y}=\Delta$ or $\sim_{x\oplus y}^\top=\Delta$.
	In the former case we have $x \odot y=0$;
	in the latter one we have $x \oplus y=1$.
\end{proof}
\begin{corollary} \label{l:good-pairs-sub-irr}
	Let $(x_0,x_1)$ be a good pair in a subdirectly irreducible {\mvm}.
	Then, either $x_0 = 1$ or $x_1 = 0$.
\end{corollary}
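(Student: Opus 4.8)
The plan is to derive this corollary directly from \cref{t:good pairs in sub irr}, which is the substantive result just established; the corollary is essentially a repackaging of that theorem using the defining equations of a good pair. So my strategy is simply to instantiate the theorem at the two components of the pair and then read off the conclusion.

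Concretely, let $(x_0, x_1)$ be a good pair in a subdirectly irreducible \mvm{} $A$, so that by definition $x_0 \oplus x_1 = x_0$ and $x_0 \odot x_1 = x_1$. First I would apply \cref{t:good pairs in sub irr} with $x \df x_0$ and $y \df x_1$; this yields that either $x_0 \oplus x_1 = 1$ or $x_0 \odot x_1 = 0$. Then I would substitute the good-pair identities into whichever disjunct holds: if $x_0 \oplus x_1 = 1$, then since $x_0 \oplus x_1 = x_0$ we conclude $x_0 = 1$; if instead $x_0 \odot x_1 = 0$, then since $x_0 \odot x_1 = x_1$ we conclude $x_1 = 0$. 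In either case the desired dichotomy holds.

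There is no genuine obstacle here: all the work lives in \cref{t:good pairs in sub irr} (which in turn rests on \cref{l:intersection-of-congruences,l:bot-top-congruences} and subdirect irreducibility), and the present step is purely a matter of combining that theorem with the definition of good pair. The only thing to be careful about is matching the variables correctly, but this is routine.

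\begin{proof}
	By \cref{t:good pairs in sub irr} applied to $x \df x_0$ and $y \df x_1$, we have either $x_0 \oplus x_1 = 1$ or $x_0 \odot x_1 = 0$.
	Since $(x_0, x_1)$ is a good pair, $x_0 \oplus x_1 = x_0$ and $x_0 \odot x_1 = x_1$.
	Thus, in the first case $x_0 = 1$, and in the second case $x_1 = 0$.
\end{proof}
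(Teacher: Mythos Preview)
Your proof is correct and is exactly the intended derivation: the paper states this as an immediate corollary of \cref{t:good pairs in sub irr} without spelling out a proof, and your argument supplies precisely the obvious two-line deduction using the defining equations of a good pair.
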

\begin{corollary}
	\label{c:good sequence in sub irr}
	Let $\gs{x}$ be a good $\Z$-sequence in a subdirectly irreducible {\mvm} $A$.
	Then, there exists $k \in \Z$ and $x\in A$ such that $\gs{x}$ is the following function.
	\begin{align*}
		\Z	& \longrightarrow	A\\
		n	& \longmapsto			{
										\begin{cases}
											1	&	\text{if } n < k	;	\\
											x	&	\text{if } n = k	;	\\
											0	&	\text{if } n > k	.
										\end{cases}
										}
	\end{align*}
\end{corollary}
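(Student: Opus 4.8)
The plan is to obtain this statement directly from the dichotomy for good pairs established in \cref{l:good-pairs-sub-irr}. Recall that, by \cref{d:good-Z}, a good $\Z$-sequence $\gs{x}$ in $A$ has $(\gs{x}(n),\gs{x}(n+1))$ a good pair for every $n \in \Z$. Since $A$ is subdirectly irreducible, \cref{l:good-pairs-sub-irr} applies to each such pair, yielding the key dichotomy: for every $n \in \Z$, either $\gs{x}(n) = 1$ or $\gs{x}(n+1) = 0$. Equivalently, whenever $\gs{x}(n) \neq 1$, we must have $\gs{x}(n+1) = 0$. This single observation is what drives the whole argument.

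Next I would locate the unique index at which the sequence may take a value other than $0$ or $1$. Being subdirectly irreducible, $A$ is nontrivial, so $0 \neq 1$; combined with the eventual-$0$ clause of \cref{d:good-Z}, the set $\{n \in \Z \mid \gs{x}(n) \neq 1\}$ is nonempty, and by the eventual-$1$ clause it is bounded below. Hence it has a least element $k$, and I set $x \df \gs{x}(k)$. By minimality of $k$, every $n < k$ satisfies $\gs{x}(n) = 1$, which settles the left tail.

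It then remains to propagate the value $0$ to the right of $k$. Since $\gs{x}(k) \neq 1$, the dichotomy gives $\gs{x}(k+1) = 0$; and since $0 \neq 1$, the value $0$ is again $\neq 1$, so the same dichotomy applied repeatedly yields $\gs{x}(n) = 0$ for all $n > k$ by a straightforward induction. This exhibits $\gs{x}$ in exactly the claimed form, with $x = \gs{x}(k) \in A$.

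I do not anticipate a genuine obstacle here: essentially all the difficulty has been absorbed into \cref{l:good-pairs-sub-irr} (and, upstream, into \cref{t:sub irr is totally ordered}), so this corollary is bookkeeping. The only points requiring a modicum of care are the well-definedness of the cutoff index $k$—which rests on nontriviality of $A$ together with the two ``eventually constant'' clauses of \cref{d:good-Z}—and the inductive step, which is immediate once one observes that $\gs{x}(k+1) = 0 \neq 1$ feeds back into the dichotomy.
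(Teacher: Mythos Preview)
Your argument is correct and is precisely the intended one: the paper states this corollary without proof, leaving it as an immediate consequence of \cref{l:good-pairs-sub-irr}, and your write-up spells out the bookkeeping the reader is expected to supply. The only detail worth noting is that the paper does not explicitly record that subdirectly irreducible implies nontrivial (hence $0 \neq 1$), but this is standard and you invoke it in the right place.
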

%

%%%%%%%%%%%%%%%%%%%%%%%%%%%%%%%%%%%   SECTION   %%%%%%%%%%%%%%%%%%%%%%%%%%%%%%%%%%%%

\section{Operations on the set \texorpdfstring{$\X(A)$}{\textXi(\unichar{"1D434})} of good \texorpdfstring{$\mathbb{Z}$}{\unichar{"2124}}-sequences in \texorpdfstring{$A$}{\unichar{"1D434}}}
	\label{s:operations on Good}

We denote with $\X(A)$ the set of good $\Z$-sequences in {\amvm} $A$.
We will endow $\X(A)$ with a structure of {\aulm}.

%=================================   SUBSECTION   =================================%

\subsection{The constants}

We denote with $\gs{0}$ the good $\Z$-sequence 
\begin{align*}
	\Z	&	\longrightarrow	A	\\
	n	&	\longmapsto			{
									\begin{cases}
										1	&	\text{if } n < 0;		\\
										0	&	\text{if } n \geq 0.	\\
									\end{cases}
									}
\end{align*}
We denote with $\gs{1}$ the good $\Z$-sequence 
\begin{align*}
	\Z	&	\longrightarrow	A	\\
	n	&	\longmapsto			{
									\begin{cases}
										1	&	\text{if } n < 1;		\\
										0	&	\text{if } n \geq 1.	\\
									\end{cases}
									}
\end{align*}
We denote with $\gs{-1}$ the good $\Z$-sequence 
\begin{align*}
	\Z	&	\longrightarrow	A	\\
	n	&	\longmapsto		{
								\begin{cases}
									1	&	\text{if } n < -1;		\\
									0	&	\text{if } n \geq -1.	\\
								\end{cases}
								}
\end{align*}
%

%=================================   SUBSECTION   =================================%

\subsection{The lattice operations}

For good $\Z$-sequences $\gs{a}$ and $\gs{b}$ in $A$, we let $\gs{a} \lor \gs{b}$ denote the function
\begin{align*}
	\Z	& \longrightarrow	A \\
	n	& \longmapsto		\gs{a}(n) \lor \gs{b}(n),
\end{align*}
and we let $\gs{a} \land \gs{b}$ denote the function
\begin{align*}
	\Z	& \longrightarrow	A\\
	n	& \longmapsto 		\gs{a}(n) \land \gs{b}(n).
\end{align*}

\begin{proposition} \label{p:join is good}
	For all good $\Z$-sequences $\gs{a}$ and $\gs{b}$ in {\amvm}, the $\Z$-sequences $\gs{a}\lor \gs{b}$ and $\gs{a}\land \gs{b}$ are good.
\end{proposition}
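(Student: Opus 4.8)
The plan is to verify directly the three defining conditions of a good $\Z$-sequence (\cref{d:good-Z}) for $\gs{a} \lor \gs{b}$ and $\gs{a} \land \gs{b}$. The first two conditions are immediate: if $\gs{a}(k) = 1$ for $k < n_a$ and $\gs{b}(k) = 1$ for $k < n_b$, then both values, hence their join and their meet, equal $1$ for $k < \min\{n_a, n_b\}$; dually, both the join and the meet vanish for $k$ large enough. The entire content therefore lies in the third condition, which asks that for each $k$ the pair
\[
	\big(\gs{a}(k) \lor \gs{b}(k),\ \gs{a}(k+1) \lor \gs{b}(k+1)\big)
\]
(and the analogous one for $\land$) be good. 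Writing $a_0 = \gs{a}(k)$, $a_1 = \gs{a}(k+1)$, $b_0 = \gs{b}(k)$, $b_1 = \gs{b}(k+1)$, this reduces the whole statement to a purely algebraic claim: if $(a_0, a_1)$ and $(b_0, b_1)$ are good pairs in {\amvm}, then $(a_0 \lor b_0, a_1 \lor b_1)$ and $(a_0 \land b_0, a_1 \land b_1)$ are good pairs.

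To prove this claim I would first reduce to the subdirectly irreducible case. Using Birkhoff's subdirect representation theorem (\cref{t:SubRepThe}), I would embed $A$ subdirectly into a product $\prod_i A_i$ of subdirectly irreducible {\mvms}, with projections $\pi_i$. Since goodness of a pair is expressed by the two equalities $x_0 \oplus x_1 = x_0$ and $x_0 \odot x_1 = x_1$, and since the subdirect embedding is an injective homomorphism, a pair is good in $A$ if and only if its image under every $\pi_i$ is good in $A_i$. Hence it suffices to prove the claim when $A$ is subdirectly irreducible.

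In that case $A$ is totally ordered (\cref{t:sub irr is totally ordered}) and, by \cref{l:good-pairs-sub-irr}, each good pair $(x_0, x_1)$ satisfies $x_0 = 1$ or $x_1 = 0$. The argument for the join would then split into two easy cases. If $a_0 = 1$ or $b_0 = 1$, then $a_0 \lor b_0 = 1$, and $(1, a_1 \lor b_1)$ is good because $1 \oplus z = 1$ by \cref{l:absorbing} and $1 \odot z = z$ by \cref{ax:A2}. Otherwise $a_1 = 0$ and $b_1 = 0$, so $a_1 \lor b_1 = 0$, and $(a_0 \lor b_0, 0)$ is good because $z \oplus 0 = z$ by \cref{ax:A2} and $z \odot 0 = 0$ by \cref{l:absorbing}. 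The statement for the meet I would obtain by duality: using the remark that $(x_0, x_1)$ is good in $A$ exactly when $(x_1, x_0)$ is good in the dual algebra, and that the dual of {\amvm} is again {\amvm}, the meet assertion for $A$ becomes the join assertion applied to the dual algebra (after swapping coordinates), which has just been established.

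The only delicate point is the reduction step: one must be sure that being a good pair is both preserved and reflected coordinatewise, which is precisely why I phrase goodness through equalities of terms and invoke the injectivity of the subdirect embedding. Once this reduction is in place, the subdirectly irreducible case is a short case analysis resting on \cref{l:good-pairs-sub-irr}, and the remaining verification of the tail conditions is routine; I expect no genuine obstacle beyond this bookkeeping.
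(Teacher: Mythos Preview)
Your proof is correct and follows the same overall strategy as the paper: reduce via Birkhoff's subdirect representation theorem to the subdirectly irreducible case, then exploit the structure of good pairs there. The only difference is one of packaging. You work pair-by-pair, invoking \cref{l:good-pairs-sub-irr} to show that the join (resp.\ meet) of two good pairs is again a good pair via a short case split. The paper instead works at the level of whole $\Z$-sequences: using \cref{c:good sequence in sub irr}, it observes that in a subdirectly irreducible {\mvm} any two good $\Z$-sequences are comparable in the pointwise order, so $\gs{a} \lor \gs{b}$ and $\gs{a} \land \gs{b}$ are each literally equal to one of $\gs{a}$, $\gs{b}$, hence good. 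This is slightly slicker since it sidesteps the case analysis and the separate duality argument for the meet, but your approach is equally valid and your handling of the reduction step (preservation and reflection of goodness along the subdirect embedding) is careful and correct.
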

\begin{proof}
	By Birkhoff's subdirect representation theorem, we can safely suppose the {\mvm} to be subdirectly irreducible.
	Then, by \cref{t:sub irr is totally ordered,c:good sequence in sub irr}, the $\Z$-sequence $\gs{a} \lor \gs{b}$ is either $\gs{a}$ or $\gs{b}$, and the same holds for $\gs{a}\land \gs{b}$.
\end{proof}
\begin{proposition}
	\label{p:distributive lattice}
	Let $A$ be {\amvm}.
	Then, $\langle\X(A);\lor,\land\rangle$ is a distributive lattice.
\end{proposition}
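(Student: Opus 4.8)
The plan is to observe that the pointwise operations on $\X(A)$ are nothing but the restriction to $\X(A)$ of the coordinatewise operations on the full power $A^{\Z}$, and then to exploit the elementary fact that a sublattice of a distributive lattice is again distributive. In this way the statement reduces entirely to closure (already secured) plus inheritance of identities from $A$.

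First I would recall that, by \cref{ax:A1}, the reduct $\langle A; \lor, \land \rangle$ is a distributive lattice. Consequently the set of \emph{all} $\Z$-sequences in $A$, i.e.\ the power $A^{\Z}$, equipped with the pointwise operations $\lor$ and $\land$, is a distributive lattice: each lattice identity (commutativity, associativity, absorption, idempotence) as well as the distributive law holds in $A^{\Z}$ because it holds coordinatewise in $A$. This is just the standard fact that a power of a distributive lattice is a distributive lattice.

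Next I would invoke \cref{p:join is good}, which guarantees that $\X(A)$ is closed under the pointwise $\lor$ and $\land$. Since the operations on $\X(A)$ are, by their very definition, the coordinatewise ones inherited from $A^{\Z}$, this exhibits $\langle \X(A); \lor, \land \rangle$ as a sublattice of the distributive lattice $A^{\Z}$. As every sublattice of a distributive lattice is itself distributive, we conclude that $\langle \X(A); \lor, \land \rangle$ is a distributive lattice, which is exactly the claim.

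I expect there to be essentially no obstacle at this stage: all the substantive work has already been carried out in \cref{p:join is good}, whose proof rested on the structure theory of subdirectly irreducible {\mvms} (\cref{t:sub irr is totally ordered} and \cref{c:good sequence in sub irr}) via Birkhoff's subdirect representation theorem. Once closure under the pointwise operations is known, distributivity is inherited for free from $A$, so the remaining argument is purely formal and requires no further computation.
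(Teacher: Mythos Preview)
Your proof is correct and follows essentially the same approach as the paper: since $\lor$ and $\land$ on $\X(A)$ are defined componentwise and $\langle A;\lor,\land\rangle$ is a distributive lattice (\cref{ax:A1}), all the lattice and distributivity identities transfer from $A$ (or equivalently from $A^{\Z}$) to $\X(A)$. The paper's proof just states this in one sentence, but the content is identical to yours.
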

\begin{proof}
	The statement holds because $\lor$ and $\land$ are applied componentwise on $\X(A)$, and $\langle A; \lor, \land \rangle$ is a distributive lattice.
\end{proof}
\noindent For $A$ an {\mvm}, we have a partial order $\leq$ on $\X(A)$, induced by the lattice operations.
Since the lattice operations are defined componentwise, we have the following.

\begin{lemma}\label{l:order is pointwise}
	For all good $\Z$-sequences $\gs{a}$ and $\gs{b}$ in {\amvm} we have $\gs{a} \leq \gs{b}$ if, and only if, for all $n \in \Z$, we have $a_n \leq b_n$.
\end{lemma}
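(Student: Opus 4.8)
The plan is to unwind the two layers of the definition—first the induced order on $\X(A)$, then the componentwise lattice operations—and reduce everything to the elementary lattice fact that $u \land v = u$ is equivalent to $u \leq v$.

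First I would recall that, as stated immediately before the lemma, $\leq$ is the partial order \emph{induced by the lattice operations} on $\X(A)$. By the standard characterisation of the order of a lattice, this means that for good $\Z$-sequences $\gs{a}$ and $\gs{b}$ we have $\gs{a} \leq \gs{b}$ if, and only if, $\gs{a} \land \gs{b} = \gs{a}$. (Here $\gs{a} \land \gs{b}$ is again a good $\Z$-sequence by \cref{p:join is good}, so the equation makes sense inside $\X(A)$.)

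Next I would use the fact that $\land$ is computed componentwise: by definition, $(\gs{a} \land \gs{b})(n) = \gs{a}(n) \land \gs{b}(n) = a_n \land b_n$ for every $n \in \Z$. Since two $\Z$-sequences are equal precisely when they agree at every index, the equation $\gs{a} \land \gs{b} = \gs{a}$ holds if, and only if, $a_n \land b_n = a_n$ for all $n \in \Z$. Finally, applying once more the lattice characterisation of the order—this time inside the distributive lattice $\langle A; \lor, \land\rangle$ of \cref{ax:A1}—the condition $a_n \land b_n = a_n$ is equivalent to $a_n \leq b_n$. Chaining these equivalences yields $\gs{a} \leq \gs{b}$ if, and only if, $a_n \leq b_n$ for all $n \in \Z$, as required.

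I expect no genuine obstacle here: the statement is a routine consequence of the componentwise definition of the lattice operations, and the only point requiring any care is the purely formal observation that equality of functions $\Z \to A$ is tested pointwise, which lets me pass between the single equation in $\X(A)$ and the family of equations in $A$.
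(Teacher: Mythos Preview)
Your argument is correct and matches the paper's approach exactly: the paper treats the lemma as an immediate consequence of the componentwise definition of $\lor$ and $\land$, giving no proof block beyond the sentence ``Since the lattice operations are defined componentwise, we have the following.'' You have simply spelled out the routine unwinding that the paper leaves implicit.
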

%

%=================================   SUBSECTION   =================================%

\subsection{The addition}

We now want to define sum of good $\Z$-sequences.
Let $\gs{a}$ and $\gs{b}$ be good $\Z$-sequences in {\amvm}.
There are two natural ways to define $\gs{a}+\gs{b}$.
The first one is
\begin{equation}
	\label{eq:sum-bigodot}
	(\gs{a}+\gs{b})(n)\df \bigodot_{k\in \Z}\gs{a}(k)\oplus \gs{b}(n-k)
\end{equation}
and the second one is
\begin{equation}
	\label{eq:sum-bigoplus}
	(\gs{a}+\gs{b})(n)\df \bigoplus_{k\in \Z}\gs{a}(k)\odot \gs{b}(n-k-1).
\end{equation}
Note that the right-hand side of \cref{eq:sum-bigodot} is well-defined because all but finitely many terms equal $1$; analogously, the right-hand side of \cref{eq:sum-bigodot} is well-defined because all but finitely many terms equal $0$. 

In fact, these two ways coincide, as shown in the following.
\begin{lemma}
	\label{l:associative}
	Let $\gs{a}$ and $\gs{b}$ be good $\Z$-sequences in {\amvm}.
	Then, for every $n\in \Z$, we have
	\[ \bigodot_{k\in \Z}\gs{a}(k)\oplus \gs{b}(n-k)=\bigoplus_{k\in \Z}\gs{a}(k)\odot \gs{b}(n-k-1). \]
\end{lemma}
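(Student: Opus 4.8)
The plan is to reduce to the subdirectly irreducible case and then compute both sides explicitly. First I would observe that, although both sides are written as $\Z$-indexed products and sums, they are in fact finite expressions: as noted after \eqref{eq:sum-bigoplus}, cofinitely many terms of $\bigodot_{k\in \Z}\gs{a}(k)\oplus \gs{b}(n-k)$ equal $1$ and cofinitely many terms of $\bigoplus_{k\in \Z}\gs{a}(k)\odot \gs{b}(n-k-1)$ equal $0$. Using that $1$ is the identity for $\odot$ and $0$ the identity for $\oplus$ (\cref{ax:A2}), each side is thus a genuine $\{\oplus,\odot\}$-term in finitely many entries of $\gs a$ and $\gs b$. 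Both sides are therefore preserved by homomorphisms, and since homomorphisms preserve $\oplus,\odot,0,1$ they send good $\Z$-sequences to good $\Z$-sequences. Hence, taking a subdirect representation $A \hookrightarrow \prod_{i} A_i$ into subdirectly irreducible algebras via Birkhoff's theorem (\cref{t:SubRepThe}) and applying the projections $\pi_i$, it suffices to prove the identity when $A$ is subdirectly irreducible, for the image sequences $\pi_i\circ\gs a$ and $\pi_i\circ\gs b$.

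So I would assume $A$ subdirectly irreducible, hence totally ordered by \cref{t:sub irr is totally ordered}. By \cref{c:good sequence in sub irr}, each good $\Z$-sequence has a single nontrivial entry: write $\gs a$ for the sequence with $\gs a(k)=1$ for $k<p$, $\gs a(p)=x$, $\gs a(k)=0$ for $k>p$, and similarly $\gs b$ with nontrivial entry $y$ at position $q$. The target is to show that both sides, as functions of $n$, agree with the good $\Z$-sequence that takes value $1$ for $n<p+q$, value $x\oplus y$ at $n=p+q$, value $x\odot y$ at $n=p+q+1$, and $0$ for $n>p+q+1$; note $(x\oplus y,\, x\odot y)$ is indeed a good pair by \cref{l:oplus odot is a good pair}, so this is a legitimate good $\Z$-sequence.

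The computation then proceeds by discarding trivial factors and a short case analysis on $n$. For the left-hand side, the terms with $k<p$ are $1$ and drop out, giving $\bigodot_{k\in\Z}\gs a(k)\oplus\gs b(n-k) = (x\oplus \gs b(n-p))\odot\bigodot_{k>p}\gs b(n-k)$; evaluating $\gs b$ at the relevant indices in each of the four ranges of $n$ (using $\gs b\oplus 1=1$ and $\gs b\odot 0 = 0$ from \cref{l:absorbing}) yields $1$, $x\oplus y$, $x\odot y$, $0$ respectively. Dually, for the right-hand side the terms with $k>p$ drop out, giving $(x\odot\gs b(n-p-1))\oplus\bigoplus_{k<p}\gs b(n-k-1)$, and the same four cases give the same values. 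The main obstacle is not any single axiom manipulation but getting the reduction step clean — in particular justifying that the two infinitary-looking expressions really are finite terms with a common support across all subdirect factors (so that Birkhoff applies verbatim) — together with careful index bookkeeping in the case analysis so that the breakpoints at $n=p+q$ and $n=p+q+1$ are located correctly.
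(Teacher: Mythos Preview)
Your proposal is correct and follows essentially the same approach as the paper: reduce via Birkhoff's subdirect representation theorem to the subdirectly irreducible case, invoke \cref{c:good sequence in sub irr} to get sequences with a single nontrivial entry, and compute both sides by a case analysis on $n$. The paper additionally simplifies by translating so that $p=q=0$ (writing $\gs a=(a)$, $\gs b=(b)$), but otherwise the argument and the resulting values $1,\ x\oplus y,\ x\odot y,\ 0$ are identical.
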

\begin{proof}
	By Birkhoff's subdirect representation theorem, it is enough to prove the statement for a subdirectly irreducible {\mvm} $A$.
	By \cref{c:good sequence in sub irr}, up to a translation of $\gs{a}$ and $\gs{b}$, we can assume $\gs{a}=(a)$ and $\gs{b}=(b)$ for some $a,b\in A$.
	Then
	\begin{equation*}
		\begin{split}
			\bigodot_{k\in \Z} \gs{a}(k) \oplus \gs{b}(n-k)	
				& =	\left(\bigodot_{k\in \Z, k<0} 1 \oplus \gs{b}(n-k)\right) \odot (a \oplus \gs{b}(n)) \odot \left(\bigodot_{k\in \Z, k>0} 0 \oplus \gs{b}(n-k)\right)\\
				& =	(a \oplus\gs{b}(n)) \odot \left(\bigodot_{k\in \Z, k>0} \gs{b}(n-k))\right)\\
				& =	{
						\begin{cases}
							1				& \text{if }n<0;\\			
							a\oplus b	& \text{if }n=0;\\
							a\odot b		& \text{if }n=1;\\			
							0				& \text{if }n>1.		
						\end{cases}
						}
		\end{split}
	\end{equation*}
	Moreover,
	\begin{align*}
		\bigoplus_{k\in \Z}(\gs{a}(k)\odot \gs{b}(n-k-1))	& =	\left(\bigoplus_{k\in \Z, k<0}1\odot \gs{b}(n-k-1)\right)\oplus (a\odot \gs{b}(n-1))\oplus 0\\
																		& =	\left(\bigoplus_{k\in \Z, k<0}\gs{b}(n-k-1)\right)\oplus (a\odot \gs{b}(n-1))\\
																		& =	{
																				\begin{cases}
																					1				& \text{if }n<0;\\			
																					a \oplus b	& \text{if }n=0;\\
																					a \odot b	& \text{if }n=1;\\			
																					0				& \text{if }n>1. 		
																				\end{cases}
																				}
	\end{align*}
\end{proof}
Given good $\Z$-sequences $\gs{a}$ and $\gs{b}$ in {\amvm}, we set, for every $n \in \Z$,
\begin{equation}
	\label{eq:sum-bigodot1}
	(\gs{a}+\gs{b})(n)\df \bigodot_{k\in \Z}\gs{a}(k)\oplus \gs{b}(n-k),
\end{equation}
or, equivalently (by \cref{l:associative}),
\begin{equation}
	\label{eq:sum-bigoplus1}
	(\gs{a}+\gs{b})(n)\df \bigoplus_{k\in \Z}\gs{a}(k)\odot \gs{b}(n-k-1).
\end{equation}
\begin{proposition}
	\label{p:sum is good}
	For all good $\Z$-sequences $\gs{a}$ and $\gs{b}$ in {\amvm}, the $\Z$-sequence $\gs{a}+\gs{b}$ is good.
\end{proposition}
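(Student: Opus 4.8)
The plan is to verify directly the three defining conditions of a good $\Z$-sequence (\cref{d:good-Z}) for $\gs{a}+\gs{b}$: the two eventual-constancy conditions and the good-pair condition at each index. The crucial preliminary observation is that, for each fixed $n$, the expression $\bigodot_{k\in\Z}\gs{a}(k)\oplus\gs{b}(n-k)$ is \emph{essentially finite}: since $\gs{a}(k)=1$ for $k$ sufficiently small and $\gs{b}(n-k)=1$ for $k$ sufficiently large, and since $1\oplus y=1$ by \cref{l:absorbing}, all but finitely many factors equal $1$. Hence $(\gs{a}+\gs{b})(n)$ is a genuine finite term built from $\oplus$ and $\odot$ in the values of $\gs{a}$ and $\gs{b}$, so it is preserved by every homomorphism; in particular, for any homomorphism $h\colon A\to B$, the sequences $h\circ\gs{a}$ and $h\circ\gs{b}$ are good and $h((\gs{a}+\gs{b})(n))=((h\circ\gs{a})+(h\circ\gs{b}))(n)$.

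First I would settle the eventuality conditions in $A$ itself. Choosing $n_a,n_b\in\Z$ with $\gs{a}(k)=1$ for $k<n_a$ and $\gs{b}(k)=1$ for $k<n_b$, if $n<n_a+n_b$ then for every $k\in\Z$ either $k<n_a$, whence $\gs{a}(k)=1$, or $k\geq n_a>n-n_b$, whence $n-k<n_b$ and $\gs{b}(n-k)=1$; in either case $\gs{a}(k)\oplus\gs{b}(n-k)=1$ by \cref{l:absorbing}, so $(\gs{a}+\gs{b})(n)=1$. Dually, using the formula \eqref{eq:sum-bigoplus1} and $x\odot 0=0$ from \cref{l:absorbing}, one obtains $(\gs{a}+\gs{b})(n)=0$ for all sufficiently large $n$.

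It remains to show that $\big((\gs{a}+\gs{b})(n),(\gs{a}+\gs{b})(n+1)\big)$ is a good pair for every $n$. Being a good pair is expressed by the two equations $x_0\oplus x_1=x_0$ and $x_0\odot x_1=x_1$, so by Birkhoff's subdirect representation theorem (\cref{t:SubRepThe}) together with the observation above, it suffices to verify this when $A$ is subdirectly irreducible. In that case \cref{c:good sequence in sub irr} shows that $\gs{a}$ and $\gs{b}$ are, up to translation, of the simple form $\gs{a}=(a)$ and $\gs{b}=(b)$, and the computation already carried out in the proof of \cref{l:associative} gives that $\gs{a}+\gs{b}$ equals $1$ for $n<0$, equals $a\oplus b$ at $n=0$, equals $a\odot b$ at $n=1$, and equals $0$ for $n>1$. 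Each consecutive pair is then good: the pairs $(1,1)$, $(1,a\oplus b)$, $(a\odot b,0)$ and $(0,0)$ are good by the identity and absorption laws (\cref{ax:A2,l:absorbing}), while $(a\oplus b,a\odot b)$ is good by \cref{l:oplus odot is a good pair}. Since translation preserves goodness, the good-pair condition holds in every subdirectly irreducible factor, hence in $A$.

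The only delicate point is the reduction to the subdirectly irreducible case: one must be certain that the infinitary-looking sum commutes with the subdirect projections, which is exactly what the essentially-finite observation of the first paragraph guarantees. Once this is in place, the eventuality conditions are elementary and the good-pair verification collapses to the five trivial cases listed above.
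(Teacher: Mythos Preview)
Your proposal is correct and follows essentially the same route as the paper: reduce via Birkhoff's subdirect representation theorem to subdirectly irreducible factors, invoke \cref{c:good sequence in sub irr} to write $\gs{a}=(a)$ and $\gs{b}=(b)$ up to translation, compute $\gs{a}+\gs{b}=(a\oplus b,a\odot b)$, and conclude with \cref{l:oplus odot is a good pair}. The paper compresses all three goodness conditions into the single reduction, whereas you separate out the eventuality conditions and verify them directly in $A$; your explicit remark that the essentially-finite sum commutes with homomorphisms is exactly what makes the Birkhoff reduction legitimate, a point the paper leaves tacit.
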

\begin{proof}
	By Birkhoff's subdirect representation theorem, it is enough to prove the statement for a subdirectly irreducible {\mvm} $A$.
	Then, up to a translation of $\gs{a}$ and $\gs{b}$, we can assume $\gs{a}=(a)$ and $\gs{b}=(b)$ for some $a,b\in A$.
	Then we have $\gs{a} + \gs{b} = (a \oplus b, a \odot b)$.
	The pair $(a \oplus b, a \odot b)$ is good by \cref{l:oplus odot is a good pair}.
	Therefore, $\gs{a}+\gs{b}$ is good.
\end{proof}
%

%=================================   SUBSECTION   =================================%

\subsection{The algebra \texorpdfstring{$\X(A)$}{\textXi(\unichar{"1D434})} is a unital commutative distributive \texorpdfstring{$\ell$}{\unichar{"02113}}-monoid}

\begin{proposition}
	\label{p:sum is commutative}
	Addition of good $\Z$-sequences in {\amvm} is commutative.
\end{proposition}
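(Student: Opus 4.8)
The plan is to exploit the manifest symmetry of the convolution-style formula defining the sum, so that commutativity follows from the commutativity of $\oplus$ together with a change of summation index; no appeal to subdirect decomposition is needed. Recall from \cref{eq:sum-bigodot1} that, for good $\Z$-sequences $\gs{a}$ and $\gs{b}$ and every $n \in \Z$,
\[
	(\gs{a} + \gs{b})(n) = \bigodot_{k \in \Z} \gs{a}(k) \oplus \gs{b}(n-k).
\]
The first point to record is that this $\Z$-indexed $\odot$-meet is well defined: since $\gs{a}(k) = 1$ for all sufficiently small $k$ and $\gs{b}(n-k) = 1$ for all sufficiently large $k$, all but finitely many factors equal $1$, and by \cref{ax:A2} the operation $\odot$ is commutative and associative with unit $1$. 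Hence the right-hand side is the $\odot$-product of a finite family of elements of $A$, and its value does not depend on the order in which the factors are multiplied.

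With this in hand, the core of the argument is a reindexing. First I would write out
\[
	(\gs{b} + \gs{a})(n) = \bigodot_{k \in \Z} \gs{b}(k) \oplus \gs{a}(n-k),
\]
and substitute $j \df n - k$; as $k$ ranges over $\Z$ so does $j$, and this bijection merely permutes the factors of the (essentially finite) $\odot$-product, which by the previous paragraph leaves the value unchanged. This turns the right-hand side into $\bigodot_{j \in \Z} \gs{b}(n-j) \oplus \gs{a}(j)$. Applying the commutativity of $\oplus$ (again \cref{ax:A2}) to each factor, $\gs{b}(n-j) \oplus \gs{a}(j) = \gs{a}(j) \oplus \gs{b}(n-j)$, yields exactly $\bigodot_{j \in \Z} \gs{a}(j) \oplus \gs{b}(n-j) = (\gs{a} + \gs{b})(n)$. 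Since this holds for every $n \in \Z$, and equality of good $\Z$-sequences is pointwise, I conclude $\gs{a} + \gs{b} = \gs{b} + \gs{a}$.

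There is essentially no obstacle here; the only thing that needs care is the justification that the reindexing preserves the value of the meet, which is precisely the well-definedness observation above. For readers who prefer to stay in line with the method used throughout this section, an alternative is available via Birkhoff's subdirect representation theorem (\cref{t:SubRepThe}): one reduces to a subdirectly irreducible $A$, which is totally ordered by \cref{t:sub irr is totally ordered}, and invokes \cref{c:good sequence in sub irr} to assume, up to a translation, that $\gs{a} = (a)$ and $\gs{b} = (b)$ for some $a, b \in A$. Then, as computed in the proof of \cref{p:sum is good}, $\gs{a} + \gs{b} = (a \oplus b, a \odot b)$ and $\gs{b} + \gs{a} = (b \oplus a, b \odot a)$, and these agree by the commutativity of $\oplus$ and $\odot$. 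I would nonetheless favour the direct reindexing proof, as it is shorter and avoids the machinery.
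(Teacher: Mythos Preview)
Your proof is correct and is precisely the argument the paper has in mind: its proof is the single line ``By commutativity of $\oplus$ and $\odot$,'' and your reindexing spells out exactly how those two commutativity facts are used (commutativity of $\odot$ to permute the finitely many nontrivial factors of the $\bigodot$, commutativity of $\oplus$ inside each factor). Your alternative via subdirect representation is not needed here, as you note.
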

\begin{proof}
	By commutativity of $\oplus$ and $\odot$.
\end{proof}

\begin{proposition}
	\label{p:0 is neutral}
	For every good $\Z$-sequence $\gs{a}$ in {\amvm} we have $\gs{a} + \gs{0} = \gs{a}$.
\end{proposition}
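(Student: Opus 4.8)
The plan is to verify the identity componentwise, that is, to show $(\gs{a} + \gs{0})(n) = \gs{a}(n)$ for every $n \in \Z$, since two $\Z$-sequences coincide precisely when they agree at each integer. I would start from the $\bigodot$-form of the sum \eqref{eq:sum-bigodot1}, namely $(\gs{a}+\gs{0})(n) = \bigodot_{k\in\Z} \gs{a}(k) \oplus \gs{0}(n-k)$, and evaluate the factors using the explicit shape of $\gs{0}$. Since $\gs{0}(n-k) = 1$ when $k > n$ and $\gs{0}(n-k) = 0$ when $k \leq n$, \cref{l:absorbing} gives $\gs{a}(k) \oplus \gs{0}(n-k) = \gs{a}(k)\oplus 1 = 1$ for $k > n$, while \cref{ax:A2} gives $\gs{a}(k)\oplus\gs{0}(n-k) = \gs{a}(k)\oplus 0 = \gs{a}(k)$ for $k \leq n$. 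As $1$ is the unit for $\odot$, the factors with $k > n$ drop out, leaving
\[
	(\gs{a}+\gs{0})(n) = \bigodot_{k \leq n} \gs{a}(k).
\]
This product is genuinely finite, and hence well defined, because $\gs{a}(k) = 1$ for all sufficiently small $k$ by goodness of $\gs{a}$.

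It then remains to prove $\bigodot_{k \leq n} \gs{a}(k) = \gs{a}(n)$. Here I would invoke the good pair condition from \cref{d:good-Z}: for each $k$, the pair $(\gs{a}(k), \gs{a}(k+1))$ is good, so $\gs{a}(k) \odot \gs{a}(k+1) = \gs{a}(k+1)$. Fixing $N$ with $\gs{a}(k) = 1$ for all $k < N$, the product above equals $\gs{a}(N) \odot \cdots \odot \gs{a}(m)$ for every $m \geq N$, and a straightforward induction on $m \geq N$ shows $\bigodot_{k \leq m} \gs{a}(k) = \gs{a}(m)$: the base case $m = N$ is the product of units with $\gs{a}(N)$, and the inductive step uses associativity and commutativity of $\odot$ (\cref{ax:A2}) together with the absorption $\gs{a}(m) \odot \gs{a}(m+1) = \gs{a}(m+1)$. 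Taking $m = n$ yields the claim, and therefore $\gs{a}+\gs{0} = \gs{a}$.

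I expect no serious obstacle; the only points requiring care are the well-definedness of the $\bigodot$ once $\gs{0}$ has been substituted, and making the telescoping of the $\odot$-product rigorous through the good pair identity. As an alternative more in line with the surrounding results, one could invoke Birkhoff's subdirect representation theorem to reduce to a subdirectly irreducible $A$, where \cref{c:good sequence in sub irr} forces $\gs{a}$ to have a single jump and the computation $\gs{a}+\gs{0}=\gs{a}$ becomes immediate; but the direct argument above is self-contained and avoids the reduction.
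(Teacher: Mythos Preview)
Your argument is correct and is precisely a detailed unpacking of what the paper leaves as ``straightforward.'' The only cosmetic point is that after fixing $N$ you should allow $N \leq n$ (which is always possible, since the set of indices where $\gs{a}$ equals $1$ is downward closed), so that taking $m = n$ is legitimate; alternatively, note that for $n < N$ both sides are trivially $1$.
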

\begin{proof}
	The proof is straightforward.
\end{proof}
\begin{proposition}
	\label{p:associativity full}
	Addition of good $\Z$-sequences in {\amvm} is associative.
\end{proposition}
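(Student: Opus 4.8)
The plan is to follow the same reduction-to-the-subdirectly-irreducible-case strategy already used for \cref{p:sum is good} and \cref{l:associative}. First I would record that addition of good $\Z$-sequences is computed componentwise through the operations $\oplus$ and $\odot$ (formula \eqref{eq:sum-bigodot1}, in which each $\bigodot$ is really a finite meet of $\odot$'s, since all but finitely many factors equal $1$). Consequently, any homomorphism $f\colon A\to B$ of {\mvms} induces a map $\X(A)\to\X(B)$, $\gs{a}\mapsto f\circ\gs{a}$, which sends good $\Z$-sequences to good $\Z$-sequences and preserves $+$, because $f$ preserves $\oplus$, $\odot$, $0$ and $1$. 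Applying Birkhoff's subdirect representation theorem (\cref{t:SubRepThe}), I would fix a subdirect embedding $A\hookrightarrow\prod_{i\in I}A_i$ with each $A_i$ subdirectly irreducible, with projections $\pi_i$. Since the $\pi_i$ are jointly injective and preserve $+$ on good $\Z$-sequences, equality of good $\Z$-sequences in $A$ can be tested coordinatewise; hence $(\gs{a}+\gs{b})+\gs{c}=\gs{a}+(\gs{b}+\gs{c})$ holds in $\X(A)$ as soon as it holds in each $\X(A_i)$. Thus it suffices to treat the case of a subdirectly irreducible $A$.

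Next I would reduce, in the subdirectly irreducible case, to the three sequences being concentrated at position $0$. A direct manipulation of the convolution formula \eqref{eq:sum-bigodot1} (the substitution $j=k-m_a$) shows that shifting $\gs{a}$ by $m_a$ and $\gs{b}$ by $m_b$ shifts $\gs{a}+\gs{b}$ by $m_a+m_b$; hence shifting each of $\gs{a},\gs{b},\gs{c}$ independently shifts both $(\gs{a}+\gs{b})+\gs{c}$ and $\gs{a}+(\gs{b}+\gs{c})$ by the same amount. By \cref{t:sub irr is totally ordered} and \cref{c:good sequence in sub irr}, every good $\Z$-sequence in $A$ is, up to such a shift, of the form $(x)$ for some $x\in A$; so I may assume $\gs{a}=(a)$, $\gs{b}=(b)$ and $\gs{c}=(c)$.

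Then I would compute both sides explicitly. As in the proof of \cref{p:sum is good}, $(a)+(b)=(a\oplus b,a\odot b)$. Expanding $((a)+(b))+(c)$ via \eqref{eq:sum-bigodot1} and simplifying each coordinate gives the $\Z$-sequence
\[
	(a\oplus b\oplus c,\ \sigma(a,b,c),\ a\odot b\odot c),
\]
where the coordinate at position $0$ is $(a\oplus b)\oplus c$, the coordinate at position $1$ is $(a\oplus b)\odot((a\odot b)\oplus c)=\sigma_1(a,b,c)=\sigma(a,b,c)$, and the coordinate at position $2$ is $(a\oplus b)\odot(a\odot b)\odot c=(a\odot b)\odot c$ (using that $(a\oplus b,a\odot b)$ is a good pair, \cref{l:oplus odot is a good pair}), while all coordinates beyond position $2$ vanish by \cref{l:absorbing}. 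The analogous computation of $(a)+((b)+(c))$ produces the coordinate at position $1$ as $(a\oplus(b\odot c))\odot(b\oplus c)=\sigma_4(a,b,c)$. By \cref{l:permutations} we have $\sigma_1=\sigma_4=\sigma$ with $\sigma$ symmetric, and by commutativity of $\oplus$ and $\odot$ the outer coordinates coincide as well; hence both associations equal the same good $\Z$-sequence, establishing associativity.

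I expect the main obstacle to be making the reduction via Birkhoff rigorous—specifically, verifying that addition of good $\Z$-sequences is preserved by the projections of a subdirect decomposition and jointly reflected by them—since this is precisely what legitimises the coordinatewise reasoning. Once inside the subdirectly irreducible case, the remaining work is a finite computation supported entirely by the good-pair identities already established, so it should present no essential difficulty.
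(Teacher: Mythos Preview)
Your proposal is correct and follows essentially the same route as the paper: reduce via Birkhoff's subdirect representation theorem to a subdirectly irreducible algebra, use \cref{c:good sequence in sub irr} together with shift-invariance of the convolution to assume $\gs{a}=(a)$, $\gs{b}=(b)$, $\gs{c}=(c)$, and then verify associativity by the explicit computation $(a)+(b)=(a\oplus b,a\odot b)$ and an appeal to \cref{l:permutations}. The paper's write-up is terser (it transforms one side directly into the other rather than showing both equal $(a\oplus b\oplus c,\sigma(a,b,c),a\odot b\odot c)$), but the content is the same.
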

\begin{proof}
	By Birkhoff's subdirect representation theorem, it is enough to prove the statement for a subdirectly irreducible {\mvm} $A$.
	So, let $\gs{a}$, $\gs{b}$ and $\gs{c}$ be good $\Z$-sequences in $A$.
	By \cref{c:good sequence in sub irr}, up to a translation for each of $\gs{a}$, $\gs{b}$ and $\gs{c}$, we can suppose $\gs{a}=(a)$, $\gs{b}=(b)$ and $\gs{c}=(c)$, for some $a,b,c\in A$.
	Then, $\gs{a}+\gs{b}=(a\oplus b,a\odot b)$, and $\gs{b} +\gs{c}=(b\oplus c,b\odot c)$.
	We have
	\begin{align*}
		(\gs{a} + \gs{b}) + \gs{c}	& = (a \oplus b, a \odot b) + (c) \\
									& = \big((a \oplus b) \oplus c, (a \oplus b) \odot((a \odot b) \oplus c), (a \odot b) \odot c\big) \\
									& = \big(a \oplus (b \oplus c), (a \oplus (b \odot c)) \odot(b \oplus c), a \odot(b \odot c)\big)	&& \by{\cref{l:permutations}}\\
									& = (a) + (b \oplus c, b \odot c)\\
									& = \gs{a} + (\gs{b} + \gs{c}).	&& \qedhere
	\end{align*}
\end{proof}
\begin{proposition}
	\label{p:distributivity full}
	For all good $\Z$-sequences $\gs{a}$, $\gs{b}$, $\gs{c}$ in {\amvm} we have
	\begin{align}
		\label{eq:distr-lor}		\gs{a} + (\gs{b} \lor \gs{c})		& = (\gs{a} + \gs{b}) \lor (\gs{a} + \gs{c})
	\intertext{and}
		\label{eq:distr-land}	\gs{a} + (\gs{b} \land \gs{c})	& = (\gs{a} + \gs{b}) \land (\gs{a} + \gs{c}).
	\end{align}
\end{proposition}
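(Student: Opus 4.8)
The plan is to follow the same strategy as in the preceding propositions of this section, reducing to the subdirectly irreducible case via Birkhoff's subdirect representation theorem (\cref{t:SubRepThe}). The operations $\lor$, $\land$ and $+$ on $\X({-})$ are all built, value by value, out of the operations $\oplus$, $\odot$, $\lor$, $\land$, $0$, $1$ of the underlying {\mvm}; consequently a homomorphism of {\mvms} sends good $\Z$-sequences to good $\Z$-sequences and preserves $\lor$, $\land$ and $+$ on them, so a subdirect embedding $A \mono \prod_{i} A_i$ induces an injection $\X(A) \to \prod_i \X(A_i)$ preserving $\lor$, $\land$ and $+$. Hence, to establish \cref{eq:distr-lor,eq:distr-land} for all good $\Z$-sequences in an arbitrary {\mvm}, it suffices to establish them when the {\mvm} $A$ is subdirectly irreducible. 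This reduction is the step that requires the most care to state precisely, although it is entirely routine once the value-by-value nature of the operations is spelled out.

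So let $A$ be subdirectly irreducible. First I would record that $\X(A)$ is totally ordered. By \cref{t:sub irr is totally ordered} the algebra $A$ is totally ordered, and by \cref{c:good sequence in sub irr} every good $\Z$-sequence in $A$ equals $1$ up to some index, takes a single intermediate value there, and is $0$ afterwards. Comparing two such sequences $\gs{b}$ and $\gs{c}$ index by index---using \cref{l:order is pointwise}, and the total order of $A$ to break the tie when the two ``steps'' occur at the same index---shows that either $\gs{b} \leq \gs{c}$ or $\gs{c} \leq \gs{b}$. Thus $\langle \X(A); \lor, \land \rangle$ is a chain, and $\gs{b} \lor \gs{c}$, $\gs{b} \land \gs{c}$ are simply the larger and the smaller of $\gs{b}$, $\gs{c}$.

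Next I would observe that addition is order-preserving on $\X(A)$. Indeed, $\gs{a} + \gs{b}$ is defined by $(\gs{a} + \gs{b})(n) = \bigodot_{k \in \Z} \gs{a}(k) \oplus \gs{b}(n - k)$, and $\oplus$, $\odot$ are order-preserving in each coordinate by \cref{l:order-preserving properties}; so $\gs{b} \leq \gs{b}'$ forces $(\gs{a} + \gs{b})(n) \leq (\gs{a} + \gs{b}')(n)$ for every $n$, that is, $\gs{a} + \gs{b} \leq \gs{a} + \gs{b}'$ by \cref{l:order is pointwise} (and these are again good $\Z$-sequences by \cref{p:sum is good}). Finally, in any chain an order-preserving unary map $f$ satisfies $f(u \lor v) = f(u) \lor f(v)$ and $f(u \land v) = f(u) \land f(v)$: if, say, $u \leq v$, then $u \lor v = v$ and $f(u) \leq f(v)$, so $f(u) \lor f(v) = f(v) = f(u \lor v)$, and dually for $\land$. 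Applying this with $f(\gs{x}) = \gs{a} + \gs{x}$ yields \cref{eq:distr-lor,eq:distr-land} in the subdirectly irreducible case, which by the reduction above completes the proof.
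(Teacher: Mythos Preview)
Your proof is correct and follows essentially the same route as the paper: reduce to the subdirectly irreducible case via Birkhoff, observe that good $\Z$-sequences are then totally ordered, and use monotonicity of $+$ to deduce distributivity. You are more explicit than the paper in two places---you spell out why the reduction step is legitimate (homomorphisms preserve the value-by-value operations), and you invoke \cref{c:good sequence in sub irr} to argue that $\X(A)$ is a chain, whereas the paper cites only \cref{t:sub irr is totally ordered} and leaves the passage from ``$A$ totally ordered'' to ``$\gs{b}$ and $\gs{c}$ comparable'' implicit.
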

\begin{proof}
	Let us prove \cref{eq:distr-lor}.
	By Birkhoff's subdirect representation theorem, it is enough to prove the statement for a subdirectly irreducible algebra.
	In this case, by \cref{t:sub irr is totally ordered}, we have either $\gs{b}\leq \gs{c}$ or $\gs{c}\leq \gs{b}$.
	Without loss of generality, we can suppose $\gs{b}\leq \gs{c}$.
	Then, by the definition of $+$ and monotonicity of $\oplus$ and $\odot$, we have $\gs{a} + \gs{b} \leq \gs{a} + \gs{c}$, and thus $\gs{a} + (\gs{b} \lor \gs{c}) = (\gs{a} + \gs{b}) \lor (\gs{a} + \gs{c})$.
	\Cref{eq:distr-land} is analogous.
\end{proof}
\begin{theorem}
	For {\amvm} $A$, the algebra $\X(A)$ is {\aulm}.
\end{theorem}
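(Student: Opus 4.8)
The plan is to check the axioms of \cref{d:ulm} one by one, invoking the structural results already established in this section for the parts that concern only the monoid-and-lattice reduct, and then verifying by hand the three axioms that mention the units $\gs{1}$ and $\gs{-1}$.

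First, \cref{ax:U0} is already in place, together with the extra requirements that $+$ be commutative and the lattice be distributive: $\langle \X(A); \lor, \land\rangle$ is a distributive lattice by \cref{p:distributive lattice}, the operation $+$ is commutative by \cref{p:sum is commutative}, has neutral element $\gs{0}$ by \cref{p:0 is neutral} and is associative by \cref{p:associativity full}, and $+$ distributes over $\lor$ and $\land$ by \cref{p:distributivity full}. Thus only \cref{ax:U1,ax:U2,ax:U3} remain.

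For \cref{ax:U2}, i.e.\ $\gs{-1} \leq \gs{0} \leq \gs{1}$, I would use the pointwise description of the order from \cref{l:order is pointwise}: at each index $n \in \Z$ the values of $\gs{-1}, \gs{0}, \gs{1}$ are either $0$ or $1$, arranged so that $\gs{-1}(n) \leq \gs{0}(n) \leq \gs{1}(n)$, which is immediate since $0 \leq x \leq 1$ for every $x \in A$ by \cref{l:bounded-lattice}. For \cref{ax:U1}, I would compute $\gs{1} + \gs{-1}$ directly from the convolution formula \eqref{eq:sum-bigodot1}: since $\gs{1}(k) = 1$ for $k \leq 0$ those factors drop out of the $\bigodot$ (using $1 \oplus y = 1$, which follows from \cref{l:absorbing} and commutativity of $\oplus$), while $\gs{1}(k) = 0$ for $k \geq 1$, so that one is left with $\bigodot_{k \geq 1} \gs{-1}(n-k)$; unwinding the step shape of $\gs{-1}$ and using $x \odot 0 = 0$ from \cref{l:absorbing}, this value equals $1$ for $n \leq -1$ and $0$ for $n \geq 0$, i.e.\ exactly $\gs{0}$. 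Commutativity (\cref{p:sum is commutative}) then yields both equalities of \cref{ax:U1}.

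The remaining axiom \cref{ax:U3} is where the only genuine work lies. I would first determine the $n$-fold sums $\underbrace{\gs{1} + \dots + \gs{1}}_{n}$ and $\underbrace{\gs{-1} + \dots + \gs{-1}}_{n}$, showing by induction on $n$ (each step a direct evaluation of \eqref{eq:sum-bigodot1} of the same flavour as the computation for \cref{ax:U1}) that the former is the good $\Z$-sequence equal to $1$ below $n$ and $0$ from $n$ onwards, and the latter the one equal to $1$ below $-n$ and $0$ from $-n$ onwards. Then, given an arbitrary good $\Z$-sequence $\gs{a}$, its goodness provides integers beyond which $\gs{a}$ is constantly $1$ on the left and constantly $0$ on the right; choosing $n \in \Np$ large enough to dominate both thresholds and comparing pointwise via \cref{l:order is pointwise} gives $\underbrace{\gs{-1} + \dots + \gs{-1}}_{n} \leq \gs{a} \leq \underbrace{\gs{1} + \dots + \gs{1}}_{n}$. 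The main obstacle is thus the bookkeeping in identifying these multiples of the units, which is routine but requires care with the index shifts in the convolution; alternatively, one may reduce these identities to the subdirectly irreducible case via Birkhoff's theorem (\cref{t:SubRepThe}) and the normal form of \cref{c:good sequence in sub irr}, exactly as was done for the other properties of $+$ in this section.
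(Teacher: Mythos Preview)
Your proposal is correct and follows essentially the same approach as the paper: you invoke the same propositions for \cref{ax:U0}, verify \cref{ax:U1} by direct computation with the convolution formula (the paper just says ``easily verified''), establish \cref{ax:U2} via the pointwise order, and prove \cref{ax:U3} by determining the step shape of $n\gs{1}$ and $n\gs{-1}$ by induction and then comparing pointwise, exactly as the paper does.
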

\begin{proof}
	By \cref{p:distributive lattice}, $\X(A)$ is a distributive lattice.
	By \cref{p:sum is commutative,p:associativity full,p:0 is neutral}, $\X(A)$ is a commutative monoid.
	By \cref{p:distributivity full}, $+$ distributes over $\land$ and $\lor$.
	Thus, $\X(A)$ is {\alm}.
	It is easily verified that $\gs{-1} + \gs{1} = \gs{0}$.
	Since the order in $\X(A)$ is pointwise (\cref{l:order is pointwise}), and $0 \leq 1$ in $A$, we have $\gs{-1} \leq \gs{0} \leq \gs{1}$.
	By induction, one proves that, for every $n \in \N$, the $\Z$-sequence $n\gs{1}$ is the function
	\begin{align*}
		\Z	&	\longrightarrow	A	\\
		k	&	\longmapsto			{
										\begin{cases}
											1	&	\text{if } k < n;		\\
											0	&	\text{if } k \geq n,	\\
										\end{cases}
										}
	\end{align*}
	and $n(\gs{-1})$ is the function
	\begin{align*}
		\Z	&	\longrightarrow	A	\\
		k	&	\longmapsto		{
									\begin{cases}
										1	&	\text{if } k < -n;		\\
										0	&	\text{if } k \geq -n.	\\
									\end{cases}
									}
	\end{align*}
	Since $1$ is the maximum of $A$ and $0$ is the minimum of $A$, we have \cref{ax:U3}, i.e., for all ${\gs{a}} \in \X(A)$, there exists $n \in \Np$ such that $n(\gs{-1})\leq{\gs{a}} \leq n\gs{1}$.
\end{proof}
Every morphism of {\mvms} $f\colon A\to B$ preserves $0$, $1$, and good pairs; thus, we are allowed to define the function
\begin{equation*}
	\begin{split}
		\X(f) \colon	\X(A)		& \longrightarrow	\X(B)\\
							\gs{x}	& \longmapsto		\big(\Z \to B; n \mapsto f(\gs{x}(n))\big).
	\end{split}
\end{equation*}
More concisely: $\X(f)(\gs{x})(n)=f(\gs{x}(n))$.
\begin{lemma}
	For every morphism $f\colon A\to B$ of {\mvms}, the function $\X(f)$ is a morphism of {\ulms}.
\end{lemma}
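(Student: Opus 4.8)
The plan is to verify directly that $\X(f)$ preserves each primitive operation of a {\ulm}, namely the constants $\gs{0}$, $\gs{1}$, $\gs{-1}$, the lattice operations $\lor$ and $\land$, and the addition $+$. The one fact I would lean on throughout is that $f$, being a morphism of {\mvms}, preserves $\oplus$, $\odot$, $\lor$, $\land$, $0$ and $1$. Well-definedness of $\X(f)$ — that $\X(f)(\gs{x})$ is again a good $\Z$-sequence in $B$ — is already granted by the observation preceding the statement, which records that $f$ preserves $0$, $1$ and good pairs; so I may take for granted that $\X(f)$ lands in $\X(B)$.

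First I would dispatch the constants and the lattice operations. Since $f(0) = 0$ and $f(1) = 1$, applying $f$ pointwise to $\gs{0}$, $\gs{1}$ and $\gs{-1}$ returns the sequences defined by the very same case distinctions in $B$, so all three constants are preserved. For the lattice operations, recall that $\lor$ and $\land$ on $\X(A)$ are computed componentwise (this underlies \cref{p:distributive lattice}); hence for each $n \in \Z$
\[
	\X(f)(\gs{a} \lor \gs{b})(n) = f(\gs{a}(n) \lor \gs{b}(n)) = f(\gs{a}(n)) \lor f(\gs{b}(n)) = \bigl(\X(f)(\gs{a}) \lor \X(f)(\gs{b})\bigr)(n),
\]
and symmetrically for $\land$, using that $f$ is a lattice homomorphism.

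The step requiring a little care is preservation of addition. Here I would start from the definition in \cref{eq:sum-bigodot1}, namely $(\gs{a} + \gs{b})(n) = \bigodot_{k \in \Z} \gs{a}(k) \oplus \gs{b}(n-k)$. The crucial observation — already noted when this sum was introduced — is that this $\bigodot$ is genuinely a \emph{finite} $\odot$-product, since all but finitely many of its factors equal $1$ (by \cref{l:absorbing}, the factors for $k \ll 0$ and $k \gg 0$ reduce to $1$). Consequently $f$, which preserves $\odot$, $\oplus$ and the constant $1$, commutes with the whole expression: for each $n \in \Z$,
\[
	f\bigl((\gs{a} + \gs{b})(n)\bigr) = \bigodot_{k \in \Z} f(\gs{a}(k)) \oplus f(\gs{b}(n-k)) = \bigl(\X(f)(\gs{a}) + \X(f)(\gs{b})\bigr)(n),
\]
the last equality being \cref{eq:sum-bigodot1} applied in $\X(B)$ to the sequences $\X(f)(\gs{a})$ and $\X(f)(\gs{b})$.

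The main (and only mild) obstacle is precisely this reduction of the formal $\bigodot_{k \in \Z}$ to a true finite product before invoking that $f$ is a homomorphism: one must confirm that the cofinitely many factors equal to $1$ are sent by $f$ to $1$ and therefore contribute nothing, so that the finitary homomorphism property of $f$ suffices to pull $f$ through the product. Since $f$ preserves $1$, this is automatic, and no infinitary hypothesis on $f$ is needed. With the three constants, the two lattice operations and the addition all preserved, $\X(f)$ is a morphism of {\ulms}, as required.
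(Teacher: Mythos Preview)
Your proof is correct and follows essentially the same approach as the paper: both verify preservation of $+$ via the componentwise formula \cref{eq:sum-bigodot1} and treat the remaining operations as straightforward. You supply more detail than the paper does---spelling out the constants and lattice operations explicitly, and justifying why the formally infinite $\bigodot$ reduces to a finite product so that the finitary homomorphism $f$ can be pushed through---but the underlying argument is the same.
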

\begin{proof}
	Let us prove that $\X(f)$ preserves $+$.
	Let $\gs{a},\gs{b}\in \X(A)$, and let $n\in \Z$.
	Then, 
	\begin{equation*}
		\begin{split}
			\big(\X(f)(\gs{a} + \gs{b})\big)(n)	& = f((\gs{a} + \gs{b})(n))\\
											& = f\mathopen{}\left(\bigodot_{k\in \Z} \gs{a}(k) \oplus \gs{b}(n - k)\right)\mathclose{}\\
											& = \bigodot_{k\in \Z} f(\gs{a}(k)) \oplus f(\gs{b}(n - k))\\
											& = \bigodot_{k\in \Z} \X(f)(\gs{a})(k) \oplus \X(f)(\gs{b})(n - k)\\
											& = \big(\X(f)(\gs{a}) + \X(f)(\gs{b})\big)(n).
		\end{split}
	\end{equation*}
	Therefore, $\X(f)$ preserves $+$.
	Straightforward computations show that $\X(f)$ preserves also $0$, $1$, $-1$, $\lor$ and $\land$.
\end{proof}
It is easy to see that $\X\colon \MVM\to \ULM$ is a functor.

%%%%%%%%%%%%%%%%%%%%%%%%%%%%%%%%%%%   SECTION   %%%%%%%%%%%%%%%%%%%%%%%%%%%%%%%%%%%%

\section{The equivalence} 
	\label{s:MVM and positive-unital}

The aim of the present section is to prove that the functors $\Gam\colon \ULM\to \MVM$ and $\X\colon \MVM\to \ULM$ defined in \cref{s:Gamma,s:operations on Good} are quasi-inverses.

%=================================   SUBSECTION   =================================%

\subsection{Natural isomorphism for {\mvms}}

For each {\mvm} $A$, define the function 
\begin{equation*}
	\begin{split}
		\eta_A \colon	A	& \longrightarrow 	\Gam\X(A)\\
							x	& \longmapsto 			(x).
	\end{split}
\end{equation*}
\begin{proposition}
	\label{p:unit mvm}
	For every {\mvm} $A$, the function $\eta_A \colon A \to \Gam\X(A)$ is an isomorphism of {\mvms}.
\end{proposition}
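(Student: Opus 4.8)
The plan is to show that $\eta_A \colon A \to \Gam\X(A)$, sending $x \mapsto (x)$, is a bijective homomorphism of {\mvms}; since $\MVM$ is a variety, a bijective homomorphism is automatically an isomorphism, so no separate argument for the inverse being a homomorphism is needed. The proof splits naturally into (i) checking $\eta_A$ lands in $\Gam\X(A)$ and preserves the operations, and (ii) checking it is a bijection. For step (i), the target $\Gam\X(A)$ consists of those good $\Z$-sequences $\gs{a}$ with $\gs{0} \leq \gs{a} \leq \gs{1}$; by \cref{l:order is pointwise} this means $\gs{a}(n) = 1$ for $n < 0$ and $\gs{a}(n) = 0$ for $n \geq 1$, so that $(x)$ indeed belongs to $\Gam\X(A)$ for every $x \in A$ (using \cref{l:bounded-lattice} to see $(x)$ is a legitimate good $\Z$-sequence, its single good pair being $(x, 0)$, which is good by \cref{ax:A2,l:absorbing}). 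In fact this identifies $\Gam\X(A)$ with precisely the sequences of the form $(x)$, which is the key bookkeeping observation.

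For preservation of operations, $\lor$ and $\land$ are immediate because they are computed componentwise in $\X(A)$, so $(x) \lor (y) = (x \lor y)$ and dually, and the constants $0, 1$ of $\Gam\X(A)$ are $\gs{0} = (0)$ and $\gs{1} = (1)$. The interesting cases are $\oplus$ and $\odot$. First I would compute $(x) + (y)$ directly from the definition in \cref{eq:sum-bigodot1}: unravelling the $\bigodot_{k}$ as in the subdirectly irreducible computation of \cref{l:associative} (but now valid in any $A$, since the formula is componentwise once one fixes $n$), one finds
\[
	(x) + (y) = (x \oplus y,\ x \odot y),
\]
a fact already recorded inside the proof of \cref{p:sum is good}. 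Then, recalling that in $\Gam\X(A)$ the operation $\oplus$ is $\gs{a} \oplus \gs{b} = (\gs{a} + \gs{b}) \land \gs{1}$ and $\odot$ is $(\gs{a} + \gs{b} - \gs{1}) \lor \gs{0}$, I would compute $(x) \oplus (y)$ by truncating the good $\Z$-sequence $(x \oplus y, x \odot y)$ at level $1$, obtaining $(x \oplus y)$; symmetrically $(x) \odot (y) = (x \odot y)$, using $(x) + (y) - \gs{1}$, whose nonzero content sits in coordinate $0$ after the shift. These are short finite verifications of the truncation, the only subtlety being to keep straight which coordinate survives.

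Finally, bijectivity: injectivity is clear since $\gs{x} = (x)$ and $(x) = (y)$ forces $x = y$ by reading off coordinate $0$; surjectivity is exactly the identification from step (i), namely that every element of $\Gam\X(A)$ is of the form $(x)$. I expect the main obstacle to be the clean verification that $(x) \oplus (y) = (x \oplus y)$ and $(x) \odot (y) = (x \odot y)$, i.e.\ correctly carrying out the truncation of $(x \oplus y, x \odot y)$ by $\gs{1}$ and the shifted truncation by $\gs{0}$; this requires being careful about the index conventions in the definitions of $\gs{0}, \gs{1}$ and of addition, but it is routine once the formula $(x) + (y) = (x \oplus y, x \odot y)$ is in hand. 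Naturality in $A$ (needed to upgrade the family $(\eta_A)$ to a natural isomorphism, presumably in the statement following this proposition) would then follow because $\X(f)$ acts componentwise and $f$ preserves all operations.
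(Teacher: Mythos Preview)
Your proposal is correct and follows essentially the same approach as the paper's proof: both establish the key identity $(x) + (y) = (x \oplus y, x \odot y)$ and then read off preservation of $\oplus$ and $\odot$ by truncating this good $\Z$-sequence against $\gs{1}$ (respectively shifting by $\gs{-1}$), with bijectivity and preservation of $\lor,\land,0,1$ treated as immediate. The paper computes $\odot$ via the equivalent form $((\gs{a}+\gs{b}) \lor \gs{1}) + \gs{-1}$ rather than $(\gs{a}+\gs{b}+\gs{-1}) \lor \gs{0}$, but this is a cosmetic difference.
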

\begin{proof}
	The facts that $\eta_A$ is a bijection and that it preserves $0,1,\lor,\land$ are immediate.
	Let $x,y\in A$.
	Then $(x) + (y)  =  (x \oplus y, x \odot y)$.
	Therefore
	\begin{align*}
		\eta_A(x) \oplus \eta_A(y)	& = (x) \oplus (y)\\
											& = ((x) + (y)) \land {\gs{1}}\\
											& = (x \oplus y, x \odot y) \land {\gs{1}}\\
											& = (x \oplus y)\\
											& = \eta_A(x \oplus y)
	\end{align*}
	and
	\begin{align*}
		\eta_A(x) \odot \eta_A(y)	& = (x) \odot(y)\\
											& = (((x) + (y)) \lor {\gs{1}}){\gs{-1}}\\
											& = ((x \oplus y, x \odot y) \lor {\gs{1}}){\gs{-1}}\\
											& = (1, x \odot y){\gs{-1}}\\
											& = (x \odot y)\\
											& = \eta_A(x \odot y). \qedhere
	\end{align*}
\end{proof}
\begin{proposition}
	\label{p:eta1 is natural}
	For every morphism of {\mvms} $f\colon A\to B$, the following diagram commutes.
	\[
		\begin{tikzcd}
			A \arrow{r}{\eta_A} \arrow[swap]{d}{f}	& \Gam\X(A) \arrow{d}{\Gam\X(f)}\\
			B \arrow[swap]{r}{\eta_B}					& \Gam\X(B),\\
		\end{tikzcd}
	\]
\end{proposition}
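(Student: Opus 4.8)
The plan is to verify commutativity of the square by chasing an arbitrary element $x \in A$ around both paths and observing that they produce the same good $\Z$-sequence in $\X(B)$, namely the sequence $(f(x))$. Since everything in sight is defined by explicit formulas, this is a direct unwinding of definitions; the only structural fact needed is that a morphism of {\mvms} preserves the constants $0$ and $1$.

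First I would compute the composite $\eta_B \circ f$. By definition $\eta_B(f(x)) = (f(x))$, the good $\Z$-sequence whose value is $1$ for indices $n < 0$, equals $f(x)$ at $n = 0$, and is $0$ for $n > 0$.

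Next I would compute $\Gam\X(f) \circ \eta_A$. Here $\eta_A(x) = (x)$, and $\Gam\X(f)$ is the restriction of $\X(f)$ to the unit interval, which acts componentwise via $\X(f)(\gs{a})(n) = f(\gs{a}(n))$. Applying this to $(x)$, whose components are $1$ for $n < 0$, $x$ at $n = 0$, and $0$ for $n > 0$, and using that $f$ is a homomorphism (so $f(1) = 1$ and $f(0) = 0$), I obtain the sequence whose components are $1$ for $n < 0$, $f(x)$ at $n = 0$, and $0$ for $n > 0$; that is, precisely $(f(x))$. Hence $\Gam\X(f)(\eta_A(x)) = (f(x)) = \eta_B(f(x))$ for every $x \in A$, and the diagram commutes.

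There is essentially no obstacle here: the content is entirely bookkeeping, and the naturality is forced by the componentwise definition of $\X(f)$ together with the preservation of $0$ and $1$ by $f$. The only point worth stating explicitly is why the two ``tails'' of the sequence $(x)$ (the constant $1$'s and constant $0$'s) are sent to the corresponding tails of $(f(x))$, which is exactly where $f(1)=1$ and $f(0)=0$ are used.
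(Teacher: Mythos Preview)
Your proof is correct and takes essentially the same approach as the paper: both chase an element $x \in A$ around the square, using the componentwise definition of $\X(f)$ together with $f(0)=0$ and $f(1)=1$ to conclude $\Gam\X(f)((x)) = (f(x)) = \eta_B(f(x))$. The paper's version is simply the one-line computation $\Gam\X(f)(\eta_A(x))=\Gam\X(f)((x))=\X(f)((x))=(f(x))=\eta_B(f(x))$, which is exactly what you unwound in more detail.
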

\begin{proof}
	For every $x\in A$ we have
	\[
		\Gam\X(f)(\eta_A(x))=\Gam\X(f)((x))=\X(f)((x))=(f(x))=\eta_B(f(x)). \qedhere
	\]
\end{proof}
%

%=================================   SUBSECTION   =================================%

\subsection{Natural isomorphism for unital lattice-ordered monoids}

\begin{lemma} \label{l:up-and-down}
	For every $x$ in {\aulm} we have
	\[
		((x \lor -1) \land 0) + ((x\lor 0) \land 1) = (x \lor -1) \land 1.
	\]
\end{lemma}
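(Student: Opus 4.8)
The plan is to prove the identity purely equationally, working directly in the signature of {\ulms} and using only distributivity of $+$ over $\lor$ and $\land$ (\cref{ax:U0}), the unit relation $-1 + 1 = 0$ (\cref{ax:U1}), the order constraints $-1 \leq 0 \leq 1$ (\cref{ax:U2}), monotonicity of $+$ (\cref{l:sum positive}), and the identity $(x \land y) + (x \lor y) = x + y$ (\cref{l:land + lor=+}); no appeal to a representation theorem is needed. Abbreviate $b \df (x \lor 0) \land 1$.

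First I would distribute $+$ over the meet in $(x \lor -1) \land 0$ to get
\[
	\big((x \lor -1) \land 0\big) + b = \big((x \lor -1) + b\big) \land (0 + b) = \big((x \lor -1) + b\big) \land b,
\]
and then expand $(x \lor -1) + b$ by distributing over the meet defining $b$, obtaining
\[
	\big((x \lor -1) \land 0\big) + b = \big((x \lor -1) + (x \lor 0)\big) \land \big((x \lor -1) + 1\big) \land b.
\]

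The first simplification is to discard the middle meetand. Using \cref{ax:U1} and distributivity, $(x \lor -1) + 1 = (x + 1) \lor 0$; on the other hand $b \leq x \lor 0 \leq (x+1) \lor 0$, where $x \leq x + 1$ follows from $0 \leq 1$ (\cref{ax:U2}) together with monotonicity (\cref{l:sum positive}). Hence $\big((x \lor -1) + 1\big) \land b = b$, and since $b = (x \lor 0) \land 1$ we are left with
\[
	\big((x \lor -1) \land 0\big) + b = \big((x \lor -1) + (x \lor 0)\big) \land (x \lor 0) \land 1.
\]

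The crux is then to show $\big((x \lor -1) + (x \lor 0)\big) \land (x \lor 0) = x \lor -1$. I would rewrite the left-hand side as $(x \lor 0) + \big((x \lor -1) \land 0\big)$ by distributivity, simplify $(x \lor -1) \land 0 = (x \land 0) \lor -1$ (using $-1 \leq 0$), and expand once more to reach $\big((x \lor 0) + (x \land 0)\big) \lor (x - 1) \lor -1$. Now \cref{l:land + lor=+} collapses $(x \lor 0) + (x \land 0)$ to $x$, and since $x - 1 \leq x$ (again \cref{ax:U2} and \cref{l:sum positive}) the join simplifies to $x \lor -1$. Substituting back yields $\big((x \lor -1) \land 0\big) + b = (x \lor -1) \land 1$, as desired. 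The only genuinely non-routine step is the comparison $b \leq (x \lor -1) + 1$ that licenses dropping a meetand; everything else is bookkeeping with the distributive laws and the two cited lemmas.
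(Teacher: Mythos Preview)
Your proof is correct, but it takes a considerably more circuitous route than the paper's. The paper observes in one stroke that, setting $z \df (x \lor -1) \land 1$, the two summands on the left are literally $z \land 0$ and $z \lor 0$: indeed $z \land 0 = (x \lor -1) \land (1 \land 0) = (x \lor -1) \land 0$, while $z \lor 0 = ((x \lor -1) \lor 0) \land (1 \lor 0) = (x \lor 0) \land 1$ by distributivity and $-1 \leq 0 \leq 1$. Then \cref{l:land + lor=+} gives $(z \land 0) + (z \lor 0) = z + 0 = z$ immediately. Your argument also pivots on \cref{l:land + lor=+}, but only applies it after several rounds of distribution, a meetand-discarding step, and a separate expansion of $(x \lor -1) \land 0$; the paper's version avoids all of that bookkeeping by recognising the $z \land 0$, $z \lor 0$ pattern up front. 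The payoff of the paper's approach is brevity and transparency; the payoff of yours is that it demonstrates the identity can be ground out mechanically without spotting the slick substitution.
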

\begin{proof}
	We have
	\begin{align*}
		((x \lor -1) \land 0)+ ((x\lor 0) \land 1)	& = (((x \lor -1) \land 1) \land 0) + (((x \lor -1) \land 1) \lor 0)\\
																	& = (x \lor -1) \land 1. \qedhere
	\end{align*}
\end{proof}

\begin{proposition}
	\label{p:trunc is good}
	Let $M$ be {\aulm}, let $x\in M$ and set, for each $n\in \Z$,
	\[ \gs{x}(n) \df ((x- n)\lor 0)\land 1. \]
	Then $\gs{x}$ is a good $\Z$-sequence in $\Gam(M)$.
\end{proposition}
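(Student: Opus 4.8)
The plan is to verify directly the three defining conditions of a good $\Z$-sequence (\cref{d:good-Z}) for $\gs{x}$, working throughout inside the ambient unital $\ell$-monoid $M$ and repeatedly using distributivity of $+$ over $\lor$ and $\land$ together with $-1 + 1 = 0$ (\cref{ax:U1}). First I would check that each $\gs{x}(n) = ((x-n)\lor 0)\land 1$ actually lies in $\Gam(M)$: it is $\leq 1$ by construction, and $((x-n)\lor 0)\land 1 \geq 0 \land 1 = 0$ since $(x-n)\lor 0 \geq 0$ and $0 \leq 1$ (\cref{ax:U2}).

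For the two ``eventually constant'' conditions I would invoke \cref{ax:U3} to fix $n_0 \in \Np$ with $-n_0 \leq x \leq n_0$ (shorthand $n_0 = n_0\cdot 1$, $-n_0 = n_0\cdot(-1)$). For integers $k \geq n_0$ one has $x - k \leq n_0 - k \leq 0$, whence $(x-k)\lor 0 = 0$ and $\gs{x}(k) = 0$, giving condition~(2). For integers $k \leq -n_0 - 1$ one has $x - k \geq -n_0 - k \geq 1$, so $(x-k)\lor 0 \geq 1$ and $\gs{x}(k) = 1$, giving condition~(1).

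The substance lies in condition~(3): that for each $k \in \Z$ the pair $(\gs{x}(k), \gs{x}(k+1))$ is good in $\Gam(M)$. Fixing $k$ and setting $w \df x - k - 1$, I would write $b \df \gs{x}(k+1) = (w\lor 0)\land 1$ and $a \df \gs{x}(k) = ((w+1)\lor 0)\land 1$. Distributing $+$ over $\lor$ and using $-1 + 1 = 0$ yields $a = ((w\lor -1)\land 0) + 1$, so that $a + b = \big(((w\lor -1)\land 0) + ((w\lor 0)\land 1)\big) + 1$. The key step is to apply \cref{l:up-and-down} with its variable equal to $w$ to the bracketed sum, which collapses it to $(w\lor -1)\land 1$; a further round of distributivity (using $0 \leq 1 \leq 2$) then gives the closed form $a + b = ((w+1)\lor 0)\land 2$.

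From this single identity both good-pair equations follow by distributivity and the order facts $-1 \leq 0 \leq 1 \leq 2$. For $\oplus$, one computes $(a+b)\land 1 = (((w+1)\lor 0)\land 2)\land 1 = ((w+1)\lor 0)\land 1 = a$. For $\odot$, one first finds $a + b - 1 = (w\lor -1)\land 1$, and then $(a+b-1)\lor 0 = ((w\lor -1)\land 1)\lor 0 = (w\lor 0)\land 1 = b$. Hence $(a,b)$ is good, completing condition~(3) and the proof. I expect the only real ingenuity to be recognizing that \cref{l:up-and-down} is precisely the tool that evaluates $\gs{x}(k) + \gs{x}(k+1)$ in closed form; once that identity is in hand, everything else is routine bookkeeping with distributivity and the unit inequalities.
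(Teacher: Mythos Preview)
Your proof is correct and follows essentially the same route as the paper: both arguments check $\gs{x}(n)\in\Gam(M)$, use \cref{ax:U3} for the eventually-constant conditions, and then apply \cref{l:up-and-down} to obtain the closed form $\gs{x}(k)+\gs{x}(k+1)=((x-k)\lor 0)\land 2$, from which the two good-pair identities drop out by distributivity. The only cosmetic difference is your substitution $w=x-k-1$, which the paper leaves expanded.
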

\begin{proof}
	Clearly, for each $n\in \Z$, we have $\gs{x}(n)\in \Gam(M)$.
	
	Since there exists $n \in \Np$ such that $-n  \leq  x  \leq  n$, we have, for every $k< -n$, $\gs{x}(k)  =  1$, and, for every $k \geq n$, $\gs{x}(k)=0$.
	
	Let us prove that, for every $n\in \Z$, $(\gs{x}(n), \gs{x}(n+1))$ is a good pair.
	We have
	\begin{align*}
		& \gs{x}(n) + \gs{x}(n+ 1)\\
		& = (((x- n) \lor 0)\land 1) + (((x- n -1)\lor 0)\land 1)\\
		& = (((x- n-1) \lor -1)\land 0) + (((x- n -1)\lor 0)\land 1) + 1\\
		& = (((x - n - 1) \lor -1) \land 1) + 1	&& \by{\cref{l:up-and-down}}\\
		& = ((x - n) \lor 0) \land 2.\\
	\end{align*}
	Then, we have 
	\begin{equation*}
		\begin{split}
			(\gs{x}(n) \oplus \gs{x}(n+1))	& = \gs{x}(n) + \gs{x}(n+1)) \land 1\\
														& = ((x - n) \lor 0) \land 2 \land 1\\
														& = ((x - n) \lor 0) \land 1\\
														& = \gs{x}(n),
		\end{split}
	\end{equation*}
	and 
	\begin{equation*}
		\begin{split}
			(\gs{x}(n) \odot \gs{x}(n+1))	& = (\gs{x}(n) + \gs{x}(n+1) - 1) \lor 0\\
													& = ((((x - n) \lor 0) \land 2) - 1) \lor 0\\
													& = (((x - n - 1) \lor -1) \land 1) \lor 0\\
													& = ((x - n - 1) \lor 0) \land 1\\
													& = \gs{x}(n + 1). \qedhere
		\end{split}
	\end{equation*}
\end{proof}

\begin{lemma}
	\label{l:sum of good sequences in mon}
	Let $M$ be {\aulm}, let $x \in M$, and let $y\in \Gam(M)$.
	Set $x_0 \df (x \lor 0) \land 1$ and $x_{-1} \df ((x + 1) \lor 0) \land 1$.
	Then, we have
	\[  ((x + y) \lor 0) \land 1 =x_{-1} \odot (x_0\oplus y).\]
\end{lemma}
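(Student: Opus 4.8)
The plan is to evaluate the right-hand side $x_{-1} \odot (x_0 \oplus y)$ explicitly inside the $\ell$-monoid $M$ and compare it with the left-hand side. First I would observe that $x_0$ and $x_{-1}$ are precisely the values at indices $0$ and $-1$ of the good $\Z$-sequence $\gs{x}$ of \cref{p:trunc is good}, since $\gs{x}(0) = (x \lor 0) \land 1 = x_0$ and $\gs{x}(-1) = ((x+1) \lor 0) \land 1 = x_{-1}$. Consequently $(x_{-1}, x_0)$ is a good pair in the {\mvm} $\Gam(M)$ (by \cref{t:Gamma is MVM}), so that $x_{-1} \oplus x_0 = x_{-1}$ and $x_{-1} \odot x_0 = x_0$.

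Using these two identities I would recognise the right-hand side as a $\sigma$-term: $x_{-1} \odot (x_0 \oplus y) = (x_{-1} \oplus x_0) \odot ((x_{-1} \odot x_0) \oplus y) = \sigma_1(x_{-1}, x_0, y)$. Then \cref{l:sigma in monoid} evaluates this expression inside $M$, giving $x_{-1} \odot (x_0 \oplus y) = ((x_{-1} + x_0 + y - 1) \lor 0) \land 1$. The problem thus reduces to computing $x_{-1} + x_0 - 1$ in $M$.

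For this, distributing $+ (-1)$ over $\lor$ and $\land$ yields $x_{-1} - 1 = (((x+1) \lor 0) \land 1) - 1 = (x \lor -1) \land 0$, whence, by \cref{l:up-and-down}, $x_{-1} + x_0 - 1 = ((x \lor -1) \land 0) + ((x \lor 0) \land 1) = (x \lor -1) \land 1$. Substituting back, the right-hand side becomes $((((x \lor -1) \land 1) + y) \lor 0) \land 1$, and it remains only to simplify this to the left-hand side $((x + y) \lor 0) \land 1$.

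Distributing $+ y$ over $\lor$ and $\land$ turns $((x \lor -1) \land 1) + y$ into $((x + y) \lor (y - 1)) \land (1 + y)$; taking the join with $0$ and then the meet with $1$, and invoking $0 \le y \le 1$ (so that $y - 1 \le 0$, making $(y-1)\lor 0 = 0$, and $1 \le 1 + y$ by \cref{l:sum positive} together with \cref{ax:U2}, making $(1+y)\land 1 = 1$), collapses the whole expression to $((x + y) \lor 0) \land 1$. I expect this final distributive step to be the only delicate point of the argument, since it is exactly where the hypothesis $y \in \Gam(M)$ is genuinely used; the rest is a direct chaining of \cref{p:trunc is good}, \cref{l:sigma in monoid}, and \cref{l:up-and-down}.
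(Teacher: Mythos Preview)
Your proof is correct and follows essentially the same route as the paper's: both reduce $x_{-1}\odot(x_0\oplus y)$ to $((x_{-1}+x_0+y-1)\lor 0)\land 1$ via \cref{l:sigma in monoid}, compute $x_{-1}+x_0-1=(x\lor -1)\land 1$ through \cref{l:up-and-down}, and then simplify using $0\le y\le 1$. Your version is slightly more explicit in that you justify the application of \cref{l:sigma in monoid} by first recognising $(x_{-1},x_0)$ as a good pair (via \cref{p:trunc is good}) so that $x_{-1}\odot(x_0\oplus y)=\sigma_1(x_{-1},x_0,y)$; the paper invokes \cref{l:sigma in monoid} directly at that step.
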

\begin{proof}
	First, note that
	\begin{align*}
		x_{-1} + x_0	& = (((x + 1) \lor 0) \land 1) + ((x \lor 0) \land 1)\\
							& = ((x \lor -1) \land 0) + ((x \lor 0) \land 1) + 1\\
							& = ((x \lor -1) \land 1) + 1.	&& \by{\cref{l:up-and-down}}
	\end{align*}
	We have
	\begin{align*}
		x_{-1} \odot (x_0 \oplus y) 	& = ((x_{-1} + x_0 + y -1) \lor 0) \land 1	&& \by{\cref{l:sigma in monoid}}\\
												& = ((((x \lor -1) \land 1) + 1 + y -1) \lor 0) \land 1\\
												& = (((x \lor (1 + y)) \land (1 + y)) \lor 0) \land 1\\
												& = ((x + y) \lor 0) \land 1.	&& \qedhere
	\end{align*}
\end{proof}

\begin{notation}
	For every {\ulm} $M$, and every $x \in M$, we define the $\Z$-sequence
	\begin{align*}
		\zeta_M(x)	\colon 	\Z	& \longrightarrow	\Gam(M)\\
									n	& \longmapsto		((x- n)\lor 0)\land 1.
	\end{align*}
	This defines a function $\zeta_M \colon M \to \X(\Gam(M))$ that maps an element $x \in M$ to $\zeta_M(x)$.
\end{notation}

\begin{proposition} \label{p:eps1 is morphism}
	For every {\ulm} $M$, the function $\zeta_M \colon M \to \X\Gam(M)$ is a morphism of {\ulms}, i.e.\ it preserves $+$, $\lor$, $\land$, $0$, $1$ and $-1$.
\end{proposition}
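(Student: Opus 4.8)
The plan is to verify directly that $\zeta_M$ preserves each of the seven operations $+$, $\lor$, $\land$, $0$, $1$, $-1$, working componentwise since the target operations on $\X(\Gam(M))$ are defined via the formulas in \cref{s:operations on Good}. The constants are the easiest: for $0$, $1$, $-1$ one computes $\zeta_M(0)(n) = ((0-n) \lor 0) \land 1$ and checks this equals $\gs{0}(n)$ (namely $1$ for $n < 0$ and $0$ for $n \geq 0$), using \cref{ax:U3} together with $-1 \leq 0 \leq 1$; the analogous computations give $\zeta_M(1) = \gs{1}$ and $\zeta_M(-1) = \gs{-1}$.

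For the lattice operations, I would exploit the fact that $\lor$ and $\land$ are applied componentwise both in $M$ (where truncation $((x-n) \lor 0) \land 1$ is order-preserving in $x$) and in $\X(\Gam(M))$ (by definition, and by \cref{l:order is pointwise}). Concretely, for each $n \in \Z$,
\[
	\zeta_M(x \lor y)(n) = (((x \lor y) - n) \lor 0) \land 1 = ((x-n)\lor 0 \lor (y-n)) \land 1,
\]
and since $+$ distributes over $\lor$ and truncation preserves joins, this equals $\zeta_M(x)(n) \lor \zeta_M(y)(n)$; the meet case is dual.

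The main obstacle is preservation of $+$, which is where the real content lies, since the addition on $\X(\Gam(M))$ is defined by the convolution-like formula $(\gs{a}+\gs{b})(n) = \bigodot_{k \in \Z} \gs{a}(k) \oplus \gs{b}(n-k)$ rather than pointwise. The key is \cref{l:sum of good sequences in mon}: writing $x_0 \df (x \lor 0) \land 1$ and $x_{-1} \df ((x+1)\lor 0)\land 1$, that lemma gives $((x+y)\lor 0)\land 1 = x_{-1}\odot(x_0 \oplus y)$ for $y \in \Gam(M)$. I would aim to show, for fixed $n$, that $\zeta_M(x+y)(n) = (\zeta_M(x) + \zeta_M(y))(n)$ by reducing the infinite $\odot$-convolution to a single term. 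The plan is first to reduce to the subdirectly irreducible case via Birkhoff's subdirect representation theorem (\cref{t:SubRepThe}), where by \cref{c:good sequence in sub irr} each good $\Z$-sequence is a translate of some $(a)$, so that $\zeta_M(x)$ and $\zeta_M(y)$ take the simple step-function form; then the convolution collapses and one matches it against $\zeta_M(x+y)(n)$ using \cref{l:sum of good sequences in mon} and the good-pair identities of \cref{l:oplus odot is a good pair}.

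I expect the bookkeeping of indices in the convolution sum to be the fiddliest part, but conceptually the whole preservation-of-$+$ argument rests on the single translation formula between truncation in $M$ and the $\oplus/\odot$ operations in $\Gam(M)$ supplied by \cref{l:sum of good sequences in mon}, with \cref{l:up-and-down} handling the ancillary algebra. Once $+$ is settled, the remaining operations follow from the componentwise observations above, completing the proof that $\zeta_M$ is a homomorphism of {\ulms}.
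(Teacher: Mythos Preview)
Your treatment of the constants and the lattice operations is fine and matches the paper's ``easily seen''. The gap is in the preservation of $+$. You propose to apply Birkhoff's theorem and then invoke \cref{c:good sequence in sub irr}, but there is a mismatch of categories: \cref{c:good sequence in sub irr} is about subdirectly irreducible \emph{\mvms}, whereas the object you must decompose is the \emph{\ulm} $M$ (decomposing $\Gam(M)$ instead would sever the link to $\zeta_M$, which is defined using the ambient operations of $M$). The subdirectly irreducible factors $M_i$ of $M$ are {\ulms}, and nothing proved so far tells you that $\Gam(M_i)$ is subdirectly irreducible as {\amvm}---indeed, establishing that would amount to a large part of the equivalence $\Gam$ is supposed to witness. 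Your route is salvageable: subdirectly irreducible {\ulms} are totally ordered (this is \cref{p:Repn} later in the text, or the cited results of Merlier/Repnitzki\u{\i}), and in a totally ordered {\ulm} one can verify directly that $\zeta_{M_i}(x)$ has the step form $(\dots,1,a,0,\dots)$; but that direct check is what you must supply in place of \cref{c:good sequence in sub irr}.

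The paper avoids subdirect representation altogether here and argues inside the single algebra $M$. After reducing by translation to $y \geq 0$, it inducts on $n \in \Np$ with $0 \leq y \leq n$. The base case $n=1$ means $y \in \Gam(M)$, so $\zeta_M(y)=(y)$; then \cref{l:sum of good sequences in mon} gives $\zeta_M(x+y)(k)=x_{k-1}\odot(x_k\oplus y)$, which is exactly $(\zeta_M(x)+(y))(k)$ once the convolution is unwound. The inductive step writes $y = (y\land n) + ((y-n)\lor 0)$, applies the base case to peel off the unit-interval summand $(y-n)\lor 0$, and uses the inductive hypothesis on $y\land n$. So both arguments ultimately rest on \cref{l:sum of good sequences in mon}, but the paper trades your structural reduction for a short induction that needs no external facts about subdirectly irreducible $\ell$-monoids.
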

\begin{proof}
	It is easily seen that $\zeta_M$ preserves $0$, $1$, $-1$, $\lor$ and $\land$.
	Let us prove that $\zeta_M$ preserves $+$.
	Let $x, y \in M$.
	Let $k \in \Z$.
	We shall prove 
	\[
		\big(\zeta_M(x) + \zeta_M(y)\big)(k) = \zeta_M(x + y) (k).
	\]
	We settle the case $y \geq 0$; the case of not necessarily positive $y$ is obtained via a translation.
	We prove the statement by induction on $n \in \Np$ such that $0 \leq y \leq n$.
	For every $k \in \Z$, we write $x_k$ for $\zeta_M(x)(k) = ((x-k) \lor 0) \land 1$.
	Let us prove the base case $n=1$.
	For every $k \in \N$, we have
	\[
		\zeta_M(x + y) (k) = ((x + y - k) \lor 0) \land 1 \stackrel{\text{\cref{l:sum of good sequences in mon}}}{=} x_{k-1} \odot (x_k\oplus y) = \big(\zeta_M(x) + \zeta_M(y)\big)(k),
	\]
	so the base case is settled.	
	Let us suppose that the case $n$ holds for a fixed $n\in\Np$, and let us prove the case $n+1$.
	So, let us suppose $0 \leq y \leq n+1$.
	Therefore,
	\begin{align*}
		\zeta_M(x + y)	& = \zeta_M\big(x + (y \land n) + ((y-n) \lor 0)\big)\\
							& = \zeta_M\big(x + (y \land n)\big) + \zeta_M\big((y-n) \lor 0\big)			&& \by{base case}\\
							& = \zeta_M(x) + \zeta_M(y \land n) + \zeta_M\big((y-n) \lor 0\big)	&& \by{ind.\ case}\\
							& = \zeta_M(x) + \zeta_M\big((y \land n) + ((y-n) \lor 0)\big)		&& \by{base case}\\
							& = \zeta_M(x) + \zeta_M(y).											&& \qedhere
	\end{align*}
\end{proof}

\begin{proposition}	\label{p:trunc sums up}
	Let $M$ be {\aulm}, let $x\in M$ and let $n\leq m\in \Z$ be such that $n\leq x\leq m$.
	Then,
	\[
		x = n + \sum_{i=n}^{m-1} ((x -i) \lor 0) \land 1.
	\]
\end{proposition}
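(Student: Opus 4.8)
The plan is to prove the identity by induction on the nonnegative integer $k \df m - n$, after isolating the single algebraic fact that does all the work. Writing $x_i \df ((x-i)\lor 0)\land 1$ for the $i$-th truncation (so $x_i = \zeta_M(x)(i)$, which by \cref{p:trunc is good} lands in $\Gam(M)$), I would instead prove the slightly more flexible statement
\[
	x \land (n+k) = n + \sum_{i=n}^{n+k-1} x_i \qquad \text{for all } k \geq 0 ,
\]
and then specialise to $k = m-n$. Indeed, since $x \leq m$ we have $x \land m = x$, and since $n \leq x$ the base case $k=0$ reads $x \land n = n$, which is immediate. The whole induction is driven by the one-step recurrence
\[
	x \land (j+1) = (x \land j) + x_j \qquad (j \in \Z),
\]
which telescopes the partial sums.

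The core of the argument is the translation-invariant identity
\[
	y \land 1 = (y \land 0) + \big((y \lor 0)\land 1\big), \qquad y \in M .
\]
I would prove this directly from the $\ell$-monoid axioms. Distributing $+$ over $\land$ gives $(y\land 0)+((y\lor 0)\land 1) = \big((y\land 0)+(y\lor 0)\big)\land\big((y\land 0)+1\big)$; the first meetand equals $y$ by \cref{l:land + lor=+} (since $(y\land 0)+(y\lor 0)=y+0$), while distributing once more yields $(y\land 0)+1 = (y+1)\land 1$. Hence the right-hand side equals $y\land(y+1)\land 1$, and since $0\leq 1$ by \cref{ax:U2} and $+$ is order-preserving by \cref{l:sum positive} we have $y\leq y+1$, so $y\land(y+1)=y$ and the whole expression collapses to $y\land 1$.

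To pass from this identity to the recurrence, I would apply it to $y = x-j$ and add $j$ to both sides, using distributivity of $+$ over $\land$ together with $-j + j = 0$ (a consequence of \cref{ax:U1}). The left-hand side becomes $\big((x-j)\land 1\big)+j = \big((x-j)+j\big)\land(1+j) = x\land(j+1)$, and the right-hand side becomes $\big((x-j)\land 0\big)+j + x_j = x\land j + x_j$. This holds for every integer $j$, of either sign, since $-j+j=0$ for all $j\in\Z$. The inductive step then reads $x\land(n+k+1) = (x\land(n+k)) + x_{n+k} = n + \sum_{i=n}^{n+k} x_i$ by the induction hypothesis, and substituting $k=m-n$ and using $x\land m = x$ delivers the claim.

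The only delicate point is bookkeeping rather than depth: one must keep track that the symbols $n,m,j$ occurring inside $M$ denote the multiples $n\cdot 1$, $m\cdot 1$, $j\cdot 1$, that $x-j$ means $x+(-j)$, and that the finite sum $\sum_{i=n}^{m-1}$ is unambiguous by commutativity and associativity of $+$. The main obstacle I anticipate is getting the translation-invariant identity $y\land 1 = (y\land 0)+((y\lor 0)\land 1)$ correct for \emph{all} $y$ — in particular for $y < -1$, where it genuinely differs from \cref{l:up-and-down} (there $y\land 0$ is replaced by $(y\lor -1)\land 0$) and so cannot simply be quoted from it. The short direct derivation above is exactly what keeps the subsequent telescoping clean.
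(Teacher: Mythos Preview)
Your proof is correct and follows essentially the same approach as the paper: both prove the stronger claim $x \land m = n + \sum_{i=n}^{m-1} x_i$ by induction on $m-n$, with the inductive step powered by \cref{l:land + lor=+}. The only difference is organisational---the paper writes the step directly as $x \land (m+1) = \big((x \land (m+1)) \land m\big) + \big((x \land (m+1)) \lor m\big) - m$ and then simplifies each piece, whereas you first isolate the translation-invariant identity $y \land 1 = (y \land 0) + ((y \lor 0) \land 1)$ and then shift by $j$; the algebraic content is identical.
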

\begin{proof}
	Let $M$ be {\aulm}, let $x\in M$ and let $n\in \Z$ be such that $n\leq x$.
	We prove, by induction on $m\geq n$, that 
	\begin{equation} \label{e:many-slices}
		x \land m = n + \sum_{i=n}^{m-1} ((x -i) \lor 0) \land 1.
	\end{equation}
	The case $m=n$ is trivial.
	Let us suppose that \cref{e:many-slices} holds for a certain $m \geq n$, and let us prove that the statement holds for $m + 1$.
	We have
	\begin{align*}
		x \land (m + 1)	& = \big((x \land (m + 1)) \land m\big) + \big((x \land (m + 1)) \lor m\big) - m 	&& \\
								& = n + \left(\sum_{i=n}^{m-1} ((x -i) \lor 0) \land 1\right) + \big(((x-m) \lor 0) \land 1\big)	&& \text{(ind. hyp.)} \\
								& = n + \sum_{i=n}^{m} ((x -i) \lor 0) \land 1.	&& \qedhere		
	\end{align*}
\end{proof}

\begin{lemma}
	\label{l:base case}
	Let $M$ be {\aulm}, and let $m\in \N$.
	Then, for every good $\Z$-sequence $(x_0,\dots,x_m)$ in $\Gam(M)$, we have
	\begin{equation*}
	\label{e:land}
		(x_0+\dots +x_m)\land 1=x_0.
	\end{equation*}
\end{lemma}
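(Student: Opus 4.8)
The plan is to argue by induction on $m$. The base case $m=0$ is immediate: since $x_0 \in \Gam(M)$ means $0 \leq x_0 \leq 1$, we have $x_0 \land 1 = x_0$.

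For the inductive step, I would assume the statement for the parameter $m$ and take a good $\Z$-sequence $(x_0,\dots,x_{m+1})$. The first task is to check that the shifted $\Z$-sequence $(x_1,\dots,x_{m+1})$ (with the usual leading $1$'s and trailing $0$'s) is again good. Indeed, all of its consecutive pairs except the first are inherited directly from $(x_0,\dots,x_{m+1})$ --- namely the pairs $(x_i,x_{i+1})$ for $1 \leq i \leq m$ together with the pair $(x_{m+1},0)$ --- while the new leading pair $(1,x_1)$ is good because $1 \oplus x_1 = (1+x_1)\land 1 = 1$ (as $x_1 \geq 0$) and $1 \odot x_1 = (1+x_1-1)\lor 0 = x_1$. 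Hence the induction hypothesis applies to $(x_1,\dots,x_{m+1})$ and yields $(x_1+\dots+x_{m+1})\land 1 = x_1$.

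Then I would combine. Set $t \df x_1+\dots+x_{m+1}$, so that $t \land 1 = x_1$. Using that $+$ distributes over $\land$, I would write $x_0 + (t \land 1) = (x_0 + t) \land (x_0 + 1)$; meeting both sides with $1$ and observing that $x_0 + 1 \geq 0 + 1 = 1$ by \cref{l:sum positive}, the factor $x_0 + 1$ is redundant in the resulting meet, whence $(x_0 + (t\land 1))\land 1 = (x_0 + t)\land 1$. The left-hand side equals $(x_0 + x_1)\land 1 = x_0 \oplus x_1 = x_0$ by the good-pair condition on $(x_0,x_1)$. Since $x_0 + t = x_0 + x_1 + \dots + x_{m+1}$, this is exactly the required equality, closing the induction.

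The main obstacle is the combine step: justifying that replacing $t$ by $t \land 1$ inside $(x_0 + {-})\land 1$ is harmless, which rests on the distributivity of $+$ over $\land$ together with the inequality $x_0 + 1 \geq 1$. The only other point demanding care is the bookkeeping that confirms the shifted sequence $(x_1,\dots,x_{m+1})$ is genuinely good --- in particular that its leading pair $(1,x_1)$ is good --- so that the induction hypothesis is legitimately applicable. Note that only the $\oplus$-half of goodness is actually used in the algebraic manipulation.
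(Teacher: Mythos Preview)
Your proof is correct. Both you and the paper argue by induction on $m$, but you work from opposite ends of the sequence. The paper applies the inductive hypothesis to the \emph{prefix} $(x_0,\dots,x_m)$ and uses the good-pair identity $(x_m+x_{m+1})\land 1 = x_m\oplus x_{m+1}=x_m$ at the tail; you apply it to the \emph{shift} $(x_1,\dots,x_{m+1})$ and use $(x_0+x_1)\land 1=x_0\oplus x_1=x_0$ at the head. The paper's route avoids the bookkeeping you had to do in verifying that the shifted sequence is again good, since truncating a good $\Z$-sequence on the right is trivially good. Apart from that, the algebraic core --- distributing $+$ over $\land$ and dropping a redundant meetand that is at least $1$ --- is identical in both arguments.
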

\begin{proof}
	We prove the statement by induction on $m\in \N$.
	The case $m=0$ is trivial.
	Suppose the statement holds for a fixed $m\in \N$, and let us prove that it holds for $m+1$:
	\begin{align*}
		(x_0 + \dots + x_{m+1}) \land 1
			& = (x_0 + \dots + x_{m + 1}) \land (x_0 + \dots + x_{m - 1} + 1) \land 1	&&	\\
			& = \big(x_0 + \dots + x_{m - 1} + ((x_m + x_{m + 1}) \land 1)\big) \land 1	&&	\\
			& = (x_0 + \dots + x_{m-1} + x_m) \land 1\\
			& = x_0.	&& \text{(ind. hyp.)} \qedhere
	\end{align*}
\end{proof}
\begin{lemma}
	\label{l:lor of goods 1}
	Let $M$ be {\aulm}.
	Then, for every $k\in \N$ and every good $\Z$-sequence $(x_0,\dots, x_{k})$ in $\Gam(M)$, we have
	\begin{equation}
	\label{e:lor ominus 1}
		(x_0+\dots+x_{k})\lor 1=1+x_1+\dots+x_{k}.	
	\end{equation}
\end{lemma}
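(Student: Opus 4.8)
The plan is to prove the identity by induction on $k$, in the spirit of \cref{l:base case} but operating at the low-index (top) end of the good $\Z$-sequence rather than at the bottom. The base case $k=0$ is immediate: since $x_0\in\Gam(M)$ satisfies $x_0\leq 1$, we get $(x_0)\lor 1 = x_0\lor 1 = 1$, which is exactly the empty-sum right-hand side. (For intuition, the case $k=1$ already drops out of goodness alone.) The key preliminary observation I would record is that the $\odot$-half of the goodness of the pair $(x_0,x_1)$ can be rephrased additively: $x_0\odot x_1 = x_1$ reads $(x_0+x_1-1)\lor 0 = x_1$ in $\Gam(M)$, and adding $1$ (using that $+$ distributes over $\lor$) turns this into $(x_0+x_1)\lor 1 = 1 + x_1$.

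First I would note that $(x_1,x_2,\dots,x_{k+1})$ is again a good $\Z$-sequence: the pair $(1,x_1)$ is automatically good, while the remaining consecutive pairs are inherited from the original sequence. Hence the inductive hypothesis applies to it and gives $(x_1+\dots+x_{k+1})\lor 1 = 1 + x_2+\dots+x_{k+1}$. The inductive step is then the chain
\begin{align*}
	1 + x_1 + \dots + x_{k+1}
		& = \bigl((x_0 + x_1) \lor 1\bigr) + (x_2 + \dots + x_{k+1}) \\
		& = (x_0 + \dots + x_{k+1}) \lor (1 + x_2 + \dots + x_{k+1}) \\
		& = (x_0 + \dots + x_{k+1}) \lor \bigl((x_1 + \dots + x_{k+1}) \lor 1\bigr) \\
		& = (x_0 + \dots + x_{k+1}) \lor 1,
\end{align*}
where the first equality uses $(x_0+x_1)\lor 1 = 1 + x_1$, the second is distributivity of $+$ over $\lor$, the third is the inductive hypothesis applied to the shifted sequence, and the last uses $x_1+\dots+x_{k+1}\leq x_0+\dots+x_{k+1}$ (monotonicity of $+$, \cref{l:sum positive}, together with $x_0\geq 0$) to absorb the middle join term after reassociating $\lor$.

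The point I expect to require the most care is precisely the reason this computation is arranged as it is: we work in a commutative \emph{monoid}, not a group, so there is no cancellation, and one cannot simply cancel $x_0$ from an identity of the shape $x_0 + (\cdots) = x_0 + (\cdots)$ (such an identity does follow from \cref{l:land + lor=+} and \cref{l:base case}, but it is useless here for lack of cancellation). The chain above is engineered to avoid cancellation entirely, relying only on distributivity of $+$ over the lattice operations, monotonicity of $+$, and the additive reformulation of goodness. In particular no appeal to Birkhoff's subdirect representation theorem or to total order is needed, so the argument goes through for an arbitrary {\ulm} $M$.
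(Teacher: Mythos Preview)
Your proof is correct and follows essentially the same strategy as the paper's: induction on $k$, the additive reformulation $(x_i+x_{i+1})\lor 1 = 1 + x_{i+1}$ of the $\odot$-half of goodness, distributivity of $+$ over $\lor$, and an absorption step using $x_i\geq 0$. The only difference is cosmetic: the paper peels off the \emph{last} term $x_{k+1}$ and applies the inductive hypothesis to the prefix $(x_0,\dots,x_k)$ (using goodness of $(x_k,x_{k+1})$), whereas you peel off the \emph{first} term and apply the inductive hypothesis to the shifted sequence $(x_1,\dots,x_{k+1})$ (using goodness of $(x_0,x_1)$). Neither approach requires cancellation, total order, or subdirect representation.
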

\begin{proof}	
	We prove this statement by induction on $k\in \N$.
	The case $k=0$ is trivial.
	Let us suppose that the statement holds for a fixed $k\in \N$, and let us prove that it holds for $k+1$.	
	We have
	\begin{align*}
		1 + x_1 + \dots + x_{k+1}	& = (1 + x_1 + \dots + x_{k}) + x_{k+1}							&& \\
											& = ((x_0 + \dots + x_{k}) \lor 1) + x_{k+1}						&& \text{(ind. hyp.)}	\\
											& = (x_0 + \dots + x_{k} + x_{k+1}) \lor (1 + x_{k+1})		&& \\
											& = (x_0 + \dots + x_{k+1}) \lor \big((x_{k} + x_{k+1}) \lor 1\big)	&& \\
											& = \big((x_0 + \dots + x_{k+1}) \lor(x_{k} + x_{k+1})\big) \lor 1	&& \\
											& = (x_0 + \dots + x_{k+1}) \lor 1.									&& \qedhere
	\end{align*}
\end{proof}

\begin{proposition}
	\label{p:unique}
	Let $M$ be {\aulm}, let $\gs{x}$ and $\gs{y}$ be good $\Z$-sequences in $\Gam(M)$.
	Let $n,m \in \Z$ with $n \leq m$ be such that $\gs{x}(k)=\gs{y}(k) = 1$ for all $k < n$, and $\gs{x}(j) = \gs{y}(j) = 0$ for all $j \geq m$.
	If $n+\sum_{i=n}^{m-1} \gs{x}(i)=n+\sum_{i=n}^{m-1} \gs{y}(i)$, then $\gs{x} = \gs{y}$.
\end{proposition}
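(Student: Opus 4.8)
The plan is to show that each value $\gs{x}(k)$ is forced by the common number $s \df n + \sum_{i=n}^{m-1}\gs{x}(i) = n + \sum_{i=n}^{m-1}\gs{y}(i)$, by an induction that peels off the terms of the sum one at a time from the left. Since $\gs{x}$ and $\gs{y}$ already agree outside the window $[n,m)$ by hypothesis (both are $1$ for $k<n$ and both are $0$ for $k\geq m$), it suffices to prove $\gs{x}(k)=\gs{y}(k)$ for $n\leq k\leq m-1$.

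First I would introduce, for each $k$ with $n\leq k\leq m$, the partial value $s_k \df k+\sum_{i=k}^{m-1}\gs{x}(i)$, together with its counterpart $s_k'$ for $\gs{y}$, and record that $s_n=s=s_n'$. The truncated family $(\gs{x}(k),\gs{x}(k+1),\dots,\gs{x}(m-1))$ is itself a good $\Z$-sequence, being a leftward shift of $\gs{x}$, so both \cref{l:base case} and \cref{l:lor of goods 1} apply to it. Writing $S_k\df\sum_{i=k}^{m-1}\gs{x}(i)$, \cref{l:base case} gives $\gs{x}(k)=S_k\land 1$; since $S_k=s_k+(-k)$ (subtracting the invertible integer $k$), this reads $\gs{x}(k)=(s_k+(-k))\land 1$. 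Hence whenever $s_k=s_k'$ we obtain at once $\gs{x}(k)=\gs{y}(k)$.

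The inductive step must then propagate $s_k=s_k'$ to $s_{k+1}=s_{k+1}'$, and here is the one place where care is needed: $M$ is only a monoid, so I cannot recover $s_{k+1}$ from $s_k$ by subtracting the possibly non-invertible element $\gs{x}(k)$. Instead I would invoke \cref{l:lor of goods 1} on the window at $k$, which yields $S_k\lor 1=1+S_{k+1}$, that is $S_{k+1}=(S_k\lor 1)+(-1)$, an expression using only $\lor$ and the invertible constant $-1$. Using distributivity of $+$ over $\lor$ I then get the subtraction-free recurrence
\[
	s_{k+1}=(k+1)+S_{k+1}=k+(S_k\lor 1)=(k+S_k)\lor(k+1)=s_k\lor(k+1).
\]
Since $s_k=s_k'$ by the inductive hypothesis, this forces $s_{k+1}=s_{k+1}'$, closing the induction. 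Running it from the base case $s_n=s=s_n'$ up to $k=m-1$ gives $\gs{x}(k)=\gs{y}(k)$ for every index in the window, and together with the boundary agreement this yields $\gs{x}=\gs{y}$.

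The main obstacle is exactly the absence of subtraction: each manipulation has to stay inside the operations $+$, $\lor$, $\land$ and the integer constants $\pm1$, so the two lattice identities \cref{l:base case} and \cref{l:lor of goods 1} do the essential work, the former extracting the leading term $\gs{x}(k)$ and the latter re-expressing the tail value $S_{k+1}$ without inverting $\gs{x}(k)$. A secondary, routine point is checking that every leftward shift of a good $\Z$-sequence is again a good $\Z$-sequence, so that these lemmas genuinely apply to each window $(\gs{x}(k),\dots,\gs{x}(m-1))$.
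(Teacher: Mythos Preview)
Your proposal is correct and follows essentially the same approach as the paper: both arguments peel off the leftmost entry of the window using \cref{l:base case} to read off $\gs{x}(k)$ and \cref{l:lor of goods 1} to pass to the remaining tail without inverting that entry. The only cosmetic difference is that the paper packages this as an induction on the length $m$ of the window (after normalising $n=0$), whereas you run a forward induction on $k$ and record the pleasant recurrence $s_{k+1}=s_k\lor(k+1)$; the underlying mechanism is identical.
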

\begin{proof}
	Without loss of generality, we may suppose $n=0$.
	We prove the statement by induction on $m$.
	The case $m=0$ is trivial.
	Suppose that the statement holds for a fixed $m\in \N$, and let us prove it for $m+1$.
	By \cref{l:base case}, we have
	\[
		\gs{x}(0)=(\gs{x}(0)+\dots+\gs{x}({m+1}))\land 1=(\gs{y}(0)+\dots+\gs{y}({m+1}))\land 1=\gs{y}(0).
	\]
	By \cref{l:lor of goods 1},
	\begin{equation*}
		\begin{split}
			\gs{x}(1) + \dots + \gs{x}({m+1})	& = \big((\gs{x}(0) + \gs{x}(1) + \dots + \gs{x}({m+1})) \lor 1\big) - 1 \\
												& = \big((\gs{y}(0) + \gs{y}(1) + \dots + \gs{y}({m+1})) \lor 1\big) -1 \\
												& = \gs{y}(1) + \dots + \gs{y}({m+1}).
		\end{split}
	\end{equation*}
	By inductive hypothesis, for all $i\in \{1,\dots,m+1\}$, we have $\gs{x}(i)=\gs{y}(i)$.
\end{proof}
\begin{theorem}
	\label{t:exists unique good sequence}
	Let $M$ be {\aulm}.
	The map $\zeta_M \colon M \to \X\Gam(M)$ is bijective, i.e., for every good $\Z$-sequence $\gs{x}$ in $\Gam(M)$ there exists exactly one element $x \in M$ such that, for every $n \in \Z$, we have
	\[
		\gs{x}(n)=((x-n)\lor 0)\land 1.
	\]
\end{theorem}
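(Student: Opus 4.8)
The plan is to establish existence and uniqueness separately, exploiting the two preparatory results \cref{p:trunc sums up} and \cref{p:unique}, together with the fact (\cref{p:trunc is good}) that $\zeta_M(x)$ is always a good $\Z$-sequence. The key observation is that \cref{p:trunc sums up} \emph{forces} the value of any preimage: whenever $\zeta_M(x) = \gs{x}$ and $n \leq x \leq m$, one has $x = n + \sum_{i=n}^{m-1} \gs{x}(i)$. Thus the candidate preimage is not a matter of guesswork, but is dictated by $\gs{x}$ itself, and the whole statement reduces to checking that this forced candidate does the job.

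For uniqueness, I would suppose $x \in M$ satisfies $\gs{x}(n) = ((x-n)\lor 0)\land 1$ for all $n \in \Z$, that is, $\zeta_M(x) = \gs{x}$. By \cref{ax:U3} there is $N \in \Np$ with $-N \leq x \leq N$, and then \cref{p:trunc sums up} gives $x = -N + \sum_{i=-N}^{N-1}\gs{x}(i)$, an expression depending only on $\gs{x}$. Hence there is at most one such $x$.

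For existence, I would fix a good $\Z$-sequence $\gs{x}$ in $\Gam(M)$ and choose $n \leq m$ in $\Z$ with $\gs{x}(k) = 1$ for $k < n$ and $\gs{x}(j) = 0$ for $j \geq m$ (such $n,m$ exist by goodness, and $m$ may be enlarged to ensure $n \leq m$). Define $x \df n + \sum_{i=n}^{m-1}\gs{x}(i)$. Since each summand lies in $\Gam(M)$ and there are $m-n$ of them, one checks $n \leq x \leq m$; from these bounds a direct computation (for $k < n$ one has $x - k \geq 1$, and for $j \geq m$ one has $x - j \leq 0$) shows $\zeta_M(x)(k) = 1$ for $k < n$ and $\zeta_M(x)(j) = 0$ for $j \geq m$, so $\zeta_M(x)$ and $\gs{x}$ agree on the two tails. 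Moreover, applying \cref{p:trunc sums up} to $x$ with the bounds $n \leq x \leq m$ yields $x = n + \sum_{i=n}^{m-1}\zeta_M(x)(i)$, while by construction $x = n + \sum_{i=n}^{m-1}\gs{x}(i)$; comparing these, the two partial sums coincide. Since $\zeta_M(x)$ is good by \cref{p:trunc is good}, \cref{p:unique} applies to the pair $\zeta_M(x)$ and $\gs{x}$ and gives $\zeta_M(x) = \gs{x}$, as required.

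I expect the only mildly delicate point to be the verification of the tail behaviour of $\zeta_M(x)$ from the bounds $n \leq x \leq m$, together with confirming that all hypotheses of \cref{p:unique} are genuinely in place; everything else is a direct invocation of lemmas already proved. Finally, uniqueness and existence together give the bijectivity of $\zeta_M$ claimed in the statement; since $\zeta_M$ is a homomorphism of {\ulms} by \cref{p:eps1 is morphism}, this bijection is moreover an isomorphism, which is what is needed in the sequel.
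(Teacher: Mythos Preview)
Your proof is correct and uses exactly the same ingredients as the paper: \cref{p:trunc sums up}, \cref{p:unique}, and \cref{p:trunc is good}. The paper organizes the argument slightly differently: it defines the candidate inverse $\theta_M(\gs{x}) = n + \sum_{i=n}^{m-1}\gs{x}(i)$, uses \cref{p:trunc sums up} to get $\theta_M \circ \zeta_M = 1_M$, uses \cref{p:unique} to show $\theta_M$ is injective, and then invokes the general fact that a left inverse which is itself injective must be a two-sided inverse. This sidesteps your explicit verification of the tail behaviour of $\zeta_M(x)$ and the direct application of \cref{p:unique} to the pair $(\zeta_M(x), \gs{x})$; your route is a touch more hands-on but equally valid, and your identification of the tail check as the only delicate point is accurate.
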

\begin{proof}
	We construct an inverse $\theta_M$ of $\zeta_M \colon \Z \to \X\Gam(M)$.
	Given a good sequence $\Z$-sequence $\gs{x}$ in $\Gam(M)$, let $n,m \in \Z$ with $n \leq m$ be such that $\gs{x}(k)= 1$ for all $k < n$, and $\gs{x}(j) = 0$ for all $j \geq m$.
	Elements $n$ and $m$ with these properties exist by the definition of good $\Z$-sequence.
	Define $\theta_M(\gs{x}) = n + \sum_{i=n}^{m-1} \gs{x}(i)$; note that this value does not depend on the choice of $n$ and $m$ with the properties above.
	By \cref{p:trunc sums up}, for every $x \in M$, we have $\theta_M(\zeta_M(x)) =x$, i.e.\ the composite $M \xrightarrow{\zeta_M} \X\Gam(M) \xrightarrow{\theta_M} M$ is the identity on $M$.
	By \cref{p:unique}, the map $\theta_M$ is injective.
	From $\theta_M \circ \zeta_M = 1_M$ and the injectivity of $\theta_M$, it follows that $\theta_M$ is the inverse of $\zeta_M$.
\end{proof}
\begin{proposition} \label{p:eps1 is natural}
	For every morphism of {\ulms} $f\colon M \to N$, the following diagram commutes.
	\[
		\begin{tikzcd}
			M \arrow[swap]{d}{f} \arrow{r}{\zeta_M}	& \X\Gam(M) \arrow{d}{\X\Gam(f)}\\
			N \arrow[swap]{r}{\zeta_N}						& \X\Gam(N)
		\end{tikzcd}
	\]
\end{proposition}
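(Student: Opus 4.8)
The plan is to verify commutativity of the naturality square pointwise. Fix a morphism $f \colon M \to N$ of {\ulms}, an arbitrary element $x \in M$, and an arbitrary index $n \in \Z$, and compare the two good $\Z$-sequences $\X\Gam(f)(\zeta_M(x))$ and $\zeta_N(f(x))$ in the coordinate $n$. First I would unwind the left-hand composite: by the definition of $\X$ on morphisms the functor acts componentwise through $\Gam(f)$, and since $\Gam(f)$ is merely the restriction of $f$ to unit intervals, this gives
\[
	\X\Gam(f)(\zeta_M(x))(n) = \Gam(f)\big(\zeta_M(x)(n)\big) = f\big(((x - n) \lor 0) \land 1\big).
\]
The right-hand composite is immediate from the definition of $\zeta_N$:
\[
	\zeta_N(f(x))(n) = ((f(x) - n) \lor 0) \land 1.
\]
Thus the entire statement reduces to the single identity $f\big(((x-n)\lor 0)\land 1\big) = ((f(x)-n)\lor 0)\land 1$.

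This identity I would then derive directly from the fact that $f$ is a homomorphism of {\ulms}, and hence preserves $+$, $\lor$, $\land$, $0$, $1$ and $-1$. Since $f$ preserves the constant $-1$ and the operation $+$, one has $f(x - n) = f(x + n(-1)) = f(x) + n(-1) = f(x) - n$; preservation of $\lor$ together with $f(0)=0$ then yields $f((x-n)\lor 0) = (f(x)-n)\lor 0$, and a final application of preservation of $\land$ with $f(1)=1$ gives the claim. As $x \in M$ and $n \in \Z$ were arbitrary, the two good $\Z$-sequences agree in every coordinate, so they coincide as functions $\Z \to \Gam(N)$, and the diagram commutes.

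I do not expect any genuine obstacle here: the content of the proposition is simply that the term $((x-n)\lor 0)\land 1$ defining $\zeta$ is built entirely from operations in the signature of {\ulms}, so that every such homomorphism automatically commutes with it. The only point requiring a little care is correctly unwinding the two functors $\X$ and $\Gam$ on the morphism $f$ — that is, observing that $\X\Gam(f)$ applies $f$ coordinatewise — which the definitions of $\X(f)$ and $\Gam(f)$ make routine.
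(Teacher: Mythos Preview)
Your proposal is correct and follows essentially the same approach as the paper: the paper's proof is the one-line computation $\zeta_N(f(x))(k) = ((f(x)-k)\lor 0)\land 1 = f(((x-k)\lor 0)\land 1) = f(\zeta_M(x)(k))$, which is exactly your argument with the intermediate justifications suppressed. Your version is more explicit about unwinding $\X\Gam(f)$ and about which operations $f$ must preserve, but the content is identical.
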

\begin{proof}
	For every $x \in M$ and every $k \in \Z$, we have
	\begin{equation*}
		\zeta_N(f(x))(k) = ((f(x) - k) \lor 0) \land 1 = f(((x - k) \lor 0) \land 1) = f(\zeta_M(x)(k)). \qedhere
	\end{equation*}
\end{proof}
%

%=================================   SUBSECTION   =================================%

\subsection{Main result: the equivalence}

\begin{theorem}
	\label{t:G is equiv}	
	The categories $\ULM$ of {\ulms} (see \cref{d:ulm}) and $\MVM$ of {\mvms} (see \cref{d:MVM}) are equivalent, as witnessed by the quasi-inverse functors
	\[
		\begin{tikzcd}
			\ULM\arrow[yshift = .45ex]{r}{\Gam}	& \MVM.\arrow[yshift = -.45ex]{l}{\X\ }
		\end{tikzcd}
	\]
	
\end{theorem}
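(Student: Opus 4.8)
The plan is to assemble the equivalence directly from the two families of comparison maps built in the preceding sections, by exhibiting natural isomorphisms $1_{\MVM} \cong \Gam\X$ and $1_{\ULM} \cong \X\Gam$. This is precisely the quasi-inverse condition for $\Gam$ and $\X$, so no computation beyond what has already been carried out should be required; the proof is purely organisational.

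First I would handle the $\MVM$ side. The family $(\eta_A)_{A \in \MVM}$, with $\eta_A(x) = (x)$, is a natural transformation from $1_{\MVM}$ to $\Gam\X$ by \cref{p:eta1 is natural}, and each component $\eta_A$ is an isomorphism of {\mvms} by \cref{p:unit mvm}. Hence $\eta$ is a natural isomorphism, with no further work needed.

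Next I would handle the $\ULM$ side, where the only point requiring a little care arises. The family $(\zeta_M)_{M \in \ULM}$ is a natural transformation from $1_{\ULM}$ to $\X\Gam$ by \cref{p:eps1 is natural}, each component $\zeta_M$ is a morphism of {\ulms} by \cref{p:eps1 is morphism}, and each $\zeta_M$ is a bijection by \cref{t:exists unique good sequence}. It then remains to observe that a bijective homomorphism of {\ulms} is automatically an isomorphism: since {\ulms} are algebras in the signature $\{+, \lor, \land, 0, 1, -1\}$, the set-theoretic inverse of a bijective homomorphism preserves every operation; in particular, as the order is recovered from $\lor$ via $x \leq y \Longleftrightarrow x \lor y = y$, the inverse is order-preserving as well. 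Thus each $\zeta_M$ is an isomorphism and $\zeta \colon 1_{\ULM} \xrightarrow{\sim} \X\Gam$ is a natural isomorphism.

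Having produced natural isomorphisms $\eta \colon 1_{\MVM} \xrightarrow{\sim} \Gam\X$ and $\zeta \colon 1_{\ULM} \xrightarrow{\sim} \X\Gam$, I would conclude that $\Gam$ and $\X$ are quasi-inverse functors, which is exactly the assertion that $\ULM$ and $\MVM$ are equivalent. The genuine content has already been spent in the earlier sections, namely in showing that $\eta_A$ is an isomorphism (resting on the subdirect-representation analysis of good $\Z$-sequences) and that $\zeta_M$ is a bijective homomorphism (resting on \cref{t:exists unique good sequence}). The main obstacle therefore lies in those two prior verifications rather than in the present statement; at this level the only subtlety is the elementary remark that bijective homomorphisms of unital $\ell$-monoids invert to homomorphisms.
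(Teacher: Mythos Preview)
Your proposal is correct and matches the paper's proof essentially line for line: both sides of the equivalence are established by citing \cref{p:unit mvm,p:eta1 is natural} for the $\MVM$ side and \cref{p:eps1 is morphism,p:eps1 is natural,t:exists unique good sequence} for the $\ULM$ side. The only difference is that you make explicit the elementary remark that a bijective homomorphism of algebras has a homomorphism inverse, which the paper leaves implicit.
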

\begin{proof}
	The functor $\Gam\X\colon \MVM\to \MVM$ is naturally isomorphic to the identity functor on $\MVM$ by \cref{p:unit mvm,p:eta1 is natural}.
	The functor $\X\Gam\colon \ULM\to \ULM$ is naturally isomorphic to the identity functor on $\ULM$ by \cref{p:eps1 is morphism,p:eps1 is natural,t:exists unique good sequence}.
\end{proof}
%

%%%%%%%%%%%%%%%%%%%%%%%%%%%%%%%%%%%   SECTION   %%%%%%%%%%%%%%%%%%%%%%%%%%%%%%%%%%%%

\section{The equivalence specialises to Mundici's equivalence}
	\label{s:restriction}

We recall that an \emph{\extlg} (\emph{\lg}, for short)\index{lattice-ordered!group!Abelian} is an algebra $\langle G; +, \lor, \land, 0, - \rangle$ (arities $2$, $2$, $2$, $0$, $1$) such that $\langle G; \lor, \land \rangle$ is a lattice, $\langle G; +, 0, - \rangle$ is an Abelian group, and $+$ distributes over $\lor$ and $\land$.
It is well-known that the underlying lattice of any {\lg} is distributive \cite[Proposition~1.2.14]{Goodearl1986}.

Furthermore, we recall that a \emph{\extulg} (\emph{\ulg}, for short)\index{lattice-ordered!group!unital} is an algebra $\langle G; +, \lor, \land, 0, -, 1 \rangle$ (arities $2$, $2$, $2$, $0$, $1$, $0$) such that $\langle G; +, \lor, \land, 0, - \rangle$ is {\alg} and $1$ is a strong order unit, i.e.\ $0 \leq 1$ and, for all $x \in M$, there exists $n \in \N$ such that $x \leq n1$.
We let $\ULG$ denote the category of {\ulgs} and homomorphisms.

For all basic notions and results about lattice-ordered groups, we refer to \cite{BKW}.

In every {\ulg} one defines the constant $-1$ as the additive inverse of $1$.
	
\begin{remark} \label{r:ulg-reduct}
	It is not difficult to prove that the $\{+, \lor, \land, 0, 1, -1\}$-reducts of {\ulgs} are precisely the {\ulms} in which every element has an inverse.
	Moreover, the forgetful functor from $\ULG$ to the category of $\{+, \lor, \land, 0, 1, -1\}$-algebras is full, faithful and injective on objects.
	Thus, the category of {\ulgs} is isomorphic to the full subcategory of $\ULM$ given by those {\ulms} in which every element has an inverse.
\end{remark}

We recall that an \emph{\mv}\index{MV-algebra} $\langle A; \oplus, \lnot, 0 \rangle$ is a set $A$ equipped with a binary operation $\oplus$, a unary operation $\lnot$ and a constant $0$ such that $\langle A; \oplus, 0 \rangle$ is a commutative monoid, $\lnot 0 \oplus x = \lnot 0$, $\lnot\lnot x = x$ and $\lnot(\lnot x \oplus y) \oplus y = \lnot(\lnot y \oplus x) \oplus x$.
We let $\MV$ denote the category of {\mvs} with homomorphisms.
For all basic notions and results about {\mvs} we refer to \cite{cdm2000}.

Via $ \oplus, \lnot, 0$, one defines the operations $1 \df \lnot 0$, $x \odot y\df \lnot(\lnot x \oplus \lnot y)$, $x \lor y\df (x \odot \lnot y) \oplus y$, and $x \land y\df x \odot (\lnot x \oplus y)$.

By a result of \cite[Theorem~3.9]{Mundici}, the categories of {\ulgs} and {\mvs} are equivalent.
In this section, we prove that this equivalence follows from \cref{t:G is equiv}.
\begin{lemma}
	\label{l:MV is MVM}
	Every {\mv} is {\amvm}.
\end{lemma}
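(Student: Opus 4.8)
The plan is to unfold the derived operations and then reduce everything to the unit interval. Starting from \amv\ $\langle A; \oplus, \lnot, 0\rangle$, the candidate \mvm\ structure keeps the same underlying set, retains $\oplus$ and $0$, and takes $1 \df \lnot 0$, $x \odot y \df \lnot(\lnot x \oplus \lnot y)$, $x \lor y \df (x \odot \lnot y) \oplus y$ and $x \land y \df x \odot (\lnot x \oplus y)$. Since each of $\odot$, $\lor$, $\land$, $1$ is an \mv-term in $\oplus$, $\lnot$, $0$, substituting these definitions turns every one of \cref{ax:A1,ax:A2,ax:A3,ax:A4,ax:A5,ax:A6,ax:A7} into a conjunction of equations in the signature $\{\oplus, \lnot, 0\}$: the lattice and distributivity laws packaged in \cref{ax:A1,ax:A3}, the monoid laws of \cref{ax:A2}, and the four identities \cref{ax:A4,ax:A5,ax:A6,ax:A7} are all equational once read this way.

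First I would invoke Chang's completeness theorem (see \cite{cdm2000}), which asserts that an equation in the language of \mvs\ holds in every \mv\ if, and only if, it holds in the standard \mv\ $[0,1]$, where $x \oplus y = \min\{x + y, 1\}$ and $\lnot x = 1 - x$. In $[0,1]$ the derived operations compute to $x \odot y = \lnot(\lnot x \oplus \lnot y) = 1 - \min\{(1-x)+(1-y),1\} = \max\{x + y - 1, 0\}$, $x \lor y = \max\{x, y\}$, $x \land y = \min\{x, y\}$ and $1 = 1$; these are exactly the operations that make $[0,1]$ into the prime \mvm\ recorded in the examples following \cref{d:MVM}. Hence the \mv-structure on $[0,1]$ induces on $[0,1]$ precisely that \mvm, so all seven axioms hold there, and therefore, by completeness, they hold in every \mv.

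In this approach there is essentially no hard step, since completeness does the heavy lifting; the work that remains is bookkeeping: confirming that each axiom is genuinely equational (it is, once $\lor$, $\land$, $\odot$, $1$ are read as \mv-terms) and checking the elementary computations above identifying the derived operations on $[0,1]$ with the standard \mvm\ ones. For a reader who prefers a more elementary or choice-free route, I would mention the alternative of appealing to Chang's subdirect representation of every \mv\ as a subdirect product of totally ordered \mvs\ and verifying \cref{ax:A4,ax:A5,ax:A6,ax:A7} directly in an \mv-chain; this replaces the trivial evaluation in $[0,1]$ by a short case analysis, so the completeness route is the cleaner one. Axiom \cref{ax:A1} can in any case be cited outright, since the derived $\lor$ and $\land$ are exactly the operations of the underlying distributive lattice of an \mv.
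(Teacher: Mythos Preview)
Your proposal is correct and matches the paper's own proof: both reduce via Chang's completeness (the paper phrases it as ``$[0,1]$ generates the variety of \mvs'') to checking \cref{ax:A1,ax:A2,ax:A3,ax:A4,ax:A5,ax:A6,ax:A7} in $[0,1]$. The only cosmetic difference is that the paper justifies $[0,1]$ being \amvm\ by appealing to \cref{t:Gamma is MVM} applied to the \ulm\ $\R$, rather than citing the example after \cref{d:MVM} directly.
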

\begin{proof}
	Since $[0, 1]$ generates the variety of {\mvs} \cite[Theorem~2.3.5]{cdm2000}, it suffices to check that \cref{ax:A1,ax:A2,ax:A3,ax:A4,ax:A5,ax:A6,ax:A7} hold in $[0, 1]$.
	This is the case because $\R$ is easily seen to be a {\ulm} and thus, by \cref{t:Gamma is MVM}, the unit interval $[0, 1]$ is {\amvm}.	
\end{proof}

\begin{proposition} \label{p:reducts-of-MV}
	The reducts of {\mvs} to the signature $\{\oplus, \odot, \lor, \land, 0, 1\}$ are the {\mvms} $A$ such that, for every $x \in A$, there exists $y \in A$ such that $x \oplus y = 1$ and $x \odot y = 0$.
\end{proposition}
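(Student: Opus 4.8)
The plan is to prove the two inclusions separately, using the equivalence $\Gam \colon \ULM \to \MVM$ of \cref{t:G is equiv} together with Mundici's theorem as recovered in this section. Throughout I write $M \df \X(A)$ for the enveloping {\ulm}.

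For the inclusion of reducts of {\mvs} into the stated class, I would first invoke \cref{l:MV is MVM} to know that every {\mv} restricts to {\amvm}; it then remains to exhibit, for each $x$, a complement $y$. Taking $y \df \lnot x$, the standard {\mv} identities $x \oplus \lnot x = 1$ and $x \odot \lnot x = 0$ (see \cite{cdm2000}) give exactly $x \oplus y = 1$ and $x \odot y = 0$. Since the existence of $y$ does not mention $\lnot$, this property survives in the reduct.

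For the converse inclusion, let $A$ be {\amvm} with the complementation property. By \cref{p:unit mvm}, $\eta_A \colon A \to \Gam\X(A)$ is an isomorphism, so it suffices to show that $M$ is (the $\ell$-monoid reduct of) {\aulg}: then $A \cong \Gam(M)$ is the unit interval of {\aulg} and hence, by Mundici's theorem \cite[Theorem~3.9]{Mundici}, the $\{\oplus, \odot, \lor, \land, 0, 1\}$-reduct of {\amv}. By \cref{r:ulg-reduct}, this reduces to proving that every element of $M$ has an additive inverse. The key step is to transfer the complementation property into $M$: for $x \in \Gam(M)$ with complement $y \in \Gam(M)$, the equalities $x \oplus y = 1$ and $x \odot y = 0$ unwind, via the definitions $x \oplus y = (x + y) \land 1$ and $x \odot y = (x + y - 1) \lor 0$, to $x + y \geq 1$ and $x + y \leq 1$, i.e.\ $x + y = 1$ in $M$. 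Adding $-1$ and using \cref{ax:U1} yields $x + (y - 1) = 0$, so $x$ is invertible with $-x = y - 1$. Thus every element of $\Gam(M)$ is invertible in $M$. To extend invertibility to all of $M$, I would decompose an arbitrary $z \in M$: by \cref{ax:U3} there is $n \in \Np$ with $-n \leq z \leq n$, and \cref{p:trunc sums up} writes $z$ as $-n \cdot 1$ plus a finite sum of elements $((z - i) \lor 0) \land 1$ of $\Gam(M)$. Each such summand is invertible by the previous step, as is each copy of $1$ (with inverse $-1$), so $z$ is a finite sum of invertible elements of the commutative monoid $M$ and is therefore invertible.

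The main obstacle is precisely this last extension: the complementation hypothesis concerns directly only the unit interval $\Gam(M)$, whereas the conclusion requires invertibility throughout $M$, and it is the slicing identity of \cref{p:trunc sums up} that bridges the two. Once $M$ is known to be {\aulg}, the coincidence of the derived {\mv} operations on $\Gam(M)$ with the {\mvm} operations $(x + y) \land 1$, $(x + y - 1) \lor 0$, $\lor$, $\land$ is a direct computation (using $\lnot x = 1 - x$), completing the identification of $A$ with the reduct of the associated {\mv}.
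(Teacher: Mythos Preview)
Your argument is correct, but it takes a genuinely different route from the paper's.

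The paper proves the converse direction entirely inside the {\mvm} $A$: it first shows that the complement $y$ of $x$ is unique (using \cref{l:almost associative}), so that $\lnot x$ is well defined, and then verifies the {\mv} axioms $\lnot 0 = 1$, $\lnot\lnot x = x$, $x \odot y = \lnot(\lnot x \oplus \lnot y)$, and $\lnot(\lnot x \oplus y) \oplus y = \lnot(\lnot y \oplus x) \oplus x$ directly from the {\mvm} axioms and the derived identity $\sigma(x,\lnot y, y) = x$. No passage through $\X(A)$, no appeal to Mundici's theorem.

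Your approach instead leverages the equivalence $\Gam \dashv \X$ already built: you lift to $M = \X(A)$, observe that $x \oplus y = 1$ and $x \odot y = 0$ force $x + y = 1$ in $M$ (hence $x$ is invertible with inverse $y - 1$), extend invertibility to all of $M$ via \cref{p:trunc sums up}, and then invoke the easy half of Mundici's equivalence to conclude that $\Gam(M) \cong A$ carries an {\mv} structure whose reduct is the given {\mvm}. This is correct, and in fact the invertibility computation you isolate is exactly the one the paper performs later in the proof of \cref{t:restricts to Mundici}. What each approach buys: the paper's direct verification makes the proposition self-contained and independent of Mundici's result (which is the point of \cref{s:restriction}, where Mundici's theorem is being \emph{derived}, not assumed), and it yields uniqueness of the complement as a byproduct; your approach is shorter given the machinery already in place, but it imports the external Mundici theorem (or at least its easy direction) as a black box, which partially undercuts the stated aim of recovering Mundici's equivalence from the paper's own.
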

\begin{proof}
	If $A$ is {\amv}, then the $\{\oplus, \odot, \lor, \land, 0, 1\}$-reduct of $A$ is {\amvm} by \cref{l:MV is MVM} and, for every $x\in A$, we have $x\oplus \lnot x=1$ and $x\odot \lnot x=0$.
	This settles one direction.
	
	For the converse direction, let $A$ be an {\mvm}, and suppose that, for every $x\in A$, there exists $y\in A$ such that $x\oplus y=1$ and $x\odot y=0$.
	One such element is unique because, if $y,z\in A$ are such that $x\oplus y=1$, $x\odot y=0$, $x\oplus z=1$ and $x\odot z =0$, then
	\[
		y=0\oplus y=(z\odot x)\oplus y\stackrel{\text{\cref{l:almost associative}}}{\geq} z\odot(x\oplus y)=z\odot 1=z,
	\]
	and, analogously, $z\geq y$.
	
	For $x\in A$, we let $\lnot x$ denote the unique element such that $x\oplus \lnot x=1$ and $x\odot \lnot x=0$.
	
	We have $\lnot 0=1$ because $0\oplus 1=1$ and $0\odot 1=1$;
	hence, $x\oplus \lnot 0=x\oplus 1=1=\lnot 0$.
	We have $\lnot\lnot x=x$ because $\lnot x\oplus x = 1 = x\oplus \lnot x$ and $\lnot x\odot x = 0 = x\odot \lnot x$.
	Moreover, we have $x\odot y=\lnot(\lnot x\oplus \lnot y)$ because 
	\begin{align*}
	 	(x \odot y) \oplus(\lnot x \oplus \lnot y)	& \geq	((x\oplus \lnot x)\odot y)\oplus \lnot y	&& \by{\cref{l:almost associative}}\\
																 	& =		(1 \odot y) \oplus \lnot y\\
																 	& =		y \oplus \lnot y\\
																 	& =		1
	\end{align*}	
	(and hence $(x\odot y)\oplus(\lnot x\oplus \lnot y)=1$), and
	\begin{align*}
		(x\odot y)\odot(\lnot x\oplus \lnot y)	& \leq	x\odot ((y\odot \lnot y)\oplus \lnot x)
															&& \by{\cref{l:almost associative}}\\
															& = x \odot (0 \oplus \lnot x)\\
															& = x \odot \lnot x\\
															& = 0
	\end{align*} 
	(and hence $(x\odot y)\odot(\lnot x\oplus \lnot y)=0$).
	Furthermore, we have
	\[
		\sigma(x,  \lnot y,y) = (x \odot (\lnot y \oplus y)) \oplus (\lnot y \odot y) = (x \odot 1) \oplus 0 = x,
	\]
	and thus
	\[
		\lnot(\lnot x\oplus y)\oplus y=(x\odot \lnot y)\oplus y=\sigma(x,\lnot y,y)\lor y=x\lor y.
	\]
	Analogously, $\lnot(\lnot y\oplus x)\oplus x=y\lor x$.
	Therefore, 
	\begin{equation*}
		\lnot(\lnot x\oplus y)\oplus y=x\lor y= y \lor x = \lnot(\lnot y\oplus x)\oplus x. \qedhere
	\end{equation*}
\end{proof}

\begin{remark} \label{r:mv-reduct}
	It is not difficult to see that the forgetful functor from $\MV$ to the category of $\{\oplus, \odot, \lor, \land, 0, 1\}$-algebras is full, faithful and injective on objects.
	Thus, by \cref{p:reducts-of-MV}, the category of {\mvs} is isomorphic to the full subcategory of $\MVM$ given by those {\mvms} $A$ such that, for every $x \in A$, there exists $y \in A$ such that $x \oplus y = 1$ and $x \odot y = 0$.
\end{remark}

\begin{theorem}
	\label{t:restricts to Mundici}
	The equivalence 
	$
		\begin{tikzcd}
			\ULM \arrow[yshift = .45ex]{r}{\Gam}	& \MVM \arrow[yshift = -.45ex]{l}{\X\ }
		\end{tikzcd}
	$
	restricts to an equivalence between $\ULG$ and $\MV$.
\end{theorem}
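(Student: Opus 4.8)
The plan is to realise $\ULG$ and $\MV$ as full subcategories of $\ULM$ and $\MVM$, and then to check that the equivalence of \cref{t:G is equiv} carries one onto the other. By \cref{r:ulg-reduct}, $\ULG$ is (isomorphic to) the full subcategory of $\ULM$ whose objects are the {\ulms} in which every element has an additive inverse; by \cref{r:mv-reduct} (which rests on \cref{p:reducts-of-MV}), $\MV$ is the full subcategory of $\MVM$ whose objects are the {\mvms} $A$ such that for every $x \in A$ there is $y \in A$ with $x \oplus y = 1$ and $x \odot y = 0$. Since both inclusions are full, it suffices to prove the two object-level containments $\Gam(\ULG) \seq \MV$ and $\X(\MV) \seq \ULG$: the natural isomorphisms $\mathrm{id} \cong \Gam\X$ and $\mathrm{id} \cong \X\Gam$ of \cref{t:G is equiv} then automatically restrict, because an isomorphism in the ambient category between two objects of a full subcategory lies in that subcategory. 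This yields the desired equivalence $\ULG \simeq \MV$.

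For $\Gam(\ULG) \seq \MV$, let $M$ be {\aulm} in which every element has an inverse, and let $x \in \Gam(M)$. Put $y \df 1 + (-x)$; since $0 \leq x \leq 1$ we have $y \in \Gam(M)$, and a direct computation from the definitions of $\oplus$ and $\odot$ gives $x \oplus y = (x + y) \land 1 = 1$ and $x \odot y = (x + y - 1) \lor 0 = 0$. Hence $\Gam(M)$ satisfies the complementation condition, so it is an {\mv} by \cref{p:reducts-of-MV}.

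For $\X(\MV) \seq \ULG$, which is the crux, let $A$ be an {\mvm} with complementation, write $\lnot$ for the unique complement, and fix a good $\Z$-sequence $\gs{a} \in \X(A)$. I would define $\gs{b}$ by $\gs{b}(n) \df \lnot\gs{a}(-n-1)$ and first check that $\gs{b}$ is again a good $\Z$-sequence: the two eventual-constancy conditions hold because $\lnot 0 = 1$ and $\lnot 1 = 0$, while the good-pair condition for $(\gs{b}(n), \gs{b}(n+1))$ follows from that of $(\gs{a}(-n-2), \gs{a}(-n-1))$ via the De Morgan identities $\lnot(p \oplus q) = \lnot p \odot \lnot q$ and $\lnot(p \odot q) = \lnot p \oplus \lnot q$. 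Then I would prove $\gs{a} + \gs{b} = \gs{0}$, which by Birkhoff's subdirect representation theorem (\cref{t:SubRepThe}) may be verified in a subdirectly irreducible {\mvm} $A$; this reduction is legitimate because the projections $A \to A_i$ are $\{\oplus,\odot,\lor,\land,0,1\}$-homomorphisms that preserve the complement by its uniqueness, hence commute both with the formation of $\gs{b}$ and, after applying $\X$, with $+$. In the subdirectly irreducible case, \cref{c:good sequence in sub irr} makes $\gs{a}$ concentrated, say $\gs{a}(j) = x$ with all later values $0$ and all earlier values $1$; then $\gs{b}$ is concentrated at $-j-1$ with value $\lnot x$, and the convolution formula defining $+$ is translation-covariant, so together with the identity $(x) + (\lnot x) = (x \oplus \lnot x, x \odot \lnot x) = (1,0) = \gs{1}$ (from \cref{l:oplus odot is a good pair,p:sum is good} and complementation) one computes $\gs{a} + \gs{b} = \gs{0}$. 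Thus every $\gs{a} \in \X(A)$ has an additive inverse, so $\X(A)$ is {\aulm} in which every element is invertible, i.e.\ an object of $\ULG$ by \cref{r:ulg-reduct}.

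With both containments in hand, the restrictions of $\Gam$ and $\X$ are mutually quasi-inverse functors between $\ULG$ and $\MV$, proving the theorem. The only genuinely delicate point is the second containment: guessing the correct inverse sequence $\gs{b}(n) = \lnot\gs{a}(-n-1)$ and reducing the verification of $\gs{a} + \gs{b} = \gs{0}$ to concentrated sequences in the subdirectly irreducible factors, where the whole computation collapses to the single complementation identity $x \oplus \lnot x = 1$, $x \odot \lnot x = 0$.
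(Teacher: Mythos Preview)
Your proof is correct, but the paper takes a shorter route for the key implication. Your first containment $\Gam(\ULG)\seq\MV$ matches the paper exactly. For the converse, however, the paper does \emph{not} work in $\X(A)$ at all: it shows directly that for any $M\in\ULM$, if $\Gam(M)$ has complementation then $M$ has inverses. Given $x\in\Gam(M)$ with complement $y$, a short computation using $(a\land b)+(a\lor b)=a+b$ yields $x+(y-1)=(x\odot y)+((x\oplus y)-1)=0+1-1=0$, so $y-1$ is the inverse of $x$ in $M$; since every element of $M$ is a finite sum of elements of $\Gam(M)\cup\{-1\}$, all of $M$ is invertible. This single biconditional $(1)\Leftrightarrow(2)$ on the $\ULM$-side immediately gives both containments.

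Your approach instead constructs the inverse explicitly in $\X(A)$ via $\gs{b}(n)=\lnot\gs{a}(-n-1)$ and verifies $\gs{a}+\gs{b}=\gs{0}$ by Birkhoff reduction to the subdirectly irreducible case. This is sound and has the merit of exhibiting the inverse concretely as a good $\Z$-sequence, but it invokes heavier machinery (subdirect representation, the structure of good sequences in subdirectly irreducible {\mvms}) where the paper gets by with a three-line calculation in $M$. The paper's argument also avoids having to check that $\X$ reflects the relevant equalities along the subdirect embedding.
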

\begin{proof}
	By \cref{r:ulg-reduct,r:mv-reduct}, it is enough to prove that, for every $M\in\ULM$, the following conditions are equivalent.
	\begin{enumerate}
		\item \label{i:inv} Every element of $M$ is invertible.
		\item \label{i:compl} For every $x\in \Gam(M)$, there exists $y\in \Gam(M)$ such that $x\oplus y=1$ and $x\odot y=0$.
	\end{enumerate}
	Suppose \cref{i:inv} holds.
	Let $x\in \Gam(M)$.
	It is immediate that $1-x \in \Gam(M)$.
	Moreover, we have
	\[
		x\oplus (1-x)=(x+ 1- x)\land 1=1\land 1=1,
	\]
	and
	\[
		x\odot (1-x)=(x+1-x-1)\lor 0=0\lor 0=0.
	\]
	So \cref{i:compl} holds.
	
	Suppose \cref{i:compl} holds.
	We first prove that every element of $\Gam(M)$ has an inverse.
	For $x\in \Gam(M)$, let $y\in \Gam(M)$ be such that $x\oplus y=1$ and $x\odot y=0$.
	The element $y-1$ is the inverse of $x$, because
	\begin{align*}
		x+(y-1)	& = ((x + y - 1) \lor 0) + ((x + y - 1) \land 0)\\
					& = (x \odot y) + (((x + y) \land 1) - 1)\\
					& = (x \odot y) + (x \oplus y) - 1\\
					& = 0 + 1 - 1\\
					& = 0.
	\end{align*}
	Every element of $G$ is invertible since it may be written as a sum of elements of $\Gam(M)\cup\{-1\}$.
\end{proof}
%

%%%%%%%%%%%%%%%%%%%%%%%%%%%%%%%%%%%   SECTION   %%%%%%%%%%%%%%%%%%%%%%%%%%%%%%%%%%%%

\section{Conclusions}

In the direction of obtaining an equational axiomatisation of the dual of the category of compact ordered spaces, we paid special attention to the operations $\oplus$, $\odot$, $\lor$, $\land$, $0$ and $1$.
In order to arrive at a convenient set of axioms to impose on algebras in this signature, we first considered a reasonable set of axioms using the operations $+$, $\lor$, $\land$, $0$, $1$, and $-1$.
Then, we showed that the unit intervals of {\ulms} are the algebras in the signature $\{\oplus, \odot, \lor, \land, 0, 1\}$ that we called {\mvms}.

This result provides us with a good set of axioms to impose on algebras in the signature $\{\oplus, \odot, \lor, \land, 0, 1\}$.
We can now build on these algebras to obtain a duality for $\CompOrd$.
However, before turning to our equational axiomatisation of $\CompOrdop$, we shall still take advantage of the algebras in the signature $\{+, \lor, \land, 0, 1, -1\}$ to clarify the intuitions behind the dualities that we will encounter in \cref{chap:axiomatisation,chap:finite-axiomatisation}.

%######################################################.  CHAPTER.  ######################################################

\chapter{Ordered Yosida duality} \label{chap:yosida-like}
% CHAPTER 5

%%%%%%%%%%%%%%%%%%%%%%%%%%%%%%%%%%%   SECTION   %%%%%%%%%%%%%%%%%%%%%%%%%%%%%%%%%%%%

\section{Introduction}

In this chapter the main character is $\R$: we investigate algebras of continuous order-preserving real-valued functions.
We obtain a duality between the category $\CompOrd$ of compact ordered spaces and a certain class of algebras, in the style of \cite{Yosida1941}.
K.\ Yosida gave the essential elements of a proof of the fact that the category of compact Hausdorff spaces is dually equivalent to the category of vector lattices with a unit that are complete in the metric induced by the unit, along with their unit-preserving linear lattice homomorphisms.
Of the several descriptions of the dual of the category of compact Hausdorff spaces that were obtained at around the same time as \cite{Yosida1941}, it is appropriate to mention here the result due to \cite{Stone1941}, where divisible Archimedean lattice-ordered groups with a unit are used in place of vector lattices.

The class of algebras that we use in this chapter to dualise $\CompOrd$ is not equationally definable (not even first-order definable); an equational axiomatisation of $\CompOrdop$ will be obtained in \cref{chap:axiomatisation}.

We conclude this introduction with an outline of the chapter, which is structured around three points.

\paragraph{Few operations are enough.}
Given two topological spaces $X$ and $Y$ equipped with a preorder, we set
\[
	\Cleq(X, Y) \df \{f \colon X \to Y \mid f \text{ is order-preserving and continuous}\}.
\]
For every cardinal $\kappa$, every order-preserving continuous function $f \colon \R^\kappa \to \R$, and every topological space $X$ equipped with a preorder, the set $\Cleq(X,\R)$ is closed under pointwise application of $f$.
So, we have a contravariant functor
\[
	\Cleq(-, \R) \colon \CompOrd \to \ALG \{+, \lor, \land, 0, 1, -1\}.
\]
We show that this functor is full and faithful (\cref{t:abstract-full,p:faithful}), and we deduce that the category of compact ordered spaces is dually equivalent to the category of $\{+, \lor, \land, 0, 1, -1\}$-algebras which are isomorphic to $\Cleq(X, \R)$ for some compact ordered space $X$ (\cref{t:gelfand-like-abstract}).

\paragraph{A more explicit Yosida-like duality.}
We then proceed to obtain a duality with a class of algebras which is more explicitly defined.
We let $\Dyad$ denote the set of dyadic rationals.
In the same spirit as above, we have a contravariant functor from $\CompOrd$ to $\ALG \{+, \lor, \land\} \cup \Dyad$, still denoted by $\Cleq(-, \R)$.
This functor is full and faithful, and so the category of compact ordered spaces is dually equivalent to the category of $(\{+, \lor, \land\} \cup \Dyad)$-algebras which are isomorphic to $\Cleq(X, \R)$ for some compact ordered space $X$.
Next, we characterise these algebras as the algebras $M$ in the signature $\{+, \lor, \land\} \cup \Dyad$ such that the function
\begin{align*}
	\disthom[M] \colon	M \times M	& \longrightarrow	[0,+ \infty]\\
								(x,y)			& \longmapsto		\sup_{f \in \hom(M, \R)} \lvert f(x) - f(y) \rvert
\end{align*}
is a metric, and $M$ is Cauchy complete with respect to it (\cref{t:char-algebras}).
We conclude that the category of compact ordered spaces is dually equivalent to the category of algebras with these properties (\cref{t:one-char}).

\paragraph{An intrinsic definition of $\disthom$.}
Next, we provide a more intrinsic definition of the function $\disthom$, as follows.
We define {\adlm} as an algebra $\alge{M}$ in the signature $\{ + , \lor, \land\}\cup\Dyad$ such that $\langle M; + , \lor, \land,0\rangle$ is {\alm}, for all $x \in M$ there exist $\alpha, \beta \in \Dyad$ such that $\alpha^\alge{M} \leq x \leq \beta^\alge{M}$, and, for all $\alpha, \beta \in \Dyad$, we have (i) if $\alpha \leq \beta$, then $\alpha^\alge{M} \leq \beta^\alge{M}$, and (ii) $\alpha^\alge{M} +^\alge{M} \beta^\alge{M} = (\alpha +^\R \beta)^\alge{M}$.
With the help of Birkhoff's subdirect representation theorem, we prove that, for every {\dlm} $M$, the function $\disthom[M]$ coincides with the function
\begin{align*}
	\distint[M] \colon	M \times M	& \longrightarrow	[0, +\infty)\\
								(x, y)		& \longmapsto		\inf\left\{t \in \Dyad\cap [0, +\infty) \mid y + (-t)^M \leq x \leq y + t^M\right\}.
\end{align*}
We conclude that the category of compact ordered spaces is dually equivalent to the category of {\dlms} $M$ that satisfy
\[
	\distint[M](x,y) = 0 \Rightarrow x = y
\]
(so that $\distint[M]$ is a metric) and are Cauchy-complete with respect to $\distint[M]$ (\cref{t:Yosida}).

%%%%%%%%%%%%%%%%%%%%%%%%%%%%%%%%%%%   SECTION   %%%%%%%%%%%%%%%%%%%%%%%%%%%%%%%%%%%%

\section{Few operations are enough}

In this section, we show that the category of compact ordered spaces is dually equivalent to the class of algebras in the signature $\{+, \lor, \land, 0, 1, -1\}$ which are isomorphic to $\Cleq(X, \R)$ for some compact ordered space $X$ (\cref{t:gelfand-like-abstract}).
To prove it, we define a contravariant functor $\Cleq(-, \R) \colon \CompOrd \to \ALG \{+, \lor, \land, 0, 1, -1\}$, which maps a compact ordered space $X$ to the algebra $\Cleq(X, \R)$ of order-preserving continuous functions from $X$ to $\R$, and we show that this functor is full and faithful.

%=================================   SUBSECTION   =================================%

\subsection{The contravariant functor \texorpdfstring{$\Cleq({-}, \R)$}{C\unichar{"02264}(\unichar{"02013},\unichar{"0211D})}: definition}

As already mentioned in the outline of the chapter, given two topological spaces $X$ and $Y$ equipped with a preorder, we set
\[
	\Cleq(X, Y) \df \{f \colon X \to Y \mid f \text{ is order-preserving and continuous}\}.
\]
For every cardinal $\kappa$, every order-preserving continuous function $f \colon \R^\kappa \to \R$, and every topological space $X$ equipped with a preorder, the set $\Cleq(X,\R)$ is closed under pointwise application of $f$.

\begin{remark}\label{r:all operations in V are mon and cont}
	The functions $+, \lor, \land \colon \R^2 \to \R$, and all function from $\R^0$ to $\R$ are continuous and order-preserving with respect to the product order and product topology.
\end{remark}

We have a contravariant functor
\[
	\Cleq(-, \R) \colon \CompOrd \to \ALG \{+, \lor, \land, 0, 1, -1\},
\]
which maps a compact ordered space $X$ to the algebra $\Cleq(X, \R)$ of order-preserving continuous functions from $X$ to $\R$ with pointwise defined operations, and which maps a morphism $f \colon X \to Y$ to the homomorphism $- \circ f \colon \Cleq(Y, \R) \to \Cleq(X, \R)$.

%=================================   SUBSECTION   =================================%

\subsection{The contravariant functor \texorpdfstring{$\Cleq({-}, \R)$}{C\unichar{"02264}(\unichar{"02013},\unichar{"0211D})} is faithful}

\begin{lemma} \label{l:cogenerator}
	Let $X$ and $Y$ be compact ordered spaces, and let $f$ and $g$ be order-preserving continuous maps from $X$ to $Y$.
	If $f \neq g$, then there exists an order-preserving continuous function $h \colon Y \to \R$ such that $h \circ f \neq h \circ g$.
\end{lemma}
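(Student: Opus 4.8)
The statement to prove is Lemma~\ref{l:cogenerator}: given distinct order-preserving continuous maps $f, g \colon X \to Y$ between compact ordered spaces, there is an order-preserving continuous $h \colon Y \to \R$ separating them, i.e.\ $h \circ f \neq h \circ g$.

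The plan is to reduce the separation problem on $Y$ to the ordered Urysohn machinery already developed in \cref{chap:CompOrd}. Since $f \neq g$, first I would pick a point $x \in X$ with $f(x) \neq g(x)$, and set $y_0 \df f(x)$ and $y_1 \df g(x)$, two distinct points of $Y$. It suffices to produce an order-preserving continuous $h \colon Y \to \R$ with $h(y_0) \neq h(y_1)$, since then $(h \circ f)(x) \neq (h \circ g)(x)$. Because $Y$ is a compact ordered space, its partial order is antisymmetric, so from $y_0 \neq y_1$ we get that at least one of $y_0 \not\leq y_1$ or $y_1 \not\leq y_0$ holds (if both $y_0 \leq y_1$ and $y_1 \leq y_0$ held, antisymmetry would force $y_0 = y_1$).

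The key step is then a direct appeal to \cref{l:cor of urysohn}. In the first case $y_0 \not\leq y_1$, note this reads $y_1 \not\geq y_0$, so applying \cref{l:cor of urysohn} with the roles $x \df y_1$ and $y \df y_0$ yields a continuous order-preserving $\psi \colon Y \to [0,1]$ with $\psi(y_1) = 0$ and $\psi(y_0) = 1$; in particular $\psi(y_0) \neq \psi(y_1)$. In the symmetric case $y_1 \not\leq y_0$, i.e.\ $y_0 \not\geq y_1$, I would instead apply \cref{l:cor of urysohn} with $x \df y_0$ and $y \df y_1$ to obtain $\psi(y_0) = 0$ and $\psi(y_1) = 1$. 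In either case the resulting $\psi$ is order-preserving and continuous into $[0,1] \subseteq \R$, hence an element of $\Cleq(Y, \R)$, and separates $y_0$ from $y_1$. Taking $h \df \psi$ completes the argument, since then
\[
	(h \circ f)(x) = \psi(y_0) \neq \psi(y_1) = (h \circ g)(x),
\]
so $h \circ f \neq h \circ g$ as required.

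There is essentially no serious obstacle here: the real content is packaged inside \cref{t:Urysohn} and its corollary \cref{l:cor of urysohn}, which already deliver an $\R$-valued (indeed $[0,1]$-valued) order-preserving continuous separating function from an incomparability hypothesis. The only point demanding a little care is the bookkeeping of which variable plays which role in \cref{l:cor of urysohn} — that lemma is stated in terms of the relation $\not\geq$, so I must translate $y_0 \not\leq y_1$ into the form $y_1 \not\geq y_0$ correctly and choose the arguments accordingly, and handle both incomparability cases symmetrically. Antisymmetry of the partial order on the compact ordered space $Y$ (guaranteed by \cref{d:compact-ordered-space}) is what ensures that distinctness of $y_0$ and $y_1$ produces an incomparable pair in at least one direction, which is exactly the hypothesis \cref{l:cor of urysohn} needs.
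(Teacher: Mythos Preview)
Your proof is correct and follows essentially the same approach as the paper: pick a point where $f$ and $g$ differ, use antisymmetry to get an incomparability in some direction, and apply \cref{l:cor of urysohn} to obtain a separating $[0,1]$-valued function. The paper's version is slightly terser, handling the two cases by a ``without loss of generality'' rather than your explicit case split, but the argument is the same.
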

\begin{proof}
	By hypothesis, there exists $x \in X$ such that $f(x) \neq g(x)$.
	Without loss of generality, we may suppose $f(x) \not\geq g(x)$.
	By \cref{l:cor of urysohn}, there exists an order-preserving continuous function $h \colon Y \to [0,1]$ such that $h(f(x)) = 0$ and $h(g(x)) = 1$.
\end{proof}

\begin{proposition} \label{p:faithful}
	The contravariant functor 
	\[
		\Cleq(-,\R) \colon \CompOrd \to \ALG \{+, \lor, \land\} \cup \Dyad
	\]
	is faithful.
\end{proposition}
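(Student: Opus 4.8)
The plan is to prove faithfulness by unwinding the definition and applying \cref{l:cogenerator}, which has already done the substantive work. Recall that faithfulness of the contravariant functor $\Cleq(-, \R)$ means: for any compact ordered spaces $X$ and $Y$ and any two morphisms $f, g \colon X \to Y$ in $\CompOrd$, if $\Cleq(f, \R) = \Cleq(g, \R)$ as homomorphisms $\Cleq(Y, \R) \to \Cleq(X, \R)$, then $f = g$. By definition of the functor, $\Cleq(f, \R)$ is the precomposition map $h \mapsto h \circ f$, so the hypothesis $\Cleq(f, \R) = \Cleq(g, \R)$ is precisely the condition that $h \circ f = h \circ g$ for every order-preserving continuous function $h \colon Y \to \R$.

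First I would argue by contraposition. Suppose $f \neq g$. Then by \cref{l:cogenerator}, there exists an order-preserving continuous function $h \colon Y \to \R$ (in fact, with values in $[0,1]$) such that $h \circ f \neq h \circ g$. This inequality says exactly that $\Cleq(f, \R)(h) \neq \Cleq(g, \R)(h)$, and therefore $\Cleq(f, \R) \neq \Cleq(g, \R)$ as functions on the underlying set of $\Cleq(Y, \R)$, hence a fortiori as homomorphisms. Taking the contrapositive yields the desired implication: $\Cleq(f, \R) = \Cleq(g, \R)$ forces $f = g$.

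I would note that the argument is insensitive to which of the two signatures $\{+, \lor, \land, 0, 1, -1\}$ or $\{+, \lor, \land\} \cup \Dyad$ is placed on the codomain, since faithfulness depends only on the underlying \emph{functions} $\Cleq(Y,\R) \to \Cleq(X,\R)$ and not on the algebraic operations they preserve; thus the same one-line reduction establishes faithfulness for both versions of the functor considered in this chapter. There is no real obstacle at this stage: the genuine content lies entirely in \cref{l:cogenerator}, whose proof invokes the ordered version of Urysohn's lemma through \cref{l:cor of urysohn} to separate the points $f(x)$ and $g(x)$ at which $f$ and $g$ disagree. Concretely, the only care needed is to record that \cref{l:cogenerator} applies to \emph{any} pair of distinct morphisms $f, g \colon X \to Y$, which it does, so the present proposition follows immediately.

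\begin{proof}
	We prove the contrapositive. Let $f, g \colon X \to Y$ be morphisms of compact ordered spaces with $f \neq g$. By \cref{l:cogenerator}, there exists an order-preserving continuous function $h \colon Y \to \R$ such that $h \circ f \neq h \circ g$. By definition of the functor, $\Cleq(f, \R)(h) = h \circ f$ and $\Cleq(g, \R)(h) = h \circ g$; hence $\Cleq(f, \R)(h) \neq \Cleq(g, \R)(h)$, and therefore $\Cleq(f, \R) \neq \Cleq(g, \R)$. This shows that distinct morphisms have distinct images, i.e.\ the functor is faithful.
\end{proof}
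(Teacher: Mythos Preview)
Your proof is correct and takes exactly the same approach as the paper, which simply writes ``By \cref{l:cogenerator}.'' You have merely unpacked what faithfulness means and made the contrapositive explicit, which is the natural expansion of that one-line reference.
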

\begin{proof}
	By \cref{l:cogenerator}.
\end{proof}

%=================================   SUBSECTION   =================================%

\subsection{The contravariant functor \texorpdfstring{$\Cleq({-}, \R)$}{C\unichar{"02264}(\unichar{"02013},\unichar{"0211D})} is full}

The results in the remaining part of this section are essentially an adaptation of the results in \cite{HN2018}.

\begin{lemma}[{\cite[Proposition~5, p.\ 45]{Nachbin}}] \label{l:compord-sandwich}
	Let $X$ be a compact ordered space, let $F$ be an up-set and let $V$ be an open neighbourhood of $F$.
	Then there exists an open up-set $W$  such that $F \seq W \seq V$.
\end{lemma}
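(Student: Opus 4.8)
The plan is to exhibit $W$ explicitly as the complement of the down-closure of the complement of $V$, thereby avoiding any appeal to the ordered Urysohn lemma (\cref{t:Urysohn}). Set $K \df X \setminus V$, which is closed since $V$ is open, and let $D \df \downset K = \{x \in X \mid \exists y \in K,\ x \leq y\}$ be its down-closure. By construction $D$ is a down-set and $K \seq D$, so $W \df X \setminus D$ will be the natural candidate.

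The one nontrivial point is that $D$ is closed. First I would recall that a compact ordered space is Hausdorff and that its order ${\leq} = \{(a,b) \in X \times X \mid a \leq b\}$ is, by definition, a closed subset of $X \times X$. Since $K$ is closed, so is $X \times K$, and hence ${\leq} \cap (X \times K)$ is closed in $X \times X$. The product $X \times X$ is compact by \cref{t:Thych}, so ${\leq} \cap (X \times K)$ is compact by \cref{p:closed-in-compact}. Its image under the continuous first projection $\pi_1 \colon X \times X \to X$ is exactly $D$, whence $D$ is compact by \cref{p:image-of-compact} and therefore closed by \cref{p:compact-in-Haus}.

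Now set $W \df X \setminus D$. As the complement of a closed down-set, $W$ is an open up-set, so it remains only to check the two inclusions. For $F \seq W$, I would show $F \cap D = \emptyset$: if some $x \in F$ lay in $D$, then $x \leq y$ for some $y \in K = X \setminus V$; but $F$ is an up-set, so $y \in F \seq V$, contradicting $y \notin V$. Finally, $W \seq V$ is immediate from $K \seq D$, since this gives $W = X \setminus D \seq X \setminus K = V$.

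The only real obstacle is the closedness of the down-closure $D$; once the projection argument above is in place, the remaining verifications are purely order- and set-theoretic and use nothing beyond the up-set hypothesis on $F$. I would emphasise that, in contrast with the more naive approach through \cref{t:Urysohn}, this argument requires no closedness assumption on $F$ itself.
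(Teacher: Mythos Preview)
Your proof is correct. The paper does not supply its own argument for this lemma; it simply cites \cite[Proposition~5, p.~45]{Nachbin}. Your construction $W = X \setminus \downset(X \setminus V)$ is the standard one, and the projection argument establishing that $\downset K$ is closed is exactly the right idea (indeed, this fact---that the down-closure of a closed set in a compact ordered space is closed---is itself a result in Nachbin's book, \cite[Proposition~4, p.~44]{Nachbin}). Your observation that no closedness hypothesis on $F$ is needed is also correct and worth noting: only the up-set property of $F$ is used, in the verification that $F \cap \downset K = \emptyset$.
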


\begin{lemma}\label{l:separation-upsets}
	Let $A$ and $B$ be disjoint closed up-sets of a compact ordered space.
	Then, there exist two disjoint open up-sets that contain $A$ and $B$ respectively.
\end{lemma}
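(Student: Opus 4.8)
The goal is to separate two disjoint closed up-sets $A$ and $B$ by disjoint \emph{open} up-sets. The natural plan is to reduce this to the previous lemma, \cref{l:compord-sandwich}, which provides an open up-set sandwiched between an up-set and an open neighbourhood.

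First I would observe that in a compact ordered space the complement of a closed \emph{down}-set is an open up-set, and dually. The key separation fact at our disposal is \cref{l:char-closed}: since the order is closed, for points $x \not\leq y$ we can separate by an open up-set and an open down-set. More usefully, I expect to invoke the ordered Urysohn machinery indirectly, but the cleanest route is purely topological-order-theoretic via \cref{l:compord-sandwich}. Here is the plan. Since $A$ and $B$ are disjoint closed sets in a compact Hausdorff space, and $B$ is closed, the complement $X \setminus B$ is an open set containing the closed up-set $A$. Applying \cref{l:compord-sandwich} with $F = A$ and $V = X \setminus B$, I obtain an open up-set $W$ with $A \seq W \seq X \setminus B$; in particular $\overline{W} \cap B$ need not yet be empty, so a single application is not quite enough to get \emph{two} disjoint open up-sets.

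To finish, I would tighten the sandwich. By normality of the compact Hausdorff space $X$, I can interpose a closed set: there is an open $V'$ with $A \seq V' \seq \overline{V'} \seq X \setminus B$. Applying \cref{l:compord-sandwich} to $F = A$ and this $V'$ gives an open up-set $W_A$ with $A \seq W_A \seq \overline{V'}$, so $\overline{W_A} \seq \overline{V'} \seq X \setminus B$, whence $\overline{W_A}$ is a closed set disjoint from $B$. Now $\overline{W_A}$ is closed but perhaps not an up-set; however, its complement is open and contains $B$. I would instead work symmetrically: the set $\downset \overline{W_A} = \{x \mid \exists w \in \overline{W_A},\ x \leq w\}$ is the down-closure, which is closed (this uses compactness and closedness of the order, as in \cref{l:up-set-closed}-type arguments), and is disjoint from the up-set $B$ because $B \cap \overline{W_A} = \emptyset$ and $B$ being an up-set means $B \cap \downset\overline{W_A} = \emptyset$. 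Then $X \setminus \downset\overline{W_A}$ is an open up-set containing $B$ and disjoint from $W_A$. Setting the two open up-sets to be $W_A$ and $X \setminus \downset \overline{W_A}$ yields the disjoint open up-sets separating $A$ and $B$ respectively.

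The main obstacle I anticipate is ensuring that the down-closure (or up-closure) of a closed set remains closed so that its complement is genuinely open; this is where compactness together with the closedness of the partial order is essential, and I would cite the relevant closure facts (\cref{l:up-set-closed} and the net-characterisation of closed orders) to justify it, rather than recompute. A cleaner alternative, which I would consider if the closure bookkeeping gets delicate, is to apply \cref{l:compord-sandwich} twice in tandem—once to $A$ inside $X \setminus B$ to get an open up-set $W_A$ whose closure avoids $B$, and once to $B$ inside the open set $X \setminus \overline{W_A}$ (noting $\overline{W_A}$, being contained in the complement of the down-set closure considerations, has an up-set-friendly complement)—to extract a second open up-set $W_B$ disjoint from $W_A$. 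Either way the proof is short and leans entirely on \cref{l:compord-sandwich} plus the interposition afforded by normality of compact Hausdorff spaces.
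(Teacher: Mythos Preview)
Your argument is correct, but it takes a detour the paper avoids. The paper's proof is a two-liner: by normality of the compact Hausdorff space $X$, choose disjoint open sets $U'$ and $V'$ with $A \seq U'$ and $B \seq V'$; then apply \cref{l:compord-sandwich} to $A$ inside $U'$ to get an open up-set $U$ with $A \seq U \seq U'$, and independently to $B$ inside $V'$ to get an open up-set $V$ with $B \seq V \seq V'$. Since $U' \cap V' = \emptyset$, the open up-sets $U$ and $V$ are automatically disjoint. The point is that normality already hands you disjointness, so the sandwich lemma only needs to upgrade each open neighbourhood to an open up-set; no closures, no down-sets, no second pass.

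Your route instead applies the sandwich lemma first and then has to manufacture disjointness after the fact, which forces you into the down-closure argument (and into invoking the nontrivial fact that $\downset K$ is closed for closed $K$ in a compact ordered space, which is true but not what \cref{l:up-set-closed} states---that lemma only covers principal down-sets $\downset x$). Your ``cleaner alternative'' at the end is still sequential rather than symmetric. The paper's order of operations---separate first, sandwich second---makes all the closure bookkeeping disappear.
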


\begin{proof}
	Using the fact that any compact ordered space is compact and Hausdorff and thus normal (see \cite[Theorem~17.10]{Willard1970}), we obtain that there exist two disjoint open neighbourhoods $U'$ and $V'$ of $A$ and $B$ respectively.
	By \cref{l:compord-sandwich}, there exists an open up-set $U$ such that $A \seq U \seq U'$; again by \cref{l:compord-sandwich}, there exists an open up-set $V$ such that $B \seq V \seq V'$.
	The sets $U$ and $V$ satisfy the desired properties.
\end{proof}

\begin{lemma} \label{l:separate-up}
	Let $x$ and $y$ be elements of a compact ordered space with no common upper bound.
	Then there exist two disjoint open up-sets that contain $x$ and $y$ respectively.
\end{lemma}
\begin{proof}
	By \cref{l:up-set-closed}, the sets $\upset x$ and $\upset y$ are closed.
	Then, apply \cref{l:separation-upsets}.
\end{proof}

\begin{notation} \label{n:AA}
	Given a compact ordered space $X$ and a map $\Phi \colon \Cleq(X, \R) \rightarrow \R$,	we set
	\[
		\DD(\Phi)\df\bigcap_{\psi \in \Cleq(X, \R)}\{y \in X \mid \psi(y) \leq \Phi(\psi)\}.
	\]
\end{notation}

Our goal is, given a map $\Phi \colon \Cleq(X, \R) \rightarrow \R$ that preserves $ + , \lor, \land$ and every real number, to find $x \in X$ such that $\Phi$ is the evaluation at $x$, i.e., for every $\psi \in \Cleq(X, \R)$, we have $\Phi(\psi) = \psi(x)$.
To do so, we show that $\DD(\Phi)$ has a maximum element $x$, and we then show that $\Phi(\psi) = \psi(x)$ for every $\psi \in \Cleq(X, \R)$.

\begin{lemma}\label{l:DD-closed}
	Let $X$ be a compact ordered space, and let $\Phi \colon \Cleq(X, \R) \rightarrow \R$ be a map that preserves $+$, $\lor$, $\land$ and every real number.
	Then, the set $\DD(\Phi)$ is a closed down-set of $X$.
\end{lemma}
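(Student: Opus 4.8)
The plan is to prove the two assertions separately, observing that each follows from one of the two defining features of the functions in $\Cleq(X,\R)$ — continuity gives closedness, and order-preservation gives the down-set property. Notably, the hypothesis that $\Phi$ preserves $+$, $\lor$, $\land$ and the reals plays no role for \emph{this} particular lemma (it is the map $\Phi$ that is constrained, but here $\Phi(\psi)$ enters only as a fixed real threshold); it will be needed only in the subsequent arguments that locate a maximum of $\DD(\Phi)$ and identify $\Phi$ with an evaluation.

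First I would address closedness. For each fixed $\psi \in \Cleq(X,\R)$, the value $\Phi(\psi)$ is a fixed real number, so
\[
	\{y \in X \mid \psi(y) \leq \Phi(\psi)\} = \psi^{-1}\bigl[(-\infty, \Phi(\psi)]\bigr].
\]
Since $\psi$ is continuous and $(-\infty, \Phi(\psi)]$ is closed in $\R$, this preimage is a closed subset of $X$. As $\DD(\Phi)$ is by \cref{n:AA} the intersection of these closed sets over all $\psi \in \Cleq(X,\R)$, and an arbitrary intersection of closed sets is closed, we conclude that $\DD(\Phi)$ is closed.

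Next I would verify that $\DD(\Phi)$ is a down-set. Let $y \in \DD(\Phi)$ and let $z \in X$ satisfy $z \leq y$; I must show $z \in \DD(\Phi)$, i.e.\ that $\psi(z) \leq \Phi(\psi)$ for every $\psi \in \Cleq(X,\R)$. Fix such a $\psi$. Since $\psi$ is order-preserving and $z \leq y$, we have $\psi(z) \leq \psi(y)$; and since $y \in \DD(\Phi)$, the defining condition gives $\psi(y) \leq \Phi(\psi)$. Chaining these, $\psi(z) \leq \Phi(\psi)$. As $\psi$ was arbitrary, $z$ lies in every set appearing in the intersection defining $\DD(\Phi)$, hence $z \in \DD(\Phi)$.

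There is no real obstacle here: both parts are immediate once one unwinds the definition in \cref{n:AA} and recalls that every element of $\Cleq(X,\R)$ is simultaneously continuous (for closedness) and order-preserving (for the down-set property). The only point worth stating explicitly is that the empty-intersection convention is harmless, since $\Cleq(X,\R)$ is non-empty (it contains all constant functions), so no degenerate case arises.
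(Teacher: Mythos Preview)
Your proof is correct and follows essentially the same approach as the paper's: closedness via each set being a preimage of a closed ray under a continuous map and then intersecting, and the down-set property via monotonicity of each $\psi$ chained with the defining inequality. Your side remark that the algebraic hypotheses on $\Phi$ are not actually used in this lemma is accurate and worth noting.
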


\begin{proof}
	For every $\psi \in \Cleq(X, \R)$, the set $\{y \in X \mid \psi(y) \leq \Phi(y)\}$ is closed.
	Thus, $\DD(\Phi)$ is an intersection of closed subsets of $X$, and hence $\DD(\Phi)$ is closed.
	To prove that $\DD(\Phi)$ is a down-set of $X$, let $y \in \DD(\Phi)$ and let $x \leq y$.
	Then, for every $\psi \in \Cleq(X, \R)$, we have $\psi(x) \leq \psi(y)$ by monotonicity of $\psi$, and we have $\psi(y) \leq \Phi(\psi)$ by definition of $\DD(\Phi)$; hence, $\psi(x) \leq \Phi(\psi)$, and thus $x \in \DD(\Phi)$.
\end{proof} 

\begin{lemma} \label{l:A=phi}
	Let $X$ be a compact ordered space and let $\Phi \colon \Cleq(X, \R) \rightarrow \R$ be a map that preserves $ + , \lor, \land$ and every real number.
	Then, for every $t \in \R$ we have
	\[
		\DD(\Phi) = \bigcap_{\psi \in \Cleq(X, \R) : \Phi(\psi) = t} \{y \in X \mid \psi(y) \leq t \}.
	\]
\end{lemma}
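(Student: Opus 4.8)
The plan is to prove the two inclusions separately. Write $D_t$ for the right-hand side, i.e.\ $D_t \df \bigcap_{\psi \in \Cleq(X,\R):\, \Phi(\psi)=t} \{y \in X \mid \psi(y) \leq t\}$. For the inclusion $\DD(\Phi) \seq D_t$, I would observe that whenever $\Phi(\psi) = t$ the set $\{y \in X \mid \psi(y) \leq t\}$ coincides with $\{y \in X \mid \psi(y) \leq \Phi(\psi)\}$, which is one of the sets whose intersection defines $\DD(\Phi)$ in \cref{n:AA}. Since $D_t$ is the intersection of only those defining sets indexed by the $\psi$ with $\Phi(\psi) = t$, it is an intersection over a subfamily of the one defining $\DD(\Phi)$; hence $\DD(\Phi) \seq D_t$.

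For the reverse inclusion $D_t \seq \DD(\Phi)$, I would fix $y \in D_t$ and show that $\psi(y) \leq \Phi(\psi)$ for every $\psi \in \Cleq(X,\R)$, which by definition of $\DD(\Phi)$ yields $y \in \DD(\Phi)$. The key device is to shift $\psi$ by a constant so as to land in the indexing family of $D_t$. Set $c \df t - \Phi(\psi) \in \R$ and consider $\psi' \df \psi + c$, where $c$ also denotes the constant function with value $c$. Adding a constant preserves continuity and monotonicity, so $\psi' \in \Cleq(X,\R)$. Since $\Phi$ preserves $+$ and every real number, $\Phi(\psi') = \Phi(\psi) + c = t$, so $\psi'$ is one of the functions indexing $D_t$. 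As $y \in D_t$, this gives $\psi'(y) \leq t$, that is $\psi(y) + c \leq t$, that is $\psi(y) \leq t - c = \Phi(\psi)$, as desired.

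Since $\psi$ was arbitrary, $y \in \DD(\Phi)$, which establishes $D_t \seq \DD(\Phi)$ and hence the claimed equality. The only point requiring care---the ``hard part'', such as it is---is the verification that $\Phi(\psi + c) = \Phi(\psi) + c$, which combines the two hypotheses that $\Phi$ preserves the binary operation $+$ and fixes each constant function; everything else is routine.
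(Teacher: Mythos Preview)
Your proof is correct and follows essentially the same approach as the paper's: both directions are argued exactly as you describe, with the key step for $D_t \seq \DD(\Phi)$ being the shift $\psi' \df \psi + (t - \Phi(\psi))$ so that $\Phi(\psi') = t$. The paper's write-up differs only cosmetically (it writes the shift as $\psi(x) - \Phi(\psi) + t$ rather than introducing the name $c$).
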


\begin{proof} 
	\begin{enumerate}[wide]
		
		\item [{[$\seq$]}]
		Let $y \in \DD(\Phi)$.
		For every $\psi \in \Cleq(X, \R)$ such that $\Phi(\psi) = t$, we have $\psi(y) \leq \Phi(\psi) = t$, where the inequality follows from the fact that $y \in \DD(\Phi)$.
		
		\item [{[$\supseteq$]}]
		Let $x \in \bigcap_{\psi \in \Cleq(X, \R) : \Phi(\psi) = t} \{y \in X \mid \psi(y) \leq t \}$.
		Let $\psi \in \Cleq(X, \R)$.
		We shall prove $\psi(x) \leq \Phi(\psi)$.
		Define the function
		\begin{align*}
			\psi'	\colon	X	& \longrightarrow	\R\\
								x	& \longmapsto		\psi(x) - \Phi(\psi) + t.
		\end{align*}
		The function $\psi'$ is order-preserving and continuous because $\psi$ is such.
		Then 
		\begin{align*}
			\Phi(\psi')	& = \Phi(\psi - \Phi(\psi) + t)	&& \text{(by def.\ of $\psi'$)}\\
							& = \Phi(\psi) - \Phi(\psi) + t	&& \text{(by hyp.\ on $\Phi$)}\\
							& = t.									&&
		\end{align*}
		By hypothesis on $x$, it follows that $\psi'(x) \leq t$, i.e.\ $\psi(x) - \Phi(\psi) + t \leq t$, i.e.\ $\psi(x) \leq \Phi(\psi)$, as was to be proved. \qedhere
		
	\end{enumerate}
\end{proof}

The following is inspired by \cite[Proposition 6.12]{HN2018}.

\begin{lemma} \label{l:phi-is-sup}
	Let $X$ be a compact ordered space and let $\Phi \colon \Cleq(X, \R) \rightarrow \R$ be a map that preserves $ + , \lor, \land$ and every real number.
	Then, for all $\psi \in \Cleq(X, \R)$, we have
	\begin{equation} \label{e:sup-DD}
		\Phi(\psi) = \max_{y \in \DD(\Phi)} \psi(y).
	\end{equation}
\end{lemma}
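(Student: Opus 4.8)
The plan is to prove both inequalities in $\Phi(\psi)=\max_{y\in\DD(\Phi)}\psi(y)$, the harder one by contradiction using the ordered Urysohn lemma (\cref{t:Urysohn}). First I record two elementary facts about $\Phi$ that will be used repeatedly. Since $\Phi$ preserves $\land$, it is monotone: from $a\le b$ we get $a=a\land b$, hence $\Phi(a)=\Phi(a)\land\Phi(b)\le\Phi(b)$. Moreover, for every $g\in\Cleq(X,\R)$ the set $\{y:g(y)\le\Phi(g)\}$ is nonempty, for otherwise compactness of $X$ gives $g\ge\Phi(g)+\eps$ for some $\eps>0$, whence $\Phi(g)\ge\Phi(g)+\eps$, a contradiction; symmetrically $\{y:g(y)\ge\Phi(g)\}\neq\emptyset$. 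Using the first of these, together with the fact that the family $\{\,\{y:\phi(y)\le\Phi(\phi)\}\,\}_{\phi}$ is closed under finite intersections (intersecting the sets for $\phi_1,\dots,\phi_n$ yields the set for $\bigvee_i(\phi_i-\Phi(\phi_i))$, which has $\Phi$-value $0$), compactness of $X$ shows $\DD(\Phi)\neq\emptyset$; being closed (\cref{l:DD-closed}) it is compact, so $\max_{y\in\DD(\Phi)}\psi(y)$ exists. The inequality $\Phi(\psi)\ge\max_{\DD(\Phi)}\psi$ is then immediate: every $y\in\DD(\Phi)$ satisfies $\psi(y)\le\Phi(\psi)$ by definition of $\DD(\Phi)$.

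It remains to prove $\Phi(\psi)\le\max_{\DD(\Phi)}\psi$. Write $t\df\Phi(\psi)$ and suppose for contradiction that $s\df\max_{\DD(\Phi)}\psi<t$; fix $s'\in(s,t)$. Then the closed up-set $\{y:\psi(y)\ge s'\}$ is disjoint from the closed down-set $\DD(\Phi)$, so \cref{t:Urysohn} provides an order-preserving continuous $f\colon X\to[0,1]$ with $f\equiv 0$ on $\DD(\Phi)$ and $f\equiv 1$ on $\{y:\psi(y)\ge s'\}$. Let $R\df\max_X\psi$ and pick $N\in\Np$ with $N\ge R-s'$. Then $\psi\le s'+Nf$ pointwise (on $\{\psi\ge s'\}$ the right-hand side is $s'+N\ge R$, elsewhere it is $\ge s'>\psi$), so applying $\Phi$ and using additivity ($\Phi(Nf)=N\Phi(f)$) gives $t\le s'+N\Phi(f)$, that is,
\[
	\Phi(f)\ \ge\ \rho\df\frac{t-s'}{N}\ >\ 0 .
\]

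The contradiction comes from bounding $\Phi(f)$ from above by a quantity smaller than $\rho$, and this is the main obstacle. Fix $\delta\in(0,\rho)$. The compact set $\{y:f(y)\ge\delta\}$ is disjoint from $\DD(\Phi)$, so by the description of $\DD(\Phi)$ in \cref{l:A=phi} (with the value $t$) it is covered by the open sets $\{\phi>t\}$ ranging over $\phi$ with $\Phi(\phi)=t$; extracting a finite subcover and taking the join $\phi$ of the finitely many functions (so $\Phi(\phi)=t$) yields $\{f\ge\delta\}\subseteq\{\phi>t\}$, and by compactness $\phi\ge t+\eps$ on $\{f\ge\delta\}$ for some $\eps>0$. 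Now set $h\df\bigl(M((\phi-t)\lor 0)\bigr)\land 1$ with $M\in\Np$ chosen so that $M\eps\ge 1$. Then $h$ is order-preserving continuous, $0\le h\le1$, and $h\equiv 1$ on $\{f\ge\delta\}$, so that $f\le\delta+(1-\delta)h$ pointwise. The key computation is that $\Phi(h)=0$: since $\Phi$ preserves $\lor$ and constants, $\Phi((\phi-t)\lor 0)=\Phi(\phi-t)\lor\Phi(0)=(t-t)\lor 0=0$, hence $\Phi(h)=(M\cdot 0)\land 1=0$. Finally, from $0\le(1-\delta)h\le h$ and monotonicity we get $\Phi((1-\delta)h)=0$, so $\Phi(f)\le\Phi(\delta+(1-\delta)h)=\delta<\rho$, contradicting $\Phi(f)\ge\rho$. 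Therefore $s=t$, the maximum is attained, and $\Phi(\psi)=\max_{y\in\DD(\Phi)}\psi(y)$. I expect the delicate point to be the construction of $h$: one must turn the merely set-theoretic separation $\{f\ge\delta\}\subseteq\{\phi>t\}$ into a genuine function with $\Phi$-value $0$, which forces the use of the strict margin $\phi\ge t+\eps$ (from compactness) and the truncation $(\,\cdot\,)\land 1$ to keep $h$ bounded while preserving $\Phi(h)=0$.
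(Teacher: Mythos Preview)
Your proof is correct, but it takes a genuinely different route from the paper's. The paper proves the inequality $\Phi(\psi)\le\sup_{\DD(\Phi)}\psi$ directly (not by contradiction) and without invoking the ordered Urysohn lemma. Given any upper bound $t$ of $\psi[\DD(\Phi)]$ and any $\eps>0$, the paper covers $X$ by the open set $U=\{\psi<t+\eps\}\supseteq\DD(\Phi)$ together with sets $s(\psi')=\{\psi'>M\}$ (where $M=\max_X\psi$) for suitable $\psi'$ with $\Phi(\psi')\le t$, these $\psi'$ being obtained from \cref{l:A=phi} by affine rescaling; after extracting a finite subcover $U,s(\psi_1),\dots,s(\psi_n)$ one has $\psi\le(t+\eps)\lor\psi_1\lor\dots\lor\psi_n$ pointwise, and applying $\Phi$ gives $\Phi(\psi)\le t+\eps$ in one stroke. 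Your argument instead introduces an auxiliary Urysohn function $f$ separating $\DD(\Phi)$ from $\{\psi\ge s'\}$, and then a second auxiliary function $h$ to squeeze $\Phi(f)$ between $\rho$ and $\delta<\rho$. Both proofs rest on \cref{l:A=phi} and a compactness/finite-subcover step, but the paper's version is shorter and avoids both Urysohn and the detour through $(1-\delta)h$ (incidentally, your last step simplifies: $f\le\delta+h$ already holds, giving $\Phi(f)\le\delta$ immediately). On the other hand, your approach has the minor virtue of making the nonemptiness of $\DD(\Phi)$ explicit via the finite-intersection-property argument, which the paper leaves implicit in its $\sup$ computation.
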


\begin{proof}
	Let $\psi \in \Cleq(X, \R)$.
	We shall prove that $\Phi(\psi)$ is the maximum of the image $\psi[\DD(\Phi)]$ of $\DD(\Phi)$ under $\psi$.
	By \cref{l:DD-closed}, the set $\DD(\Phi)$ is a closed subset of the compact space $X$.
	Hence, $\DD(\Phi)$ is compact.
	Thus, $\psi[\DD(\Phi)]$ is compact, as well.
	Therefore, it is enough to prove
	\[
	\Phi(\psi) = \sup_{y \in \DD(\Phi)} \psi(y).
	\]
	\begin{enumerate}[wide]
		
		\item [{[$\geq$]}]
		By definition of $\DD(\Phi)$, for every $y \in \DD(\Phi)$, we have $\psi(y) \leq \Phi(\psi)$.
		
		\item [{[$\leq$]}]
		Given $z \in \R$ and a subset $Z$ of $\R$, the condition $z \leq \sup Z$ holds if, and only if, for every real upper bound $t$ of $Z$ and every $\eps > 0$ we have $z \leq t + \eps$.
		We apply this observation with $z \df \Phi(\psi)$ and $Z \df \{\psi(y) \mid y \in \DD(\Phi)\}$.
		So, we let $t \in \R$ be such that, for every $y \in \DD(\Phi)$, we have $\psi(y) \leq t$, and we let $\eps > 0$.
		We shall prove $\Phi(\psi) \leq t + \eps$.
		Set
		\[
			U \df \left\{y \in X \mid \psi(y) < t + \eps \right\}.
		\]
		Clearly, $U$ is open and $\DD(\Phi) \seq U$.
		Since $X$ is compact, the image of $\psi$ is a compact subset of $\R$, and so it admits an upper bound $M \in \R$.
		For every $\psi' \in \Cleq(X, \R)$ we set
		\[
			s(\psi')\df \{x \in X \mid \psi'(x) > M\}.
		\]
		\begin{claim*}
			We have 
			\begin{equation*}
				X = U \cup \bigcup_{\psi' \in \Cleq(X, \R) : \Phi(\psi') \leq t} s(\psi').
			\end{equation*}
		\end{claim*}
		\begin{claimproof}
			For every $x \in X \setminus \DD(\Phi)$, there exists, by Lemma \ref{l:A=phi}, an element $\widetilde{\psi} \in \Cleq(X, \R)$ such that $\Phi(\widetilde{\psi}) = t$ and $\widetilde{\psi}(x) > t$.
			Let $n \in \N$ and $k \in \Z$ be such that $n(\Phi(\widetilde{\psi})) + k \leq t$ and $n(\widetilde{\psi}(x)) + k> M$ (such $n$ and $k$ do exist because $\widetilde{\psi}(x) > t$).
			Set $\psi' \df n\widetilde{\psi} + k$.
			Then, $\psi' \in \Cleq(X, \R)$, $\Phi(\psi') \leq t$ and $\psi'(x) > M$.
		\end{claimproof}
		Since $X$ is compact, there exist $\psi_1, \dots, \psi_n \in \Cleq(X, \R)$ with $\Phi(\psi_i) \leq t$ for all $i \in \{1, \dots, n\}$ such that
		\[
			X = U \cup s(\psi_1) \cup \dots \cup s(\psi_n).
		\]
		Therefore, for all $x \in X$, either $x \in U$, i.e.\ $\psi(x) < t + \eps$, or there exists $j \in \{1, \dots, n\}$ such that $x \in s(\psi_j)$, i.e.,\ $\psi_j(x) > M$, and thus $\psi_j(x) \geq \psi(x)$.
		Hence, we have
		\[
			\psi \leq (t + \eps) \lor \psi_1\lor \dots \lor \psi_n.
		\]
		Therefore, we have
		\begin{align*}
			\Phi(\psi)	& \leq (t + \eps) \lor \Phi(\psi_1) \lor \dots \lor \Phi(\psi_n)\\
							& = (t + \eps) \lor t \lor \dots \lor t\\
							& = t + \eps. \qedhere
		\end{align*}
	\end{enumerate}
\end{proof} 

\begin{lemma} \label{l:directed}
	Let $X$ be a compact ordered space, and let $\Phi \colon \Cleq(X, \R) \rightarrow \R$ be a map that preserves $+ , \lor, \land$ and every real number.
	Then, $\DD(\Phi)$ is directed.
\end{lemma}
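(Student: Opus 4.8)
The plan is to prove that $\DD(\Phi)$ is directed by showing it is non-empty and that any two of its elements admit a common upper bound lying in $\DD(\Phi)$. Non-emptiness is immediate from \cref{l:phi-is-sup}, since the maximum in \cref{e:sup-DD} is taken over a non-empty set. For the main part, fix $x_1, x_2 \in \DD(\Phi)$ and set $A \df \upset x_1 \cap \upset x_2$, the set of common upper bounds of $x_1$ and $x_2$ in $X$. By \cref{l:up-set-closed}, $\upset x_1$ and $\upset x_2$ are closed, so $A$ is a closed, hence compact, up-set. Since $z \in \DD(\Phi)$ is a common upper bound of $x_1$ and $x_2$ precisely when $z \in A \cap \DD(\Phi)$, it suffices to prove $A \cap \DD(\Phi) \neq \emptyset$.

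First I would establish a version of \cref{l:phi-is-sup} ``from below''. Reversing the order of $X$ yields again a compact ordered space $X^{\mathrm{op}}$, and $\Phi^{\sharp}(h) \df -\Phi(-h)$ defines a map $\Cleq(X^{\mathrm{op}}, \R) \to \R$ that preserves $+$, $\lor$, $\land$ and every real number (here one uses that $h \mapsto -h$ is a bijection $\Cleq(X^{\mathrm{op}}, \R) \to \Cleq(X, \R)$ interchanging $\lor$ with $\land$). Applying \cref{l:DD-closed,l:phi-is-sup} to $\Phi^{\sharp}$ and translating back through $h = -\psi$, I obtain that
\[
	\EE(\Phi) \df \bigcap_{\psi \in \Cleq(X, \R)} \{y \in X \mid \psi(y) \geq \Phi(\psi)\}
\]
is a non-empty closed up-set and that $\Phi(\psi) = \min_{y \in \EE(\Phi)} \psi(y)$ for every $\psi \in \Cleq(X, \R)$. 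Now for $y' \in \EE(\Phi)$ and any $\psi$, combining the two representations gives $\psi(x_1) \leq \max_{\DD(\Phi)} \psi = \Phi(\psi) = \min_{\EE(\Phi)} \psi \leq \psi(y')$; since this holds for all $\psi$, the order-reflection property of order-preserving continuous functions (\cref{l:cor of urysohn}) yields $x_1 \leq y'$, and likewise $x_2 \leq y'$. Hence $\EE(\Phi) \seq A$, and in particular $A$ is non-empty.

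Finally I would argue by contradiction: suppose $A \cap \DD(\Phi) = \emptyset$. Writing $\DD(\Phi) = \bigcap_{\psi} \{y \mid \psi(y) \leq \Phi(\psi)\}$, compactness of $A$ and the finite intersection property produce $\psi_1, \dots, \psi_n \in \Cleq(X, \R)$ such that every $y \in A$ satisfies $\psi_i(y) > \Phi(\psi_i)$ for some $i$. Replacing each $\psi_i$ by $\psi_i - \Phi(\psi_i)$ (which keeps it in $\Cleq(X, \R)$ and forces $\Phi(\psi_i) = 0$), the function $\chi \df \psi_1 \lor \dots \lor \psi_n$ satisfies $\chi(y) > 0$ for all $y \in A$, while $\Phi(\chi) = \Phi(\psi_1) \lor \dots \lor \Phi(\psi_n) = 0$. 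But $\EE(\Phi)$ is a non-empty compact subset of $A$, so $\chi$ attains a strictly positive minimum on it, whence $\Phi(\chi) = \min_{\EE(\Phi)} \chi > 0$, a contradiction. Therefore $A \cap \DD(\Phi) \neq \emptyset$, which furnishes the required common upper bound in $\DD(\Phi)$.

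The main obstacle is obtaining a lower bound for $\Phi$: the representation $\Phi(\psi) = \max_{\DD(\Phi)} \psi$ alone only bounds $\Phi$ from above and is powerless to contradict $\Phi(\chi) = 0$. The crux is therefore the dual ``$\min$'' representation through $\EE(\Phi)$, together with the observation that $\EE(\Phi)$ lies inside the set $A$ of common upper bounds; extracting both facts from the already-established lemmas via the order-reversal trick is what makes the argument go through.
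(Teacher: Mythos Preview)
Your proof is correct, but it follows a genuinely different route from the paper's. The paper argues entirely inside $\DD(\Phi)$: assuming $x,y\in\DD(\Phi)$ have no common upper bound in $\DD(\Phi)$, it uses ordered Urysohn separation (\cref{l:separate-up}, \cref{t:Urysohn}) on the compact ordered space $\DD(\Phi)$ to build $\psi_1,\psi_2\colon\DD(\Phi)\to[0,1]$ with $\psi_1\land\psi_2=0$ on $\DD(\Phi)$ and $\Phi(\psi_1)=\Phi(\psi_2)=1$, then extends them to $X$ via \cref{l:Nac-reg-inj} and derives $1=\Phi(\psi_1)\land\Phi(\psi_2)=\Phi(\psi_1\land\psi_2)=0$ from the max representation alone. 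Your argument instead introduces the dual set $\EE(\Phi)$ via the order-reversal $\Phi^{\sharp}(h)=-\Phi(-h)$, obtains a \emph{min} representation $\Phi(\psi)=\min_{\EE(\Phi)}\psi$ for free by reapplying \cref{l:DD-closed} and \cref{l:phi-is-sup}, and then runs a compactness/finite-cover argument using $\lor$-preservation. What you gain is that you avoid the Urysohn and extension lemmas entirely, and the inclusion $\EE(\Phi)\subseteq\upset x$ for every $x\in\DD(\Phi)$ is a pleasant structural fact (it already foreshadows \cref{l:DD-max} and \cref{t:full}, since the eventual evaluation point will lie in $\DD(\Phi)\cap\EE(\Phi)$). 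What the paper's route gains is economy of ingredients: it needs only the max representation and $\land$-preservation, without introducing a second auxiliary set.
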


\begin{proof}
	The set $\DD(\Phi)$ is non-empty by \cref{l:phi-is-sup}.
	
	Let $x$ and $y$ be elements of $\DD(\Phi)$.
	We claim that $x$ and $y$ admit a common upper bound which belongs to $\DD(\Phi)$.
	Suppose, by way of contradiction, that this is not the case.
	By \cref{l:DD-closed}, the set $\DD(\Phi)$ is a closed subspace of $X$; hence, $\DD(\Phi)$ is a compact ordered space.
	By \cref{l:separate-up}, there exist two disjoint open up-sets $U$ and $V$ of $\DD(\Phi)$ that contain $x$ and $y$ respectively.
	By \cref{l:up-set-closed}, the sets $\upset x$ and $\upset y$ are closed.
	Hence, by \cref{t:Urysohn}, there exist an order-preserving continuous function 
	\[
		\psi_1 \colon \DD(\Phi) \to [0,1]
	\]
	such that for all $z \in \DD(\Phi) \setminus V$ we have $\psi_1(z) = 0$, and for all $z \in \upset y$ we have $\psi_1(z) = 1$, and an order-preserving continuous function
	\[
		\psi_2 \colon \DD(\Phi) \to [0,1]
	\]
	such that for all $z \in \DD(\Phi) \setminus U$ we have $\psi_2(z) = 0$, and for all $z \in \upset x$ we have $\psi_2(z) = 1$.
	Using again the fact that $\DD(\Phi)$ is closed (\cref{l:DD-closed}), the functions $\psi_1$ and $\psi_2$ can be extended to order-preserving continuous functions on $X$ by \cref{l:Nac-reg-inj}.
	Then, by \cref{l:phi-is-sup}, we have $\Phi(\psi_1) = \max_{z \in \DD(\Phi)} \psi_1(z) = 1$ and $\Phi(\psi_2) = \max_{z \in \DD(\Phi)} \psi_2(z) = 1$.
	Since $\Phi$ preserves $\land$, we have
	\begin{equation} \label{e:phi-sum-1}
		\Phi(\psi_1 \land \psi_2) = \Phi(\psi_1) \land \Phi(\psi_2) = 1 \land 1 = 1.
	\end{equation}
	By \cref{l:phi-is-sup}, we have also 
	\begin{equation} \label{e:phi-sum-2}
		\Phi(\psi_1 \land \psi_2) = \max_{z \in \DD(\Phi)} \psi_1(z) \land \psi_2 (z).
	\end{equation}
	Since $U$ and $V$ are disjoint subsets of $\DD(\Phi)$, for every $z \in \DD(\Phi)$ we have either $z \in \DD(\Phi) \setminus V$ (and thus $\psi_1(z) = 0$), or $z \in \DD(\Phi) \setminus U$ (and thus $\psi_2(z) = 0$).
	In any case, for $z \in \DD(\Phi)$, we have $\psi_1(z) \land \psi_2 (z) = 0$.
	It follows that the right-hand side of \cref{e:phi-sum-2} equals $0$, and therefore also the left-hand side: $\Phi(\psi_1 \land \psi_2) = 0$.
	This contradicts the equality $\Phi(\psi_1 \land \psi_2) = 1$ obtained in \cref{e:phi-sum-1}.
	This settles our claim that $x$ and $y$ admit a common upper bound which belongs to $\DD(\Phi)$, and this concludes the proof.
\end{proof}

\begin{lemma} \label{l:DD-max}
	Let $X$ be a compact ordered space, and let $\Phi\colon \Cleq(X, \R) \rightarrow \R$ be a map that preserves $\lor, \land, + $ and every element in $\R$.
	Then $\DD(\Phi)$ admits a maximum element.
\end{lemma}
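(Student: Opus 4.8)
The goal is to show that the closed directed set $\DD(\Phi)$ has a maximum element. The plan is to combine the two structural facts just established about $\DD(\Phi)$: it is a closed down-set (\cref{l:DD-closed}) and it is directed (\cref{l:directed}). The key observation is that a directed subset of a compact ordered space always has a supremum, and that this supremum is the topological limit of the set viewed as a net. This is exactly the result cited in the proof of the Claim inside \cref{t:effective}, namely \cite[Proposition~VI.1.3]{ContLattDom}: every directed subset of a compact ordered space has a supremum which coincides with the topological limit of the directed set regarded as a net, and hence lies in the topological closure of the set.

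Here is how I would carry it out. First, since $\DD(\Phi)$ is directed by \cref{l:directed} (and non-empty, as noted there), I can regard $\DD(\Phi)$ itself as a net directed by the order $\leq$ of $X$. By the cited result on compact ordered spaces, this net has a supremum $s \in X$, and $s$ belongs to the topological closure of $\DD(\Phi)$ in $X$. Second, since $\DD(\Phi)$ is closed by \cref{l:DD-closed}, its topological closure is $\DD(\Phi)$ itself, so $s \in \DD(\Phi)$. Finally, $s$ is an upper bound of $\DD(\Phi)$ (being its supremum in $X$) that lies in $\DD(\Phi)$, hence $s$ is the maximum element of $\DD(\Phi)$.

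The only point requiring a little care is the distinction between the supremum computed in $X$ and the maximum within $\DD(\Phi)$: a priori the supremum of a subset need not belong to the subset, which is precisely why closedness is needed. The interplay is that the supremum of a directed set is realized as a limit of the net, so closedness of $\DD(\Phi)$ forces the limit — and therefore the supremum — back into $\DD(\Phi)$, upgrading it to a genuine maximum. I do not anticipate a serious obstacle here, since the heavy lifting (directedness, closedness, and the existence of suprema of directed sets in compact ordered spaces) is already available; the present lemma is essentially the assembly of these three ingredients. One should just make sure to invoke that an element of a poset which is both an upper bound of a set and a member of that set is its maximum, which is immediate.
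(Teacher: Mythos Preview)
Your proposal is correct and follows essentially the same approach as the paper: invoke \cite[Proposition~VI.1.3]{ContLattDom} to obtain that the directed set $\DD(\Phi)$ has a supremum equal to the limit of the associated net, then use closedness of $\DD(\Phi)$ (\cref{l:DD-closed}) to conclude that this supremum lies in $\DD(\Phi)$ and is therefore its maximum. The paper's proof is just a terser version of what you wrote.
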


\begin{proof}
	Every directed set in a compact ordered space has a supremum, which coincides with the topological limit of the set regarded as a net \cite[Proposition~VI.1.3]{ContLattDom}.
	Since $\DD(\Phi)$ is directed (\cref{l:directed}), the set $\DD(\Phi)$ admits a supremum $x$ which coincides with the topological limit of the set regarded as a net.
	Since $\DD(\Phi)$ is closed (\cref{l:DD-closed}), the element $x$ belongs to $\DD(\Phi)$.
\end{proof}

\begin{theorem}\label{t:full}
	Let $X$ be a compact ordered space, and let $\Phi\colon \Cleq(X, \R) \rightarrow \R$ be a map that preserves $\lor, \land, +$ and every real number.
	Then, there exists a unique $x_0 \in X$ such that $\Phi$ is the evaluation at $x_0$, i.e.\ such that, for every $\psi \in \Cleq(X, \R)$, we have $\Phi(\psi) = \psi(x_0)$.
\end{theorem}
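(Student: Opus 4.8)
The plan is to assemble the already-established lemmas into a short proof. Given a map $\Phi \colon \Cleq(X, \R) \to \R$ preserving $\lor$, $\land$, $+$ and every real number, the existence of the desired point $x_0$ will follow almost immediately from the machinery built up in \cref{l:DD-closed,l:A=phi,l:phi-is-sup,l:directed,l:DD-max}. First I would invoke \cref{l:DD-max} to obtain a maximum element $x_0$ of the set $\DD(\Phi)$, which exists precisely because $\DD(\Phi)$ is a closed (\cref{l:DD-closed}) directed (\cref{l:directed}) subset of a compact ordered space. This $x_0$ is the candidate point.

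Next I would verify that $\Phi$ is the evaluation at $x_0$, i.e.\ that $\Phi(\psi) = \psi(x_0)$ for every $\psi \in \Cleq(X, \R)$. This is where \cref{l:phi-is-sup} does the work: it gives
\[
	\Phi(\psi) = \max_{y \in \DD(\Phi)} \psi(y).
\]
Since $x_0$ is the maximum of $\DD(\Phi)$ and every $\psi$ is order-preserving, $\psi$ attains its maximum over $\DD(\Phi)$ at $x_0$, so $\max_{y \in \DD(\Phi)} \psi(y) = \psi(x_0)$. Combining the two equalities yields $\Phi(\psi) = \psi(x_0)$, as required.

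For uniqueness, suppose $x_0$ and $x_1$ both represent $\Phi$ as an evaluation. Then $\psi(x_0) = \psi(x_1)$ for every order-preserving continuous $\psi \colon X \to \R$. If $x_0 \neq x_1$, then without loss of generality $x_0 \not\geq x_1$, and by \cref{l:cor of urysohn} there is an order-preserving continuous function $\psi \colon X \to [0,1]$ with $\psi(x_0) = 0$ and $\psi(x_1) = 1$, contradicting $\psi(x_0) = \psi(x_1)$. Hence $x_0 = x_1$.

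I do not expect a genuine obstacle here, since all the analytic content has been isolated in the preceding lemmas; the only point requiring a little care is the step $\max_{y \in \DD(\Phi)} \psi(y) = \psi(x_0)$, where I must use both that $x_0$ is an \emph{upper bound} of $\DD(\Phi)$ (so $\psi(y) \leq \psi(x_0)$ for all $y \in \DD(\Phi)$ by monotonicity) and that $x_0$ itself \emph{belongs} to $\DD(\Phi)$ (so the value $\psi(x_0)$ is actually attained within the set). Both facts are guaranteed by $x_0$ being the maximum element furnished by \cref{l:DD-max}.
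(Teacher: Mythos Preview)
Your proposal is correct and follows essentially the same approach as the paper's proof: invoke \cref{l:DD-max} to obtain the maximum $x_0$ of $\DD(\Phi)$, apply \cref{l:phi-is-sup} to get $\Phi(\psi) = \max_{y \in \DD(\Phi)} \psi(y) = \psi(x_0)$ using monotonicity of $\psi$, and deduce uniqueness from \cref{l:cor of urysohn}. The paper's version is more terse, but the argument is identical.
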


\begin{proof}
	Uniqueness follows from \cref{l:cor of urysohn}.	
	Let us prove existence.
	By \cref{l:DD-max}, $\DD(\Phi)$ admits a maximum element $x_0$.
	By \cref{l:phi-is-sup}, we have, for all $\psi \in \Cleq(X, \R)$,
	\[
		\Phi(\psi) = \max_{z \in \DD(\Phi)}\psi(z) = \psi(x_0),
	\]
	where the last equality holds because $x_0$ is the maximum of $\DD(\Phi)$ and $\psi$ is order-preserving.
\end{proof}

\begin{theorem}\label{t:morphism-is-evaluation}
	Let $X$ be a compact ordered space, and let $\Phi\colon \Cleq(X, \R)\rightarrow \R$ be a map that preserves $\lor$, $\land$, $+$, $0$, $1$, and $-1$.
	Then, there exists a unique $x_0 \in X$ such that $\Phi$ is the evaluation at $x_0$, i.e.\ such that, for every $\psi \in \Cleq(X, \R)$, we have $\Phi(\psi) = \psi(x_0)$.
\end{theorem}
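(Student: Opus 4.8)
The plan is to reduce everything to \cref{t:full}, whose only stronger hypothesis is that $\Phi$ preserve the constant function with value $r$ for \emph{every} real number $r$, rather than merely for $r \in \{0,1,-1\}$. For $r \in \R$, write $\underline{r}\colon X \to \R$ for the constant function with value $r$; this is order-preserving and continuous, hence an element of $\Cleq(X,\R)$, and in $\Cleq(X,\R)$ the constants $0$, $1$, $-1$ are interpreted as $\underline 0$, $\underline 1$, $\underline{-1}$. Thus it suffices to show that any $\Phi$ preserving $\lor$, $\land$, $+$, $0$, $1$ and $-1$ already satisfies $\Phi(\underline r) = r$ for all $r \in \R$; the conclusion, including the uniqueness of $x_0$, then follows immediately from \cref{t:full}, since $\Phi$ preserves $\lor$, $\land$, $+$ by hypothesis.

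First I would record that $\Phi$ is order-preserving: if $f \leq g$ in $\Cleq(X,\R)$ then $f \lor g = g$, so $\Phi(f) \lor \Phi(g) = \Phi(f \lor g) = \Phi(g)$, whence $\Phi(f) \leq \Phi(g)$. Next I would climb up from the three given constants. Since $\Phi$ preserves $+$ and sends $\underline 0 \mapsto 0$, $\underline 1 \mapsto 1$, $\underline{-1} \mapsto -1$, an easy induction using $\underline n = \underline 1 + \dots + \underline 1$ for $n \geq 0$ (and the analogous expression in terms of $\underline{-1}$ for $n < 0$) gives $\Phi(\underline n) = n$ for every $n \in \Z$. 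For a rational $p/q$ with $q \in \Np$ and $p \in \Z$, the identity $\underline p = \underline{p/q} + \dots + \underline{p/q}$ (with $q$ summands) yields $p = \Phi(\underline p) = q\,\Phi(\underline{p/q})$, so $\Phi(\underline{p/q}) = p/q$. Hence $\Phi$ fixes every rational constant.

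Finally, for an arbitrary $r \in \R$ I would combine monotonicity with the density of $\Q$ in $\R$: for all rationals $p < r < s$ we have $\underline p \leq \underline r \leq \underline s$ (the order on $\Cleq(X,\R)$ being pointwise), and applying the order-preserving $\Phi$ gives $p \leq \Phi(\underline r) \leq s$. Taking the supremum over rationals $p < r$ and the infimum over rationals $s > r$ forces $\Phi(\underline r) = r$. This verifies that $\Phi$ preserves every real number, and \cref{t:full} then supplies the required unique $x_0 \in X$. I do not expect a serious obstacle: the substantive work (the analysis of $\DD(\Phi)$, its directedness, and the existence of its maximum) is already contained in \cref{t:full}, and the present statement is essentially a bootstrapping argument extending preservation from $\{0,1,-1\}$ to all of $\R$. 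The only point that calls for a little care is the final squeeze step, which depends on $\Phi$ being monotone and on the order of $\Cleq(X,\R)$ being pointwise.
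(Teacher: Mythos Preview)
Your proposal is correct and follows essentially the same route as the paper: reduce to \cref{t:full} by showing that preservation of $0$, $1$, $-1$ and $+$ forces preservation of all integers, then all rationals via the identity $q\,\Phi(\underline{p/q}) = \Phi(\underline p) = p$, and finally all reals by monotonicity of $\Phi$ and order-density of $\Q$. The paper's proof is terser (it takes monotonicity for granted and compresses the integer step to ``immediate''), but the logical structure is identical.
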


\begin{proof} 
	By \cref{t:full}, it is enough to prove that $\Phi$ preserves every real number.
	It is immediate that $\Phi$ preserves every element of $\Z$.
	Let $\frac{p}{q}$ be any rational number.
	Then, we have $q\Phi(\frac{p}{q}) = \Phi(q\frac{p}{q}) = \Phi(p) =p$, which implies $\Phi(\frac{p}{q}) = \frac{p}{q}$.
	Thus, $\Phi$ preserves every rational number.
	By monotonicity of $\Phi$ and order-density of $\Q$, the function $\Phi$ preserves every real number.
\end{proof} 

We note that this fact specialises to compact Hausdorff spaces.
To this end, let $\Cont(X, Y)$ denote the set of continuous functions.

\begin{corollary}
	Let $X$ be a compact Hausdorff space, and let $\Phi\colon \Cont(X, \R)\rightarrow \R$ be a map that preserves $\lor$, $\land$, $+$, $0$, $1$, and $-1$.
	Then, there exists a unique $x_0 \in X$ such that $\Phi$ is the evaluation at $x_0$, i.e.\ such that, for every $\psi \in \Cont(X, \R)$, we have $\Phi(\psi) = \psi(x_0)$.
\end{corollary}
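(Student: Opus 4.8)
The plan is to reduce this statement to the ordered version already established in \cref{t:morphism-is-evaluation}, by regarding $X$ as a compact ordered space whose partial order is trivial. First I would equip $X$ with the discrete order, i.e.\ the partial order given by equality; as recalled in \cref{ex:comp-ord-spaces}, this makes $X$ into a compact ordered space. Indeed, by \cref{p:diag-closed} the diagonal $\{(x,x) \mid x \in X\}$ is a closed subspace of $X \times X$ precisely because $X$ is Hausdorff, so the discrete order is a genuine closed partial order and $(X, =)$ is a legitimate object of $\CompOrd$.

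The key observation is that, for the discrete order, every continuous function is automatically order-preserving: if $x \leq y$ then $x = y$, whence $\psi(x) = \psi(y)$ and in particular $\psi(x) \leq \psi(y)$ for every function $\psi \colon X \to \R$. Therefore the two function sets coincide as algebras in the signature $\{+, \lor, \land, 0, 1, -1\}$ with pointwise operations:
\[
	\Cleq(X, \R) = \Cont(X, \R).
\]

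Consequently, a map $\Phi \colon \Cont(X, \R) \to \R$ preserving $\lor$, $\land$, $+$, $0$, $1$, and $-1$ is exactly a map $\Phi \colon \Cleq(X, \R) \to \R$ preserving the same operations, so \cref{t:morphism-is-evaluation} applies verbatim and yields a unique $x_0 \in X$ with $\Phi(\psi) = \psi(x_0)$ for every $\psi \in \Cleq(X, \R) = \Cont(X, \R)$. There is no substantial obstacle in this argument; the only point that genuinely requires the hypotheses is the closedness of the discrete order, which is exactly where the Hausdorff assumption on $X$ enters, mirroring the role of the Urysohn-type separation results in the ordered setting.
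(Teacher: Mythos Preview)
Your proposal is correct and is exactly the intended argument: the paper simply states that this corollary ``specialises'' from \cref{t:morphism-is-evaluation} without writing out a proof, and your reduction via the discrete order (invoking \cref{ex:comp-ord-spaces} and \cref{p:diag-closed}) makes that specialisation explicit. There is nothing to add.
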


\begin{lemma} \label{l:initial-for-morphisms}
	A function $f \colon X \to Y$ between compact ordered spaces is order-preserving and continuous if, and only if, for every order-preserving continuous function $h \colon Y \to \R$, the composite $h \circ f$ is order-preserving and continuous.
\end{lemma}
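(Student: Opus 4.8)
The forward direction is immediate: if $f$ is order-preserving and continuous, then for any order-preserving continuous $h \colon Y \to \R$ the composite $h \circ f$ is a composition of morphisms of the appropriate kind, hence order-preserving and continuous. So the whole content lies in the converse, and the plan is to recover $f$ as a composite of $\CompOrd$-morphisms by exploiting the embedding of $Y$ into a power of $[0,1]$.

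Concretely, set $I \df \hom_{\CompOrd}(Y, [0,1])$ and consider the evaluation map $\ev \colon Y \to \prod_{I} [0,1]$. By \cref{l:embedding}, $\ev$ is continuous, order-preserving, injective and order-reflective. As in the proof of \cref{l:closed-sub-power}, its image is a closed subspace of a power of $[0,1]$ and hence a compact ordered space; moreover $\ev$ is a continuous bijection onto its image between compact Hausdorff spaces, thus a homeomorphism by \cref{p:comp-Haus-homeo}, and it reflects the order, so by \cref{i:iso} in \cref{p:properties-of-PC-morphisms} it is an isomorphism in $\CompOrd$ onto $\ev[Y]$. In particular $\ev^{-1} \colon \ev[Y] \to Y$ is order-preserving and continuous.

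Next I would analyse the composite $\ev \circ f \colon X \to \prod_I [0,1]$. For each $h \in I$, the $h$-th component of $\ev \circ f$ is exactly $h \circ f$; since every $h \in I$, regarded as a function $Y \to \R$, is order-preserving and continuous, the hypothesis yields that each $h \circ f$ is order-preserving and continuous. By the description of products in $\CompOrd$ (\cref{l:limits}) together with \cref{p:pres-limits}, which ensures that the forgetful functors $\CompOrd \to \Top$ and $\CompOrd \to \Preord$ preserve limits and hence that the product carries the product topology and product order, a map into $\prod_I [0,1]$ is order-preserving and continuous precisely when all its components are. Therefore $\ev \circ f$ is order-preserving and continuous, and since its image is contained in $\ev[Y]$, its corestriction $\ev \circ f \colon X \to \ev[Y]$ is order-preserving and continuous (the induced topology and order on $\ev[Y]$ are exactly the subspace ones). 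Finally $f = \ev^{-1} \circ (\ev \circ f)$ is a composite of two order-preserving continuous maps, hence order-preserving and continuous, as desired.

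The main obstacle is the componentwise criterion for maps into a product: one must be certain that being a $\CompOrd$-morphism into $\prod_I [0,1]$ is equivalent to being such in each coordinate, which is where \cref{l:limits,p:pres-limits} are essential. The only other delicate point is verifying that $\ev$ is a genuine isomorphism onto its image rather than merely an injective order-reflecting map, which is handled by combining order-reflectivity with \cref{p:comp-Haus-homeo} and \cref{p:properties-of-PC-morphisms}.
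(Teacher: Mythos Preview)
Your proof is correct and follows essentially the same approach as the paper, which simply says ``This follows from the fact that every compact ordered space embeds in a power of $[0,1]$ (see \cref{l:closed-sub-power}).'' You have spelled out precisely the argument the paper leaves implicit, including the componentwise criterion for maps into a product and the fact that $\ev$ is an isomorphism onto its image.
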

\begin{proof}
	This follows from the fact that every compact ordered space embeds in a power of $[0,1]$ (see \cref{l:closed-sub-power}).
\end{proof}

\begin{theorem} \label{t:abstract-full}
	The contravariant functor
	\[
		\Cleq(-, \R) \colon \CompOrd \to \ALG \{+, \lor, \land, 0, 1, -1\}
	\]
	is full.
\end{theorem}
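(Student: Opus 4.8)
The plan is to prove fullness by reconstructing, from an abstract homomorphism, a morphism of compact ordered spaces witnessed pointwise by \cref{t:morphism-is-evaluation}. So fix compact ordered spaces $X$ and $Y$ together with a homomorphism of $\{+, \lor, \land, 0, 1, -1\}$-algebras
\[
	\phi \colon \Cleq(Y, \R) \longrightarrow \Cleq(X, \R),
\]
and the goal is to produce an order-preserving continuous map $f \colon X \to Y$ with $\Cleq(f, \R) = \phi$, i.e.\ $\phi(\psi) = \psi \circ f$ for every $\psi \in \Cleq(Y, \R)$.

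First I would define $f$ one point at a time. For each $x \in X$, consider the map
\[
	\Phi_x \colon \Cleq(Y, \R) \longrightarrow \R, \qquad \psi \longmapsto \phi(\psi)(x).
\]
This is the composite of $\phi$ with evaluation at $x$; since the operations on $\Cleq(X, \R)$ are defined pointwise, evaluation at $x$ preserves $+$, $\lor$, $\land$, $0$, $1$, $-1$, and hence so does $\Phi_x$. Therefore \cref{t:morphism-is-evaluation} applies and yields a unique point, which I call $f(x) \in Y$, such that $\Phi_x$ is evaluation at $f(x)$; explicitly, $\phi(\psi)(x) = \psi(f(x))$ for every $\psi \in \Cleq(Y, \R)$. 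This defines a function $f \colon X \to Y$, and by construction, for every $\psi \in \Cleq(Y, \R)$ and every $x \in X$ we have $(\psi \circ f)(x) = \psi(f(x)) = \phi(\psi)(x)$, so that $\phi(\psi) = \psi \circ f$. Granting that $f$ is a morphism in $\CompOrd$, this is precisely the identity $\Cleq(f, \R) = \phi$.

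It then remains to check that $f$ is order-preserving and continuous, and here I would invoke \cref{l:initial-for-morphisms}: the map $f$ is order-preserving and continuous if and only if $h \circ f$ is order-preserving and continuous for every order-preserving continuous $h \colon Y \to \R$. But taking $\psi = h$ in the identity just established gives $h \circ f = \phi(h)$, which lies in $\Cleq(X, \R)$ and is therefore order-preserving and continuous. Hence $f$ is a $\CompOrd$-morphism, completing the argument.

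The step carrying all the weight is the pointwise reconstruction via \cref{t:morphism-is-evaluation}: that theorem (built on the Urysohn-type separation lemmas and the maximum-element analysis of $\DD(\Phi)$) is exactly what converts an algebraic datum into a genuine point of $Y$, and it is the only nontrivial input. Once it is in hand, both the equality $\Cleq(f,\R)=\phi$ and the regularity of $f$ are formal consequences — the former by construction and the latter through \cref{l:initial-for-morphisms} — so I expect no further obstacle beyond verifying that $\Phi_x$ indeed preserves all the required operations.
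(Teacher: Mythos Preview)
Your proposal is correct and follows essentially the same approach as the paper's proof: both define the map pointwise by composing with evaluations and invoking \cref{t:morphism-is-evaluation}, then establish that the resulting function is a $\CompOrd$-morphism via \cref{l:initial-for-morphisms}. The only differences are notational.
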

\begin{proof}
	Let $X$ and $Y$ be compact ordered spaces and let $s \colon \Cleq(Y, \R) \to \Cleq(X, \R)$ be a function that preserves $+$, $\lor$, $\land$, $0$, $1$ and $-1$.
	We define a function $\overline{s} \colon X \to Y$.
	Given an element $x \in X$, we define the function
	\begin{align*}
		\ev_x \colon	\Cleq(X, \R)	& \longrightarrow	\R\\
							f					& \longmapsto		f(x).
	\end{align*}
	The function $\ev_x$ preserves $+$, $\lor$, $\land$, $0$, $1$ and $-1$.
	Therefore, also the composite $\Cleq(Y, \R) \xrightarrow{s} \Cleq(X, \R) \xrightarrow{\ev_x}$ preserves $+$, $\lor$, $\land$, $0$, $1$ and $-1$.
	Hence, by \cref{t:morphism-is-evaluation}, given an element $x \in X$, there exists a unique element $y \in Y$ such that, for every $\psi \in \Cleq(Y, \R)$, we have $(\ev_x \circ s)(\psi) = \psi(y)$, i.e.\ $s(\psi)(x) = \psi(y)$.
	This defines the unique function $\overline{s}\colon X \to Y$ such that, for every $\psi \in \Cleq(X, \R)$, we have 
	\begin{equation} \label{e:ev}
		s(\psi) = \psi \circ \overline{s}.
	\end{equation}
	Let us prove that $\overline{s}$ is order-preserving and continuous.
	By \cref{l:initial-for-morphisms}, to prove that $\overline{s}$ is a morphism, it is enough to prove that, for every order-preserving continuous function $h \colon Y \to \R$, the composite $h \circ \overline{s}$ is order-preserving and continuous.
	Let $h \colon Y \to \R$ be an order-preserving continuous function.
	Then, for every $x \in X$, we have
	\[
		(h \circ \overline{s})(x)  =  h(\overline{s}(x))  \stackrel{\text{\cref{e:ev}}}{=}  s(h)(x).
	\]
	Hence, $h \circ \overline{s} = s(h)$, and this proves that $h \circ \overline{s}$ is order-preserving and continuous.
	Therefore, $\overline{s}$ is order-preserving and continuous.
	By \cref{e:ev}, the functor $\Cleq(-, \R) \colon \CompOrd \to \ALG \{+, \lor, \land, 0, 1, -1\}$ maps the morphism $\overline{s}$ to $s$.
\end{proof}

\begin{theorem} \label{t:gelfand-like-abstract}
	The category of compact ordered spaces is dually equivalent to the category of algebras in the signature $\{+, \lor, \land, 0, 1, -1\}$ which are isomorphic to $\Cleq(X, \R)$ for some compact ordered space $X$.
\end{theorem}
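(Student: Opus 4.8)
The plan is to invoke the standard characterisation of equivalences of categories: a functor is an equivalence if, and only if, it is full, faithful and essentially surjective on objects. The functor $\Cleq(-, \R) \colon \CompOrd \to \ALG\{+, \lor, \land, 0, 1, -1\}$ is contravariant, so I would regard it as a covariant functor $\CompOrd^\opcat \to \ALG\{+, \lor, \land, 0, 1, -1\}$, and I would let $\cat{A}$ denote the full subcategory of $\ALG\{+, \lor, \land, 0, 1, -1\}$ whose objects are those algebras isomorphic to $\Cleq(X, \R)$ for some compact ordered space $X$. By construction this functor corestricts to a functor $\CompOrd^\opcat \to \cat{A}$, and showing that this corestriction is an equivalence of categories is precisely establishing the dual equivalence asserted in the statement.

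First I would record that the functor is full and faithful: faithfulness is \cref{p:faithful} and fullness is \cref{t:abstract-full}. Both were proved for the contravariant functor, hence remain valid for its covariant reincarnation, as fullness and faithfulness are insensitive to the variance bookkeeping. Next, essential surjectivity onto $\cat{A}$ holds tautologically: by the very description of $\cat{A}$, every object $A$ of $\cat{A}$ is isomorphic to $\Cleq(X, \R)$, i.e.\ to the image under $\Cleq(-, \R)$ of some compact ordered space $X$. Thus the corestricted functor is full, faithful and essentially surjective, whence an equivalence.

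To make the equivalence explicit one would exhibit a quasi-inverse using the axiom of choice (assumed throughout the manuscript): for each algebra $A$ in $\cat{A}$ choose a compact ordered space $X_A$ together with an isomorphism $A \cong \Cleq(X_A, \R)$, and extend this assignment to morphisms in the canonical way (transporting a homomorphism along the chosen isomorphisms and applying fullness to realise it as $\Cleq(f, \R)$ for a unique morphism $f$ in $\CompOrd$). This yields a functor $\cat{A} \to \CompOrd^\opcat$ quasi-inverse to $\Cleq(-, \R)$. I do not expect any genuine obstacle at this stage: all the substantive work already resides in the fullness result \cref{t:abstract-full}, whose proof rests on the analysis of the closed directed down-set $\DD(\Phi)$ and the ordered Urysohn lemma (\cref{t:Urysohn}). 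The present statement is merely the repackaging of fullness, faithfulness and essential surjectivity into the language of dual equivalence.
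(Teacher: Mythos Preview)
Your proposal is correct and takes essentially the same approach as the paper, which simply cites \cref{p:faithful} for faithfulness and \cref{t:abstract-full} for fullness of $\Cleq(-,\R)$; you have merely made explicit the (tautological) essential surjectivity onto the described full subcategory and the standard construction of a quasi-inverse.
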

\begin{proof}
	The contravariant functor
	\[
		\Cleq(-, \R) \colon \CompOrd \to \ALG \{+, \lor, \land, 0, 1, -1\}
	\]
	is faithful by \cref{p:faithful} and full by \cref{t:abstract-full}.
\end{proof}

%%%%%%%%%%%%%%%%%%%%%%%%%%%%%%%%%%%   SECTION   %%%%%%%%%%%%%%%%%%%%%%%%%%%%%%%%%%%%

\section{Ordered Stone-Weierstrass theorem}

In the next section, we will obtain a duality between $\CompOrd$ and a class of algebras whose definition is more intrinsic than the definition of the class of algebras described in \cref{t:gelfand-like-abstract}.
Before moving to such a duality, we obtain an ordered version of the classical Stone-Weierstrass theorem, which will suggest us a signature for the corresponding algebras.

In 1885, K.\ Weierstra{\ss} proved an approximation theorem for continuous functions over the unit interval $[0,1]$ \cite{Weierstrass1885}.
Later, M.\ H.\ Stone provided a simpler proof of Weierstra{\ss}' approximation theorem and proved some additional similar results \cite{Stone1937,Stone1948}, which fall under the name of \emph{Stone-Weierstrass theorem}\index{Stone-Weierstrass theorem}.
We recall that a set of functions $F$ from a set $X$ to $Y$ \emph{separates the elements of $X$}\index{separation} (or is \emph{separating}) if, for all $x, y \in X$ such that $x \neq y$, there exists $f \in F$ such that $f(x) \neq f(y)$.
One version of the classical Stone-Weierstrass theorem asserts: If $X$ is a compact Hausdorff space and $L$ is a real vector subspace of the set of continuous functions from $X$ to $\R$ which is closed under $\lor$ and $\land$, which contains the constant function $1$ and which separates the elements of $X$, then every continuous function from $X$ to $\R$ is the uniform limit of a sequence in $L$ \cite[Theorem~7.29]{HS75}\footnote{As noted in the cited reference, the hypothesis are slightly redundant: if $X$ admits a separating family of continuous real-valued functions, then $X$ has to be a Hausdorff space.}.
We would like to obtain an analogous result, where compact Hausdorff spaces are replaced by compact ordered spaces.
To do so, we rely on the following theorem, which characterises in a simple way the closure under uniform limits of a sublattice of the lattice of real-valued continuous functions over a compact space\footnote{The theorem is essentially due to \cite[Theorem~1]{Stone1948}, who proved the implication \cref{i:unif-limit} $\Leftrightarrow$ \cref{i:Stone}. I would like to thank V.\ Marra and L.\ Spada for a discussion which highlighted the equivalent \cref{i:up-down}.}.

\begin{theorem}\label{t:StWe-figo-2-options}
Let $X$ be a compact space, let $L$ be a set of continuous functions from $X$ to $\R$ that is closed under $\lor$ and $\land$, and suppose that either $X$ or $L$ is non-empty.
For every function $f \colon X \to \R$, the following conditions are equivalent.
\begin{enumerate}
	\item \label{i:unif-limit} The function $f$ is a uniform limit of a sequence in $L$.
	\item \label{i:Stone} The function $f$ is continuous and, for all $x, y \in X$ and all $\eps > 0$, there exists $g \in L$ such that $\lvert f(x) - g(x) \rvert < \eps$ and $\lvert f(y) - g(y) \rvert < \eps$.
	\item \label{i:up-down} The function $f$ is continuous, for all $z \in X$ the value $f(z)$ belongs to the topological closure of $\{h(z) \mid h \in L\}$, and, for all distinct $x, y \in X$ and all $\eps > 0$, there exists $g \in L$ such that $g(x) > f(x) - \eps$ and $g(y) < f(y) + \eps$.
\end{enumerate}
\end{theorem}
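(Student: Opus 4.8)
The plan is to prove the theorem by establishing the cycle of implications \cref{i:unif-limit} $\Rightarrow$ \cref{i:Stone} $\Rightarrow$ \cref{i:up-down} $\Rightarrow$ \cref{i:unif-limit}. The easy implications are the first two; the substantive one is \cref{i:up-down} $\Rightarrow$ \cref{i:unif-limit}, which is the classical lattice version of the Stone--Weierstrass argument, and this is where the main work lies.

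First I would handle \cref{i:unif-limit} $\Rightarrow$ \cref{i:Stone}. If $f$ is the uniform limit of a sequence $(g_n)$ in $L$, then $f$ is continuous as a uniform limit of continuous functions, and given any $x, y \in X$ and $\eps > 0$, picking $n$ large enough that $\sup_{z \in X}\lvert f(z) - g_n(z)\rvert < \eps$ yields a single $g = g_n \in L$ witnessing both inequalities. Next, \cref{i:Stone} $\Rightarrow$ \cref{i:up-down} is essentially a rewriting: continuity is common to both; the two-sided pointwise approximation in \cref{i:Stone} immediately gives that $f(z)$ lies in the closure of $\{h(z) \mid h \in L\}$ for each $z$ (take $x = y = z$); and for distinct $x, y$ the $g$ provided by \cref{i:Stone} satisfies $\lvert f(x) - g(x)\rvert < \eps$ and $\lvert f(y) - g(y)\rvert < \eps$, hence in particular $g(x) > f(x) - \eps$ and $g(y) < f(y) + \eps$.

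The hard part is \cref{i:up-down} $\Rightarrow$ \cref{i:unif-limit}. Here I would fix $\eps > 0$ and seek $g \in L$ with $\sup_{z}\lvert f(z) - g(z)\rvert \leq \eps$; letting $\eps \to 0$ then exhibits $f$ as a uniform limit of a sequence in $L$. The standard two-stage compactness argument runs as follows. For each ordered pair $(x,y)$ with $x \neq y$, use the hypothesis to obtain $g_{x,y} \in L$ with $g_{x,y}(x) > f(x) - \eps$ and $g_{x,y}(y) < f(y) + \eps$; the diagonal case $x = y$ is covered instead by the closure condition on $\{h(z) \mid h \in L\}$, which supplies for each $z$ some $h_z \in L$ with $\lvert f(z) - h_z(z)\rvert < \eps$. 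Fixing $x$, the sets $\{z \mid g_{x,y}(z) < f(z) + \eps\}$ (and the analogous neighbourhood from $h_x$ near $z = x$) form an open cover of the compact space $X$ as $y$ varies, so finitely many suffice; taking the meet $g_x \df g_{x,y_1} \land \dots \land g_{x,y_n}$, which lies in $L$ since $L$ is closed under $\land$, produces a function with $g_x(z) < f(z) + \eps$ for all $z$ and $g_x(x) > f(x) - \eps$. Then, letting $x$ range, the sets $\{z \mid g_x(z) > f(z) - \eps\}$ again cover $X$ by compactness, and the join $g \df g_{x_1} \lor \dots \lor g_{x_m} \in L$ satisfies $f(z) - \eps < g(z) < f(z) + \eps$ for every $z$, as required.

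The main obstacle, and the point to treat carefully, is the bookkeeping around the degenerate cases: when $X$ or $L$ is empty, and the diagonal $x = y$ where the separating hypothesis in \cref{i:up-down} is vacuous and one must instead invoke the pointwise-closure condition to control $f$ near each point. One must check that the open covers genuinely cover $X$ (including the points $x$ themselves, handled by the strict inequalities at $x$ together with continuity of $f$ and $g_{x,y}$) and that the finite meets and joins stay within $L$; the latter is exactly why closure under $\lor$ and $\land$ is assumed. No completeness or vector-space structure is needed here, only the lattice operations and compactness of $X$, which is what makes this the appropriate lattice-theoretic form of the Stone--Weierstrass theorem for the ordered setting.
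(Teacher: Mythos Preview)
Your proof is correct, but it takes a different route from the paper's. The paper cites Stone's result for the equivalence \cref{i:unif-limit} $\Leftrightarrow$ \cref{i:Stone}, observes \cref{i:Stone} $\Rightarrow$ \cref{i:up-down} is trivial, and then proves \cref{i:up-down} $\Rightarrow$ \cref{i:Stone} directly: given distinct $x,y$, it takes $h \in L$ close to $f$ at $x$ and $k \in L$ close to $f$ at $y$ (from the pointwise-closure hypothesis), examines the four cases according to the signs of $k(x)-f(x)$ and $h(y)-f(y)$, and in each case assembles a single $g$ from $h$, $k$, and (when needed) the one-sided separator $l$ using $\lor$ and $\land$. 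By contrast, you prove \cref{i:up-down} $\Rightarrow$ \cref{i:unif-limit} directly, running the two-stage compactness argument with the one-sided functions $g_{x,y}$ supplied by \cref{i:up-down} (and $h_z$ on the diagonal). Your approach is self-contained and arguably cleaner, since it reveals that the one-sided condition in \cref{i:up-down} is exactly what the standard lattice Stone--Weierstrass compactness argument uses; the paper's approach is shorter on the page because it outsources the compactness argument to Stone and isolates the new content as the passage \cref{i:up-down} $\Rightarrow$ \cref{i:Stone}.
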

\begin{proof}
The equivalence \cref{i:unif-limit} $\Leftrightarrow$ \cref{i:Stone} is due to M.\ H.\ Stone \cite[Theorem~1]{Stone1948}.
The implication \cref{i:Stone} $\Rightarrow$ \cref{i:up-down} is trivial.
Let us prove the implication \cref{i:up-down} $\Rightarrow$ \cref{i:Stone}.
So, let $f$ be a continuous function, suppose that, for all $x, y \in X$ and all $\eps > 0$, there exists $g \in L$ such that $f(x) < g(x) + \eps$ and $f(y) > g(y) - \eps$, and suppose that, for all $z \in X$, $f(z)$ belongs to the closure of $\{h(z) \mid h \in L\}$.
The desired conclusion holds for $x = y$ because in this case, by hypothesis on $f$, there exists $h \in L$ such that $\lvert f(x) - h(x) \rvert < \eps$.
So, we are left with the case $x \neq y$.
By hypothesis on $f$, there exists $h \in L$ such that $\lvert f(x) - h(x) \rvert < \eps$, and there exists $k \in L$ such that $\lvert f(y) - k(y) \rvert < \eps$.
We have either $k(x) \geq f(x)$ or $k(x) \leq f(x)$, and we have either $h(y) \geq f(y)$ or $h(y) \leq f(y)$.
Thus, we have four cases: 
\begin{enumerate}[wide]
	\item Case $k(x) \geq f(x)$ and $h(y) \geq f(y)$.
	In this case we obtain the desired conclusion by taking $g = h \land k$.
	\item Case $k(x) \leq f(x)$ and $h(y) \leq f(y)$.
	In this case we obtain the desired conclusion by taking $g = h \lor k$.
	\item Case $k(x) \leq f(x)$ and $h(y) \geq f(y)$.
	Since $x \neq y$, by hypothesis there exists $l \in L$ such that $l(x) > f(x) - \eps$ and $l(x) < f(x) + \eps$.
	Then, we obtain the desired conclusion by taking $g = (h \land l) \lor k$.
	\item Case $k(x) \geq f(x)$ and $h(y) \leq f(y)$.
	This case is similar to the previous one. \qedhere
\end{enumerate}
\end{proof}

%-------------------------------   SUBSUBSECTION   --------------------------------%

\subsubsection{Consequences in the ordered case}

\begin{proposition} \label{p:with-constants}
	Let $X$ be a compact ordered space, and let $L$ be a subset of $\Cleq(X, \R)$ which is closed under $\lor$ and $\land$ and which contains every constant in a dense subset of $\R$.
	Then, the set of uniform limits of sequences in $L$ is $\Cleq(X, \R)$ if, and only if, for every $x, y \in X$ with $x \not\geq y$, and every $s,t \in \R$ with $s < t$, there exists $g \in L$ such that $g(x) \leq s$ and $g(y) \geq t$.
\end{proposition}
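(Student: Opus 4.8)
The plan is to prove both implications by leaning on Theorem~\ref{t:StWe-figo-2-options}, applied to the pair $(X,L)$: here $X$ is compact and $L$ is a sublattice of $\Cleq(X,\R)$ closed under $\lor$ and $\land$, and $L$ is nonempty since it contains the constants from a dense subset of $\R$, so the theorem applies. Write $\overline{L}$ for the set of uniform limits of sequences in $L$. Since a uniform limit of continuous order-preserving functions is again continuous (standard) and order-preserving (from $a\leq b \Rightarrow g_n(a)\leq g_n(b)$ passing to the limit), we always have $\overline{L}\seq \Cleq(X,\R)$; thus the assertion $\overline{L}=\Cleq(X,\R)$ reduces to the reverse inclusion.

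For the implication from the separation condition to $\overline{L}=\Cleq(X,\R)$, I would fix $f\in\Cleq(X,\R)$ and verify condition~\cref{i:up-down} of Theorem~\ref{t:StWe-figo-2-options}, whence $f\in\overline{L}$ by the equivalence \cref{i:up-down}~$\Leftrightarrow$~\cref{i:unif-limit}. Continuity of $f$ is immediate. The closure requirement---that $f(z)$ lie in the closure of $\{h(z)\mid h\in L\}$ for each $z$---holds because the constant functions in $L$ already realise a dense subset of $\R$ at every point $z$. The crux is the one-sided separation: for distinct $x,y\in X$ and $\eps>0$, I must produce $g\in L$ with $g(x)>f(x)-\eps$ and $g(y)<f(y)+\eps$. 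Here I would split on the order relation. If $x\leq y$, then $f(x)\leq f(y)$ by monotonicity, so the interval $(f(x)-\eps, f(y)+\eps)$ is nonempty; choosing a constant $c\in L$ inside it gives $g=c$ with $g(x)=c>f(x)-\eps$ and $g(y)=c<f(y)+\eps$. If $x\not\leq y$, then $y\not\geq x$, so the separation hypothesis applied to the pair $(y,x)$ yields, for any chosen reals $s<t$ with $s<f(y)+\eps$ and $t>f(x)-\eps$ (take $s$ small and $t$ large), a function $g\in L$ with $g(y)\leq s<f(y)+\eps$ and $g(x)\geq t>f(x)-\eps$. Either way the required $g$ exists.

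For the converse I would assume $\overline{L}=\Cleq(X,\R)$ and derive the separation condition directly. Given $x,y\in X$ with $x\not\geq y$ and reals $s<t$, apply \cref{l:cor of urysohn} to obtain $\psi\in\Cleq(X,[0,1])$ with $\psi(x)=0$ and $\psi(y)=1$. Then $f_0\df (s-1)+(t-s+2)\psi$ lies in $\Cleq(X,\R)=\overline{L}$ (the coefficient $t-s+2$ is positive, so $f_0$ is order-preserving), and it satisfies $f_0(x)=s-1$ and $f_0(y)=t+1$. Picking $g\in L$ with $\sup_{z\in X}\lvert g(z)-f_0(z)\rvert<1$ then gives $g(x)<s$ and $g(y)>t$, hence $g(x)\leq s$ and $g(y)\geq t$, as required.

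The main obstacle is the backward direction, and specifically matching the orientation of the separation hypothesis to the one-sided inequalities of condition~\cref{i:up-down}: the hypothesis separates ``low at $x$, high at $y$'' only when $x\not\geq y$, so it can be used to make $g$ high at $x$ and low at $y$ precisely when $y\not\geq x$; the complementary case $x\leq y$ must instead be absorbed by a constant, which is exactly where monotonicity of $f$ together with the density of the available constants enter.
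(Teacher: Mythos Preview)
Your proposal is correct and follows essentially the same approach as the paper: invoke the ordered Urysohn lemma for the forward implication, and verify condition~\cref{i:up-down} of \cref{t:StWe-figo-2-options} for the backward one. The only cosmetic difference is the case split in the backward direction: you split on whether $x\leq y$ or $x\not\leq y$, while the paper splits on whether $f(x)\geq f(y)$ or $f(x)<f(y)$; both work, and the paper's split is marginally tidier since $f(x)<f(y)$ directly forces $x\not\geq y$ and lets one take $s=f(x)$, $t=f(y)$ without the ``take $s$ small and $t$ large'' manoeuvre.
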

\begin{proof}
	The left-to-right implication is a consequence of the ordered Urysohn's lemma (\cref{t:Urysohn}).
	Let us prove the right-to-left implication.
	Since $L$ is contained in $\Cleq(X, \R)$, and $\Cleq(X, \R)$ is closed under uniform limits, every uniform limit of a sequence in $L$ is in $\Cleq(X, \R)$.
	For the opposite direction, let $f \in \Cleq(X, \R)$: we shall prove that $f$ is a uniform limit of a sequence in $L$.
	Let $x, y \in X$, and let $\eps > 0$.
	\begin{claim}
		There exists a function $g \in L$ such that $g(x) < f(x) + \eps$ and $g(y) > f(y) - \eps$.
	\end{claim}
	\begin{claimproof}
		If $f(x) \geq f(y)$, then, since $L$ contains every constant in a dense subset of $\R$, there exists an element $d \in \R$ such that the function $g$ which is constantly equal to $d$ belongs to $L$, and $ f(y) - \eps < d  < f(x) + \eps$.
		The function $g$ has the desired properties.
		
		We are left with the case $f(x) < f(y)$.
		Since $f$ is order-preserving, we deduce $x \not\geq y$.
		Therefore, by hypothesis, there exists a function $g \in L$ such that $g(x) \leq f(x)$ and $g(y) \geq f(y)$.
		The function $g$ has the desired properties.
	\end{claimproof}
	Therefore, applying the implication \cref{i:up-down} $\Rightarrow$ \cref{i:unif-limit} in \cref{t:StWe-figo-2-options}, we conclude that $f$ is a uniform limit of a sequence in $L$.
\end{proof}

An ordered version of separation is the following.

\begin{definition}
	Let $X$ and $Y$ be preordered sets, and let $F$ be a set of order-preserving functions from $X$ to $Y$.
	We say that $F$ \emph{order-separates the elements of $X$}, or that $F$ is \emph{order-separating}\index{order-separation}\index{separation!order-} if, for all $x, y \in X$ such that $x \not\geq y$, there exists $f \in F$ such that $f(x) \not\geq f(y)$.
\end{definition}

By contraposition, and using the fact that the elements of $F$ are order-preserving, the condition of order-separation above is equivalent to the following one:
\[
	\forall x, y \in X \ \ (x \leq y \Leftrightarrow \forall f \in F \  f(x) \leq f(y)).
\]
This means that the source $(f \colon X \to Y)_{f \in F}$ is initial with respect to the forgetful functor $\Preord \to \Set$ (cf.\ \cref{s:init-preorder}).

When the orders of both $X$ and $Y$ are discrete (i.e., the identity), order-separation is equivalent to separation, because $\not\geq$ coincides with $\neq$.

\begin{lemma} \label{l:dilate}
	Let $X$ be a compact ordered space, and let $L$ be an order-separating subset of $\Cleq(X, \R)$ which is closed under $\lor$ and $\land$ and contains every constant in a dense subset of $\R$.
	Suppose that, for all $a,b,c,d \in \R$ with $a \leq b < c \leq d$, there exists a function $\theta \colon \R \to \R$ such that $\theta(b) \leq a$, $\theta(c) \geq d$, and $L$ is closed under $\theta$.
	Then, the set of uniform limits of sequences in $L$ is $\Cleq(X, \R)$.
\end{lemma}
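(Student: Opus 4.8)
The plan is to apply Proposition \ref{p:with-constants}, whose standing hypotheses---namely that $L$ is a subset of $\Cleq(X,\R)$ closed under $\lor$ and $\land$ and containing every constant in a dense subset of $\R$---are all among the assumptions of the present lemma. By that proposition, the set of uniform limits of sequences in $L$ equals $\Cleq(X,\R)$ precisely when, for every $x,y \in X$ with $x \not\geq y$ and every $s,t \in \R$ with $s < t$, there exists $g \in L$ with $g(x) \leq s$ and $g(y) \geq t$. Hence it suffices to verify this separation condition, and the entire content of the argument is to upgrade the mere order-separation of $L$ to this quantitatively strong version, using the dilation hypothesis as the amplifying device.

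First I would fix $x,y \in X$ with $x \not\geq y$ and reals $s < t$. Since $L$ order-separates the elements of $X$, there exists $f \in L$ with $f(x) \not\geq f(y)$; as $f$ is real-valued, this reads $f(x) < f(y)$. The key step is then to feed the values $b \df f(x)$ and $c \df f(y)$ into the dilation hypothesis with a judicious choice of endpoints: set $a \df \min\{s, f(x)\}$ and $d \df \max\{t, f(y)\}$, so that $a \leq b < c \leq d$. The hypothesis then supplies a function $\theta \colon \R \to \R$ with $\theta(b) \leq a$, $\theta(c) \geq d$, and such that $L$ is closed under $\theta$.

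Finally I would set $g \df \theta \circ f$. Since $L$ is closed under $\theta$, we have $g \in L$, and in particular $g \in \Cleq(X,\R)$, so no separate verification of monotonicity or continuity of $g$ is needed---the closure hypothesis absorbs it. Moreover $g(x) = \theta(f(x)) = \theta(b) \leq a \leq s$ and $g(y) = \theta(f(y)) = \theta(c) \geq d \geq t$, which is exactly the required pair of inequalities. This establishes the separation condition and hence, by Proposition \ref{p:with-constants}, the conclusion.

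I do not expect a genuine obstacle here: the only point requiring care is the choice of the auxiliary endpoints $a$ and $d$, which must simultaneously respect the constraint $a \leq f(x) < f(y) \leq d$ demanded by the dilation hypothesis and the target bounds $a \leq s$, $d \geq t$. Taking $a = \min\{s, f(x)\}$ and $d = \max\{t, f(y)\}$ reconciles both at once (the degenerate cases $s \geq f(x)$ or $t \leq f(y)$ merely collapse $a$ to $b$ or $d$ to $c$, which is still permitted since the hypothesis allows equalities at the outer ends). Everything else is direct substitution.
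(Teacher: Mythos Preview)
Your proposal is correct and follows exactly the route the paper intends: the paper's proof reads simply ``By \cref{p:with-constants}'', and you have spelled out precisely the verification that proposition requires, including the careful choice $a=\min\{s,f(x)\}$, $d=\max\{t,f(y)\}$ to make the dilation hypothesis applicable.
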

\begin{proof}
	By \cref{p:with-constants}.
\end{proof}

\begin{lemma} \label{l:expansion}
	For all $a,b,c,d \in \R$ with $a \leq b < c \leq d$, and for every dense subset $D$ of $\R$, there exists $n \in \Np$ and $u \in D$ such that $nb +u \leq a$ and $nc + u \geq d$.
\end{lemma}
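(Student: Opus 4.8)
The plan is to reduce the two desired inequalities to a single two-sided constraint on $u$ and then invoke density of $D$ on a suitably long interval. First I would rewrite the conditions $nb + u \leq a$ and $nc + u \geq d$ equivalently as $d - nc \leq u \leq a - nb$. Thus it suffices to produce $n \in \Np$ for which the closed interval $[d - nc,\, a - nb]$ has nonempty interior, since then density of $D$ immediately yields a point $u \in D$ lying strictly inside it, and any such $u$ satisfies both required inequalities.

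The interior is nonempty precisely when $d - nc < a - nb$, that is, when $d - a < n(c - b)$. Since $b < c$ we have $c - b > 0$, so by the Archimedean property of $\R$ I would choose any $n \in \Np$ with $n > \frac{d - a}{c - b}$; such an $n$ exists. For this choice the open interval $(d - nc,\, a - nb)$ is nonempty, and by density of $D$ there is some $u \in D$ with $d - nc < u < a - nb$.

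Finally I would verify that this pair $(n, u)$ works: from $u < a - nb$ we obtain $nb + u < a$, hence $nb + u \leq a$; and from $u > d - nc$ we obtain $nc + u > d$, hence $nc + u \geq d$. This completes the argument. There is no genuine obstacle here; the only point requiring care is ensuring that the admissible interval for $u$ is nondegenerate, which is exactly what the strict inequality $b < c$ together with the Archimedean choice of $n$ guarantees. (The remaining hypotheses $a \leq b$ and $c \leq d$ are not actually needed for the conclusion, serving only to frame the statement as it will be applied in \cref{l:dilate}.)
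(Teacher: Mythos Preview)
Your argument is correct; the paper itself does not give a proof, simply stating ``This is a well known fact.'' Your reduction to finding $u$ in the interval $[d-nc,\,a-nb]$ and then using the Archimedean property to make that interval nondegenerate is exactly the standard justification one would supply.
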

\begin{proof}
	This is a well known fact.
\end{proof}

\begin{theorem}[Ordered Stone-Weierstrass theorem]\label{t:StWe}\index{Stone-Weierstrass theorem!ordered}
	Let $X$ be a compact ordered space, let $L$ be an order-separating\footnote{The hypothesis are slightly redundant: if $X$ is a compact space equipped with a partial order, and $X$ admits an order-separating family of continuous order-preserving real-valued functions, then $X$ has to be a compact ordered space.} set of continuous order-preserving functions from $X$ to $\R$ which is closed under $+$, $\lor$, $\land$ and which contains every constant in a dense subset of $\R$.
	Then, the set of uniform limits of sequences in $L$ is the set of continuous order-preserving functions from $X$ to $\R$.
\end{theorem}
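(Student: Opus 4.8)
The plan is to apply \cref{l:dilate}, whose conclusion is exactly the assertion we want. Three of its four hypotheses on $L$---namely that $L$ is order-separating, closed under $\lor$ and $\land$, and contains every constant belonging to some dense subset $D \seq \R$---are already part of the present hypotheses (indeed closure under $+$ is stronger than anything these three require). So the only work is to verify the remaining hypothesis of \cref{l:dilate}: that for all $a, b, c, d \in \R$ with $a \leq b < c \leq d$, there exists a function $\theta \colon \R \to \R$ with $\theta(b) \leq a$, $\theta(c) \geq d$, such that $L$ is closed under $\theta$.

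To supply such a $\theta$, I would invoke \cref{l:expansion} with the dense set $D$ whose constants lie in $L$: this yields $n \in \Np$ and $u \in D$ with $nb + u \leq a$ and $nc + u \geq d$. I then set $\theta(x) \df nx + u$, so that $\theta(b) = nb + u \leq a$ and $\theta(c) = nc + u \geq d$, as required. It remains to check that $L$ is closed under this particular $\theta$. Since $L$ is closed under $+$, for every $f \in L$ the $n$-fold sum $nf = f + \dots + f$ ($n$ times) lies in $L$; and since $u \in D$, the constant function with value $u$ belongs to $L$, whence $nf + u = \theta \circ f \in L$ by a further use of closure under $+$. Thus $L$ is closed under $\theta$.

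With all hypotheses verified, \cref{l:dilate} gives that the set of uniform limits of sequences in $L$ equals $\Cleq(X, \R)$, which is precisely the set of continuous order-preserving functions from $X$ to $\R$, completing the proof. There is essentially no obstacle here: the substance of the argument is carried entirely by \cref{l:dilate} (itself a consequence of \cref{p:with-constants}, which rests on the approximation criterion \cref{t:StWe-figo-2-options}) together with the elementary arithmetic of \cref{l:expansion}. The only point requiring a moment's care is recognising that closure under $+$, combined with the presence of the dense family of constants, upgrades to closure under every affine map $x \mapsto nx + u$ with $n \in \Np$ and $u \in D$, and that \cref{l:expansion} guarantees a map of exactly this form performs the required separation.
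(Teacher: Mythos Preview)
Your proposal is correct and follows exactly the paper's approach: the paper's proof is simply ``By \cref{l:dilate,l:expansion}'', and you have spelled out precisely how these two lemmas combine, including the choice $\theta(x) = nx + u$ and the verification that $L$ is closed under it via closure under $+$ and the constant $u \in D$.
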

\begin{proof}
	By \cref{l:dilate,l:expansion}.
\end{proof}

%%%%%%%%%%%%%%%%%%%%%%%%%%%%%%%%%%%   SECTION   %%%%%%%%%%%%%%%%%%%%%%%%%%%%%%%%%%%%

\section{Description as Cauchy complete algebras}

We now proceed to obtain a duality between the category $\CompOrd$ of compact ordered spaces and a class of algebras which, compared to the one in \cref{t:gelfand-like-abstract}, is more explicitly defined.
The choice of the signature is suggested by the ordered version of the Stone-Weierstrass theorem (\cref{t:StWe}).
We will apply the theorem with the dense subset of $\R$ in the statement being the set of dyadic rationals (i.e.\ the elements of the form $\frac{k}{2^n}$ for $k \in \Z$ and $n \in \N$), denoted by $\Dyad$.

\begin{remark} \label{r:dyad-full}
	By \cref{l:cogenerator}, the contravariant functor 
	\[
		\Cleq(-, \R) \colon \CompOrd \to \ALG \{+, \lor, \land\} \cup \Dyad
	\]
	is faithful.
	From \cref{t:abstract-full} we deduce that it is also full.
	It follows that the category of compact ordered spaces is dually equivalent to the category of algebras in the signature $\{+, \lor, \land\} \cup \Dyad$ which are isomorphic to $\Cleq(X, \R)$ for some compact ordered space $X$.
\end{remark}

We next characterise these algebras.

\begin{notation}
	Given an algebra $M$ in the signature $\{+, \lor, \land\} \cup \Dyad$, we define the function
	\begin{align*}
		\disthom[M] \colon	M \times M	& \longrightarrow	[0,+ \infty]\\
									(x,y)			& \longmapsto		\sup_{f \in \hom(M, \R)} \lvert f(x) - f(y) \rvert.
	\end{align*}
\end{notation}

We recall that, given two algebras $A$ and $B$ in a common signature, we have a canonical homomorphism
\begin{align*}
	\ev \colon	A	& \longrightarrow B^{\hom(A,B)}\\
					x	& \longmapsto		\ev_x,
\end{align*}
where $\ev_x$ is defined by
\begin{align*}
	\ev_x \colon	\hom(A,B)	& \longrightarrow	B\\
						f				& \longmapsto		f(x).
\end{align*}

\begin{remark} \label{r:uniform}
	Let $M$ be an algebra in the signature $\{+, \lor, \land\} \cup \Dyad$.
	Denoting with $\ev$ the canonical homomorphism from $M$ to $\R^{\hom(M,\R)}$, we note that, for all $x,y \in M$, the element $\disthom[M](x,y)$ coincides with the value assumed on the pair $(\ev(x), \ev(y))$ by the uniform possibly $\infty$-valued metric on $\R^{\hom(M,\R)}$.
\end{remark}

\begin{lemma}
	For every algebra $M$ in the signature $\{+, \lor, \land\} \cup \Dyad$, the function $\disthom[M]$ satisfies positivity, symmetry and triangle inequality.
\end{lemma}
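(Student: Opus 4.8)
The statement asserts that $\disthom[M]$ satisfies the three metric-like properties (positivity, symmetry, triangle inequality). Crucially, it does \emph{not} yet claim that $\disthom[M](x,y) = 0$ implies $x = y$, nor that the value is finite—so this is a ``pseudometric with possibly infinite values'' claim, and the proof should be a direct and essentially formal verification.

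The plan is to argue pointwise, fixing an arbitrary $M$ in the signature $\{+, \lor, \land\} \cup \Dyad$ and using the definition
\[
	\disthom[M](x,y) = \sup_{f \in \hom(M, \R)} \lvert f(x) - f(y) \rvert.
\]
For \emph{positivity}, I would simply observe that each term $\lvert f(x) - f(y) \rvert$ is a nonnegative real number, hence the supremum lies in $[0, +\infty]$; this is immediate from the definition and requires no further input. For \emph{symmetry}, I would note that $\lvert f(x) - f(y) \rvert = \lvert f(y) - f(x) \rvert$ holds for each individual $f \in \hom(M,\R)$, so taking the supremum over the same indexing set $\hom(M,\R)$ yields $\disthom[M](x,y) = \disthom[M](y,x)$.

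The only mildly substantive point is the \emph{triangle inequality}. Here I would use the fact that the usual absolute value on $\R$ satisfies, for every $f \in \hom(M, \R)$ and all $x, y, z \in M$,
\[
	\lvert f(x) - f(z) \rvert \leq \lvert f(x) - f(y) \rvert + \lvert f(y) - f(z) \rvert.
\]
The right-hand side is bounded above by $\disthom[M](x,y) + \disthom[M](y,z)$, since each summand is dominated by the corresponding supremum. Thus $\disthom[M](x,y) + \disthom[M](y,z)$ is an upper bound for the set $\{\lvert f(x) - f(z) \rvert \mid f \in \hom(M,\R)\}$, and therefore dominates its supremum $\disthom[M](x,z)$. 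One should handle the arithmetic of $[0,+\infty]$ with the usual conventions (addition with $+\infty$ absorbing), but this causes no difficulty: the inequality holds trivially whenever either term on the right is $+\infty$.

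There is no real obstacle here; the proof is purely formal. The cleanest way to organise it—and the approach I would take—is to invoke \cref{r:uniform}, which already identifies $\disthom[M](x,y)$ with the value of the uniform (possibly $\infty$-valued) metric on $\R^{\hom(M,\R)}$ evaluated at $(\ev(x), \ev(y))$. Since the uniform metric on any power of $\R$ is a genuine (extended) pseudometric, positivity, symmetry, and the triangle inequality for $\disthom[M]$ are inherited immediately from the corresponding properties of that uniform metric, with the canonical homomorphism $\ev$ transporting the structure. This reduces the lemma to the well-known fact that a supremum of pseudometrics (here, the pullbacks along each coordinate projection of the standard metric on $\R$) is again a pseudometric valued in $[0,+\infty]$.
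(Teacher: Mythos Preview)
Your proposal is correct and, in its final paragraph, takes exactly the approach of the paper: invoke \cref{r:uniform} to identify $\disthom[M]$ with the pullback of the uniform (possibly $\infty$-valued) metric on $\R^{\hom(M,\R)}$ along $\ev$, and inherit positivity, symmetry, and the triangle inequality from there. The paper's proof is just those two sentences; your preceding direct verification is fine but more detailed than needed.
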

\begin{proof}
	The uniform possibly $\infty$-valued metric on $\R^{\hom(M,\R)}$ satisfies positivity, symmetry and triangle inequality.
	By \cref{r:uniform}, the function $\disthom[M]$ possesses these properties, as well.
\end{proof}

\begin{remark}
	The function $\disthom[M]$ is not guaranteed to be a metric: indeed, it might fail to be finite-valued and to satisfy the identity of indiscernibles property\index{identity of indiscernibles}, i.e.\ 
	\[
		\forall x \ \forall y\ \disthom[M](x,y) = 0 \Longrightarrow x = y.
	\]
	For example, the function $\disthom[M]$ is not finite-valued for $M = \R^\N$.
	Moreover, the function $\disthom[M]$ does not satisfy the identity of indiscernibles property for $M = \R \stackrel{\to}{\times} \R$, i.e., the lexicographic product of $\R$ and $\R$ (see \cref{i:lexico} in \cref{ex:ulms}) on which the interpretation of a dyadic rational $t$ is $(t, 0)$.
\end{remark}

\begin{lemma} \label{l:dist-finite}
	For every compact ordered space $X$, the function 
	\[
		\disthom[\Cleq(X, \R)] \colon \Cleq(X, \R) \times \Cleq(X, \R) \to \R
	\]
	is the uniform metric.
\end{lemma}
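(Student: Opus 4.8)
The goal is to show that for a compact ordered space $X$, the function $\disthom[\Cleq(X,\R)]$ is the \emph{uniform metric}, i.e.\ that for all $f, g \in \Cleq(X,\R)$ we have
\[
	\disthom[\Cleq(X,\R)](f,g) = \sup_{x \in X} \lvert f(x) - g(x) \rvert,
\]
and that this quantity is a genuine (finite-valued) metric. Recall that by definition $\disthom[\Cleq(X,\R)](f,g) = \sup_{\Phi \in \hom(\Cleq(X,\R),\R)} \lvert \Phi(f) - \Phi(g) \rvert$, where the homomorphisms are taken in the signature $\{+,\lor,\land\}\cup\Dyad$. So the crux is to identify the set of $\{+,\lor,\land\}\cup\Dyad$-homomorphisms $\Cleq(X,\R) \to \R$ with the evaluation maps $\{\ev_x \mid x \in X\}$.

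The plan is to invoke \cref{t:morphism-is-evaluation}, which already establishes exactly this identification, modulo a change of signature. First I would observe that every homomorphism $\Phi \colon \Cleq(X,\R) \to \R$ in the signature $\{+,\lor,\land\}\cup\Dyad$ preserves $+$, $\lor$, $\land$ and every dyadic rational, hence in particular preserves $0$, $1$ and $-1$. By the argument in the proof of \cref{t:morphism-is-evaluation} (preservation of $\Z$, then of $\Q$, then by monotonicity and density of $\Q$ of all of $\R$), such a $\Phi$ preserves every real number; so it is precisely a map of the kind to which \cref{t:morphism-is-evaluation} applies. That theorem then yields a unique $x \in X$ with $\Phi = \ev_x$. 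Conversely, each $\ev_x$ is a homomorphism in this signature, since the operations are computed pointwise. Thus $\hom(\Cleq(X,\R),\R) = \{\ev_x \mid x \in X\}$, and substituting into the definition gives
\[
	\disthom[\Cleq(X,\R)](f,g) = \sup_{x \in X} \lvert \ev_x(f) - \ev_x(g) \rvert = \sup_{x \in X} \lvert f(x) - g(x) \rvert,
\]
which is the supremum norm of $f - g$.

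It remains to check that this is a finite-valued metric. Finiteness follows because $f$ and $g$ are continuous on the compact space $X$, so $f - g$ is continuous with compact (hence bounded) image in $\R$; the supremum is therefore attained and finite. Positivity, symmetry and the triangle inequality hold for the uniform metric in general (and were already recorded for $\disthom[M]$ in the preceding lemma). The identity of indiscernibles holds here because if $\sup_{x}\lvert f(x) - g(x)\rvert = 0$ then $f(x) = g(x)$ for every $x \in X$, i.e.\ $f = g$.

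The only genuinely substantive step is the signature-translation in the first paragraph: one must verify that a $\{+,\lor,\land\}\cup\Dyad$-homomorphism automatically preserves all reals, so that \cref{t:morphism-is-evaluation} can be applied verbatim. This is routine but is where the real content lives, since it is exactly what forces $\hom(\Cleq(X,\R),\R)$ to be no larger than the evaluations. I do not expect any serious obstacle: the bulk of the work has already been done in \cref{t:morphism-is-evaluation}, and everything else is a direct computation with the definition together with compactness of $X$.
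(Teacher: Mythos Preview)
Your proposal is correct and follows essentially the same route as the paper: identify $\hom(\Cleq(X,\R),\R)$ with $\{\ev_x \mid x \in X\}$ via \cref{t:morphism-is-evaluation}, then substitute into the definition of $\disthom$ and use compactness of $X$ for finiteness. One small redundancy: once you observe that a $\{+,\lor,\land\}\cup\Dyad$-homomorphism preserves $0$, $1$, $-1$ (as these lie in $\Dyad$), you can apply \cref{t:morphism-is-evaluation} directly, without rehearsing the passage to all reals, since that theorem only assumes preservation of $+$, $\lor$, $\land$, $0$, $1$, $-1$.
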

\begin{proof}
	By \cref{t:morphism-is-evaluation}, any function $f \colon \Cleq(X, \R) \to \R$ that preserves $+$, $\lor$, $\land$, $0$, $1$ and $-1$ is the evaluation at an element $z \in X$.
	Moreover, evaluations at elements of $X$ are homomorphisms with respect to the signature $\{+, \lor, \land\} \cup \Dyad$.
	It follows that, for every $x, y \in \Cleq(X, \R)$, we have 
	\[
		\disthom[\Cleq(X, \R)] = \sup_{f \in \hom(M, \R)} \lvert f(x) - f(y) \rvert = \sup_{z \in X} \lvert x(z) - y(z) \rvert,
	\]
	and this last term is the value of the uniform metric at $(x,y)$.
	The fact that $\disthom[\Cleq(X, \R)]$ is finite-valued is guaranteed by the fact that $X$ is compact, and that continuous functions map compact sets to compact sets (\cref{p:image-of-compact}).
\end{proof}

\begin{lemma}\label{l:injective}
	The following conditions are equivalent for an algebra $M$ in the signature $\{+, \lor, \land\} \cup \Dyad$.
	\begin{enumerate}
		\item The canonical homomorphism $\ev \colon M \to \R^{\hom(M, \R)}$ is injective.
		\item The function $\disthom[M]$ satisfies the identity of indiscernibles property.
	\end{enumerate}
\end{lemma}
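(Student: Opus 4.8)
The plan is to show that the two conditions are simply two phrasings of the same implication, so the equivalence follows by unwinding the definitions. First I would record the elementary observation that for fixed $x, y \in M$ the quantity $\disthom[M](x,y) = \sup_{f \in \hom(M,\R)} \lvert f(x) - f(y) \rvert$ is a supremum of nonnegative real numbers, hence it equals $0$ if, and only if, every term vanishes, i.e.\ if, and only if, $f(x) = f(y)$ for every homomorphism $f \colon M \to \R$.

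Next I would translate this last condition into a statement about $\ev$. By definition, $\ev(x)$ and $\ev(y)$ are the elements of $\R^{\hom(M,\R)}$ whose value at $f$ is $f(x)$ and $f(y)$ respectively; thus $\ev(x) = \ev(y)$ holds precisely when $f(x) = f(y)$ for all $f \in \hom(M,\R)$. Combining this with the previous paragraph yields, for all $x, y \in M$,
\[
	\disthom[M](x,y) = 0 \iff \ev(x) = \ev(y).
\]

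Finally I would conclude. Condition (1), injectivity of $\ev$, says that $\ev(x) = \ev(y)$ implies $x = y$ for all $x, y \in M$; condition (2), the identity of indiscernibles property, says that $\disthom[M](x,y) = 0$ implies $x = y$ for all $x, y \in M$. By the displayed equivalence the hypotheses of these two implications coincide, so the implications themselves are equivalent, and hence (1) and (2) are equivalent. There is no genuine obstacle here: the only point worth isolating is that a supremum of nonnegative reals is zero exactly when each term is zero, and everything else is a direct reading of the definitions of $\disthom[M]$ and $\ev$.
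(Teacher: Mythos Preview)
Your proposal is correct and follows essentially the same approach as the paper: both proofs establish the key equivalence $\disthom[M](x,y) = 0 \iff \ev(x) = \ev(y)$ by noting each side is equivalent to $f(x)=f(y)$ for all $f \in \hom(M,\R)$, and then observe that the two conditions (1) and (2) are the implications with these equivalent hypotheses.
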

\begin{proof}
	Injectivity of $\ev$ is expressed by the condition 
	\[
		\forall x\ \forall y\  \ev(x) = \ev(y) \Longrightarrow x = y.
	\]
	The identity of indiscernibles property of $\disthom[M]$ is expressed by the condition
	\[
		\forall x\ \forall y\  \disthom[M](x,y) = 0 \Longrightarrow x = y.
	\]
	For every $x, y \in M$, the condition $\ev(x) = \ev(y)$ holds if, and only if, for every homomorphism $f$ from $M$ to $\R$ we have $\ev(x)(f) = \ev(y)(f)$, i.e.\ $f(x) = f(y)$; in turn, the latter condition is equivalent to
	$\sup_{f \in \hom(M, \R)} \lvert f(x) - f(y) \rvert = 0$, i.e.\ $\disthom[M] = 0$.
	It follows that injectivity of $\ev$ and the identity of indiscernibles property of $\disthom$ are equivalent.
\end{proof}

We recall that $\TopXPreord$ denotes the category whose objects are topological spaces equipped with a preorder, and whose morphisms are the continuous order-preserving maps.
With a bit of theory on natural dualities (see \cite[Proposition~5.4]{HN2018}), one shows that we have two adjoint contravariant functors 
\[
	\begin{tikzcd}
		\TopXPreord \arrow[yshift = .45ex]{r}{\Cleq(-,\R)}	& \ALG \{+, \lor, \land\} \cup \Dyad, \arrow[yshift = -.45ex]{l}{\hom(-, \R)}
	\end{tikzcd}
\]
with the evaluation functions as units.
For an algebra $M$ in the signature $\{+, \lor, \land\} \cup \Dyad$, the set $\hom(M, \R)$ is endowed with the initial topology and order with respect to the structured source of evaluation maps $(\ev_x \colon \hom(M, \R) \to \R)_{x \in M}$, or, equivalently, the induced topology and order with respect to the inclusion $\hom(M, \R) \seq \R^M$.

\begin{lemma} \label{l:order-closed}
	The order on $\hom(M, \R)$ is closed and the topology is Hausdorff.
\end{lemma}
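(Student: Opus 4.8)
The plan is to establish the two assertions separately, in each case reducing to a standard property of $\R$ and transporting it along the initial structure that defines the topology and order on $\hom(M,\R)$. Throughout I would use that, by construction, $\hom(M,\R)$ carries the topology and order induced by the inclusion $\hom(M,\R)\seq\R^M$, equivalently the initial ones with respect to the source $(\ev_x\colon\hom(M,\R)\to\R)_{x\in M}$, so that each $\ev_x$ is continuous and order-preserving and the order is computed pointwise: $f\leq g$ if and only if $\ev_x(f)\leq\ev_x(g)$ for every $x\in M$.

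For the Hausdorff property, I would simply observe that $\R$ is Hausdorff, that an arbitrary product of Hausdorff spaces is Hausdorff, and that a subspace of a Hausdorff space is Hausdorff. Since $\hom(M,\R)$ is a subspace of $\R^M$, it is Hausdorff.

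For the closedness of the order, recall from \cref{d:closed} that the task is to show that the set $R\df\{(f,g)\in\hom(M,\R)\times\hom(M,\R)\mid f\leq g\}$ is closed in the product topology. Because the order is pointwise, I would write $R=\bigcap_{x\in M}R_x$, where $R_x\df\{(f,g)\mid\ev_x(f)\leq\ev_x(g)\}$. For each fixed $x$, the map sending $(f,g)$ to $(\ev_x(f),\ev_x(g))\in\R\times\R$ is continuous, since $\ev_x$ is continuous and composition with the two projections preserves continuity. As $\{(a,b)\in\R^2\mid a\leq b\}$ is closed in $\R^2$, the set $R_x$ is its continuous preimage and hence closed, and an arbitrary intersection of closed sets is closed; therefore $R$ is closed.

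There is no genuine obstacle here: the entire content is the transfer of the closedness of $\leq$ on $\R$ through continuous evaluation maps. The only point requiring care is to invoke the correct (initial) topology and order on $\hom(M,\R)$, so that the $\ev_x$ are continuous and the order is pointwise; once that is fixed, both claims follow immediately.
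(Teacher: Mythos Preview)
Your proof is correct and follows essentially the same approach as the paper: both reduce the claim to the fact that $\hom(M,\R)$ sits as a subspace of $\R^M$, inheriting Hausdorffness and closedness of the order from $\R$. The paper's proof is simply the one-line observation that every subset of a power of $\R$ has a closed order and a Hausdorff topology, whereas you unpack the closed-order part explicitly as an intersection of preimages of $\{(a,b)\in\R^2\mid a\leq b\}$ under the evaluation maps.
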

\begin{proof}
	Every subset of a power of $\R$ has a closed order (by \cref{l:char-closed}) and a Hausdorff topology.
\end{proof}

\begin{lemma} \label{l:closed}
	The set $\hom(M, \R)$ is a closed subset of $\R^M$.
\end{lemma}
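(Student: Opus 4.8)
The plan is to realise $\hom(M,\R)$ as an intersection of closed subsets of $\R^M$. Recall that $\R^M$ carries the product topology, equivalently the topology of pointwise convergence, so that for each $z \in M$ the projection
\[
	\pi_z \colon \R^M \longrightarrow \R, \qquad f \longmapsto f(z)
\]
is continuous. A function $f \in \R^M$ belongs to $\hom(M,\R)$ precisely when it preserves each primitive operation of the signature $\{+, \lor, \land\} \cup \Dyad$; that is, when $f(x + y) = f(x) + f(y)$, $f(x \lor y) = f(x) \lor f(y)$ and $f(x \land y) = f(x) \land f(y)$ for all $x, y \in M$, and $f(t^M) = t^\R$ for every dyadic rational $t \in \Dyad$.

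The key observation is that each of these conditions carves out a closed subset of $\R^M$. For a fixed pair $(x, y) \in M \times M$, the map $f \mapsto \pi_{x + y}(f) - \pi_x(f) - \pi_y(f)$ is continuous, being assembled from the continuous projections $\pi_{x+y}, \pi_x, \pi_y$ and the continuous operations of $\R$; hence the set $\{f \in \R^M \mid f(x + y) = f(x) + f(y)\}$ is closed, as the preimage of the closed singleton $\{0\}$. The same argument applies verbatim to $\lor$ and $\land$, since $\max, \min \colon \R^2 \to \R$ are continuous. Finally, for a dyadic constant $t$ the set $\{f \in \R^M \mid f(t^M) = t^\R\} = \pi_{t^M}^{-1}(\{t^\R\})$ is closed, again because $\pi_{t^M}$ is continuous and $\{t^\R\}$ is closed in $\R$.

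It then suffices to observe that $\hom(M,\R)$ is exactly the intersection, over all $x, y \in M$ and all $t \in \Dyad$, of the closed sets just described, and an arbitrary intersection of closed subsets is closed; this yields the claim. I expect no genuine obstacle here: the only point requiring care is that the product topology makes each projection continuous and that the operations on $\R$ are continuous, which is precisely what turns each homomorphism equation into a closed condition.
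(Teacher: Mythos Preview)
Your proof is correct and follows essentially the same approach as the paper: both express $\hom(M,\R)$ as an intersection, over all primitive operation symbols and all tuples of elements of $M$, of the loci where a single preservation equation holds, and argue that each such locus is closed by continuity of the projections and of the operations on $\R$. The paper phrases the last step uniformly as ``$\R$ is Hausdorff, so the equaliser of two continuous maps is closed'', whereas you spell it out operation by operation, but this is only a cosmetic difference.
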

\begin{proof}
	The idea is that $\hom(M, \R)$ is closed because it is defined by the equations that express the preservation of primitive operation symbols of $\DLM$.
	
	To make this precise, set $\mathcal{L} = \{+, \lor, \land\} \cup \Dyad$.
	For each $h \in \mathcal{L}$, we let $\ar h$ denote the arity of $h$; moreover, we let $h^M$ denote the interpretation of $h$ in $M$, and we let $h_{\R}$ denote the interpretation of $h$ in $\R$.
	For $a \in M$, we let $\pi_a\colon \R^M\rightarrow \R$ denote the projection onto the $a$-th coordinate (which is continuous).
	We have
	\begin{align*}
		& \hom(M, \R)\\
		& = \bigcap_{h \in \mathcal{L}} \bigcap_{a_1, \dots,a_{\ar h} \in M} \left\{x \colon M \rightarrow \R \mid x\mathopen{}\left(h^M(a_1, \dots,a_{\ar h})\right)\mathclose{}  =  h^\R(x(a_1), \dots, x(a_{\ar h}))\right\}\\
		& = \bigcap_{h \in \mathcal{L}} \bigcap_{a_1, \dots, a_{\ar h} \in M} \left\{x \in \R^M \mid \pi_{h^M(a_1, \dots, a_{\ar h})}(x)  =  h^\R\mathopen{}\left(\pi_{a_1}(x), \dots, \pi_{a_{\ar h}}(x)\right)\mathclose{}\right\}.
	\end{align*}
	By \cref{r:all operations in V are mon and cont}, the function $h^\R$ is continuous; therefore, the function from $\R^M$ to $\R$ which maps $x$ to $ h^\R(\pi_{a_1}(x), \dots, \pi_{a_{\ar h}}(x))$ is continuous.
	Since $\R$ is Hausdorff, the set
	\[
		\left\{x \in \R^M\mid \pi_{h^M(a_1, \dots, a_{\ar h})}(x) = h^\R\mathopen{}\left(\pi_{a_1}(x), \dots, \pi_{a_{\ar h}}(x)\right)\mathclose{}\right\}
	\]
	is closed.
\end{proof}

Concerning the proof of \cref{l:closed} above, the reader can compare \cite[Proposition~2.3(a)]{LambekRattray1979}, whose application shows that $\hom(M, \R)$ is an equaliser of a power of $\R$.

\begin{lemma} \label{l:char-compact-dist}
	For every algebra $M$ in the signature $\{+, \lor, \land\} \cup \Dyad$, the topology on $\hom(M, \R)$ is compact (or, equivalently, $\hom(M, \R)$ is a compact ordered space) if, and only if, the function $\disthom[M]$ is finite-valued.
\end{lemma}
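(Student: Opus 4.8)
The plan is to exploit the inclusion $\hom(M,\R) \subseteq \R^M$ and to translate finiteness of $\disthom[M]$ into \emph{pointwise boundedness} of the homomorphisms, after which Tychonoff's theorem does the work. Throughout, recall that the topology and order on $\hom(M,\R)$ are the subspace topology and order induced from $\R^M$, so that each evaluation map $\ev_x \colon \hom(M,\R) \to \R$ is the restriction of the projection $\pi_x \colon \R^M \to \R$ and is therefore continuous.

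For the implication from compactness to finiteness, I would argue as follows. Assume the topology on $\hom(M,\R)$ is compact. For each $x \in M$, the image $\{f(x) \mid f \in \hom(M,\R)\} = \ev_x[\hom(M,\R)]$ is compact by \cref{p:image-of-compact}, hence bounded in $\R$. Since $\disthom[M](x,y) = \sup_{f \in \hom(M,\R)} \lvert f(x) - f(y)\rvert \leq \sup_f \lvert f(x)\rvert + \sup_f \lvert f(y)\rvert$, both suprema being finite forces $\disthom[M](x,y) < \infty$ for all $x,y \in M$.

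For the converse, assume $\disthom[M]$ is finite-valued. The key observation is that $0 \in \Dyad$, and every homomorphism preserves the constant $0$, so $f(0^M) = 0$ for all $f \in \hom(M,\R)$; consequently $\disthom[M](x, 0^M) = \sup_{f} \lvert f(x)\rvert$. Finiteness thus yields, for each $x \in M$, a real number $b_x \geq 0$ with $\{f(x) \mid f \in \hom(M,\R)\} \subseteq [-b_x, b_x]$, i.e.\ $\hom(M,\R) \subseteq K \df \prod_{x \in M} [-b_x, b_x]$. By Tychonoff's theorem (\cref{t:Thych}), $K$ is a compact subspace of $\R^M$. By \cref{l:closed}, $\hom(M,\R)$ is closed in $\R^M$, hence a closed subspace of the compact space $K$, so it is compact by \cref{p:closed-in-compact}. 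Finally, for the parenthetical equivalence: the order induced from the product order of $\R^M$ is a partial order and is closed by \cref{l:order-closed}, so compactness of the topology is exactly the remaining condition needed for $\hom(M,\R)$ to be a compact ordered space in the sense of \cref{d:compact-ordered-space}.

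The crux of the argument—rather than a genuine obstacle—is the single observation that, because $0$ is a distinguished nullary operation forced to $0$ in every homomorphism, finiteness of $\disthom[M]$ is equivalent to the family of all homomorphisms being bounded at each $x \in M$; this is precisely the pointwise-boundedness hypothesis that lets Tychonoff and the closedness of $\hom(M,\R)$ deliver compactness. Every other step is routine and uses only the general topology already recorded in the excerpt.
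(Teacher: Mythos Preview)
Your proof is correct and follows essentially the same route as the paper's: in both directions you use exactly the same ingredients (continuity of $\ev_x$ plus \cref{p:image-of-compact} for one direction; the observation $f(0^M)=0$ to get pointwise boundedness, then Tychonoff and \cref{l:closed,p:closed-in-compact} for the other), and you handle the parenthetical equivalence via \cref{l:order-closed} just as the paper does. The only difference is cosmetic: you make explicit that $0\in\Dyad$ forces $f(0^M)=0$, whereas the paper simply writes $|f(x)-f(0)|=|f(x)|$ without comment.
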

\begin{proof}
	Suppose $\hom(M, \R)$ is compact.
	Then, for every $x \in M$, the projection 
	\begin{align*}
		\hom(M, \R)	& \longrightarrow \R\\
		f				& \longmapsto 		f(x)
	\end{align*}
	onto the $x$-th coordinate has a compact image, since the image of a compact set under a continuous map is compact by \cref{p:image-of-compact}.
	Therefore, for every $x \in M$, there exists $t \in \R$ such that, for every homomorphism $f \colon M \to \R$, we have $\lvert f(x) \rvert \leq t$.
	Let $x, y \in M$.
	Let $t, s \in \R$ be such that, for every homomorphism $f \colon M \to \R$, we have $\lvert f(x) \rvert \leq t$ and $\lvert f(y) \rvert \leq s$.
	By the previous discussion, $t$ and $s$ exist with these properties.
	Then, for every homomorphism $f \colon M \to \R$, we have
	\[
		\lvert f(x) - f(y) \rvert \leq \lvert f(x) \rvert + \lvert f(y) \rvert \leq t + s.
	\]
	Thus, 
	\[
		\disthom[M](x,y) = \sup_{f \in \hom(M, \R)} \lvert f(x) - f(y) \rvert \leq t+s < \infty.
	\]
	Hence, $\disthom[M]$ is finite-valued.
	
	For the converse direction, let us suppose that $\disthom[M]$ is finite-valued.
	Then, for every $x \in M$, we have 
	\[
		\infty > \disthom[M](x,0) = \sup_{f \in \hom(M, \R)} \lvert f(x) - f(0) \rvert = \sup_{f \in \hom(M, \R)} \lvert f(x)\rvert.
	\]
	Therefore, for every $x \in M$, the image of the projection $\hom(M, \R)$ onto the $x$-th coordinate is bounded.
	Therefore, there exists a family $(t_x)_{x \in M}$ of real numbers such that $\hom(M, \R) \seq \prod_{x \in M}[-t_x, t_x]$.
	By Thychonoff's theorem, the space $\prod_{x \in M}[-t_x, t_x]$ is compact.
	By \cref{l:closed}, $\hom(M, \R)$ is a closed subspace of $\R^M$, and thus it is a closed subspace of $\prod_{x \in M}[-t_x, t_x]$. 
	Since a closed subspace of a compact space is compact (\cref{p:closed-in-compact}), $\hom(M, \R)$ is compact.
	
	By \cref{l:order-closed}, the order of $\hom(M, \R)$ is closed.
	Therefore, $\hom(M, \R)$ is a compact ordered space if, and only if, it is compact.
\end{proof}

\begin{remark} \label{r:cont-mon-ev}
	Every element in the image of the canonical homomorphism 
	\[
		\ev \colon M \to \R^{\hom(M, \R)}
	\]
	is an order-preserving continuous function.
\end{remark}

\begin{lemma} \label{l:order-separating}
	Let $M$ be an algebra in the signature $\{+, \lor, \land\} \cup \Dyad$.
	The image of the canonical homomorphism $\ev \colon M \to \R^{\hom(M, \R)}$ is order-separating.
\end{lemma}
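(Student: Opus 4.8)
The goal is to show that the image of $\ev \colon M \to \R^{\hom(M, \R)}$ order-separates the elements of $\hom(M, \R)$; that is, for all $f, g \in \hom(M, \R)$ with $f \not\geq g$, there is an element in the image of $\ev$ that fails to be ordered the same way. Unwinding the definition of order-separation and recalling that the elements of the image of $\ev$ are precisely the evaluation functions $\ev_x \colon \hom(M, \R) \to \R$ for $x \in M$, I need to produce, for any two homomorphisms $f, g \colon M \to \R$ with $f \not\geq g$, an element $x \in M$ such that $\ev_x(f) \not\geq \ev_x(g)$, i.e.\ $f(x) \not\geq g(x)$, which means $f(x) < g(x)$.

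The plan is to translate the hypothesis $f \not\geq g$ directly. The order on $\hom(M, \R)$ is the induced (pointwise) order coming from the inclusion $\hom(M, \R) \seq \R^M$, so $f \geq g$ holds if and only if $f(x) \geq g(x)$ for every $x \in M$. First I would spell this out: $f \not\geq g$ means precisely that it is \emph{not} the case that $f(x) \geq g(x)$ for all $x \in M$. Hence there exists some witness $x \in M$ with $f(x) < g(x)$, equivalently $f(x) \not\geq g(x)$.

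With such an $x$ in hand, I would then observe that $\ev_x(f) = f(x)$ and $\ev_x(g) = g(x)$ by definition of the evaluation map, so $\ev_x(f) = f(x) < g(x) = \ev_x(g)$, giving $\ev_x(f) \not\geq \ev_x(g)$. Since $\ev_x$ is an element of the image of $\ev \colon M \to \R^{\hom(M,\R)}$ and is an order-preserving continuous function by \cref{r:cont-mon-ev}, this exhibits an element of the image witnessing that $f$ and $g$ are order-separated. As $f$ and $g$ were arbitrary homomorphisms with $f \not\geq g$, the image of $\ev$ is order-separating.

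I do not anticipate a genuine obstacle here: the statement is essentially a reformulation of the fact that the order on $\hom(M,\R)$ is pointwise, so the only subtlety is bookkeeping about which direction of the inequality the conventions in \cref{s:init-preorder} (initial/induced preorder) and the definition of order-separation require. The one point deserving care is to make sure the induced order on $\hom(M,\R) \seq \R^M$ really is coordinatewise comparison of values, which is exactly the product/induced order described earlier, and that the definition of order-separating (for all $x,y$ with $x \not\geq y$ there is $f$ with $f(x) \not\geq f(y)$) is applied with the roles of the ambient space $\hom(M,\R)$ and the separating family $\{\ev_x\}_{x \in M}$ correctly matched.
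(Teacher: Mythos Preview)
Your proposal is correct and follows essentially the same approach as the paper: both arguments simply unwind the pointwise (induced) order on $\hom(M,\R)$ to find a witness $x \in M$ with $f(x) \not\geq g(x)$, and observe that $\ev_x$ then separates $f$ and $g$. The paper's proof is just a terser version of what you wrote.
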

\begin{proof}
	Let $f, g \in \hom(M, \R)$, and suppose $f \not \leq g$.
	Then, there exists $x \in M$ such that $f(x) \not\leq g(x)$, i.e.\ $\ev_x(f) \not \leq \ev_x(g)$.
\end{proof}

\begin{theorem}\label{t:char-algebras}
	The following conditions are equivalent for an algebra $M$ in the signature $\{+, \lor, \land\} \cup \Dyad$.
	\begin{enumerate}
		\item \label{i:some} The algebra $M$ is isomorphic to $\Cleq(X, \R)$ for some compact ordered space $X$.
		\item \label{i:dist} The function $\disthom[M]$ is a metric and $M$ is Cauchy complete with respect to it.
		\item \label{i:ev} The canonical homomorphism $\ev \colon M \to \R^{\hom(M, \R)}$ is injective, $\hom(M, \R)$ is a compact ordered space, and the image of $\ev$ is $\Cleq(\hom(M, \R), \R)$.
	\end{enumerate}
\end{theorem}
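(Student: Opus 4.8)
The plan is to prove the equivalence of the three conditions by establishing a cycle of implications, following the structure already laid out by the preceding lemmas. The main work will be showing $\ref{i:ev} \Rightarrow \ref{i:some}$ and $\ref{i:dist} \Rightarrow \ref{i:ev}$, since $\ref{i:some} \Rightarrow \ref{i:dist}$ is essentially handed to us by the earlier results.

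First I would prove $\ref{i:some} \Rightarrow \ref{i:dist}$. Suppose $M \cong \Cleq(X, \R)$ for a compact ordered space $X$. By \cref{l:dist-finite}, the function $\disthom[\Cleq(X, \R)]$ is the uniform metric; being the uniform metric, it is genuinely a metric (in particular it satisfies the identity of indiscernibles, since two continuous functions agreeing at every point are equal). Cauchy completeness follows from the fact that $\Cleq(X, \R)$ is closed under uniform limits: a uniformly Cauchy sequence of continuous order-preserving real-valued functions converges uniformly to a continuous function, which is moreover order-preserving as a pointwise limit of order-preserving functions. Transporting along the isomorphism gives the claim for $M$.

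Next I would handle $\ref{i:dist} \Rightarrow \ref{i:ev}$. Assume $\disthom[M]$ is a metric and $M$ is Cauchy complete with respect to it. Injectivity of $\ev$ is immediate from \cref{l:injective}, since $\disthom[M]$ being a metric includes the identity of indiscernibles. Finiteness of $\disthom[M]$ (part of being a metric) gives, via \cref{l:char-compact-dist}, that $\hom(M, \R)$ is a compact ordered space. It remains to identify the image of $\ev$ with $\Cleq(\hom(M, \R), \R)$. By \cref{r:cont-mon-ev}, the image is contained in $\Cleq(\hom(M, \R), \R)$. For the reverse inclusion, I would invoke the ordered Stone-Weierstrass theorem (\cref{t:StWe}): the image of $\ev$ is a subset of $\Cleq(\hom(M, \R), \R)$ closed under $+$, $\lor$, $\land$, containing all dyadic constants, and order-separating by \cref{l:order-separating}; hence every element of $\Cleq(\hom(M, \R), \R)$ is a uniform limit of a sequence in the image of $\ev$. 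Since $\ev$ is an isometry from $(M, \disthom[M])$ onto its image (this is precisely \cref{r:uniform} combined with \cref{l:dist-finite} applied to the compact ordered space $\hom(M,\R)$) and $M$ is Cauchy complete, the image of $\ev$ is itself Cauchy complete, hence closed under uniform limits in $\Cleq(\hom(M, \R), \R)$. Therefore the image equals $\Cleq(\hom(M, \R), \R)$.

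Finally, $\ref{i:ev} \Rightarrow \ref{i:some}$ is essentially immediate: if $\hom(M, \R)$ is a compact ordered space, $\ev$ is injective, and its image is $\Cleq(\hom(M, \R), \R)$, then $\ev$ is an isomorphism of $M$ onto $\Cleq(X, \R)$ for $X = \hom(M, \R)$, so $\ref{i:some}$ holds. The main obstacle I expect is the careful bookkeeping in the $\ref{i:dist} \Rightarrow \ref{i:ev}$ step: one must argue that the image of $\ev$ is \emph{closed} under uniform limits, and this requires knowing that $\ev \colon (M, \disthom[M]) \to (\Cleq(\hom(M,\R),\R), \text{uniform metric})$ is an isometric embedding whose domain is complete, so that the image is a complete—hence closed—metric subspace. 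Combining this closure with the density supplied by the ordered Stone-Weierstrass theorem is what forces surjectivity onto $\Cleq(\hom(M, \R), \R)$.
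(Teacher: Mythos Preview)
Your proof is correct and follows essentially the same cycle of implications as the paper, invoking the same supporting lemmas at each stage. The only minor difference is in the step \ref{i:dist} $\Rightarrow$ \ref{i:ev}: the paper explicitly verifies that every homomorphism $\ev[M] \to \R$ is evaluation at some point of $\hom(M,\R)$ (so that $\disthom[{\ev[M]}]$ coincides with the uniform metric), whereas you appeal directly to \cref{r:uniform} to see that $\ev$ is an isometry onto its image---a slight streamlining of the same idea.
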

\begin{proof}	
	Suppose \cref{i:some} holds.
	Then, by \cref{l:dist-finite}, $\disthom[\Cleq(X, \R)]$ is a metric; precisely, $\disthom[\Cleq(X, \R)]$ is the uniform metric.
	Since uniform limit of continuous functions is continuous and pointwise limits of order-preserving functions is order-preserving, it follows that $\Cleq(X, \R)$ is Cauchy complete with respect to $\disthom[\Cleq(X, \R)]$.
	\Cref{i:dist} follows.
	
	Suppose now \cref{i:dist} holds.
	By \cref{l:injective}, the function $\ev$ is injective.
	By \cref{l:char-compact-dist}, $\hom(M, \R)$ is a compact ordered space.
	By \cref{r:cont-mon-ev}, every element in the image of $\ev$ is an order-preserving continuous function.
	By \cref{l:order-separating}, the image of $\ev$ is order-separating.
	Therefore, by the ordered version of the Stone-Weierstrass theorem (\cref{t:StWe}), the closure under uniform convergence of the image of $\ev$ is $\Cleq(\hom(M, \R), \R)$.
	We claim that the function $\disthom[{\ev[M]}]$ is the uniform metric.
	It is enough to prove that, for every homomorphism $f \colon \ev[M] \to \R$, there exists $g \in \hom(M, \R)$ such that, for every $h \in \ev[M]$, we have $f(h) = h(g)$.
	Indeed, if $f \colon \ev[M] \to \R$ is a homomorphism, we define $g$ as the composite $f \circ \ev \colon M \to \R$; then, for every $h = \ev(x)$, we have 
	\[
		f(h) = f(\ev(x)) = (f \circ \ev)(x) = g(x) = \ev_x(g) = \ev(x)(g) = h(g).
	\]
	This proves our claim that the function $\disthom[{\ev[M]}]$ is the uniform metric.
	Since $M$ is Cauchy complete with respect to $\disthom[M]$, $\ev[M]$ is Cauchy complete with respect to $\disthom[{\ev[M]}]$, i.e.\ the uniform metric.
	Therefore, $\ev[M] = \Cleq(\hom(M, \R), \R)$.
	In conclusion, \cref{i:ev} holds.
	
	It is immediate that \cref{i:ev} implies \cref{i:some}.
\end{proof}

\begin{theorem} \label{t:one-char}
	The category of compact ordered spaces is dually equivalent to the category of algebras $M$ in the signature $\{+, \lor, \land\} \cup \Dyad$ such that $\disthom[M]$ is a metric and $M$ is Cauchy complete with respect to it.
\end{theorem}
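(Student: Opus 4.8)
The plan is to assemble the statement from two facts already in hand: the abstract duality recorded in \cref{r:dyad-full}, and the characterisation of the relevant algebras in \cref{t:char-algebras}. Recall that \cref{r:dyad-full} asserts that the contravariant functor $\Cleq(-,\R)\colon \CompOrd \to \ALG\,\{+, \lor, \land\} \cup \Dyad$ is full and faithful, and hence that $\CompOrd$ is dually equivalent to the full subcategory of $\ALG\,\{+, \lor, \land\} \cup \Dyad$ whose objects are those algebras isomorphic to $\Cleq(X, \R)$ for some compact ordered space $X$. The target category in the present statement is likewise a full subcategory of $\ALG\,\{+, \lor, \land\} \cup \Dyad$: its morphisms are, by convention, all homomorphisms. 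Thus both categories are full subcategories of the same ambient category, and each is determined entirely by its class of objects.

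First I would observe that the two full subcategories have exactly the same objects. Indeed, \cref{t:char-algebras} establishes the equivalence of its conditions \cref{i:some} and \cref{i:dist}: for an algebra $M$ in the signature $\{+, \lor, \land\} \cup \Dyad$, the property of being isomorphic to $\Cleq(X, \R)$ for some compact ordered space $X$ holds if, and only if, $\disthom[M]$ is a metric and $M$ is Cauchy complete with respect to it. Consequently the full subcategory appearing implicitly in \cref{r:dyad-full} and the full subcategory named in the statement of \cref{t:one-char} consist of precisely the same algebras, so they are literally the same subcategory of $\ALG\,\{+, \lor, \land\} \cup \Dyad$.

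Combining these two observations yields the result: by \cref{r:dyad-full} the category $\CompOrd$ is dually equivalent to the former subcategory, and by the object-equality just noted this subcategory equals the category of algebras $M$ with $\disthom[M]$ a metric under which $M$ is Cauchy complete. Hence $\CompOrd$ is dually equivalent to the latter, as claimed. I do not expect any genuine obstacle here, since all the substantive work—the ordered Stone--Weierstrass theorem \cref{t:StWe}, the representation of structure-preserving maps as evaluations in \cref{t:morphism-is-evaluation}, and the compactness analysis in \cref{l:char-compact-dist}—has already been carried out in the preceding sections; the only point requiring a line of care is the bookkeeping that both categories are full subcategories, so that the dual equivalence of \cref{r:dyad-full} transports verbatim once the object classes are identified.
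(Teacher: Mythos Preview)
Your proposal is correct and follows essentially the same approach as the paper's proof: combine \cref{r:dyad-full} (giving the dual equivalence with the full subcategory of algebras isomorphic to some $\Cleq(X,\R)$) with the equivalence \cref{i:some}\,$\Leftrightarrow$\,\cref{i:dist} in \cref{t:char-algebras} to identify that subcategory with the one described in the statement. The paper's version is terser, omitting your explicit remark that both sides are full subcategories of the same ambient category, but the argument is the same.
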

\begin{proof}
	By \cref{r:dyad-full}, the category of compact ordered spaces is dually equivalent to the category of $(\{+, \lor, \land\} \cup \Dyad)$-algebras which are isomorphic to $\Cleq(X, \R)$ for some compact ordered space $X$.
	By the equivalence between \cref{i:some,i:dist} in \cref{t:char-algebras}, these algebras are precisely the algebras such that $\disthom[M]$ is a metric and $M$ is Cauchy complete with respect to it.
\end{proof}

%%%%%%%%%%%%%%%%%%%%%%%%%%%%%%%%%%%   SECTION   %%%%%%%%%%%%%%%%%%%%%%%%%%%%%%%%%%%%

\section{An intrinsic definition of the metric}

Next, we replace the function $\disthom$ with a function which has a more intrinsic definition.

\begin{definition}\label{d:dyadic-ell}
	A \emph{\dlm}\index{lattice-ordered!monoid!dyadic} is an algebra $\alge{M}$ in the signature $\{ + , \lor, \land\}\cup\Dyad$ (where $+$, $\lor$ and $\land$ have arity $2$, and each element of $\Dyad$ has arity $0$) with the following properties.
	\begin{enumerate}[label = DM\arabic*., ref = DM\arabic*, start = 0, leftmargin=\widthof{LTE4'.} + \labelsep]
		\item \label[axiom]{ax:R0} $\langle M; + , \lor, \land,0\rangle$ is {\alm} (see \cref{d:lm}).
		\item \label[axiom]{ax:R1} For all $\alpha, \beta \in \Dyad$ with $\alpha \leq \beta$ we have $\alpha^\alge{M} \leq \beta^\alge{M}$.
		\item \label[axiom]{ax:R2} For all $\alpha, \beta \in \Dyad$ we have $\alpha^\alge{M} +^\alge{M} \beta^\alge{M} = (\alpha +^\R \beta)^\alge{M}$.
		\item \label[axiom]{ax:R3} For all $x \in M$, there exist $\alpha, \beta \in \Dyad$ such that $\alpha^\alge{M} \leq x \leq \beta^\alge{M}$.
	\end{enumerate}
\end{definition}

\Cref{ax:R1,ax:R2,ax:R3} say that the constants in $\Dyad$ behave (with respect to $+$, $\lor$, $\land$) exactly as in the algebra $\R$ (or $\Dyad$).
In other words, \cref{ax:R1,ax:R2,ax:R3} are (equivalent to) the positive atomic diagram of $\Dyad$ (cf. e.g.\ \cite[Section~3.2, p.\ 211]{Prest2003}).
Notice that \cref{ax:R3} is equivalent to the \cref{ax:U3} of order unit for {\ulms} (\cref{d:lm}).

The sets $\R$ and $\Dyad$ are given the structure of {\adlm} in an obvious way.

We let $\Dyad_{\geq 0}$ denote the set $\Dyad \cap [0, \infty)$.

\begin{notation} \label{n:dist}
	Given {\adlm} $M$ and given $x,y \in M$ we set
	\[
		\distint[M](x, y) \df \inf\left\{t \in \Dyad_{\geq 0} \mid y + (-t)^M \leq x \leq y + t^M\right\}.
	\]
	When $M$ is understood, we write simply $\distint(x, y)$ for $\distint[M](x, y)$.
\end{notation}

\begin{remark} \label{r:dist-on-R}
	On $\R$, the function $\distint$ is the euclidean distance, i.e., for every $x, y \in \R$, we have
	\[
		\distint[\R](x,y) = \lvert x - y \rvert.
	\]
\end{remark}

\begin{remark}
	On algebras of real-valued functions, the function $\distint$ is the uniform metric.
	Indeed, if $X$ is a set, and $f$ and $g$ are functions from $X$ to $\R$, we have
	\begin{equation*}
		\begin{split}
			\inf \{t \in \Dyad_{\geq 0} \mid g - t \leq f \leq g + t\}
			& = \inf \{t \in \Dyad_{\geq 0} \mid -t \leq f -g \leq t\}\\
			& = \inf \{t \in \Dyad_{\geq 0} \mid \lvert f - g\rvert \leq t\}\\
			& = \sup \lvert f - g \rvert;
		\end{split}
	\end{equation*}
	therefore, if $M \seq \R^X$ is {\adlm}, and $f$ and $g$ are elements of $M$, we have
	\[
		\distint[M](f, g) = \sup_{x \in X}\lvert f(x) - g(x) \rvert.
	\]
\end{remark}

\begin{lemma}
	For every {\dlm} $M$, the function $\distint[M]$ is finite-valued.
\end{lemma}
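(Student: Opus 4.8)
The statement to prove is that $\distint[M]$ is finite-valued for every {\dlm} $M$. The plan is to show that the infimum defining $\distint[M](x,y)$ is taken over a non-empty set, since an infimum over a non-empty subset of $\Dyad_{\geq 0}$ (which is bounded below by $0$) is automatically a real number rather than $+\infty$.

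First I would fix $x, y \in M$ and recall the definition
\[
	\distint[M](x, y) = \inf\left\{t \in \Dyad_{\geq 0} \mid y + (-t)^M \leq x \leq y + t^M\right\}.
\]
The key observation is that this set is non-empty: I would use \cref{ax:R3} to find dyadic bounds for both $x$ and $y$. Explicitly, by \cref{ax:R3} applied to $x$, there exist $\alpha_1, \beta_1 \in \Dyad$ with $\alpha_1^M \leq x \leq \beta_1^M$, and applied to $y$ there exist $\alpha_2, \beta_2 \in \Dyad$ with $\alpha_2^M \leq y \leq \beta_2^M$.

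Next I would produce a single dyadic $t$ that works. The goal is to find $t \in \Dyad_{\geq 0}$ with $y + (-t)^M \leq x \leq y + t^M$. Since $x \leq \beta_1^M$ and $\alpha_2^M \leq y$, I want $\beta_1^M \leq y + t^M$; it suffices that $\beta_1^M \leq \alpha_2^M + t^M = (\alpha_2 + t)^M$ by \cref{ax:R2}, which by \cref{ax:R1} holds provided $\beta_1 \leq \alpha_2 + t$ in $\Dyad$, i.e. $t \geq \beta_1 - \alpha_2$. Symmetrically, from $\alpha_1^M \leq x$ and $y \leq \beta_2^M$, to get $y + (-t)^M \leq x$ it suffices that $y + (-t)^M \leq \alpha_1^M$, for which $\beta_2^M + (-t)^M = (\beta_2 - t)^M \leq \alpha_1^M$ suffices, holding when $\beta_2 - t \leq \alpha_1$, i.e. $t \geq \beta_2 - \alpha_1$. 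Thus choosing any dyadic $t \geq \max\{\beta_1 - \alpha_2,\ \beta_2 - \alpha_1,\ 0\}$ (such dyadics exist and are $\geq 0$) places $t$ in the defining set.

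The main step to carry out carefully is assembling these inequalities using order-preservation of $+$ (which is \cref{l:sum positive}, valid since $M$ is {\alm}) together with \cref{ax:R1,ax:R2} to translate dyadic arithmetic into the order relations in $M$. Once the defining set is shown to be a non-empty subset of $\Dyad_{\geq 0}$, it is bounded below by $0$, so its infimum is a finite nonnegative real number, establishing that $\distint[M]$ is finite-valued. I do not anticipate a genuine obstacle here; the only mild care needed is to route every comparison through \cref{ax:R1,ax:R2} rather than reasoning directly in $\R$, since the constants $t^M$ live in $M$ and their behaviour is governed precisely by those axioms.
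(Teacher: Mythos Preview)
Your proposal is correct and follows essentially the same approach as the paper: use \cref{ax:R3} to bound $x$ and $y$ by dyadic constants, then exhibit a concrete $t \in \Dyad_{\geq 0}$ in the defining set via \cref{ax:R1,ax:R2} and monotonicity of $+$. The only cosmetic difference is that the paper picks a single $\alpha \in \Dyad_{\geq 0}$ with $(-\alpha)^M \leq x,y \leq \alpha^M$ and then takes $t = 2\alpha$, whereas you track four separate bounds $\alpha_1,\beta_1,\alpha_2,\beta_2$; the underlying argument is the same.
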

\begin{proof}
	Let $x, y \in M$.
	By \cref{ax:R3}, there exists $\alpha \in \Dyad_{\geq 0}$ such that $(-\alpha)^M \leq x \leq \alpha^M$ and $(-\alpha)^M \leq y \leq \alpha^M$.
	Then, we have
	\[
		y + (-2\alpha)^M \leq \alpha^M + (-2 \alpha)^M = (- \alpha)^M \leq x \leq \alpha^M = (-\alpha)^M + (2 \alpha)^M \leq y + (2 \alpha)^M.
	\]
	Thus, $\distint[M](x) \leq 2 \alpha$.
\end{proof}

\begin{proposition} \label{p:Repn}
	Every subdirectly irreducible {\lm} is totally ordered.
\end{proposition}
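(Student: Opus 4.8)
The plan is to adapt the congruence-theoretic argument used for MV-monoidal algebras in \cref{t:sub irr is totally ordered} to the simpler single-operation setting of commutative distributive $\ell$-monoids; indeed this is essentially the result of \cite{Repn} on which that argument was modelled, so one clean option is simply to invoke it. Reproducing the argument, let $M$ be a commutative distributive $\ell$-monoid and let $\theta$ be a lattice congruence on $M$ with $\lvert M/\theta \rvert = 2$. I would set
\[
	\theta^* \df \{(a,b) \in M \times M \mid \forall x \in M\ (a + x, b + x) \in \theta\},
\]
and write $\0(\theta)$, $\1(\theta)$ for the bottom and top classes of the two-element lattice $M/\theta$.

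The first step is to prove that $\theta^*$ is the greatest $\{+, \lor, \land, 0\}$-congruence contained in $\theta$, the analogue of \cref{l:char-theta*}. Taking $x = 0$ gives $\theta^* \seq \theta$, and any full congruence $\rho \seq \theta$ lands inside $\theta^*$, since $(a,b) \in \rho$ forces $(a + x, b + x) \in \rho \seq \theta$ for all $x$. That $\theta^*$ is an equivalence relation respecting $\lor$ and $\land$ is immediate from $\theta$ being a lattice congruence together with the distributivity of $+$ over $\lor$ and $\land$ (\cref{ax:M3-gen}), which lets one pull the translation $\mathord{+}\, x$ inside the lattice operations. Crucially, compatibility with $+$ is here much easier than in the MV-monoidal case: if $(a,a'), (b,b') \in \theta^*$, then for every $x$ we have $(a + (b + x), a' + (b + x)) \in \theta$ and, using commutativity, $(a' + b + x, a' + b' + x) \in \theta$, so transitivity of $\theta$ yields $(a + b + x, a' + b' + x) \in \theta$; hence $(a + b, a' + b') \in \theta^*$. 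There is no need for the four-case contradiction argument of \cref{l:char-theta*}, precisely because there is a single commutative monoid operation rather than the pair $\oplus, \odot$.

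The second step mirrors \cref{l:existence of lattice congruence}: since the lattice reduct of $M$ is distributive, it is a subdirect product of two-element lattices, giving lattice congruences $\{\theta_i\}_{i \in I}$ with $\lvert M/\theta_i \rvert = 2$ and $\bigcap_{i} \theta_i = \Delta$. As $\Delta \seq \theta_i^* \seq \theta_i$, also $\bigcap_i \theta_i^* = \Delta$, and subdirect irreducibility forces $\theta_j^* = \Delta$ for some $j$. Finally, for the total-order conclusion I would argue by contradiction: if $a, b$ are incomparable, then $a \land b \neq a$ and $a \land b \neq b$, so $\theta_j^* = \Delta$ produces $x, y$ with $((a \land b) + x, a + x) \notin \theta_j$ and $((a \land b) + y, b + y) \notin \theta_j$. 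Monotonicity of $+$ (\cref{l:sum positive}) places $(a \land b) + x$ and $(a \land b) + y$ in $\0(\theta_j)$ and $a + x$, $b + y$ in $\1(\theta_j)$, whereupon distributivity of $+$ over $\lor$ and $\land$ gives
\[
	\0(\theta_j) \ni (a \land b) + (x \lor y) = (a + (x \lor y)) \land (b + (x \lor y)) \geq (a + x) \land (b + y) \in \1(\theta_j),
\]
contradicting $\0(\theta_j) < \1(\theta_j)$.

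The main obstacle is the verification that $\theta^*$ is a congruence, exactly as in the MV-monoidal development; but the single-operation setting collapses the delicate part of \cref{l:char-theta*} to the short transitivity computation above, so the real work is bookkeeping. The remaining risk is purely that of correctly tracking the two-element quotient $M/\theta$ and the directions of the inequalities, which the monotonicity and distributivity axioms render routine.
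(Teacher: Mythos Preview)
Your argument is correct. The paper's own proof is simply a citation: it points to \cite[Lemma~1.4]{Repn} and to \cite[Corollary~2]{Merlier} (with a historical remark attributing the result to Fuchs) and moves on.

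What you have done instead is reproduce the Repnitzkii-style argument in full, specialising the machinery of \cref{l:char-theta*}--\cref{t:sub irr is totally ordered} to the single-operation setting. Your observation that compatibility of $\theta^*$ with $+$ collapses to a one-line transitivity argument (no $\oplus/\odot$ case split) is exactly the simplification one expects, and the final contradiction via $(a \land b) + (x \lor y)$ is the correct endgame. So the two approaches agree in mathematical content---your proof \emph{is} the cited result---but differ in presentation: the paper offloads the work to the literature, while you make the thesis self-contained at this point. Either choice is fine; yours has the advantage of keeping the parallel with the MV-monoidal case visible.
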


\begin{proof}
	This is a corollary of \cite[Lemma 1.4]{Repn}, but already in \cite[Corollary 2]{Merlier} it is proved that any {\lm} is a subdirect product of totally ordered ones, and it is asserted, in Remark~3 of the same paper, that this was an unpublished result by L.\ Fuchs.
\end{proof}

\begin{corollary} \label{c:irreducible}
	All subdirectly irreducible {\dlms} are totally ordered.
\end{corollary}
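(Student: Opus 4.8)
The plan is to reduce the statement to \cref{p:Repn} by passing to the $\ell$-monoid reduct. Given \adlm{} $M$, let $M'$ denote its reduct to the signature $\{+,\lor,\land,0\}$, which is \alm{} by \cref{ax:R0}. The underlying lattice order of $M$ and of $M'$ is literally the same, so it suffices to show that if $M$ is subdirectly irreducible as a \dlm{}, then $M'$ is subdirectly irreducible as an \lm{}; \cref{p:Repn} then forces $M'$, and hence $M$, to be totally ordered.

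The crucial observation is that the members of $\Dyad$ are \emph{nullary} operation symbols, and a congruence is required to be compatible only with operations that take arguments, the condition for a constant being vacuous. Hence an equivalence relation on the common carrier is a congruence of $M$ if and only if it is a congruence of $M'$: the two algebras have exactly the same congruences. Moreover, every quotient $M/\theta$ is again \adlm{} (\cref{ax:R1,ax:R2} are preserved by homomorphic images, since homomorphisms preserve the order and the dyadic constants, and \cref{ax:R3} is preserved by surjections), and the $\{+,\lor,\land,0\}$-reduct of $M/\theta$ is precisely $M'/\theta$.

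With this in hand the transfer of subdirect irreducibility is routine. Suppose $\alpha\colon M' \hookrightarrow \prod_{i\in I} A_i$ exhibits $M'$ as a subdirect product of \lms{}, and let $\theta_i$ be the congruence of $M'$ with $M'/\theta_i \cong A_i$, so that $\bigcap_{i\in I}\theta_i = \Delta$. By the previous paragraph each $\theta_i$ is also a congruence of $M$, and the induced map $M \hookrightarrow \prod_{i\in I} M/\theta_i$ is a subdirect representation of $M$ by \dlms{}. Since $M$ is subdirectly irreducible, some projection $M \to M/\theta_j$ is an isomorphism of \dlms{}, i.e.\ $\theta_j = \Delta$; then $\pi_j\alpha\colon M' \to A_j$ is an isomorphism of \lms{}. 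Thus $M'$ is subdirectly irreducible.

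The only point requiring care — and the one I would isolate as a one-line lemma — is this invariance of the congruence lattice under the addition of constant symbols to the signature; everything else is bookkeeping. Once it is granted, \cref{p:Repn} applied to $M'$ yields that $M'$ is totally ordered, and since $M$ carries the same order, $M$ is totally ordered as well.
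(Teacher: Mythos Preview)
Your proof is correct and follows essentially the same approach as the paper, which dispatches the corollary in one line: ``This is an immediate consequence of \cref{p:Repn}, since the constants do not play any role in subdirect irreducibility.'' Your argument is a careful unpacking of exactly that sentence---the key observation that nullary operations impose no constraint on congruences, so the congruence lattices of $M$ and its $\{+,\lor,\land,0\}$-reduct coincide---and the transfer of subdirect irreducibility then follows (indeed, one could shortcut your subdirect-representation argument by noting that subdirect irreducibility is characterised by the congruence lattice having a unique minimal nontrivial element).
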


\begin{proof}
	This is an immediate consequence of \cref{p:Repn}, since the constants do not play any role in subdirect irreducibility.
\end{proof}

\begin{lemma}\label{l:non-triv}
	Let $M$ be a non-trivial {\dlm}. For all $\alpha, \beta \in \Dyad$, we have
	\begin{align}
		\label{e:const-=}	\alpha^M = \beta^M		& \Leftrightarrow \alpha = \beta;\\
		\label{e:const-leq} \alpha^M \leq \beta^M	& \Leftrightarrow \alpha \leq \beta.
	\end{align}
\end{lemma}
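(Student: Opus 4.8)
The plan is to reduce everything to a single observation: in a dyadic $\ell$-monoid, identifying two distinct dyadic constants forces the algebra to be trivial. Granting this, both equivalences follow quickly. For \eqref{e:const-=} the implication $\alpha = \beta \Rightarrow \alpha^M = \beta^M$ is trivial, while the contrapositive of the converse is exactly the observation: if $\alpha \neq \beta$ and $\alpha^M = \beta^M$, then $M$ is trivial, contradicting the hypothesis. For \eqref{e:const-leq} the right-to-left implication is precisely \cref{ax:R1}; for the left-to-right one I would argue by contradiction: if $\alpha^M \leq \beta^M$ but $\beta < \alpha$, then \cref{ax:R1} gives $\beta^M \leq \alpha^M$, hence $\alpha^M = \beta^M$, and \eqref{e:const-=} forces $\alpha = \beta$, a contradiction.

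The heart of the matter is therefore the claim that, if $\alpha^M = \beta^M$ for some dyadics $\alpha \neq \beta$, then $M$ is trivial. Assume without loss of generality $\alpha < \beta$ and set $\gamma \df \beta - \alpha \in \Dyad$, so that $\gamma > 0$. The key computation, using only \cref{ax:R2}, is that the interpretation of the constants is periodic with period $\gamma$: for every $c \in \Dyad$,
\[
	(c + \gamma)^M = ((c - \alpha) + \beta)^M = (c - \alpha)^M + \beta^M = (c - \alpha)^M + \alpha^M = c^M,
\]
where the two middle equalities are instances of \cref{ax:R2} and the hypothesis $\beta^M = \alpha^M$. Iterating gives $(c + n\gamma)^M = c^M$ for all $n \in \N$.

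With periodicity in hand, monotonicity collapses all constants. Indeed, given $c \leq c'$ in $\Dyad$, choose $n \in \N$ with $c + n\gamma \geq c'$; then \cref{ax:R1} yields $c^M \leq c'^M \leq (c + n\gamma)^M = c^M$, so $c^M = c'^M$. As $c$ and $c'$ were arbitrary, every dyadic constant interprets as $0^M = 0$. Finally, \cref{ax:R3} provides, for each $x \in M$, dyadics with $a^M \leq x \leq b^M$; since $a^M = b^M = 0$, we obtain $x = 0$, so $M$ is trivial, proving the claim.

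The only genuinely delicate point is the periodicity identity, which must be arranged so that every rewriting is a legitimate instance of \cref{ax:R2} (note that $c - \alpha \in \Dyad$ even when it is negative); once this is secured, monotonicity and boundedness do the rest. I would also remark that one could instead first reduce to subdirectly irreducible---hence, by \cref{c:irreducible}, totally ordered---algebras via \cref{t:SubRepThe} and project through a nontrivial factor, but the direct argument above needs neither total order nor the subdirect representation.
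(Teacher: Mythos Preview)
Your proof is correct and follows essentially the same approach as the paper's. The paper argues slightly more directly by first showing $0^M = t^M$ for $t = \beta - \alpha$ and then using $nt^M = n\,0^M = 0^M$ to sandwich any $s^M$ between two copies of $0^M$, whereas you phrase the same idea as periodicity of $c \mapsto c^M$ with period $\gamma$; the logical content and the axioms invoked are identical, and your treatment of \eqref{e:const-leq} matches the paper's exactly.
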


\begin{proof}
	Let us first prove \cref{e:const-=}.
	Clearly, if $\alpha = \beta$, then $\alpha^M = \beta^M$.
	For the converse direction, let us suppose $\alpha^M = \beta^M$ and suppose, by way of contradiction, that we have $\alpha \neq \beta$.
	Without loss of generality, we may suppose $\alpha < \beta$.
	Set $t \df \beta - \alpha$.
	Then, $t > 0$.
	We have
	\[
		0^M = \alpha^M + (-\alpha)^M = \beta^M + (-\alpha)^M = (\beta - \alpha)^M = t^M.
	\]
	For every $s \in \Dyad_{\geq 0}$, there exists $n \in \N$ such that $s \leq n t$; then, we have 
	\[
		0^M \leq s^M \leq n t^M = n 0^M = 0^M.
	\]
	Hence, for every $s \in \Dyad_{\geq 0}$, we have $s^M = 0$.
	Analogously, we have $t^M = 0$ for every $t \in \Dyad$ with $t \leq 0$, as well.
	Thus, for every $t \in \Dyad$, we have $t^M = 0$.
	Let $x \in M$.
	By \cref{ax:R3}, there exist $\alpha, \beta \in \Dyad$ such that $\alpha^M \leq x \leq \beta^M$.
	Then, we have $0^M = \alpha^M \leq x \leq \beta^M = 0^M$, which implies $x = 0^M$.
	Therefore, every element of $M$ equals $0^M$.
	This contradicts the hypothesis of non-triviality of $M$.
	We have thus proved \cref{e:const-=}.
	
	By \cref{ax:R1}, if $\alpha \leq \beta$, then $\alpha^M \leq \beta^M$.
	For the converse direction, let us suppose $\alpha^M \leq \beta^M$, and suppose, by way of contradiction, that $\alpha > \beta$.
	Then, we would have $\alpha^M \geq \beta^M$.
	Thus, we would have $\alpha^M = \beta^M$.
	Therefore, by \cref{e:const-=}, we have $\alpha = \beta$: a contradiction.
\end{proof}

\begin{notation}
	For an element $x$ of a {\dlm} $M$ we set
	\begin{align*}
		I_x			& \df \left\{t \in \Dyad \mid t^M \leq x\right\};\\
		S_x			& \df \left\{t \in \Dyad \mid t^M \geq x\right\};\\
		\essinf x	& \df \sup I_x;\\
		\esssup x	& \df \inf S_x.
	\end{align*}
	When $\essinf x = \esssup x$, we let $\ess x$ denote this number.
\end{notation}

\begin{lemma} \label{l:ess}
	Given a totally ordered non-trivial {\dlm} $M$, the values $\essinf x$ and $\esssup x$ are finite and coinciding.
\end{lemma}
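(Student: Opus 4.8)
The plan is to read off both assertions from the two dyadic cuts attached to $x$, using non-triviality and the total order hypothesis. First I would settle finiteness. By the order-unit axiom \cref{ax:R3} there are $\alpha,\beta\in\Dyad$ with $\alpha^M\leq x\leq\beta^M$; hence $\alpha\in I_x$ and $\beta\in S_x$, so neither set is empty. Next, for any $t\in I_x$ and $s\in S_x$ I have $t^M\leq x\leq s^M$, whence $t^M\leq s^M$; since $M$ is non-trivial, \cref{e:const-leq} in \cref{l:non-triv} gives $t\leq s$. Thus every element of $S_x$ is an upper bound for $I_x$ and every element of $I_x$ is a lower bound for $S_x$, so $\essinf x=\sup I_x$ and $\esssup x=\inf S_x$ are finite real numbers, and moreover $\essinf x\leq\esssup x$.

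For the equality, the key observation is that the total order of $M$ forces $I_x\cup S_x=\Dyad$. Indeed, given any $t\in\Dyad$, the elements $t^M$ and $x$ are comparable: if $t^M\leq x$ then $t\in I_x$, and otherwise $x\leq t^M$, so $t^M\geq x$ and $t\in S_x$. I would then argue by contradiction. Note that $I_x$ is a down-set in $\Dyad$ (by \cref{ax:R1}, $s\leq t$ and $t\in I_x$ give $s^M\leq t^M\leq x$), with supremum $\essinf x$, and dually $S_x$ is an up-set with infimum $\esssup x$. If $\essinf x<\esssup x$, then by density of the dyadic rationals there is $r\in\Dyad$ with $\essinf x<r<\esssup x$. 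From $r>\sup I_x$ we get $r\notin I_x$, and from $r<\inf S_x$ we get $r\notin S_x$, contradicting $I_x\cup S_x=\Dyad$. Hence $\essinf x=\esssup x$, so $\ess x$ is well defined.

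The argument is elementary once these two auxiliary facts are in place, and I do not anticipate a serious obstacle. The only point requiring care is the coordinated use of both hypotheses: non-triviality, via \cref{l:non-triv}, is what prevents all dyadic constants from collapsing to $0^M$ (which would make $I_x$ and $S_x$ each equal to all of $\Dyad$ and thereby destroy finiteness), while total order is exactly what yields the covering $I_x\cup S_x=\Dyad$. Since the statement fails if either hypothesis is dropped, I would make sure the appeal to \cref{e:const-leq} is explicitly phrased for the non-trivial algebra $M$ and that the comparability step is justified by the total order rather than by any lattice identity.
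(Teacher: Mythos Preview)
Your proof is correct and follows essentially the same approach as the paper: non-emptiness of $I_x$ and $S_x$ from \cref{ax:R3}, the covering $I_x\cup S_x=\Dyad$ from the total order, and the order comparison of the constants from \cref{l:non-triv}. The only cosmetic difference is that the paper phrases the use of non-triviality as ``$I_x\cap S_x$ has at most one element'' and then concludes directly, whereas you phrase it as ``every element of $I_x$ is below every element of $S_x$'' and finish with a density argument; the two formulations are interchangeable and your write-up is in fact more detailed than the paper's.
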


\begin{proof}
	Since every element is bounded from above and below by some dyadic rational (\cref{ax:R3} in \cref{d:dyadic-ell}), the sets $I_x$ and $S_x$ are non-empty.
	Since $M$ is totally ordered, $I_x \cup S_x = \Dyad$.
	Since $M$ is non-trivial, the intersection of $I_x$ and $S_x$ has at most an element, by \cref{l:non-triv}.
	Therefore, the values $\essinf x$ and $\esssup x$ are finite and coinciding.
\end{proof}

\begin{lemma} \label{l:constant-ess}
	Given a non-trivial {\dlm} $M$, for each $t \in \Dyad$ we have $\ess(t^M) = t$.
\end{lemma}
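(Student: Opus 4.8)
The plan is to reduce the statement to an explicit identification of the two sets $I_{t^M}$ and $S_{t^M}$ that enter the definitions of $\essinf$ and $\esssup$. Fix a dyadic rational $t \in \Dyad$ and consider the element $x \df t^M$. By definition, $I_{t^M} = \{s \in \Dyad \mid s^M \leq t^M\}$ and $S_{t^M} = \{s \in \Dyad \mid s^M \geq t^M\}$, so the whole statement hinges on comparing the interpretations of dyadic constants in $M$, together with the observation that $\ess(t^M)$ is defined precisely when $\essinf(t^M) = \esssup(t^M)$.

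First I would invoke \cref{l:non-triv}: since $M$ is non-trivial, the equivalence $\alpha^M \leq \beta^M \Leftrightarrow \alpha \leq \beta$ holds for all $\alpha, \beta \in \Dyad$. Applying this with $\alpha = s$ and $\beta = t$ gives $I_{t^M} = \{s \in \Dyad \mid s \leq t\}$, and applying it with $\alpha = t$, $\beta = s$ gives $S_{t^M} = \{s \in \Dyad \mid s \geq t\}$. This converts the problem from a question about the (possibly complicated) order of $M$ into an elementary question about the order of $\Dyad$.

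Next I would compute the two bounds. Because $t$ is itself a dyadic rational, it belongs to both $\{s \in \Dyad \mid s \leq t\}$ and $\{s \in \Dyad \mid s \geq t\}$; hence it is simultaneously the greatest element of the former set and the least element of the latter. Consequently $\essinf(t^M) = \sup I_{t^M} = t$ and $\esssup(t^M) = \inf S_{t^M} = t$. Since these two values coincide, $\ess(t^M)$ is defined and equals $t$, as required.

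The computation itself is immediate; there is no real obstacle beyond noticing that the non-triviality hypothesis is exactly what licenses the use of \cref{l:non-triv} (in a trivial algebra all dyadic constants collapse to $0^M$, the equivalence fails, and $\ess$ would not single out $t$). It is worth stressing that, unlike \cref{l:ess}, this argument does \emph{not} require $M$ to be totally ordered: the relevant supremum and infimum are attained at the dyadic point $t$ regardless of the order structure of $M$, so the conclusion holds for every non-trivial {\dlm}.
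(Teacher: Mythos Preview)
Your proof is correct and follows essentially the same approach as the paper: both invoke \cref{l:non-triv} to identify $I_{t^M}$ and $S_{t^M}$ with $\{s \in \Dyad \mid s \leq t\}$ and $\{s \in \Dyad \mid s \geq t\}$ respectively, and then read off the supremum and infimum. Your added remark that total order is not needed here is accurate and a nice observation, though the paper does not comment on it.
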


\begin{proof}
	By \cref{l:non-triv}, since $M$ is non-trivial, we have $I_{t^M} = \Dyad \cap (\infty, t]$, and therefore $\essinf t^M = t$.
	Analogously for $\esssup$.
\end{proof}

\begin{proposition} \label{p:hom-to-R}
	Given a totally ordered non-trivial {\dlm} $M$, there exists a unique homomorphism from $M$ to $\R$, namely $x \mapsto \ess x$.
\end{proposition}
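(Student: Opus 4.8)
The plan is to verify that the assignment $\phi \colon x \mapsto \ess x$ is a well-defined $(\{+, \lor, \land\} \cup \Dyad)$-homomorphism, and then to show that any homomorphism $M \to \R$ must coincide with it. The map $\phi$ is well-defined and finite-valued by \cref{l:ess}. The technical heart of the argument is the pair of auxiliary facts that, for every dyadic rational $t \in \Dyad$,
\[
	t < \ess x \ \Longrightarrow \ t^M \leq x \qquad \text{and} \qquad t > \ess x \ \Longrightarrow \ x \leq t^M.
\]
These follow from the definitions $\ess x = \sup I_x = \inf S_x$: if $t < \sup I_x$, there is $s \in I_x$ with $t < s$, whence $t^M \leq s^M \leq x$ by \cref{ax:R1}; the second implication is dual.

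First I would record that $\phi$ preserves every constant, since $\ess(t^M) = t$ for each $t \in \Dyad$ by \cref{l:constant-ess}. Next, $\phi$ is order-preserving: if $x \leq y$ then $I_x \seq I_y$, so $\ess x \leq \ess y$. Because $M$ is totally ordered, this immediately yields preservation of $\lor$ and $\land$: given $x, y$ we may assume $x \leq y$, so that $x \lor y = y$ and $x \land y = x$, while $\max\{\ess x, \ess y\} = \ess y$ and $\min\{\ess x, \ess y\} = \ess x$.

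The main step is preservation of $+$; write $a \df \ess x$ and $b \df \ess y$. For the lower bound, take dyadic $p < a$ and $q < b$; then $p^M \leq x$ and $q^M \leq y$ by the auxiliary facts, so by order-preservation of $+$ (\cref{l:sum positive}) together with \cref{ax:R2} we get $(p + q)^M = p^M + q^M \leq x + y$, i.e.\ $p + q \in I_{x+y}$, hence $p + q \leq \ess(x + y)$; letting $p$ and $q$ increase to $a$ and $b$ through dyadics gives $a + b \leq \ess(x+y)$. The symmetric argument, with $p > a$, $q > b$ and $S_{x+y}$, gives $\ess(x+y) \leq a + b$, so $\ess(x+y) = \ess x + \ess y$. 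This establishes that $\phi$ is a homomorphism into $\R$.

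Finally, for uniqueness, let $f \colon M \to \R$ be any homomorphism. It preserves the constants, so $f(t^M) = t$ for all $t \in \Dyad$, and it preserves $\lor, \land$, so it is order-preserving. For $x \in M$ and dyadic $t < \ess x$ we then have $t = f(t^M) \leq f(x)$, and for dyadic $t > \ess x$ we have $f(x) \leq f(t^M) = t$; taking the supremum and infimum over such $t$ squeezes $f(x) = \ess x = \phi(x)$. Thus $\phi$ is the unique homomorphism from $M$ to $\R$. I expect no serious obstacle beyond the careful density bookkeeping in the additivity step, which is where total order (through the well-definedness of $\ess$) and \cref{ax:R2} interact most delicately.
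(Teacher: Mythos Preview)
Your proposal is correct and follows essentially the same strategy as the paper: use $I_x$, $S_x$, and the density of $\Dyad$ to sandwich $\ess(x \otimes y)$ between $\ess x \otimes \ess y$ from both sides, and derive uniqueness from order-preservation plus preservation of the dyadic constants. The only cosmetic difference is that the paper treats $+$, $\lor$, $\land$ uniformly via the single observation $I_x \otimes I_y \subseteq I_{x \otimes y}$ together with $\sup(I_x \otimes I_y) = (\sup I_x) \otimes (\sup I_y)$, whereas you dispatch $\lor$ and $\land$ directly from the total order and reserve the density argument for $+$; both routes are equally short.
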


\begin{proof}
	By \cref{l:ess}, for every $x \in M$ the values $\essinf x$ and $\esssup x$ are finite and coinciding.
	Hence, the function $\ess \colon M \to \R;\, x \mapsto \ess x$ is well defined.
	Let us prove that this function is a homomorphism.
	The function $\ess$ preserves every constant symbol in $\Dyad$ by \cref{l:constant-ess}.
	Let $x, y \in M$ and let $\otimes$ denote any operation among $\{+, \lor, \land\}$.
	Let $\alpha \in I_x$ (i.e.\ $\alpha^M \leq x$) and $\beta \in I_y$ (i.e.\ $\beta^M \leq y$).
	Then, by monotonicity of $\otimes$, we have $\alpha^M \otimes \beta^M \leq x \otimes y$.
	Since $\alpha^M \otimes \beta^M = (\alpha \otimes \beta)^M$, we then have $I_x \otimes I_y \seq I_{x \otimes y}$.
	Therefore, $\sup(I_x \otimes I_y) \leq \sup(I_{x \otimes y})$.
	Since $\otimes \colon \R^2 \rightarrow \R$ is continuous, we have, for every non-empty $U \seq \R$ and $V \seq \R$, that $\sup (U \otimes W) = (\sup U) \otimes (\sup W)$.
	By the axiom of order unit (\cref{ax:U3}), the set $I_x$ is non-empty.
	Therefore,
	\[
		\ess x \otimes \ess y = \sup I_x \otimes \sup I_y = \sup(I_x \otimes I_y) \leq \sup(I_{x \otimes y}) = \ess(x \otimes y).
	\] 
	Replacing, in the proof above, the set $I_x$ with $S_x$, the symbol $\sup$ with $\inf$ and reversing the inequalities, one obtains that the opposite inequality $\ess{x} \otimes \ess{y} \geq \ess(x \otimes y)$ holds.
	
	Uniqueness follows from the fact that every homomorphism preserves dyadic rationals and is order-preserving.
\end{proof}

\begin{lemma} \label{l:hom-geq}
	Let $M$ and $N$ be {\dlms}, and let $\varphi \colon M \to N$ be a homomorphism.
	Then, for all $x, y \in M$ we have
	\[
		\distint[M](x,y)  \geq \distint[N](\varphi(x), \varphi(y)).
	\]
\end{lemma}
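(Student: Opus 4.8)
The plan is to prove the inequality by comparing the two index sets whose infima define $\distint[M]$ and $\distint[N]$. Writing
\[
	S_M \df \left\{t \in \Dyad_{\geq 0} \mid y + (-t)^M \leq x \leq y + t^M\right\}
\]
and
\[
	S_N \df \left\{t \in \Dyad_{\geq 0} \mid \varphi(y) + (-t)^N \leq \varphi(x) \leq \varphi(y) + t^N\right\},
\]
the assertion $\distint[M](x,y) \geq \distint[N](\varphi(x),\varphi(y))$ is exactly $\inf S_M \geq \inf S_N$. Since the infimum of a larger subset of $\R$ is no larger, it suffices to establish the inclusion $S_M \seq S_N$.

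First I would record the two elementary properties of the homomorphism $\varphi$ that do all the work. Because $\varphi$ is a homomorphism in the signature $\{+,\lor,\land\}\cup\Dyad$, it preserves $+$ and it preserves every dyadic constant, so that $\varphi(s^M)=s^N$ for all $s\in\Dyad$; in particular $\varphi((-t)^M)=(-t)^N$ and $\varphi(t^M)=t^N$. Moreover $\varphi$ preserves $\lor$ and $\land$, hence it is order-preserving: from $a\leq b$, i.e.\ $a\lor b=b$, one gets $\varphi(a)\lor\varphi(b)=\varphi(b)$, that is $\varphi(a)\leq\varphi(b)$.

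With these facts in hand, the inclusion is immediate. Take any $t\in S_M$, so that $y+(-t)^M\leq x\leq y+t^M$. Applying the order-preserving map $\varphi$ to both inequalities and using that it commutes with $+$ and with the dyadic constants yields
\[
	\varphi(y)+(-t)^N = \varphi\big(y+(-t)^M\big) \leq \varphi(x) \leq \varphi\big(y+t^M\big) = \varphi(y)+t^N,
\]
which says precisely that $t\in S_N$. Hence $S_M\seq S_N$, and the desired inequality of infima follows. (Both infima are in fact finite, as $\distint$ is finite-valued on any \dlm{}, but this is not even needed for the inequality, given the standard convention $\inf\emptyset=+\infty$.)

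I do not anticipate any genuine obstacle here: the statement is essentially the functoriality of a homomorphism against the order and the additive structure, and the only point requiring a (one-line) justification is that $\{+,\lor,\land\}\cup\Dyad$-homomorphisms are automatically monotone and fix the dyadic scalars, both of which read off directly from the definition of \dlm{} and of $\ell$-monoid homomorphism.
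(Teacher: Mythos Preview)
Your argument is correct and is exactly the kind of straightforward verification the paper has in mind; the paper's own proof reads simply ``The proof is straightforward.'' Your unpacking via the inclusion $S_M \subseteq S_N$ and the monotonicity and constant-preservation of $\varphi$ is precisely the intended content.
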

\begin{proof}
	The proof is straightforward.
\end{proof}

\begin{lemma} \label{l:dist-ess}
	For all $x$ and $y$ in a totally ordered non-trivial {\dlm}, we have
	$\distint(x, y) = \lvert \ess{x} - \ess{y} \rvert$.
\end{lemma}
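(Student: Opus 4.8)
The statement to prove is that for $x, y$ in a totally ordered non-trivial dyadic commutative distributive $\ell$-monoid, we have $\distint(x,y) = \lvert \ess{x} - \ess{y} \rvert$.

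My plan is to unfold both sides against the definitions and exploit the essential value. Recall that $\distint(x,y) = \inf\{t \in \Dyad_{\geq 0} \mid y + (-t)^M \leq x \leq y + t^M\}$, and that by \cref{p:hom-to-R} the function $\ess$ is the unique homomorphism from $M$ to $\R$; in particular $\ess$ preserves $+$ and all dyadic constants, and it is order-preserving. Since $M$ is totally ordered and non-trivial, $\ess$ is well defined and finite-valued by \cref{l:ess}.

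First I would prove the inequality $\distint(x,y) \geq \lvert \ess{x} - \ess{y}\rvert$. For any $t \in \Dyad_{\geq 0}$ witnessing the infimum, i.e.\ satisfying $y + (-t)^M \leq x \leq y + t^M$, I apply the order-preserving homomorphism $\ess$ to obtain $\ess{y} - t \leq \ess{x} \leq \ess{y} + t$, using $\ess((-t)^M) = -t$ and $\ess(t^M) = t$ together with additivity. This gives $\lvert \ess{x} - \ess{y}\rvert \leq t$, and taking the infimum over all such $t$ yields the desired inequality. (This direction is essentially \cref{l:hom-geq} applied to $\ess \colon M \to \R$ together with \cref{r:dist-on-R}.)

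The reverse inequality $\distint(x,y) \leq \lvert \ess{x} - \ess{y}\rvert$ is the main obstacle, since here I must produce genuine dyadic witnesses $t$ from the essential values. Without loss of generality suppose $\ess{y} \leq \ess{x}$, and let $d \df \lvert \ess{x} - \ess{y}\rvert = \ess{x} - \ess{y}$. The strategy is to show that for every $\eps > 0$ with $\eps \in \Dyad$, the dyadic number $t \df d + \eps$ satisfies $y + (-t)^M \leq x \leq y + t^M$; taking $\eps \to 0$ then gives $\distint(x,y) \leq d$. To verify, say, $x \leq y + t^M$: since $M$ is totally ordered, it suffices to rule out $x > y + t^M$; if that held, applying $\ess$ would give $\ess{x} \geq \ess{y} + t = \ess{y} + d + \eps = \ess{x} + \eps > \ess{x}$, a contradiction. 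Here I would use that $\ess$ is order-preserving together with the density of $\Dyad$ in $\R$ and the characterisation $\ess{x} = \sup I_x = \inf S_x$ from \cref{l:ess,l:constant-ess} to convert strict inequalities into comparisons with dyadic constants. The subtlety is that $x \leq y + t^M$ is an inequality in $M$, not in $\R$, so I cannot simply read it off from $\ess$; instead I must translate via the essential-value description, noting that $\ess(y + t^M) = \ess{y} + t > \ess{x}$ forces $y + t^M$ to dominate $x$ in the total order (since two elements with strictly ordered essential values are themselves strictly ordered, again because any dyadic constant strictly between them separates them). Assembling these comparisons for both $x \leq y + t^M$ and $y + (-t)^M \leq x$ completes the reverse inequality and hence the lemma.
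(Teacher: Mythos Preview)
Your argument is essentially correct, with one minor slip: you set $t \df d + \eps$ with $\eps \in \Dyad$ and call it ``the dyadic number $t$'', but $d = \lvert \ess{x} - \ess{y}\rvert$ need not be dyadic, so neither need $t$. The fix is immediate: let $t$ range over dyadic rationals with $t > d$ (such $t$ exist and have infimum $d$ by density); your contradiction argument---assume $x > y + t^M$, apply the order-preserving homomorphism $\ess$, and obtain $\ess{x} \geq \ess{y} + t > \ess{x}$---then goes through verbatim, and likewise for $y + (-t)^M \leq x$.

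Your route to the reverse inequality differs from the paper's. You directly verify that every dyadic $t > d$ is a witness in the infimum defining $\distint$, using the total order and the homomorphism $\ess$ to derive a contradiction from the failure of either bounding inequality. The paper instead applies the triangle inequality for the pseudometric $\distint[M]$, splitting $\distint[M](x,y)$ through the intermediate points $(\ess{x})^M$ and $(\ess{y})^M$: the two outer summands vanish (each element is at distance $0$ from its essential value), while the middle summand equals $\lvert \ess{x} - \ess{y}\rvert$. Your approach avoids the triangle inequality and stays entirely within the dyadic constants actually present in the signature; the paper's decomposition is more conceptual and shorter to state, but tacitly treats $(\ess{x})^M$ as though $\ess{x}$ were a dyadic constant, which requires the same density argument you spell out.
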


\begin{proof}
	Let $M$ be a totally ordered non-trivial {\dlm}, and let $x, y \in M$.
	It is easy to see that, given a homomorphism $\varphi \colon A \to B$ between {\dlms}, and given $a, b \in A$, we have $\distint[A](a, b) \geq \distint[B](\varphi(a),\varphi(b))$.
	By \cref{p:hom-to-R}, the function $\ess \colon M \to \R$ is a homomorphism, and therefore
	\[
		\distint[M](x, y) \stackrel{\text{\cref{l:hom-geq}}}{\geq} \distint[\R](\ess{x},\ess{y}) \stackrel{\text{\cref{r:dist-on-R}}}{=} \lvert \ess{x} - \ess{y} \rvert.
	\]
	
	Let us prove the opposite inequality.
	Since $\distint[M]$ is a pseudometric, we can apply the triangle inequality to obtain
	\begin{equation} \label{e:dist-ineq}
		\distint[M](x, y) \leq \distint[M]\mathopen{}\left(x, (\ess{x})^M\right)\mathclose{} + \distint[M]\mathopen{}\left((\ess{x})^M, (\ess{y})^M\right)\mathclose{} + \distint[M]\mathopen{}\left((\ess{y})^M, y\right)\mathclose{}.
	\end{equation}
	It is easily seen that we have 
	\[
		\distint[M]\mathopen{}\left(x, (\ess{x})^M\right)\mathclose{} = 0,
	\]
	\[
		\distint[M]\mathopen{}\left((\ess{x})^M, (\ess{y})^M\right)\mathclose{} = \lvert \ess{x} - \ess{y} \rvert,
	\]
	and
	\[
		\distint[M]\mathopen{}\left((\ess{y})^M, y\right)\mathclose{} = 0.
	\]
	Therefore, the right-hand side of \cref{e:dist-ineq} equals  $\lvert \ess{x} - \ess{y} \rvert$, and we obtain the desired inequality.
\end{proof}

\begin{theorem} \label{t:dist-two-ways}
	For every {\dlm} $M$, we have
	\[
		{\distint[M]} = {\disthom[M]}.
	\]
\end{theorem}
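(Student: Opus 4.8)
The plan is to establish the two inequalities ${\disthom[M]} \leq {\distint[M]}$ and ${\distint[M]} \leq {\disthom[M]}$ separately. The first is immediate from the results already proved: fixing $x, y \in M$ and an arbitrary $f \in \hom(M, \R)$, \cref{l:hom-geq} applied to $f$ together with \cref{r:dist-on-R} gives $\distint[M](x,y) \geq \distint[\R](f(x), f(y)) = \lvert f(x) - f(y) \rvert$. Taking the supremum over all $f \in \hom(M, \R)$ yields $\distint[M](x,y) \geq \disthom[M](x,y)$; in particular, since $\distint[M]$ is finite-valued, so is $\disthom[M]$.

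For the reverse inequality I would set $d \df \disthom[M](x,y)$ and show that $y + (-t)^M \leq x \leq y + t^M$ for every $t \in \Dyad_{\geq 0}$ with $t > d$, since this forces $\distint[M](x,y) \leq t$ for all such $t$ and hence $\distint[M](x,y) \leq d$. To verify these inequalities in $M$ I would invoke Birkhoff's subdirect representation theorem (\cref{t:SubRepThe}) to embed $M$ subdirectly into a product $\prod_i M_i$ of subdirectly irreducible $\dlms$, with surjective projections $\pi_i \colon M \to M_i$. Because the subdirect embedding is an injective lattice homomorphism, it both preserves and reflects the order, so an inequality holds in $M$ precisely when it holds in every coordinate; and since each $\pi_i$ is a homomorphism, it commutes with $+$ and with the interpreted constants $t^M$ and $(-t)^M$. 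It therefore suffices to prove $\pi_i(y) + (-t)^{M_i} \leq \pi_i(x) \leq \pi_i(y) + t^{M_i}$ in each factor $M_i$.

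Each $M_i$ is totally ordered by \cref{c:irreducible}; the trivial factors are immediate, so I may assume $M_i$ nontrivial. Then by \cref{p:hom-to-R} the map $\ess \colon M_i \to \R$ is a homomorphism, and $f_i \df \ess \circ \pi_i$ is an element of $\hom(M, \R)$. Applying \cref{l:dist-ess} inside $M_i$,
\[
	\distint[M_i](\pi_i(x), \pi_i(y)) = \lvert \ess \pi_i(x) - \ess \pi_i(y) \rvert = \lvert f_i(x) - f_i(y) \rvert \leq \disthom[M](x,y) = d < t.
\]
Since $\distint[M_i](\pi_i(x), \pi_i(y)) < t$, the defining infimum is nonempty and contains an element below $t$, so there is $s \in \Dyad_{\geq 0}$ with $s < t$ and $\pi_i(y) + (-s)^{M_i} \leq \pi_i(x) \leq \pi_i(y) + s^{M_i}$; widening via \cref{ax:R1} (as $s \leq t$) gives the desired bracketing in $M_i$. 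Reassembling over all coordinates produces $y + (-t)^M \leq x \leq y + t^M$ in $M$, which completes the second inequality.

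The step requiring the most care is the reduction of the order-theoretic inequalities in $M$ to the subdirectly irreducible factors: one must check that the subdirect embedding reflects the order (which follows from its being an injective lattice homomorphism) and that the projections commute with the dyadic constants, so that the finitary machinery of \cref{t:SubRepThe} applies to an order statement rather than an equational one. Once this bookkeeping is in place, the genuinely analytic content collapses to the one-dimensional identity packaged in \cref{l:dist-ess} and the uniqueness of the homomorphism in \cref{p:hom-to-R}.
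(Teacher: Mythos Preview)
Your proof is correct and follows essentially the same approach as the paper: both use Birkhoff's subdirect representation, the total ordering of subdirectly irreducible factors via \cref{c:irreducible}, the homomorphism $\ess$ from \cref{p:hom-to-R}, and \cref{l:dist-ess} to reduce to the one-dimensional case, together with \cref{l:hom-geq} for the easy inequality. The paper packages the subdirect step slightly differently---first stating the identity $\distint[M](x,y) = \sup_{\theta} \distint[M/\theta](\pi_\theta(x),\pi_\theta(y))$ and then reading off the inequality---but this is exactly what your order-reflection argument establishes (note also that subdirectly irreducible algebras are nontrivial by definition, so your case split on triviality is harmless but unnecessary).
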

\begin{proof}
	Let $I$ be the set of congruences $\theta$ on $M$ such that $M/\theta$ is subdirectly irreducible.
	Let $\iota \colon M \to \prod_{\theta \in I} M/\theta$ be the canonical homomorphism.
	By Birkhoff's subdirect representation theorem, $\iota$ is injective.
	For each $\theta \in I$, let $\pi_\theta \colon M \to M/\theta$ denote the quotient map.
	Since $\iota$ is injective, for all $x, y \in M$ we have
	\begin{equation} \label{e:subdirect}
		\distint[M](x, y) = \sup_{\theta \in I}\distint[M/\theta](\pi_\theta(x),\pi_\theta(y)).
	\end{equation}
	For every $\theta \in I$, the algebra $M/\theta$ is subdirectly irreducible; hence $M/\theta$ is totally ordered (\cref{c:irreducible}) and non-trivial.
	By \cref{l:dist-ess}, for every $\theta \in I$ we have
	\begin{equation} \label{e:dist-pres}
		\distint[M/\theta](\pi_\theta(x),\pi_\theta(y)) = \lvert \ess{\pi_\theta(x)} - \ess{\pi_\theta(y)} \rvert.
	\end{equation}
	Hence, by \cref{e:subdirect,e:dist-pres}, we have
	\begin{equation} \label{e:supess}
		\distint[M](x, y) = \sup_{\theta \in I}\lvert \ess{\pi_\theta(x)} - \ess{\pi_\theta(y)} \rvert.
	\end{equation}
	By \cref{p:hom-to-R}, for every $\theta \in I$, the function ${\ess} \colon M/\theta \to \R$ is a morphism of {\dlms}.
	Hence, from \cref{e:supess} we deduce
	\begin{equation} \label{e:dist-leq}
		\distint[M](x, y)  \leq  \sup_{f \in \hom(M, \R)} \lvert f(x) - f(y) \rvert.
	\end{equation}
	For every homomorphism $f \colon M \to \R$ and all $x, y \in M$, we have
	\[
		\distint[M](x, y) \stackrel{\text{\cref{l:hom-geq}}}{\geq} \distint[\R](f(x),f(y)) \stackrel{\text{\cref{r:dist-on-R}}}{=} \lvert f(x) - f(y) \rvert;
	\]
	thus, the opposite inequality of \cref{e:dist-leq} holds.
\end{proof}

\begin{definition}
	An algebra $M$ in the signature $\{+, \lor, \land\} \cup \Dyad$ is \emph{Archimedean}\index{Archimedean}\index{lattice-ordered!monoid!Archimedean} if $M$ is isomorphic to a subalgebra of a power of $\R$ with obviously defined operations.
	The following are two other conditions which are easily seen to be equivalent.
	\begin{enumerate}
		\item The canonical homomorphism $M \to \R^{\hom(M, \R)}$ is injective.
		\item For all $x,y \in A$ with $x \neq y$ there exists a homomorphism $f \colon M \to \R$ such that $f(x) \neq f(y)$.
	\end{enumerate}
\end{definition}

\begin{theorem} \label{t:char-Arch-dlms}
	\xmakefirstuc{\adlm} $A$ is Archimedean if, and only if, for all distinct $x, y \in A$ we have $\distint(x, y) \neq 0$.
\end{theorem}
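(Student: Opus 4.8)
The statement to prove is \Cref{t:char-Arch-dlms}: a \dlm{} $A$ is Archimedean if, and only if, for all distinct $x, y \in A$ we have $\distint(x, y) \neq 0$.

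The plan is to invoke \Cref{t:dist-two-ways}, which has just been established and identifies $\distint[M]$ with $\disthom[M]$ for every \dlm{} $M$. This reduces the problem to a statement purely about $\disthom$. The definition of Archimedean gives the equivalence of being isomorphic to a subalgebra of a power of $\R$ with the injectivity of the canonical homomorphism $\ev \colon A \to \R^{\hom(A, \R)}$; I would take this injectivity as the working characterisation of Archimedean.

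First I would unfold the definitions on both sides. By \Cref{t:dist-two-ways}, the condition ``for all distinct $x,y \in A$ we have $\distint(x,y) \neq 0$'' is equivalent to ``for all distinct $x, y \in A$ we have $\disthom[A](x,y) \neq 0$'', i.e.\ to the fact that $\disthom[A]$ satisfies the identity of indiscernibles. Then I would apply \Cref{l:injective}, which states precisely that $\disthom[M]$ satisfies the identity of indiscernibles if, and only if, the canonical homomorphism $\ev \colon M \to \R^{\hom(M, \R)}$ is injective. By the definition of Archimedean, this injectivity is exactly the condition that $A$ is Archimedean. Chaining these three equivalences yields the claim.

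In more detail, the argument runs as a short chain: $A$ is Archimedean $\Leftrightarrow$ $\ev \colon A \to \R^{\hom(A,\R)}$ is injective (definition of Archimedean) $\Leftrightarrow$ $\disthom[A]$ satisfies the identity of indiscernibles (\Cref{l:injective}) $\Leftrightarrow$ $\distint[A]$ satisfies the identity of indiscernibles (\Cref{t:dist-two-ways}) $\Leftrightarrow$ for all distinct $x, y \in A$ we have $\distint(x,y) \neq 0$. Since there is no genuine obstacle—all the substantive work was carried out in \Cref{t:dist-two-ways} and \Cref{l:injective}—the only point requiring minimal care is observing that $\distint[A](x,y) = 0$ for $x = y$ always holds (so the identity of indiscernibles is genuinely captured by the condition restricted to distinct elements), which is immediate from \Cref{n:dist} since $x \leq x \leq x$ gives $0 \in \{t \in \Dyad_{\geq 0} \mid x + (-t)^A \leq x \leq x + t^A\}$.
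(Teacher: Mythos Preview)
Your proof is correct and follows exactly the same approach as the paper: the paper's proof is simply ``By \cref{t:dist-two-ways,l:injective},'' and you have spelled out precisely how these two results chain together via the definition of Archimedean. Your extra observation that $\distint[A](x,x)=0$ always holds is a harmless clarification making explicit why the condition on distinct elements captures the full identity of indiscernibles.
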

\begin{proof}
	By \cref{t:dist-two-ways,l:injective}.
\end{proof}

\begin{theorem}
	An algebra $M$ in the signature $\{+, \lor, \land\} \cup \Dyad$ is isomorphic to the algebra $\Cleq(X, \R)$ of real-valued order-preserving continuous functions on $X$ for some compact ordered space $X$ if, and only if, $M$ is a {\dlm} that satisfies $\distint[M](x,y) = 0 \Rightarrow x = y$ (so that $\distint[M]$ is a metric), and which is Cauchy complete with respect to $\distint[M]$.
\end{theorem}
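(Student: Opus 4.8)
The plan is to prove the biconditional by combining the two main results already established earlier in the chapter, namely \cref{t:char-algebras} and \cref{t:dist-two-ways}. The statement to be proven is essentially a rephrasing of the equivalence between \cref{i:some} and \cref{i:dist} in \cref{t:char-algebras}, except that here I must work with the \emph{intrinsic} metric $\distint[M]$ rather than the homomorphism-based metric $\disthom[M]$, and I must verify that the object $M$ is a \dlm{} in the sense of \cref{d:dyadic-ell}.

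First I would handle the left-to-right implication. Suppose $M \cong \Cleq(X, \R)$ for some compact ordered space $X$. The algebra $\Cleq(X, \R)$ is visibly a \dlm: \cref{ax:R0} holds because $\langle M; +, \lor, \land, 0 \rangle$ is an \lm{} (pointwise operations on order-preserving continuous real-valued functions), and \cref{ax:R1,ax:R2,ax:R3} hold because the interpretation of each dyadic rational is the corresponding constant function, so that the dyadic constants behave exactly as in $\R$, with \cref{ax:R3} following from compactness of $X$ (each continuous function on a compact space is bounded). Then, by \cref{t:dist-two-ways}, we have ${\distint[M]} = {\disthom[M]}$, so I may freely transfer properties between the two. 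By the implication \cref{i:some} $\Rightarrow$ \cref{i:dist} in \cref{t:char-algebras}, the function $\disthom[M]$ is a metric and $M$ is Cauchy complete with respect to it; via the equality ${\distint[M]} = {\disthom[M]}$, the same holds for $\distint[M]$.

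For the converse, suppose $M$ is a \dlm{} satisfying $\distint[M](x,y) = 0 \Rightarrow x = y$ and Cauchy complete with respect to $\distint[M]$. Again by \cref{t:dist-two-ways}, we have ${\distint[M]} = {\disthom[M]}$, so $\disthom[M]$ is a metric (the identity of indiscernibles is precisely the hypothesis, and the remaining metric axioms hold for $\disthom[M]$ by the lemma preceding \cref{r:uniform}) and $M$ is Cauchy complete with respect to it. Applying the implication \cref{i:dist} $\Rightarrow$ \cref{i:some} in \cref{t:char-algebras}, I conclude that $M$ is isomorphic to $\Cleq(X, \R)$ for some compact ordered space $X$, as desired.

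The only genuine subtlety, and the step I would take most care with, is the bookkeeping around the signature: \cref{t:char-algebras} is stated for arbitrary algebras in the signature $\{+, \lor, \land\} \cup \Dyad$, whereas the present theorem quantifies over \dlms, which are such algebras satisfying the extra axioms \cref{ax:R0}--\cref{ax:R3}. I must check that \cref{t:dist-two-ways} indeed applies (it is stated for \dlms, so I must ensure the hypothesis-side object is a \dlm{} before invoking it) and that the \dlm{} axioms are automatically present on the $\Cleq(X,\R)$ side. Since \cref{t:dist-two-ways} presupposes the \dlm{} structure, the equality ${\distint[M]} = {\disthom[M]}$ is only available once that structure is confirmed; in the left-to-right direction this requires first checking $\Cleq(X,\R)$ is a \dlm{} (straightforward, as sketched), and in the right-to-left direction it is given by hypothesis. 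No further obstacle arises, since all the hard analytic work—the ordered Stone--Weierstrass theorem, the compactness of $\hom(M,\R)$, and the subdirect-representation argument identifying the two metrics—has already been carried out in \cref{t:StWe}, \cref{t:char-algebras}, and \cref{t:dist-two-ways}.
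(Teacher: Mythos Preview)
Your proposal is correct and follows essentially the same approach as the paper, which proves the result in one line by citing \cref{t:dist-two-ways} and \cref{t:char-algebras}. Your version simply unpacks those citations and adds the bookkeeping verification that $\Cleq(X,\R)$ is a \dlm{} before invoking \cref{t:dist-two-ways}, which the paper leaves implicit.
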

\begin{proof}
	By \cref{t:dist-two-ways,t:char-algebras}.
\end{proof}

\begin{theorem}[Ordered Yosida duality] \label{t:Yosida}
	The category $\CompOrd$ of compact ordered spaces is dually equivalent to the category of {\dlms} $M$ which satisfy $\distint[M](x,y) = 0 \Rightarrow x = y$ (so that $\distint[M]$ is a metric), and which are Cauchy complete with respect to $\distint[M]$.
\end{theorem}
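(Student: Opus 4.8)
The plan is to obtain this final duality as a short assembly of two results that have already done all the real work: the abstract duality \cref{t:one-char} and the identity \cref{t:dist-two-ways}. By \cref{t:one-char}, the category $\CompOrd$ is dually equivalent to the category $\mathcal{C}$ of algebras $M$ in the signature $\{+, \lor, \land\} \cup \Dyad$ for which $\disthom[M]$ is a metric and $M$ is Cauchy complete with respect to it. The target category $\mathcal{D}$ of the statement consists of {\dlms} $M$ satisfying $\distint[M](x,y) = 0 \Rightarrow x = y$ and Cauchy complete with respect to $\distint[M]$. Both $\mathcal{C}$ and $\mathcal{D}$ are full subcategories of $\ALG(\{+, \lor, \land\} \cup \Dyad)$, so their morphisms agree automatically, and it suffices to prove that they have the same objects. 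The duality for $\mathcal{D}$ then follows by transport along $\mathcal{C} = \mathcal{D}$.

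For the inclusion $\mathcal{C} \subseteq \mathcal{D}$, I would take $M \in \mathcal{C}$ and use the equivalence of \cref{i:some} and \cref{i:dist} in \cref{t:char-algebras} to write $M \cong \Cleq(X, \R)$ for some compact ordered space $X$. The point is then that $\Cleq(X, \R)$ is a {\dlm}: \cref{ax:R0,ax:R1,ax:R2} hold because the operations and dyadic constants are computed pointwise from $\R$, and \cref{ax:R3} holds because an order-preserving continuous function on the compact space $X$ has compact, hence bounded, image in $\R$ (by \cref{p:image-of-compact}), so every element is sandwiched between two dyadic constants. Once $M$ is known to be {\adlm}, \cref{t:dist-two-ways} gives $\distint[M] = \disthom[M]$, whence $\distint[M]$ is a metric and $M$ is Cauchy complete with respect to it; thus $M \in \mathcal{D}$.

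For the reverse inclusion $\mathcal{D} \subseteq \mathcal{C}$, I would take {\adlm} $M \in \mathcal{D}$ and apply \cref{t:dist-two-ways} to get $\distint[M] = \disthom[M]$ at once. Now $\disthom[M]$ always satisfies positivity, symmetry and the triangle inequality (by the lemma preceding \cref{l:dist-finite}), while the identity of indiscernibles is precisely the hypothesis $\distint[M](x,y) = 0 \Rightarrow x = y$ rewritten through the equality of the two functions; hence $\disthom[M]$ is a metric. Cauchy completeness with respect to $\distint[M]$ transfers verbatim to Cauchy completeness with respect to $\disthom[M]$, since the two metrics coincide. Therefore $M \in \mathcal{C}$, and the two object-classes agree.

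The genuinely hard steps — the ordered Stone--Weierstrass theorem behind \cref{t:one-char}, and the subdirect-representation argument establishing $\distint[M] = \disthom[M]$ in \cref{t:dist-two-ways} — are already available, so the only substantive content left in this final step is the verification that the completeness-based algebras supplied by \cref{t:one-char} are literally {\dlms}. I expect the one subtle point there to be the order-unit axiom \cref{ax:R3}, which is where compactness of $X$ (equivalently, boundedness of continuous real-valued functions) is used; the remaining conditions are pure bookkeeping, tracking which properties pass along the identification $\distint = \disthom$.
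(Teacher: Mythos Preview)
Your proposal is correct and follows essentially the same route as the paper, which proves the result in one line by citing \cref{t:dist-two-ways} and \cref{t:one-char}. You have simply unpacked what that citation entails: verifying that the algebras furnished by \cref{t:one-char} are genuinely {\dlms} (so that \cref{t:dist-two-ways} applies), and then transporting the metric conditions across the identity $\distint[M] = \disthom[M]$ in both directions.
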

\begin{proof}
	By \cref{t:dist-two-ways,t:one-char}.
\end{proof}

%%%%%%%%%%%%%%%%%%%%%%%%%%%%%%%%%%%   SECTION   %%%%%%%%%%%%%%%%%%%%%%%%%%%%%%%%%%%%

\section{Conclusions}

We obtained an analogue of Yosida duality, where compact Hausdorff spaces are replaced by compact ordered spaces.
In the next chapter, we will obtain an explicit axiomatisation of a variety dual to $\CompOrd$.
The results in the present chapter should provide a useful intuitive ground to grasp the ideas behind that axiomatisation.

%######################################################.  CHAPTER.  ######################################################

\chapter{Equational axiomatisation}\label{chap:axiomatisation}
% CHAPTER 6

%%%%%%%%%%%%%%%%%%%%%%%%%%%%%%%%%%%   SECTION   %%%%%%%%%%%%%%%%%%%%%%%%%%%%%%%%%%%%

\section{Introduction} \label{s:intro-axiom}

In \cref{chap:direct-proof} we proved that the opposite of the category of compact ordered spaces is equivalent to a variety of algebras.
To obtain a description of one such variety, we defined the signature $\SignCM$ consisting of all order-preserving continuous functions from powers of $[0,1]$ to $[0,1]$ itself.
Every element of this signature has an obvious interpretation on the set $[0,1]$---namely, itself---and so $[0,1]$ is a $\SignCM$-algebra in an obvious way.
The class 
\[
	\opS\opP\mathopen{}\left(\left\langle[0,1]; \SignCM\right\rangle\right)\mathclose{}
\]
was then shown to be a variety which is dually equivalent to the category of compact ordered spaces.

The aim of the present chapter is to provide an \emph{explicit equational axiomatisation} of $\CompOrdop$: in \cref{d:DMVMinfty} we describe the variety $\DMVMinfty$ consisting of what we call \emph{\dmvminftys}, and in \cref{t:axiomatisation} we prove that this variety is in fact dually equivalent to the category of compact ordered spaces.

The primitive operations of $\DMVMinfty$ are $\oplus$, $\odot$, $\lor$, $\land$, all dyadic rationals in $[0,1]$, and the operation of countably infinite arity $\limop$.
The finitary operations have a canonical interpretation on $[0,1]$:
\begin{align*}
	x \oplus y							& \df \min\{x + y, 1\},\\
	x \odot y						 	& \df \max\{x + y - 1, 0\},\\
	x \lor y 							& \df \max\{x,y\},\\
	x \land y							& \df \min\{x,y\},\\
	t \in \Dyadu						& \df \text{the element } t.
\end{align*}
Furthermore, we interpret the infinitary operation $\limop$ in $[0,1]$ as
\[
	\limop(x_1, x_2, x_3, \dots)	\df \lim_{n \to \infty} \aprj_n(x_1, \dots, x_n),
\]
where $\mu_n$ is defined inductively by setting
\[
	\aprj_1(x_1) \df x_1,
\]
and, for $n \geq 2$,
\begin{align*}
	&\aprj_n(x_1, \dots,x_n)\\
	& \df \max\left\{\min\left\{x_n, \aprj_{n-1}(x_1, \dots,x_{n-1}) + \frac{1}{2^{n-1}}\right\}, \aprj_{n-1}(x_1, \dots,x_{n-1}) - \frac{1}{2^{n-1}}\right\}.
\end{align*}

The main sources of inspiration for this chapter have been \cite{MarraReggio2017,HN2018,HNN2018}.

We warn the reader that the interpretation in $[0,1]$ of the operation of countably infinite arity $\limop$ here differs from the interpretation of the operation $\delta$ in \cite{Abbadini2019}, and we believe the axiomatisation in this chapter to be more elegant, one of the reasons being the self-duality of $\limop$.

%-------------------------------   SUBSUBSECTION   --------------------------------%

\subsubsection{Sketch of the proof}

We sketch the proof of the main result of this chapter (\cref{t:axiomatisation}) which shows that $\CompOrd$ is dually equivalent to the variety $\DMVMinfty$. 

We make use of the sub-signature $\SignDMVM \df \{\oplus, \odot, \lor, \land\} \cup (\Dyadu)$ of $\SignCM$.
We define a \emph{\dmvm} as a $\SignDMVM$-algebra $\alge{A}$ which is {\amvm} such that, for each $\alpha, \beta \in \Dyadu$, we have $\alpha^\alge{A} \oplus^\alge{A} \beta^\alge{A} = (\alpha \oplus^\alge{\R} \beta)^\alge{A}$, and $\alpha^\alge{A} \odot^\alge{A} \beta^\alge{A} = (\alpha \odot^\alge{\R} \beta)^\alge{A}$ (\cref{d:dyadic-MVM}).
Using the subdirect representation theorem, we obtain that a $\SignDMVM$-algebra is isomorphic to a subalgebra of a power of $[0,1]$ if, and only if, it is {\admvm} which satisfies
\[
	\forall x\ \forall y\ \distint(x, y) = 0 \Rightarrow x = y
\]
(see \cref{n:distint-int} for the definition of $\distint$, and \cref{t:char-Arch} for the equivalence of the two conditions).

We then define the variety $\DMVMinfty$ of {\dmvminftys} (\cref{d:DMVMinfty}), in the signature $\SignDMVMinfty = \SignDMVM \cup \{\limop\}$, where $\limop \colon [0,1]^{\Np} \to [0,1]$ is an order-preserving continuous function.
We obtain the following results.
\begin{enumerate}
	\item The algebra $[0,1]$ with standard interpretations is {\admvminfty}.
	
	\item	The $\SignDMVM$-reduct of any {\dmvminfty} is {\admvm} which satisfies
	\[
		\forall x\ \forall y\ \distint(x, y) = 0 \Rightarrow x = y.
	\]
	
	\item For every {\dmvminfty} $A$, every $\SignDMVM$-homomorphism from $A$ to $[0,1]$ is a $\SignDMVMinfty$-homomorphism.
	
	\item For every cardinal $\kappa$, the term operations of arity $\kappa$ of the $\SignDMVMinfty$-algebra $[0,1]$ are the order-preserving continuous functions from $[0,1]^\kappa$ to $[0,1]$.
\end{enumerate}
Using these facts, we deduce that the variety $\DMVMinfty$ of {\dmvminftys} is term-equivalent to $\opS\opP\mathopen{}\left(\left\langle [0,1]; \SignCM \right\rangle\right)\mathclose{}$.
Since the latter is dually equivalent to $\CompOrd$, we obtain that $\DMVMinfty$ and $\CompOrd$ are dually equivalent\footnote{This result provides an alternative proof of the fact (already obtained in \cref{t:MAIN}) that $\CompOrd$ is dually equivalent to a variety of algebras; in this proof, we use the fact that $[0,1]$ is a regular injective regular cogenerator of $\CompOrd$. We do not need, instead, the fact that equivalence relations in $\CompOrdop$ are effective.}.

%%%%%%%%%%%%%%%%%%%%%%%%%%%%%%%%%%%   SECTION   %%%%%%%%%%%%%%%%%%%%%%%%%%%%%%%%%%%%

\section{Primitive operations and their interpretations}
\label{s:primitive}

In this section, we state a unit interval ordered version of the Stone-Weierstrass theorem and we use it to choose a convenient set of primitive operations for the variety dual to $\CompOrd$.

%=================================   SUBSECTION   =================================%

\subsection{Unit interval ordered Stone-Weierstrass theorem}

The ordered version of the Stone-Weierstrass theorem (\cref{t:StWe}) admits an analogous version with $[0, 1]$ instead of $\R$, whose proof---that we omit---could be either obtained analogously to \cref{t:StWe} or---in light of the equivalence established in \cref{t:G is equiv}---as a consequence of it.

\begin{theorem}[Unit interval ordered Stone-Weierstrass theorem]\label{t:StWe-unit}\index{Stone-Weierstrass theorem!ordered!unit interval}
	Let $X$ be a compact ordered space, let $L$ be an order-separating set of continuous order-preserving functions from $X$ to $[0,1]$ which is closed under $\oplus$, $\odot$, $\lor$, $\land$ and which contains every constant in $[0,1]$.
	Then, the closure of $L$ under uniform convergence is the set of continuous order-preserving functions from $X$ to $[0,1]$.
\end{theorem}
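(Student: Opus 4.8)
The plan is to reduce the unit-interval statement to the already-established real-valued ordered Stone--Weierstrass theorem (\cref{t:StWe}) via the equivalence of \cref{t:G is equiv}, rather than re-running the whole approximation argument. The key observation is that the data of the unit-interval version---a compact ordered space $X$ and an order-separating sublattice $L \seq \Cleq(X, [0,1])$ closed under $\oplus$, $\odot$ and containing every constant in $[0,1]$---is precisely the data of {\amvm} of functions, and that the functor $\X$ (or more concretely the passage through {\ulms}) converts $[0,1]$-valued structure into $\R$-valued structure compatibly with uniform convergence and with order-separation.

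First I would set up the correspondence. Recall from \cref{t:G is equiv} that $\Gam \colon \ULM \to \MVM$ is an equivalence, with $\Gam(M) = \{x \in M \mid 0 \leq x \leq 1\}$, and that for a compact ordered space $X$ the algebra $\Cleq(X, [0,1])$ is exactly $\Gam(\Cleq(X, \R))$, since an order-preserving continuous function $X \to \R$ lands in $[0,1]$ iff it lies between the constants $0$ and $1$. Given the sublattice $L \seq \Cleq(X, [0,1])$ satisfying the hypotheses, I would form the sub-$\{+,\lor,\land,0,1,-1\}$-algebra $\widetilde{L} \seq \Cleq(X, \R)$ generated by $L$. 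The natural candidate is
\[
	\widetilde{L} \df \left\{ f_1 + \dots + f_n + k \ \middle|\ n \in \N,\ f_1, \dots, f_n \in L,\ k \in \Z \right\},
\]
closed under $\lor$ and $\land$; the point is that $\widetilde{L}$ should have $L$ as its unit interval, i.e.\ $\Gam$-image, and should be order-separating, closed under $+$, $\lor$, $\land$, and contain a dense set of constants (all of $\Z$, hence by taking the hypotheses on $L$ one recovers enough constants: every dyadic in $[0,1]$ lies in $L$, and adding integers gives a dense subset of $\R$).

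Then I would apply \cref{t:StWe} to $\widetilde{L}$: its closure under uniform limits is all of $\Cleq(X, \R)$. The final step is to transfer this back to the unit interval. Given $f \in \Cleq(X, [0,1])$, it is in particular in $\Cleq(X, \R)$, so there is a sequence $g_n \in \widetilde{L}$ with $g_n \to f$ uniformly. Replacing each $g_n$ by $(g_n \lor 0) \land 1 = (g_n \lor \mathbf{0}) \land \mathbf{1}$, which lies in $L$ (since $L$ is closed under $\lor$, $\land$ and contains the constants $0$ and $1$) and which still converges uniformly to $f = (f \lor 0) \land 1$ because truncation is $1$-Lipschitz, yields the desired approximating sequence in $L$. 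Conversely, $L \seq \Cleq(X, [0,1])$ and the latter is uniformly closed (a uniform limit of continuous order-preserving functions is continuous and order-preserving), so the uniform closure of $L$ is contained in $\Cleq(X, [0,1])$; this gives the reverse inclusion.

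The main obstacle I anticipate is the bookkeeping in the reduction step: verifying that the generated algebra $\widetilde{L}$ genuinely satisfies \emph{all} hypotheses of \cref{t:StWe}, in particular that $\widetilde{L}$ is order-separating. Order-separation of $\widetilde{L}$ follows from that of $L$ since $L \seq \widetilde{L}$, so this is in fact automatic; the subtler point is confirming that adding integer constants together with the dyadics already present in $L$ produces a set dense in $\R$ (which it does, as $\Dyad = \Z + (\Dyadu - \mathbb{Z})$ up to integer translation is dense). An alternative, which I would mention as a remark, is to bypass the reduction entirely and prove the theorem directly by the same method as \cref{t:StWe}, using the unit-interval version of \cref{t:StWe-figo-2-options}; but the equivalence-based route is shorter and makes transparent why the $[0,1]$-version is formally no stronger than the $\R$-version.
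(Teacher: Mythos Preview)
Your approach is correct and matches one of the two routes the paper explicitly suggests: the paper omits the proof entirely, stating only that it ``could be either obtained analogously to \cref{t:StWe} or---in light of the equivalence established in \cref{t:G is equiv}---as a consequence of it.'' You have fleshed out the second route, and your argument is sound.

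One point deserves slightly more care than you give it. The crux of the reduction is the claim that $(g_n \lor 0) \land 1 \in L$ for every $g_n \in \widetilde{L}$, i.e.\ that $\Gam(\widetilde{L}) = L$ rather than something larger. Your ad hoc description of $\widetilde{L}$ (sums plus integers, then ``closed under $\lor$ and $\land$'') does not make this transparent; the clean way is to take $\widetilde{L}$ to be the image of $\X(L) \to \Cleq(X,\R)$ under the equivalence, so that $\Gam(\widetilde{L}) \cong \Gam\X(L) \cong L$ by the unit isomorphism of \cref{p:unit mvm}. Since $L$ contains every constant in $[0,1]$, $\widetilde{L}$ then contains every real constant, and the hypotheses of \cref{t:StWe} are met. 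With this phrasing the truncation step is immediate. Your closing remark that one could alternatively rerun the approximation argument directly (the paper's other suggested route) is also apt.
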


\Cref{t:StWe-unit} tells us something about sets of generating operations for the clone of order-preserving continuous functions on $[0,1]$.

\begin{lemma} \label{l:denseness}
	Let $\kappa$ be a cardinal, and let $L_\kappa$ be the set of operations from $[0,1]^\kappa$ to $[0,1]$ that are generated by $\{\oplus, \odot,\lor,\land\} \cup [0,1]$.
	Then, the set of order-preserving continuous functions from $[0,1]^\kappa$ to $[0,1]$ coincides with the closure of $L_\kappa$ under uniform convergence.
\end{lemma}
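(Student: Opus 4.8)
The plan is to apply the unit interval ordered Stone--Weierstrass theorem (\cref{t:StWe-unit}) to the compact ordered space $X \df [0,1]^\kappa$, with $L \df L_\kappa$ the set of operations generated by $\{\oplus, \odot, \lor, \land\} \cup [0,1]$. Since the closure of $\Cleq(X, [0,1])$ under uniform convergence is itself (continuous functions on a compact space are closed under uniform limits, and pointwise limits of order-preserving functions are order-preserving), it suffices to check that $L_\kappa$ satisfies the three hypotheses of \cref{t:StWe-unit}: (i) every element of $L_\kappa$ is an order-preserving continuous function from $[0,1]^\kappa$ to $[0,1]$; (ii) $L_\kappa$ is closed under $\oplus$, $\odot$, $\lor$, $\land$ and contains every constant in $[0,1]$; and (iii) $L_\kappa$ order-separates the elements of $[0,1]^\kappa$.

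First I would verify (i). The four generating binary operations $\oplus, \odot, \lor, \land \colon [0,1]^2 \to [0,1]$ are order-preserving and continuous (this is the unit-interval analogue of \cref{r:all operations in V are mon and cont}), and every constant in $[0,1]$ is trivially an order-preserving continuous function from $[0,1]^\kappa$ to $[0,1]$. The coordinate projections $\pi_i \colon [0,1]^\kappa \to [0,1]$ are also order-preserving and continuous. Since order-preserving continuous functions are closed under composition, and every element of $L_\kappa$ is built from projections, constants, and the four operations by composition, an easy induction on term structure shows $L_\kappa \seq \Cleq([0,1]^\kappa, [0,1])$. Condition (ii) is immediate: $L_\kappa$ is closed under the generating operations by its very definition as the set of operations generated by $\{\oplus, \odot, \lor, \land\} \cup [0,1]$, and it contains every constant in $[0,1]$ by construction.

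The one step requiring genuine argument is the order-separation condition (iii). I must show that for all $x = (x_i)_{i \in \kappa}$ and $y = (y_i)_{i \in \kappa}$ in $[0,1]^\kappa$ with $x \not\geq y$, there is some $f \in L_\kappa$ with $f(x) \not\geq f(y)$. If $x \not\geq y$, then by the definition of the product order there exists a coordinate $j \in \kappa$ with $x_j \not\geq y_j$, i.e.\ $x_j < y_j$. The projection $\pi_j$ lies in $L_\kappa$ (it is obtained, for instance, as $\pi_j \lor 0$ or directly as a one-variable generating term), and $\pi_j(x) = x_j < y_j = \pi_j(y)$, so $\pi_j(x) \not\geq \pi_j(y)$. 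This establishes order-separation. Here I expect the only mild subtlety to be making sure the coordinate projections are indeed counted among the generated operations; since they are the basic variables of the clone, they belong to $L_\kappa$ by convention.

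With (i), (ii) and (iii) verified, \cref{t:StWe-unit} applies and yields that the closure of $L_\kappa$ under uniform convergence equals $\Cleq([0,1]^\kappa, [0,1])$, which is exactly the set of order-preserving continuous functions from $[0,1]^\kappa$ to $[0,1]$. This is the desired conclusion. The main (and essentially only) obstacle is the order-separation hypothesis, but as shown it reduces to the triviality that distinct-in-order points differ in some coordinate and projections separate them; no harder analysis is needed.
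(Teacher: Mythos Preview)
Your proof is correct and follows exactly the same approach as the paper: apply \cref{t:StWe-unit} with $X = [0,1]^\kappa$, noting that $L_\kappa$ order-separates because it contains the projections. The paper's proof is a one-liner stating precisely this, while you have spelled out the verification of the hypotheses in more detail.
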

\begin{proof}
	We apply \cref{t:StWe-unit} with $X = [0,1]^\kappa$: note that $X$ is a compact ordered space and that $L_\kappa$ is order-separating because it contains the projections, which are easily seen to order-separate the elements of $[0,1]^\kappa$.
\end{proof}

\begin{lemma} \label{l:generation}
	Let $\alpha \colon [0,1]^{\Np} \to [0,1]$ be an order-preserving continuous function such that, if $L$ is a set of functions from a set $X$ to $[0,1]$, then the closure of $L$ under pointwise application of $\alpha$ contains the closure of $L$ under uniform limits.
	Let $D$ be a dense subset of $[0,1]$.
	Then $\{\oplus, \odot,\lor,\land\} \cup D \cup \{\alpha\}$ generates the clone on $[0,1]$ of order-preserving continuous functions.
\end{lemma}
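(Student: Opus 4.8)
The plan is to combine Lemma~\ref{l:denseness} with the hypothesis on $\alpha$, reducing the statement about the infinitary operation $\alpha$ to the already-established finitary approximation result. Let $\mathcal{C}_\kappa$ denote the clone of order-preserving continuous functions from $[0,1]^\kappa$ to $[0,1]$. I want to show that for every cardinal $\kappa$, every function in $\mathcal{C}_\kappa$ is generated by $\{\oplus, \odot, \lor, \land\} \cup D \cup \{\alpha\}$.

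First I would fix a cardinal $\kappa$ and an order-preserving continuous function $f \colon [0,1]^\kappa \to [0,1]$. Let $L_\kappa$ be the set of operations generated by $\{\oplus, \odot, \lor, \land\} \cup D$ (note the use of the dense set $D$ rather than all of $[0,1]$). The key point is that Lemma~\ref{l:denseness} is stated with all constants in $[0,1]$, so the first step is to observe that the constants from a dense subset $D$ suffice to generate, under uniform limits, all the constants in $[0,1]$: every real $t \in [0,1]$ is a uniform (indeed pointwise-constant) limit of elements of $D$, hence the closure of $L_\kappa$ under uniform limits contains all constant functions. Consequently, the closure of $L_\kappa$ under uniform limits coincides with the closure under uniform limits of the set generated by $\{\oplus, \odot, \lor, \land\} \cup [0,1]$, which by Lemma~\ref{l:denseness} is exactly $\mathcal{C}_\kappa$.

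Next I would invoke the hypothesis on $\alpha$. Let $L'$ denote the clone generated by $\{\oplus, \odot, \lor, \land\} \cup D \cup \{\alpha\}$ on $[0,1]$; its arity-$\kappa$ part contains $L_\kappa$ and is closed under pointwise application of $\alpha$. By the defining property of $\alpha$, applied with $X = [0,1]^\kappa$ and $L = L_\kappa$, the closure of $L_\kappa$ under pointwise application of $\alpha$ contains the closure of $L_\kappa$ under uniform limits. Combining this with the previous paragraph, the arity-$\kappa$ part of $L'$ contains $\mathcal{C}_\kappa$. Since every element of $L'$ is itself an order-preserving continuous function (the generators are such, and $\mathcal{C}_\kappa$ is closed under composition), the reverse inclusion is immediate, giving the arity-$\kappa$ part of $L'$ equal to $\mathcal{C}_\kappa$. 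As $\kappa$ was arbitrary, this is exactly the assertion that $\{\oplus, \odot, \lor, \land\} \cup D \cup \{\alpha\}$ generates the clone of order-preserving continuous functions on $[0,1]$.

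The main subtlety to handle carefully is the interplay between the two closure notions: the hypothesis on $\alpha$ promises that applying $\alpha$ pointwise recovers uniform limits \emph{of a given set $L$}, so I must be sure to apply it to the specific set $L_\kappa$ (generated with $D$) rather than to an already-enlarged set, and then separately argue that $D$-constants generate $[0,1]$-constants under uniform limits so that Lemma~\ref{l:denseness} becomes applicable. A small point worth spelling out is that $L'$ is closed under pointwise application of $\alpha$ by construction, so once $L_\kappa \subseteq L'$ and the uniform-limit closure of $L_\kappa$ lands inside the $\alpha$-closure of $L_\kappa$, that closure lies in $L'$; no genuine obstacle arises, only the bookkeeping of these inclusions.
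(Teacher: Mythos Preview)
Your proposal is correct and follows the same approach as the paper, which simply writes ``By \cref{l:denseness}.'' Your argument is a careful unpacking of that one-line proof; in particular, you make explicit the passage from the dense subset $D$ to all constants in $[0,1]$ (needed because \cref{l:denseness} is stated with the full set of constants), a step the paper leaves entirely to the reader.
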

\begin{proof}
	By \cref{l:denseness}.
\end{proof}

In the following we will look for a function $\limop \colon [0,1]^{\Np} \to [0,1]$ that satisfies the conditions in \cref{l:generation}, so that the set $\{\oplus, \odot, \land, \lor\} \cup \Dyad \cup \{\limop\}$ settles our search for a generating set of the clone of order-preserving continuous functions on $[0,1]$.

Ideally, one would like to take
\[
	\lim \colon [0,1]^{\Np} \to [0,1].
\]
The first thing we notice is that $\lim$ is not defined on the whole $[0,1]^{\Np}$ because not all sequences converge.
That said, the next ideal thing one would like to take is a function that maps each Cauchy sequence to its limits.
However, this is not possible because we want the function to be continuous, and every function $\alpha \colon [0,1]^{\Np} \to [0,1]$ that maps each Cauchy sequence to its limit is not continuous, as it does not commute with topological limits:
\[
	\alpha\big(\lim_{n \to \infty} (\underbrace{1, \dots, 1}_{n\ \text{times}}, 0, 0, \dots)\big) = \alpha(1, 1, 1, \dots) = 1,
\]
and
\[
	\lim_{n \to \infty} \alpha(\underbrace{1, \dots, 1}_{n\ \text{times}}, 0, 0, \dots) = \lim_{n \to \infty} 0 = 0.
\]
So, we look for an order-preserving continuous function $\limop \colon [0,1]^{\Np} \to [0,1]$ which maps just `enough' Cauchy sequences to their limit (and with no restriction on the value it takes on other sequences).
There does not seem to be a canonical candidate for $\limop$.
In \cite{HNN2018} a suitable operation denoted $\delta$ was described.
Here we use a slightly different function.
As a further remark showing the limitations in the choice of $\limop$, we point out that there exists no continuous function $\alpha \colon [0,1]^{\Np} \to [0,1]$ that satisfies the following identities, which would be natural for an operation which acts as a limit.
\begin{enumerate}
	\item $\alpha(x, x, x, \dots) = x$;
	\item $\alpha(x_1, x_2, x_3, \dots) = \alpha(x_2, x_3, x_4, \dots)$.
\end{enumerate}
Indeed, one such function would satisfy 
\[
	\alpha(1, 1, 1, \dots) = 1,
\]
and, for every $n$,
\[
	\alpha(\underbrace{1, \dots, 1}_{n\ \text{times}}, 0, 0, \dots) = 0,
\]
and so it would not be continuous by the previous discussion.

However, the function $\limop$ that we will describe seems to us quite a natural choice.

The idea in order to ensure continuity of $\limop$ is to define $\limop$ as the uniform limit of a sequence of continuous functions $\widetilde{\aprj}_n \colon [0,1]^{\Np} \to [0,1]$, where $\widetilde{\aprj}_n$ are functions generated by $\{\oplus, \odot, \land, \lor\} \cup \Dyad$.
The fact that the functions $\widetilde{\aprj}_n$ are generated by $\{\oplus, \odot, \land, \lor\} \cup \Dyad$ implies that they are order-preserving and continuous.
To be sure that $(\widetilde{\aprj}_n)_n$ converges uniformly, we will require $\dist(\widetilde{\aprj}_n, \widetilde{\aprj}_{n+1}) \leq \frac{1}{2^n}$, where $\dist$ is the uniform metric.
In this way, $(\widetilde{\aprj}_n)_n$ admits a uniform limit that we will denote with $\limop$.
Then, $\limop$ is guaranteed to be continuous.
Since the functions $\widetilde{\aprj}_n$ are order-preserving, the function $\limop$ is order-preserving, as well.
Moreover, we will look for functions $\widetilde{\aprj}_n$ which behave very much like projections onto the $n$-th coordinates: this means that, for `enough many sequences $(x_n)_n$', we have $\widetilde{\aprj}_n(x_1, x_2, x_3, \dots) = x_n$.
In this way, for `enough many sequences $(x_1, x_2, x_3, \dots)$', we have 
\[
	\limop(x_1, x_2, x_3, \dots) = \lim_{n \to \infty} \widetilde{\aprj}_n(x_1, x_2, x_3, \dots) = \lim_{n \to \infty} x_n.
\]
Then, $\limop$ will be very much like a limit operation\footnote{In fact, the symbol $\limop$ should be evocative of the word `\emph{l}imit'.}.

%=================================   SUBSECTION   =================================%

\subsection{Completion via \texorpdfstring{$2$}{2}-Cauchy sequences}

The need for $\limop$ to be continuous motivated the requirement $\dist(\widetilde{\aprj}_n, \widetilde{\aprj}_{n+1}) \leq \frac{1}{2^n}$.
This brings us to the following definition.

\begin{definition}\label{d:$2$-Cauchy}
	A sequence $(x_n)_{n \in \Np}$ in a metric space $(X,\dist)$ is called \emph{$2$-Cauchy}\index{$2$-Cauchy sequence!in a metric space}\index{sequence!$2$-Cauchy!in a metric space} if, for every $n \in \Np$, we have
	\[
		\dist(x_n,x_{n + 1}) \leq \frac{1}{2^n}.
	\]
\end{definition}

As shown in the following results, every $2$-Cauchy sequence is a Cauchy sequence, and Cauchy completeness is equivalent to convergence of all $2$-Cauchy sequences.

\begin{lemma}\label{l:supercauchy is cauchy}
	Let $(x_n)_{n \in \Np}$ be a $2$-Cauchy sequence in a metric space $(X,\dist)$.
	Then, for every $n,m \in \Np$ with $n \leq m$, we have
	\[
		\dist(x_n,x_m) < \frac{1}{2^{n-1}}.
	\]
\end{lemma}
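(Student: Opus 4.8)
The plan is to bound $\dist(x_n,x_m)$ by the sum of the distances between consecutive terms, using the triangle inequality, and then evaluate the resulting finite geometric series.

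First I would dispose of the trivial case $n = m$: here $\dist(x_n,x_m) = \dist(x_n,x_n) = 0 < \frac{1}{2^{n-1}}$, so the strict inequality holds. For the remaining case $n < m$, the telescoping chain $x_n, x_{n+1}, \dots, x_m$ together with repeated application of the triangle inequality yields
\[
	\dist(x_n,x_m) \leq \sum_{k=n}^{m-1} \dist(x_k,x_{k+1}).
\]
Since $(x_n)_{n \in \Np}$ is $2$-Cauchy, each summand satisfies $\dist(x_k,x_{k+1}) \leq \frac{1}{2^k}$ by \cref{d:$2$-Cauchy}, and hence
\[
	\dist(x_n,x_m) \leq \sum_{k=n}^{m-1} \frac{1}{2^k}.
\]

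Next I would evaluate this finite geometric sum. Reindexing by $j \df k - n$, one computes
\[
	\sum_{k=n}^{m-1} \frac{1}{2^k} = \frac{1}{2^n} \sum_{j=0}^{m-1-n} \frac{1}{2^j} = \frac{1}{2^{n-1}} \left( 1 - \frac{1}{2^{m-n}} \right).
\]
Because $m > n$, the factor $\frac{1}{2^{m-n}}$ is strictly positive, so $1 - \frac{1}{2^{m-n}} < 1$, and therefore $\dist(x_n,x_m) < \frac{1}{2^{n-1}}$, as desired.

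This is an entirely routine estimate, so there is no genuine obstacle; the only point requiring a little care is to secure the \emph{strict} inequality rather than $\leq$. This is handled precisely by the observations that the geometric series for $n < m$ sums to strictly less than $\frac{1}{2^{n-1}}$ (the tail $\frac{1}{2^{m-n}}$ is never zero for finite $m$), and that the degenerate case $n = m$ gives distance $0$, which is also strictly below the bound.
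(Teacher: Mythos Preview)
Your proof is correct and follows the same approach as the paper's: triangle inequality along the chain $x_n,\dots,x_m$, then the geometric-series bound. The only cosmetic difference is that the paper obtains the strict inequality by bounding the finite sum $\sum_{i=n}^{m-1}\frac{1}{2^i}$ by the infinite tail $\sum_{i=n}^{\infty}\frac{1}{2^i}=\frac{1}{2^{n-1}}$, whereas you compute the finite sum explicitly and treat $n=m$ separately.
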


\begin{proof} 
	By the triangle inequality, we have
	\[ 	\dist(x_n,x_m) \leq \sum_{i=n}^{m-1}\dist(x_i,x_{i + 1}) \leq \sum_{i=n}^{m-1}\frac{1}{2^{i}}< \sum_{i=n}^{ \infty}\frac{1}{2^i}= \frac{1}{2^{n-1}}.  \qedhere\]
\end{proof}

\begin{lemma}\label{l:$2$-Cauchy-implies-Cauchy}
	Every $2$-Cauchy sequence in a metric space is a Cauchy sequence.
\end{lemma}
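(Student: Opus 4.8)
The statement to prove is Lemma~\ref{l:$2$-Cauchy-implies-Cauchy}: every $2$-Cauchy sequence in a metric space is a Cauchy sequence. The plan is to reduce this immediately to the quantitative bound already established in the previous lemma, namely \cref{l:supercauchy is cauchy}, which states that for a $2$-Cauchy sequence $(x_n)_{n \in \Np}$ and all $n \leq m$ we have $\dist(x_n, x_m) < \frac{1}{2^{n-1}}$.

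First I would recall the definition of a Cauchy sequence: $(x_n)_{n \in \Np}$ is Cauchy if, for every $\eps > 0$, there exists $N \in \Np$ such that for all $n, m \geq N$ we have $\dist(x_n, x_m) < \eps$. So, fixing an arbitrary $\eps > 0$, the task is to produce such an $N$. The natural choice is to pick $N$ large enough that $\frac{1}{2^{N-1}} \leq \eps$; concretely, one takes $N \in \Np$ with $2^{N-1} \geq \frac{1}{\eps}$, which exists by the Archimedean property of the reals.

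Next I would verify that this $N$ works. Let $n, m \geq N$. By symmetry of the metric we may assume $n \leq m$. Then \cref{l:supercauchy is cauchy} gives
\[
	\dist(x_n, x_m) < \frac{1}{2^{n-1}} \leq \frac{1}{2^{N-1}} \leq \eps,
\]
where the middle inequality uses $n \geq N$ (so $2^{n-1} \geq 2^{N-1}$). This establishes the Cauchy condition for the chosen $N$, completing the argument.

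There is essentially no obstacle here: the lemma is a routine consequence of the preceding quantitative estimate, and the only ingredient beyond \cref{l:supercauchy is cauchy} is the elementary fact that $\frac{1}{2^{n-1}} \to 0$, i.e.\ that the powers of $2$ are unbounded. The one point requiring the tiniest care is handling the case $n > m$ by invoking symmetry of $\dist$ before applying \cref{l:supercauchy is cauchy}, which is stated only for $n \leq m$.
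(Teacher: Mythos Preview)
Your proof is correct and follows exactly the approach the paper takes: the paper's proof consists of the single line ``By \cref{l:supercauchy is cauchy},'' and you have simply spelled out the routine $\eps$-$N$ verification that this reference entails.
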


\begin{proof}
	By \cref{l:supercauchy is cauchy}.
\end{proof}

\begin{lemma} \label{l:subsequence}
	Every Cauchy sequence in a metric space admits a $2$-Cauchy subsequence.
\end{lemma}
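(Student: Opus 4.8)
The claim to prove is: every Cauchy sequence in a metric space admits a $2$-Cauchy subsequence.

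The plan is to extract a subsequence whose consecutive terms are close enough to satisfy the defining inequality of \cref{d:$2$-Cauchy}. The Cauchy condition gives, for every $\eps > 0$, an index beyond which all pairs of terms are within $\eps$; the key is to invoke this with the specific thresholds $\frac{1}{2^n}$ and to choose the extraction indices to be strictly increasing so that what we obtain is genuinely a subsequence.

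Concretely, let $(x_n)_{n \in \Np}$ be a Cauchy sequence in a metric space $(X, \dist)$. I would define a strictly increasing sequence of indices $(k_n)_{n \in \Np}$ by recursion. For the base case, apply the Cauchy property with threshold $\frac{1}{2}$ to obtain an index $N_1$ such that $\dist(x_p, x_q) \leq \frac{1}{2}$ for all $p, q \geq N_1$, and set $k_1 \df N_1$. For the recursive step, assuming $k_n$ has been defined, apply the Cauchy property with threshold $\frac{1}{2^{n+1}}$ to obtain an index $N_{n+1}$ such that $\dist(x_p, x_q) \leq \frac{1}{2^{n+1}}$ for all $p, q \geq N_{n+1}$, and set $k_{n+1} \df \max\{k_n + 1, N_{n+1}\}$. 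This guarantees $k_{n+1} > k_n$, so $(k_n)_n$ is strictly increasing and $(x_{k_n})_n$ is a subsequence.

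It then remains to verify that $(x_{k_n})_n$ is $2$-Cauchy. Fix $n \in \Np$. Since $k_{n+1} \geq N_{n+1}$ and $k_n \geq k_{n} \geq \cdots \geq N_{n+1}$ (because the construction forces $k_m \geq N_{n+1}$ for every $m \geq n+1$, and more to the point $k_{n+1} \geq N_{n+1}$ while $k_n$ lies below $k_{n+1}$), I would instead argue directly using the threshold attached to step $n+1$: both $k_n$ and $k_{n+1}$ are at least $N_{n+1}$ is \emph{not} automatic for $k_n$, so the cleaner route is to note that $k_n < k_{n+1}$ and both exceed $N_{n+1}$ only for the larger index; hence I would define the thresholds so that step $n$ controls the pair $(k_n, k_{n+1})$. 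The robust formulation is to choose $N_{n+1}$ so that $\dist(x_p, x_q) \leq \frac{1}{2^n}$ whenever $p, q \geq N_n$, and require $k_n \geq N_n$ at every stage; then $\dist(x_{k_n}, x_{k_{n+1}}) \leq \frac{1}{2^n}$ follows immediately since $k_n, k_{n+1} \geq N_n$. This yields exactly the inequality required by \cref{d:$2$-Cauchy}, completing the proof. There is no real obstacle here: the only point requiring care is the bookkeeping of the indices to ensure simultaneously that the extraction is strictly increasing (so it is a subsequence) and that consecutive extracted terms satisfy the correct dyadic bound; aligning the threshold used at stage $n$ with the gap $\frac{1}{2^n}$ is what makes the two requirements compatible.
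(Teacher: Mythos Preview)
Your approach is correct and essentially the same as the paper's: recursively choose indices $k_n$ with $k_n \geq N_n$ (where $N_n$ witnesses the Cauchy threshold $\tfrac{1}{2^n}$) and $k_{n+1} > k_n$. The mid-proof detour is unnecessary, since your original construction already gives $k_n \geq N_n$ and $k_{n+1} > k_n \geq N_n$, so $\dist(x_{k_n}, x_{k_{n+1}}) \leq \tfrac{1}{2^n}$ follows immediately from the defining property of $N_n$ (not $N_{n+1}$); there was never any need to reformulate.
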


\begin{proof}
	Let $(x_n)_{n \in \Np}$ be a Cauchy sequence.
	Choose $n_1$ so that, for $k \geq n_1$, we have $\dist(x_{n_j}, x_k) \leq  \frac{1}{2}$, and then, iteratively, choose ${n_j}$ ($j= 2, 3, 4, \dots$) so that ${n_j} > n_{j-1}$ and, for every $k \geq n_j$, $\dist(x_{n_j}, x_k) \leq  \frac{1}{2^{j}}$.
	Then, $(x_{n_j})_{j \in \Np}$ is a $2$-Cauchy subsequence.
\end{proof}

\begin{lemma} \label{l:2-Cauchy-and-completeness}
	A metric space $X$ is Cauchy complete if, and only if, every $2$-Cauchy sequence in $X$ converges.
\end{lemma}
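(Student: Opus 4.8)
The statement to prove is the following standard equivalence: a metric space $X$ is Cauchy complete if, and only if, every $2$-Cauchy sequence in $X$ converges.

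\medskip

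The plan is to prove both implications directly, relying on the elementary lemmas already established. The forward implication is immediate: if $X$ is Cauchy complete, then every Cauchy sequence converges, and since every $2$-Cauchy sequence is a Cauchy sequence by \cref{l:$2$-Cauchy-implies-Cauchy}, every $2$-Cauchy sequence converges. This direction requires no real work beyond invoking the earlier lemma.

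\medskip

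For the converse, suppose every $2$-Cauchy sequence in $X$ converges, and let $(x_n)_{n \in \Np}$ be an arbitrary Cauchy sequence; I want to show it converges. First I would extract a $2$-Cauchy subsequence $(x_{n_j})_{j \in \Np}$ using \cref{l:subsequence}. By hypothesis this subsequence converges to some limit $\ell \in X$. The key step is then the standard fact that a Cauchy sequence which admits a convergent subsequence is itself convergent, with the same limit: given $\eps > 0$, use the Cauchy property of $(x_n)_n$ to find $N$ such that $\dist(x_n, x_m) < \eps/2$ for all $n, m \geq N$, and use convergence of the subsequence to find an index $n_j \geq N$ with $\dist(x_{n_j}, \ell) < \eps/2$; then for every $n \geq N$ the triangle inequality gives $\dist(x_n, \ell) \leq \dist(x_n, x_{n_j}) + \dist(x_{n_j}, \ell) < \eps$, so $x_n \to \ell$.

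\medskip

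There is no genuine obstacle here: the result is a routine fact about metric spaces, and both directions reduce to triangle-inequality estimates together with the two preparatory lemmas \cref{l:$2$-Cauchy-implies-Cauchy} and \cref{l:subsequence}. The only point requiring a moment of care is ensuring that the chosen subsequence index $n_j$ in the final estimate can simultaneously be made $\geq N$ and satisfy the proximity bound to $\ell$, which is guaranteed since the convergence condition $\dist(x_{n_j}, \ell) < \eps/2$ holds for all sufficiently large $j$ and there are infinitely many such indices exceeding $N$.
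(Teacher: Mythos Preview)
Your proof is correct and follows exactly the same approach as the paper: the forward direction via \cref{l:$2$-Cauchy-implies-Cauchy}, and the converse by extracting a $2$-Cauchy subsequence via \cref{l:subsequence} and then invoking the standard fact that a Cauchy sequence with a convergent subsequence converges. The only difference is that you spell out the triangle-inequality argument for that last fact, whereas the paper simply states it.
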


\begin{proof} 
	By \cref{l:$2$-Cauchy-implies-Cauchy}, every $2$-Cauchy sequence is a Cauchy sequence.
	Thus, if $X$ is Cauchy complete, then every $2$-Cauchy sequence in $X$ converges.
	For the converse implication, it is enough to notice that every Cauchy sequence in a metric space admits a $2$-Cauchy subsequence (\cref{l:subsequence}) and that a Cauchy sequence admitting a convergent subsequence converges.
\end{proof}

So, ensuring convergence of $2$-Cauchy sequences is enough to ensure Cauchy-completeness.

\begin{lemma} \label{l:closure}
	Let $\alpha \colon [0,1]^{\Np} \to [0,1]$ be a function that maps all $2$-Cauchy sequences to their limits.
	Let $X$ be a set and let $L$ be a set of functions from $X$ to $[0,1]$.
	Then, every uniform limit of sequences in $L$ belongs to the closure of $L$ under pointwise application of $\alpha$.
\end{lemma}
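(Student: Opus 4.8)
The plan is to fix an arbitrary function $f$ that is a uniform limit of some sequence $(g_n)_{n \in \Np}$ in $L$ and to realise $f$ as a single pointwise application of $\alpha$ to suitable members of $L$, which immediately places $f$ in the desired closure. First I would record that uniform convergence is convergence with respect to the uniform metric $\dist(g,h) \df \sup_{x \in X}\lvert g(x) - h(x)\rvert$, so that the convergent sequence $(g_n)_n$ is in particular $\dist$-Cauchy.

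The decisive move is then to replace $(g_n)_n$ by a subsequence with better convergence control: by \cref{l:subsequence}, the $\dist$-Cauchy sequence $(g_n)_n$ admits a $2$-Cauchy subsequence $(g_{n_j})_{j \in \Np}$, which of course still converges uniformly, hence pointwise, to $f$.

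Next I would transfer $2$-Cauchyness from the uniform metric to each coordinate. Fixing $x \in X$, for every $j \in \Np$ we have $\lvert g_{n_j}(x) - g_{n_{j+1}}(x)\rvert \leq \dist(g_{n_j}, g_{n_{j+1}}) \leq \frac{1}{2^j}$, so $(g_{n_j}(x))_{j \in \Np}$ is a $2$-Cauchy sequence in $[0,1]$ with the Euclidean metric. By the hypothesis on $\alpha$ it is sent to its limit, and by pointwise convergence that limit is $f(x)$; hence $\alpha(g_{n_1}(x), g_{n_2}(x), g_{n_3}(x), \dots) = f(x)$. Since $x$ is arbitrary, $f$ is exactly the pointwise application of $\alpha$ to the sequence of functions $(g_{n_j})_{j \in \Np}$, each of which lies in $L$, so $f$ belongs to the closure of $L$ under pointwise application of $\alpha$.

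I do not anticipate a serious obstacle; the only subtlety is that the sequence witnessing uniform convergence need not itself be $2$-Cauchy, which is precisely why invoking \cref{l:subsequence} to pass to a $2$-Cauchy subsequence is the essential step of the argument.
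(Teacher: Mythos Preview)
Your proof is correct and follows essentially the same approach as the paper: pass from the uniformly convergent (hence Cauchy) sequence to a $2$-Cauchy subsequence via \cref{l:subsequence}, then observe that the subsequence is pointwise $2$-Cauchy and apply the hypothesis on $\alpha$. The paper's version is just slightly more terse, but the argument is identical.
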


\begin{proof}
	Let $(f_n)_n$ be a sequence in $L$ that converges uniformly to a function $f$.
	Since $(f_n)_n$ converges, it is a Cauchy sequence.
	By \cref{l:subsequence}, $(f_n)_{n}$ admits a $2$-Cauchy subsequence $(f_{n_j})_{j}$.
	Then, for every $x \in X$, $(f_{n_j}(x))_{j}$ is a $2$-Cauchy sequence, and it converges to $f(x)$.
	Thus, $\alpha((f_{n_j}(x))_{j}) = f(x)$.
\end{proof}

%=================================   SUBSECTION   =================================%

\subsection{The function of countably infinite arity \texorpdfstring{$\limop$}{\textlambda}}

Let us roughly anticipate how $\limop \colon [0,1]^{\Np} \to [0,1]$ will be defined.
\begin{description}
	
	\item [Input]
			The input for $\limop$ is a sequence $(x_n)_{n \in \Np}$.
	
	\item [Step]
			We do an intermediate step in which the sequence $(x_n)_{n}$ is turned into a $2$-Cauchy-sequence $(y_n)_{n}$.
			This is done with as little modification as possible.
			In particular, if $(x_n)_{n}$ was already $2$-Cauchy, then $(y_n)_n = (x_n)_n$.
			For every $n \in \Np$, the element $y_n$ will depend on $x_1, \dots, x_n$, i.e.\ $y = \aprj_n(x_1, \dots, x_n)$ for some $\aprj_n \colon [0,1]^n \to [0,1]$.
	
	\item [Output]
			The output (i.e.\ the value $\limop(x_1, x_2, x_3, \dots)$) is $\lim_{n \to \infty} y_n$, which exists because $(y_n)_n$ is a Cauchy sequence.
	
\end{description}
For each $n$, the function $\aprj_n$ will be order-preserving, so that also $\limop$ is guaranteed to be so.
Moreover, we will have $\dist(\aprj_n, \aprj_{n+1}) \leq \frac{1}{2^n}$, so that $\limop$ is guaranteed to be continuous.
Note that, by construction, $\limop$ will map $2$-Cauchy sequences to their limit.

We first illustrate our choice of $\mu_n$ with an example, which shows how we turn a sequence in $[0,1]$ into a $2$-Cauchy sequence in $[0,1]$.
Consider a sequence beginning with
\begin{align*}
	x_1 & = 0.1,	& x_2	& = 0.5,	& x_3	& = 0,	& x_4	& = 0.3,	&&\dots.
	\intertext{Let us turn this sequence into a $2$-Cauchy one with as few modifications as possible:}
	y_1 & = 0.1,	& y_2	& = 0.5,	& y_3	& = 0.25,	& y_4	& = 0.3,	&&\dots.
\end{align*}
The first element of the original sequence, $0.1$, can be left unchanged in the new sequence: $y_1 \df x_1 = 0.1$.
The distance between the first and the second element of the new sequence must be less or equal than $\frac{1}{2}$.
Since the distance between $y_1 = 0.1$ and $x_2 = 0.5$ is less than $\frac{1}{2}$, the second element, $0.5$, can be left as it is in the new sequence: $y_2 \df x_2 = 0.5$.
The distance between the second and the third element of the new sequence must be less or equal than $\frac{1}{4}$.
Since the distance between $y_2 = 0.5$ and $x_3 = 0$ is strictly greater than $\frac{1}{4}$, we have to replace $0$ with another element: we take this new element $y_3$ as close to $x_3 = 0$ as possible, given the restriction $\dist(y_2, y_3) \leq 0.25$.
Thus we take $y_3 \df 0.25$.
The distance between the third and the fourth element of the new sequence must be less or equal than $\frac{1}{8}$ ($=0.125$).
Since the distance between $y_3 = 0.25$ and $x_4 = 0.3$ is less than $0.125$, the fourth element can be left unchanged: $y_4 \df x_4 = 0.3$.

The $n$-th element of the new sequence depends on the first $n$ elements of the old one; else said, $y_n = \aprj_n(x_1, \dots, x_n)$ for some function $\aprj_n \colon [0,1]^n \to [0,1]$.

\begin{notation} \label{n:apricot}
	Inductively on $n \in \Np$, we define the function $\aprj_n \colon [0,1]^n \to [0,1]$ of arity $n$.
	We set
	\[
		\aprj_1(x_1) \df x_1,
	\]
	and, for $n \geq 2$,
	\begin{align*}
		&\aprj_n(x_1, \dots,x_n)\\
		& \df \max\left\{\min\left\{x_n, \aprj_{n-1}(x_1, \dots,x_{n-1}) + \frac{1}{2^{n-1}}\right\}, \aprj_{n-1}(x_1, \dots,x_{n-1}) - \frac{1}{2^{n-1}}\right\}.
	\end{align*}
\end{notation}

The following three lemmas capture the main properties of the functions $(\aprj_n)_n$.

\begin{lemma} \label{l:ord-pres_cont}
	For every $n \in \Np$, the function $\aprj_n \colon [0,1]^n \to [0,1]$ is order-preserving and continuous.
\end{lemma}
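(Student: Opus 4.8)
The plan is to prove the statement by induction on $n \in \Np$, exactly matching the inductive structure of \cref{n:apricot}. The base case $n = 1$ is immediate: $\aprj_1(x_1) = x_1$ is the projection onto the single coordinate, which is trivially order-preserving and continuous. For the inductive step, I would assume that $\aprj_{n-1} \colon [0,1]^{n-1} \to [0,1]$ is order-preserving and continuous, and then show that $\aprj_n$ inherits both properties from the defining formula.

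The key observation is that $\aprj_n$ is obtained from $\aprj_{n-1}$ by composing with only the operations $\max$, $\min$, addition of the constant $\frac{1}{2^{n-1}}$, subtraction of the constant $\frac{1}{2^{n-1}}$, and the projection onto the $n$-th coordinate $(x_1, \dots, x_n) \mapsto x_n$. More precisely, writing $p \df \aprj_{n-1}(x_1, \dots, x_{n-1})$, we have
\[
	\aprj_n(x_1, \dots, x_n) = \max\left\{\min\left\{x_n,\ p + \tfrac{1}{2^{n-1}}\right\},\ p - \tfrac{1}{2^{n-1}}\right\}.
\]
By \cref{r:all operations in V are mon and cont}, the functions $\lor = \max$, $\land = \min$, and $+$ are continuous and order-preserving on $\R^2$ (with respect to the product order and product topology), and translation by a fixed dyadic constant is likewise continuous and order-preserving. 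I would then argue that continuity is preserved under composition, so that $\aprj_n$, being a composite of these continuous maps with $\aprj_{n-1}$ (continuous by the inductive hypothesis) and the continuous $n$-th projection, is continuous. Similarly, composites of order-preserving functions are order-preserving, and since $p$ depends order-preservingly on $x_1, \dots, x_{n-1}$ while $x_n$ is order-preserving in the last coordinate, the whole expression is monotone in each of its $n$ arguments, hence order-preserving with respect to the product order on $[0,1]^n$.

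The argument is essentially routine once the compositional structure is made explicit; the only point requiring a little care is bookkeeping the monotonicity in the mixed situation where $\aprj_{n-1}$ appears twice (once inside the $\min$ via $p + \frac{1}{2^{n-1}}$ and once in the outer $p - \frac{1}{2^{n-1}}$). I expect this to be the main, though minor, obstacle: one must confirm that increasing any single input $x_i$ with $i < n$ can only increase $p$, and hence can only increase both $\min\{x_n, p + \frac{1}{2^{n-1}}\}$ and $p - \frac{1}{2^{n-1}}$, so that the outer $\max$ increases as well. This follows from the fact that $\lor$ and $\land$ are order-preserving in both coordinates (\cref{r:all operations in V are mon and cont}), so no genuine difficulty arises—the induction closes cleanly.
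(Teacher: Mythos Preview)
Your proposal is correct and follows exactly the same approach as the paper: the paper's proof is a single sentence stating that the result is ``easily proved by induction, observing that the functions $+$, $\max$, $\min$ and all the constants are order-preserving and continuous with respect to the product order and topology.'' Your write-up simply spells out this induction in detail, including the invocation of \cref{r:all operations in V are mon and cont}.
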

\begin{proof}
	This is easily proved by induction, observing that the functions $+$, $\max$, $\min$ and all the constants are order-preserving and continuous with respect to the product order and topology.
\end{proof}

\begin{lemma}\label{l:2-Cauchy}
	For each sequence $(x_n)_{n \in \Np}$ of elements of $[0,1]$, the sequence 
	\[
		(\aprj_n(x_1, \dots,x_n))_{n \in \Np}
	\]
	is a $2$-Cauchy sequence.
\end{lemma}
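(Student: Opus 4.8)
The statement to prove is \cref{l:2-Cauchy}: for each sequence $(x_n)_{n \in \Np}$ in $[0,1]$, the sequence $(\aprj_n(x_1, \dots, x_n))_{n \in \Np}$ is $2$-Cauchy, i.e.\ $\dist(\aprj_n(x_1,\dots,x_n), \aprj_{n+1}(x_1,\dots,x_{n+1})) \leq \frac{1}{2^n}$ for every $n$.

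The plan is to unwind the definition in \cref{n:apricot} directly. Write $a_n \df \aprj_n(x_1,\dots,x_n)$ for brevity. By definition, for $n \geq 1$ we have
\[
	a_{n+1} = \max\left\{\min\left\{x_{n+1},\, a_n + \frac{1}{2^n}\right\},\, a_n - \frac{1}{2^n}\right\}.
\]
First I would observe that the right-hand side is exactly the value $x_{n+1}$ \emph{truncated} into the interval $\left[a_n - \frac{1}{2^n},\, a_n + \frac{1}{2^n}\right]$: the inner $\min$ caps it above by $a_n + \frac{1}{2^n}$, and the outer $\max$ raises it up to at least $a_n - \frac{1}{2^n}$. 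Hence, regardless of the value of $x_{n+1}$, one always has
\[
	a_n - \frac{1}{2^n} \leq a_{n+1} \leq a_n + \frac{1}{2^n}.
\]
The lower bound is immediate since $a_{n+1}$ is a $\max$ with $a_n - \frac{1}{2^n}$ as one argument; the upper bound follows because both arguments of the outer $\max$ are bounded above by $a_n + \frac{1}{2^n}$ (the first by the inner $\min$, and the second because $a_n - \frac{1}{2^n} \leq a_n + \frac{1}{2^n}$). From this two-sided bound I read off directly that $\lvert a_{n+1} - a_n \rvert \leq \frac{1}{2^n}$, which is precisely the $2$-Cauchy condition of \cref{d:$2$-Cauchy}.

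I expect no serious obstacle here: the result is essentially built into the construction, since each $\aprj_{n+1}$ was designed to move by at most $\frac{1}{2^n}$ from $\aprj_n$. The only care needed is to verify the upper bound $a_{n+1} \leq a_n + \frac{1}{2^n}$ cleanly, handling the outer $\max$: one notes that $\min\left\{x_{n+1}, a_n + \frac{1}{2^n}\right\} \leq a_n + \frac{1}{2^n}$ and $a_n - \frac{1}{2^n} \leq a_n + \frac{1}{2^n}$, so the maximum of the two is also $\leq a_n + \frac{1}{2^n}$. This is a short elementary argument about $\max$ and $\min$ on real numbers and requires no induction beyond the single-step estimate, since the $2$-Cauchy condition is itself a statement about consecutive terms. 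Thus the proof amounts to establishing the displayed inequality for an arbitrary fixed $n$.
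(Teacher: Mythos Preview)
Your proof is correct and follows essentially the same approach as the paper: the paper records the general inequality $\dist\bigl(y, \max\{\min\{x, y + \tfrac{1}{2^{n-1}}\}, y - \tfrac{1}{2^{n-1}}\}\bigr) \leq \tfrac{1}{2^{n-1}}$ and then substitutes $x = x_n$, $y = \aprj_{n-1}(x_1,\dots,x_{n-1})$. You carry out exactly this computation (with the index shifted by one) and in fact supply more detail on why the two-sided bound holds.
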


\begin{proof}
	For all $x, y \in [0,1]$ and all $n \in \Np$, we have
	\begin{equation} \label{e:ineq-for-mu}
		 \dist\mathopen{}\left(y, \max \left\{\min\left\{x, y + \frac{1}{2^{n-1}}\right\}, y - \frac{1}{2^{n-1}}\right\}\right)\mathclose{} \leq \frac{1}{2^{n-1}}.
	\end{equation}
	If we set $x = x_n$ and $y = \aprj_{n-1}(x_1, \dots, x_{n-1})$ in \cref{e:ineq-for-mu} and we apply the inductive definition of $\aprj_n$, we obtain $\dist(\aprj_{n-1}(x_1, \dots, x_n), \aprj_{n}(x_1, \dots, x_{n-1})) \leq \frac{1}{2^{n-1}}$.
\end{proof}

\begin{lemma}\label{l:effect-of-rho}
	Given a $2$-Cauchy sequence $(x_n)_{n \in \Np}$ of elements of $[0,1]$, we have, for all $n \in \Np$, 
	\[
		\aprj_n(x_1, \dots,x_n) =x_n.
	\]
\end{lemma}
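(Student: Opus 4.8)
Looking at this statement, I need to prove that for a $2$-Cauchy sequence $(x_n)_{n \in \Np}$ in $[0,1]$, we have $\aprj_n(x_1, \dots, x_n) = x_n$ for all $n \in \Np$.

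My plan is to proceed by induction on $n$, following directly from the inductive definition of $\aprj_n$ given in \cref{n:apricot}. The base case $n = 1$ is immediate since $\aprj_1(x_1) = x_1$ by definition. For the inductive step, I would assume $\aprj_{n-1}(x_1, \dots, x_{n-1}) = x_{n-1}$ and show that $\aprj_n(x_1, \dots, x_n) = x_n$. Substituting the inductive hypothesis into the defining recursion, I get
\[
	\aprj_n(x_1, \dots, x_n) = \max\left\{\min\left\{x_n, x_{n-1} + \frac{1}{2^{n-1}}\right\}, x_{n-1} - \frac{1}{2^{n-1}}\right\}.
\]
The key observation is that the $2$-Cauchy hypothesis gives $\dist(x_{n-1}, x_n) \leq \frac{1}{2^{n-1}}$, which (since $\dist$ on $[0,1]$ is the euclidean distance) means $x_{n-1} - \frac{1}{2^{n-1}} \leq x_n \leq x_{n-1} + \frac{1}{2^{n-1}}$.

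From these two inequalities I can simplify the nested $\max$/$\min$. The upper bound $x_n \leq x_{n-1} + \frac{1}{2^{n-1}}$ gives $\min\{x_n, x_{n-1} + \frac{1}{2^{n-1}}\} = x_n$, and then the lower bound $x_n \geq x_{n-1} - \frac{1}{2^{n-1}}$ gives $\max\{x_n, x_{n-1} - \frac{1}{2^{n-1}}\} = x_n$, so the whole expression collapses to $x_n$, completing the induction.

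I do not anticipate a serious obstacle here, as this is essentially a routine verification: the recursion in \cref{n:apricot} truncates the value $x_n$ into the interval $[\aprj_{n-1} - \frac{1}{2^{n-1}}, \aprj_{n-1} + \frac{1}{2^{n-1}}]$, and the $2$-Cauchy condition is precisely what guarantees that $x_n$ already lies in this interval (once we invoke the inductive hypothesis to replace $\aprj_{n-1}$ by $x_{n-1}$), so no truncation actually occurs. The only point requiring a modicum of care is keeping the two inequalities from the $2$-Cauchy condition correctly aligned with the inner $\min$ and outer $\max$ respectively; this mirrors exactly the bookkeeping already used in the proof of \cref{l:2-Cauchy}.
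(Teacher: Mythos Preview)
Your proposal is correct and follows essentially the same inductive argument as the paper's proof: substitute the inductive hypothesis into the recursive definition of $\aprj_n$, then use the $2$-Cauchy inequalities to collapse the $\max$/$\min$ expression to $x_n$. Your indexing is in fact slightly cleaner than the paper's, which contains a minor slip in the inductive step.
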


\begin{proof}
	We prove this by induction on $n \in \Np$.
	The statement holds for $n = 1$ by definition of $\aprj_1$.
	Suppose the statement holds for $n \in \Np$, and let us prove it holds for $n + 1$.
	By the inductive definition of $\aprj_n$ and by the inductive hypothesis, we have
	\begin{equation} \label{e:mu}
		\aprj_n(x_1, \dots, x_n) = \max\left\{\min\left\{x_n, x_{n-1} + \frac{1}{2^{n-1}}\right\}, x_{n-1} - \frac{1}{2^{n-1}}\right\}.
	\end{equation}
	Since $(x_n)_{n \in \Np}$ is $2$-Cauchy, we have $\dist(x, y) \leq \frac{1}{2^n}$, i.e.\ $x_{n-1} - \frac{1}{2^n} \leq x_n \leq \frac{1}{2^n}$; hence, the right-hand side of \cref{e:mu} coincides with $x_n$.
\end{proof}

\begin{notation} \label{n:delta}
	Let $(x_n)_{n \in \Np}$ be a sequence of elements of $\R$.
	By \cref{l:2-Cauchy}, the sequence $(\aprj_n(x_1, \dots,x_n))_{n \in \Np}$ is a $2$-Cauchy sequence and thus a Cauchy sequence by \cref{l:$2$-Cauchy-implies-Cauchy}.
	Since the metric space $[0,1]$ is complete, the Cauchy sequence $(\aprj_n(x_1, \dots,x_n))_{n \in \Np}$ admits a limit, that we denote with $\limop(x_1, x_2, x_3, \dots)$.
	This establishes a function%
	\footnote{
		The function $\limop$ defined here differs from the function $\delta$ from \cite{HNN2018,Abbadini2019}. There are two main advantages on the side of $\limop$.
		The first one is elegance: the function $\limop$ is self-dual: for every sequence $(x_n)_{n}$ of elements of $[0,1]$, we have $1 - \limop((1 - x_n)_n) = \limop((x_n)_n)$.
		The second advantage is that the closure under $\limop$ contains the closure under uniform limits for \emph{any} set of $[0,1]$-valued functions (see \cref{l:closure}).
	}
	\begin{equation*}
		\begin{split}
			\limop \colon	[0,1]^{\Np}				& \longrightarrow	[0,1] \\
								(x_1,x_2,x_3, \dots)	& \longmapsto		\lim_{n\rightarrow \infty}\aprj_n(x_1, \dots,x_n).	
		\end{split}
	\end{equation*}
\end{notation}

\begin{proposition}\label{p:delta-cont-mon}
	The function $\limop\colon {[0,1]}^{\Np}\rightarrow [0,1]$ is order-preserving and continuous (with respect to the product order and product topology).
\end{proposition}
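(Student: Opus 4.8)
The plan is to prove that $\limop$ is order-preserving and continuous by exhibiting it as a uniform limit of the functions $\widetilde{\aprj}_n$, where $\widetilde{\aprj}_n \colon [0,1]^{\Np} \to [0,1]$ is the composite of $\aprj_n$ with the projection onto the first $n$ coordinates, i.e.\ $\widetilde{\aprj}_n(x_1, x_2, x_3, \dots) \df \aprj_n(x_1, \dots, x_n)$. Since each $\aprj_n$ is order-preserving and continuous by \cref{l:ord-pres_cont}, and projections onto coordinates are order-preserving and continuous with respect to the product order and product topology, each $\widetilde{\aprj}_n$ is order-preserving and continuous.

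For continuity, first I would show that the sequence $(\widetilde{\aprj}_n)_{n \in \Np}$ converges uniformly to $\limop$. Indeed, \cref{l:2-Cauchy} asserts that, for every input sequence, $(\widetilde{\aprj}_n(x_1, x_2, x_3, \dots))_{n \in \Np}$ is a $2$-Cauchy sequence; hence, by \cref{l:supercauchy is cauchy}, for all $n \leq m$ we have $\dist(\widetilde{\aprj}_n(x_1, x_2, x_3, \dots), \widetilde{\aprj}_m(x_1, x_2, x_3, \dots)) < \frac{1}{2^{n-1}}$. Letting $m \to \infty$ and recalling the definition $\limop(x_1, x_2, x_3, \dots) = \lim_{m \to \infty} \widetilde{\aprj}_m(x_1, x_2, x_3, \dots)$ from \cref{n:delta}, we obtain
\[
	\dist\big(\widetilde{\aprj}_n(x_1, x_2, x_3, \dots), \limop(x_1, x_2, x_3, \dots)\big) \leq \frac{1}{2^{n-1}}
\]
uniformly over all inputs. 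Thus $(\widetilde{\aprj}_n)_n$ converges uniformly to $\limop$, and, since each $\widetilde{\aprj}_n$ is continuous and a uniform limit of continuous functions is continuous, $\limop$ is continuous.

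For order-preservation, I would argue pointwise: given two sequences $(x_n)_n \leq (y_n)_n$ in the product order of $[0,1]^{\Np}$ (i.e.\ $x_n \leq y_n$ for all $n$), monotonicity of each $\widetilde{\aprj}_n$ yields $\widetilde{\aprj}_n(x_1, x_2, x_3, \dots) \leq \widetilde{\aprj}_n(y_1, y_2, y_3, \dots)$ for all $n$. Since the order $\leq$ on $[0,1]$ is closed (it is the closed partial order of a compact ordered space, \cref{i:interval} in \cref{ex:comp-ord-spaces}), passing to the limit preserves the inequality, giving $\limop(x_1, x_2, x_3, \dots) \leq \limop(y_1, y_2, y_3, \dots)$.

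I do not anticipate a serious obstacle: the heavy lifting has already been done in establishing that each $\aprj_n$ is well-behaved (\cref{l:ord-pres_cont}) and that the relevant sequences are $2$-Cauchy (\cref{l:2-Cauchy}, \cref{l:supercauchy is cauchy}). The only point demanding mild care is verifying that the convergence is \emph{uniform} rather than merely pointwise, so that continuity is inherited by the limit; this is precisely where the explicit bound $\frac{1}{2^{n-1}}$ from \cref{l:supercauchy is cauchy}, independent of the input, does the work.
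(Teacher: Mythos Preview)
Your proposal is correct and follows essentially the same approach as the paper: define $\widetilde{\aprj}_n$ as the composite of $\aprj_n$ with the projection onto the first $n$ coordinates, observe that each $\widetilde{\aprj}_n$ is order-preserving and continuous, and deduce the result from uniform convergence of $(\widetilde{\aprj}_n)_n$ to $\limop$. If anything, your argument is slightly more explicit than the paper's, which simply asserts the uniform convergence without spelling out the bound from \cref{l:supercauchy is cauchy}.
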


\begin{proof} 
	For every $n \in \Np$, we set
	\begin{equation*}
		\begin{split}
			\widetilde{\aprj}_n \colon	{[0,1]}^{\Np}			& \longrightarrow	[0,1] \\
												(x_n)_{n \in \Np}		& \longmapsto		\aprj_n(x_1, \dots,x_{n}).
		\end{split}
	\end{equation*}
	Then, the sequence $(\widetilde{\aprj}_n)_{n \in \Np}$ converges uniformly to $\limop$.
	By \cref{l:ord-pres_cont}, for every $n \in \Np$, the function $\aprj_n\colon [0,1]^n\rightarrow [0,1]$ is order-preserving and continuous.
	Moreover, for every $i \in \Np$, the projection $\pi_i\colon [0,1]^{\Np}\rightarrow [0,1]$ onto the $i$-th coordinate is order-preserving and continuous.
	We have $\widetilde{\aprj}_n = \mu_n(\pi_1, \dots, \pi_n)$, which shows that $\widetilde{\aprj}_n$ is order-preserving and continuous.
	Since $\limop$ is the pointwise limit of $\widetilde{\aprj}_n$, $\limop$ is order-preserving, as well.
	Since $(\widetilde{\aprj}_n)_{n \in \Np}$ uniformly converges to $\limop$, the latter is continuous.
\end{proof}

\begin{lemma} \label{l:to-their-limit}
	The function $\limop \colon [0,1]^{\Np} \to [0,1]$ maps $2$-Cauchy sequences to their limit.
\end{lemma}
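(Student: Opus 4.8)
The plan is to show that for a $2$-Cauchy sequence $(x_n)_{n \in \Np}$ in $[0,1]$, we have $\limop(x_1, x_2, x_3, \dots) = \lim_{n \to \infty} x_n$. This is essentially immediate from the machinery already assembled, so the proof should be short and will consist of chaining together two earlier lemmas.

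First I would recall the definition of $\limop$ from \cref{n:delta}: by construction,
\[
	\limop(x_1, x_2, x_3, \dots) = \lim_{n \to \infty} \aprj_n(x_1, \dots, x_n).
\]
The key observation is that when the input sequence $(x_n)_{n \in \Np}$ is already $2$-Cauchy, the intermediate ``correction'' step performed by the functions $\aprj_n$ does nothing: this is exactly the content of \cref{l:effect-of-rho}, which asserts that $\aprj_n(x_1, \dots, x_n) = x_n$ for every $n \in \Np$ whenever $(x_n)_{n \in \Np}$ is $2$-Cauchy. Substituting this into the displayed limit gives
\[
	\limop(x_1, x_2, x_3, \dots) = \lim_{n \to \infty} \aprj_n(x_1, \dots, x_n) = \lim_{n \to \infty} x_n.
\]

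It remains only to note that the right-hand limit exists: since $(x_n)_{n \in \Np}$ is $2$-Cauchy, it is a Cauchy sequence by \cref{l:$2$-Cauchy-implies-Cauchy}, and $[0,1]$ is complete, so $\lim_{n \to \infty} x_n$ exists in $[0,1]$. Thus $\limop$ sends the $2$-Cauchy sequence $(x_n)_{n \in \Np}$ to its limit, as claimed.

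There is really no substantial obstacle here; the only thing to be careful about is not to conflate the two limits appearing in the definition. The statement is by design an immediate corollary of \cref{l:effect-of-rho}, which already did the genuine work of establishing that $\aprj_n$ acts as the projection onto the $n$-th coordinate on $2$-Cauchy inputs. I would therefore present the proof as a one-line invocation of \cref{l:effect-of-rho} (together with \cref{l:$2$-Cauchy-implies-Cauchy} and completeness of $[0,1]$ to justify the existence of the limit).
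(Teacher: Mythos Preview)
Your proposal is correct and follows essentially the same approach as the paper: the paper's proof is the one-line chain $\limop(x_1, x_2, x_3, \dots) = \lim_{n\to\infty}\aprj_n(x_1,\dots,x_n) = \lim_{n\to\infty} x_n$, with the second equality justified by \cref{l:effect-of-rho}. Your additional remark about existence of the limit is a harmless elaboration.
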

\begin{proof}
	For every $2$-Cauchy sequence $(x_n)_{n \in \Np}$ in $[0,1]$ we have
	\[
		\limop(x_1, x_2, x_3, \dots) = \lim_{n\rightarrow \infty}\aprj_n(x_1, \dots,x_n) \stackrel{\text{\cref{l:effect-of-rho}}}{=} \lim_{n \to \infty}x_n. \qedhere
	\]
\end{proof}

The set of primitive operations that we use is 
\[
	\{\oplus, \odot, \lor, \land\} \cup (\Dyadu) \cup \limop.
\]
The reason why we take only dyadic rationals instead of all elements in $[0,1]$ is because dyadic rationals will be useful during the study of the finite axiomatisation provided in \cref{chap:finite-axiomatisation}.
Further, we point out that this choice has the advantage to obtain only a countable set of primitive operations and a countable set of axioms.

%%%%%%%%%%%%%%%%%%%%%%%%%%%%%%%%%%%   SECTION   %%%%%%%%%%%%%%%%%%%%%%%%%%%%%%%%%%%%

\section{Dyadic MV-monoidal algebras}

\begin{definition}\label{d:dyadic-MVM}
	A \emph{\dmvm}\index{MV-monoidal algebra!dyadic} is an algebra $\alge{A}$ in the signature $\{\oplus, \odot, \lor, \land\}\cup(\Dyadu)$ (where $\oplus$, $\odot$, $\lor$ and $\land$ have arity $2$ and each element of $\Dyadu$ has arity $0$) with the following properties.
	\begin{enumerate}[label = DE\arabic*., ref = DE\arabic*, start = 0, leftmargin=\widthof{LTE4'.} + \labelsep]
		\item \label[axiom]{ax:DM0} $\langle A; \oplus^\alge{A}, \odot^\alge{A}, \lor^\alge{A}, \land^\alge{A},0^\alge{A},1^\alge{A}\rangle$ is {\amvm} (see \cref{d:MVM}).
		\item \label[axiom]{ax:DM1} For all $\alpha, \beta \in \Dyadu$, we have $\alpha^\alge{A} \oplus^\alge{A} \beta^\alge{A} = (\alpha \oplus^\alge{\R} \beta)^\alge{A}$.
		\item \label[axiom]{ax:DM2} For all $\alpha, \beta \in \Dyadu$, we have $\alpha^\alge{A} \odot^\alge{A} \beta^\alge{A} = (\alpha \odot^\alge{\R} \beta)^\alge{A}$.
	\end{enumerate}
\end{definition}

We let $\DMVM$ denote the category of {\dmvms} with homomorphisms.

\Cref{ax:DM1,ax:DM2} are (equivalent to) the positive atomic diagram of $\Dyadu$ (cf. e.g.\ \cite[Section~3.2, p.\ 211]{Prest2003} for the notion of positive atomic diagram); we have not included axioms regarding the lattice operations, because they are a consequence, as the following shows.
\begin{lemma}
	For every {\dmvm} $\alge{A}$ and every $\alpha, \beta \in \Dyadu$ with $\alpha \leq \beta$ (as real numbers), we have $\alpha^\alge{A} \leq^\alge{A} \beta^\alge{A}$.
\end{lemma}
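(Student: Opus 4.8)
The plan is to reduce the statement to the monotonicity-type fact already available for the monoidal operation, namely \cref{l:increasing}, which asserts that $x \leq x \oplus y$ in any {\mvm}. The key observation is that the "gap" between $\alpha$ and $\beta$ is itself a dyadic rational in the unit interval, and adding it to $\alpha$ recovers $\beta$ on the nose, thanks to the fact that the dyadic constants add up exactly as in $\R$ (\cref{ax:DM1}).

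Concretely, first I would set $\gamma \df \beta - \alpha$. Since $\alpha$ and $\beta$ are dyadic rationals with $\alpha \leq \beta \leq 1$, the number $\gamma$ is again a dyadic rational and satisfies $0 \leq \gamma \leq \beta \leq 1$, so $\gamma \in \Dyadu$. Next I would compute the truncated sum in $\R$: since $\alpha + \gamma = \beta \leq 1$, we have $\alpha \oplus^\R \gamma = \min\{\alpha + \gamma, 1\} = \beta$. Invoking \cref{ax:DM1} with the two constants $\alpha, \gamma \in \Dyadu$, this gives the identity $\alpha^\alge{A} \oplus^\alge{A} \gamma^\alge{A} = (\alpha \oplus^\R \gamma)^\alge{A} = \beta^\alge{A}$ in $\alge{A}$.

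Finally, applying \cref{l:increasing} with $x \df \alpha^\alge{A}$ and $y \df \gamma^\alge{A}$ yields $\alpha^\alge{A} \leq \alpha^\alge{A} \oplus^\alge{A} \gamma^\alge{A}$, and combining this with the identity just obtained gives $\alpha^\alge{A} \leq \beta^\alge{A}$, as desired. (A symmetric argument using $\odot$ and \cref{ax:DM2} would work just as well: take $\delta \df 1 - \beta + \alpha \in \Dyadu$, check $\beta \odot^\R \delta = \alpha$, and use the dual half of \cref{l:increasing}, namely $x \geq x \odot y$.)

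I do not expect any real obstacle here: the only points requiring a moment's care are verifying that $\gamma$ genuinely lands in $\Dyadu$ (which uses $\alpha \leq \beta$ for $\gamma \geq 0$ and $\beta \leq 1$ for $\gamma \leq 1$) and that the truncation in $\oplus^\R$ is inactive because $\alpha + \gamma = \beta$ does not exceed $1$. Both are immediate, so the proof is short.
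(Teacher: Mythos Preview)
Your proposal is correct and follows essentially the same approach as the paper: the paper's proof is the one-line computation $\beta^{\alge{A}} = (\alpha \oplus^{\R} (\beta - \alpha))^\alge{A} = \alpha^{\alge{A}} \oplus^\alge{A} (\beta - \alpha)^\alge{A} \geq \alpha^{\alge{A}}$, which is exactly your argument with $\gamma = \beta - \alpha$, \cref{ax:DM1}, and \cref{l:increasing}, just written more tersely.
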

\begin{proof}
	We have $\beta^{\alge{A}} = (\alpha \oplus^{\R} (\beta - \alpha))^\alge{A} = \alpha^{\alge{A}} \oplus^\alge{A} (\beta - \alpha)^\alge{A} \geq \alpha^{\alge{A}}$.
\end{proof}

\begin{remark} \label{r:equiv}
	Building on the equivalence established in \cref{t:G is equiv}, it is not difficult to prove that the category of {\dmvms} is equivalent to the category of {\dlms}.
	One functor maps {\adlm} $M$ to the {\dmvm} $\Gam(M)$, on which the constants are defined by restriction.
	The other functor maps {\admvm} $A$ to the {\dlm} $\X(A)$, on which a dyadic rational $t$ is interpreted as follows: denoting with $k$ the unique integer such that $t \in [k,k + 1)$, we set
	\begin{equation}
		\label{e:lambda-interpretation}
		\begin{split}
			t^{\X(A)} \colon	\Z	& \longrightarrow	A\\
			n	& \longmapsto		{
				\begin{cases}
					1						& \text{if } n < k;\\
					(t - k)^A	& \text{if } n = k;\\
					0						& \text{if } n > k.
				\end{cases}
			}
		\end{split}
	\end{equation}
\end{remark}

\begin{example} \label{ex:int-ok}
	The unit interval $[0,1]$ with standard interpretations is {\admvm}.
\end{example}

\begin{definition}
	We say that an algebra $A$ in the signature $\{\oplus, \odot, \lor, \land\} \cup (\Dyadu)$ is \emph{Archimedean}\index{Archimedean}\index{MV-monoidal algebra!Archimedean} if $A$ is isomorphic to a subalgebra of a power of $[0,1]$ with obviously defined operations%
	\footnote{
		The following are two other conditions which are easily seen to be equivalent.
		\begin{enumerate}
			\item The canonical homomorphism $A \to [0,1]^{\hom(A, [0,1])}$ is injective.
			\item For all $x,y \in A$ with $x \neq y$ there exists a homomorphism $f \colon A \to [0,1]$ such that $f(x) \neq f(y)$.
		\end{enumerate}
	}.
\end{definition}

As it was pointed out by one of the referees, the Archimedean algebras are those algebras so that $[0, 1]$ acts as a cogenerator.
As it is explained in \cite{PorstTholen1991}, this is an essential property to obtain a natural duality.

\begin{notation} \label{n:distint-int}
	In analogy with \cref{n:dist}, given {\adlm} $A$, and given $x, y \in A$, we set
	\[
		\distint[A](x, y) \df \inf\left\{t \in \Dyadu \mid y \odot (1-t)^A \leq x \leq y \oplus t^A\right\}.
	\]
	When $A$ is understood, we write simply $\distint(x, y)$ for $\distint[A](x, y)$.
\end{notation}

It is clear that every algebra in the signature $\{\oplus, \odot, \lor, \land\} \cup (\Dyadu)$ which is Archimedean is {\admvm}.
The following theorem settles the problem of identifying which {\dmvms} are Archimedean.
We omit the proof, since---in light of \cref{r:equiv}---it is analogous to \cref{t:char-Arch-dlms}.

\begin{theorem} \label{t:char-Arch}
	\xmakefirstuc{\admvm} $A$ is Archimedean if, and only if, for all distinct $x, y \in A$ we have $\distint(x, y) \neq 0$.
\end{theorem}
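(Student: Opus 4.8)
The plan is to follow the proof of \cref{t:char-Arch-dlms}, reducing the statement to the identity ${\distint[A]}={\disthom[A]}$ and then transferring this identity to the {\dlm} setting through the equivalence of \cref{r:equiv}. For {\admvm} $A$, introduce the auxiliary function
\[
	\disthom[A](x,y) \df \sup_{f \in \hom(A, [0,1])} \lvert f(x) - f(y) \rvert,
\]
where $\hom(A,[0,1])$ is the set of homomorphisms from $A$ to $[0,1]$ in the signature $\SignDMVM$. By the reformulation of Archimedeanity recorded in the footnote to the definition of Archimedean algebra, $A$ is Archimedean if, and only if, the canonical homomorphism $\ev \colon A \to [0,1]^{\hom(A,[0,1])}$ is injective; and, repeating verbatim the purely formal argument of \cref{l:injective} (which is insensitive to the choice of signature), $\ev$ is injective if, and only if, $\disthom[A]$ satisfies the identity of indiscernibles. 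Hence, once we establish ${\distint[A]}={\disthom[A]}$, the hypothesis ``$\distint(x,y) \neq 0$ for all distinct $x, y$'' becomes exactly the identity of indiscernibles for $\disthom[A]$, and the theorem follows.

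It therefore remains to prove ${\distint[A]}={\disthom[A]}$, the $\SignDMVM$-analogue of \cref{t:dist-two-ways}. I would obtain it by transferring both sides along \cref{r:equiv}. Write $M \df \X(A)$, a {\dlm}, so that $A$ is (isomorphic to) the unit interval $\Gam(M)$, viewed as a subset $A \seq M$. First, for $x, y \in A$ the truncated operations unwind inside $M$ to $y \odot (1-t)^A = (y - t) \lor 0$ and $y \oplus t^A = (y + t) \land 1$; since $0 \leq x \leq 1$ (\cref{l:bounded-lattice}), the constraints $(y-t)\lor 0 \leq x \leq (y+t)\land 1$ are equivalent to $y - t \leq x \leq y + t$, and, as $\lvert x - y \rvert \leq 1$, the infimum is unaffected by letting $t$ range over $\Dyadu$ rather than over $\Dyad_{\geq 0}$; thus $\distint[A](x,y) = \distint[M](x,y)$. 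Second, since $\Gam$ is full and faithful and $[0,1] = \Gam(\R)$, the assignment $g \mapsto g|_A$ is a bijection $\hom_{\DLM}(M, \R) \cong \hom_{\DMVM}(\Gam M, \Gam \R) = \hom(A,[0,1])$ satisfying $\lvert g(x) - g(y)\rvert = \lvert g|_A(x) - g|_A(y)\rvert$ for $x, y \in A$; hence the two suprema coincide and $\disthom[A](x,y) = \disthom[M](x,y)$. Applying \cref{t:dist-two-ways} to the {\dlm} $M$ then gives $\distint[A] = \distint[M] = \disthom[M] = \disthom[A]$.

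The main obstacle is the homomorphism bookkeeping in the second step: one must verify that every $\SignDMVM$-homomorphism $A \to [0,1]$ is the restriction of a unique homomorphism $M \to \R$ in the signature $\{+, \lor, \land\} \cup \Dyad$, so that the two suprema genuinely range over the same set of reals; this is precisely the fully faithfulness of $\Gam$ on the relevant hom-sets. As an alternative avoiding \cref{r:equiv}, one could reprove ${\distint[A]}={\disthom[A]}$ directly, exactly as \cref{t:dist-two-ways} was proved: invoke Birkhoff's subdirect representation theorem (\cref{t:SubRepThe}) to reduce to subdirectly irreducible $A$, which are totally ordered by \cref{t:sub irr is totally ordered}; then, on a non-trivial totally ordered {\dmvm}, set $\ess x \df \sup\{t \in \Dyadu \mid t^A \leq x\}$ and show, in analogy with \cref{p:hom-to-R,l:dist-ess}, that $x \mapsto \ess x$ is the unique homomorphism to $[0,1]$ and that $\distint(x,y) = \lvert \ess x - \ess y\rvert$. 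In this route the delicate point would be checking that $\ess$ preserves the truncated operations $\oplus$ and $\odot$, rather than merely the lattice operations and constants.
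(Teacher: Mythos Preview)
Your proposal is correct and takes essentially the same approach as the paper: the paper explicitly omits the proof, saying only that in light of \cref{r:equiv} it is analogous to \cref{t:char-Arch-dlms}, and you have carried out precisely that analogy in detail, transferring both $\distint$ and $\disthom$ along the equivalence $\Gam$ and invoking \cref{t:dist-two-ways}. Your alternative direct route via subdirectly irreducible algebras is also sound and parallels the original proof of \cref{t:dist-two-ways}.
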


%%%%%%%%%%%%%%%%%%%%%%%%%%%%%%%%%%%   SECTION   %%%%%%%%%%%%%%%%%%%%%%%%%%%%%%%%%%%%

\section{Equational axiomatisation}

\begin{notation} \label{n:trunc}
	For every $n \in \N$, we define a binary term $\trunc_n$ in the language of {\dmvms}:
	\[
		\trunc_{n}(x, y) \df \left(x \land \left(y \oplus \frac{1}{2^n}\right)\right) \lor \left(y \odot \left(1 - \frac{1}{2^n}\right)\right).
	\]
\end{notation}

For example, for $x, y \in [0,1]$, we have
\[
	\trunc_{n}^\R(x, y) = \max\left\{\min\left\{x, y + \frac{1}{2^n}\right\}, y - \frac{1}{2^n}\right\}.
\]

\begin{notation} \label{n:aprj}
	Inductively on $n \in \Np$, we define a term $\aprj_n$ of arity $n$ in the language of {\dmvms}:
	\begin{align*}
		\aprj_1(x_1)					\df{}	& 	x_1; \\
		\begin{split}
			\aprj_n(x_1, \dots,x_n)	\df{}	& 	\trunc_{n-1}\mathopen{}\big(x_n,\aprj_{n-1}(x_1, \dots,x_{n-1})\big)\mathclose{}\\
											={}	& 	\left(x_n \land \left(\aprj_{n-1}(x_1, \dots,x_{n-1}) \oplus \frac{1}{2^{n-1}}\right)\right)\\
													&	\lor \left(\aprj_{n-1}(x_1, \dots,x_{n-1}) \odot \left(1 - \frac{1}{2^{n-1}}\right)\right).
		\end{split}
	\end{align*}
\end{notation}

\begin{remark}
	The interpretation of $\aprj_n$ on the unit interval $[0,1]$ is precisely the function $\aprj_n \colon [0,1]^n \to [0,1]$ defined inductively in \cref{n:apricot}.
	Thus, the overlapping notation should not be a source of problems.
\end{remark}

We identify a variety of algebras which we will show to be dual to the category of compact ordered spaces.

\begin{definition}\label{d:DMVMinfty}
	A \emph{\dmvminfty}\index{MV-monoidal algebra!limit dyadic} is an algebra $\alge{A}$ in the language $\{\oplus, \odot, \lor, \land\}\cup (\Dyadu) \cup \{\limop\}$---where $\oplus$, $\odot$, $\lor$ and $\land$ have arity $2$, every element of $\Dyadu$ has arity $0$, and $\limop$ has countably infinite arity---with the following properties.
	\begin{enumerate} [label = LDE\arabic*., ref = LDE\arabic*, start = 0, leftmargin=\widthof{LTE4'.} + \labelsep]
	
		\item \label[axiom]{ax:inf}
				The ($\{\oplus, \odot, \lor, \land\}\cup( \Dyadu)$)-reduct of $\alge{A}$ is {\admvm} (see \cref{d:dyadic-MVM}).
	
		\item \label[axiom]{ax:constant-ax-inf}
				$\limop(x,x,x, \dots) =x$.
				
		\item \label[axiom]{ax:delta-ominus-ax-inf}
				$\limop\mathopen{}\big(\trunc_{0}(x, y),\trunc_{1}(x, y),\trunc_{2}(x, y), \dots\big)\mathclose{} = y$. (See \cref{n:trunc} for the definition of $\trunc_n$.)
				
		\item \label[axiom]{ax:delta-sandwich-ax-inf}
				For every $n \in \Np$ we have
				\[
					\aprj_n(x_1, \dots,x_{n}) \odot \left(1 - \frac{1}{2^{n-1}}\right) \leq \limop(x_1,x_2,x_3, \dots) \leq \aprj_n(x_1, \dots,x_{n}) \oplus \frac{1}{2^{n-1}}.
				\]
				(See \cref{n:aprj} for the definition of $\aprj_n$.)
	\end{enumerate}
\end{definition}

\Cref{ax:constant-ax-inf,ax:delta-ominus-ax-inf} guarantee (given \cref{ax:inf}) that the algebra is Archimedean (see \cref{p:W is Archimedean} below);
\cref{ax:delta-sandwich-ax-inf} forces $\limop(x_1,x_2,x_3,\dots)$ to be the limit of $(\aprj_n(x_1, \dots,x_n))_{n \in \Np}$ (see \cref{l:full-on-int} below).
\begin{lemma}\label{l:properties of W}
	The unit interval $[0,1]$, with standard interpretations of the operation symbols, is {\admvminfty}.
\end{lemma}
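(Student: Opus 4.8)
The plan is to verify the four axioms of \cref{d:DMVMinfty} directly for the algebra $[0,1]$ with its standard interpretations. \cref{ax:inf} is immediate: the $(\{\oplus, \odot, \lor, \land\}\cup(\Dyadu))$-reduct of $[0,1]$ is precisely the {\dmvm} structure verified in \cref{ex:int-ok}, so nothing new is required there.

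For the remaining axioms, the key observation is that the interpretations of the terms $\trunc_n$ and $\aprj_n$ on $[0,1]$ coincide with the functions $\trunc_n$ and $\aprj_n$ introduced in \cref{n:trunc} (more precisely, their real-valued versions) and in \cref{n:apricot}, and that the interpretation of $\limop$ on $[0,1]$ is exactly the function $\limop$ of \cref{n:delta}, as noted in the remark following \cref{n:aprj}. First I would check \cref{ax:constant-ax-inf}: for a constant sequence $(x,x,x,\dots)$, an easy induction using \cref{l:effect-of-rho} (or directly the definition in \cref{n:apricot}) shows $\aprj_n(x,\dots,x)=x$ for all $n$, whence $\limop(x,x,x,\dots)=\lim_{n\to\infty}x=x$. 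Next, for \cref{ax:delta-ominus-ax-inf}, I would observe that the sequence $(\trunc_n^\R(x,y))_{n\in\N}$ is a $2$-Cauchy sequence converging to $y$: indeed $\trunc_n^\R(x,y)=\max\{\min\{x,y+\frac{1}{2^n}\},y-\frac{1}{2^n}\}$ lies within distance $\frac{1}{2^n}$ of $y$ by the inequality \eqref{e:ineq-for-mu}, so it converges to $y$; since $\limop$ maps $2$-Cauchy sequences to their limit by \cref{l:to-their-limit}, the left-hand side equals $y$.

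For \cref{ax:delta-sandwich-ax-inf}, I would use the fact that $\limop(x_1,x_2,\dots)=\lim_{n\to\infty}\aprj_n(x_1,\dots,x_n)$ is the limit of a $2$-Cauchy sequence (\cref{l:2-Cauchy}). By \cref{l:supercauchy is cauchy}, for all $m\geq n$ we have $\dist(\aprj_n(x_1,\dots,x_n),\aprj_m(x_1,\dots,x_m))<\frac{1}{2^{n-1}}$; letting $m\to\infty$ yields $\lvert \aprj_n(x_1,\dots,x_n)-\limop(x_1,x_2,\dots)\rvert\leq\frac{1}{2^{n-1}}$, which is exactly the two-sided inequality asserted (the operations $\odot(1-\frac{1}{2^{n-1}})$ and $\oplus\frac{1}{2^{n-1}}$ on $[0,1]$ being truncated subtraction and addition of $\frac{1}{2^{n-1}}$). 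I expect no genuine obstacle here: the entire statement is bookkeeping that transfers the analytic properties of the functions $\aprj_n$ and $\limop$, already established in \cref{l:ord-pres_cont,l:2-Cauchy,l:effect-of-rho,l:to-their-limit,p:delta-cont-mon}, into the equational axioms of \cref{d:DMVMinfty}. The only point deserving care is matching the algebraic operations $\trunc_n$, $\odot(1-\tfrac{1}{2^{n-1}})$ and $\oplus\tfrac{1}{2^{n-1}}$ with their real-valued counterparts, which is routine once one recalls the standard interpretations of $\oplus$ and $\odot$ on $[0,1]$.
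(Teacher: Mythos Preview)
Your proposal is correct and matches the paper's approach step for step. The one place to be a bit more careful is the $2$-Cauchy property of the sequence $(\trunc_n(x,y))_{n}$ in \cref{ax:delta-ominus-ax-inf}: the bound $\dist(\trunc_n(x,y),y)\leq\tfrac{1}{2^n}$ alone does not yield $2$-Cauchy via the triangle inequality (it gives only $\tfrac{3}{2^{n}}$ between consecutive terms), and the paper supplies this with a short case analysis on whether $x$ lies inside, above, or below the interval $[y-\tfrac{1}{2^n}, y+\tfrac{1}{2^n}]$.
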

\begin{proof}
	As already observed in \cref{ex:int-ok}, the unit interval $[0,1]$ is {\admvm}, so \cref{ax:inf} holds.

	The sequence $(x,x,x, \dots)$ is $2$-Cauchy, and its limit is $x$; thus $\limop(x, x, x, \dots) = x$.
	Thus, \cref{ax:constant-ax-inf} holds.
	
	Let us prove \cref{ax:delta-ominus-ax-inf}.
	\begin{claim} \label{cl:trunc-cauchy}
		For all $x, y \in [0,1]$, the sequence $(\trunc_{0}(x, y),\trunc_{1}(x, y),\trunc_{2}(x, y), \dots)$ is $2$-Cauchy.
	\end{claim}
	\begin{claimproof}
		Let $n \in \Np$.
		If $x \in \left[y - \frac{1}{2^{n}}, y + \frac{1}{2^{n}}\right]$, then $x = \tau_{n-1}(x, y) = \tau_n(x, y)$.
		Thus, in this case, we have 
		\[
			\dist\mathopen{}\big(\tau_{n-1}(x, y), \tau_n(x, y)\big)\mathclose{} = \dist(x,x) = 0 \leq \frac{1}{2^{n-1}}.
		\]
		If $x \geq y + \frac{1}{2^{n}}$, then both $\tau_{n-1}(x, y)$  and $\tau_n(x, y)$ belong to $\left[y + \frac{1}{2^{n}}, y + \frac{1}{2^{n-1}}\right]$, and therefore $\dist(\tau_{n-1}(x, y), \tau_n(x, y)) \leq \frac{1}{2^{n-1}}$.
		Analogously if $x \leq y - \frac{1}{2^{n}}$.
	\end{claimproof}
	\begin{claim} \label{cl:converges}
		For all $x, y \in [0,1]$, the sequence $(\trunc_{0}(x, y),\trunc_{1}(x, y),\trunc_{2}(x, y), \dots)$ converges to $y$.
	\end{claim}
	\begin{claimproof}
		The sequence $(\trunc_{0}(x, y),\trunc_{1}(x, y),\trunc_{2}(x, y), \dots)$ is bounded from below by $\left(y - \frac{1}{2^0}, y - \frac{1}{2^1}, y - \frac{1}{2^2}, \dots\right)$ and from above by $\left(y + \frac{1}{2^0},y + \frac{1}{2^1},x + \frac{1}{2^2}, \dots\right)$, and both these two sequences converge to $y$.
		Thus, $(\trunc_{0}(x, y),\trunc_{1}(x, y),\trunc_{2}(x, y), \dots)$ converges to $y$.
	\end{claimproof}
	For all $x,y\in [0,1]$, by \cref{cl:trunc-cauchy}, the sequence $(\trunc_{0}(x, y),\trunc_{1}(x, y),\trunc_{2}(x, y), \dots)$ is $2$-Cauchy; thus, by \cref{l:to-their-limit}, $\limop(\trunc_{0}(x, y),\trunc_{1}(x, y),\trunc_{2}(x, y), \dots)$ is the limit of the sequence $(\trunc_{0}(x, y),\trunc_{1}(x, y),\trunc_{2}(x, y), \dots)$, which, by \cref{cl:converges}, is $y$.
	Hence, \cref{ax:delta-ominus-ax-inf} holds.
	
	Let us prove \cref{ax:delta-sandwich-ax-inf}.	
	By \cref{l:2-Cauchy}, the sequence $(\aprj_n(x_1, \dots,x_n))_{n \in \Np}$ is a $2$-Cauchy sequence.
	By \cref{l:supercauchy is cauchy}, for every $n,m \in \Np$ with $n \leq m$, we have 
	\[
		\dist\mathopen{}\big(\aprj_n(x_1, \dots,x_n),\aprj_m(x_1, \dots,x_m)\big)\mathclose{} < \frac{1}{2^{n-1}}.
	\]
	Fixing $n$ and letting $m$ tend to $ \infty$, we obtain 
	\begin{align*}
		\dist(\aprj_n(x_1, \dots,x_n),\limop(x_1,x_2,x_3, \dots)) & = \dist\mathopen{}\left(\aprj_n(x_1, \dots, x_n), \lim_{m\to \infty}\aprj_m(x_1, \dots, x_m)\mathclose{}\right) \\
		& \leq	\frac{1}{2^{n-1}}. \qedhere
	\end{align*}
\end{proof} 

\begin{lemma}\label{l:t_n}
	Let $A$ be {\admvm}, let $x,y \in A$, and suppuse $\distint(x, y) = 0$.
	Then, for all $n \in \N$, we have $\trunc_{n}(x, y) =x$.
\end{lemma}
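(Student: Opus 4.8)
The plan is to unwind the definition of $\distint$ and then read off the two inequalities that collapse $\trunc_n(x,y)$ to $x$. Recall from \cref{n:distint-int} that $\distint(x,y)$ is the infimum of the set $S \df \{t \in \Dyadu \mid y \odot (1 - t)^A \leq x \leq y \oplus t^A\}$; the hypothesis $\distint(x,y) = 0$ therefore says exactly that $\inf S = 0$, and in particular that $S$ is non-empty and contains arbitrarily small dyadic rationals. The goal is to exploit one such small witness for each fixed exponent $n$.

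Fix $n \in \N$. Since $\inf S = 0 < \frac{1}{2^n}$, there is some $t \in S$ with $t \leq \frac{1}{2^n}$, so that $y \odot (1-t)^A \leq x \leq y \oplus t^A$. The next step is to replace $t$ by $\frac{1}{2^n}$ using monotonicity of the dyadic constants together with \cref{l:order-preserving properties}. From $t \leq \frac{1}{2^n}$ (as real numbers) the monotonicity of the interpretation of dyadic constants gives $t^A \leq (\frac{1}{2^n})^A$, whence $y \oplus t^A \leq y \oplus \frac{1}{2^n}$ because $\oplus$ is order-preserving; combined with $x \leq y \oplus t^A$ this yields $x \leq y \oplus \frac{1}{2^n}$. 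Symmetrically, from $1 - \frac{1}{2^n} \leq 1 - t$ one gets $(1 - \frac{1}{2^n})^A \leq (1-t)^A$, and since $\odot$ is order-preserving, $y \odot (1 - \frac{1}{2^n}) \leq y \odot (1-t)^A \leq x$, so that $y \odot (1 - \frac{1}{2^n}) \leq x$.

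With these two inequalities in hand, the computation of $\trunc_n(x,y)$ is immediate. Since $x \leq y \oplus \frac{1}{2^n}$ we have $x \land (y \oplus \frac{1}{2^n}) = x$, and since $y \odot (1 - \frac{1}{2^n}) \leq x$ we have $x \lor (y \odot (1 - \frac{1}{2^n})) = x$. Substituting into the definition (\cref{n:trunc}) gives
\[
	\trunc_{n}(x,y) = \left(x \land \left(y \oplus \tfrac{1}{2^n}\right)\right) \lor \left(y \odot \left(1 - \tfrac{1}{2^n}\right)\right) = x \lor \left(y \odot \left(1 - \tfrac{1}{2^n}\right)\right) = x,
\]
as desired.

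I expect no serious obstacle here: the argument is essentially bookkeeping with the order-preservation of $\oplus$ and $\odot$ and the monotonicity of the dyadic constants established just after \cref{d:dyadic-MVM}. The only point needing a little care is the extraction of a witness $t \in S$ with $t \leq \frac{1}{2^n}$ from $\inf S = 0$, which is where the hypothesis is actually used. For the boundary case $n = 0$ one can alternatively bypass the hypothesis altogether and observe directly that $y \oplus 1 = 1$ and $y \odot 0 = 0$ (\cref{l:absorbing}), so that $\trunc_0(x,y) = (x \land 1) \lor 0 = x$ by \cref{l:bounded-lattice}; this serves as a sanity check that the general argument gives the right answer at the endpoint.
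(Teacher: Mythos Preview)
Your proof is correct and follows exactly the same approach as the paper: from $\distint(x,y)=0$ you extract, for each $n$, the two inequalities $y\odot(1-\tfrac{1}{2^n})\leq x\leq y\oplus\tfrac{1}{2^n}$, which immediately collapse the lattice expression defining $\trunc_n(x,y)$ to $x$. Your version is simply a more detailed unpacking of the one-line argument in the paper (and in fact avoids a small $x$/$y$ swap typo present there).
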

\begin{proof}
	If $\distint(x, y) = 0$, then, for every $n \in \N$, we have $x  \odot \left(1 - \frac{1}{2^{n}}\right) \leq y \leq x \oplus \frac{1}{2^n}$, which implies $\trunc_n(x, y) = x$.
\end{proof}
\begin{lemma} \label{l:arch-by-equations}
	Let $A$ be {\admvm}, and suppose that there exists a function $\alpha \colon A^{\Np} \to A$ such that, for all $x, y \in A$, the following conditions hold.
	\begin{enumerate}
		\item \label{i:lim-const} $\alpha(x,x,x, \dots) =x$.
		\item \label{i:shrink} $\alpha\mathopen{}\big(\trunc_{0}(x, y),\trunc_{1}(x, y),\trunc_{2}(x, y), \dots\big)\mathclose{} = y$.
	\end{enumerate}
	Then, $A$ is Archimedean.
\end{lemma}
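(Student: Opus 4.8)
The plan is to reduce the statement to the distance criterion of \cref{t:char-Arch} and then exploit the two hypotheses on $\alpha$ to force the identity of indiscernibles for $\distint$. By \cref{t:char-Arch}, \admvm{} $A$ is Archimedean if, and only if, for all distinct $x, y \in A$ we have $\distint(x, y) \neq 0$; equivalently, $A$ is Archimedean precisely when, for all $x, y \in A$, the implication $\distint(x, y) = 0 \Rightarrow x = y$ holds. So it suffices to establish this implication.

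First I would fix $x, y \in A$ with $\distint(x, y) = 0$ and observe, by \cref{l:t_n}, that $\trunc_n(x, y) = x$ for every $n \in \N$. Thus the sequence appearing in hypothesis~\eqref{i:shrink}, namely $\big(\trunc_0(x, y), \trunc_1(x, y), \trunc_2(x, y), \dots\big)$, is nothing but the constant sequence $(x, x, x, \dots)$.

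Then I would evaluate $\alpha$ on this sequence in two ways. On the one hand, hypothesis~\eqref{i:shrink} gives $\alpha\big(\trunc_0(x, y), \trunc_1(x, y), \dots\big) = y$. On the other hand, since the sequence equals $(x, x, x, \dots)$, hypothesis~\eqref{i:lim-const} gives $\alpha\big(\trunc_0(x, y), \trunc_1(x, y), \dots\big) = \alpha(x, x, x, \dots) = x$. Comparing the two yields $x = y$, which is exactly the implication we needed, so $A$ is Archimedean.

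The argument is short and there is no real computational obstacle; the only thing to get right is the conceptual reduction. The step I would single out as the crux is recognizing that, once $\distint(x,y) = 0$ collapses the truncation sequence to the constant sequence $(x, x, \dots)$, the two otherwise unrelated axioms on $\alpha$ are forced to agree, and their common value is simultaneously $x$ (via \eqref{i:lim-const}) and $y$ (via \eqref{i:shrink}). Everything else is bookkeeping through \cref{t:char-Arch,l:t_n}.
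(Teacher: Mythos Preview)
Your proof is correct and essentially identical to the paper's own argument: assume $\distint(x,y)=0$, use \cref{l:t_n} to collapse the truncation sequence to the constant one, then read off $x=y$ from the two hypotheses on $\alpha$ and conclude via \cref{t:char-Arch}.
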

\begin{proof}
	Let $x,y \in A$ be such that $\distint(x, y) = 0$.
	Then
	\begin{align*}
		x	&	=	\alpha(x,x,x, \dots)																	&& \text{(\cref{i:lim-const})}\\
			&	=	\alpha\mathopen{}\big(\trunc_{0}(x, y),\trunc_{1}(x, y),\trunc_{2}(x, y), \dots\big)\mathclose{} 	&& \text{(\cref{l:t_n})}		\\
			&	=	y.																							&& \text{(\cref{i:shrink})}	
	\end{align*}
	By \cref{t:char-Arch}, this implies that $A$ is Archimedean.
\end{proof}

\begin{proposition}\label{p:W is Archimedean}
	The $(\{\oplus, \odot, \lor, \land\} \cup (\Dyadu))$-reduct of any {\dmvminfty} is Archimedean.
\end{proposition}
\begin{proof}
	As proved in \cref{l:arch-by-equations}, this follows from \cref{ax:constant-ax-inf,ax:delta-ominus-ax-inf}.
\end{proof}

\begin{lemma} \label{l:full-on-int}
	Every function from {\admvminfty} to $[0,1]$ which preserves every operation symbol in $\{\oplus, \odot, \lor, \land\} \cup \Dyad$ preserves also $\limop$.
\end{lemma}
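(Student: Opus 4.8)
The statement to prove is \cref{l:full-on-int}: any $f \colon A \to [0,1]$ that preserves $\oplus, \odot, \lor, \land$ and every dyadic constant automatically preserves $\limop$. The plan is to show that $f(\limop(x_1, x_2, x_3, \dots)) = \limop(f(x_1), f(x_2), f(x_3), \dots)$ for every sequence $(x_n)_{n \in \Np}$ in $A$, using the sandwiching \cref{ax:delta-sandwich-ax-inf} together with the fact that the term operations $\aprj_n$ are built purely from the finitary signature $\{\oplus, \odot, \lor, \land\} \cup (\Dyadu)$, and hence are automatically preserved by $f$.

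First I would record that, since each $\aprj_n$ is a term in the signature $\{\oplus, \odot, \lor, \land\} \cup (\Dyadu)$ (see \cref{n:aprj}), the hypothesis on $f$ gives
\[
	f\mathopen{}\big(\aprj_n(x_1, \dots, x_n)\big)\mathclose{} = \aprj_n(f(x_1), \dots, f(x_n))
\]
for every $n \in \Np$. Next, apply $f$ to both inequalities of \cref{ax:delta-sandwich-ax-inf}, which hold in $A$. Because $f$ preserves $\oplus$, $\odot$, the dyadic constants $1$, $\frac{1}{2^{n-1}}$, $1 - \frac{1}{2^{n-1}}$, and the lattice order (every homomorphism of the finitary reduct is order-preserving by \cref{l:order-preserving properties}, or directly because $\leq$ is definable via $\lor$), applying $f$ yields, for each $n \in \Np$,
\[
	\aprj_n(f(x_1), \dots, f(x_n)) \odot \left(1 - \frac{1}{2^{n-1}}\right) \leq f(\limop(x_1, x_2, \dots)) \leq \aprj_n(f(x_1), \dots, f(x_n)) \oplus \frac{1}{2^{n-1}}.
\]
This pins down $f(\limop(x_1, x_2, \dots))$ to within the same shrinking interval around $\aprj_n(f(x_1), \dots, f(x_n))$ that, by \cref{ax:delta-sandwich-ax-inf} applied in the target algebra $[0,1]$, also traps $\limop(f(x_1), f(x_2), \dots)$.

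The key step is then to deduce equality from this two-sided bound. In $[0,1]$, the element $\aprj_n(f(x_1), \dots, f(x_n)) \oplus \frac{1}{2^{n-1}}$ is simply $\min\{\aprj_n(\dots) + \frac{1}{2^{n-1}}, 1\}$ and dually for $\odot$, so both $f(\limop(x_1, x_2, \dots))$ and $\limop(f(x_1), f(x_2), \dots)$ lie within Euclidean distance $\frac{1}{2^{n-1}}$ of the common value $\aprj_n(f(x_1), \dots, f(x_n))$. By the triangle inequality their distance is at most $\frac{1}{2^{n-2}}$ for every $n$, whence they coincide. I would phrase this cleanly using the metric $\dist$ on $[0,1]$ and letting $n \to \infty$, exactly as in the proof of \cref{l:properties of W}.

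The only point requiring care --- and the mild obstacle --- is justifying that the finitary inequalities are genuinely transported by $f$: one must confirm that $f$ is order-preserving, which follows since $x \leq y \Leftrightarrow x \lor y = y$ and $f$ preserves $\lor$, and that the truncated sum and product expressions evaluate in $[0,1]$ to the claimed Euclidean neighbourhoods. Both are routine given \cref{ax:DM1,ax:DM2} and the standard interpretations, so no genuinely new idea is needed beyond combining the preservation of the $\aprj_n$-terms with the sandwich axiom in source and target. I expect the entire argument to occupy only a few lines once the preservation of $\aprj_n$ and the order are noted.
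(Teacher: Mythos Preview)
Your proposal is correct and follows essentially the same route as the paper: apply $f$ to the sandwich inequalities of \cref{ax:delta-sandwich-ax-inf}, use that $f$ preserves the finitary terms $\aprj_n$ and the order, and conclude in $[0,1]$ by letting $n \to \infty$. The only cosmetic difference is that the paper, instead of invoking the sandwich a second time in $[0,1]$ and using the triangle inequality, simply observes that the transported bounds force $f(\limop(x_1,x_2,\dots)) = \lim_{n\to\infty}\aprj_n(f(x_1),\dots,f(x_n))$, which is $\limop(f(x_1),f(x_2),\dots)$ by definition.
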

\begin{proof}
	Let $A$ be {\admvminfty}, and let $f \colon A \to [0,1]$ be a function that preserves every operation symbol in $\{\oplus, \odot, \lor, \land\} \cup \Dyad$.
	Let $(x_1,x_2, x_3, \dots)$ be a sequence of elements of $A$.
	By \cref{ax:delta-sandwich-ax-inf}, for every $n \in \Np$, we have
	\[
		\aprj_n(x_1, \dots,x_{n}) \odot \left(1 - \frac{1}{2^{n-1}}\right)  \leq  \limop(x_1, x_2, x_3, \dots)  \leq  \aprj_n(x_1, \dots,x_{n}) \oplus \frac{1}{2^{n-1}}.
	\]
	Since $f$ preserves every operation symbol in $\{\oplus, \odot, \lor, \land\} \cup \Dyad$ we have, for every $n \in \Np$,
	\begin{align*}
		\aprj_n(f(x_1), \dots,f(x_{n})) \odot \left(1 - \frac{1}{2^{n-1}}\right)	& \leq  f(\limop(x_1, x_2, x_3, \dots))\\
																											& \leq  \aprj_n(f(x_1), \dots,f(x_{n})) \oplus \frac{1}{2^{n-1}}.
	\end{align*}
	It follows that, for every $n \in \Np$, we have
	\[
		\lvert f(\limop(x_1, x_2, x_3, \dots)) - \aprj_n(f(x_1), \dots, f(x_n)) \rvert \leq \frac{1}{2^n}.
	\]
	It follows that
	\[
		f(\limop(x_1, x_2, x_3, \dots))  =  \lim_{n \to \infty} \aprj_n(f(x_1), \dots, f(x_n)) = \limop(f(x_1, x_2, x_3, \dots)). \qedhere
	\]
\end{proof}

\begin{proposition} \label{p:isp-dmvminfty}
	We have
	\[
		\DMVMinfty = \opS\opP([0,1]).
	\]
\end{proposition}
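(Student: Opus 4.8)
The plan is to establish the two inclusions of $\DMVMinfty = \opS\opP([0,1])$ separately, where throughout $[0,1]$ denotes the limit dyadic MV-monoidal algebra with its standard interpretations. The observation that makes the argument short is that every defining condition of a limit dyadic MV-monoidal algebra is equational: \cref{ax:inf} unfolds into the equations defining a dyadic MV-monoidal algebra, \cref{ax:constant-ax-inf,ax:delta-ominus-ax-inf} are equations, and each instance of \cref{ax:delta-sandwich-ax-inf} is a conjunction of two inequalities $s \leq t$, which are the equations $s \lor t = t$ in disguise (here $1 - \frac{1}{2^{n-1}}$ and $\frac{1}{2^{n-1}}$ are genuine constant symbols of $\Dyadu$). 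Hence $\DMVMinfty$ is an equationally defined class, and so it is closed under subalgebras and products.

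For the inclusion $\opS\opP([0,1]) \subseteq \DMVMinfty$, I would simply combine \cref{l:properties of W}, which gives $[0,1] \in \DMVMinfty$, with the closure of $\DMVMinfty$ under $\opS$ and $\opP$ just noted (any power of $[0,1]$ lies in $\DMVMinfty$, and then so does any subalgebra). This settles the easy half.

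The substantial half is $\DMVMinfty \subseteq \opS\opP([0,1])$. Given $A \in \DMVMinfty$, I would work with the canonical homomorphism $\ev \colon A \to [0,1]^{\hom(A,[0,1])}$, where $\hom(A,[0,1])$ is formed with respect to the reduct signature $\SignDMVM$. By \cref{p:W is Archimedean} the $\SignDMVM$-reduct of $A$ is Archimedean, which by the definition of Archimedean algebra (and the equivalent separation condition recorded with it) says precisely that $\ev$ is injective. It then remains to verify that $\ev$ is a homomorphism for the \emph{full} signature $\SignDMVMinfty$: a map into a product is a homomorphism exactly when each of its components is, and each component $\pi_f \circ \ev = f$ is an element of $\hom_{\SignDMVM}(A,[0,1])$, so it suffices that every such $f$ also preserves $\limop$ — which is exactly \cref{l:full-on-int}. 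Thus $\ev$ is an injective $\SignDMVMinfty$-homomorphism, its image is a $\SignDMVMinfty$-subalgebra of a power of $[0,1]$, and $A$ is isomorphic to that image; recalling $\opS\opP([0,1]) = \opI\opS\opP([0,1])$, we conclude $A \in \opS\opP([0,1])$.

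The hard part does not reside in this proposition itself but in the three preparatory lemmas it invokes: (i) checking that $[0,1]$ satisfies the axioms (\cref{l:properties of W}), (ii) deriving Archimedeanness of the reduct from \cref{ax:constant-ax-inf,ax:delta-ominus-ax-inf} (\cref{p:W is Archimedean}), and, above all, (iii) showing that the infinitary operation $\limop$ is \emph{forced} by its finitary fragment, so that $\SignDMVM$-homomorphisms into $[0,1]$ automatically preserve $\limop$ (\cref{l:full-on-int}). Once these are in hand, the identity $\DMVMinfty = \opS\opP([0,1])$ is essentially bookkeeping: the homomorphisms to $[0,1]$ separate the points of any $A \in \DMVMinfty$ and respect the whole signature, so the evaluation map realizes $A$ inside a power of $[0,1]$.
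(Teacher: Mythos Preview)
Your proof is correct and follows essentially the same route as the paper: for $\opS\opP([0,1]) \subseteq \DMVMinfty$ you use \cref{l:properties of W} together with closure under $\opS$ and $\opP$, and for $\DMVMinfty \subseteq \opS\opP([0,1])$ you use \cref{p:W is Archimedean} to get an injective $\SignDMVM$-embedding into a power of $[0,1]$ and then \cref{l:full-on-int} componentwise to see that this embedding also preserves $\limop$. The only cosmetic difference is that you name the embedding explicitly as the evaluation map $\ev \colon A \to [0,1]^{\hom(A,[0,1])}$, whereas the paper speaks of an arbitrary inclusion $\iota \colon A \hookrightarrow [0,1]^\kappa$ coming from Archimedeanness and then composes with projections; the content is identical.
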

\begin{proof}
	Let us first prove that every {\dmvminfty} $A$ is isomorphic to a subalgebra of a power of the algebra $[0,1]$.
	By \cref{p:W is Archimedean}, the reduct to the signature $\{\oplus, \odot, \lor, \land\}\cup (\Dyadu)$ of $A$ is isomorphic to a subalgebra of $[0,1]^{\kappa}$, for some cardinal $\kappa$.
	Let $\iota \colon A \hookrightarrow [0,1]^\kappa$ denote the corresponding inclusion.
	We claim that $\iota$ preserves also $\limop$.
	By \cref{l:full-on-int}, every function from $A$ to $[0,1]$ which preserves every operation symbol in $\{\oplus, \odot, \lor, \land\} \cup \Dyad$ preserves also $\limop$.
	Thus, given any $i \in \kappa$, the composite $A \xhookrightarrow{\iota} [0,1]^{\kappa} \xrightarrow{\pi_i} [0,1]$---where $\pi_i$ denotes the $i$-th projection---preserves $\limop$.
	Therefore, $\iota$ preserves $\limop$, settling our claim, and thus $A$ is isomorphic to a subalgebra of a power of the algebra $[0,1]$.
	
	The converse implication is guaranteed by the following facts.
	\begin{enumerate}
		\item
			The algebra $[0,1]$ in the signature $\{\oplus, \odot, \lor, \land\}\cup (\Dyadu) \cup \{\limop\}$ with standard interpretation of the operation symbols is {\admvminfty} by \cref{l:properties of W}.
		\item
			The class of algebras $\DMVMinfty$ is a variety, and so it is closed under products and subalgebras. \qedhere
	\end{enumerate}
\end{proof}

\begin{lemma} \label{l:clone-mon-cont}
	For every cardinal $\kappa$, the set of interpretations of the term operations of the algebra $[0,1]$ in the signature $\{\oplus, \odot, \lor, \land\}\cup (\Dyadu) \cup \{\limop\}$ is the set of order-preserving continuous functions from $[0,1]^\kappa$ to $[0,1]$.
\end{lemma}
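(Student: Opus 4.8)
The plan is to prove the two inclusions separately. For the inclusion that every term operation of arity $\kappa$ of the algebra $[0,1]$ is an order-preserving continuous function from $[0,1]^\kappa$ to $[0,1]$, I would argue by induction on the structure of terms. The base case consists of the projections $[0,1]^\kappa \to [0,1]$, which are order-preserving and continuous with respect to the product order and product topology. For the inductive step, I would use that each primitive operation is order-preserving and continuous: the finitary operations $\oplus$, $\odot$, $\lor$, $\land$ and the dyadic constants are evidently so, and the operation $\limop$ of countably infinite arity is order-preserving and continuous by \cref{p:delta-cont-mon}. Since an order-preserving continuous function precomposed with a (possibly $\Np$-indexed) family of order-preserving continuous functions is again order-preserving and continuous, the interpretation of any term is order-preserving and continuous.

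For the converse inclusion, that every order-preserving continuous function $[0,1]^\kappa \to [0,1]$ arises as a term operation, I would apply \cref{l:generation} with $D \df \Dyadu$ and $\alpha \df \limop$. This reduces the task to verifying the hypotheses of that lemma. The set $\Dyadu$ of dyadic rationals in $[0,1]$ is dense in $[0,1]$. The function $\limop$ is order-preserving and continuous by \cref{p:delta-cont-mon}, and by \cref{l:to-their-limit} it maps every $2$-Cauchy sequence to its limit; hence, by \cref{l:closure}, for every set $L$ of $[0,1]$-valued functions the closure of $L$ under pointwise application of $\limop$ contains the closure of $L$ under uniform limits, which is exactly the remaining hypothesis of \cref{l:generation}. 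The conclusion of \cref{l:generation} is that $\{\oplus,\odot,\lor,\land\}\cup\Dyadu\cup\{\limop\}$ generates the clone of order-preserving continuous functions on $[0,1]$; since the clone generated by a family of operations is precisely its family of term operations (of every arity), this yields the desired inclusion for each $\kappa$.

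The main obstacle, and the point requiring the most care, will be the correct bookkeeping of the infinitary composition in the first inclusion: one must check that composing the countably-ary operation $\limop$ with an $\Np$-indexed family of order-preserving continuous functions of arity $\kappa$ produces an order-preserving continuous function, i.e.\ that continuity and monotonicity pass through the infinitary substitution. This is ensured by the fact that the induced map $[0,1]^\kappa \to [0,1]^{\Np}$ is order-preserving and continuous precisely when each of its components is, by the universal property characterising the product order and the product topology. Once this is in place, the two inclusions combine to give that the term operations of arity $\kappa$ are exactly the order-preserving continuous functions from $[0,1]^\kappa$ to $[0,1]$.
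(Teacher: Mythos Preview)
Your proof is correct and follows essentially the same approach as the paper. The only differences are cosmetic: you make the first inclusion explicit via structural induction (the paper leaves it implicit, as it is needed to apply \cref{t:StWe-unit}), and for the second inclusion you invoke the pre-packaged \cref{l:generation} rather than applying \cref{t:StWe-unit} and \cref{l:closure} directly to $L_\kappa$ as the paper does.
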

\begin{proof}
	Let $\kappa$ be a cardinal, and let $L_\kappa$ be the set of term operations of $[0,1]$ of arity $\kappa$.
	We now apply \cref{t:StWe-unit}, with $X = [0,1]^\kappa$: note that $X$ is a compact ordered space and that $L_\kappa$ is order-separating because it contains the projections, which are easily seen to order-separate the elements of $[0,1]^\kappa$.
	Therefore, the set of order-preserving continuous functions from $[0,1]^\kappa$ to $[0,1]$ coincides with the closure of $L_\kappa$ under uniform convergence.
	By \cref{l:closure}, using the fact that $L_\kappa$ is closed under $\limop$, we obtain that the closure of $L_\kappa$ under uniform convergence is $L_\kappa$ itself.
\end{proof} 

\begin{remark} \label{r:ISP-te}
	Let $\FF$ and $\GG$ be signatures, and let $\alge{A}$ and $\alge{B}$ be algebras in signatures $\FF$ and $\GG$ with the same underlying set. Suppose the clone on $\alge{A}$ equals the clone of $\alge{B}$. Then, the quasivarieties $\mathrm{ISP}(\alge{A})$ and $\mathrm{ISP}(\alge{B})$ are term-equivalent.
\end{remark}

Let $\CMiso$ denote the class of $\SignCM$-algebras which are (isomorphic to) a subalgebra of a power of the $\SignCM$-algebra $[0,1]$ with standard interpretation of the operation symbols, i.e.\ 
\[
	\CMiso \df \opS\opP\mathopen{}\left(\left\langle [0,1]; \SignCM \right\rangle\right)\mathclose{}.
\]

\begin{theorem} \label{t:term-equiv}
	The classes $\CMiso$ and $\DMVMinfty$ are term-equivalent varieties.
\end{theorem}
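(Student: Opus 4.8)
The plan is to recognise both $\CMiso$ and $\DMVMinfty$ as $\opS\opP$-classes generated by the \emph{same} carrier $[0,1]$ under \emph{the same} clone, and then to invoke the term-equivalence criterion of \cref{r:ISP-te}. Since all the genuinely analytic work has already been carried out, this final statement should reduce to a formal comparison of clones.

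First I would pin down the clone of the $\SignCM$-algebra $\langle [0,1]; \SignCM\rangle$. By construction $\SignCM$ is the signature whose operation symbols of arity $\kappa$ are \emph{all} order-preserving continuous functions $[0,1]^\kappa \to [0,1]$, each interpreted as itself on $[0,1]$. Since this collection of functions already contains all projections and is closed under composition, it is itself a clone; hence the clone of $\langle [0,1]; \SignCM\rangle$ is exactly the set of order-preserving continuous functions from powers of $[0,1]$ to $[0,1]$. Next I would compute the clone of the $\SignDMVMinfty$-algebra $\langle [0,1]; \SignDMVMinfty\rangle$, and this is precisely the content of \cref{l:clone-mon-cont}: for every cardinal $\kappa$ the term operations of arity $\kappa$ of $[0,1]$ coincide with the order-preserving continuous functions $[0,1]^\kappa \to [0,1]$. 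Thus the two algebras $\langle [0,1]; \SignCM\rangle$ and $\langle [0,1]; \SignDMVMinfty\rangle$ share the same underlying set $[0,1]$ and the same clone.

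With the clones identified I would conclude. By definition $\CMiso = \opS\opP(\langle [0,1]; \SignCM\rangle)$, while \cref{p:isp-dmvminfty} gives $\DMVMinfty = \opS\opP(\langle [0,1]; \SignDMVMinfty\rangle)$; since $\opS\opP$ agrees with $\mathrm{ISP}$ on an isomorphism-closed generator, \cref{r:ISP-te} applies to these two clone-equal algebras and yields that $\CMiso$ and $\DMVMinfty$ are term-equivalent. That both are varieties is then immediate: $\DMVMinfty$ is equationally defined by \cref{d:DMVMinfty}, $\CMiso$ is a variety by \cref{c:equiv-to-varieties}, and in any case term-equivalence transports the property of being a variety between the two classes.

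The step I expect to require the most care is the bookkeeping in the first two paragraphs, namely verifying that \enquote{clone} is exactly the right invariant and that the two generating algebras genuinely have \emph{identical} clones rather than merely clones with the same closure under uniform convergence. The delicate inclusion \textemdash\ that no order-preserving continuous function escapes the clone generated by $\{\oplus,\odot,\lor,\land\}\cup(\Dyadu)\cup\{\limop\}$ \textemdash\ is the point where the density of the finitary operations, the continuity and monotonicity of $\limop$, and the unit-interval Stone\textendash Weierstrass argument all get used; but since these are discharged inside \cref{l:clone-mon-cont}, the present theorem is essentially a repackaging of that lemma through \cref{r:ISP-te}.
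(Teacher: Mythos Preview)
Your proposal is correct and follows essentially the same route as the paper: both arguments identify $\DMVMinfty$ with $\opS\opP(\langle[0,1];\SignDMVMinfty\rangle)$ via \cref{p:isp-dmvminfty}, observe that the clones of $\langle[0,1];\SignCM\rangle$ and $\langle[0,1];\SignDMVMinfty\rangle$ coincide (the latter via \cref{l:clone-mon-cont}), and then invoke \cref{r:ISP-te}. The only cosmetic difference is that the paper deduces that $\CMiso$ is a variety from term-equivalence with the equationally defined $\DMVMinfty$, whereas you additionally cite \cref{c:equiv-to-varieties}; either justification works.
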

\begin{proof}
	By \cref{p:isp-dmvminfty}, the class $\DMVMinfty$ consists of the algebras in the signature $\{\oplus, \odot, \lor, \land\} \cup (\Dyadu) \cup \{\limop\}$ which are isomorphic to a subalgebra of a power of $[0,1]$.
	By definition of $\CMiso$, the class $\CMiso$ consists of the $\SignCM$-algebras which are isomorphic to a subalgebra of a power of $[0,1]$.
	The clone of term operations of the $\SignCM$-algebra $[0,1]$ consists of the order-preserving continuous functions.
	By \cref{l:clone-mon-cont}, the interpretations of the term operations of the algebra $[0,1]$ in the signature $\{\oplus, \odot, \lor, \land\} \cup (\Dyadu) \cup \{\limop\}$ are the order-preserving continuous functions.
	By \cref{r:ISP-te}, the class $\CMiso$ is term-equivalent to $\DMVMinfty$.
	Since the class $\DMVMinfty$ is a variety of algebras, also the class $\CMiso$ is a variety of algebras.
\end{proof}

\begin{theorem} \label{t:axiomatisation}
	The category $\CompOrd$ of compact ordered spaces is dually equivalent to the variety $\DMVMinfty$ of {\dmvminftys} (see \cref{d:DMVMinfty}).
\end{theorem}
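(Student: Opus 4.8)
The plan is to chain together two results that have already done all the real work: the duality of \cref{t:duality-not-explicit} and the term-equivalence of \cref{t:term-equiv}. By \cref{t:duality-not-explicit}, the category $\CompOrd$ is dually equivalent to $\CMiso = \opS\opP\mathopen{}\left(\left\langle [0,1]; \SignCM \right\rangle\right)\mathclose{}$. I would emphasise that this step rests only on the fact that $[0,1]$ is a regular injective regular cogenerator of $\CompOrd$ (via \cref{p:Ada-natural}, together with \cref{p:int-is-reg-cogen} and \cref{p:int-is-regular-injective}), and in particular does \emph{not} invoke the effectiveness of equivalence relations from \cref{t:effective}; this is precisely what makes the present argument an independent route to dual-variety-hood. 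By \cref{t:term-equiv}, the variety $\CMiso$ is term-equivalent to $\DMVMinfty$. Hence it only remains to observe that term-equivalent varieties are equivalent as categories, and to splice the two (anti)equivalences together.

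First I would make the last observation explicit, since the whole argument hinges on it. Term-equivalence here means (see \cref{r:ISP-te}) that the $\SignCM$-algebra $[0,1]$ and the $\SignDMVMinfty$-algebra $[0,1]$ share a common underlying set and a common clone; the latter agreement is exactly the content of \cref{l:clone-mon-cont}, which identifies both clones with the order-preserving continuous functions on powers of $[0,1]$. Because homomorphisms between such algebras are precisely the functions preserving the (shared) clone, a function between two carriers preserves all $\SignCM$-operations if and only if it preserves all $\SignDMVMinfty$-operations. Consequently the assignment sending each algebra of $\CMiso$ to the algebra of $\DMVMinfty$ on the same carrier (reinterpreting the primitive operations via the corresponding order-preserving continuous functions, using \cref{p:isp-dmvminfty} to see that these reducts land in $\DMVMinfty$) is a bijective-on-objects, full and faithful functor, with an inverse of the same kind. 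Thus $\CMiso$ and $\DMVMinfty$ are isomorphic, a fortiori equivalent, categories.

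Composing the dual equivalence $\CompOrd \simeq \CMiso^{\opcat}$ of \cref{t:duality-not-explicit} with this equivalence $\CMiso \simeq \DMVMinfty$ then yields a dual equivalence between $\CompOrd$ and $\DMVMinfty$, as required. I expect no genuine obstacle to remain at this stage: the substantive analytic and algebraic work lives in \cref{t:duality-not-explicit}, \cref{l:clone-mon-cont}, and \cref{p:isp-dmvminfty}. The only point deserving care is the verification that term-equivalence transports an (anti)equivalence of categories, i.e.\ that the morphism classes really coincide under the change of signature; this is routine but should be stated rather than taken for granted, precisely because the equivalence of categories—not merely a bijection between objects—is what the theorem asserts.
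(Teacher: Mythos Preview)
Your proposal is correct and follows essentially the same approach as the paper: invoke \cref{t:duality-not-explicit} for the dual equivalence $\CompOrd \simeq \CMiso^{\opcat}$, invoke \cref{t:term-equiv} for the term-equivalence $\CMiso \simeq \DMVMinfty$, and compose. Your additional remark that this route bypasses \cref{t:effective} is also made in the paper (in a footnote in \cref{s:intro-axiom}); the only difference is that you spell out more explicitly why term-equivalence yields an isomorphism of categories, which the paper leaves implicit.
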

\begin{proof}
	By \cref{t:term-equiv}, the varieties $\DMVMinfty$ and $\CMiso$ are term-equivalent.
	By \cref{t:duality-not-explicit}, the categories $\CompOrd$ and $\CMiso$ are dually equivalent.
\end{proof}

We describe two contravariant functors which witness the equivalence in \cref{t:axiomatisation}.
One contravariant functor is
\[
	\Cleq (-,[0,1])\colon \CompOrd \to \DMVMinfty.
\]
In the opposite direction, we have the contravariant functor 
\[
	\hom(-,[0,1]) \colon \DMVMinfty \to \CompOrd,
\]
defined as follows.
Given a {\dmvminfty} $A$, we let $\hom(A, [0,1])$ denote the set of $(\{\oplus, \odot, \lor, \land\}\cup (\Dyadu) \cup \{\limop\})$-homomorphisms from $A$ to $[0,1]$, or equivalently (by \cref{l:full-on-int}), the set of $(\{\oplus, \odot, \lor, \land\}\cup (\Dyadu))$-homomorphisms from $A$ to $[0,1]$.
We equip $\hom(A, [0,1])$ with the initial order and the initial topology with respect to the structured source of evaluation maps
\[
	(\ev_x \colon \hom(A, [0,1]) \to [0,1])_{x \in A},
\]
or, equivalently, the induced topology and order with respect to the inclusion 
\[
	\hom(A, [0,1]) \seq [0,1]^A,
\]
or, equivalently, as follows. 
For $f, g \in \hom(A, [0,1]) $ we set $f \leq g$ if, and only if, for all $x \in X$, we have $f(x) \leq g(x)$.
Furthermore, we endow $\hom(A, [0,1])$ with the smallest topology that contains, for every element $x \in A$ and every open subset $O$ of $[0,1]$, the set $\{f \in \hom(A, [0,1]) \mid f(x) \in O\}$.

%%%%%%%%%%%%%%%%%%%%%%%%%%%%%%%%%%%   SECTION   %%%%%%%%%%%%%%%%%%%%%%%%%%%%%%%%%%%%

\section{Conclusions}

We finally obtained an equational axiomatisation of the dual of the category of compact ordered spaces.
One final question arises, to which our next chapter will be devoted: Does there exist a \emph{finite} equational axiomatisation of $\CompOrdop$?

%######################################################.  CHAPTER.  ######################################################

\chapter{Finite equational axiomatisation}\label{chap:finite-axiomatisation}
% CHAPTER 7

%%%%%%%%%%%%%%%%%%%%%%%%%%%%%%%%%%%   SECTION   %%%%%%%%%%%%%%%%%%%%%%%%%%%%%%%%%%%%

\section{Introduction}

In the previous chapter we obtained an explicit equational axiomatisation of the dual of $\CompOrd$.
In this chapter we take a further step by providing a \emph{finite} equational axiomatisation, meaning that we use only finitely many function symbols and finitely many equational axioms to present the variety.
To the best of the author's knowledge, the existence of such a finite axiomatisation is a new result.

Recall that in \cref{chap:axiomatisation} we obtained an equational axiomatisation of $\CompOrdop$ in the signature consisting of $\oplus$, $\odot$, $\lor$, $\land$, all dyadic rationals in $[0,1]$ and $\limop$.
Since we now want only finitely many primitive operations, we cannot include in the signature all the dyadic rationals in $[0,1]$.
So, we replace them with the constants $0$ and $1$, together with the unary operation $\hh$ of division by $2$, and (for the sake of elegance) its `dual' operation $\jj$ defined on $[0,1]$ by $x \mapsto 1 - (1-\hh(x)) = \frac{1}{2} + \frac{x}{2}$.
The primitive operations are then $\oplus$, $\odot$, $\lor$, $\land$, $0$, $1$, $\hh$, $\jj$, and $\limop$.

%%%%%%%%%%%%%%%%%%%%%%%%%%%%%%%%%%%   SECTION   %%%%%%%%%%%%%%%%%%%%%%%%%%%%%%%%%%%%

\section{Term-equivalent alternatives for algebras with dyadic constants}

The algebras of the following section---called \emph{\tmvms}---have {\admvm} as a reduct: this will allow us to use the results of the previous chapter.
The fact that {\atmvm} has {\admvm} as a reduct is easier to observe if we introduce a term-equivalent alternative for {\dmvms}: instead of all the constants in $\Dyadu$, we consider only the constants in $\{\frac{1}{2^n} \mid n \in \N\} \cup \{1 - \frac{1}{2^n} \mid n \in \N\}$.
To help the intuition, we first obtain a term-equivalence for {\dlms}.

%-------------------------------   SUBSUBSECTION   --------------------------------%

\subsubsection{Term-equivalent alternative for dyadic commutative distributive \texorpdfstring{$\ell$}{\unichar{"02113}}-monoids}

\begin{definition}[Term-equivalent alternative to \cref{d:dyadic-ell}] \label{d:lm-alt}
	A \emph{\dlm} is an algebra $\alge{M}$ in the signature 
	\[
		\{ + , \lor, \land, 0\} \cup \left\{\frac{1}{2^n} \mid n \in \N\right\} \cup \left\{-\frac{1}{2^n} \mid n \in \N\right\}
	\]
	(where the operations $+$, $\lor$ and $\land$ have arity $2$, and the element $0$ and every element of $\left\{\frac{1}{2^n} \mid n \in \N\right\} \cup \left\{-\frac{1}{2^n} \mid n \in \N\right\}$ have arity $0$) with the following properties.
	\begin{enumerate}[label = DM'\arabic*., ref = DM'\arabic*, start = 0, leftmargin=\widthof{LTE4'.} + \labelsep]
		\item \label{i:F0} $\langle M; + , \lor, \land,0\rangle$ is {\alm} (see \cref{d:lm}).
		\item \label{i:F1} For all $n \in \Np$, $\frac{1}{2^n} + \frac{1}{2^n} = \frac{1}{2^{n-1}}$.
		\item \label{i:F2} For all $n \in \Np$, $\left(-\frac{1}{2^n}\right) + \left(-\frac{1}{2^n}\right) = -\frac{1}{2^{n-1}}$.
		\item \label{i:F3} For all $n \in \N$, $-\frac{1}{2^n} + \frac{1}{2^n}= 0$.
		\item \label{i:F4} For all $n \in \N$, $- \frac{1}{2^n} \leq 0 \leq \frac{1}{2^n}$.
		\item \label{i:F5} For all $x \in M$, there exists $n \in \N$ such that $ n(-1) \leq x \leq n1$.
	\end{enumerate}
\end{definition}

We claim that the classes of algebras described in \cref{d:lm,d:lm-alt} under the common name of `{\dlms}' are term-equivalent.

Indeed, we first note that \cref{i:F0,i:F1,i:F2,i:F3,i:F4,i:F5} holds for every {\dlm}.
For the opposite direction, if $M$ satisfies \cref{i:F1,i:F2,i:F3,i:F4,i:F5}, then, for every $k \in \Np$ and $n \in \N$, we denote with $\frac{k}{2^n}$ the element 
\[
	\underbrace{\frac{1}{2^n} + \dots + \frac{1}{2^n}}_{k \text{ times}},
\]
and we denote with $-\frac{k}{2^n}$ the element
\[
	\underbrace{\left(-\frac{1}{2^n}\right) + \dots + \left(-\frac{1}{2^n}\right)}_{k \text{ times}}.
\]
In this way, to every dyadic rational is associated an element of $M$; this association is well given for the following reason: if a strictly positive dyadic rational is both equal to $\frac{k}{2^n}$ and $\frac{k'}{2^{n'}}$ for $k, k' \in \Np$ and $n, n' \in \N$, then the elements $\underbrace{\frac{1}{2^n} + \dots + \frac{1}{2^n}}_{k \text{ times}}$ and $\underbrace{\frac{1}{2^{n'}} + \dots + \frac{1}{2^{n'}}}_{k' \text{ times}}$ are the same by \cref{i:F1}; an analogous statement holds for strictly negative dyadic rationals, using \cref{i:F2}.
\Cref{ax:R1} holds by \cref{i:F4}, using the monotonicity of $+$.
\Cref{ax:R2} holds by \cref{i:F4}.
\cref{ax:R3} holds by \cref{i:F5}.

The two classes of algebras are then term-equivalent.

%-------------------------------   SUBSUBSECTION   --------------------------------%

\subsubsection{Term-equivalent alternative for dyadic MV-monoidal algebras}

\begin{definition}[Term-equivalent alternative to \cref{d:dyadic-MVM}] \label{d:dmvm-alt}
	An algebra $\alge{A}$ in the signature $\{\oplus, \odot, \lor, \land\} \cup \{\dd_n \mid n \in \N \} \cup \{\uu_n \mid n \in \N\}$, where $\oplus$, $\odot$, $\lor$ and $\land$ have arity $2$ and each $\dd_n$ and $\uu_n$ has arity $0$, is a \emph{\dmvm} provided it satisfies the following properties.
	\begin{enumerate}[label = DE'\arabic*., ref = DE'\arabic*, start = 0, leftmargin=\widthof{LTE4'.} + \labelsep]
		\item \label[axiom]{ax:dmvm-alt:0}			$\langle A; \oplus, \odot, \lor, \land, \uu_0, \dd_0\rangle$ is {\amvm} (see \cref{d:MVM}).
		\item \label[axiom]{ax:dmvm-alt:d-oplus} 	For every $n \in \Np$, $\dd_{n} \oplus \dd_{n} = \dd_{n-1}$.
		\item \label[axiom]{ax:dmvm-alt:u-odot} 	For every $n \in \Np$, $\uu_{n} \odot \uu_{n} = \uu_{n-1}$.
		\item \label[axiom]{ax:dmvm-alt:d-odot} 	For every $n \in \Np$, $\dd_{n} \odot \dd_{n} = 0$.
		\item \label[axiom]{ax:dmvm-alt:u-oplus} 	For every $n \in \Np$, $\uu_{n} \oplus \uu_{n} = 1$.
		\item \label[axiom]{ax:dmvm-alt:oplus} 	For every $n \in \N$, $\dd_n \oplus \uu_n = 1$.
		\item \label[axiom]{ax:dmvm-alt:odot} 		For every $n \in \N$, $\dd_n \odot \uu_n = 0$.
	\end{enumerate}
\end{definition}
The conjunction of \cref{ax:dmvm-alt:d-oplus,ax:dmvm-alt:d-odot} is equivalent (given \cref{ax:dmvm-alt:0}) to $\dd_n + \dd_n = \dd_{n-1}$ in the enveloping {\ulm} of $\alge{A}$, and---loosely speaking---it corresponds to \cref{i:F1} in \cref{d:lm-alt}.
Analogously, the conjunction of \cref{ax:dmvm-alt:u-oplus,ax:dmvm-alt:u-odot} is equivalent to $\uu_n + \uu_n - 1 = \uu_{n-1}$, and it corresponds to \cref{i:F2} in \cref{d:lm-alt}.
\Cref{ax:dmvm-alt:oplus,ax:dmvm-alt:odot} are equivalent to $\dd_n + \uu_n = 1$ and correspond to \cref{i:F3}.

We show that the classes of algebras described in \cref{d:dyadic-MVM,d:dmvm-alt} are term-equivalent.
First, we observe that \cref{i:F1,i:F2,i:F3,i:F4,i:F5} hold for every {\dmvm} in the sense of \cref{d:dyadic-MVM}, with $\dd_n = \frac{1}{2^n}$ and $\uu_n = 1 - \frac{1}{2^n}$.

For the converse direction, we make use of the following result.
\begin{lemma}
	Let $A$ be {\admvm} in the sense of \cref{d:dmvm-alt}.
	Then, for every $n \in \N$, and every $k \in \{0, \dots, 2^n\}$, we have
	\[
		\underbrace{\dd_n \oplus \dots \oplus \dd_n}_{k \text{ times}} = \underbrace{\uu_n \odot \dots \odot \uu_n}_{2^n - k \text{ times}}
	\]
\end{lemma}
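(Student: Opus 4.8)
The plan is to abbreviate, for a fixed $n$, the closed terms $a_k \df \underbrace{\dd_n \oplus \cdots \oplus \dd_n}_{k \text{ times}}$ (with $a_0 \df 0$) and $b_m \df \underbrace{\uu_n \odot \cdots \odot \uu_n}_{m \text{ times}}$ (with $b_0 \df 1$), so that the assertion to prove reads $a_k = b_{2^n-k}$. Everything rests on one identity valid in any {\dmvm} in the sense of \cref{d:dmvm-alt}, whose {\mvm} reduct (\cref{ax:dmvm-alt:0}) lets us invoke the results of \cref{chap:equiv}: for every element $t$,
\[
	(t \odot \uu_n) \oplus \dd_n = t \lor \dd_n.
\]
Indeed, by \cref{ax:dmvm-alt:oplus,ax:dmvm-alt:odot} we have $\uu_n \oplus \dd_n = 1$ and $\uu_n \odot \dd_n = 0$, so that $\sigma(t,\uu_n,\dd_n) = \sigma_3(t,\uu_n,\dd_n) = (t \odot (\uu_n \oplus \dd_n)) \oplus (\uu_n \odot \dd_n) = (t \odot 1) \oplus 0 = t$ by \cref{l:permutations,ax:A2,l:absorbing}; the identity then follows from \cref{ax:A6}. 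An analogous computation using \cref{ax:A7} and \cref{ax:dmvm-alt:d-oplus} gives $\dd_{n-1} \odot \uu_n = \sigma(\dd_n,\dd_n,\uu_n) \land \uu_n = \dd_n \land \uu_n$.

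With these at hand I would first establish $a_k = b_{2^n-k}$ by a forward induction on $k$. The base case $k=0$ is $a_0 = 0 = b_{2^n}$, where $b_{2^n} = 0$ telescopes from \cref{ax:dmvm-alt:u-odot} (repeatedly halving the $2^n$ factors down to $\uu_0 = 0$). For the inductive step, using $b_{2^n-k+1} = b_{2^n-k} \odot \uu_n$ and the key identity,
\[
	a_k = a_{k-1} \oplus \dd_n = b_{2^n-k+1} \oplus \dd_n = (b_{2^n-k} \odot \uu_n) \oplus \dd_n = b_{2^n-k} \lor \dd_n.
\]
Thus the step closes \emph{provided} $b_{2^n-k} \geq \dd_n$, i.e.\ provided every $b_\ell$ with $0 \leq \ell \leq 2^n-1$ dominates $\dd_n$. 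This is the crux: the identities $a_k \oplus a_{2^n-k} = 1$ telescope for free, but the genuinely needed information is these one-sided lower bounds, and they do not telescope.

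Since $b_\ell = b_{\ell-1} \odot \uu_n \leq b_{\ell-1}$ by \cref{l:increasing}, the sequence $(b_\ell)_\ell$ is decreasing, so it suffices to prove the single bound $b_{2^n-1} \geq \dd_n$ for $n \geq 1$ (the case $n=0$ being checked directly). I would do this by a nested induction on $n$. The halving \cref{ax:dmvm-alt:u-odot} rewrites $b_{2^n-1}$ as $\big(\underbrace{\uu_n \odot \cdots \odot \uu_n}_{2^n-2 \text{ times}}\big) \odot \uu_n = b^{(n-1)}_{2^{n-1}-1} \odot \uu_n$, where $b^{(n-1)}$ denotes the analogue at level $n-1$; the inductive hypothesis gives $b^{(n-1)}_{2^{n-1}-1} \geq \dd_{n-1}$, and monotonicity of $\odot$ (\cref{l:order-preserving properties}) together with $\dd_{n-1} \odot \uu_n = \dd_n$ yields $b_{2^n-1} \geq \dd_{n-1} \odot \uu_n = \dd_n$. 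Here $\dd_{n-1} \odot \uu_n = \dd_n \land \uu_n = \dd_n$ uses the computation above together with $\dd_n \leq \uu_n$, which in turn follows from the chains $\dd_n \leq \dd_1$ and $\uu_1 \leq \uu_n$ (from \cref{l:increasing}) and the equality $\dd_1 = \uu_1$; the latter holds because $\dd_1$ and $\uu_1$ are both complements of $\dd_1$ (by \cref{ax:dmvm-alt:d-oplus,ax:dmvm-alt:d-odot,ax:dmvm-alt:oplus,ax:dmvm-alt:odot}), and complements are unique in {\mvms}, as shown in the proof of \cref{p:reducts-of-MV}. The base case $n=1$ is $b_1 = \uu_1 = \dd_1$. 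The main obstacle, and the reason a plain induction on $k$ does not suffice, is precisely this boundary bound $b_{2^n-1} \geq \dd_n$: it is the equality case $\dd_n = b_{2^n-1}$ in disguise, so it cannot be read off from the free telescoping identities and instead requires the separate induction on $n$ feeding on $\dd_{n-1} \odot \uu_n = \dd_n$.
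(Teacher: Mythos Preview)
Your argument is correct, but it takes a markedly different route from the paper's. The paper invokes \cref{t:G is equiv} to pass to the enveloping {\ulm} of $A$: there the claim becomes the additive identity
\[
	\underbrace{\dd_n + \dots + \dd_n}_{k \text{ times}} = \underbrace{\uu_n + \dots + \uu_n}_{2^n - k \text{ times}} - (2^n - k - 1),
\]
which is disposed of in one line by adding the $+$-invertible element $\underbrace{\dd_n + \dots + \dd_n}_{2^n - k \text{ times}}$ to both sides (using $\dd_n + \uu_n = 1$ and $2^n\dd_n = 1$, both consequences of the axioms). So the paper trades the truncated operations $\oplus,\odot$ for genuine $+$ and cancels.

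Your proof stays entirely inside the {\mvm}, avoiding the $\Gam/\X$ equivalence. The price is the nested induction: the outer induction on $k$ reduces the step to the order bound $b_{2^n-1} \geq \dd_n$, and the inner induction on $n$ (feeding on the identity $\dd_{n-1} \odot \uu_n = \dd_n$ and the uniqueness of complements) secures that bound. This is more laborious but more elementary in that it relies only on the {\mvm} calculus of \cref{chap:equiv} (the $\sigma$-identities, \cref{ax:A6,ax:A7}, monotonicity) rather than on the full categorical equivalence. The paper's approach is shorter and conceptually cleaner precisely because the equivalence has already absorbed all the hard work; yours shows that the lemma can be extracted by hand from the axioms of \cref{d:dmvm-alt}.
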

\begin{proof}
	By \cref{t:G is equiv}, it is enough to show that, in the {\ulm} that envelops $A$, we have
	\begin{equation} \label{e:d-u}
		\underbrace{\dd_n + \dots + \dd_n}_{k \text{ times}} = \underbrace{\uu_n + \dots + \uu_n}_{2^n - k \text{ times}} - (2^n - k - 1)
	\end{equation}
	To prove \cref{e:d-u}, it is enough to add the $+$-invertible element $\underbrace{\dd_n + \dots + \dd_n}_{2^n - k \text{ times}}$ on both sides.
\end{proof}
So, suppose that $A$ satisfies \cref{i:F1,i:F2,i:F3,i:F4,i:F5}. Then, for every $n \in \N$ and $k \in \{0, \dots, 2^n\}$, we let $\frac{k}{2^n}$ denote the element $\underbrace{\dd_n \oplus \dots \oplus \dd_n}_{k \text{ times}}$, or equivalently the element $\underbrace{\uu_n \odot \dots \odot \uu_n}_{2^n - k \text{ times}}$.
In this way, to every dyadic rational is associated an element of $A$; it is not difficult to see that this association is well given.

The two classes of algebras are then term-isomorphic.

%%%%%%%%%%%%%%%%%%%%%%%%%%%%%%%%%%%   SECTION   %%%%%%%%%%%%%%%%%%%%%%%%%%%%%%%%%%%%

\section{MV-monoidal algebras with division by \texorpdfstring{$2$}{2}}

\begin{definition} \label{d:tmvm}
	A \emph{\tmvm}\index{MV-monoidal algebra!$2$-divisible} $\langle A; \oplus, \odot, \lor, \land, 0, 1, \hh, \jj \rangle$ (arities $2$, $2$, $2$, $2$, $0$, $1$, $1$) is an algebra with the following properties.
	\begin{enumerate} [label = TE\arabic*., ref = TE\arabic*, start = 0, leftmargin=\widthof{LTE4'.} + \labelsep]
		
		\item \label[axiom]{ax:tmvm:mvm}
				$\langle A; \oplus, \odot, \lor, \land, 0, 1\rangle$ is {\amvm} (see \cref{d:MVM}).
				
		\item \label[axiom]{ax:tmvm:cohf-via-hf}
				$\jj(x) = \hh(1) \oplus \hh(x)$.
		
		\item \label[axiom]{ax:tmvm:hf-via-cohf}
				$\hh(x) = \jj(0) \odot \jj(x)$.
		
		\item \label[axiom]{ax:tmvm:double-oplus}
				$\hh(x) \oplus \hh(x) = x$.
		
		\item \label[axiom]{ax:tmvm:double-odot}
				$\jj(x) \odot \jj(x) = x$.
		
		\item \label[axiom]{ax:tmvm:double-h}
				$\hh(\hh(x) \oplus \hh(y)) = \hh(\hh(x)) \oplus \hh(\hh(y))$.
				
		\item \label[axiom]{ax:tmvm:double-j}
				$\jj(\jj(x) \odot \jj(y)) = \jj(\jj(x)) \odot \jj(\jj(y))$.
		
	\end{enumerate}
\end{definition}

The axioms have been chosen so that, for every {\tmvm} $A$, (the appropiate reduct of) $A$ is {\admvm} (\cref{l:reduct}) and every function $f \colon A \to [0,1]$ that preserves the operations of {\dmvms} preserves also $\hh$ and $\jj$ (\cref{l:pres-also-h}).
This is enough for our purposes.

\begin{remark}
	To give an intuition about the axioms, we state (without a proof) that the category of {\tmvms} is equivalent to the category of what we might call \emph{\tulms}\index{lattice-ordered!monoid!commutative distributive!$2$-divisible!unital!}, i.e.\ algebras $\langle M; + , \lor, \land, 0, 1, -1, \frac{\cdot}{2} \rangle$ (arities $2$, $2$, $2$, $0$, $0$, $0$, $1$) with the following properties (we write $\frac{x}{2}$ for $\frac{\cdot}{2}(x)$).
	\begin{enumerate}
		\item $\langle M; + , \lor, \land, 0, 1, -1 \rangle$ is {\aulm}.
		\item If $x \geq 0$, then $\half{x} \geq 0$.
		\item If $x \leq 0$, then $\half{x} \leq 0$.
		\item $\half{x} + \half{x} = x$.
		\item $\half{x} + \half{y} = \half{x + y}$.
	\end{enumerate}
	One functor maps {\atulm} $M$ to the {\mvm} $\Gam(M)$ on which the interpretation of $\hh$ is $\half{x}$, and the interpretation of $\jj$ is $\half{1} + \half{x}$.
	The functor in the opposite direction maps {\amvm} $A$ to the {\tulm} $\X(A)$, on which the interpretation of $\half{\cdot}$ is as follows: for $\gs{x} \in \X(A)$ and $k \in \Z$, we set
	\[
		\half{\gs{x}}(k) = \hh\mathopen{}\big(\gs{x}(2k)\big)\mathclose{} \oplus \hh\mathopen{}\big(\gs{x}(2k + 1)\big)\mathclose{},
	\]
	or, equivalently,
	\[
		\half{\gs{x}}(k) = \jj\mathopen{}\big(\gs{x}(2k)\big)\mathclose{} \odot \jj\mathopen{}\big(\gs{x}(2k + 1)\big)\mathclose{}.
	\]
\end{remark}

\begin{lemma} \label{l:no-exceed}
	For every element $x$ in {\atmvm}, we have $\hh(x) \odot \hh(x) = 0$ and $\jj(x) \oplus \jj(x) = 1$.
\end{lemma}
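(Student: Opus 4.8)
The plan is to derive each of the two identities by a short equational computation that combines the two ``doubling'' axioms \cref{ax:tmvm:double-oplus} and \cref{ax:tmvm:double-odot} with the definitions \cref{ax:tmvm:cohf-via-hf} and \cref{ax:tmvm:hf-via-cohf} of $\jj$ and $\hh$, together with the commutative-monoid structure of $\odot$ and $\oplus$ (\cref{ax:tmvm:mvm}, hence \cref{ax:A2}) and the absorption lemma \cref{l:absorbing}. The guiding intuition is the one from $[0,1]$: since $\hh(x) = \tfrac{x}{2} \leq \tfrac12$, doubling $\hh(x)$ with $\odot$ produces no ``carry'', so the result is $0$; dually for $\jj$ and $\oplus$.

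For the identity $\hh(x) \odot \hh(x) = 0$, first I would rewrite $\hh(x)$ using \cref{ax:tmvm:hf-via-cohf} as $\jj(0)\odot\jj(x)$. Then
\begin{align*}
	\hh(x)\odot\hh(x)	& = \big(\jj(0)\odot\jj(x)\big)\odot\big(\jj(0)\odot\jj(x)\big)\\
							& = \big(\jj(0)\odot\jj(0)\big)\odot\big(\jj(x)\odot\jj(x)\big),
\end{align*}
where the second equality uses only commutativity and associativity of $\odot$. Now \cref{ax:tmvm:double-odot} applied at $0$ gives $\jj(0)\odot\jj(0)=0$, and applied at $x$ gives $\jj(x)\odot\jj(x)=x$; hence the right-hand side equals $0\odot x$, which is $0$ by \cref{l:absorbing}.

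The identity $\jj(x)\oplus\jj(x)=1$ is the exact dual: rewriting $\jj(x)$ via \cref{ax:tmvm:cohf-via-hf} as $\hh(1)\oplus\hh(x)$ and regrouping by commutativity and associativity of $\oplus$ yields $\big(\hh(1)\oplus\hh(1)\big)\oplus\big(\hh(x)\oplus\hh(x)\big)$, and \cref{ax:tmvm:double-oplus} applied at $1$ and at $x$ turns this into $1\oplus x = 1$, again by \cref{l:absorbing}. Alternatively, since the dual algebra $\langle A; \odot, \oplus, \land, \lor, 1, 0, \jj, \hh\rangle$ of a {\tmvm} is again a {\tmvm} (\cref{ax:tmvm:cohf-via-hf} and \cref{ax:tmvm:hf-via-cohf} swap, as do \cref{ax:tmvm:double-oplus} and \cref{ax:tmvm:double-odot}, with $\hh$ and $\jj$ interchanged), the second identity follows from the first by duality.

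I do not expect a genuine obstacle here: the argument is a routine manipulation of a commutative-monoid product. The only point worth isolating is that the ``carry'' vanishes precisely because $\jj(0)\odot\jj(0)$ is the instance of \cref{ax:tmvm:double-odot} at $0$ (dually, $\hh(1)\oplus\hh(1)$ is the instance of \cref{ax:tmvm:double-oplus} at $1$); once this is spotted, the regrouping and a single application of \cref{l:absorbing} finish each case.
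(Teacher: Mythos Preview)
Your proof is correct and is essentially identical to the paper's: both rewrite $\hh(x)$ via \cref{ax:tmvm:hf-via-cohf}, regroup using commutativity and associativity of $\odot$, apply \cref{ax:tmvm:double-odot} at $0$ and at $x$ to obtain $0\odot x$, and finish with \cref{l:absorbing}; the second identity is obtained by duality in both cases.
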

\begin{proof}
	We have 
	\begin{align*}
		\hh(x) \odot \hh(x)	& = \jj(0) \odot \jj(x) \odot \jj(0) \odot \jj(x)	&& \text{(\cref{ax:tmvm:hf-via-cohf})}\\
									& = \jj(0) \odot \jj(0) \odot \jj(x) \odot \jj(x)	&& \\
									& = 0 \odot x 													&& \text{(\cref{ax:tmvm:double-odot})}\\
									& = 0,															&& \by{\cref{l:absorbing}}
	\end{align*}
	and, dually, $\jj(x) \oplus \jj(x) = 1$.
\end{proof}

\begin{remark}
	By \cref{ax:tmvm:double-oplus,l:no-exceed}, we have $\hh(x) + \hh(x) = x$.
	Analogously, we have $\jj(x) + \jj(x) - 1 = x$.
\end{remark}

\begin{lemma} \label{l:hf-0}
	In {\atmvm} we have $\hh(0) = 0$ and $\jj(1) = 1$.
\end{lemma}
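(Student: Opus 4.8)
The plan is to derive both equalities directly from the two halving axioms together with the boundedness properties that every MV-monoidal algebra enjoys, via a short sandwich argument between the constants $0$ and $1$; no use of the mixed axioms \cref{ax:tmvm:cohf-via-hf,ax:tmvm:hf-via-cohf} or of \cref{l:no-exceed} is needed. Since a {\tmvm} has an {\mvm} as its $\{\oplus, \odot, \lor, \land, 0, 1\}$-reduct by \cref{ax:tmvm:mvm}, all the MV-monoidal lemmas are available.

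First I would treat $\hh(0) = 0$. Setting $x = 0$ in \cref{ax:tmvm:double-oplus} gives $\hh(0) \oplus \hh(0) = 0$. By \cref{l:increasing} we have $\hh(0) \leq \hh(0) \oplus \hh(0)$, hence $\hh(0) \leq 0$; on the other hand \cref{l:bounded-lattice} gives $0 \leq \hh(0)$. Antisymmetry of the lattice order then yields $\hh(0) = 0$. The second equality is obtained by the order-dual argument: setting $x = 1$ in \cref{ax:tmvm:double-odot} gives $\jj(1) \odot \jj(1) = 1$, while \cref{l:increasing} supplies $\jj(1) \geq \jj(1) \odot \jj(1) = 1$ and \cref{l:bounded-lattice} supplies $\jj(1) \leq 1$, so $\jj(1) = 1$.

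There is essentially no obstacle here, as the argument reduces to combining the halving axioms \cref{ax:tmvm:double-oplus,ax:tmvm:double-odot} with the elementary facts that $\oplus$ is increasing and $\odot$ decreasing in each argument (\cref{l:increasing}) and that every element lies in the interval between $0$ and $1$ (\cref{l:bounded-lattice}). The only point worth stating carefully is that $0$ and $1$ really are the least and greatest elements, which is exactly \cref{l:bounded-lattice}; this is what makes the one-sided inequalities coming from \cref{l:increasing} collapse into equalities.
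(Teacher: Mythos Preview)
Your proof is correct and follows essentially the same approach as the paper's own proof: instantiate \cref{ax:tmvm:double-oplus} at $x=0$, use \cref{l:increasing} to get $\hh(0)\leq 0$, and combine with \cref{l:bounded-lattice} to conclude $\hh(0)=0$, then argue dually for $\jj(1)=1$. Your write-up is simply a more explicit version of the same sandwich argument.
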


\begin{proof}
	By \cref{ax:tmvm:double-oplus}, we have $\hh(0) \oplus \hh(0) = 0$.
	Thus, by \cref{l:increasing}, $\hh(0) \leq 0$, which implies, by \cref{l:bounded-lattice}, $\hh(0) = 0$.
	Dually, $\jj(1) = 1$.
\end{proof}

\begin{lemma} \label{l:two-ways-for-one-half}
	In every {\dmvm} we have $\hh(1) = \jj(0)$.
\end{lemma}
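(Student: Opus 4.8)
The plan is to obtain the identity by a single substitution into one of the defining axioms, together with one previously established fact and the monoid unit laws. Concretely, I would specialise \cref{ax:tmvm:cohf-via-hf}, namely $\jj(x) = \hh(1) \oplus \hh(x)$, to the case $x = 0$, giving
\[
	\jj(0) = \hh(1) \oplus \hh(0).
\]
By \cref{l:hf-0} we have $\hh(0) = 0$, so the right-hand side becomes $\hh(1) \oplus 0$. Since $\langle A; \oplus, 0\rangle$ is a commutative monoid by \cref{ax:A2} (which holds because the $\{\oplus, \odot, \lor, \land, 0, 1\}$-reduct of a {\tmvm} is {\amvm} by \cref{ax:tmvm:mvm}), the element $0$ is neutral for $\oplus$, whence $\hh(1) \oplus 0 = \hh(1)$. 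Chaining these equalities yields $\jj(0) = \hh(1)$, which is the desired conclusion.

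An equally direct alternative, which I would mention for symmetry, is to specialise \cref{ax:tmvm:hf-via-cohf}, namely $\hh(x) = \jj(0) \odot \jj(x)$, to $x = 1$: this gives $\hh(1) = \jj(0) \odot \jj(1)$, and since $\jj(1) = 1$ by \cref{l:hf-0} and $1$ is neutral for $\odot$ (again by \cref{ax:A2}), we get $\hh(1) = \jj(0)$. Either route settles the lemma.

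I do not anticipate any genuine obstacle here: the statement follows immediately once the correct axiom is specialised, and the only ingredients beyond the axioms themselves are the already-proved evaluations $\hh(0) = 0$ and $\jj(1) = 1$ from \cref{l:hf-0}. The main thing to get right is simply choosing the substitution that reduces one side to a neutral-element application of $\oplus$ or $\odot$, so that the commutative-monoid structure provided by \cref{ax:A2} finishes the computation.
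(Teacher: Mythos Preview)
Your proposal is correct and matches the paper's proof exactly: the paper specialises \cref{ax:tmvm:cohf-via-hf} to $x=0$, applies \cref{l:hf-0} to get $\hh(0)=0$, and then uses that $0$ is neutral for $\oplus$. Your dual alternative via \cref{ax:tmvm:hf-via-cohf} is equally valid but not the route the paper takes.
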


\begin{proof}
	We have
	\begin{equation*}
		\jj(0) \stackrel{\text{\cref{ax:tmvm:cohf-via-hf}}}{=} \hh(1) \oplus \hh(0) \stackrel{\text{\cref{l:hf-0}}}{=} \hh(1) \oplus 0 = \hh(1). \qedhere
	\end{equation*}
\end{proof}

We use the convention $\hh^0(x) = \jj^0(x) = x$.

\begin{lemma} \label{l:j-many-sum}
	For every $n \in \Np$ and every $x$ in {\atmvm} we have 
	\begin{equation} \label{e:j-as-sum}
		\jj^n(x) = \hh^1(1) \oplus \hh^2(1) \oplus \hh^3(1) \oplus \dots \oplus \hh^n(1) \oplus \hh^{n}(x)
	\end{equation}
	and
	\begin{equation} \label{e:h-as-sum}
		\hh^n(x) = \jj^1(0) \odot \jj^2(0) \odot \jj^3(0) \odot \dots \odot \jj^n(0) \odot \jj^{n}(x).
	\end{equation}
\end{lemma}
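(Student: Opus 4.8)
The plan is to prove both displayed identities by induction on $n$, treating them as order-dual statements: I will prove \eqref{e:j-as-sum} in full, and \eqref{e:h-as-sum} follows by the dual argument (interchanging $\oplus \leftrightarrow \odot$, $\lor \leftrightarrow \land$, $0 \leftrightarrow 1$, $\hh \leftrightarrow \jj$, so that \cref{ax:tmvm:cohf-via-hf} is replaced by \cref{ax:tmvm:hf-via-cohf} and \cref{ax:tmvm:double-h} by \cref{ax:tmvm:double-j}). The base case $n=1$ of \eqref{e:j-as-sum} is exactly the defining axiom $\jj(x) = \hh(1) \oplus \hh(x)$ (\cref{ax:tmvm:cohf-via-hf}). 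The inductive step will unfold $\jj^{n+1}(x) = \jj^n(\jj(x))$, apply the inductive hypothesis to the element $\jj(x)$, and then rewrite $\jj(x)$ again via \cref{ax:tmvm:cohf-via-hf}; the only non-formal ingredient needed is that $\hh^n$ can be pushed through an $\oplus$ of two ``halves''.

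The crux is therefore an auxiliary distributivity lemma, which I would establish first by a separate induction on $n$: for all $a,b \in A$,
\[
	\hh^n\big(\hh(a)\oplus\hh(b)\big) = \hh^n(\hh(a))\oplus\hh^n(\hh(b)).
\]
The case $n=0$ is trivial and the case $n=1$ is precisely \cref{ax:tmvm:double-h}. For the inductive step one computes, writing $c \df \hh^n(a)$ and $d \df \hh^n(b)$ and using that $\hh$ commutes with its own iterates ($\hh^n(\hh(a)) = \hh^{n+1}(a) = \hh(c)$),
\begin{align*}
	\hh^{n+1}\big(\hh(a)\oplus\hh(b)\big)
		&= \hh\big(\hh^n(\hh(a))\oplus\hh^n(\hh(b))\big)
		&& \text{(ind.\ hyp.)}\\
		&= \hh\big(\hh(c)\oplus\hh(d)\big)\\
		&= \hh(\hh(c))\oplus\hh(\hh(d))
		&& \by{\cref{ax:tmvm:double-h}}\\
		&= \hh^{n+1}(\hh(a))\oplus\hh^{n+1}(\hh(b)),
\end{align*}
the last step again using commutativity of the iterates of $\hh$. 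This is the step I expect to be the main obstacle, since \cref{ax:tmvm:double-h} only delivers one layer of additivity at a time, and the argument relies on recognising $\hh^n(\hh(a))$ as itself a half element in order to peel off a single $\hh$ and reapply the axiom.

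With the auxiliary lemma in hand, the main induction runs smoothly. Assuming \eqref{e:j-as-sum} for $n$, I would compute
\begin{align*}
	\jj^{n+1}(x)
		&= \jj^{n}(\jj(x))\\
		&= \hh^1(1)\oplus\dots\oplus\hh^n(1)\oplus\hh^n(\jj(x))
		&& \text{(ind.\ hyp.\ at }\jj(x))\\
		&= \hh^1(1)\oplus\dots\oplus\hh^n(1)\oplus\hh^n\big(\hh(1)\oplus\hh(x)\big)
		&& \by{\cref{ax:tmvm:cohf-via-hf}}\\
		&= \hh^1(1)\oplus\dots\oplus\hh^n(1)\oplus\hh^n(\hh(1))\oplus\hh^n(\hh(x))
		&& \text{(aux.\ lemma)}\\
		&= \hh^1(1)\oplus\dots\oplus\hh^n(1)\oplus\hh^{n+1}(1)\oplus\hh^{n+1}(x),
\end{align*}
where the regrouping of terms is justified by associativity and commutativity of $\oplus$ (the monoid structure of \cref{ax:A2}). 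This is exactly \eqref{e:j-as-sum} for $n+1$, completing the induction; \eqref{e:h-as-sum} is then obtained by passing to the dual algebra, which is again {\atmvm}.
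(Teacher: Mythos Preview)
Your proof is correct and follows essentially the same route as the paper: induction on $n$, base case from \cref{ax:tmvm:cohf-via-hf}, inductive step via $\jj^{n+1}(x)=\jj^n(\jj(x))$ and a rewrite of $\jj(x)$ as $\hh(1)\oplus\hh(x)$, with the dual argument for \eqref{e:h-as-sum}. The only difference is cosmetic: the paper simply cites \cref{ax:tmvm:double-h} for the passage $\hh^{n}\big(\hh(1)\oplus\hh(x)\big)=\hh^{n+1}(1)\oplus\hh^{n+1}(x)$, whereas you make explicit (and prove by a separate induction) the iterated form $\hh^n\big(\hh(a)\oplus\hh(b)\big)=\hh^n(\hh(a))\oplus\hh^n(\hh(b))$ that this step actually requires---your version is the more careful one.
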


\begin{proof}
	We prove \cref{e:j-as-sum} by induction on $n$.
	The case $n = 1$ reads as $\jj(x) = \hh(1) \oplus \hh(x)$, which is just \cref{ax:tmvm:cohf-via-hf}.
	Let us suppose that \cref{e:j-as-sum} holds for a fixed $n \in \N$, and let us prove that it holds for $n + 1$.
	We have
	\begin{align*}
		\jj^{n+1}(0)	& = \jj^{n}\mathopen{}\big(\jj(x)\big)\mathclose{}																																	&&\\
							& = \hh^1(1) \oplus \hh^2(1) \oplus \hh^3(1) \oplus \dots \oplus \hh^n(1) \oplus \hh^{n}\mathopen{}\big(\jj(x)\big)\mathclose{}							&& \text{(ind.\ hyp.)}\\
							& = \hh^1(1) \oplus \hh^2(1) \oplus \hh^3(1) \oplus \dots \oplus \hh^n(1) \oplus \hh^{n}\mathopen{}\big(\hh(1) \oplus \hh(x)\big)\mathclose{}			&& \text{(\cref{ax:tmvm:cohf-via-hf})}\\
							& = \hh^1(1) \oplus \hh^2(1) \oplus \hh^3(1) \oplus \dots \oplus \hh^n(1) \oplus \hh^{n+1}(1) \oplus \hh^{n+1}(x).	&& \text{(\cref{ax:tmvm:double-h})}
	\end{align*}
	Dually we have \cref{e:h-as-sum}.
\end{proof}

\begin{lemma} \label{l:one-step-up}
	In {\atmvm}, for every $n \in \Np$, we have
	\[
		\hh^n(x) \oplus \hh^n(x) = \hh^{n-1}(x),
	\]
	and
	\[
		\jj^n(x) \odot \jj^n(x) = \jj^{n-1}(x).
	\]
\end{lemma}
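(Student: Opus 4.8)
In a $2$-divisible MV-monoidal algebra, for every $n \in \Np$, we have $\hh^n(x) \oplus \hh^n(x) = \hh^{n-1}(x)$ and $\jj^n(x) \odot \jj^n(x) = \jj^{n-1}(x)$.

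The plan is to recognise that both identities are immediate substitution instances of the defining axioms of a \tmvm, so that neither induction nor any auxiliary computation is needed. I would first recall the convention $\hh^0(x) = \jj^0(x) = x$ together with the iterative reading $\hh^n(x) = \hh\mathopen{}\big(\hh^{n-1}(x)\big)\mathclose{}$ and $\jj^n(x) = \jj\mathopen{}\big(\jj^{n-1}(x)\big)\mathclose{}$ for $n \in \Np$, so that the case $n = 1$ reduces exactly to the axioms themselves.

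For the first identity I would invoke \cref{ax:tmvm:double-oplus}, which asserts $\hh(y) \oplus \hh(y) = y$ for every element $y$ of the algebra. Since the axioms are universally quantified, they hold in particular for the element $y \df \hh^{n-1}(x)$, and substituting gives
\[
	\hh^n(x) \oplus \hh^n(x) = \hh\mathopen{}\big(\hh^{n-1}(x)\big)\mathclose{} \oplus \hh\mathopen{}\big(\hh^{n-1}(x)\big)\mathclose{} = \hh^{n-1}(x),
\]
which is precisely what is claimed. The second identity is handled dually: applying \cref{ax:tmvm:double-odot}, namely $\jj(y) \odot \jj(y) = y$, with $y \df \jj^{n-1}(x)$ yields
\[
	\jj^n(x) \odot \jj^n(x) = \jj^{n-1}(x).
\]

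There is essentially no obstacle here. The content of the lemma is merely that the ``halving'' behaviour recorded by \cref{ax:tmvm:double-oplus,ax:tmvm:double-odot} at a single level propagates to every level of iteration, and this propagation is automatic because the axioms hold for \emph{all} elements, including the iterates $\hh^{n-1}(x)$ and $\jj^{n-1}(x)$. The only point requiring minor care is the correct unfolding of the superscript notation via the convention $\hh^0(x) = \jj^0(x) = x$, which guarantees that the substitution is legitimate for every $n \in \Np$.
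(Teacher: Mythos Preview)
Your proof is correct and follows exactly the same approach as the paper: both identities are direct substitution instances of \cref{ax:tmvm:double-oplus} and \cref{ax:tmvm:double-odot}, applied with $y = \hh^{n-1}(x)$ and $y = \jj^{n-1}(x)$ respectively. The paper's proof is even terser than yours, simply citing \cref{ax:tmvm:double-oplus} for the first equation and noting the second is dual.
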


\begin{proof}
	The first equation holds by \cref{ax:tmvm:double-oplus}, and the second equation is dual.
\end{proof}

\begin{lemma} \label{l:complement}
	In {\atmvm}, for every $n \in \N$ we have
	\[
		\hh^n(1) \oplus \jj^n(0) = 1
	\]
	and
	\[
		\hh^n(1) \odot \jj^n(0) = 0.
	\]
\end{lemma}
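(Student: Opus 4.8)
The plan is to prove both identities by a short telescoping computation, after rewriting $\jj^n(0)$ and $\hh^n(1)$ as iterated $\oplus$- and $\odot$-sums; everything stays purely equational, so no appeal to subdirect representation is needed. Throughout I would abbreviate $a_n \df \hh^n(1)$ and $b_n \df \jj^n(0)$, so that the goal becomes $a_n \oplus b_n = 1$ and $a_n \odot b_n = 0$ for all $n \in \N$.

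First I would treat $a_n \oplus b_n = 1$. Since $\hh(0) = 0$ by \cref{l:hf-0}, we have $\hh^n(0) = 0$, so specialising \cref{e:j-as-sum} of \cref{l:j-many-sum} at $x = 0$ gives, for $n \geq 1$, the representation $b_n = a_1 \oplus a_2 \oplus \dots \oplus a_n$. Using commutativity and associativity of $\oplus$ (\cref{ax:A2}) together with $a_k \oplus a_k = a_{k-1}$ for $k \geq 1$ (\cref{l:one-step-up}), I would telescope: $a_n \oplus b_n = a_1 \oplus \dots \oplus a_{n-1} \oplus (a_n \oplus a_n) = a_1 \oplus \dots \oplus a_{n-1} \oplus a_{n-1}$, and repeating collapses the whole expression down to $a_1 \oplus a_1$. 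Finally $a_1 = \hh(1) = \jj(0) = b_1$ by \cref{l:two-ways-for-one-half}, and $\jj(0) \oplus \jj(0) = 1$ by \cref{l:no-exceed}, so $a_1 \oplus a_1 = 1$. The degenerate case $n = 0$ is immediate since $a_0 \oplus b_0 = 1 \oplus 0 = 1$ by \cref{ax:A2}.

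For $a_n \odot b_n = 0$ I would argue dually. The quickest route is to note that the class of $2$-divisible MV-monoidal algebras is closed under order-duality: the order-dual $\langle A; \odot, \oplus, \land, \lor, 1, 0, \jj, \hh\rangle$ (which interchanges $\oplus \leftrightarrow \odot$, $\lor \leftrightarrow \land$, $0 \leftrightarrow 1$ and $\hh \leftrightarrow \jj$) again satisfies \cref{ax:tmvm:mvm,ax:tmvm:cohf-via-hf,ax:tmvm:hf-via-cohf,ax:tmvm:double-oplus,ax:tmvm:double-odot,ax:tmvm:double-h,ax:tmvm:double-j}, as one verifies by matching each axiom with its dual (for instance \cref{ax:tmvm:cohf-via-hf} for the dual is \cref{ax:tmvm:hf-via-cohf} for $A$, and \cref{ax:tmvm:double-h} for the dual is \cref{ax:tmvm:double-j} for $A$). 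Applying the already-proved identity $\hh^n(1) \oplus \jj^n(0) = 1$ in the dual algebra and translating back yields $\jj^n(0) \odot \hh^n(1) = 0$, which is the desired statement by commutativity of $\odot$. Alternatively, I could mirror the previous paragraph directly: using $\jj(1) = 1$ (\cref{l:hf-0}) and \cref{e:h-as-sum} of \cref{l:j-many-sum} at $x = 1$ to get $a_n = b_1 \odot \dots \odot b_n$, then telescoping with $b_k \odot b_k = b_{k-1}$ (\cref{l:one-step-up}) down to $b_1 \odot b_1 = \jj(0) \odot \jj(0) = \hh(1) \odot \hh(1) = 0$ (\cref{l:no-exceed}).

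No step presents a genuine difficulty: the argument is elementary and equational. The only points requiring care are the index bookkeeping in the telescoping and the treatment of the small cases $n \in \{0,1\}$, together with the routine verification that the axioms of \cref{d:tmvm} are self-dual under the interchange $\hh \leftrightarrow \jj$ (this verification is the mildly tedious part, but it is exactly parallel to the self-duality already observed for plain MV-monoidal algebras).
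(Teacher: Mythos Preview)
Your proof is correct and follows essentially the same approach as the paper: expand $\jj^n(0)$ via \cref{l:j-many-sum}, telescope with \cref{l:one-step-up}, and dispatch the second identity by duality. The only (inconsequential) differences are that you handle $n=0$ explicitly and finish the telescope via \cref{l:two-ways-for-one-half} and \cref{l:no-exceed}, whereas the paper applies \cref{l:one-step-up} once more at $n=1$ to get $\hh^1(1)\oplus\hh^1(1)=\hh^0(1)=1$ directly.
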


\begin{proof}
	We have
	\begin{align*}
		\hh^n(1) \oplus \jj^n(0) 	& = \hh^n(1) \oplus \big(\hh^n(1) \oplus \hh^{n-1}(1) \oplus \dots \oplus \hh^1(1) \oplus \hh^n(0)\big)	&& \text{(\cref{l:j-many-sum})}\\
											& = \hh^n(1) \oplus \hh^n(1) \oplus \hh^{n-1}(1) \oplus \dots \oplus \hh^1(1).					&& \text{(\cref{l:hf-0})}
	\end{align*}
	By \cref{l:one-step-up}, we have 
	\begin{gather*}
		\hh^{n}(1) \oplus \hh^n(1) = \hh^{n-1}(x),\\
		\hh^{n-1}(1) \oplus \hh^{n-1}(1) = \hh^{n-2}(x),\\
		\vdots\\
		\hh^1(1)\oplus \hh^1(1) = 1.
	\end{gather*}
	Hence,
	\[
		\hh^n(1) \oplus \hh^n(1) \oplus \hh^{n-1}(1) \oplus \dots \oplus \hh^1(1) = 1.
	\]
	So, $\hh^n(1) \oplus \jj^n(0) = 1$.
	Dually, $\hh^n(1) \odot \jj^n(0) = 0$.
\end{proof}

\begin{lemma} \label{l:reduct}
	Every {\tmvm} has a reduct which is {\admvm} (in the sense of \cref{d:dmvm-alt}), obtained by setting, for each $n \in \N$, $\dd_n \df \hh^n(1)$ and $\uu_n \df \jj^n(0)$.
\end{lemma}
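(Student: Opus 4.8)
The plan is to verify, one axiom at a time, that the proposed reduct---in which each nullary symbol is interpreted by $\dd_n = \hh^n(1)$ and $\uu_n = \jj^n(0)$---satisfies the seven conditions \cref{ax:dmvm-alt:0,ax:dmvm-alt:d-oplus,ax:dmvm-alt:u-odot,ax:dmvm-alt:d-odot,ax:dmvm-alt:u-oplus,ax:dmvm-alt:oplus,ax:dmvm-alt:odot} of \cref{d:dmvm-alt}. Since all the required identities have already been isolated in the preceding lemmas, the proof reduces to matching each axiom with the relevant lemma; I expect no genuine difficulty here, the only delicate point being to keep track of the convention $\hh^0(x) = \jj^0(x) = x$ and of which constants play the roles of $0$ and $1$.

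First I would note that, by that convention, $\dd_0 = \hh^0(1) = 1$ and $\uu_0 = \jj^0(0) = 0$, so that the distinguished constants $\dd_0$ and $\uu_0$ of the reduct are exactly the constants $1$ and $0$ of the ambient \mvm. Consequently $\langle A; \oplus, \odot, \lor, \land, \uu_0, \dd_0\rangle = \langle A; \oplus, \odot, \lor, \land, 0, 1\rangle$, and \cref{ax:dmvm-alt:0} holds by \cref{ax:tmvm:mvm}.

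For the remaining axioms I would proceed as follows. \cref{ax:dmvm-alt:d-oplus} and \cref{ax:dmvm-alt:u-odot}, which read $\hh^n(1) \oplus \hh^n(1) = \hh^{n-1}(1)$ and $\jj^n(0) \odot \jj^n(0) = \jj^{n-1}(0)$ for $n \in \Np$, are the two instances of \cref{l:one-step-up} obtained by specialising $x$ to $1$ and to $0$. For \cref{ax:dmvm-alt:d-odot} and \cref{ax:dmvm-alt:u-oplus}, namely $\hh^n(1) \odot \hh^n(1) = 0$ and $\jj^n(0) \oplus \jj^n(0) = 1$ for $n \in \Np$, I would write $\hh^n(1) = \hh(\hh^{n-1}(1))$ and $\jj^n(0) = \jj(\jj^{n-1}(0))$ and apply \cref{l:no-exceed} to the elements $\hh^{n-1}(1)$ and $\jj^{n-1}(0)$. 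Finally, \cref{ax:dmvm-alt:oplus} and \cref{ax:dmvm-alt:odot}, which require $\hh^n(1) \oplus \jj^n(0) = 1$ and $\hh^n(1) \odot \jj^n(0) = 0$ for all $n \in \N$, are precisely the two identities established in \cref{l:complement}. Having checked all seven axioms, the verification is complete.
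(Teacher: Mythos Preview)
Your proof is correct and follows essentially the same structure as the paper's. The only (minor) difference is in the verification of \cref{ax:dmvm-alt:d-odot} and \cref{ax:dmvm-alt:u-oplus}: the paper argues via monotonicity, observing $\hh^n(1) \leq \hh(1) = \jj(0)$ and then bounding $\dd_n \odot \dd_n \leq \jj(0) \odot \jj(0) = 0$, whereas your appeal to \cref{l:no-exceed} with $x = \hh^{n-1}(1)$ is more direct and arguably cleaner.
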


\begin{proof}
	By convention, we have $\dd_0 = \hh^0(1) = 1$ and $\uu_0 = \jj^n(0) = 0$.
	\Cref{ax:dmvm-alt:0} holds because, by \cref{ax:tmvm:mvm}, $\langle A; \oplus, \odot, \lor, \land, \uu_0, \dd_0\rangle$ is {\amvm}.
	\Cref{ax:dmvm-alt:d-oplus} holds because, by \cref{l:one-step-up}, for every $n \in \Np$ we have 
	\[
		\dd_n \oplus \dd_n = \hh^n(1) \oplus \hh^n(1) = \hh^{n-1} = \dd_{n-1}.
	\]
	\Cref{ax:dmvm-alt:u-odot} is dual.
	For every $n \in \Np$, we have
	\[
		\dd_n = \hh^n(1)\stackrel{\text{\cref{l:one-step-up}}}{\leq} \hh(1) \stackrel{\text{\cref{l:two-ways-for-one-half}}}{=} \jj(0).
	\]
	Hence, we have
	\[
		\dd_n \odot \dd_n \leq \jj(0) \odot \jj(0) \stackrel{\text{\cref{ax:tmvm:double-odot}}}{=} 0.
	\]
	Hence, by \cref{l:bounded-lattice}, we have $\dd_n \odot \dd_n = 0$. Thus, \cref{ax:dmvm-alt:d-odot} holds.
	Dually, \cref{ax:dmvm-alt:d-oplus} holds.
	\Cref{ax:dmvm-alt:oplus} holds because, by \cref{l:complement}, for every $n \in \Np$, we have $\dd_n \oplus \uu_n = \hh^n(1) \oplus \jj^n(0)$.
	Dually, \cref{ax:dmvm-alt:odot} holds.
\end{proof}

\begin{lemma} \label{l:pres-also-h}
	Let $A$ be {\atmvm}, and let $f \colon A \to [0,1]$ be a function that preserves $\oplus$, $\odot$, $0$ and $1$.
	Then $f$ preserves also $\hh$ and $\jj$.
\end{lemma}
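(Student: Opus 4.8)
The plan is to exploit the fact that, in any $2$-divisible MV-monoidal algebra, the element $\hh(x)$ is pinned down by two equations phrased entirely in terms of $\oplus$, $\odot$, $0$ and $1$, and that these same two equations admit a unique solution once interpreted in $[0,1]$. First I would record the relations satisfied by $\hh(x)$: Axiom \cref{ax:tmvm:double-oplus} gives $\hh(x) \oplus \hh(x) = x$, while \cref{l:no-exceed} gives $\hh(x) \odot \hh(x) = 0$. Dually, \cref{ax:tmvm:double-odot} together with \cref{l:no-exceed} gives $\jj(x) \odot \jj(x) = x$ and $\jj(x) \oplus \jj(x) = 1$. These four identities are the only inputs needed.

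Next I would apply $f$ to the two $\hh$-identities. Writing $a \df f(\hh(x))$ and using that $f$ preserves $\oplus$, $\odot$ and the constants, they become $a \oplus a = f(x)$ and $a \odot a = 0$ in $[0,1]$; in terms of the standard interpretations these read $\min\{2a, 1\} = f(x)$ and $\max\{2a - 1, 0\} = 0$. The second identity forces $2a - 1 \leq 0$, that is $a \leq \frac{1}{2}$, so that $\min\{2a, 1\} = 2a$ and hence $2a = f(x)$. Therefore $a = \frac{f(x)}{2} = \hh(f(x))$, which is precisely $f(\hh(x)) = \hh(f(x))$. The case of $\jj$ is entirely symmetric: writing $b \df f(\jj(x))$, the identities become $\max\{2b - 1, 0\} = f(x)$ and $\min\{2b, 1\} = 1$; the latter forces $b \geq \frac{1}{2}$, whence the former gives $2b - 1 = f(x)$, i.e.\ $b = \frac{1}{2} + \frac{f(x)}{2} = \jj(f(x))$. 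This establishes preservation of both $\hh$ and $\jj$.

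There is no genuine obstacle here; the argument is a short direct computation in $[0,1]$. The only point I would keep straight is that $f$ is assumed to preserve only $\oplus$, $\odot$, $0$ and $1$ (not $\hh$, $\jj$, $\lor$ or $\land$), and the derivation is arranged to use exactly these. Conceptually, the content of the lemma is that the two `halving' equations satisfied by $\hh(x)$ (and dually by $\jj(x)$) have a \emph{unique} solution once read off in the cogenerator $[0,1]$, so that any $\{\oplus,\odot,0,1\}$-preserving map is forced to respect $\hh$ and $\jj$ as well. This is exactly what makes the finite signature $\{\oplus,\odot,\lor,\land,0,1,\hh,\jj\}$ behave, from the point of view of homomorphisms into $[0,1]$, like the signature with all dyadic constants.
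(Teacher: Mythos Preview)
Your proof is correct and follows essentially the same approach as the paper: both use \cref{ax:tmvm:double-oplus} and \cref{l:no-exceed} to obtain $\hh(x)\oplus\hh(x)=x$ and $\hh(x)\odot\hh(x)=0$, push these through $f$, and then solve uniquely in $[0,1]$ (dually for $\jj$). You spell out the uniqueness computation in $[0,1]$ a bit more explicitly than the paper does, but the argument is the same.
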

\begin{proof}
	For every $x \in A$ we have $\hh(x) \oplus \hh(x) = x$ (by \cref{ax:tmvm:double-oplus}) and $\hh(x) \odot \hh(x) = 0$ (by \cref{l:no-exceed}).
	Since $f$ preserves $\oplus$, $\odot$, and $0$, we have $f(\hh(x)) \oplus f(\hh(x))  =  f(x)$ and $f(\hh(x)) \odot f(\hh(x)) =  0$.
	Therefore $f(\hh(x)) = \half{f(x)}= \hh(f(x))$.
	Dually for $\jj$.
\end{proof}

%%%%%%%%%%%%%%%%%%%%%%%%%%%%%%%%%%%   SECTION   %%%%%%%%%%%%%%%%%%%%%%%%%%%%%%%%%%%%

\section{Finite equational axiomatisation}

By \cref{l:reduct}, every {\tmvm} has a reduct which is {\admvm}.
Therefore, we inherit the notation for the binary term $\trunc_n$ and the $n$-ary term $\aprj_n$.
In the language of {\tmvms}, these terms can be expressed as follows.

\begin{notation} \label{n:trunc-t}
	For every $n \in \N$, we define a binary term $\trunc_n$ in the language of {\tmvms}:
	\[
		\trunc_{n}(x, y) \df \big(x \land (y \oplus \hh^n(1))\big) \lor \big(y \odot \jj^n(0)\big).
	\]
\end{notation}

\begin{notation} \label{n:aprj-t}
	Inductively on $n \in \Np$, we define a term $\aprj_n$ of arity $n$ in the language of {\tmvms}:
	\begin{align*}
		\aprj_1(x_1)					& \df x_1; \\
		\begin{split}
			\aprj_n(x_1, \dots,x_n)	& \df	\trunc_{n-1}\mathopen{}\big(x_n,\aprj_{n-1}(x_1, \dots,x_{n-1})\big)\mathclose{}\\
											& = \big(x_n \land \left(\aprj_{n-1}(x_1, \dots,x_{n-1}) \oplus \hh^n(1)\right)\big)	\lor \big(\aprj_{n-1}(x_1, \dots,x_{n-1}) \odot \jj^n(0)\big).
		\end{split}
	\end{align*}
\end{notation}

\begin{definition}\label{d:TMVMinfty}
	 A \emph{\tmvminfty}\index{MV-monoidal algebra!limit $2$-divisible} $\langle A; \oplus, \odot, \lor, \land, 0, 1, \hh, \jj, \limop \rangle$ (arities $2$, $2$, $2$, $2$, $0$, $0$, $1$, $1$, $\omega$) is an algebra  with the following properties.
	\begin{enumerate} [label = LTE\arabic*., ref = LTE\arabic*, start = 0, leftmargin=\widthof{LTE4'.} + \labelsep]
		\item \label[axiom]{ax:atmv-infty}
				The algebra $\langle A; \oplus, \odot, \lor, \land, 0, 1, \hh, \jj \rangle$ is {\atmvm} (see \cref{d:tmvm}).
		\item \label[axiom]{ax:constant} 
				$\limop(x, x, x, \dots) = x$.
		\item \label[axiom]{ax:delta-ominus} 
				$\limop\mathopen{}\big(\trunc_{0}(x, y), \trunc_{1}(x, y), \trunc_{2}(x, y), \dots\big)\mathclose{} = y$.
		\item \label[axiom]{ax:rhoify}
				$\limop(x_1, x_2, x_3, \dots) = \limop(\aprj_1(x_1), \aprj_2(x_1, x_2), \aprj_3(x_1, x_2, x_3), \dots)$.
		\item \label[axiom]{ax:base-case}
				$\aprj_2(x_1, x_2) \odot \jj(0)  \leq  \limop(x_1, x_2, x_3, \dots)  \leq  \aprj_2(x_1, x_2) \oplus \hh(1)$.
		\item \label[axiom]{ax:inductive-step-0} 
				$\limop(x_1, x_2, x_3, \dots) \oplus \limop(x_1, x_2, x_3, \dots)$\\
				$= \limop\mathopen{}\big(\aprj_2(x_1, x_2) \oplus \aprj_2(x_1, x_2), \aprj_3(x_1, x_2, x_3) \oplus \aprj_3(x_1, x_2, x_3), \dots\big)\mathclose{}$.
		\item \label[axiom]{ax:inductive-step-1} 
				$\limop(x_1, x_2, x_3, \dots) \odot \limop(x_1, x_2, x_3,\dots)$\\ 
				$= \limop\mathopen{}\big(\aprj_2(x_1,x_2) \odot \aprj_2(x_1, x_2), \aprj_3(x_1, x_2, x_3) \odot \aprj_3(x_1, x_2, x_3), \dots\big)\mathclose{}$.
	\end{enumerate}
\end{definition}
We let $\TMVMinfty$ denote the category of {\tmvminftys} with homomorphisms.

\begin{definition}
	A \emph{$2$-Cauchy sequence in {\admvm}}\index{$2$-Cauchy sequence!in {\admvm}}\index{sequence!$2$-Cauchy!in {\admvm}} $A$ is a sequence $(x_n)_{n \in \Np}$ in $A$ such that, for every $n \in \Np$, we have
	\[
		x_n \odot \left(1 - \frac{1}{2^n}\right) \leq x_{n + 1} \leq x_n \oplus \frac{1}{2^n}.
	\] 
\end{definition}

\begin{lemma}
	Let $(x_1, x_2, x_3, \dots)$ be a sequence in {\admvm}.
	Then $(\aprj_1(x_1), \aprj_2(x_1, x_2), \aprj_3(x_1, x_2, x_3), \dots)$ is a $2$-Cauchy sequence.
\end{lemma}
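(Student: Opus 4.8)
The plan is to read off the two defining inequalities of a $2$-Cauchy sequence directly from the inductive definition of $\aprj_n$, reducing the whole statement to elementary order properties of $\oplus$, $\odot$, $\lor$ and $\land$. Fix $n \in \Np$ and abbreviate $y \df \aprj_n(x_1, \dots, x_n)$. By \cref{n:aprj}, we have $\aprj_{n+1}(x_1, \dots, x_{n+1}) = \trunc_n(x_{n+1}, y)$, so, unfolding the definition of $\trunc_n$ (\cref{n:trunc}), it suffices to prove
\[
	y \odot \left(1 - \frac{1}{2^n}\right) \ \leq \ \left(x_{n+1} \land \left(y \oplus \frac{1}{2^n}\right)\right) \lor \left(y \odot \left(1 - \frac{1}{2^n}\right)\right) \ \leq \ y \oplus \frac{1}{2^n},
\]
since the left- and right-hand sides are precisely the terms $y \odot (1 - \frac{1}{2^n})$ and $y \oplus \frac{1}{2^n}$ appearing in the $2$-Cauchy condition for the pair $(\aprj_n, \aprj_{n+1})$.

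The left-hand inequality is immediate, because the second argument of the outermost $\lor$ is exactly $y \odot (1 - \frac{1}{2^n})$, and a join dominates each of its arguments. For the right-hand inequality I would bound each argument of the $\lor$ separately and then use that a join of elements below a common bound lies below that bound. The first argument $x_{n+1} \land (y \oplus \frac{1}{2^n})$ is a meet and hence lies below $y \oplus \frac{1}{2^n}$. The second argument satisfies $y \odot (1 - \frac{1}{2^n}) \leq y \leq y \oplus \frac{1}{2^n}$, where the first step is $y \odot z \leq y$ and the second is $y \leq y \oplus z$, both instances of \cref{l:increasing}.

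There is no genuine obstacle here: the result is the purely order-theoretic observation that $\trunc_n(x_{n+1}, y)$ always lies between $y \odot (1 - \frac{1}{2^n})$ and $y \oplus \frac{1}{2^n}$, regardless of the value of $x_{n+1}$. The only point deserving care is the bookkeeping of the dyadic constants: the terms $\frac{1}{2^n}$ and $1 - \frac{1}{2^n}$ occurring in $\trunc_n$ are the very same constants used in the definition of $2$-Cauchy sequence in {\admvm}, so that the displayed inequalities are literally the two inequalities required.
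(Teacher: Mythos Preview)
Your proof is correct and follows exactly the same approach as the paper, which simply says ``Immediate from the definition of $\aprj_n$.'' You have spelled out the details the paper omits, verifying that $\trunc_n(x_{n+1}, y)$ lies between $y \odot (1 - \frac{1}{2^n})$ and $y \oplus \frac{1}{2^n}$ via basic lattice inequalities and \cref{l:increasing}.
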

\begin{proof}
	Immediate from the definition of $\aprj_n$.
\end{proof}

\begin{lemma}
	Let $(x_1, x_2, x_3, \dots)$ be a $2$-Cauchy sequence in {\admvm}.
	Then, for every $n \in \Np$, we have $\aprj_n(x_1, \dots, x_n) = x_n$.
\end{lemma}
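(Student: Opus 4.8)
The plan is to prove the statement by induction on $n \in \Np$, following verbatim the argument of \cref{l:effect-of-rho}, which is the special case $A = [0,1]$. The only ingredients needed are that $\langle A; \lor, \land \rangle$ is a lattice (by \cref{ax:DM0}, via the underlying {\mvm} structure), the elementary lattice identities ``$a \leq b \Rightarrow a \land b = a$'' and ``$b \leq a \Rightarrow a \lor b = a$'', and the defining inequalities of a $2$-Cauchy sequence.

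First I would dispatch the base case $n = 1$, which is immediate since $\aprj_1(x_1) = x_1$ by definition (\cref{n:aprj}). For the inductive step, assuming $\aprj_{n-1}(x_1, \dots, x_{n-1}) = x_{n-1}$, I would unfold the definition of $\aprj_n$ and substitute the inductive hypothesis to obtain
\[
	\aprj_n(x_1, \dots, x_n) = \trunc_{n-1}(x_n, x_{n-1}) = \left(x_n \land \left(x_{n-1} \oplus \frac{1}{2^{n-1}}\right)\right) \lor \left(x_{n-1} \odot \left(1 - \frac{1}{2^{n-1}}\right)\right),
\]
where the dyadic constants are interpreted in $A$.

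The heart of the argument is then to collapse both the meet and the join using the $2$-Cauchy condition at index $n-1$, namely
\[
	x_{n-1} \odot \left(1 - \frac{1}{2^{n-1}}\right) \leq x_n \leq x_{n-1} \oplus \frac{1}{2^{n-1}}.
\]
The right-hand inequality gives $x_n \land \left(x_{n-1} \oplus \frac{1}{2^{n-1}}\right) = x_n$; feeding this back and using the left-hand inequality gives $x_n \lor \left(x_{n-1} \odot \left(1 - \frac{1}{2^{n-1}}\right)\right) = x_n$, whence $\aprj_n(x_1, \dots, x_n) = x_n$ and the induction closes.

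I do not expect a genuine obstacle: this is a routine induction taking place entirely within the lattice reduct. The one point to watch is the index bookkeeping, since the clause $\trunc_{n-1}$ carries the dyadic constants $\frac{1}{2^{n-1}}$ and $1 - \frac{1}{2^{n-1}}$, which must be matched against the $2$-Cauchy inequality written for the index $n-1$ rather than $n$; axioms \cref{ax:DM1,ax:DM2} ensure these constants behave as in $[0,1]$, so no further structure is required.
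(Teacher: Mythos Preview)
Your proposal is correct and is exactly the argument the paper has in mind: the paper's own proof reads simply ``Immediate from the definition of $\aprj_n$'', and what you have written is the natural unfolding of that sentence---the same induction as in \cref{l:effect-of-rho}, transported verbatim from $[0,1]$ to an arbitrary {\dmvm}. Your index bookkeeping (matching $\trunc_{n-1}$ against the $2$-Cauchy inequality at index $n-1$) is right, and no further structure beyond the lattice identities is needed.
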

\begin{proof}
	Immediate from the definition of $\aprj_n$.
\end{proof}

\begin{lemma} \label{l:ineq-for-2C}
	Let $A$ be {\amvm}, and let $x, y, z, w \in A$.
	Then
	\[
		(x \odot y) \oplus (z \odot w) \geq (x \oplus z) \odot y \odot w.
	\]
\end{lemma}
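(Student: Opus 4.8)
The plan is to prove the inequality directly from the MV-monoidal axioms, with no appeal to Birkhoff's theorem or to the equivalence $\Gam \dashv \X$: the only non-trivial ingredient needed is the sub-associativity inequality $a \odot (b \oplus c) \leq (a \odot b) \oplus c$ of \cref{l:almost associative}, which I would apply twice, together with the commutativity and associativity of $\odot$ and $\oplus$ (\cref{ax:A2}) and the monotonicity of $\odot$ (\cref{l:order-preserving properties}).

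First I would regroup the smaller side as $((x \oplus z) \odot y) \odot w$, using associativity of $\odot$. Applying \cref{l:almost associative} to $y \odot (x \oplus z)$ gives $(x \oplus z) \odot y \leq (x \odot y) \oplus z$; multiplying both sides on the right by $w$ and invoking monotonicity of $\odot$ in the first coordinate yields $(x \oplus z) \odot y \odot w \leq ((x \odot y) \oplus z) \odot w$. A second application of \cref{l:almost associative}, after rewriting $((x \odot y) \oplus z) \odot w$ as $w \odot (z \oplus (x \odot y))$, gives $w \odot (z \oplus (x \odot y)) \leq (w \odot z) \oplus (x \odot y)$, and the right-hand term is $(x \odot y) \oplus (z \odot w)$ by commutativity. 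Concatenating:
\begin{align*}
	(x \oplus z) \odot y \odot w
	&\leq ((x \odot y) \oplus z) \odot w\\
	&= w \odot (z \oplus (x \odot y))\\
	&\leq (w \odot z) \oplus (x \odot y)\\
	&= (x \odot y) \oplus (z \odot w),
\end{align*}
which is the desired claim.

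The only real care is bookkeeping: \cref{l:almost associative} is directional, so at each step I must use commutativity of $\odot$ and $\oplus$ to line up the variables so that the operand being ``pulled out'' sits in the last slot of the inequality $a \odot (b \oplus c) \leq (a \odot b) \oplus c$. There is no genuine obstacle here. As a sanity check one may verify on $[0,1]$ that $(x \oplus z) \odot y \odot w$ and $(x \odot y) \oplus (z \odot w)$ compute to $(\,((x+z)\land 1) + y + w - 2\,) \lor 0$ and $\min\{(x+y-1)\lor 0 + (z+w-1)\lor 0,\,1\}$ respectively, with the former below the latter; alternatively, the same computation carried out in the enveloping unital $\ell$-monoid $\X(A)$ via \cref{t:G is equiv} gives an independent verification.
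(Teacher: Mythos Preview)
Your proof is correct and follows essentially the same route as the paper: two applications of \cref{l:almost associative} (together with commutativity/associativity of $\oplus,\odot$ and monotonicity of $\odot$) passing through the intermediate term $((x \odot y) \oplus z) \odot w$. You are slightly more explicit about the bookkeeping, but the argument is the same.
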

\begin{proof}
	Recall, from \cref{l:almost associative}, that for all $a,b,c \in A$ we have $a \odot (b \oplus c) \leq (a \odot b) \oplus c$.
	Therefore we have $(x \odot y) \oplus (z \odot w) \geq ((x \odot y) \oplus z) \odot w$.
	Using again \cref{l:almost associative}, we have $((x \odot y) \oplus z) \odot w \geq (x \oplus z) \odot y \odot w$.
\end{proof}

\begin{lemma}\label{l:oplus-and-odot-are-$2$-Cauchy}
	Given a $2$-Cauchy sequence $(x_n)_{n \in \Np}$ in {\admvm}, the sequences $(x_{n+1} \oplus x_{n+1})_{n \in \Np}$ and $(x_{n+1} \odot x_{n+1})_{n \in \Np}$ are $2$-Cauchy.
\end{lemma}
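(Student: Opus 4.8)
The goal is to verify, for the sequence $y_n \df x_{n+1} \oplus x_{n+1}$ (and symmetrically for $x_{n+1} \odot x_{n+1}$), the two defining inequalities of a $2$-Cauchy sequence, namely $y_n \odot (1 - \frac{1}{2^n}) \leq y_{n+1} \leq y_n \oplus \frac{1}{2^n}$ for every $n \in \Np$. The only facts I would invoke are: the $2$-Cauchy hypothesis on $(x_n)_n$ evaluated at index $n+1$, i.e.\ $x_{n+1} \odot (1 - \frac{1}{2^{n+1}}) \leq x_{n+2} \leq x_{n+1} \oplus \frac{1}{2^{n+1}}$; the monotonicity of $\oplus$ and $\odot$ (\cref{l:order-preserving properties}); the commutative-monoid laws (\cref{ax:A2}); and the dyadic identities $\frac{1}{2^{n+1}} \oplus \frac{1}{2^{n+1}} = \frac{1}{2^n}$ and $(1 - \frac{1}{2^{n+1}}) \odot (1 - \frac{1}{2^{n+1}}) = 1 - \frac{1}{2^n}$, which are instances of \cref{ax:DM1,ax:DM2}.

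For the upper bound of $(y_n)_n$ I would argue directly: monotonicity of $\oplus$ applied to $x_{n+2} \leq x_{n+1} \oplus \frac{1}{2^{n+1}}$ gives $x_{n+2} \oplus x_{n+2} \leq (x_{n+1} \oplus \frac{1}{2^{n+1}}) \oplus (x_{n+1} \oplus \frac{1}{2^{n+1}})$, and reassociating and recommuting by \cref{ax:A2} this equals $(x_{n+1} \oplus x_{n+1}) \oplus (\frac{1}{2^{n+1}} \oplus \frac{1}{2^{n+1}}) = y_n \oplus \frac{1}{2^n}$. The lower bound is the only place that needs a genuine inequality rather than bookkeeping: writing $u \df 1 - \frac{1}{2^{n+1}}$, from $x_{n+2} \geq x_{n+1} \odot u$ and monotonicity of $\oplus$ I get $y_{n+1} \geq (x_{n+1} \odot u) \oplus (x_{n+1} \odot u)$, and then I would apply \cref{l:ineq-for-2C} with $x = z = x_{n+1}$ and $y = w = u$ to obtain $(x_{n+1} \odot u) \oplus (x_{n+1} \odot u) \geq (x_{n+1} \oplus x_{n+1}) \odot u \odot u = y_n \odot (1 - \frac{1}{2^n})$, the last equality being the dyadic identity for $\odot$.

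For the sequence $(x_{n+1} \odot x_{n+1})_n$ I would not repeat the computation but invoke duality. The whole statement is self-dual: passing to the dual algebra (discussed after \cref{d:MVM}) interchanges $\oplus$ with $\odot$, $0$ with $1$, and reverses the order, and for a dyadic MV-monoidal algebra it additionally interchanges the constant $\frac{1}{2^n}$ with $1 - \frac{1}{2^n}$; moreover the two defining inequalities of a $2$-Cauchy sequence are swapped into one another, so being $2$-Cauchy is a self-dual property. Hence $(x_{n+1} \odot x_{n+1})_n$ is $2$-Cauchy in $A$ if, and only if, $(x_{n+1} \oplus x_{n+1})_n$ is $2$-Cauchy in the dual of $A$, which is again a dyadic MV-monoidal algebra; the $\oplus$-case already established, applied to the dual, settles this.

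I expect the lower-bound step to be the crux, since $\odot$ does not distribute over the truncated sum and the naive identity one would like is unavailable; \cref{l:ineq-for-2C} is precisely the one-sided replacement that makes the estimate go through. Everything else is routine manipulation with the commutative-monoid structure and the two dyadic constant identities, so no further obstacle is anticipated.
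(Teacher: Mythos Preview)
Your proposal is correct and matches the paper's proof essentially step for step: both derive the upper bound from monotonicity and associativity/commutativity of $\oplus$ together with the dyadic identity $\frac{1}{2^{n+1}} \oplus \frac{1}{2^{n+1}} = \frac{1}{2^n}$, both invoke \cref{l:ineq-for-2C} for the lower bound, and both dispatch the $\odot$-sequence by duality. Your identification of the lower-bound step as the crux and of \cref{l:ineq-for-2C} as the key ingredient is exactly right.
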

\begin{proof}
	Let $k \in \Np$.
	Since $(x_n)_{n \in \Np}$ is $2$-Cauchy, we have
	\[
		x_{k+1} \odot \left(1 - \frac{1}{2^{k+1}}\right)  \leq  x_{k + 2} \leq x_{k+1} \oplus \frac{1}{2^{k+1}}.
	\]
	Therefore we have
	\begin{align*}
		x_{k+2} \oplus x_{k+2}	& \geq	\left(x_{k+1} \odot \left(1 - \frac{1}{2^{k+1}}\right)\right) \oplus \left(x_{k+1} \odot \left(1 - \frac{1}{2^{k+1}}\right)\right)	&&\\
										& \geq	(x_{k+1} \oplus x_{k+1}) \odot \left(\left(1 - \frac{1}{2^{k+1}}\right) \odot \left(1 - \frac{1}{2^{k+1}}\right)\right)					&& \text{(\cref{l:ineq-for-2C})}\\
										& =		(x_{k+1} \oplus x_{k+1}) \odot \left(1 - \frac{1}{2^{k}}\right).																							&&
	\end{align*}
	Moreover, we have
	\begin{align*}
		x_{k+2} \oplus x_{k+2}	& \leq  \left(x_{k+1} \oplus \frac{1}{2^{k+1}}\right) \oplus  \left(x_{k+1} \oplus \frac{1}{2^{k+1}}\right)\\
										& =		(x_{k+2} \oplus x_{k+2}) \oplus \frac{1}{2^k}.
	\end{align*}
	It follows that $(x_{n+1} \oplus x_{n+1})_{n \in \Np}$ is a $2$-Cauchy sequence.
	Dually for $(x_{n+1} \odot x_{n+1})_{n \in \Np}$.
\end{proof}

The reason why \cref{ax:base-case,ax:inductive-step-0,ax:inductive-step-1} are written in the way they are written is to make it clear that they are equational.
However, this presentation is not the clearest one.
So, we point out that these axioms, are equivalent, given \cref{ax:atmv-infty,ax:rhoify}, to the following statements.
\begin{enumerate} [label = LTE\arabic*'., ref = LTE\arabic*', start = 4, leftmargin=\widthof{LTE4'.} + \labelsep]
	\item 
			If $(x_n)_{n \in \Np}$ is a $2$-Cauchy sequence, then 
			\[
				x_2 \odot \jj(0)  \leq  \limop(x_1, x_2, x_3, \dots)  \leq  x_2 \oplus \hh(1).
			\]
	\item \label{ax:ind-step0'}
			If $(x_n)_{n \in \Np}$ is a $2$-Cauchy sequence, then 
			\[
				\limop(x_1, x_2, x_3, \dots) \oplus \limop(x_1, x_2, x_3, \dots) =  \limop(x_2 \oplus x_2, x_3 \oplus x_3, x_4 \oplus x_4, \dots).
			\]
	\item \label{ax:ind-step1'}
			If $(x_n)_{n \in \Np}$ is a $2$-Cauchy sequence, then 
			\[
				\limop(x_1, x_2, x_3, \dots) \odot \limop(x_1, x_2, x_3, \dots) = \limop(x_2 \odot x_2, x_3 \odot x_3, x_4 \odot x_4, \dots).
			\]
\end{enumerate}
\begin{lemma} \label{l:int-is-t-infty}
	The algebra $[0,1]$ with obvious interpretation of the operation symbols is {\atmvminfty}.
\end{lemma}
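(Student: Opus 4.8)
The plan is to verify the eight defining conditions \cref{ax:atmv-infty,ax:constant,ax:delta-ominus,ax:rhoify,ax:base-case,ax:inductive-step-0,ax:inductive-step-1} of \cref{d:TMVMinfty} for $[0,1]$, where $\hh$ is interpreted as $\hh(x) = \frac{x}{2}$, $\jj$ as $\jj(x) = \frac{1}{2} + \frac{x}{2}$, and $\limop$ as the operation of \cref{n:delta}. The first preliminary observation is that on $[0,1]$ we have $\hh^n(1) = \frac{1}{2^n}$ and $\jj^n(0) = 1 - \frac{1}{2^n}$; hence the $\tmvm$-terms $\trunc_n$ (\cref{n:trunc-t}) and $\aprj_n$ (\cref{n:aprj-t}) interpret on $[0,1]$ exactly as the functions $\trunc_n$ and $\aprj_n$ used in \cref{chap:axiomatisation} (\cref{n:apricot,n:trunc}). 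With this dictionary in place most of the work is already available. For \cref{ax:atmv-infty} I would check that $[0,1]$ is {\atmvm}: it is {\amvm} by \cref{ex:int-ok}, and \cref{ax:tmvm:cohf-via-hf,ax:tmvm:hf-via-cohf,ax:tmvm:double-oplus,ax:tmvm:double-odot,ax:tmvm:double-h,ax:tmvm:double-j} are elementary identities for the affine maps $x \mapsto \frac{x}{2}$ and $x \mapsto \frac{1}{2} + \frac{x}{2}$, verified by direct arithmetic.

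The conditions \cref{ax:constant,ax:delta-ominus,ax:base-case} are then inherited from the previous chapter. Indeed, \cref{ax:constant,ax:delta-ominus} coincide, under the above dictionary, with \cref{ax:constant-ax-inf,ax:delta-ominus-ax-inf}, established for $[0,1]$ in \cref{l:properties of W}; and \cref{ax:base-case} is precisely \cref{ax:delta-sandwich-ax-inf} specialised to $n = 2$, since there $\frac{1}{2^{n-1}} = \frac{1}{2} = \hh(1) = \jj(0)$. For \cref{ax:rhoify} I would argue directly: the sequence $(\aprj_n(x_1, \dots, x_n))_{n \in \Np}$ is $2$-Cauchy by \cref{l:2-Cauchy}, so by \cref{l:to-their-limit} the value $\limop(\aprj_1(x_1), \aprj_2(x_1, x_2), \dots)$ equals the limit of that sequence, which by \cref{n:delta} is exactly $\limop(x_1, x_2, x_3, \dots)$.

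The genuinely new content, and the step I expect to be the main obstacle, is \cref{ax:inductive-step-0,ax:inductive-step-1}. Here I would not work with the equational form directly but with the equivalent reformulations \cref{ax:ind-step0',ax:ind-step1'}, which the text notes are equivalent to the originals given \cref{ax:atmv-infty,ax:rhoify} (both already verified). So let $(x_n)_{n \in \Np}$ be a $2$-Cauchy sequence in $[0,1]$; by \cref{l:to-their-limit} its $\limop$-value is its topological limit $\ell$. By \cref{l:oplus-and-odot-are-$2$-Cauchy} the sequences $(x_{n+1} \oplus x_{n+1})_{n}$ and $(x_{n+1} \odot x_{n+1})_{n}$ are again $2$-Cauchy, so \cref{l:to-their-limit} gives that their $\limop$-values are their limits; since $\oplus$ and $\odot$ are continuous on $[0,1]$, these limits are $\ell \oplus \ell$ and $\ell \odot \ell$, which are exactly $\limop(x_1, x_2, \dots) \oplus \limop(x_1, x_2, \dots)$ and $\limop(x_1, x_2, \dots) \odot \limop(x_1, x_2, \dots)$. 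This establishes \cref{ax:ind-step0',ax:ind-step1'}, and hence \cref{ax:inductive-step-0,ax:inductive-step-1}, completing the verification that $[0,1]$ is {\atmvminfty}. The only delicacy to handle carefully is the passage between the equational axioms and their $2$-Cauchy reformulations, which rests on \cref{ax:rhoify} to replace an arbitrary input sequence by the $2$-Cauchy sequence of its $\aprj_n$-truncations; once this is granted, every remaining verification reduces to the fact that $\limop$ of a $2$-Cauchy sequence in $[0,1]$ is simply its Euclidean limit, together with continuity of the finitary operations.
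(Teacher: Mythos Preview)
Your proposal is correct and follows essentially the same route as the paper: reduce \cref{ax:constant,ax:delta-ominus,ax:base-case} to \cref{l:properties of W}, verify \cref{ax:rhoify} via \cref{l:2-Cauchy} together with the fact that $\limop$ returns the limit of a $2$-Cauchy sequence, and establish \cref{ax:inductive-step-0,ax:inductive-step-1} through their primed reformulations using \cref{l:oplus-and-odot-are-$2$-Cauchy}, \cref{l:to-their-limit}, and continuity of $\oplus$ and $\odot$. Your treatment of \cref{ax:atmv-infty} is in fact a little more careful than the paper's, which simply gestures at \cref{l:properties of W} without separately noting that the $\hh$/$\jj$ axioms need a one-line arithmetic check.
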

\begin{proof}
	The fact that $[0,1]$ satisfies \cref{ax:atmv-infty,ax:constant,ax:delta-ominus} is proved in \cref{l:properties of W}.
	\Cref{ax:rhoify} holds by the definition of $\limop$, together with \cref{l:2-Cauchy,l:effect-of-rho}.
	\Cref{ax:base-case} is the case $n=2$ of \cref{ax:delta-sandwich-ax-inf} in \cref{d:DMVMinfty}, which was proved in \cref{l:properties of W} to hold in $[0,1]$.
	Let us now prove \cref{ax:ind-step0'}.
	Let $(x_1, x_2, x_3, \dots)$ be a $2$-Cauchy sequence.
	By \cref{l:oplus-and-odot-are-$2$-Cauchy}, the sequence $(x_2 \oplus x_2, x_3 \oplus x_3, x_4 \oplus x_4, \dots)$ is $2$-Cauchy.
	By \cref{l:to-their-limit}, we have $\limop(x_1, x_2, x_3, \dots) = \lim_{n \to \infty} x_n$ and 
	\[
		\limop(x_2 \odot x_2, x_3 \odot x_3, x_4 \odot x_4, \dots) = \lim_{n \to \infty} x_n \oplus x_n,
	\]
	and this last number, by continuity of $\oplus \colon [0,1]^2 \to [0,1]$, coincides with 
	\[
		\left(\lim_{n \to \infty} x_n\right) \oplus \left(\lim_{n\to \infty} x_n\right).
	\]
	So, both $\limop(x_1, x_2, x_3, \dots) \oplus \limop(x_1, x_2, x_3, \dots)$ and $\limop(x_2 \oplus x_2, x_3 \oplus x_3, x_4 \oplus x_4, \dots)$ coincide with $(\lim_{n \to \infty} x_n) \oplus (\lim_{n\to \infty} x_n)$.
	This proves \cref{ax:ind-step0'}.
	Analogously for \cref{ax:ind-step1'}.
\end{proof}
\begin{proposition}\label{p:W is Archimedean-t}
	The $(\{\oplus, \odot, \lor, \land\} \cup (\Dyadu))$-reduct of any {\tmvminfty} is Archimedean.
\end{proposition}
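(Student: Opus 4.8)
The plan is to follow verbatim the strategy used for the limit dyadic case in \cref{p:W is Archimedean}, namely to exhibit the interpretation of $\limop$ as the witnessing function $\alpha$ in \cref{l:arch-by-equations}. First I would invoke \cref{l:reduct}: the $(\{\oplus, \odot, \lor, \land\} \cup (\Dyadu))$-reduct of any {\tmvminfty} $A$ is {\admvm} (in the sense of \cref{d:dmvm-alt}), obtained by setting $\dd_n = \hh^n(1)$ and $\uu_n = \jj^n(0)$. Passing through the term-equivalence between \cref{d:dmvm-alt} and \cref{d:dyadic-MVM}, this means that on $A$ the dyadic constant $\frac{1}{2^n}$ is interpreted as $\hh^n(1)$ and $1 - \frac{1}{2^n}$ as $\jj^n(0)$. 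Consequently \cref{l:arch-by-equations} is applicable to this reduct, and it suffices to produce a function $\alpha \colon A^{\Np} \to A$ satisfying its two hypotheses.

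The key observation is that the binary term $\trunc_n$ written in the signature of {\tmvms} (\cref{n:trunc-t}), namely $\big(x \land (y \oplus \hh^n(1))\big) \lor \big(y \odot \jj^n(0)\big)$, has exactly the same interpretation on $A$ as the term $\trunc_n$ written in the dyadic signature (\cref{n:trunc}), namely $\big(x \land (y \oplus \frac{1}{2^n})\big) \lor \big(y \odot (1 - \frac{1}{2^n})\big)$; this is immediate from the identifications $\frac{1}{2^n} = \hh^n(1)$ and $1 - \frac{1}{2^n} = \jj^n(0)$ recorded above. Taking $\alpha$ to be the interpretation of $\limop$ on $A$, the two required hypotheses of \cref{l:arch-by-equations} are then precisely the two axioms \cref{ax:constant} and \cref{ax:delta-ominus} of a {\tmvminfty}: the former gives $\limop(x,x,x,\dots) = x$, and the latter gives $\limop\mathopen{}\big(\trunc_0(x,y), \trunc_1(x,y), \trunc_2(x,y), \dots\big)\mathclose{} = y$, where by the previous sentence the $\trunc_n$ occurring here may be read in either signature.

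With these two identities in hand, \cref{l:arch-by-equations} immediately yields that the reduct is Archimedean, completing the proof. I do not expect any genuine obstacle here: the entire content lies in the reduction to the dyadic setting, and the only step demanding a word of care is the identification of the two presentations of $\trunc_n$, which is settled purely by \cref{l:reduct} and the constant-matching of the term-equivalence. In this sense the statement is the {\tmvm}-flavoured counterpart of \cref{p:W is Archimedean}, proved by the same two-line application of \cref{l:arch-by-equations}.
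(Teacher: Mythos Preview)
Your proposal is correct and follows exactly the paper's approach: the paper's proof is the single line ``As proved in \cref{l:arch-by-equations}, this follows from \cref{ax:constant,ax:delta-ominus},'' and you have simply spelled out the implicit steps (invoking \cref{l:reduct} to get {\admvm} reduct, matching the two presentations of $\trunc_n$, and taking $\alpha = \limop$). Your more detailed version is entirely in line with what the paper leaves tacit.
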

\begin{proof}
	As proved in \cref{l:arch-by-equations}, this follows from \cref{ax:constant,ax:delta-ominus}.
\end{proof}

\begin{lemma}\label{l:iterating tau-1}
	Let $n \in \Np$ and, for each $i \in \{1, \dots, n\}$, let $\alpha_i$ be either the term operation $x \mapsto x \oplus x$ or the term operation $x \mapsto x \odot x$.
	Then, for every $2$-Cauchy sequence $(x_n)_{n \in \Np}$ in {\atmvminfty}, we have
	\[
		\alpha_n \dots \alpha_1(x_{n+2}) \odot \jj(0)  \leq  \alpha_n \dots \alpha_1(\limop(x_1, x_2, x_3,\dots))  \leq  \alpha_n \dots \alpha_1(x_{n+2}) \oplus \hh(1).
	\]
\end{lemma}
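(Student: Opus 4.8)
The plan is to reduce the general statement to the base bound \cref{ax:base-case} by an induction on $n$ that pushes the doublings $\alpha_1, \dots, \alpha_n$ through $\limop$ one at a time. First I would isolate the key auxiliary fact, stated for \emph{every} $n \in \N$ and \emph{every} $2$-Cauchy sequence $(x_k)_{k \in \Np}$ in $A$: the sequence $(y_k)_{k \in \Np}$ defined by $y_k \df \alpha_n \cdots \alpha_1(x_{k+n})$ is again $2$-Cauchy, and
\[
	\alpha_n \cdots \alpha_1(\limop(x_1, x_2, x_3, \dots)) = \limop(y_1, y_2, y_3, \dots).
\]
The universal quantification over all $2$-Cauchy sequences is exactly what makes the induction go through, since each inductive step feeds a \emph{different} sequence back into the hypothesis.

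The base case $n = 0$ is trivial. For the inductive step I would peel off the first-applied operation $\alpha_1$. If $\alpha_1$ is the doubling $x \mapsto x \oplus x$, then \cref{ax:ind-step0'} applied to $(x_k)_k$ gives $\alpha_1(\limop(x_1, x_2, \dots)) = \limop(\alpha_1(x_2), \alpha_1(x_3), \dots)$; if $\alpha_1$ is $x \mapsto x \odot x$, I would use \cref{ax:ind-step1'} instead. By \cref{l:oplus-and-odot-are-$2$-Cauchy}, the shifted-and-doubled sequence $(\alpha_1(x_{k+1}))_{k}$ is $2$-Cauchy, so I can apply the inductive hypothesis to it with the remaining $n-1$ operations $\alpha_2, \dots, \alpha_n$. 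Tracking indices, this rewrites $\alpha_n \cdots \alpha_1(\limop(x_1, x_2, \dots))$ as $\limop$ of the sequence whose $k$-th term is $\alpha_n \cdots \alpha_2(\alpha_1(x_{k+n})) = \alpha_n \cdots \alpha_1(x_{k+n}) = y_k$, which the hypothesis also guarantees to be $2$-Cauchy. This closes the induction.

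To finish, I would apply the auxiliary fact to the sequence $(x_k)$ of the statement and read off that $(y_k)$ is $2$-Cauchy with $y_2 = \alpha_n \cdots \alpha_1(x_{n+2})$. Since $(y_k)$ is $2$-Cauchy, $\aprj_2(y_1, y_2) = y_2$ by the fact, established just above \cref{d:TMVMinfty}, that $\aprj_k$ acts as the $k$-th projection on $2$-Cauchy sequences; hence \cref{ax:base-case} yields $y_2 \odot \jj(0) \leq \limop(y_1, y_2, \dots) \leq y_2 \oplus \hh(1)$. Substituting $\limop(y_1, y_2, \dots) = \alpha_n \cdots \alpha_1(\limop(x_1, x_2, \dots))$ and $y_2 = \alpha_n \cdots \alpha_1(x_{n+2})$ gives precisely the claimed inequalities. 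The only delicate point is the index bookkeeping in the inductive step: one must check that the shift by one introduced by each application of \cref{ax:ind-step0'}/\cref{ax:ind-step1'} accumulates to the single shift by $n$ recorded in $y_k = \alpha_n \cdots \alpha_1(x_{k+n})$. Beyond this accounting there is no genuine algebraic difficulty, since everything rests on the two reformulated axioms and \cref{l:oplus-and-odot-are-$2$-Cauchy}.
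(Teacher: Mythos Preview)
Your proposal is correct and takes essentially the same approach as the paper: iteratively push the doublings through $\limop$ using \cref{ax:inductive-step-0}/\cref{ax:inductive-step-1} (equivalently their primed forms) together with \cref{l:oplus-and-odot-are-$2$-Cauchy}, and then apply \cref{ax:base-case} to the resulting $2$-Cauchy sequence. The paper compresses your explicit induction into a single phrase ``by iterated application'', but the argument and the index shift it produces are identical to what you spell out.
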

\begin{proof}
	By iterated application of \cref{ax:inductive-step-0} and \cref{ax:inductive-step-1}, which is possible by \cref{l:oplus-and-odot-are-$2$-Cauchy}, we obtain
	\[
		\alpha_n \dots \alpha_1 (\limop(x_1, x_2, x_3, \dots)) = \limop\mathopen{}\big(\alpha_n \dots \alpha_1(x_{n+1}), \alpha_n \dots \alpha_1(x_{n+2}), \alpha_n \dots \alpha_1(x_{n+3}), \dots\big)\mathclose{}.
	\]
	By \cref{ax:base-case} we have 
	\begin{align*}
		&\alpha_n \dots \alpha_1(x_{n+1}) \odot \jj(0)\\
		&\leq  \limop\mathopen{}\big(\alpha_n \dots \alpha_1(x_{n+2}), \alpha_n \dots \alpha_1(x_{n+2}), \alpha_n \dots \alpha_1(x_{n+3}), \dots\big)\mathclose{} \\
		&\leq  \alpha_n \dots \alpha_1(x_{n+2}) \oplus \hh(1).
	\end{align*}
	The desired statement follows.
\end{proof}

\begin{lemma} \label{l:piecewise-linear}
	For every $n \in \Np$, the following two conditions are equivalent for a function $f \colon [0,1] \to [0,1]$.
	\begin{enumerate}
		\item
				There exists an $n$-tuple $(\alpha_1, \dots, \alpha_n)$ of functions from $[0,1]$ to $[0,1]$ belonging to $\{x \mapsto x \oplus x, x \mapsto x \odot x \}$ such that $f = \alpha_n \circ \dots \circ \alpha_1$.
		\item \label{i:interpolant}
				There exists $k \in \{0,\dots, 2^n-1\}$ such that $f$ is the linear interpolant of $(0, 0)$, $(\frac{k}{2^n}, 0)$, $(\frac{k + 1}{2^n}, 1)$, $(1, 1)$, i.e.,  for every $x \in [0,1]$, we have
				\[
					f(x) =	{
								\begin{cases}
									0				& \text{if } x \in [0, \frac{k}{2^n}];\\
									2^n x - k	& \text{if } x \in [\frac{k}{2^n}, \frac{k+1}{2^n}];\\
									1				& \text{if } x \in [\frac{k+1}{2^n}, 1].
								\end{cases}
								}
				\]
				(See \cref{fig:piecewise} for a plot of $f$ for $n = 2$ and $k \in \{0, 1, 2, 3\}$.)
	\end{enumerate}
\end{lemma}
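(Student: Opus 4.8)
I would prove the equivalence by induction on $n$, simultaneously in both directions, exploiting the fact that on $[0,1]$ the two doubling maps have the explicit forms $x \oplus x = \min\{2x,1\}$ and $x \odot x = \max\{2x-1,0\}$. The base case $n=1$ is immediate: $x \mapsto x \oplus x$ is the linear interpolant of $(0,0)$, $(\frac12,1)$, $(1,1)$, i.e.\ condition \cref{i:interpolant} with $k=0$, while $x \mapsto x \odot x$ is the linear interpolant of $(0,0)$, $(\frac12,0)$, $(1,1)$, i.e.\ \cref{i:interpolant} with $k=1$; and $\{0,1\}$ exhausts $\{0,\dots,2^1-1\}$.

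\textbf{Inductive step.} Assume the claim for $n$, and write an $(n+1)$-fold composite as $f = \alpha_{n+1} \circ g$ with $g = \alpha_n \circ \dots \circ \alpha_1$. By the inductive hypothesis the set of such $g$ is exactly the set of interpolants of $(0,0)$, $(\frac{k}{2^n},0)$, $(\frac{k+1}{2^n},1)$, $(1,1)$ for $k \in \{0,\dots,2^n-1\}$. The crux is then the explicit computation of $\alpha_{n+1} \circ g$. Fixing such a $g$ with parameter $k$, its graph rises linearly from $0$ to $1$ on $[\frac{k}{2^n},\frac{k+1}{2^n}]$ and so attains the value $\frac12$ exactly at the midpoint $\frac{2k+1}{2^{n+1}}$. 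If $\alpha_{n+1}$ is the map $x \mapsto x \oplus x$, then $f(x) = \min\{2g(x),1\}$ equals $0$ for $x \le \frac{2k}{2^{n+1}}$, equals $1$ for $x \ge \frac{2k+1}{2^{n+1}}$, and on the intermediate interval equals $2^{n+1}x - 2k$; hence $f$ is the interpolant with parameter $k' = 2k$. If instead $\alpha_{n+1}$ is $x \mapsto x \odot x$, then $f(x) = \max\{2g(x)-1,0\}$ is $0$ for $x \le \frac{2k+1}{2^{n+1}}$, is $1$ for $x \ge \frac{2k+2}{2^{n+1}}$, and equals $2^{n+1}x - 2k - 1$ in between; hence $f$ has parameter $k' = 2k+1$.

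\textbf{Conclusion and main obstacle.} The assignment $(k,\alpha_{n+1}) \mapsto k'$ sends $k \mapsto 2k$ under $\oplus$-doubling and $k \mapsto 2k+1$ under $\odot$-doubling, which is a bijection from $\{0,\dots,2^n-1\} \times \{x\mapsto x\oplus x,\ x \mapsto x \odot x\}$ onto $\{0,\dots,2^{n+1}-1\}$ (it is the operation of appending a least-significant binary digit). Consequently the functions described by \cref{i:interpolant} for $n+1$ are exactly those of the form $\alpha_{n+1} \circ g$, which settles both implications at once and closes the induction. The only real work is the piecewise computation of the composite, where the point to get right is that $g$ crosses $\frac12$ precisely at the dyadic midpoint $\frac{2k+1}{2^{n+1}}$, so that the three linear pieces of $f$ land on the correct dyadic breakpoints; everything else is bookkeeping on the index $k$.
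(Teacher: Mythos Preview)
Your proof is correct and follows exactly the approach the paper indicates: the paper's proof consists of the single sentence ``This can be proved by induction on $n$,'' and you have carried out precisely that induction in full detail. Your observation that the assignment $(k,\alpha_{n+1}) \mapsto k'$ amounts to appending a binary digit is a clean way to see that both implications are handled simultaneously.
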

\begin{proof}
	This can be proved by induction on $n$.
\end{proof}
\begin{figure}[ht]
	\centering
	\begin{subfigure}{0.2\textwidth}
		\includegraphics{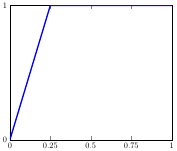}
		\caption{$k = 0$}
	\end{subfigure}
	\quad
	\begin{subfigure}{0.2\textwidth}
		\includegraphics{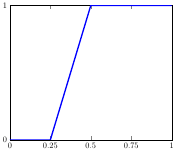}
		\caption{$k = 1$}
	\end{subfigure}
	\quad
	\begin{subfigure}{0.2\textwidth}
		\includegraphics{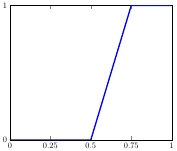}
		\caption{$k = 2$}
	\end{subfigure}
	\quad
	\begin{subfigure}{0.2\textwidth}
		\includegraphics{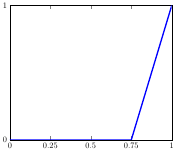}
		\caption{$k = 3$}
	\end{subfigure}
	\caption{The plot of the function $f$ of \cref{i:interpolant} in \cref{l:piecewise-linear} for $n = 2$ and $k \in \{0, 1, 2, 3\}$.}
	\label{fig:piecewise}
\end{figure}
\begin{lemma}\label{l:distance by tau}
	Let $a, b \in [0,1]$, let $n \in \Np$ and suppose that, for every $n$-tuple of functions $\alpha_1, \dots, \alpha_n \colon [0,1] \to [0,1]$ belonging to $\{x \mapsto x \oplus x, x \mapsto x \odot x \}$, we have
	\[
		\lvert \alpha_n\dots\alpha_1(a) - \alpha_n\dots\alpha_1(b)\rvert < 1.
	\]
	Then $\lvert a - b \rvert < \frac{1}{2^{n-1}}$.
\end{lemma}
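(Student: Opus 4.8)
The plan is to argue by contraposition, reducing the statement to a short counting argument by means of \cref{l:piecewise-linear}. By that lemma, the composites $\alpha_n \circ \dots \circ \alpha_1$ arising from $n$-tuples with each $\alpha_i \in \{x \mapsto x \oplus x,\ x \mapsto x \odot x\}$ are \emph{precisely} the piecewise-linear functions $f_k$, for $k \in \{0, \dots, 2^n - 1\}$, where $f_k$ is the linear interpolant of $(0,0)$, $(\frac{k}{2^n}, 0)$, $(\frac{k+1}{2^n}, 1)$, $(1,1)$. Hence the hypothesis of the lemma is equivalent to the condition that $\lvert f_k(a) - f_k(b) \rvert < 1$ for every $k \in \{0, \dots, 2^n - 1\}$. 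I would then prove the contrapositive: assuming $\lvert a - b \rvert \geq \frac{1}{2^{n-1}}$, I shall exhibit a single index $k$ for which $\lvert f_k(a) - f_k(b)\rvert = 1$, contradicting the assumption.

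For the counting step, assume without loss of generality that $a < b$, so that $b - a \geq \frac{1}{2^{n-1}} = \frac{2}{2^n}$. From the explicit description of $f_k$ one reads off that $f_k(a) = 0$ holds exactly when $a \leq \frac{k}{2^n}$, and $f_k(b) = 1$ holds exactly when $b \geq \frac{k+1}{2^n}$. Thus it suffices to produce an integer $k$ with $2^n a \leq k \leq 2^n b - 1$. The closed interval $[2^n a,\ 2^n b - 1]$ has length $2^n(b - a) - 1 \geq 1$, and therefore contains an integer; concretely, $k \df \lceil 2^n a \rceil$ works. It remains to check that $k \in \{0, \dots, 2^n - 1\}$: since $a \geq 0$ we get $k \geq 2^n a \geq 0$, and since $b \leq 1$ we get $k \leq 2^n b - 1 \leq 2^n - 1$. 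For this $k$ we obtain $f_k(a) = 0$ and $f_k(b) = 1$, whence $\lvert f_k(a) - f_k(b)\rvert = 1$, the desired contradiction.

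The argument is essentially elementary once \cref{l:piecewise-linear} is available, since that result has already absorbed the nontrivial content by identifying the composites with the interpolants $f_k$. I do not expect a serious obstacle; the only point requiring genuine care is confirming that the integer extracted from the length argument lands in the admissible range $\{0, \dots, 2^n - 1\}$, which is exactly where the constraints $a \geq 0$ and $b \leq 1$ enter. One minor subtlety worth flagging is the strictness of the inequalities: the hypothesis gives $\lvert f_k(a)-f_k(b)\rvert < 1$ for all $k$, while the contrapositive starts from $\lvert a - b\rvert \geq \frac{1}{2^{n-1}}$ and produces a difference equal to $1$, so the strict/non-strict bookkeeping matches up cleanly and yields the strict conclusion $\lvert a - b\rvert < \frac{1}{2^{n-1}}$.
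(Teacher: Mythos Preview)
Your proposal is correct and follows essentially the same approach as the paper: both argue by contraposition, invoke \cref{l:piecewise-linear} to realise the composites as the interpolants $f_k$, and then locate an index $k$ for which $f_k$ separates $a$ and $b$ by the full distance $1$. The paper is slightly terser, simply asserting that from $\lvert a-b\rvert \geq \frac{1}{2^{n-1}}$ one finds $k$ with $\{\frac{k}{2^n},\frac{k+1}{2^n}\}\subseteq [a,b]$, whereas you spell out the choice $k=\lceil 2^n a\rceil$ and verify it lies in the admissible range; but the arguments are the same in substance.
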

\begin{proof}
	We proceed by contraposition: suppose $\lvert a- b \rvert  \geq  \frac{1}{2^{n-1}}$.
	Then, there exists $k \in \{0,\dots,2^n-1\}$ such that $\left\{\frac{k}{2^n}, \frac{k+1}{2^n}\right\} \seq [a, b]$.
	Consider the function 
	\begin{align*}
		f \colon	[0,1]	& \longrightarrow	[0,1]\\
					x		& \longmapsto		{
													\begin{cases}
														0				& \text{if } x \in [0, \frac{k}{2^n}]\\
														2^n x - k	& \text{if } x \in [\frac{k}{2^n}, \frac{k+1}{2^n}]\\
														1				& \text{if } x \in [\frac{k+1}{2^n}, 1]
													\end{cases}
													}
	\end{align*}
	By \cref{l:piecewise-linear}, there exists an $n$-tuple $(\alpha_1, \dots, \alpha_n)$ of functions from $[0,1]$ to $[0,1]$ belonging to $\{x \mapsto x \oplus x, x \mapsto x \odot x \}$ such that $f = \alpha_n \circ \dots \circ \alpha_1$.
	Hence we have
	\[
		\lvert {\alpha}_n \dots {\alpha}_1(a) - {\alpha}_n\dots {\alpha}_1(b) \rvert  =  \lvert f(a) - f(b) \rvert  =  \lvert 0 - 1 \rvert = 1,
	\]
	which concludes the proof.
\end{proof}

\begin{lemma} \label{l:full-on-int-t}
	Every function from {\atmvminfty} to $[0,1]$ that preserves $\oplus$, $\odot$, $\lor$, $\land$, $0$, $1$, $\hh$ and $\jj$ preserves also $\limop$.
\end{lemma}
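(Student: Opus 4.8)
The plan is to reduce the claim to the case of $2$-Cauchy input sequences and then to squeeze the relevant value using \cref{l:iterating tau-1} (which packages \cref{ax:base-case,ax:inductive-step-0,ax:inductive-step-1}) together with \cref{l:distance by tau}. So let $A$ be \atmvminfty{} and let $f\colon A\to[0,1]$ preserve $\oplus$, $\odot$, $\lor$, $\land$, $0$, $1$, $\hh$, $\jj$. Since $f$ preserves $\lor$ and $\land$, it is order-preserving; and since it preserves $\oplus,\odot,\hh,\jj,0,1$, it commutes with every term built from these, in particular with each $\aprj_n$ (see \cref{n:aprj-t}), with each doubling operation $x\mapsto x\oplus x$ and $x\mapsto x\odot x$, and with the constants $\hh^n(1)$ and $\jj^n(0)$ (which in $[0,1]$ equal $\tfrac{1}{2^n}$ and $1-\tfrac{1}{2^n}$, by \cref{l:reduct}).

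First I would reduce to $2$-Cauchy sequences. Given an arbitrary sequence $(x_n)$ in $A$, set $y_n\df\aprj_n(x_1,\dots,x_n)$; then $(y_n)$ is $2$-Cauchy in $A$, and by \cref{ax:rhoify} applied in $A$ we have $\limop(x_1,x_2,\dots)=\limop(y_1,y_2,\dots)$. Since $f$ preserves each $\aprj_n$, we have $f(y_n)=\aprj_n(f(x_1),\dots,f(x_n))$, so by \cref{ax:rhoify} applied in $[0,1]$ (valid by \cref{l:int-is-t-infty}) we get $\limop(f(x_1),f(x_2),\dots)=\limop(f(y_1),f(y_2),\dots)$. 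Hence it suffices to prove $f(\limop(y_1,y_2,\dots))=\limop(f(y_1),f(y_2),\dots)$ for $2$-Cauchy $(y_n)$. Fix such a sequence and write $\ell\df\limop(y_1,y_2,\dots)$. Because $f$ is order-preserving and preserves the relevant operations, $(f(y_n))$ is $2$-Cauchy in $[0,1]$, so by \cref{l:to-their-limit} we have $\limop(f(y_1),f(y_2),\dots)=\lim_n f(y_n)$; the goal therefore becomes $f(\ell)=\lim_n f(y_n)$.

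The main step is the squeeze. Fix $k\in\Np$ and a $k$-tuple $(\alpha_1,\dots,\alpha_k)$ of doubling operations. By \cref{l:iterating tau-1} in $A$,
\[
	\alpha_k\cdots\alpha_1(y_{k+2})\odot\jj(0)\ \le\ \alpha_k\cdots\alpha_1(\ell)\ \le\ \alpha_k\cdots\alpha_1(y_{k+2})\oplus\hh(1).
\]
Applying the order-preserving $f$ and using that it commutes with each $\alpha_i$, with $\odot$ and $\oplus$, and with $\jj(0)$ and $\hh(1)$ (both equal to $\tfrac12$ in $[0,1]$), one reads off $\lvert\alpha_k\cdots\alpha_1(f(\ell))-\alpha_k\cdots\alpha_1(f(y_{k+2}))\rvert\le\tfrac12<1$ in $[0,1]$, using $a\odot\tfrac12=\max\{a-\tfrac12,0\}$ and $a\oplus\tfrac12=\min\{a+\tfrac12,1\}$. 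As this holds for every such tuple, \cref{l:distance by tau} yields $\lvert f(\ell)-f(y_{k+2})\rvert<\tfrac{1}{2^{k-1}}$. Letting $k\to\infty$ gives $\lim_n f(y_n)=f(\ell)$, completing the reduced case and, with the first paragraph, the lemma.

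The step I expect to be most delicate is the transfer of the two-sided inequality of \cref{l:iterating tau-1} across $f$ and its conversion into the bound $\le\tfrac12$: this requires tracking simultaneously that $f$ preserves the order and each of $\odot$, $\oplus$, the doublings, and the constants $\jj(0)=\hh(1)=\tfrac12$, and then performing the elementary $[0,1]$-computation of $a\odot\tfrac12$ and $a\oplus\tfrac12$. Everything else is a routine assembly of the two applications of \cref{ax:rhoify} and one application each of \cref{l:to-their-limit} and \cref{l:distance by tau}.
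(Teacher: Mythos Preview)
Your proposal is correct and follows essentially the same approach as the paper: reduce to $2$-Cauchy sequences via \cref{ax:rhoify} (in both $A$ and $[0,1]$), then apply \cref{l:iterating tau-1}, push the resulting inequalities through $f$, and invoke \cref{l:distance by tau} together with \cref{l:to-their-limit} to conclude. The only cosmetic difference is the order of presentation—the paper first isolates the $2$-Cauchy case as a claim and then derives the general case, whereas you reduce first—but the logical content is identical.
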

\begin{proof}
	Let $A$ be {\atmvminfty} and let $f$ be a function from $A$ to $[0,1]$ that preserves $\oplus$, $\odot$, $\lor$, $\land$, $0$, $1$, $\hh$ and $\jj$.
	\begin{claim} \label{cl:for-2Cauchy}
		for every $2$-Cauchy sequence $(x_n)_{n \in \Np}$ in $A$, we have $f(\limop(x_1, x_2, x_3, \dots)) = \limop(f(x_1), f(x_2), f(x_3), \dots)$.
	\end{claim}
	\begin{claimproof}
		Let $(x_n)_{n \in \Np}$ be a $2$-Cauchy sequence in $A$.
		Let $n \in \Np$.
		By \cref{l:iterating tau-1}, for every $n$-tuple $(\alpha_1, \dots, \alpha_n)$ of term operations in $\{x \mapsto x \oplus x, x \mapsto x \odot x\}$, we have
		\[
			\alpha_n \dots \alpha_1(x_{n+2}) \odot \jj(0)  \leq  \alpha_n \dots \alpha_1(\limop(x_1, x_2, x_3,\dots))  \leq  \alpha_n \dots \alpha_1(x_{n+2}) \oplus \hh(1),
		\]
		and thus, using the fact that $f$ preserves $\oplus$, $\odot$, $\lor$, $\land$, $0$, $1$, $\hh$ and $\jj$, we have
		\[
			\alpha_n \dots \alpha_1(f(x_{n+2})) \odot \frac{1}{2}  \leq  \alpha_n \dots \alpha_1(f(\limop(x_1, x_2, x_3,\dots)))  \leq  \alpha_n \dots \alpha_1(f(x_{n+2})) \oplus \frac{1}{2},
		\]
		i.e.\
		\[
			\lvert \alpha_n \dots \alpha_1(f(\limop(x_1, x_2, x_3,\dots))) - \alpha_n \dots \alpha_1(f(x_{n+2})) \rvert \leq \frac{1}{2}.
		\]
		Therefore, by \cref{l:distance by tau}, we have
		\[
			\lvert f(\limop(x_1, x_2, x_3,\dots)) - f(x_{n+2}) \rvert  \leq  \frac{1}{2^{n-1}}.
		\]
		It follows that $f(\limop(x_1, x_2, x_3,\dots)) = \lim_{n \to \infty} f(x_n)$.
		It is easy to see that the sequence $(f(x_1), f(x_2), f(x_3), \dots)$ is $2$-Cauchy.
		Therefore, by \cref{l:to-their-limit},
		\[
			\lim_{n \to \infty} f(x_n) = \limop(f(x_1), f(x_2), f(x_3), \dots).
		\]
		It follows that 
		\[
			f(\limop(x_1, x_2, x_3,\dots)) = \limop(f(x_1), f(x_2), f(x_3), \dots),
		\]
		settling our claim.
	\end{claimproof}
	Let now $(x_n)_{n \in \Np}$ be an arbitrary $2$-Cauchy sequence in $A$.
	Then, we have
	\begin{align*}
		& f\mathopen{}\big(\limop(x_1, x_2, x_3,\dots)\big)\mathclose{}\\
		& = f\mathopen{}\big(\limop(\aprj_1(x_1), \aprj_2(x_1,x_2), \aprj_3(x_1, x_2, x_3),\dots)\big)\mathclose{}						&& \by{\cref{ax:rhoify}}\\
		& = \limop\mathopen{}\big(f(\aprj_1(x_1)), f(\aprj_2(x_1,x_2)), f(\aprj_3(x_1, x_2, x_3)),\dots\big)\mathclose{}				&& \by{\cref{cl:for-2Cauchy}} \\
		& = \limop\mathopen{}\big(\aprj_1(f(x_1)), \aprj_2(f(x_1),f(x_2)), \aprj_3(f(x_1), f(x_2), f(x_3)),\dots\big)\mathclose{}	&& \\
		& = \limop\mathopen{}\big(f(x_1), f(x_2), f(x_3), \dots\big)\mathclose{}. 																	&& \qedhere
	\end{align*}
\end{proof}

\begin{proposition}\label{p:isp-t}
	We have
	\[
		\TMVMinfty = \opS\opP([0,1]).
	\]
\end{proposition}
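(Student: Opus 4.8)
The plan is to mirror the proof of the corresponding statement for {\dmvminftys}, namely \cref{p:isp-dmvminfty}, substituting the reduct results for {\dmvminftys} with their $2$-divisible counterparts. I would prove the two inclusions separately.

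First I would show that every {\tmvminfty} $A$ belongs to $\opS\opP([0,1])$. By \cref{p:W is Archimedean-t}, the $(\{\oplus, \odot, \lor, \land\} \cup (\Dyadu))$-reduct of $A$ is Archimedean; hence there exist a cardinal $\kappa$ and an injective homomorphism $\iota \colon A \hookrightarrow [0,1]^\kappa$ of $(\{\oplus, \odot, \lor, \land\} \cup (\Dyadu))$-algebras, that is, a map preserving $\oplus$, $\odot$, $\lor$, $\land$, and all the term-definable dyadic constants, among which are $0 = \uu_0$ and $1 = \dd_0$, where $\dd_n = \hh^n(1)$ and $\uu_n = \jj^n(0)$ as in \cref{l:reduct}. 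The key claim is that $\iota$ preserves the remaining primitive operations $\hh$, $\jj$ and $\limop$ as well. To verify this it suffices to check that each composite $\pi_i \circ \iota \colon A \to [0,1]$ preserves them, where $\pi_i$ denotes the $i$-th projection. Now $\pi_i \circ \iota$ preserves $\oplus$, $\odot$, $0$ and $1$, so by \cref{l:pres-also-h} it preserves $\hh$ and $\jj$; consequently it preserves all of $\oplus$, $\odot$, $\lor$, $\land$, $0$, $1$, $\hh$ and $\jj$, whence by \cref{l:full-on-int-t} it preserves $\limop$. Therefore $\iota$ preserves every operation symbol of the signature of $\TMVMinfty$, so it is a $\TMVMinfty$-embedding, witnessing $A \in \opS\opP([0,1])$.

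For the converse inclusion, I would invoke \cref{l:int-is-t-infty}, which says that $[0,1]$ with the standard interpretations is itself {\atmvminfty}, together with the fact that $\TMVMinfty$, being presented by finitely many equations, is a variety and hence closed under subalgebras and products. This gives $\opS\opP([0,1]) \subseteq \TMVMinfty$, completing the proof.

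The only genuinely substantive point—already settled by the earlier lemmas—is that an embedding of the dyadic reduct automatically upgrades to an embedding in the full signature, and all of this content is carried by \cref{l:pres-also-h} and \cref{l:full-on-int-t}. Once those are in hand, the present argument is a routine assembly, with no individual step presenting a real difficulty.
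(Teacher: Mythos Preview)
Your proof is correct and follows essentially the same approach as the paper: use \cref{p:W is Archimedean-t} to embed the dyadic reduct into a power of $[0,1]$, then upgrade the embedding via \cref{l:pres-also-h} and \cref{l:full-on-int-t} applied to each coordinate projection, and handle the converse inclusion by \cref{l:int-is-t-infty} together with closure of varieties under $\opS$ and $\opP$. The only cosmetic difference is that you mention finiteness of the axiomatisation, which is true but not needed---any equationally defined class is closed under products and subalgebras.
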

\begin{proof}
	We first prove $\TMVMinfty \seq \opS\opP([0,1])$.
	By \cref{p:W is Archimedean-t}, the reduct to the signature $\{\oplus, \odot, \lor, \land\}\cup (\Dyadu)$ of any {\tmvminfty} is isomorphic to a subalgebra of a power of the algebra $[0,1]$ with standard interpretation of the operations.
	Let $\iota \colon A \hookrightarrow [0,1]^\kappa$ denote the corresponding inclusion.
	We claim that $\iota$ preserves also $\hh$, $\jj$, and $\limop$.
	By \cref{l:pres-also-h}, every function from {\atmvminfty} to $[0,1]$ which preserves $\oplus$, $\odot$, $\lor$, $\land$, $0$ and $1$ preserves also $\hh$ and $\jj$; by \cref{l:full-on-int-t}, it preserves also $\limop$.
	Thus, given any $i \in \kappa$, the composite $A \xhookrightarrow{\iota} [0,1]^{\kappa} \xrightarrow{\pi_i} [0,1]$---where $\pi_i$ denotes the $i$-th projection---preserves $\hh$, $\jj$, and $\limop$.
	Therefore, $\iota$ preserves also $\limop$, settling our claim, and thus $A$ is isomorphic to a subalgebra of a power of the algebra $[0,1]$.
	Therefore, $\TMVMinfty \seq \opS\opP([0,1])$.
	
	The opposite inclusion $\TMVMinfty \supseteq \opS\opP([0,1])$ is guaranteed by the following facts.
	\begin{enumerate}
		\item
			The algebra $[0,1]$ in the signature $\{\oplus, \odot, \lor, 0, 1, \land, \hh, \jj, \limop\}$ with standard interpretation of the operations is {\atmvminfty} by \cref{l:int-is-t-infty}.
		\item
			The class of algebras $\TMVMinfty$ is a variety, and so it is closed under products and subalgebras. \qedhere
	\end{enumerate}
\end{proof}

\begin{lemma} \label{l:clone-mon-cont-t}
	For every cardinal $\kappa$, the set of interpretations of the term operations of arity $\kappa$ on the algebra $[0,1]$ in the signature $\{\oplus, \odot, \lor, \land, 0, 1, \hh, \jj, \limop\}$ is the set of order-preserving continuous functions from $[0,1]^\kappa$ to $[0, 1]$.
\end{lemma}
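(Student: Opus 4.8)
The plan is to deduce this from its counterpart in the previous chapter, namely \cref{l:clone-mon-cont}, which asserts that on $[0,1]$ the term operations in the signature $\{\oplus, \odot, \lor, \land\}\cup (\Dyadu) \cup \{\limop\}$ are exactly the order-preserving continuous functions. Write $L_\kappa$ for the set of term operations of arity $\kappa$ of the algebra $[0,1]$ in the signature $\{\oplus, \odot, \lor, \land, 0, 1, \hh, \jj, \limop\}$. I would prove the two inclusions between $L_\kappa$ and the set of order-preserving continuous functions separately.

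For the inclusion of $L_\kappa$ into the order-preserving continuous functions, I would argue that each primitive operation of the signature is order-preserving and continuous (with respect to the product order and topology): this is clear for $\oplus$, $\odot$, $\lor$, $\land$, $0$, $1$, $\hh$ and $\jj$, while for $\limop$ it is \cref{p:delta-cont-mon}. Since the projections are order-preserving and continuous, and both properties are preserved under (possibly infinitary) composition, every element of $L_\kappa$ is order-preserving and continuous.

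For the reverse inclusion I would show that $L_\kappa$ contains every term operation in the signature $\{\oplus, \odot, \lor, \land\}\cup (\Dyadu) \cup \{\limop\}$, and then invoke \cref{l:clone-mon-cont}. The operations $\oplus$, $\odot$, $\lor$, $\land$ and $\limop$ are already primitive in the finite signature, so it suffices to see that each dyadic constant $t \in \Dyadu$ is the interpretation of a nullary term. Writing $t = k/2^n$ with $0 \le k \le 2^n$, the term $\hh^n(1)$ is interpreted on $[0,1]$ as $\frac{1}{2^n}$, and hence $t$ is the value on $[0,1]$ of the $k$-fold $\oplus$-sum of $\hh^n(1)$, exactly as in the term-equivalence established after \cref{d:dmvm-alt} together with \cref{l:reduct}. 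Thus the clone generated on $[0,1]$ by the finite signature contains the clone generated by $\{\oplus, \odot, \lor, \land\}\cup (\Dyadu) \cup \{\limop\}$; by \cref{l:clone-mon-cont} the latter is the whole set of order-preserving continuous functions from $[0,1]^\kappa$ to $[0,1]$, so every such function lies in $L_\kappa$.

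Combining the two inclusions yields the claim. The only point requiring care is the generation of the dyadic constants from $0$, $1$, $\hh$ and $\jj$, which is routine given the term-equivalence already recorded in this chapter, so I would not expect a genuine obstacle. Alternatively one could try to mimic the proof of \cref{l:clone-mon-cont} directly via the unit interval ordered Stone--Weierstrass theorem \cref{t:StWe-unit}, but that statement requires \emph{all} real constants in $[0,1]$, whereas here only the dyadic ones are available as terms; reducing to \cref{l:clone-mon-cont} neatly sidesteps this mismatch.
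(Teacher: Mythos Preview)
Your argument is correct, but the paper takes precisely the route you set aside at the end. It mimics the proof of \cref{l:clone-mon-cont} verbatim: it applies \cref{t:StWe-unit} directly to $L_\kappa$ (with $X=[0,1]^\kappa$), obtains that the uniform closure of $L_\kappa$ is all of $\Cleq([0,1]^\kappa,[0,1])$, and then uses \cref{l:closure} together with closure under $\limop$ to conclude $L_\kappa$ is already uniformly closed.

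Your worry that \cref{t:StWe-unit} requires \emph{all} constants in $[0,1]$ while only dyadic ones are available is misplaced: $L_\kappa$ does contain every real constant. Any $r\in[0,1]$ is the limit of a $2$-Cauchy sequence of dyadic rationals, each of which is a nullary term by your own observation; since $\limop$ maps $2$-Cauchy sequences to their limits (\cref{l:to-their-limit}), the constant $r$ is the interpretation of a term as well. So the hypothesis of \cref{t:StWe-unit} is met, and the paper's direct application goes through. Your reduction to \cref{l:clone-mon-cont} is a perfectly valid alternative that avoids having to notice this; the paper's approach is more self-contained but leaves the ``all constants'' check implicit.
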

\begin{proof}
	Let $\kappa$ be a cardinal, and let $L_\kappa$ be the set of interpretations on $[0,1]$ of the term operations of arity $\kappa$.
	We now apply \cref{t:StWe-unit} with $X = [0,1]^\kappa$: note that $X$ is a compact ordered space and $L_\kappa$ is order-separating because it contains the projections.
	Therefore, the set of order-preserving continuous function from $[0,1]^\kappa$ to $[0,1]$ coincides with the closure of $L_\kappa$ under uniform convergence.
	By \cref{l:closure}, using the fact that $L_\kappa$ is closed under $\limop$, we obtain that the closure of $L_\kappa$ under uniform convergence is $L_\kappa$ itself.
\end{proof} 

\begin{theorem} \label{t:term-equiv-t}
	The classes $\CMiso$ and $\TMVMinfty$ are term-equivalent varieties of algebras.
\end{theorem}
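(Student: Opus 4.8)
The plan is to follow verbatim the strategy already used for \cref{t:term-equiv}, replacing each ingredient by its finite-signature counterpart. The governing principle, recorded in \cref{r:ISP-te}, is that two algebras sharing the same underlying set and the same clone of term operations generate term-equivalent $\opS\opP$-classes; so the entire argument reduces to identifying the relevant clones on the single algebra $[0,1]$.

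First I would invoke \cref{p:isp-t} to rewrite $\TMVMinfty$ as $\opS\opP([0,1])$, where $[0,1]$ carries the signature $\{\oplus, \odot, \lor, \land, 0, 1, \hh, \jj, \limop\}$ with its standard interpretations. On the other side, by the definition of $\CMiso$ we have $\CMiso = \opS\opP(\langle [0,1]; \SignCM \rangle)$, where $\SignCM$ consists of all order-preserving continuous functions from powers of $[0,1]$ to $[0,1]$; hence the clone of the $\SignCM$-algebra $[0,1]$ is precisely the set of order-preserving continuous functions. The key comparison is then supplied by \cref{l:clone-mon-cont-t}, which identifies, for every cardinal $\kappa$, the $\kappa$-ary term operations of the finite-signature algebra $[0,1]$ with the order-preserving continuous functions from $[0,1]^\kappa$ to $[0,1]$. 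Thus the two algebras on the underlying set $[0,1]$---the $\SignCM$-algebra and the $\{\oplus, \odot, \lor, \land, 0, 1, \hh, \jj, \limop\}$-algebra---have one and the same clone, so \cref{r:ISP-te} yields that $\CMiso$ and $\TMVMinfty$ are term-equivalent. Finally, since $\TMVMinfty$ is a variety, term-equivalence transports this property to $\CMiso$, so both are varieties of algebras.

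As for the main obstacle: the theorem itself carries essentially no difficulty of its own, since all the substantive content has been isolated in the supporting lemmas, and the proof is a direct transcription of \cref{t:term-equiv}. The one delicate point sits inside \cref{l:clone-mon-cont-t}, namely the verification that the finite stock of operations $\{\oplus, \odot, \lor, \land, 0, 1, \hh, \jj, \limop\}$ genuinely generates \emph{every} order-preserving continuous function. This rests on the unit interval ordered Stone-Weierstrass theorem (\cref{t:StWe-unit}) together with two facts that must be lined up carefully: that the dyadic constants are recovered from $0$, $1$, $\hh$, $\jj$ through the reduct described in \cref{l:reduct} (so that $L_\kappa$ contains a dense set of constants and is order-separating), and that closure under $\limop$ already subsumes closure under uniform limits (\cref{l:closure}). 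Granting those lemmas, the remaining deduction is purely formal.
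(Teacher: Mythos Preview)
Your proof is correct and follows essentially the same approach as the paper: both invoke \cref{p:isp-t} and the definition of $\CMiso$ to express each class as $\opS\opP([0,1])$ in its respective signature, use \cref{l:clone-mon-cont-t} to match the clone on $[0,1]$ with the order-preserving continuous functions, and conclude via \cref{r:ISP-te}. Your additional commentary on where the real work lies (inside \cref{l:clone-mon-cont-t}) is accurate and helpful, but the argument for the theorem itself is the same as the paper's.
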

\begin{proof}
	By \cref{p:isp-t}, the class $\TMVMinfty$ consists of the algebras in the signature $\{\oplus, \odot, \lor, \land, 0, 1, \hh, \jj, \limop\}$ which are isomorphic to a subalgebra of a power of $[0,1]$.
	By definition of $\CMiso$, the class $\CMiso$ consists of the $\SignCM$-algebras which are isomorphic to a subalgebra of a power of $[0,1]$.
	The clone of term operations of the $\SignCM$-algebra $[0,1]$ consists of the order-preserving continuous functions.
	By \cref{l:clone-mon-cont-t}, the clone of term operations of the algebra $[0,1]$ in the signature $\{\oplus, \odot, \lor, \land, 0, 1, \hh, \jj, \limop\}$ consists of the order-preserving continuous functions.
	The class $\TMVMinfty$ is clearly a variety of algebras.
	By \cref{r:ISP-te}, the class $\CMiso$ is a variety which is term-equivalent to $\TMVMinfty$.
\end{proof}

\begin{theorem} \label{t:axiomatisation-t}
	The category $\CompOrd$ of compact ordered spaces is dually equivalent to the variety $\TMVMinfty$ of {\tmvminftys} (see \cref{d:TMVMinfty}).
\end{theorem}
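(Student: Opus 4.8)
The plan is to deduce the statement immediately by composing two results already established: the term-equivalence of \cref{t:term-equiv-t} and the (non-explicit) duality of \cref{t:duality-not-explicit}. Concretely, \cref{t:term-equiv-t} asserts that the variety $\TMVMinfty$ of {\tmvminftys} and the variety $\CMiso = \opS\opP(\langle [0,1]; \SignCM\rangle)$ are term-equivalent, while \cref{t:duality-not-explicit} asserts that $\CompOrd$ is dually equivalent to $\CMiso$. Chaining these facts gives that $\CompOrd$ is dually equivalent to $\TMVMinfty$, exactly as wanted. Thus the formal proof is a one-line combination, precisely paralleling the proof of \cref{t:axiomatisation} in the previous chapter, and the witnessing contravariant functors are again $\Cleq(-,[0,1])$ and $\hom(-,[0,1])$.

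The real content lives in \cref{t:term-equiv-t}, whose proof I would structure in two halves. The first half is the identification $\TMVMinfty = \opS\opP([0,1])$ (\cref{p:isp-t}): one shows that the $(\{\oplus,\odot,\lor,\land\}\cup(\Dyadu))$-reduct of any {\tmvminfty} is Archimedean (\cref{p:W is Archimedean-t}), hence embeds into a power of $[0,1]$, and then checks that this embedding automatically preserves the remaining operations $\hh$, $\jj$ and $\limop$; conversely $[0,1]$ is itself {\atmvminfty} (\cref{l:int-is-t-infty}) and any variety is closed under $\opS$ and $\opP$. The second half is the clone computation (\cref{l:clone-mon-cont-t}): the unit interval ordered Stone--Weierstrass theorem (\cref{t:StWe-unit}) shows that the term operations form a uniformly dense set of order-preserving continuous functions, and closure under $\limop$ (\cref{l:closure}) upgrades density to equality. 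With both halves in place, \cref{r:ISP-te} yields the term-equivalence, and since $\TMVMinfty$ is visibly a variety, so is $\CMiso$.

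The main obstacle is the claim that every $\{\oplus,\odot,\lor,\land,0,1,\hh,\jj\}$-preserving map from {\atmvminfty} to $[0,1]$ already preserves the infinitary operation $\limop$ (\cref{l:full-on-int-t}). Because the finite signature no longer supplies all dyadic constants directly, the sandwiching estimates \cref{ax:base-case,ax:inductive-step-0,ax:inductive-step-1} must be propagated through iterated doublings $x \mapsto x\oplus x$ and $x \mapsto x\odot x$; the decisive quantitative input is that these iterated doublings realise the piecewise-linear interpolants of \cref{l:piecewise-linear}, so that control of every $n$-fold doubling of a difference forces the difference itself below $\tfrac{1}{2^{n-1}}$ (\cref{l:distance by tau}). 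This is precisely where the finiteness of the signature is paid for, replacing the direct appeal to dyadic constants used in the infinite axiomatisation of \cref{chap:axiomatisation}, and it is the step I expect to require the most care.
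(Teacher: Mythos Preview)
Your proposal is correct and matches the paper's approach exactly: the proof in the paper is the same one-line composition of the term-equivalence result (\cref{t:term-equiv-t}, though the paper's text has a minor typo citing \cref{t:term-equiv}) with the duality of \cref{t:duality-not-explicit}. Your additional commentary on how \cref{t:term-equiv-t} is established, and in particular on the role of \cref{l:full-on-int-t} and the iterated-doubling argument via \cref{l:piecewise-linear} and \cref{l:distance by tau}, accurately identifies where the real work lies, even though none of that is needed in the proof of \cref{t:axiomatisation-t} itself.
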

\begin{proof}
	The category $\TMVMinfty$ is isomorphic to $\CMiso$ by \cref{t:term-equiv}, and $\CMiso$ is dually equivalent to $\CompOrd$ by \cref{t:duality-not-explicit}.
\end{proof}

%%%%%%%%%%%%%%%%%%%%%%%%%%%%%%%%%%%   SECTION   %%%%%%%%%%%%%%%%%%%%%%%%%%%%%%%%%%%%

\section{Conclusions}

We showed that the dual of the category of compact ordered spaces admits a finite equational axiomatisation.
This concludes the main development of our work.

%######################################################.  CHAPTER.  ######################################################

\chapter{Conclusions}\label{chap:conclusions}
% CHAPTER 8

We have concluded our journey into the axiomatisability of the dual of the category of compact ordered spaces.
We started by motivating compact ordered spaces as the correct solution for $X$ in the equation
\begin{equation*}
	\text{Stone spaces : Priestley spaces = Compact Hausdorff spaces : $X$.}
\end{equation*}
Then, we observed that Stone spaces, Priestley spaces, and compact Hausdorff spaces all have a dual which is equivalent to a variety of (possibly infinitary) algebras, and we raised a question, which had been left open in \cite{HNN2018}: does the same happen for compact ordered spaces?
We showed that this is the case: the category $\CompOrd$ of compact ordered spaces and order-preserving continuous maps is dually equivalent to the variety $\CMiso$---in the signature $\SignCM$ of order-preserving continuous functions from powers of $[0,1]$ to $[0,1]$---consisting of the subalgebras of the powers of the $\SignCM$-algebra $[0,1]$.
Clearly, $[0,1]$ could be replaced by any of its isomorphic copies in $\CompOrd$.
We also observed that each operation in the theory of this variety depends on at most countably many coordinates.

Moreover, we proved that the countable bound on the arities is the best possible: $\CompOrd$ is not dually equivalent to any variety of finitary algebras.

After these results, we addressed the problem of establishing an explicit equational axiomatisation.
We pushed our investigation to the point of obtaining a finite equational axiomatisation, which established an ordered version of the main result of \cite{MarraReggio2017}.
From a historical point of view, our choice of the primitive operations is very natural: it is based on the lattice operations and on the addition of real numbers, following the tradition of several dualities for compact Hausdorff spaces \cite{KreinKrein1940,Yosida1941,Stone1941,Kakutani1941}.
Moreover, MV-algebras were at the base of the axiomatisation of the dual of compact Hausdorff spaces in \cite{MarraReggio2017}, so we found it is reasonable to base our axiomatisation on the order-preserving term-operations of MV-algebras, which led us to the notion of MV-monoidal algebra.

However, beyond the historical motivation, our choice of the generating set of operations remains somewhat arbitrary, and we have left completely unaddressed the problem of identifying which other choices of primitive operations give rise to an adequate duality for $\CompOrd$.
To the best of the author's understanding, one of the reasons why the interval $[-\infty, + \infty]$ is usually disregarded in dualities for compact Hausdorff spaces is because of the non-existence of a continuous extension of the binary addition at infinity.
This has lead, in the dualities available in the literature, to either a loss of first-order definability, or the employment of truncated addition, which carries axioms that some find unwieldy.
However, in our discussion on the Stone-Weierstrass theorem, we presented the results of M.\ H.\ Stone with special attention to his characterisation of the topological closure of any given lattice of continuous real functions over a compact space.
This result seems to suggests that the binary addition could be replaced, for example, by the set of affine unary functions, which, instead, admit continuous extensions at infinity.
In the ordered case, only the order-preserving ones are to be considered.
The author wonders:
May a simpler description of $\CompOrdop$ be obtained starting from the order-preserving affine unary functions on $[-\infty, + \infty]$, together with the lattice operations?

In closing, we indicate how the results in this thesis can be used to strengthen one of the results by \cite{HNN2018} about coalgebras\index{coalgebra} for the Vietoris functor on the category of compact ordered spaces.
In fact, the theory of coalgebras was one of the motivations for the algebraic study of $\CompOrdop$ in \cite{HNN2018}.
It is well known that the category of modal algebras is dually equivalent to the category of coalgebras for the Vietoris endofunctor on the category of Stone spaces\index{Vietoris functor!for Stone spaces}\index{functor!Vietoris|see{Vietoris functor}}; for more details, see \cite{KupkeKurzVenema2004}.
A similar study based on the Vietoris functor\index{Vietoris functor!for Priestley spaces} on the category of Priestley spaces and monotone continuous maps can be found in \cite{CignoliLafalcePetrovich1991,Petrovich1996,BonsangueKurzRewitzky2007}.
Dualities for the Vietoris endofunctor\index{Vietoris functor!for compact Hausdorff spaces} on the category of compact Hausdorff spaces appear in \cite{BezhanishviliBezhanishviliHarding2015,BezhanishviliBezhanishviliHarding2015a,BezhanishviliCaraiMorandi}.

A similar approach can be carried out for compact ordered spaces.
We recall (see e.g.\ \cite[Section~4]{HN2018}) that the \emph{Vietoris functor for compact ordered spaces}\index{Vietoris functor!for compact ordered spaces} $V \colon \CompOrd \to \CompOrd$ sends a compact ordered space $X$ to the space $V(X)$ of all closed up-sets of $X$, ordered by reverse inclusion $\supseteq$, and equipped with the topology generated by the sets
\begin{align*}
	\{A \seq X \mid A \text{ closed up-set and }A \cap U \neq \emptyset\}\ \  	&\ \  \text{($U \seq X$ open down-set)},\\
	\{A \seq X \mid A \text{ closed up-set and }A \cap K = \emptyset\}\ \ 		&\ \  \text{($K \seq X$ closed down-set)}.
\end{align*}
Given a map $f \colon X \to Y$ in $\CompOrd$, the functor returns the map $V(f)$ that sends a closed up-set $A \seq X$ to the up-closure $\upset f[A]$ of $f[A]$.
In \cite[Theorem~4.2]{HNN2018}, using the fact that $\CompOrd$ is dually equivalent to an $\aleph_1$-ary quasivariety, it was proved that the category $\CoAlg(V)$ of coalgebras for the endofunctor $V$ is dually equivalent to a $\aleph_1$-ary quasivariety, as well.
Such a quasivariety is described by adding to the theory of $\CMiso$ (dual of $\CompOrd$) a unary operation $\lozenge$, subject to the axioms
\begin{enumerate}
	\item \label{i:diam1} $\lozenge 0 = 0$;
	\item \label{i:diam2} $\lozenge (x \lor y) = \lozenge x \lor \lozenge y$;
	\item \label{i:diam3} for all $t \in [0,1]$, $\lozenge (x \odot t) = \lozenge x \odot t$;
	\item \label{i:diam4} $\lozenge (x \odot y) \leq \lozenge x \odot \lozenge y$.
\end{enumerate}
Given a coalgebra $r \colon X \to V(X)$, the unary operation $\lozenge$ is interpreted on $\Cleq(X, [0,1])$ by setting, for each $f \in \Cleq(X, [0,1])$ and each $x \in X$,
\[
	(\lozenge f)(x) \df \sup_{y \in r(x)} f(y).
\]
Since \cref{i:diam1,i:diam2,i:diam3,i:diam4} are equational, using the fact that $\CMiso$ is a variety, we obtain that the quasivariety $\CoAlg(V)$ described in \cite[Theorem~4.2]{HNN2018} is actually a variety.
In summary, then, we have:

\begin{theorem}
	The category $\CoAlg(V)$ of coalgebras and homomorphisms for the Vietoris functor $V \colon \CompOrd \to \CompOrd$ is dually equivalent to a variety, with operations of at most countable arity.
\end{theorem}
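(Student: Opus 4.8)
The plan is to read the statement as a strengthening of \cite[Theorem~4.2]{HNN2018}, upgrading the word \emph{quasivariety} to \emph{variety}. Recall from that theorem that $\CoAlg(V)$ is dually equivalent to the $\aleph_1$-ary quasivariety $\cat{D}$ whose signature is obtained from that of $\CMiso$ by adjoining a single unary operation $\lozenge$, and whose axioms are those of $\CMiso$ together with \cref{i:diam1,i:diam2,i:diam3,i:diam4}. First I would fix the countable-arity presentation of the base, replacing $\SignCM$ by $\SignCMomega$; by \cref{c:equiv-to-varieties} the class $\opS\opP(\langle [0,1]; \SignCMomega\rangle)$ is still a variety dual to $\CompOrd$, and the signature of $\cat{D}$ then consists of the operations of $\SignCMomega$ (each of at most countable arity) together with the unary $\lozenge$. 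Hence every operation of $\cat{D}$ has at most countable arity, which already secures the arity clause of the statement.

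The heart of the argument is to check the two defining conditions of \cref{d:variety} for $\cat{D}$. The existence of free algebras is immediate: $\cat{D}$ is a quasivariety by \cite[Theorem~4.2]{HNN2018}, and every quasivariety has free algebras by \cref{d:quasivariety}. For equational presentability, note that since $\opS\opP(\langle[0,1];\SignCMomega\rangle)$ is a variety (\cref{c:equiv-to-varieties}), it is presented, by \cref{d:variety}, by a class $E$ of equations in the signature $\SignCMomega$. Each member of $E$ remains an equation in the enlarged signature $\SignCMomega\cup\{\lozenge\}$, simply not mentioning $\lozenge$. Moreover \cref{i:diam1,i:diam2,i:diam3} are equalities, and \cref{i:diam4}, being an inequality $\lozenge(x\odot y)\leq \lozenge x\odot \lozenge y$ in the lattice order, is equivalent to the equation $(\lozenge(x\odot y))\land(\lozenge x\odot \lozenge y)=\lozenge(x\odot y)$; in \cref{i:diam3} the condition indexed by $t\in[0,1]$ is read as a class of equations, one for each nullary symbol $t$. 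Thus $\cat{D}$ is exactly the class of models of $E$ together with \cref{i:diam1,i:diam2,i:diam3,i:diam4}, i.e.\ a class of equations. Both conditions of \cref{d:variety} therefore hold, and $\cat{D}$ is a variety.

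Combining the two, $\CoAlg(V)$ is dually equivalent to the variety $\cat{D}$, all of whose operations have at most countable arity, as claimed. I do not expect a genuine obstacle in this last step: the substantive work---that $\CMiso$ is a variety and not merely a quasivariety, resting on the effectiveness of equivalence relations in $\CompOrdop$ (\cref{t:effective})---has already been done, and what remains is the soft observation that adjoining an operation symbol together with purely equational axioms to a variety, inside a class already known to be a quasivariety, preserves both equational presentability and the existence of free algebras. The only point demanding a little care is to confirm that the description in \cite[Theorem~4.2]{HNN2018} really \emph{presents} $\cat{D}$ by the equations of $\CMiso$ augmented with \cref{i:diam1,i:diam2,i:diam3,i:diam4}, rather than carving it out by genuine implications; but as those four axioms are (equivalent to) equalities, this is precisely the content of the cited description.
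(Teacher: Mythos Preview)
Your proposal is correct and follows essentially the same approach as the paper: the paper's argument is precisely that \cite[Theorem~4.2]{HNN2018} presents the dual of $\CoAlg(V)$ as the theory of $\CMiso$ augmented by the unary $\lozenge$ subject to \cref{i:diam1,i:diam2,i:diam3,i:diam4}, and since $\CMiso$ is a variety (\cref{c:equiv-to-varieties}) and these four axioms are equational, the resulting quasivariety is in fact a variety. Your write-up is simply more explicit in verifying the two clauses of \cref{d:variety} and in rewriting the inequality of \cref{i:diam4} as an equation, but the substance is identical.
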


\cleardoublepage
\phantomsection
\addcontentsline{toc}{chapter}{Bibliography}
%\bibliographystyle{apalike}
%\bibliography{Biblio} 

\printunsrtglossary[type = category]
\printunsrtglossary[type = symbol]

\cleardoublepage
\phantomsection
\addcontentsline{toc}{chapter}{Index}
\printindex

\end{document}